\DeclareFontFamily{OT1}{pzc}{}
\DeclareFontShape{OT1}{pzc}{m}{it}{<-> [1.15] rpzcmi}{}
\DeclareMathAlphabet{\mathzc}{OT1}{pzc}{m}{it}
\newtheorem{thm}{Theorem}[section]
\newtheorem{conds}[thm]{Conditions}
\newtheorem{lem}[thm]{Lemma}
\newtheorem{notation}[thm]{Notation}
\newtheorem{cor}[thm]{Corollary}
\newtheorem{hyp}[thm]{Stratification Hypothesis}
\newtheorem{str}[thm]{Strategy}
\newtheorem{fhyp}[thm]{Finiteness Hypothesis}\newtheorem{prop}[thm]{Proposition}
\newtheorem{const}[thm]{Construction}
\theoremstyle{definition}
\newtheorem{example}[thm]{Example}
\newtheorem{defn}[thm]{Definition}
\newtheorem{defns}[thm]{Definitions}
\newtheorem{conj}[thm]{Conjecture}
\newtheorem{rem}[thm]{Remark}
\newtheorem{rems}[thm]{Remarks}
\newtheorem{ex}[thm]{Example}
\numberwithin{section}{chapter}
\numberwithin{equation}{section}
\numberwithin{thm}{section}
\newcommand{\irr}{\text{\rm Irr}}
\newcommand{\wF}{{\widetilde F}}
\newcommand{\op}{{\text{\rm op}}}
\newcommand{\ssT}{\widetilde T}
\newcommand{\Ind}{\text{\rm Ind}}
\newcommand{\Bmod}{B\mbox{--mod}}
\newcommand{\Reps}{R[\epsilon]}
\newcommand{\rank}{\text{\rm rank}}
\newcommand{\ck}{{\text{\rm Coker}}}
\newcommand{\Ker}{\text{\rm Ker}}
\newcommand{\im}{\mbox{Im}}
\newcommand{\inff}{\text{Inf}}
\newcommand{\sH}{{\mathcal H}}
\newcommand{\bsH}{{\boldsymbol{\sH}}}
\newcommand{\tT}{{\widetilde T}}
\newcommand{\sA}{{\mathcal A}}
\newcommand{\sX}{{\mathcal X}}
\newcommand{\sZ}{{\mathcal{Z}}}
\newcommand{\sfB}{{\mathsf {B}}}
\newcommand{\Ext}{{\text{\rm Ext}}}
\newcommand{\x}{\text{\bf \it x}}
\newcommand{\htt}{\mathfrak{h}}
\newcommand{\sG}{{\mathscr G}}
\newcommand{\sS}{{\mathcal S}}
\newcommand{\Hom}{\text{\rm Hom}}
\newcommand{\End}{\operatorname{End}}
\newcommand{\ind}{\operatorname{Ind}}
\newcommand{\sE}{\operatorname{{\mathscr E}}}
\newcommand{\wt}{\operatorname{wt}}
\newcommand{\wA}{{\widetilde{A}}}
\newcommand{\wB}{{\widetilde{B}}}
\newcommand{\wN}{{\widetilde{N}}}
\newcommand{\wP}{{\widetilde{P}}}
\newcommand{\wK}{{\widetilde{K}}}
\newcommand{\wT}{{\widetilde{T}}}
\newcommand{\wL}{{\widetilde{L}}}
\newcommand{\wG}{{\widetilde{G}}}
\newcommand{\sD}{{\mathscr{D}}}
\newcommand{\St}{{\text{\rm St}}}
\newcommand{\bG}{{\bold G}}
\newcommand{\bT}{{\bold T}}
\newcommand{\bZ}{{\bold Z}}
\newcommand{\bB}{{\bold B}}
\newcommand{\bN}{{\bold N}}
\newcommand{{\bU}}{{\bold U}}
\def\AAA{{\scrA(\scrS)}}
\def\EEE{{\scrE(\scrS)}}
 \def\Af{{\scrA_\flat}}
 \def\Ef{{\scrE_\flat}}%theorem labels
\def\sJ{{\mathcal J}}
\def\sgn{{\text{\rm sgn}}}
\def\sQ{{\mathscr Q}}
\def\sO{{\mathcal O}}
\def\sM{{\mathcal M}}
\def\la{{\lambda}}
\def\sR{{\mathscr R}}
\def\scc{{\textsc c}}
\def\scrC{{\mathscr C}}
\def\scrK{{\mathscr K}}
\def\scrA{{\mathscr A}}
\def\scrB{{\mathscr B}}
\def\scrF{{\mathscr F}}
\def\scrL{{\mathscr L}}
\def\scrS{{\mathscr S}}
\def\scrE{{\mathscr E}}
\newcommand{\nsJ}{{\sJ}^\natural}
\newcommand{\Spec}{{\text{\rm Spec}}}
\def\dim{\text{dim}}
\def\Hom{\text{\rm{Hom}}}
\def\End{\text{\rm{End}}}
\def\Ext{\text{\rm{Ext}}}
\def\Res{\text{Res}}
\def\O{\mathcal{O}}
\def\C{\mathscr{C}}
\def\ker{\text{Ker}}
\def\im{\text{Im}}
\def\char{\text{char}}
\def\GL{\text{GL}}
\def\ind{\text{Ind}}
\def\sgn{\text{{\rm sgn}}}
\def\Ind{\text{Ind}}
\def\Res{\text{Res}}
\def\mod{\text{\,\rm{mod}\,}}
\def\b{\mathfrak{b}}
\def\e{\mathfrak{e}}
\def\u{\mathfrak{u}}
\def\St{\text{St}}
\def\cH{{\mathtt H}}
\DeclareRobustCommand{\SkipTocEntry}[4]{}
\def\GL{{\text{\rm GL}}}
\def\GU{{\text{\rm GU}}}
\def\SL{{\text{\rm SL}}}
\def\SU{{\text{\rm SU}}}
\def\O{{\text{\rm O}}}
\def\SO{{\text{\rm SO}}}
\def\Sp{{\text{\rm Sp}}}
  \def\Stab{{\rm Stab}}
\def\up{{t}}
\def\fS{{\mathfrak S}}
\newcommand{\ul}[1]{\underline{#1}}
\def\scB{{\jmath}}
\def\scC{{\imath}}
\def\bfG{{\textbf{G}}}
\def\bfB{{\textbf{B}}}
\def\bfN{{\textbf{N}}}
\def\bfW{{\textbf{W}}}
\def\wW{{\widetilde W}}
\def\La{{\Lambda}}
\def\sfJ{{\mathsf J}}
\def\sfj{{\mathsf j}}
\def\o{{\text{\rm o}}}
\def\fkt{{\mathfrak t}}
\def\lra{{\longrightarrow}}
\def\lmt{{\longmapsto}}
\def\bsS{{\boldsymbol{\sS}}}
\begin{document}
%Comments: what is Theorem 4.9

\title[Hecke endomorphism algebras]{An exact category approach to Hecke endomorphism algebras$^*$}

 %\title[Hecke endomorphism algebras]{Extending Hecke endomorphism algebras at roots of unity}
 \author{Jie Du}
 \address{School of Mathematics and Statistics\\ University of New South Wales\\ UNSW Sydney 2052
}
 \email{j.du@unsw.edu.au {\text{\rm (Du)}}}

 \author{Brian Parshall$^\dag$}
\address{$^\dag$Department of Mathematics \\
University of Virginia\\
Charlottesville, VA 22903} 
%\email{bjp8w@virginia.edu {\text{\rm(Parshall)}}}
\author{Leonard Scott}
\address{Department of Mathematics \\
University of Virginia\\
Charlottesville, VA 22903} \email{lls2l@virginia.edu {\text{\rm
(Scott)}}}

\address{*{\bf with an appendix by Veronica Shalotenko}}
%\address{Department of Mathematics\\
%University of Virginia\\Charlottesville, VA 22903}
\date{\today}

\setcounter{tocdepth}{1}
\maketitle
\setcounter{tocdepth}{3}
\begin{abstract}  
 Let $G$ be a finite group of Lie type. In studying the cross-characteristic representation theory of
$G$, the (specialized) Hecke algebra $H=\End_G(\ind_B^G1_B)$ has played a important role.  In particular, when $G=GL_n(\mathbb F_q)$ is a finite general linear group, this approach led to the Dipper-James theory of $q$-Schur algebras  $A$.  These algebras can be constructed over $\sZ:=\mathbb Z[t,t^{-1}]$ as the $q$-analog (with $q=t^2$) of an endomorphism algebra larger than $H$, involving parabolic subgroups.  The algebra $A$ is quasi-hereditary over $\sZ$. An analogous algebra, still denoted $A$, can always be constructed in other types. However, these algebras have so far been less useful than in the $GL_n$ case, in part because they are not generally quasi-hereditary.  
 
   Several
years ago, reformulating a 1998 conjecture, the authors proposed (for all types) the 
existence of a $\sZ$-algebra $A^+$ having a stratified derived module category, with strata constructed via Kazhdan-Lusztig cell theory. The algebra $A$ is recovered as $A=eA^+e$ for an idempotent $e\in A^+$. A main goal of this monograph is to prove this conjecture completely.  The proof involves several new homological techniques using exact categories. Following the proof, we show that $A^+$ does become quasi-hereditary after the inversion of the bad primes. Some first applications of the result---e.g., to decomposition
matrices---are presented, together with several open
problems. \end{abstract}																 

\vspace{4.5in}
\thanks{\small This work was supported by grants from the ARC (DP120101436, Jie Du), and the Simons Foundation (\#359360, Brian Parshall; \#359363, Leonard Scott). The last author would like to thank UNSW for support from the Distinguished Research Visitor Scheme.}

\date{\today}

\maketitle
\pagenumbering{roman}
\setcounter{page}{6}
\tableofcontents

\mainmatter

 \sloppy \maketitle

\chapter*{Introduction} 
 
%{\color{blue}

Representations of the symmetric group $\mathfrak S_r$ are the first examples of group representations systematically investigated by the pioneers in representation theory such as F.G. Frobenius, A. Young, and W. Burnside. When I. Schur used the symmetric group representations to determine polynomial representations of the complex general linear group $\GL_n(\mathbb C)$, certain finite-dimensional algebras, known as Schur algebras, played a bridging role between the two. The well-known Schur--Weyl duality summarizes the relation between the representations of $\GL_n(\mathbb C)$ and $\mathfrak S_r$. 

A Schur algebra is the endomorphism algebra of a direct sum of certain permutation modules associated with Young (or parabolic) subgroups of $\mathfrak S_r$. These permutation modules are filtered by Specht modules (with multiplicities given by Young's rule), and such filtrations result in a standard stratification for the Schur algebra so that its representation category is a so-called highest weight category,  a notion naturally arising from the study of representations of Lie groups/algebras and algebraic groups. The Schur algebra itself is an example of a quasi-hereditary algebra, a notion introduced by E. Cline together with the last two authors of this monograph (CPS) in the late 1980s.

The endomorphism algebra of a permutation representation on a Borel subgroup $B(q)$, consisting of all upper triangular matrices, of a finite general linear group $\GL_r(q)$ is called an (Iwahori--)Hecke algebra which is a $q$-deformation of the group algebra of $\mathfrak S_r$. Such Hecke algebras and their generic versions are all defined for finite groups $G$ of Lie type with a parameter $q$ \cite{Iwa64, C85}. Often $q$ is a prime power and the associated Hecke algebras are $q$-deformations of the group algebra of the Weyl group $W$ of $G$. Now, using $q$-permutation modules associated with parabolic subgroups of $W$, the notion of Schur algebras is generalized not only to $q$-Schur algebras in the type $A$ case, but also to Hecke endomorphism algebras (Hecke endo-algebras) in general.

 More precisely, let $\sH$ be a generic Hecke algebra over $\sZ:=\mathbb Z[t,t^{-1}]$ associated to a finite group $G$ of Lie type with Weyl group $W$. For each subset $J$ of the set $S$ of fundamental reflections for $W$, form the associated
$q$-permutation module $x_J\sH$. Put $T:=\bigoplus_J x_J\sH$ and call the endomorphism algebra $A:=\End_\sH( T)$ a generic Hecke endo-algebra. If $G$ is a finite general linear group and $\scrK$ is a field,
$A_\scrK=\scrK\otimes A$ is (Morita equivalent to) a $q$-Schur algebra,\index{$q$-Schur algebra} as introduced by Dipper and James \cite{DJ89}, and used effectively in the study of cross-characteristic representation theory of
$G$ (so that $q$ is nonzero in $\scrK$). The definition makes sense for $G$ of any type, but $A$ itself, without modification, appears less useful (compared to $q$-Schur algebras), partly for structural and homological reasons. For simplicity, we will assume that $G$ is a fixed-point group of a Frobenius map on a {\it simple} algebraic group $\bG$ throughout the monograph.

The structure of $q$-Schur algebras is a natural quantization from the structure of Schur algebras, thanks to the existence of filtrations of $q$-permutation modules by $q$-Specht modules. What is a replacement of Specht or $q$-Specht modules in the general case?

In 1996, CPS developed a theory of stratifying endomorphism algebras. They considered abstract Specht modules and a set of stratification hypotheses with which the associated endomorphism algebra is a standardly stratified algebra, a notion that is slightly weaker than that of a quasi-hereditary algebra. For the aforementioned generic Hecke algebras, the authors used the modules associated with Kazhdan--Lusztig left cells as ``Specht modules'' to filter these $q$-permutation modules. This results in a standard stratification for $q$-Schur algebras, but a coarser stratification for a generic Hecke endo-algebra $A$ of other Lie types; see \cite{DPS98b, DPS98a}. However, in the rank-two case,  if we enlarge $T$ to $T^+=T\oplus X$ where $X$ is a certain module filtered by certain cell modules, we obtain a finer stratification in terms of two-sided cells.
In other words, if an enlargement $T^+$ of $T$ is used, then $A^+=\End_\sH(T^+)$ could be standardly stratified. This motivated the conjecture \cite[Conj. 2.5.2]{DPS98a}, later modified (slightly) in
\cite[Conj. 1.2]{DPS15}.

The conjecture is about the construction of an extended (generic) Hecke endo-algebra $A^+$ with good homological
properties. %was also conjectured in \cite{DPS98a}, later modified in \cite{DPS15}. 
It proposes  the existence of a $\sZ$-algebra $A^+$ with a nicely stratified derived module category, with all strata constructed via Kazhdan-Lusztig cell theory. It is required by the conjecture to extend the $\sH$-module $T$ to $T^+=T\oplus X$, for some right
$\sH$--module $X$ with a suitable dual left-cell (module) filtration. Note, this implies that $A=eA^+e$, where $e$ is the idempotent projection of $T^+$ onto $T$ along $X$. The conjecture also specifies ``standard modules," parametrized by left cells, that filter a projective generator for the category $A^+$--mod of finitely generated $A^+$-modules.

 More precisely, the $q$-permutation module $x_J\sH$, filtered by dual left-cell modules $S_\omega$, has a bottom section $S_{\omega^{0,J}}$, where $\omega^{0,J}$ is the left cell that contains the longest element $w_{0,J}$ of the parabolic subgroup $W_J$.
 For a left cell $\omega$ that satisfies $S_\omega\not\cong S_{\omega^{0,J}}$ for all $J\subseteq S$, we want to construct a right $\sH$-module $X_\omega$ filtered by dual left-cell modules and with bottom section $S_\omega$. %For example, truncating some $x_J\sH$ gives such an $X_\omega$.
 We require $X_\omega$ to satisfy a certain homological property, the Ext$^1$ vanishing property, which forms part of the Stratification Hypothesis \cite[(1.2.9)]{DPS98a}. 
 %This homological property can be easily verified for $x_J\sH$ under a certain invertibility condition (see \cite[Cor. 4.5]{DPS15}), but not for a truncation of $x_J\sH$. 
 Note that, for the symmetric group, every two-sided cell contains some $w_{0,J}$ and every $S_\omega\cong S_{\omega^{0,J}}$
 for some $J$. So, we take $X=0$ in this case. In general,
we may take $X$ to be a direct sum of such $X_\omega$. Thus, the proof of the conjecture relies on the existence of such $X_\omega$.

In \cite{DPS15}, we studied a local case of the conjecture, under the equal parameter assumption for $\sH$ in order to use a result from \cite{GGOR03}. In the DPS work, the module $X_\omega$ was constructed inductively, using a new order on left cells, to satisfy the required Ext$^1$ vanishing condition. This order is strictly compatible with the two-sided cell order $\leq_{LR}$, and is defined by a height function on left cells, which is called a sorting function in \cite{GGOR03}. Note that the height function is motivated by Lusztig's $a$-function on $W$ that takes constant values on two-sided cells. In this way, we proved a special case (\cite[Th. 5.6]{DPS15}) of the conjecture \cite[Conj. 1.2]{DPS15}.

To attack the conjecture in general, the authors developed in \cite{DPS17} an exact category approach to control the Ext$^1$ vanishing property when constructing certain modules (such as $X_\omega$) with some desired filtrations. Applying
this general construction to $\sH$-modules yields the exact category $(\scrA(\scrL^*),\scrE(\scrL^*))$, where $\scrA(\scrL^*)$ is a full (additive) subcategory of mod-$\sH$, and $\scrE(\scrL^*)$ is a subcategory of the category of short exact sequences in the category mod-$\sH$ of finite right $\sH$-modules. However, this general construction does not decide if the $q$-permutation modules, though belonging to $\scrA(\scrL^*)$, satisfy the required Ext$^1$ vanishing property. To overcome the hurdle, two more exact subcategories are constructed to obtain an exact category $(\scrA_\flat(\scrL^*),\scrE_\flat(\scrL^*))$ in which
$X_\omega$ may be constructed with a required filtration and both $X_\omega$ and $x_J\sH$ satisfy the Ext$^1$ vanishing property. Though the process is quite sophisticated, we eventually prove the conjecture \cite[Conj. 1.2]{DPS15}; it is a consequence of Theorem \ref{4.4} whose proof is outlined in Section 0.2 below.

%\medskip\noindent

  \medskip\noindent
%  \begin{center}
 \section{Further results and applications}
% \end{center}
 After preliminaries in Chapters 1, we begin in Chapter 2 with a construction of a sequence of exact categories, consisting of objects with filtrations of left-cell modules, inside the category $\sH$-mod of left $\sH$-modules. By applying a standard duality theory to the construction in Section 2.2, we obtain in Section 2.3 the exact categories $(\scrA_\flat(\scrL^*),\scrE_\flat(\scrL^*))$, required in the proof of the conjecture, of right $\sH$-modules with filtrations of dual left-cell modules and the homological property satisfied by $x_J\sH$.  Section 2.4 reviews and reformulates an abstract setting developed in \cite{DPS17} for the construction of these aforementioned extra modules $X_\omega$.
 
 The conjecture is 
  stated and proved in Chapter 3; see Theorem \ref{4.4}. The proof is technically quite difficult, so we
provide at the end of this introduction a conceptual framework for the way $A^+$ and $T^+$ evolve in these sections.
 
 Theorem \ref{4.4} involves {\it no mention of primes whatsoever}, good, bad, or otherwise.  Nevertheless, the theorem  proves  that the category $A^+$-mod has a natural (standard) stratifying system.   Section 3.4 is devoted to a ``uniqueness theory." 
 (See similar uniqueness results for quasi-hereditray covers, discussed by Rouquier in \cite[\S4]{Ro08}.) Its original motivation was to sharpen Theorem \ref{4.4}: A main step in the proof of that theorem involved the construction of certain $\sH$-modules $X_\omega$ associated to
 left cells $\omega$; see
 Construction \ref{const}.  By Theorem \ref{relative inj}, the $X_\omega$ have remarkable homological properties, which are
  required in realizing $A^+$ as an endomorphism algebra. Although no claim is made that the modules $X_\omega$ are unique, in Theorem \ref{11.5}, the
 following uniqueness result is proved: 
 {\it the Morita class of $A^+$ is independent of the choice of the modules $X_\omega$.} The uniqueness theory is formulated in such a way (see Theorem \ref{11.2}) that it might be useful for other suitably filtered endomorphism algebras. In Section 3.5, we turn the categorical property --- i.e., the stratifying system for $A^+$-mod, established in  Theorem \ref{4.4} --- into the structural property for $A^+$ by explicitly displaying a defining sequence, called a standard stratification, for $A^+$ with stratifying ideals as sections; see Theoren \ref{11.5}.
 
\begin{rem}[{\bf Good and bad primes}] \label{good bad}
  The %only property of 
  ``bad primes" 
  %we need is that they 
   are the only integer primes
  that divide the denominators of generic character degree formulas; see \cite[(4.1.12)]{DPS98a} or the discussion of display \eqref{Dchi} in the appendix to Section 4.1, where only the $^2F_4$ case is considered. They form a subset of $\{2,3,5\}$.
  We declare all such prime divisors to be ``bad" (for $W$, $G$, or the Lie series to which $G$ belongs) and all others to be ``good." Recall the classical definition that a prime is good for $\bG$ if it does not divide any coefficient of the maximal root in the root system of $\bG$. By \cite[4.1]{DPS98a}, all primes that are good for $\bG$ are good for $G$.
  \end{rem}

 Suppose that $A^{+\natural}:=\sZ^\natural\otimes_\sZ  A^+$  is obtained from $A^+$ by inverting all the bad primes for $W$.  Theorem \ref{quasi-hereditary} then proves that $A^{+\natural}$ is quasi-hereditary of separable type
 in all cases (and split quasi-hereditary except in type $^2F_4$, becoming split upon the adjunction of $\sqrt{2})$).
 Sections 4.1 and 4.2 prepare for the above results in Section 4.3. They also provide an interesting study in themselves of the base-changed version $\sJ:=\sZ\otimes \sfJ$ of the Lusztig asymptotic $\mathbb Z$--algebra $\sfJ$. There is a natural inclusion of the Hecke algebra $\sH$ into (a copy of) $\sJ$, and the latter may be regarded as an order in $\sH_{\mathbb Q(t)}$ containing $\sH$. 
One approach to understanding the representation theory of the generic Hecke algebra $\sH$ is to relate it to
 the simpler algebra $\sJ$.
 Any
 $\sJ$-module $M$ can be restricted to $\sH$, and our  Theorem \ref{cellmodule} says every left-cell module for $\sH$ arises this way. Once again there is no mention of primes. However, if bad primes are inverted in $\mathbb Z^\natural\subseteq\sZ^\natural$, we show the resulting order $\sJ^\natural$ is maximal (in the sense of Reiner \cite{Reiner03}). We then prove that every stratifying ideal arising from the defining sequence for $A^+$ in Theorem \ref{11.5} is, after base change to $\sZ^\natural$, a hereditary ideal of separable type (and of split type, if ${}^2F_4$ is excluded), completing the proof of Theorem \ref{quasi-hereditary}. In the rest of Section 4.3, we focus on a refinement of the defining sequence for $A^{+\natural}$ such that its sections are indecomposable split hereditary ideals, a notion used in \cite{Ro08}. In particular, standard modules and Rouquier's isomorphism are given in Corollary \ref{refine}. Corollary \ref{B'e}
brings in the idempotents that define these indecomposable split hereditary ideals. (Note that the approach developed in \cite{Ro08} is an idempotent-free approach.) Finally, in Corollary \ref{refine3}, we prove that $A^{+\natural}$-mod is a highest weight category in the sense of \cite{DS94} and \cite{Ro08}.

 Chapter 5 is devoted to applications to several related topics.
 In Section 5.1, we consider the algebras $A$ and $A^+$ from the point of view of standardly based algebras, as introduced by Du and Rui \cite{DR98}. For example, Theorem \ref{DM}
 proves that the algebras  $A^\natural$, $A^{+\natural}$ and $\sH^\natural$ are all standardly based. The latter case was previously known (see \cite{G07}), though our proof is new.
 %The related question of whether the $q$-Schur algebras $A^\natural$ are split quasi-hereditary in all %types
 %is apparently an open question.
  
  Sections 5.2 focuses on a new study of decomposition numbers of specialized Hecke endo-algebras relevant to those for finite groups of Lie type in cross-characteristic representation theory. It is not a new theme, but our results are quite general and often deeper than those found in the literature. Theorems \ref{12.1a} and \ref{bigDecom} identify a large collection of decomposition numbers which remain
  constant for all groups in a given series, independent of their $q$-parameter, the latter parameter, defined by \cite{C85}, is a general version of the $q$ in $SL_n(q)$ for the $SL_n$-series. The uni-triangularity of the associated decomposition matrix is an easy consequence of the split quasi-hereditary structure.
  
Section 5.3 links the decomposition matrix for a Hecke endo-algebra $A$ with that of the corresponding finite groups of Lie type. We prove in Theorem \ref{12.3} that, if $A$ is the Hecke endo-algebra associated with the standard finite Coxeter system $(W,S,L)$ and $G$ is a finite group in the Lie type series associated with $(W,S,L)$, then the decomposition matrix for the specialized  $A_{\sO}$  is part of the decomposition matrix of $\sO G$. In particular, if $r$ is good, then this part of the decomposition matrix is unitriangular. This provides evidence for possible improvement of Geck's conjecture \cite{G12} proved recently in \cite{BDT20}. See comments around footnote \ref{BDT} in Chapter 5.
  Part of our proof in the adjoint case is based on a note of Veronica Shalotenko; see Appendix A. We thank her for allowing us to include this work in our monograph. We also point out in a Remark \ref{2ndproof}(1) that her proof continues to work if the hypothesis on the connected center of $\bG$ is removed, due to a result of Dipper and Fleischmann \cite[Lem. (3.8)]{DF92}.
  
  Section 5.4 is motivated by Schur's original work, the Schur--Weyl duality, as mentioned at the beginning of this Introduction, and its quantum analog. In their study of quantum symmetric pairs of type AIII, H. Bao and W. Wang \cite{BW18} established a new type of the Schur--Weyl duality between $i$-quantum groups ${\bf U}^\jmath$ and ${\bf U}^\imath$ and Hecke algebras of types $B$ and $C$, respectively. Wu and the first author have partially lifted Bao--Wang's duality to the integral level (see \cite{DW22, DW23}). In these works, certain Hecke endomorphism algebras, which are centralizer subalgebras of $A^\ddagger$, played the bridging role. In particular, the standard basis theory in Section 5.1 and  the discovery about the decomposition matrix for $A^+$ or $A^\ddagger$ in Section 5.2 can be applied to representations of $i$-quantum groups and their hyperalgebras. See Theorems \ref{unitri} and \ref{integral}.
 
 In the final section of Chapter 5, we pose some problems, suggested by this monograph, on decomposition numbers for finite groups of Lie type.  
  Each of the problems can be viewed as  a step toward understanding the decomposition numbers in reductions
  mod $p$ of ordinary irreducible representations in the unipotent series.  The problems are formulated at a ``starter" level, assuming the underlying field characteristic is a good prime.  We hope this assumption might be eventually removed in some future reformulation. The framework of Theorem \ref{4.4}, which treats all primes alike, makes such a reworking of ``good prime" formulations  possible, and provides a general starting point.  Note that Theorem \ref{4.4} itself is a kind of reformulation of the ``good prime" result Theorem \ref{quasi-hereditary}.

  Finally, the monograph closes with several appendices which cover a number of 
 preparatory topics used in its main body.

 \medskip
%\begin{center} 
\section{Outline and guide for the proof of Theorem \ref{4.4}}
%\end{center}

Let $\Omega$ be the set of Kazhdan-Lusztig left cells in a finite standard Coxeter system $(W,S,L)$ as defined in \cite{Lus03}, using parameters from an ambient algebraic group
and a fixed-point group of a Frobenius map (see \cite[\S 1.17]{C85}).\footnote{See also footnote \ref{fn1} in Section 1.1 below.} In \cite{DPS17},  the authors constructed an early version $A^\dagger$ of $A^+$ based on 
  the existence of suitable modules $X_\omega$, for all $\omega\in \Omega$. Each 
$X_{\omega}$ has a filtration with sections isomorphic to dual left-cell modules $S_\nu$, $\nu\in\Omega$, and a bottom section $S_\omega$ (defined
precisely in display \eqref{dualleft}), %above Remark \ref{Specht} below), 
which, in addition, satisfy 
\begin{equation}\label{vanishing}
\Ext^1_{\scrE(\scrS)}(S_\tau,X_\omega)=0,\quad \forall\tau,\omega\in\Omega.\end{equation}
 Here $S_\tau$ and $X_\omega$ are objects in a suitable exact category $(\scrA(\scrS),\scrE(\scrS))$. The category
$\scrA(\scrS)$ is a full subcategory of the category of $\mathbb Z[t,t^{-1}]$-free, finite right $\sH$-modules. In addition, it  is required that
each object in $\scrA(\scrS)$ have a certain filtration with sections direct sums of dual left-cell modules.  
Also, $A^\dagger:=\End_{\scrA(\scrS)}
(T^\dagger)$, where  $T^\dagger:=\bigoplus _{\omega\in\Omega} X_\omega^{\oplus m_\omega}$ with each $m_\omega$ a positive integer. This definition of $T^\dagger$, together with the 
$\Ext^1_{\scrE(\scrS)}$--vanishing (\ref{vanishing}), implies that $\Ext^1_{\scrE
(\scrS)}(S_\tau, T^\dagger)=0$, for all $\tau\in\Omega$. This is a key property for managing filtrations of $A^\dagger$ that arise from filtrations of $T^\dagger$.

 Generally speaking, the pair $(A^\dagger, T^\dagger)$ has many desirable properties, except we want $T^\dagger$ to contain, as an $\scrA(\scrS)$-summand, each of the $q$-permutation modules $x_J\sH$, as well as their direct sum (denoted $T$). 
 This problem
 presents a major difficulty in passing from \cite{DPS17} to the set-up of this monograph. It is solved by introducing a new exact
 category $(\scrA_\flat,\scrE_\flat)=(\scrA_\flat(\scrS),\scrE_\flat(\scrS))$ inside $(\scrA(\scrS),\scrE(\scrS))$ with $x_J\sH\in\scrA_\flat$ and $\Ext^1_{\scrE_\flat}(S_\tau,x_J\sH)=0$, for all $\tau$. 
 
 The construction of $(\scrA_\flat,\scrE_\flat)$ is highly nontrivial. 
  It  begins in Sections 1.3 and  2.1 which study a dual category
 $(\scrA^\flat,\scrE^\flat)$ in the  category of $\sZ$-free left $\sH$-modules. Of course, it is possible to pass between right and left $\sZ$-free, finite $\sH$-modules by simply applying the dual functor $(-)^*=\Hom_\sZ(-,\sZ)$.  However, this process interchanges left exact functors with 
 right exact functors, while we need to use the left-module versions to insure the various functors sending an exact sequence $0\to M\to N\to P\to 0$  to the fixed-point sequences $0\to M^{\sH_J}\to N^{\sH_J}\ \to P^{\sH_J}$,
 $J\subseteq S$, are  left exact. This allows us to build an exact subcategory by focusing on the
 short exact sequences $0\to M\to N\to P\to 0$ whose associated fixed-point sequences are also all short exact sequences \cite[Lem. 3.1]{DPS17}. 
  A version of this approach is used in Proposition \ref{Prop2}  to create $\scrE^\flat=\scrE^\flat(\scrL)$ inside an exact category $(\scrA(\scrL),\scrE(\scrL)$ dual to $(\scrA(\scrS),\scrE(\scrS))$.
 The category $\scrA^\flat$ is then defined to consist of objects with certain ``internal" candidate exact sequences belonging to $\scrE^\flat$. Remarkably, the objects $\sH x_J$ belong to $\scrA^\flat$; see Corollary \ref{3.7a}. This implies that each dual object
$ x_J\sH$ belongs to $\scrA_\flat$. We want more than that, however. We want $x_J\sH$ to be
a direct summand of a new version of $T^\dagger$ with the key vanishing \eqref{vanishing} preserved. This forces us to prove that $\Ext^1_{\scrE_\flat}(S_\omega,x_J\sH)=0$, for each $\omega\in\Omega$ and $J\subseteq S$. This vanishing is proved in its dual
incarnation, Theorem \ref{ext0}.  The short exact  sequence  (\ref{F}) displayed in the proof of Theorem \ref{ext0} is the key ingredient. It is available from the exactness of $(-)^{\sH_J}$ there.

 Section 2.3 can be viewed as dualizing the constructions of Section 2.1. Of course, Section 2.1 was formulated with these duals in mind.
 
 A main step in completing the proof of the conjecture, at this point, is carried out in Section 2.4, namely, in Theorem \ref{relative inj}, which imitates the construction of $X_\omega$ in \cite[Th. 4.5]{DPS17}. 
 
 Section 3.1 is also preparatory, providing more detail on dual left-cell module filtrations of $q$-permutation
 modules. It refines the analysis of $q$-permutation module filtrations begun above Corollary \ref{hotcor}. The discussion
 draws from earlier work in \cite{DPS98a}.
 
 The main goal of Section 3.3 is to prove Theorem \ref{4.4}. This implies Conjecture \ref{conjecture}, equivalent to    \cite[Conj. 1.2]{DPS15}, a modification of \cite[Conj. 2.5.2]{DPS98a}.
 A further consequence of  the proof is the $\Ext^1$-vanishing result Corollary \ref{qperm}.  It is used later in Section 3.4 in constructing a uniqueness Morita theory complementary to Theorem \ref{4.4}, and discussed further below.  
 
 For further discussions in outline of this monograph, see the previous two sections.
 
 \vspace{1.5cm}
%\bigskip

\noindent
Sydney and Charlottesville,\hfill Jie Du\;\,\hspace{1.4cm}

\noindent
September 2022 \hfill Brian Parshall$^\dag$

\hfill Leonard Scott\;\,

 %%%%%%%%%%%%%%%%%
% \vspace{2cm}
\newpage
\begin{center}
{\bf Acknowledgements}
\end{center}
%\medskip\noindent
%\hfill$***$\hfill$***$\hfill$***$\hfill\null

\medskip

We would like to thank our colleague Weiqiang Wang for several discussions and special assistances during the writing of this monograph. Section 5.4 was heavily influenced by his work with H. Bao. %Jie Du extends his special thank to Weiqiang for  during his several visits.

Part of the work was written while Jie Du was visiting the University of Virginia in 2016 and 2022,
using Simons grants there. Leonard Scott visited the University of New South Wales in 2015 and 2019 with support there through an ARC grant and the Distinguished Research Visitor Scheme. Both authors want to thank the two universities for their support and hospitality.

 Finally, we wish to thank Karen Parshall (University of Virginia) for reading the entire manuscript and making many helpful comments.

\bigskip\noindent
%\hfill$***$\hfill$***$\hfill$***$\hfill\null

\begin{center}
{\bf A special note}
\end{center}

\medskip
 Our lifelong collaborator and friend, Brian Parshall, passed away on January 17, 2022. This was a few days after a Zoom meeting we had held, generally weekly, since mid 2020.  This last meeting was very successful and lasted 2 hours. The last question, one Jie had for Brian,  was about using ``Hecke endo-algebra'' as a short form for ``Hecke endomorphism algebra''. Brian answered ``it sounds good. I will think about it''. Unfortunately, these words from Brian became the last words we heard from him.  He passed away within a week of this last meeting. It was a shock and surprise for everyone, certainly for Brian’s collaborators. Most of the work on this monograph, however, had been done with Brian’s participation. Its remaining co-authors,  Jie Du and Leonard Scott, wish to take this opportunity to acknowledge Brian Parshall as being a co-author of this monograph. In a variation on the words above that Brian used in our last meeting,``we didn’t have to think about it.''

 \bigskip
 \hfill Jie Du and Leonard Scott\qquad\qquad

 \chapter{Preliminaries}
 
 In this chapter, we briefly review some notions that will be used throughout. Topics include finite groups of Lie type, generic Hecke algebras, exact categories and stratified algebras.
 
If $B$ is a ring, then $B$-mod (respectively, mod-$B$) will denote the category of all finite (i.e., finitely generated) left
(respectively, right) $B$-modules.   Usually, $B$ will be a finitely generated algebra over a commutative Noetherian ring
$\scrK$. In this case, and unless otherwise noted, objects in $B$-mod will be assumed to be finite $\scrK$-modules
(with a similar convention for mod-$B$).

\section{Finite groups of Lie type}
   We  review some basic notions concerning finite groups of Lie type. More material on this topic will be introduced  in Chapter 5. We closely follow the treatment given in Carter's book \cite{C85}. In particular, we will assume the reader is familiar with the notion \cite[pp.16--17]{C85} of a connected, reductive algebraic group over an algebraically closed field of positive characteristic. Such a group $\bG$ may be characterized as the internal product $\mathbf Z^\circ\bG'$, where $\mathbf Z^\circ$ is a torus, the connected component of the identity in the center $\mathbf Z=Z(\bG)$ of $\bG$, and $\bG'$ is a semisimple algebraic group, defined by Carter to be nontrivial and an ``almost direct product"  (a ``central product" in traditional terminology) of simple algebraic groups. The latter are defined as having a maximal normal closed subgroup which is finite. Note that
   $\bG'$ is  necessarily the commutator subgroup of $\bG$.

        Now let $\bG$ be a (connected) reductive algebraic group over an algebraically closed field
        of positive characteristic $p$.  Recall that a Frobenius map $F:\bG\to\bG$ in Carter's sense \cite[\S1.17]{C85} is a morphism,
         some power of which equals a standard Frobenius map. Fix a Frobenius map $F$, and let $G:=\bG^F$ 
be the group of fixed points of $F$. Necessarily, $G$ is finite.  It is called a {\it finite group of Lie type},\index{finite group of Lie type} and all finite groups of Lie type arise this way.\footnote{\label{fn1}Our notation differs slightly from that in \cite{C85}, where $\bG$ is sometimes written $G$. Also, we sometimes call a Frobenius map a Frobenius endomorphism or a Frobenius morphism. See  \cite[p. 31]{C85} for a detailed definition. A Frobenius endomorphism is sometimes called a Steinberg endomorphism in other sources (see, e.g., \cite{MT11}).
We remark from \cite[p.184, top]{MT11} that Carter's assertion near the bottom of \cite[p. 31]{C85} (just above the definition)
regarding  surjective endomorphisms is not correct. However, this does not affect the validity of his definition.}

The finite groups of Lie type most studied here arise in the context of the previous paragraph when 
$\bG$ is taken to be simple.\footnote{\label{simpleG}This assumption is convenient for the description of the Lie type series and weight function $L$ discussed below. Note also that sometimes it is useful to study the more general case where $\bG'$ is simple, either to place $\bG$ in a familiar context (e.g., $\bG$ is a general linear group), or,  after beginning a study using a simple group $\bG$, viewing it as a commutator group of a larger reductive group $\widetilde{\bG}$ with favorable properties, such as a connected center. See Lemma \ref{functor} in Chapter 5.}
In addition to 
the defining (algebraically closed) field, the (simple) algebraic group $\bG$ is determined by its root datum and its isogeny class (see \cite[\S\S1.9, 1.11]{C85}), and $\bG^F$ is discussed in \cite[\S1.19]{C85}.  

It is known that $\bG$ has an $F$-stable Borel subgroup $\bB$. In addition, $\bB$ contains a maximal torus $\bold T$ of $\bG$ which is $F$-stable. The Weyl group $\bold W:=N_{\bG}(\bold T)/{\bold T}$ is then part of a finite Coxeter system $(\bold W,\bold S)$ for a (finite) subset $\bold S$ of elements of order 2 and $\bf W$ is irreducible (i.e., the Coxeter diagram is connected) under the assumption that $\bG$ is simple. In addition, the pair $(\bold B,\bold N)$, where $\bold N=\bold N_{\bold G}(\bold T)$,
give $\bold G$ the structure of a BN-pair. The group $G=\bG^F$ also has a natural BN-pair structure $B,N$, where $B=\bold B^F$ and $N=\bold N^F$. Its Weyl group is $W=\bold W^F$. \index{$W$, a Weyl group} In fact, $W$ is generated by the set $S$ of all the long words in the various $\bold W_J$, as stated in \cite[p. 34]{C85} and proved in \cite[Lem. C.1]{MT11}.
%Here $\bold W_J$ is a subgroup of $\bold W$ generated by a subset $J$ of its fundamental reflections, that subset here being any orbit of a permutation associated to $F$ (discussed further below).
Moreover, the proof in \cite{MT11} shows that every element $w\in W$ can be written as a product of elements of $S$ with lengths additive. One can use these facts to complete Carter's sketch \cite[p.34]{C85} that $W$ is the Weyl group in a (split) BN-pair structure on $G$, so, in particular, it is a Coxeter group (with $S$ as its set of fundamental generators).

Let $\Phi$ be the root system of $\bold G$, and let $\Delta$ be the set of simple roots associated to $\bB$.
The Frobenius map $F$ on $\bG$ is closely  related to a certain permutation $\rho$ of $\Delta$. In fact, $\rho$ is a graph automorphism of  the undirected
Dynkin diagram associated to $\Delta$.
(See \cite[\S1.19]{C85}.)  Two finite groups $G_1$ and $G_2$ are said to be in the same {\it Lie type series}\index{finite group of Lie type! Lie type series of $\sim$} if there exist a connected reductive group $\bG$ and Frobenius morphisms $F_1$ and $F_2$, inducing the same graph automorphism $\rho$, and such that $G_i=\bG^{F_i}$, for $i=1,2$. By inspection, this amounts to saying that $G_1$ and $G_2$ are in one of the groupings
on \cite[pp. 39--41]{C85}; see also \cite[p.193]{MT11}.
Further, if $\rho$ is the trivial (or identity) graph automorphism, then $G$ is called a {\it split} (or untwisted) group of Lie type; \index{finite group of Lie type! split $\sim$} if $\rho$ is non-trivial, then $G$ is called a {\it quasi-split} (or twisted) group of Lie type. \index{finite group of Lie type! quasi-split $\sim$} Here, following Carter's classification in \cite[p.41]{C85}, we exclude the Ree and Suzuki groups in the quasi-split case.

% associated to a quasi-split $(W,S,L)$ is the fixed-point subgroup of $\bG$ of a Frobenius morphism $F$ that induces a non-trivial graph automorphism of its Dynkin diagram. In this case, $G$ is also called a {\it quasi-split} or {\it twisted group} of Lie type. If $F$ induces a trivial graph automorphism of the Dynkin diagram of $\bG$, then $G=\bG^F$ is also called a split or untwisted group of Lie type.\footnote{in \cite[p.41]{C85}, quasi-split finite groups of Lie type do not include the Ree and Suzuki groups.}

There is a natural action of $F$ on the character group $\bold X$ (respectively, cocharacter group $\bold Y$) of $\bT$. 
 The maximum of the absolute values of  the eigenvalues of $F$ as an operator on $\bold X$ is denoted $q$.
It is a (possibly fractional) power $q=p^d$, where $p$ is the characteristic of the defining field of $\bG$. For example, if $G$ is a classical finite (split) Chevalley group over a finite field $k$ of order $r$ (an integral power of the defining characteristic $p$),
then $q=r$.  In this case, $\rho=1$, the identity map.

Returning to the general case, a useful construction of $W$ as a subgroup of $\bold W$ using only the Dynkin diagram permutation $\rho$ associated to $F$
(rather than $F$ itself) can be based on \cite[p. 34]{C85}: First, $\bold W$ may be viewed as constructed from the Dynkin diagram $\Gamma$, with the fundamental reflections associated to the nodes of $\Gamma$. Thus, we can associate to each orbit $J$ of $\rho$ on $\Gamma$ a parabolic subgroup $\bold W_J$ of $\bold W$. Now, the fundamental reflections $s$ in $W$ may be defined as the longest words of the various subgroups $\bold W_J$. Next, define the ``weight function" $L$ on $W$ by taking it to be the restriction of the length function $\boldsymbol{\ell}$ on $\bold W$ to $W$. That is, $L(w) := \boldsymbol\ell(w)$, for
$w\in W\subseteq \bold W$. Then,
$(W,S, L)$ is a finite (irreducible) Coxeter system\index{Coxeter system} with weight function $L:W\to\mathbb Z$ in the sense of \cite[\S 3.1]{Lus03}. Thus, 
 if $w=xy$ with $\ell(w)=\ell(x)+\ell(y)$, then $L(xy)=L(x)+L(y)$, for $x,y,w\in W$. 
 
Within its Lie type series, the group $G=\bG^F$ is determined by 
    (and determines) a parameter $q$ \cite[p. 35]{C85} which is described as follows. 
      If $\delta$ is the smallest positive integer such that $F^\delta$ is a standard Frobenius endomorphism (as per \cite[p. 31]{C85}), then $q^\delta$ (rather than $q$ itself) is defined to be the power of $p$ associated to the latter endomorphism. The number $\delta$ is also the order of the Dynkin diagram graph automorphism $\rho$. If $G$ is a (split) Chevalley group, $\delta=1$ and $q$ can be any positive power of the defining characteristic $p$.  In the quasi-split cases which are not split, $\delta$ is the order of a non-trivial automorphism of the Dynkin diagram, and $q$ is still any positive power of $p$. However, in the Ree and Suzuki cases, $\delta=2$ and $q^2=p^{2n+1}$ with $p=2$ for the $^2B_2$ and $^2F_4$ cases and $p=3$ for the $^2G_2$ case. In particular, $q$ is a positive power of $\sqrt{p}$. See
 \cite[p. 41]{C85}.
 Remarkably, in all cases, $q$ together with the weight function $L$ gives
 \begin{equation}
 \label{weight} q^{L(s)}=[B:B\cap B^s],\end{equation} 
 where $B$ is a Borel subgroup of $G$ and $s$ is a fundamental reflection in its Weyl group $W$. Here $B=\bold B^F$, where $\bold B$ is an $F$-stable Borel subgroup of $\bold G$; see \cite[pp. 34-35, and p. 74]{C85} 
 for further details.
% Also, $L$ is the weight function as discussed in \S1.1; in particular, $L(s)$ is the length of $s$ in the Weyl group of $\bG$. 
This formula holds in the more general case where $\bG$ is connected and reductive. 
    (See the partial discussion in \cite[p. 74]{C85} and also  \cite[11.8]{St68}.)

 Throughout this work, $W$ will be, unless otherwise noted, the Weyl group of a finite group $G$
of Lie type (i.e., the Weyl group of the BN-pair associated to $G$). We will use the natural
positive weight function $L$, constructed above and  illustrated in \cite[Lem.16.2]{Lus03}. The resulting weighted Coxeter system \index{Coxeter system! weighted $\sim$} $(W,S,L)$ is irreducible and will be called {\it standard finite}.\index{Coxeter system! standard finite $\sim$}\footnote{The Hecke algebras associated to these triples are especially important
for the modular representation theory of unipotent blocks for finite groups of Lie type. According to a theory put forward by Bonaf\'e and Rouquier
\cite{BR03}, \cite{BR17}, the case of general blocks (for finite groups of Lie type) could, eventually, reduce to the unipotent case. In any event, all irreducible or indecomposable modules with nontrivial cohomology belong to the unipotent principal block.} \index{$W$, a Weyl group! $(W,S,L)$, a standard finite $\sim$}
We also ignore the rank one case, where $W$ has order 2, unless otherwise noted.\footnote{This connectedness assumption is a matter of convenience and is unnecessary in most cases, e.g., if no component is dihedral of order 16, the Weyl group of $^2F_4$.} 
 Note that standard finite Coxeter systems $(W,S,L)$ can also be divided into the {\it split case}, where the weight function $L$ equals  the length function $\ell$ on $W$, and the {\it quasi-split case}, where $L\neq \ell$. (The only non-trivial case for the Ree and Suzuki groups is the $^2F_4$ case, where $W$ is the dihedral group of order 16.) See \cite[Chs. 15\&16]{Lus03}.

%The next subsection goes here and comes from Sec. 5.3a later in the book.
\begin{example}\label{GL_n} Let $\mathbb F_q$ be the finite field of $q$ elements, and let $n$ be a positive integer. The finite general linear group $\GL_n(\mathbb F_q)$ is the fixed-point group of the following (standard) Frobenius map
\begin{equation}\label{F_q}
F_q:\GL_n(k)\longrightarrow \GL_n(k),\;(a_{i,j})\longmapsto (a_{i,j}^q),
\end{equation}
where $k$ is the algebraic closure of $\mathbb F_q$. Note that $\GL_n(k)$ is a connected reductive group and the subgroup
$\SL_n(k)$ of determinant 1 matrices is simple with $F_q$-fixed-point group $\SL_n(\mathbb F_q)$.

For any field $k$, $\GL_n(k)$ has the following split BN-pair:
$$\aligned
B_n(k)&:=\{\text{subgroup of all upper triangular matrices in }\GL_n(k)\}\\
N_n(k)&:=\{\text{subgroup of all monomial matrices in }\GL_n(k)\}.
\endaligned
$$
Let $T_n(k)=B_n(k)\cap N_n(k)$ be the subgroup of diagonal matrices.
If $n_i$ denotes the permutation matrix obtained by interchanging the $i$-th and $(i+1)$-th rows of $I_n$, then the Weyl group
$$\widetilde W:=N_n(\mathbb F)/T_n(\mathbb F)\cong \fS_n \text{ under the correspondence }\;n_i\mapsto \tilde s_i=(i,i+1).$$
See, e.g., \cite[1.6.9]{G03} for more details.
\end{example}
The other finite classical groups can be constructed by fixed-point groups of certain group automorphisms on $\GL_n(\mathbb F_q)$.  In Example \ref{SO_n}, we consider the symplectic and orthogonal groups.  They are relevant to our studies of $i$-quantum groups in Section 5.4. Unitary groups are taken up in Example \ref{SU_n}. These both help to complete the classical group picture and to provide examples of ``unequal parameter" weight functions.

\begin{example}\label{SO_n} For any field $k$ and $J\in \GL_n(k)$, consider the group isomorphism
\begin{equation}\label{vartheta}
\vartheta=\vartheta_J:\GL_n(k)\longrightarrow \GL_n(k),\;x\longmapsto J^{-1}(x^t)^{-1}J,
\end{equation}
and define orthogonal and symplectic groups as the fixed-point groups of some $\vartheta$:
$$\aligned
\O_n(k)&:=\{x\in\GL_n(k)\mid J_n=x^tJ_nx\},\\
\Sp_{2m}(k)&:=\{x\in\GL_{2m}(k)\mid J_{2m}^-=x^tJ_{2m}^-x\},
\endaligned$$
where 
\begin{equation}\label{JJ'}
J_{n}:=\begin{pmatrix}
0&0&\cdots&0&1\\
0&0&\cdots&1&0\\
&\cdots&\cdots&\cdots&\\
0&1&\cdots&0&0\\
1&0&\cdots&0&0\\
\end{pmatrix},\quad J^-_{2m}:=\begin{pmatrix}0&J_m\\-J_m&0\\\end{pmatrix}.
\end{equation}

Let $\SO_n(k)=\O_n(k)\cap\SL_n(k)$ and 
let $\bfG\subseteq\GL_n(k)$ be one of the classical groups:
$$
\bfG=\begin{cases}
\SO_{2m+1}(k), &n=2m+1,\text{char}(k)\neq 2;\\
\Sp_{2m}(k), &n=2m,\text{any characteristic};\\
\SO_{2m}(k), &n=2m,\text{char}(k)\neq 2.
\end{cases}$$
 These are known as split twisted groups
in \cite[Ch. 11]{St67}.\footnote{See \cite[p.8]{MT11} for the definition for char$(k)=2$.}

If $k$ is the algebraic closure of $\mathbb F_q$, then $\bfG$ is a connected simple group and has a split BN-pair $\bfB=\bfG\cap B_n(k)$, $\bfN=\bfG\cap N_n(k)$.

For the Frobenius map $F_q$ as defined in \eqref{F_q} and $J=J_n$ or $J_{2m}^-$, since $J=x^tJ x$ implies $J=(F_q(x))^tJ F_q(x)$, it follows that $F_q$ induces the (standard) Frobenius maps
$$F:\bfG\longrightarrow \bfG,$$
and $\bfG^F=\SO_n(\mathbb F_q)\; (n=2m+1, 2m)$ or $\Sp_{2m}(\mathbb F_q)$. Note also that $\bfG^F=\GL_n(\mathbb F_q)^\vartheta$ has BN-pair $\bfB^F=B_n(\mathbb F_q)^\vartheta$ and $\bfN^F=N_n(\mathbb F_q)^\vartheta$ since $\vartheta$ stabilizes the split BN-pair $(B_n(k), N_n(k))$.
In particular, $\vartheta$ induced a group automorphism 
$\sigma:\wW\longrightarrow\wW$, and there is a group isomorphism
$$\bfW:=\bfN/\bfB\cap\bfN \cong\wW^{\sigma}=N_n(\mathbb F_q)^\vartheta/T_n(\mathbb F_q)^\vartheta.$$
Note that the map induced by $F$ on $\bfW$ is the identity map. Note also that, for $\bfG=\SO_{2m+1}(k)$ or $\Sp_{2m}(k)$, $W:=\bfW^F=\bfW$ is the Weyl group of type $B/C$ with generators $S=\{s_1,\ldots,s_{m-1}, t\}$, where
$$\aligned
s_i&=\tilde s_{m-i}\tilde s_{m+1+i},  \text{ and } t=\tilde s_m\tilde s_{m+1}\tilde s_m, \text{ for }n=2m+1;\\
s_i&=\tilde s_{m-i}\tilde s_{m+i},\quad  \text{ and } t=\tilde s_m, \text{ for }n=2m.\endaligned
$$
Finally, $t$ is the homomorphic image of the elements in $N_n(k)^\vartheta$:
$$\begin{pmatrix}I_{m-1}&0&0\\ 0&J&0\\0&0&I_{m-1}\end{pmatrix},\text{ where }J=\begin{pmatrix}0&0&1\\0&-1&0\\1&0&0\end{pmatrix}\text{ or }
\begin{pmatrix}0&1\\-1&0\end{pmatrix}.$$
For more details, see, e.g., \cite[1.7.3]{G03}.
\end{example}

\begin{example}\label{SU_n}  Now, following \cite[\S4.1]{DVV19}, we combine $F_q$ in \eqref{F_q} with $\vartheta$ in \eqref{vartheta}, using the choice $J=J_n$, to obtain a new Frobenius morphism
$$F:=F_q\vartheta=\vartheta F_q:\GL_n(k)\longrightarrow \GL_n(k).$$
Since $\vartheta^2$ is the identity map, it follows that $F^{2}$ is the standard Frobenius map $F_{q^2}$ and 
$\GL_n(k)^{F}\subseteq\GL_n(\mathbb F_{q^2})$. We have 
$$\GU_n(\mathbb F_q):=\GL_n(k)^{F}=\{x\in\GL_n(\mathbb F_{q^2})\mid (F_q(x))^tJ_nx=J_n\}.$$ 
This is called the {\it general unitary group} associated with the hermitian form defined by $J_n$. Replacing G by S defines the special unitary group. (See also \cite[Ex. 21.48(2)]{MT11} or \cite[4.1.10(c)]{G03}.)

%For $\SU_3(\mathbb F_{q^2})$, the $F$-orbits of the simple roots $\alpha_1,\alpha_2, \alpha_3$ for $\SL_4(k)$ are $\{\alpha_1,\alpha_3\}$, $\alpha_2$, we have root subgroups

For $\SU_4(\mathbb F_{q^2})$, the $F$-orbits of the simple roots $\alpha_{1,2},\alpha_{2,3}, \alpha_{3,4}$ for $\SL_4(k)$, with associated root subgroups $U_{i,j}=\{I_4+aE_{i,j}\mid a\in \mathbb F_{q^2}\}$, are $\{\alpha_{1,2},\alpha_{3,4}\}$, $\alpha_{2,3}$.
The root subgroups $\SU_4(\mathbb F_{q^2})=\SL_4(k)^F$ are
$$(U_{1,2}U_{3,4})^F=\{I_4+aE_{1,2}-a^qE_{3,4}\mid a\in\mathbb F_{q^2}\},\quad U_{2,3}^F=\{I_4+aE_{2,3}\mid a+a^q=0\}.$$
The values $q^{L(s)}$ ($s\in S$) associated to the weight function $L$ on $\bfW^F=W(B_2)$ in this case are $q^2, q$. In general,
the values $q^{L(s)}$ for $\SU_{2m}(\mathbb F_{q^2})$ are listed in the Coxeter diagram:

\begin{center}
\begin{tikzpicture}[scale=1.5]
%\fill(-1,0) node {$B_m$:};
\fill (0,0) circle (1.5pt);
\fill (0,.25) node {$q^2$};
\fill (1,0) circle (1.5pt);
\fill (1,.25) node {$q^2$};
\fill (2,0) circle (1.5pt);
\fill (2,.25) node {$q^2$};
\fill (4,0) circle (1.5pt);
\fill (4,.25) node {$q^2$};
\fill (5,0) circle (1.5pt);
\fill (5,.25) node {$q$};
  \draw (0,0) node[below] {$_1$} --
        (1,0) node[below] {$_2$} -- (2,0)node[below] {$_3$}--(2.5,0);
\draw[style=dashed](2.5,0)--(3.5,0);
\draw (3.5,0)--(4,0) node[below] {$_{m-1}$};
\draw (4,0.05) --
        (5,0.05);
\draw (4,-0.05) --
        (5,-0.05) node[below]  {$_m$};
%\draw (4.4,0.1)--(4.6,0);
%\draw (4.4,-0.1)--(4.6,0);
\end{tikzpicture}
\end{center}

For $\SU_3(\mathbb F_{q^2})$, the $F$-orbits of the simple roots $\alpha_{1,2},\alpha_{2,3}$ for $\SL_4(k)$ are $\{\alpha_{1,2},\alpha_{2,3}\}$, and we have the root subgroup (cf. \cite[4.5.12(b)]{G03})
 $$(U_{1,2}U_{2,3}U_{1,3})^F=\bigg\{\begin{pmatrix}1&a&b\\0&1&-a^q\\0&0&1\end{pmatrix}\;\bigg|\;a,b\in\mathbb F_q, b^q+b+a^{q+1}=0\bigg\}.$$
 So, $q^{L(s)}=q^3$. In general, the values $q^{L(s)}$ ($s\in S$) for $\SU_{2m+1}(\mathbb F_{q^2})$ take the form:
 \begin{center}
\begin{tikzpicture}[scale=1.5]
%\fill(-1,0) node {$B_m$:};
\fill (0,0) circle (1.5pt);
\fill (0,.25) node {$q^2$};
\fill (1,0) circle (1.5pt);
\fill (1,.25) node {$q^2$};
\fill (2,0) circle (1.5pt);
\fill (2,.25) node {$q^2$};
\fill (4,0) circle (1.5pt);
\fill (4,.25) node {$q^2$};
\fill (5,0) circle (1.5pt);
\fill (5,.25) node {$q^3$};
  \draw (0,0) node[below] {$_1$} --
        (1,0) node[below] {$_2$} -- (2,0)node[below] {$_3$}--(2.5,0);
\draw[style=dashed](2.5,0)--(3.5,0);
\draw (3.5,0)--(4,0) node[below] {$_{m-1}$};
\draw (4,0.05) --
        (5,0.05);
\draw (4,-0.05) --
        (5,-0.05) node[below]  {$_m$};
%\draw (4.4,0.1)--(4.6,0);
%\draw (4.4,-0.1)--(4.6,0);
\end{tikzpicture}
\end{center}

\end{example}
% Part of Section 5.3a---make it a subsection. Maybe the end of 5.3a on $q$-Schur algebras could be moved forward? Chapter 4?

 \section{(Iwahori-)Hecke algebras}

We review the theory of Hecke algebras  in \cite{Lus03}.\index{Hecke algebra} Each weighted Coxeter system $(W,S,L)$ defines an algebra
$\sH$, called an {\it Iwahori-Hecke algebra} in \cite{Lus03} and called a {\it generic Hecke algebra } in this monograph. See  (\ref{relations}) and
(\ref{newtildebasis}) for more details.

  Many results in \cite{Lus03} are contingent on a series P1---P15 of 
conjectures (involving $(W,S,L)$ and $\sH$), all of which have been proved in the standard finite case. See \cite[Chs. 15, 16]{Lus03},
 revisited in \cite[Ch. 2]{GJ11} (especially \cite[2.4.1(b)]{GJ11} for the case $^2F_4$). Thus, we can assume Lusztig's conjectures
 P1--P15 hold for the triples $(W,S, L)$ considered in this monograph. 
 {\it Henceforth, we assume that $(W
 ,S,L)$ is  standard finite unless otherwise noted. In particular, $L$ is positive and Lusztig's conjectures
P1-P15 hold, as does 
condition (\ref{order}) below.} In fact, we mention that in 
Chapters 1--3 which include the main
 theorem, we need only assume conjectures P4 and P9, which, together with the rank 2 discussion 
 in \cite[p. 204]{DPS98a},\footnote{See footnote \ref{error} in Chapter 3 which corrects a typo.} imply that property (\ref{order}) holds.
  
 Let $\sZ={\mathbb Z}[t,t^{-1}]$ be the ring of integer Laurent polynomials in a variable
$t$. \index{$\sZ={\mathbb Z}[t,t^{-1}]$}
 Let $\sH$ be the generic Hecke algebra\index{Hecke algebra! generic $\sim$} over $\sZ$ associated with $(W,S,L)$. Recall 
that $\sH$ has basis $\{T_w\}_{w\in W}$ and defining
relations ($s\in S, \,w\in W$):
\begin{equation}\label{relations}T_sT_w=\begin{cases} T_{sw}, \quad sw>w;\\ t_s^2T_{sw}+ (t_s^2-1)T_w,\quad sw<w,\end{cases}
\end{equation}
where $t_s:=t^{L(s)}$ (and, more generally, $t_w=t^{L(w)}$). Also, for $x,y\in W$, $x<y$ denotes the Chevalley-Bruhat partial order.
The algebra $\sH$ is defined just using $t^2$, but, in practice, it is convenient to have its 
square root $t$ available.\footnote{\label{semisimpleH}It is well known that the algebra $\mathbb Q(t)\otimes_{\sZ}\sH$ is a semisimple
$\mathbb Q(t)$-algebra.  See \cite[Cor. 68.12]{CR81} as well as Remark \ref{Rem7.4}(b) below.}

We often use a different basis $\{\widetilde T_w\}$ also indexed by $W$ defined by 
putting 
\begin{equation}\label{newtildebasis}
\widetilde T_w:=t^{-L(w)}T_w,\quad w\in W.\end{equation}
 (Our notation here differs slightly from that in \cite{Lus03}, where our $\widetilde T_w$ is denoted $T_w$.)

The {\it bar  operator}\; $\bar{\ }:\sH\to\sH$ sending $t$ to $t^{-1}$ and $T_w$ to $(T_{w^{-1}})^{-1}$ is a ring involution on
$\sH$ (sending $t$ to $t^{-1})$, and
 $\sH$ admits a Kazhdan-Lusztig (or ``canonical" in the
sense of  \cite{Lus90}; see also \cite[\S0.5]{DDPW08}) basis $\{\scc_w\}_{w\in W}$. This basis is defined in \cite[Chap.5]{Lus03} using the formula
$$\label{rules} \scc_w=\sum_{y\leq w}p_{y,w}\tT_y.$$
In addition to the condition $\overline{\scc}_w=\scc_w$, for all $w$,  the Laurent polynomials $p_{y,w}\in\sZ$ satisfy (1) $p_{y,w}=0$ unless $y\leq w$, (2) $p_{w,w}=1$, and (3) $p_{y,w}\in\sZ_{<0}:=t^{-1}\mathbb Z[t^{-1}]$, if $y<w$.  The polynomials $p_{y,w}$ are determined by these properties. 
For example, 
\begin{equation}\label{Calpha}\scc_s=t_s^{-1}+\widetilde T_s= t_s^{-1}(1+ T_s).\end{equation}
(Note that the element $\scc_x$ is denoted $c_x$ 
in \cite{Lus03}.)

If $J\subseteq S$ is non-empty, let $W_J:=\langle s \rangle_{ s \in J}$, a parabolic subgroup of $W$;  if $J=\emptyset$, set $W_J:=\{1\}$. Then, $W_J$ has a unique
 longest (reduced) word, denoted $w_{0,J}$, and
\begin{equation}\label{longest} \scc_{w_{0,J}}=t^{-L(w_{0,J})}\sum_{y\in W_J}T_y=: t^{-L(w_{0,J})}x_J\end{equation}
(which defines $x_J$).
See  \cite[Cor. 12.2]{Lus03} (or \cite[(7.2.5)]{DDPW08} in the split case). In the literature, the endomorphism algebra
\begin{equation}\label{q-Schuralgebra} 
A:=\End_{\sH}\left(\bigoplus_{J\subseteq S}x_J\sH\right)\end{equation}
(or any algebra Morita equivalent to it) is called ``the" generalized generic $q$-Schur algebra or a {\it generic Hecke endomorphism algebra} ({or \it generic Hecke endo-algebra} for short) by analogy with our naming of $\sH$. 
%Here $q=t^2$. 
Various specializations of $A$ and its extensions 
will also be considered in later chapters. \index{$q$-Schur algebra! generic $\sim$}\index{generic Hecke endo-algebra} \index{$A$, generic Hecke endo-algebra} 
%q Schur algebra....

 Let $h_{x,y,z}\in\sZ$ denote the structure constants for $\sH$, defined by the equations 
\begin{equation}\label{structureconstants}\scc_x\scc_y=\sum_{z\in W}h_{x,y,z}\scc_z.\end{equation}
In particular, we have \cite[Th.~6.6]{Lus03}
\begin{equation}\label{CsCw}
\scc_s\scc_w=\begin{cases}\scc_{sw}+\sum_{x:sx<x<w}\mu^s_{x,w}\scc_x,&\text{ if }w<sw;\\
(t_s+t_s^{-1})\scc_w,&\text{ if }w>sw.
\end{cases}
\end{equation}
Here $\mu^s_{x,w}\in \sZ$ is fixed under the bar operator. In the case of split groups, $\mu^s_{x,w}$ is constant (in $\mathbb Z$). Also,
\eqref{CsCw} implies that the $\scc_s$, $s\in S$, generate the $\sZ$-algebra $\sH$.

%A relation $\leq$ on a set $\Lambda$ that is reflexive and transitive is called a {\it preorder}. \index{preorder} We will call $\Lambda=(\Lambda,\leq)$ a {\it quasi-poset}\index{quasi-poset} in the sequel. For such a preorder $\leq$, there is an associated equivalence relation $\sim=\sim_\leq$ ($x\sim y\iff x\leq y\, \& \, y\leq x$), and a strict version $<$ of $\leq$ (thus, $x<y\iff x\leq y\;\&\;x\not\geq y\iff x\leq y\; \&\; x\not\sim y$). If $\overline\Lambda=\Lambda/\sim$ denotes the set of equivalence classes, then $\leq$ induces a partial order, denoted again by $\leq$, on $\overline\Lambda$. {\color{blue}Call $(\overline \Lambda,\leq)$ the {\it poset associated with} $(\Lambda,\leq)$.}\index{quasi-poset! poset associated with $\sim$} Thus, for any $\la,\mu\in\Lambda$, if $\overline\lambda$ denotes the equivalence class containing $\lambda$, then we have $\overline\lambda\leq\overline\mu\iff\lambda\leq\mu$. For further details, see Appendix B. 

If $\scc_y$ appears in $\scc_s\scc_w$ with a nonzero coefficient, we set $y\leftarrow_Lw$, following \cite[8.1]{Lus03}.  Define a {\it preorder} (that is, a relation which is reflexive and transitive) on $W$ by setting,
for $w,w'\in W$, $w\leq_L w'$ if there exists $w=w_0,w_1,\ldots, w_n=w'$ in $W$ such that $w_{i-1}\leftarrow_R w_{i}$, for all $i=1,2,\ldots,n$, and define $w\leq_R w'$ if $w^{-1}\leq_L(w')^{-1}$.

We now consider the preorders $\leq_L$ and $\leq_R$ defined on
$W$, and the preorder $\leq_{LR}$ generated by $\leq_L$ and $\leq_R$ and their associated {\it quasi-poset} $(W,\leq_L)$ etc. See, e.g., \cite[Ch. 8]{Lus03}. Let $\sim_L, \sim_R$ and
$\sim_{LR}$ be the equivalence relations associated with $\leq_L, \leq_R$, and
$\leq_{LR}$, respectively. The equivalence classes for $\sim_L, \sim_R$, and
$\sim_{LR}$ are called the {\it left, right}, and {\it two-sided cells}, respectively. \index{cell} \index{cell! left $\sim$}
\index{cell! right $\sim$}\index{cell! two-sided $\sim$} 

For a quasi-poset, there is an associated poset (see Section B2 in Appendix B).
The poset associated with the quasi-poset $(W,\leq_L)$ is the set $\Omega$ of all left cells\index{$\Omega$, set of left cells}, while the poset associated with the quasi-poset $(\Omega,\leq_{LR})$ is the set $\overline{\Omega}$ of all two-sided cells.\index{$\overline{\Omega}$, set of two-sided cells}  Note that if a left cell $\omega\in\Omega$ contains an element $w_{0,J}$, then $J$ is a uniquely determined
subset of $S$. See \cite[Lem. 8.6]{Lus03} and the argument above Lemma \ref{w_0,J} in this monograph.

 We record that 
\begin{equation}\label{hxyz}
h_{x,y,z}\neq0\implies  z\leq_Ly, z\leq_R x.
\end{equation}
See \cite[13.1(d)]{Lus03}.  In particular, for any $w\in W$, 
\begin{equation}\label{H<}
\sH_{\leq_L w}:=\sum_{y\in W\atop y\leq_L w}\sZ\scc_y,\;\;\sH_{\leq_R w}:=\sum_{y\in W\atop y\leq_R w}\sZ\scc_y,\;\;\sH_{\leq_{LR} w}:=\sum_{y\in W\atop y\leq_{LR} w}\sZ\scc_y
\end{equation}
is a left, right, or two-sided ideal of $\sH$, respectively (see \cite[Lem.~8.2]{Lus03}.). We may also define similar ideals by using $<_L,<_R,<_{LR}$.

Each left cell $\omega\in\Omega$ defines a {\it left-cell module}\index{cell module! left $\sim$, $S(\omega)$} 
 $S(\omega):=\sH_{\leq_L\omega}/\sH_{<_L\omega}$---it is explicitly defined  in \cite[\S8.3]{Lus03} (but with a different notation). Here $\sH_{\leq_L\omega}=\sH_{\leq_L y}$ and $\sH_{<_L\omega}=\sH_{<_L y}$, for any $y\in\omega$. Similarly, we may define right- or two-sided-cell modules. Also, there are base-change versions associated with the Hecke alegbra $\scrK\otimes_\sZ\sH$ and the basis $\{1\otimes\scc_w\}_{w\in W}$ over any commutative ring $\scrK$ which is a $\sZ$-module.
 %and  is a {\it left} $\sH$-module.  
 
 In addition, there is a
corresponding
\begin{equation}\label{dualleft} \text{\rm{\it dual left-cell module \index{cell module! dual left $\sim$, $S_\omega$} $S_\omega:=S(\omega)^*=\Hom_\sZ(S(\omega),\sZ)$.}}
\end{equation}
  It is a {\it right}
$\sH$-module. Both $S(\omega)$ and $S_\omega$ are free $\sZ$-modules; see \cite[\S8.3]{Lus03}.
%\begin{color}{black}{Jie---the following remark is not quite correct. The Specht module is for the "specialized"
%Hecke algebra, where $t^2$ is specialized to a power of a prime. This needs to be explained. Also, we have %generators
%and relations for the generic Hecke algebra and similar ones for the standard specialized Hecke algebra. Finally, the
%specialized Hecke algebra is a $G(q)$-endomorphism algebra......}\end{color}

\begin{rem}\label{Specht} If $\sH$ is the generic Hecke algebra for a symmetric group $W=\mathfrak S_r$,
then it can be proved that the dual left-cell module $S_\omega$ is a ``classical'' Specht module for $\sH$, as defined more generally by Dipper-James; see \cite[p 34]{DJ86}, taking $R$  in the latter reference to be the ring $\mathbb Z$ of integers.  On the other hand, the right cell $\gamma:=\omega^{-1}$, defines a right-cell module
$K_\gamma$, which is related to $S_\omega$ by the isomorphism $\mathfrak D_\sH S_\omega\cong K_\gamma$,
$\gamma=\omega^{-1}$. (Here, $\mathfrak D_\sH$ is the contravariant duality functor on mod-$\sH$ defined in 
\cite[(1.2)]{DPS98c}.) We will not make use of any of the theory of Specht modules in this monograph.\end{rem}

The preorders $\leq_L,\leq_{LR}$ on $W$ induce partial orders $\leq_L,\leq_R$ on $\Omega$. Thus, for $\omega,\omega'\in\Omega$ and $X\in\{L,LR\}$, $\omega\leq_X \omega' \iff x\leq_X x'$, for some $x\in\omega,x'\in\omega'.$
Also, $\omega\in\Omega$ defines a unique two-sided cell $\bar\omega$ with the property that $\omega\subseteq\bar\omega$.  It is extremely important in our context here (see the second paragraph of Section 1.2 or Proposition \ref{prop1}  in Appendix B)  that, given $\omega_1,\omega_2\in\Omega$,
\begin{equation}\label{order}\omega_1<_L\omega_2\implies\overline\omega_1<_{LR}\overline\omega_2.\end{equation}
Thus, $\leq_L$ strictly dominates $\leq_{LR}$ in the sense of Definition \ref{dominates} in Appendix B.
 As discussed at the beginning of the section, this property holds when $(W,S,L)$ is standard finite, our standing assumption. %{\color{blue}In fact, it follows immediately from the conjectures \cite[P4,P9]{Lus03} which are proved in \cite[Chs 15, 16]{Lus03} for the split and quasi-split cases,}

For each two-sided cell $\mathbf c$ of $W$, there is an $\sH$-bimodule $M(\mathbf c)$.
The two-sided cell decomposition of $W$ induces a decomposition of $\sH_{\mathbb Q(t)}$ into two-sided ideals $M(\mathbf c)_{\mathbb Q(t)}$ and, hence, a decomposition of $\Lambda=\text{Irr}(\sH_{\mathbb Q(t)})$. 
%For $\lambda\in\Lambda$, let $E_\la$ denotes a representative from the class $\la$. 
Further, the partial order $\leq_{LR}$ on the set $\bar\Omega$ of two-sided cells induces a preorder on $\Lambda$ by setting, for $E,E'\in\Lambda$,
 \begin{equation}\label{Irr}
 E\leq_{LR}E'\text{ if }\mathbf c_E\leq_{LR}\mathbf c_{E'}.
 \end{equation}
  Here, $\mathbf c_E$ denotes the two-sided cell $\mathbf c$ such that $E|M(\mathbf c)_{\mathbb Q(t)}$
  in the sense that $E$ is a direct summand of the left $\sH_{\mathbb Q(t)}$-module defined by $M(\mathbf c)_{\mathbb Q(t)}$.

{We end this section with a brief discussion about the origin of the generic Hecke algebra $\sH$ associated with a standard finite Coxeter system $(W,S,L)$. Here $W=\mathbf W^F$ is the Weyl group of a finite group $G=\bG^F$ of Lie type that has
a Borel subgroup $B=\bB^F$; see Section 1.1.
The following result, which will not be used until Section 5.2, was first discovered by Iwahori \cite{Iwa64} for (split) Chevalley groups. 

\begin{lem}\label{H^o}  Let $\sH^o$ be the subalgebra over $\mathbb Z[t^2]$ generated by all $T_s\in \sH$, $s\in S$.\footnote{Thus, $\sH^o$ is a form for $\sH$
over $\mathbb Z[t^2]$.} Suppose $\sO$ is a commutative ring and $q\in\sO$,  and view $\sO$ as a $\mathbb Z[t^2]$-algebra by the homomorphism
$\mathbb Z[t^2]\to\sO$ sending $t^2\mapsto q$. 
Then, $$\sH^o_{\sO}\cong \End_{\sO G}( \sO G/B),$$ where $\sO G/B$ is the permutation module for the action of $G$ on the set of left cosets of $B$. 
\end{lem}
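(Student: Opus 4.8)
The plan is to identify $\sH^o_{\sO}$ with the Hecke endomorphism algebra $\End_{\sO G}(\sO G/B)$ by matching generators and relations. First I would recall the classical double-coset description: the endomorphism algebra $E:=\End_{\sO G}(\sO G/B)$ has an $\sO$-basis $\{T_w^G\}_{w\in W}$ indexed by $B$-$B$ double cosets $BwB$, by the Bruhat decomposition $G=\coprod_{w\in W}BwB$ together with $W\cong B\backslash G/B$; here $T_w^G$ is the endomorphism supported on the double coset $BwB$, suitably normalized (say, sending the coset $B$ to the sum of the cosets in $BwB$). This is standard for groups with a split BN-pair, and $G=\bG^F$ has one by Section 1.1. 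The key point is that these basis elements satisfy the same quadratic and braid relations as the $T_w$ in $\sH$, provided the parameters are chosen correctly, and the correct parameter for the fundamental reflection $s\in S$ is precisely $[B:B\cap B^s]$.

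The heart of the argument is then display \eqref{weight}: in all cases (split, quasi-split, and Ree/Suzuki), one has $q^{L(s)}=[B:B\cap B^s]$, where $q$ is the parameter attached to $G$ in its Lie type series. Thus, under the specialization $\mathbb Z[t^2]\to\sO$, $t^2\mapsto q$, the element $t_s^2=t^{2L(s)}$ maps to $q^{L(s)}=[B:B\cap B^s]$. I would then invoke the classical fact (Iwahori for Chevalley groups, extended to all BN-pairs with the appropriate parameter index set) that the generators $T_s^G$, $s\in S$, satisfy
$$(T_s^G)^2=[B:B\cap B^s]\,T_1^G+\big([B:B\cap B^s]-1\big)T_s^G,$$
together with the braid relations of $(W,S)$, and that these relations present $E$ as an $\sO$-algebra on the generators $\{T_s^G\}_{s\in S}$. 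Comparing with the defining relations \eqref{relations} of $\sH$ — restricted to the subalgebra $\sH^o$ over $\mathbb Z[t^2]$ generated by the $T_s$, $s\in S$, and base-changed to $\sO$ via $t^2\mapsto q$ — one sees that the assignment $T_s\mapsto T_s^G$ extends to a surjective $\sO$-algebra homomorphism $\sH^o_{\sO}\to E$. Since both $\sH^o_{\sO}$ and $E$ are free $\sO$-modules of rank $|W|$ (the former because $\{T_w\}_{w\in W}$ is a $\mathbb Z[t^2]$-basis of $\sH^o$, the latter by the double-coset count), the homomorphism is an isomorphism.

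The main obstacle, and the one place genuine care is needed, is the uniformity of \eqref{weight} across all Lie types, including the twisted groups where $W=\bfW^F$ is built from orbits of the diagram automorphism $\rho$ and the fundamental reflections $s\in S$ are longest elements of the parabolic subgroups $\bfW_J$: here $[B:B\cap B^s]$ is genuinely a power of $q$ with exponent $L(s)=\boldsymbol\ell(s)$, and one must check the index computation using the $F$-orbit structure of the root subgroups (as illustrated in Examples \ref{SO_n} and \ref{SU_n}). This is, however, exactly the content already cited from \cite[pp.\ 34--35, 74]{C85} and \cite[11.8]{St68}, so for the proof I would simply quote \eqref{weight} and the split-BN-pair presentation of $E$, noting that the result is stated here only for completeness and will be used later in Section 5.2. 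A secondary, purely bookkeeping point is to verify the normalization of the $T_w^G$ so that the structure constants literally match \eqref{relations} rather than a variant; this is routine once the quadratic relation for a single $T_s^G$ is pinned down.
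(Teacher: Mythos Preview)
Your proposal is correct and follows essentially the same approach as the paper: both identify the double-coset basis of $\End_{\sO G}(\sO G/B)$, verify the quadratic and multiplication relations match those of $\sH^o$ via the parameter identification $q^{L(s)}=[B:B\cap B^s]$ from display \eqref{weight}, and conclude by matching generators and relations (the paper uses the explicit basis $\frac{1}{|B|}\underline{BwB}$ from \cite{CPS73} and fills in a short derivation of \eqref{weight} from Carter's root-group cardinality formula, whereas you simply quote \eqref{weight} and appeal to a rank count, but the substance is the same).
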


 %to define a generic Hecke algebras $\sH$. On the other hand, $(W,S,L)$ arises from a $BN$-pair of $G=\bG^F$.
 % We end this section with the fact that a certain specialisations of $\sH$ is isomorphic to the original Iwahori's Hecke algebra arising from the induced trivial module for $B$ to $G$.
 
 \begin{proof}
A proof of display (\ref{weight}) is outlined in \cite[p.74]{C85}, beginning with a general root group cardinality formula
 $$|X_\Psi^F|=q^{|\Psi|},$$
 where $\Psi$ is a certain subset of the positive roots $\Phi^+$ of $\bG$.  For  each fundamental reflection $s$ in the Weyl group $W$ of $G$, one of the root groups $X^F_\Psi$ can be seen, using \cite[Prop. 2.5.11]{C85},
 to satisfy $|B|=|X^F_\Psi(B\cap B^s)|=|X_\Psi^F|\cdot |B\cap B^s|
 $.  The set $\Psi$ here is the set of positive roots in $\Phi$ made negative by the action of $s$. Thus, $|\Psi|=L(s)$ for this $\Psi$, where $L(s)$ is the length of $s$ in the Weyl group $\mathbf W$ of $\bG$; see Section 1.1. Thus, we obtain the expression (\ref{weight}) above from the expression
 $$q^{L(s)}=q^{|\Psi|}=|B/B\cap B^s|=[B:B\cap B^s].$$
 (Carter actually gives in \cite[p. 74]{C85} an analogous formula for $q^{L(w)}$, for any $w$ in the Weyl group 
 of $\bG$.) 
 
 As in \cite[p. 188]{CPS73}, the elements
 \begin{equation}\label{isohecke}
 \frac{1}{|B|}{\underline{BwB}},\quad w\in W\end{equation}
 span a $\mathbb Z$-algebra $H$, well known to be the endomorphism algebra of the permutation module $\mathbb ZG/B$ for the action of $G$ on coset space $G/B$.  This algebra base changes well, so that we can replace $\mathbb Z$ by $\sO$, obtaining a basis for $\End_{\sO G}(\sO G/B)$. See, e.g., \cite[Exercise 9.3]{DDPW08}. In \cite[fn. 31]{CPS73}, a multiplication relation is showed to be analogous to the right-hand version of the relation $T_sT_w=T_{sw}$ in (\ref{relations}), when $\ell(sw)=1+\ell(w)$. Here, we think of $\frac{1}{|B|}{\underline{BwB}}$ as analogous to
 $T_w$ in (\ref{relations}). There is also the ``quadratic relation''
 $$\left(\frac{1}{|B| }{\underline{BsB}}\right)^2=a \frac{1}{|B|}{\underline{BsB}}+b1$$
 with $b=[B:B\cap B^s]$ and $a=b-1$. Such a  relation holds in any finite doubly transitive group, such as $\langle B,s\rangle$
 acting on the coset space $\langle B,s\rangle/B$. 
 
Hence $b=q^{L(s)}$ (and $a=b-1$).  The two types of relations are analogous to (\ref{relations}) with $w=s$, replacing the $t_s^2$ there with $q^{L(s)}$. The two types of relations we have considered clearly form special cases of (\ref{relations}), sufficient to provide generators and relations for $\sH^o_\sO$. Similarly, we have obtained from $\End_G(\sO G/B) $ analogs with enough information to do all 
multiplications of the basis elements $\frac{1}{|B|}{\underline{ BwB}}$.
 \end{proof} 
  }

\section{Exact categories and stratified algebras}
%\medskip\noindent\underline{\bf Exact categories.}
Beginning in this section, considerable use is made of exact categories $(\scrA,\scrE)$ \index{exact category} as formulated in Keller \cite[Appendix A]{K90} and revisited in B\"uhler's exposition \cite{Bu10}. The axioms given in these papers are equivalent to (but shorter than)  the
original axioms of Quillen \cite{Q73}. A version\footnote{There are several different versions of exact categories in the literature. In particular, following Quillen, we do not assume any kind of idempotent splitting.} is stated in \cite[Appendix]{DPS17} which follows \cite[Appendix A]{K90} quite closely (as quoted in \cite[Appendix A, Set 2]{DRSS99} written by Keller). Briefly, $\scrA$ is an additive category, while $\scrE$ consists of certain pairs $(i,d)$ of composable morphisms $i:X \to Y$
and $d:Y\to Z$ in $\scrA$, such that $i$ is the kernel of $d$ and $d$ is the cokernel of $i$. The pairs
$(i,d)$ which belong to $\scrE$ are required to satisfy certain axioms; see explicitly
\cite[Appendix A]{K90}, \cite[Appendix A, Set 2]{DRSS99},  \cite[Def. 2.1]{Bu10}, or  \cite[Appendix A]{DPS17}.  As in \cite{K90}, we sometimes refer to $i$ as an
{\it inflation}, $d$ as a {\it deflation}, and the pair $(i,d)$ as a {\it conflation}. Conflations $(i,d)$ are usually denoted
by $(X\to Y\to Z)$.\footnote{Such a sequence is denoted by $X\rightarrowtail Y\twoheadrightarrow Z$ in \cite{Bu10}. There are also other notations in the literature used with the same meaning.} As in \cite{Bu10}, we sometimes call conflations ``short exact sequences" in $\scrE$. When
it is convenient and clear from context,  $(X\to Y\to Z)$  is denoted $0\to X\to Y\to Z\to 0$ and called a short
exact sequence (in $\scrE$).

%\begin{center}\begin{tikzpicture} \node at (0,0) (a) {X};\node at (1.5,0) (b) {Y};\node at (3,0) (c) {Z};
%\node at (0,-1.5) (d) {X'};\path [draw,>->] (a) -- (b);\path [draw,->>] (b) -- (c);
%\path [draw,->] (d) -- (a);\end{tikzpicture}\end{center}

Notice that $\scrE$ is itself an additive category, using the usual mappings between 
the categorical diagrams formed by composable pairs of morphisms in $\scrA$; see \cite[Cor. 2.10]{Bu10}. As such, $\scrE$ is required to be closed
under isomorphisms as part of the exactness axioms.

One can consider a pair $(\scrA,\scrE)$ consisting of an additive category $\scrA$ together with a 
collection $\scrE$ of composable morphisms $(i,d)$ in $\scrA$ and ask if it defines an exact category. Of course, one needs
any $(i,d)$ in $\scrE$ to be a kernel-cokernel pair (in the terminology of \cite{Bu10}). Expanding on \cite{DPS17}, 
Chapters 2 and 3 present some important examples of exact categories $(\scrA,\scrE)$ inspired  by Hecke algebra
representation theory, especially involving the construction and control of endomorphism rings, as in Theorem 3.3.1 below. 
  Often $\scrA$ is a full subcategory of an additive category $\scrB$, associated to
an already introduced exact category $(\scrB,\scrF)$.  The theme of constructing new exact categories from old ones is continued in Sections 2.2 and 2.3 using duality.

Finally, for an exact category $(\scrA,\scrE)$, we can consider the abelian group $\Ext^1_{\scrE}(X,Y)$ for any
pair of objects $X,Y\in \scrA$.  In case $\scrA$ is an additive $\scrK$-category for a commutative ring $\scrK$, $\Ext^1_\scrE(X,Y)$
carries a natural structure as a $\scrK$-module; see \cite[Prop. A.2]{DPS17}. %Moreover, the proof of the latter
%result shows that $\Ext^1_{\scrE}(X,Y)$ is a $\scrK$-submodule of $\Ext^1_{\mathscr C}(X,Y)$ whenever
%$\scrA$ is a full subcategory of a $\scrK$-category $\mathscr C$. 
 The action of $a\in\scrK$ on $\Ext^1_{\scrE}(X,Y)$ is given by either pull-back of multiplication by $a$ on $X$ or (equally, as it turns out) by push-out of multiplication of $a$ on $Y$. 
 
 \medskip
 %\noindent
 {\bf 1.3A. Exact subcategories: some terminology and a new strategy.} 
 A note on exact category terminology: Let
$(\scrA,\scrE)$ and $(\scrB,\scrF)$ be exact categories. Suppose $\scrA$ is a subcategory of $\scrB$, and $\scrE\subseteq
\scrF$, viewing $\scrE$ and $\scrF$ as classes of composable pairs of morphisms in  $\scrA$ and $\scrB$, respectively.
We say that $(\scrA,\scrE)$ is an {\it exact subcategory}\footnote{\label{exact sub}Some authors note that the set $\scrE$ has a natural
category structure, using “morphisms” constructed
from one short exact sequence to another via
commutative diagrams. This does not, however,
affect our definition of an exact subcategory.} of $(\scrB,\scrF)$. It is a {\it full exact subcategory} if $\scrA$
is also a full subcategory of $\scrB$---that is, $\Hom_{\scrA}(A_1,A_2)=\Hom_{\scrB}(A_1,A_2)$ whenever $A_1,A_2$ are objects in $\scrA$. 

With this terminology in hand, we can now describe our new approach and some of its motivation.
 \begin{str} \label{rem2.2}
{\rm If $(\scrA,\scrE)$ is a full exact subcategory of $(\scrB,\scrF)$, then the fullness implies that the inclusion $\scrE\subseteq
\scrF$ induces an inclusion, i.e., an injective map, $\Ext^1_{\scrE}(A,A')\hookrightarrow\Ext^1_{\scrF}(A,A')$ for each
pair $(A,A')$ of objects in $\scrA$.\footnote{The members of exact category $\text{Ext}^1$ groups  are defined as equivalence classes of short exact sequences under a well-known equivalence relation. See \cite[p.249]{DPS17}.  In the presence of
fullness, each equivalence class in $\scrF$ intersects at most one of the $\scrE$ equivalence classes, resulting in the inclusions of this sentence.

     We take this opportunity to note that our notion of ``full exact subcategory" is quite different from the notion ``fully exact subcategory"
in \cite[Def.~10.21]{Bu10}, apparently aimed in a different direction.}  The inclusion is even $\scrK$-linear if $\scrK$ is a commutative Noetherian ring and the inclusion $\scrA\subseteq\scrB$ is also an inclusion of $\scrK$-categories (with $\scrA$ full in $\scrB$ as before.) See \cite[Prop. A.2]{DPS17}.

As in traditional homological algebra, the vanishing of a group $\Ext^1_{\scrF}(A_1, A_2)$ leads to the exactness
of $\Hom_{\scrF}(A_1,-)$ when applied to any given short exact sequence $\gamma$ in $\scrF$ that begins with $A_2$.  Assume this exactness is desirable for some set $\Gamma$ of such $\gamma$'s, but the proposed $\Ext^1$ vanishing is too much to ask for.  In that case, we can try to build  a full exact subcategory $(\scrA, \scrE)$ as above, but with  $\Gamma\subseteq\scrE$ and $\Ext^1_{\sE}(A_1,A_2)=0$.  This gives the desired exactness.}
 \end{str}

 Though the discussion above is only an outline, we are able to use it when constructing stratifying systems, as we now show.

\medskip%\noindent
{\bf 1.3B. Stratifying Systems.} \index{stratifying system} Let $\scrK$ be a Noetherian commutative ring.  Let $B$ be an algebra which
is finite (= finitely generated) and projective as a $\scrK$-module. \index{finite module}\index{finite module! $=$\;finitely generated module}
Let $\Lambda$ be a quasi-poset,\index{quasi-poset} i.e., a finite set $\Lambda$ with a preorder, denoted 
$\leq$ here. The associated poset is $\bar\Lambda,$\index{quasi-poset! poset associated with $\sim$} and $\bar\lambda$ is the element  of $\bar\Lambda$ associated to any given $\lambda\in\Lambda$.

 For $\lambda\in\Lambda$, assume that there is given both a $B$-module $\Delta(\lambda)$, which is finite and projective over $\scrK$, and a finite and projective $B$-module $P(\lambda)$, together with an epimorphism $P(\lambda)\twoheadrightarrow \Delta(\lambda)$.  The following
conditions are assumed to hold:

\begin{itemize}
\item[(SS1)] For $\lambda,\mu\in\Lambda$, 
$$\Hom_{ B}(P(\lambda), \Delta(\mu))\not=0\implies \lambda\leq\mu.$$

\item[(SS2)] Every irreducible $B$-module is a homomorphic image of some $\Delta(\lambda)$.

\item[(SS3)] For $\lambda\in\Lambda$, the $B$-module $P(\lambda)$ has a finite filtration by $ B$-submodules
with top section $\Delta(\lambda)$ and other sections of the form $\Delta(\mu)$ with $\bar\mu>\bar\lambda$.
\end{itemize}
When these conditions hold, the data 
\begin{equation}\label{strat} \{\Delta(\lambda),P(\lambda)\}_{\lambda\in\Lambda}\end{equation}
form (by definition) a  {\it  stratifying system}\footnote{Following \cite{DPS17}, we drop the word ``strict" in ``strict stratifying system" as used in
\cite{DPS98a}. I.e., all stratifying systems in this monograph are strict in the sense of \cite{DPS98a}, thus, we insist on
the strict comparison $\bar\mu>\bar\lambda$ in 
(SS3). }
for the category $ \Bmod$  of finite $B$-modules. 
Note that if (\ref{strat}) is a stratifying system for $\Bmod$, then $\{\Delta(\lambda)_{\mathscr K'}, P(\lambda)_{\mathscr K'}\}_{\lambda\in\Lambda}$ is a
 stratifying system for $ B_{\mathscr K'}$-mod {\it for any base change $\mathscr K\to \mathscr K'$}, provided $\mathscr K'$ is a Noetherian commutative 
ring. (See \cite[p. 231]{DPS15}. For more  details and further results, see \cite[Lem. 1.2.5]{DPS98a}.)

\begin{rem}\label{1.3.2}
 If $B$ has a stratifying system as explained above, it has a ``standard stratification" by idempotent ideals
$0\subseteq J_0\subseteq J_1\subseteq \cdots\subseteq J_n=B$ much like a defining sequence in the
theory of quasi-hereditary algebras. Thus, $B$ is a kind of generalization of a quasi-hereditary algebra, or, equivalently,
if $B$-mod has a stratifying system, then $B$-mod is a generalization of a highest weight category. We call $B$ a {\it standardly stratified algebra}; see Section 3.5 for more details. \index{standardly stratified algebra}
 We do not pursue this further here, but refer  to 
\cite[\S1.2]{DPS98a} and \cite{DPS18} for a more complete discussion.
\end{rem}
 
\medskip%\noindent
{\bf 1.3C. Exact categories and the stratification hypothesis.} Stratifying
systems can be constructed in an endomorphism-ring setting.  Originally, the construction was based on assuming a ``stratification hypothesis"  in \cite[Hyp. 1.2.9, Th. 1.2.10]{DPS98a}, which
required a difficult $\Ext^1$-vanishing condition (see \cite[Th. 2.3.9, Th. 2.4.4]{DPS98a}). This problem was
partly circumvented in \cite[Hyp. 2.5 \& Th. 2.6]{DPS17} by introducing ``designer" exact categories which had, a priori, smaller $\Ext^1$-groups, and thus were more likely to vanish. However, the new version was somewhat awkward to use in the Quillen axiom system. See \cite[Rem. 2.7]{DPS17}. The updated version below, given in the Stratification Hypothesis \ref{hypothesis},  removes this issue,
by replacing all $\Ext^1$-vanishings
with existence assertions for needed short exact sequences. As noted in Remark \ref{2.4}, these assertions are at least implied by a natural exact category $\Ext^1$-vanishing condition. This is the way they are verified  in this monograph.

Let $R$ be a finite and projective $\scrK$-algebra. Our aim will be to provide ingredients in mod-$R$, including a
right $R$-module $T$ with suitable filtrations, which enable the construction of a stratifying system for $\Bmod$,
where $B:=\End_R(T)$. Clearly, $T$ is naturally a left $B$-module. 

{\bf Construction.} We will construct $T$ as an object in
a full subcategory $\scrA$ of mod-$R$, with the following assumptions. First,
the subcategory $\scrA$ is part of an exact category $(\scrA,\scrE)$. Second, the exact sequences $(X\to Y\to Z)\in\mathscr E$ are among the short exact sequences $0\to X\to Y\to Z\to 0$ in mod-$R$.  Thus,
$(\scrA,\scrE)$ is a full exact subcategory of mod-$R$. (In our terminology, ``full exact" is just an abbreviation for
``full and exact.")
Third, we have a collection of objects $S_\lambda, T_\lambda\in {\mathscr A}$ indexed by the elements $\lambda$ of a finite quasi-poset
$\Lambda$, where $T_\lambda$ is a finite and projective $\scrK$-module.\footnote{This assumption is not often used
directly, but can be a step toward the required projectivity over $\scrK$ of the modules $\Delta(\lambda)$; see 
Remark \ref{2.5}(b). The assumption for $T$
often comes for free, since the $S_\nu$ are typically projective (or even free) $\scrK$-modules and each  $T_\lambda$ is constructed  to have a filtration with each section a direct sum of various $S_\nu$.} For each $\lambda\in\Lambda$, $S_\lambda$ is a subobject of $T_\lambda$, with
the inclusion $S_\lambda\hookrightarrow T_\lambda$ an inflation in $\scrE$.
For each $\lambda\in\Lambda$, fix a positive integer $m_\lambda$. Define
\begin{equation}\label{modules}
 T: =\bigoplus_{\lambda\in\Lambda}T_\lambda^{\oplus m_\lambda}.
\end{equation} 
% Note that $T$ is an object in $\scrA$, so that $B=\End_R(T)=\End_\scrA(T)$.   
 With this notation, we can now state the following:

\begin{hyp}\label{hypothesis} \index{stratification hypothesis}
{\rm Assume the set-up introduced in the previous paragraph. In particular, $(\scrA,\scrE)$ is an exact category, a full exact subcategory of mod-$R$, and $T\in\scrA$ with $B=\End_R(T)=\End_\scrA(T)$. There are also indexed objects $
S_\lambda, T_\lambda\, (\lambda\in \Lambda)$, and  the following statements are required as part of the hypothesis: 
\begin{itemize}

\item[(1)] For $\lambda\in\Lambda$, there is an increasing filtration of $T_\lambda$:  
$$0= T^{-1}_\lambda\subseteq T^0_\lambda\subseteq \cdots \subseteq T^{\ell(\lambda)}_\lambda=T_\lambda$$
in which each inclusion $T^{i-1}_\lambda\subseteq T_\lambda^i$ defines a conflation 
\begin{equation}\label{inclinf}(T^{i-1}_\lambda\longrightarrow T_\lambda^i \longrightarrow
T_\lambda^i/T_\lambda^{i-1})\in\scrE.\end{equation} 
In addition, $T^0_\lambda\cong
S_\lambda$, and, for $i>0$, 
the ``section"\footnote{Observe that $T^i_\lambda\to T^i_\lambda/T^{i-1}_\lambda$ is the unique (up to isomorphism) cokernel in $\scrA$ of the inflation
$T^{i-1}_\lambda\to T^i_\lambda$ and, thus, is a deflation. We generally leave such exact category exercises to the reader.  }  $T^i_\lambda/T^{i-1}_\lambda$ is a direct sum of various $S_\mu$, $\mu\in\Lambda$ and $\bar\mu>\bar\lambda$ (repetitions allowed).

\item[(2)] For $\lambda,\mu\in\Lambda$, $\Hom_{\mathscr A}(S_\mu, T_\lambda)\not=0\implies\lambda\leq \mu.$

\item[(3)] For all $\lambda\in\Lambda$ and integers $i\geq 0$, applying the contravariant ``diamond'' functor\footnote{This functor will also be used later in the book.} $(-)^\diamond:=\Hom_R(-,T)$ to \eqref{inclinf} gives the exact sequence
$$0\to (T_\lambda^i/T_\lambda^{i-1})^\diamond\longrightarrow (T^{i}_\lambda)^\diamond\longrightarrow 
(T^{i-1}_\lambda)^\diamond\to 0$$
of $B$-modules.
%(If $M\in\scrA$, $M^\diamond:=\Hom_\scrA(M,T)=\Hom_R(M,T)$, which has a natural $B$-module structure. Using it, $(-)^\diamond$
%is a left exact contravariant functor from $\scrA$ to $B$-mod.)
 
\end{itemize} }
\end{hyp}

 \begin{rem} \label{2.4} Notice that condition (3) {\it is implied by the vanishing of} $\Ext^1_\scrE(S_\lambda,T)$, for all $\lambda\in
 \Lambda$; see \cite[Prop. A.2]{DPS17}.  This is the way that we will use the (updated) Stratification Hypothesis \ref{hypothesis} in this monograph. 
 We mention, without proof, that it is implied by the less flexible earlier \cite[Hyp. 2.5]{DPS17}, which was modeled 
closely on \cite[Hyp. 1.2.9]{DPS98a} and which is harder to check, as written, in the Quillen exact category setting. 
 \end{rem}

%When the algebra $A$  arises as an endomorphism algebra $A:=\End_R(T)$, there is a useful theory for obtaining 
%a  stratifying system for $\Amod$. In fact, this is how such stratifying systems initially arose (see
%\cite{CPS96} and \cite{DPS98a}). This approach is followed in the proof of the main theorem in this monograph. For
%convenience, we summarize the sufficient conditions that will be used, all taken from \cite[Th. 1.2.10]{DPS98a}.
The following theorem updates \cite[Th. 1.3.3]{DPS98a} and \cite[Th. 2.6]{DPS17}.

\begin{thm}\label{thm2.5}{   Assume that the Stratification Hypothesis \ref{hypothesis} holds.   
For $\lambda\in\Lambda$, put
\begin{equation}\label{SH}  \begin{cases}
\Delta(\lambda):=\Hom_\scrA(S_\lambda,T)=\Hom_R(S_\lambda,T)\in \Bmod,\\
P(\lambda):=\Hom_\scrA(T_\lambda,T)=\Hom_R(T_\lambda,T)\in \Bmod.\end{cases}
\end{equation}
If each $\Delta(\lambda)$ is projective over $\scrK$, then $\{\Delta(\lambda), P(\lambda) \}_{\lambda\in\Lambda}$ is a
 stratifying system for $\Bmod$. }
\end{thm}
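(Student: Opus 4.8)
The plan is to verify that the data $\{\Delta(\lambda),P(\lambda)\}_{\lambda\in\Lambda}$ of \eqref{SH} has the shape required of a stratifying system and then to check the three axioms (SS1)--(SS3). For the preliminary point: $\Delta(\lambda)=\Hom_R(S_\lambda,T)$ and $P(\lambda)=\Hom_R(T_\lambda,T)$ are $\scrK$-submodules of $\Hom_\scrK(S_\lambda,T)$ and $\Hom_\scrK(T_\lambda,T)$, hence finite over the Noetherian ring $\scrK$; $\Delta(\lambda)$ is $\scrK$-projective by hypothesis. Since $m_\lambda\geq 1$ in \eqref{modules}, $T_\lambda$ is a direct summand of $T$ in mod-$R$, so $P(\lambda)\cong Be_\lambda$ is a direct summand of $B=\Hom_R(T,T)$ as a left $B$-module, where $e_\lambda\in B$ is the idempotent with $e_\lambda T=T_\lambda$; in particular $P(\lambda)$ is $B$-projective. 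The required epimorphism $P(\lambda)\twoheadrightarrow\Delta(\lambda)$ will be the restriction map along the inflation $S_\lambda=T^0_\lambda\hookrightarrow T_\lambda$: factoring this inclusion through the filtration of Hypothesis \ref{hypothesis}(1) and applying the diamond functor $(-)^\diamond=\Hom_R(-,T)$, Hypothesis \ref{hypothesis}(3) makes each map $(T^i_\lambda)^\diamond\to(T^{i-1}_\lambda)^\diamond$ surjective, hence so is the composite.

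For (SS3), I would apply $(-)^\diamond$ to the filtration $0=T^{-1}_\lambda\subseteq T^0_\lambda\subseteq\cdots\subseteq T^{\ell(\lambda)}_\lambda=T_\lambda$ of Hypothesis \ref{hypothesis}(1). Although $(-)^\diamond$ is only left exact in general, Hypothesis \ref{hypothesis}(3) carries each conflation \eqref{inclinf} to a short exact sequence
\[
0\to (T^i_\lambda/T^{i-1}_\lambda)^\diamond\to (T^i_\lambda)^\diamond\to (T^{i-1}_\lambda)^\diamond\to 0
\]
of $B$-modules. Setting $F^i:=\ker\bigl((T_\lambda)^\diamond\to(T^i_\lambda)^\diamond\bigr)$ (with $T^{-1}_\lambda=0$) produces a chain $P(\lambda)=(T_\lambda)^\diamond=F^{-1}\supseteq F^0\supseteq\cdots\supseteq F^{\ell(\lambda)}=0$ with $F^{-1}/F^0\cong (S_\lambda)^\diamond=\Delta(\lambda)$ and $F^{i-1}/F^i\cong (T^i_\lambda/T^{i-1}_\lambda)^\diamond$ for $i\geq 1$. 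By Hypothesis \ref{hypothesis}(1) and additivity of $\Hom_R(-,T)$, the latter is a finite direct sum of copies of $\Delta(\mu)=(S_\mu)^\diamond$ with $\bar\mu>\bar\lambda$; refining each such direct sum by its partial sums gives the filtration demanded by (SS3), with top section $\Delta(\lambda)$.

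For (SS1), the crux is the identification $\Hom_B(P(\lambda),\Delta(\mu))\cong\Hom_\scrA(S_\mu,T_\lambda)$. Using $P(\lambda)\cong Be_\lambda$, one has $\Hom_B(P(\lambda),M)\cong e_\lambda M$ for every left $B$-module $M$; taking $M=\Delta(\mu)=\Hom_R(S_\mu,T)$ and recalling that the left $B$-action on $\Hom_R(S_\mu,T)$ is post-composition, $e_\lambda\Delta(\mu)$ is exactly the set of $R$-homomorphisms $S_\mu\to T$ with image contained in $e_\lambda T=T_\lambda$, i.e.\ $\Hom_B(P(\lambda),\Delta(\mu))\cong\Hom_R(S_\mu,T_\lambda)=\Hom_\scrA(S_\mu,T_\lambda)$, the last equality by fullness of $\scrA$ in mod-$R$. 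Hypothesis \ref{hypothesis}(2) then gives $\Hom_\scrA(S_\mu,T_\lambda)\neq 0\implies\lambda\leq\mu$, which is (SS1).

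For (SS2): a simple left $B$-module $L$ is cyclic, hence a quotient of $B$, and since $B\cong\bigoplus_\nu P(\nu)^{\oplus m_\nu}$ as left $B$-modules (because $T=\bigoplus_\nu T_\nu^{\oplus m_\nu}$) with $L$ simple, some $P(\nu)$ surjects onto $L$; by (SS3) the module $P(\nu)$ carries a finite filtration with sections among the $\Delta(\mu)$, and a short induction on the filtration length shows that any simple quotient of a $\Delta$-filtered module is a quotient of one of those sections, so $L$ is a quotient of some $\Delta(\mu)$. This proves (SS2) and hence the theorem. I expect the only genuine subtlety to lie in the idempotent bookkeeping of the (SS1) step and in keeping straight that $(-)^\diamond$ is merely left exact, so that Hypothesis \ref{hypothesis}(3) is precisely what is needed to produce both the surjection $P(\lambda)\twoheadrightarrow\Delta(\lambda)$ and the $\Delta$-filtration of $P(\lambda)$; once that is in hand the argument is formal.
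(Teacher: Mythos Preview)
Your argument is correct and follows essentially the same route as the paper: verify (SS1) by identifying $\Hom_B(P(\lambda),\Delta(\mu))\cong\Hom_R(S_\mu,T_\lambda)$ (you via the idempotent $e_\lambda$, the paper via functoriality of $(-)^\diamond$ applied to the summand decomposition of $T$), obtain (SS3) from Hypothesis~\ref{hypothesis}(3) by filtering $P(\lambda)$ by the kernels $F^i=(T_\lambda/T^i_\lambda)^\diamond$ (the paper uses exactly this filtration and the Snake Lemma on the analog of your two short exact sequences), and derive (SS2) from (SS3). The only point you omit that the paper checks explicitly is that $B$ itself is $\scrK$-projective, which the definition of stratifying system presupposes; this follows at once from (SS3) and the hypothesis on the $\Delta(\lambda)$, since then each $P(\lambda)$ is $\scrK$-projective and $_BB\cong\bigoplus_\lambda P(\lambda)^{\oplus m_\lambda}$.
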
 

\begin{proof} First, we check (SS1).  Observe that the contravariant functor $(-)^\diamond$ provides maps
$$\Hom_R(S_\mu, T_\lambda)\longrightarrow \Hom_B(P(\lambda),\Delta(\mu)).$$
Passing to $T=\bigoplus_{\la\in\Lambda} T_\lambda^{\oplus m_\lambda}$ on the left and $_BB=
\bigoplus_{\lambda\in\Lambda} P(\lambda)^{\oplus m_\lambda}$ on the right, we obtain an isomorphism
$$\Hom_R(S_\mu,T)\overset\sim\longrightarrow \Hom_B({_BB},\Delta(\mu))\cong \Delta(\mu).$$
Consequently, the original maps must also be isomorphisms.  Thus, (SS1) follows from Condition (2) (of Hypothesis \ref{hypothesis}).

Next, we will prove (SS2) assuming  (SS3) has been proved. By the construction of $B=\End_R(T)$, we have that $_BB\cong \bigoplus_{\lambda \in\Lambda}P(\lambda)^{\oplus m_\lambda}$. Using (SS3), $_BB$ has a filtration with sections various 
$\Delta(\nu)$, $\nu\in\Lambda$. Any irreducible $B$-module is, of course, a homomorphic image of $_BB$, and so
(using irreducibility) a homomorphic image of some $\Delta(\nu)$. This proves (SS2) in the presence of (SS3).

Now consider (SS3) itself for a given $\lambda$.  %For convenience, abbreviate $T_\lambda, T_\lambda^i$, etc. by $T, T^i$, etc. 
Let $n$ be an integer such that $T^n_\la=T_\la$. Let $j\geq i\geq -1$ be integers. Thus, $T^i_\la$ is a submodule of $ T^j_\la$.

\medskip\noindent
{\bf Claim.} The natural map $(T^j_\la)^\diamond\to (T^i_\la)^\diamond$ is a surjection.

\smallskip
To see this, observe that the inclusion $T^i_\la\hookrightarrow
T^j_\la$ factors into a composition of inclusions $T^i_\la\hookrightarrow T^{i+1}_\la\hookrightarrow T^{i+2}_\la\hookrightarrow
\cdots\hookrightarrow T^j_\la$. By Condition (3), for $i\leq a<j$, the inclusion $T^a_\la\hookrightarrow T^{a+1}_\la$ becomes 
a surjection
$(T^{a+1}_\la)^\diamond\twoheadrightarrow (T^a_\la)^\diamond$ upon applying $(-)^\diamond$. The Claim follows by taking compositions.

\medskip
We return to the proof of (SS3) for a given $\lambda$.
Let $n\geq j\geq 0$.  The maps 
$ (T^{n}_\la)^\diamond\to (T^{j}_\la)^\diamond $ and $(T^n_\la)^\diamond\to (T^{j-1}_\la)^\diamond$ are both surjective by the Claim.
This gives the two exact sequences forming the rows of the natural commutative diagram
\begin{equation}\label{diagg}
\begin{CD}
0@>>> (T^n_\la/T^j_\la)^\diamond @>>> (T^n_\la)^\diamond @>>> (T^j_\la)^\diamond  @>>> 0\\
@.  @V\alpha VV @| @V\beta VV @. \\
0 @>>> (T^n_\la/T^{j-1}_\la)^\diamond @>>> (T^n_\la)^\diamond @>>> (T^{j-1}_\la)^\diamond @>>> 0\\
 \end{CD}
 \end{equation}
 Take a sufficiently large integer $n$, so that $T^n_\la=T_\lambda$. Then, $P(\lambda)=T^\diamond_\la$ has a filtration by submodules of the form $(T_\la/T^j_\la)^\diamond$, and the map $\alpha$ above is an inclusion between adjacent filtration terms. The Snake Lemma shows that the section $\ck(\alpha)$ of these two filtration terms is isomorphic to
 ${\text{\rm Ker}}(\beta) = (T^j_\la/T^{j-1}_\la)^\diamond$. Thus, $P(\lambda)$ has a filtration with sections isomorphic to  various 
 $(T^j_\la/T^{j-1}_\la)^\diamond$. The module $(T^j_\la/T^{j-1}_\la)^\diamond$ is of the form %a direct sum of terms
  $S_\nu^\diamond=
 \Hom_R(S_\nu,T)=\Delta(\nu)$, where $\nu>\lambda$ if $j>0$, and $\nu=\lambda$ if $j=0$. In the $j=0$ case, $\ck(\alpha)$
 is the ``top" section of the given filtration on $P(\lambda)$, and it is isomorphic to ${\text{\rm Ker}}(\beta)=
 (T^0_\la)^\diamond=S_\lambda^\diamond=\Delta(\la)$.  This completes the proof of (SS3).
 
 Finally, because it is assumed that each $\Delta(\nu)$, $\nu\in\Lambda$, is a projective $\scrK$-module, it follows that each $P(\lambda)$, $\lambda\in\Lambda$, is also a projective $\scrK$-module.  Since $B$ is a direct sum of
 the $P(\lambda)$ (repetitions allowed),  $B$ is a projective $\scrK$-module.  This completes the proof.
  \end{proof}

\begin{rems}\label{2.5}

(a) Correspondingly, $\scrA$ can be one of  the categories $\scrA(\scrL^*)$ in Section 2.3  below (or a full
subcategory,
 such as $\scrA_\flat(\scrL^*)$ there.) A similar remark holds for the corresponding exact
category in Section 2.2; however, the notation above is chosen to be compatible with the applications in Section 2.4 as developed in
Section 2.3.

(b) If $\scrK$ is a regular ring of Krull dimension at most 2, then, in the statement of Theorem \ref{thm2.5}, 
the sentence ``Assume that each $\Delta(\lambda)$ is projective over $\scrK$" can be omitted, since $\Delta(\lambda)$ is automatically projective over $\sZ$. This is shown in the proof of  \cite[Cor. 1.2.12)]{DPS98a},
using earlier work of Auslander-Goldman together with the projectivity of $T$ as a $\scrK$-module (assumed in the preamble to the Stratification Hypothesis \ref{hypothesis}).   For a detailed account, see \cite[\S C.3]{DDPW08}. A similar result holds
if the stratifying system is ``defined over $Z$" where $Z$ is a regular ring of Krull dimension $\leq 2$. This means
that the stratifying system over $\scrK$ is obtained by base change from the regular ring $Z$.

(c) As an example in (b), we will be mainly interested in the special case in which $R=\sH$ is the Hecke algebra defined over $\sZ=\mathbb Z[t,t^{-1}]$ (and its localizations $\sZ^\natural$ at multiplicative subsets, not containing 0, of $\mathbb Z$). Since $\sZ$ is a regular ring of dimension 2, (b) above applies to show that each $\Delta(\lambda)$ is projective over $\sZ$.  In fact, by a result of Swan \cite[p.111, first paragraph]{Sw78},  $\Delta(\lambda)$ is then
$\sZ$-free. (Swan's paragraph also implies that any finite, projective $\sZ^\natural$-module is free. In addition, if
$ \sZ^{\natural\oplus m}\cong \sZ^{\natural\oplus n}$ for integers $m,n$, then $m=n$.) \end{rems}

\chapter{Some  exact categories}

Section 1.3 in Chapter 1 provides a general theory of stratifying endomorphism algebras using exact categories,  reformulating that developed in \cite{DPS17}. To apply the theory in the context of the conjecture \cite[Conj.~2.5.2]{DPS98a} by the authors, we need to construct a nested sequence of exact categories involving Hecke algebra modules with cell module filtrations. We eventually prove the Ext$^1$ vanishing condition (see Remark \ref{2.4}) in one of the exact categories and, then, prove the (slightly modified) conjecture in the next chapter.
%Since the modules considered in the conjecture \cite[Conj.~2.5.2]{DPS98a} by the authors are Hecke algebra modules which are filtered by dual left cell modules and satisfy a certain Ext$^1$ vanishing condition, we now construct several exact categories

\section{Three exact categories}
{\it In this section, we work exclusively with {\it left modules} for an algebra.} 
Let $\sH$ be the generic Hecke algebra over $\sZ$, associated to a standard finite Coxeter system $(W,S,L)$ (as defined in Section 1.1).

We begin with the following variation on \cite[Lem. 4.3]{DPS15}. If $V$ is a left $\sH$-module, the ``fixed-point module'' for the action of $\sH$ on $V$ is defined to be
$$V^\sH:=\{v\in V\,|\, \scc_sv=(t_s+t_s^{-1})v,\quad\forall s\in S\}\in \sZ{\text{\rm --mod}},$$
where $t_s,\scc_s$ are defined in \eqref{relations} and \eqref{Calpha}, respectively.
Equivalently,
$$v\in V^\sH\iff T_sv=t_s^2v,\text{ for all }s\in S.$$
The assignment $V\mapsto V^\sH$ is part of an evident left-exact endo-functor.
Similarly, $V^{\sH_J}$ is defined for any parabolic subalgebra $\sH_J$ of $\sH$ ($J\subseteq S$). 
The next result follows from \cite[Lem.4.3]{DPS15} and the above definition
of fixed points, using \eqref{relations} and \eqref{Calpha}.

\begin{lem}\label{onto}
Let ${\mathfrak I\subseteq\mathfrak K}$ be left ideals in the Hecke algebra $\sH$ over $\sZ$.  Assume that 
both $\mathfrak I$ and $\mathfrak K$ are spanned by the Kazhdan-Lusztig basis elements $\scc_x$, $x\in W$, that they contain.
Then, for any $J\subseteq S$, the natural map
\begin{equation}\label{strange} {\mathfrak K}^{\sH_J}\longrightarrow 
({\mathfrak K}/{\mathfrak J})^{\sH_J}\end{equation}
is surjective. \end{lem}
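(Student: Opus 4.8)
The plan is to reduce the statement to its already-known form in \cite[Lem.~4.3]{DPS15} by unwinding the fixed-point functor. First I would record that, by \eqref{Calpha}, the defining condition $\scc_sv=(t_s+t_s^{-1})v$ is equivalent to $t_s^{-1}(v+T_sv)=(t_s+t_s^{-1})v$, i.e.\ to $T_sv=t_s^2v$, so that
$$V^{\sH_J}=\{v\in V\,|\,T_sv=t_s^2v\text{ for all }s\in J\},$$
which is the description used in \cite{DPS15}. With this identification the sequence $0\to\mathfrak I\to\mathfrak K\to\mathfrak K/\mathfrak I\to 0$ is a short exact sequence of left $\sH$-modules, the hypothesis that $\mathfrak I$ and $\mathfrak K$ be spanned by the Kazhdan--Lusztig basis elements they contain is precisely the ``based'' hypothesis of that reference, and the desired conclusion is exactly the surjectivity of \eqref{strange} there.

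In case one wants a self-contained argument (or if \cite[Lem.~4.3]{DPS15} is only stated for a special pair $\mathfrak I\subseteq\mathfrak K$), I would deduce the general case by a stepwise reduction through left-cell modules. By \eqref{CsCw} and \eqref{hxyz}, the index set $X_{\mathfrak K}\subseteq W$ with $\mathfrak K=\bigoplus_{x\in X_{\mathfrak K}}\sZ\scc_x$ is closed under $\leq_L$-descent, and likewise for $\mathfrak I$; hence $Y:=X_{\mathfrak K}\setminus X_{\mathfrak I}$ is a union of left cells. Picking a left cell $\omega\subseteq Y$ minimal for $\leq_L$ and setting $\mathfrak K':=\mathfrak I+\sum_{y\in\omega}\sZ\scc_y$, one checks from \eqref{CsCw} and \eqref{hxyz} that $\mathfrak K'$ is again a based left ideal with $\mathfrak K'/\mathfrak I\cong S(\omega)$. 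Since $(-)^{\sH_J}$ is left exact, a two-step diagram chase along $\mathfrak I\subseteq\mathfrak K'\subseteq\mathfrak K$ — lift a fixed element modulo $\mathfrak K'$ using the pair $(\mathfrak K',\mathfrak K)$, then correct it by an element lifted from the pair $(\mathfrak I,\mathfrak K')$ — reduces the assertion, by induction on the number of left cells in $Y$, to the single-cell case $\mathfrak K/\mathfrak I\cong S(\omega)$.

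For the single-cell case I would reduce further to $\mathfrak I=\sH_{<_L\omega}\subseteq\mathfrak K=\sH_{\leq_L\omega}$: when $X_{\mathfrak K}\setminus X_{\mathfrak I}=\omega$ one has $\sH_{<_L\omega}\subseteq\mathfrak I$, $\sH_{\leq_L\omega}\subseteq\mathfrak K$, $\sH_{\leq_L\omega}+\mathfrak I=\mathfrak K$ and $\sH_{\leq_L\omega}\cap\mathfrak I=\sH_{<_L\omega}$, so the inclusion $\sH_{\leq_L\omega}\hookrightarrow\mathfrak K$ induces an isomorphism $\sH_{\leq_L\omega}/\sH_{<_L\omega}\overset\sim\longrightarrow\mathfrak K/\mathfrak I$ and a commuting square from which surjectivity of $\sH_{\leq_L\omega}^{\sH_J}\to S(\omega)^{\sH_J}$ forces surjectivity of \eqref{strange}. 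Thus everything comes down to the surjectivity of $\sH_{\leq_L\omega}^{\sH_J}\to S(\omega)^{\sH_J}$, which is the substance of \cite[Lem.~4.3]{DPS15}; this is the step I expect to be the real obstacle. I would not reprove it here beyond observing that, on $\sH_J$-fixed points, $\scc_{w_{0,J}}$ acts by multiplication by the scalar $t^{-L(w_{0,J})}\sum_{y\in W_J}t^{2L(y)}$ (from \eqref{relations} and \eqref{longest}), which yields the lifting up to that non-invertible element of $\sZ$, the remaining gap being closed — as in \cite{DPS15} — using the known cell-module structure.
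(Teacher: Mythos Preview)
Your proposal is correct and matches the paper's approach exactly: the paper's proof consists entirely of the sentence ``The next result follows from \cite[Lem.~4.3]{DPS15} and the above definition of fixed points, using \eqref{relations} and \eqref{Calpha},'' which is precisely your first paragraph. Your additional reduction through left-cell filtrations (paragraphs~2--3) is supplementary material the paper does not include; it is a correct d\'evissage, but since you ultimately defer the single-cell case back to \cite[Lem.~4.3]{DPS15} anyway, it does not add independent content beyond what the direct citation already provides.
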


\begin{notation}\label{notation1}{\rm At this point, the reader should review the notion of height functions\index{height function} $\htt$
discussed in Appendix B below. 
In this section, we fix $$\htt=\htt_l:\Omega\to\mathbb Z,$$ a function defined on the set $\Omega$ of
left cells in $W,$ and compatible with the preorder $\leq_{LR}$; see display \eqref{preorder} in Appendix B for the meaning of ``compatible.'' 
For another example, see the order defined in display \eqref{preorder1}. The subscript $l$ is there to remind the reader of our left-module context.
Such a height function
 $\htt$ is also compatible with $\leq_L$, as formalized
in 
Corollary \ref{discussion} (which uses property (\ref{order}) via Proposition \ref{prop1}).  In particular, $\htt$ is constant on left cells which belong to the same two-sided cell.  
 It is also observed in Remark \ref{A7} that $\htt$ may be viewed as assigning heights to left-cell modules $S(\omega)$. \index{cell module! left $\sim$, $S(\omega)$}  Thus, if
$\omega\in\Omega$, $\htt$ assigns the left-cell module $S(\omega)$ height $\htt(\omega)$. 
Also, the value $\htt(\omega)=\htt(\bar\omega)$, where $\bar\omega$ is the two-sided cell containing $\omega$, may be assigned
without ambiguity as a ``height" to the irreducible constituents of the base-changed module $S(\omega)_{\mathbb Q(t)}$. In particular, this means that $\htt$ uniquely determines a height function on the set $\Lambda$ of irreducible
modules for the semisimple algebra $\sH_{\mathbb Q(t)}$. Since this height function also determines the original
$\htt:\Lambda\to\mathbb Z$ (see Remark \ref{A7}), we continue to denote it by $\htt$.  As explained in Example \ref{example1}, $\htt$
induces a canonical decreasing filtration 
\begin{equation}\label{lcf}
M=M_0\supseteq M_1\supseteq M_2\supseteq\cdots\supseteq M_{n-1}\supseteq M_n=0
\end{equation}
on each $\sH$-module $M$. We call this filtration the $\htt$-{\it filtration} or the {\it height filtration,} when $\htt$ is understood on $M$. Similar terminology can be used for right modules in the next three sections, although the height filtrations are increasing and the subscripts here become superscripts there. \index{height filtration}\index{$\htt$-filtration}

Finally, as a convenience, we 
assume in this section (unless noted otherwise) that $\htt=\htt_l:\Omega\to \mathbb Z$ only takes negative values. See Appendix B. Later, we make use of $\htt_r:=-\htt_l:\Omega\to\mathbb Z$, which is, in particular, used to assign positive height values to dual left-cell modules  
$S_\omega= S(\omega)^*=\Hom_\sZ(S(\omega),\sZ),\;\; \omega\in\Omega$ (see \eqref{dualleft}) which are naturally
right $\sH$-modules.  See Remark B.8.}
\end{notation}\smallskip

Relative to such a height function $\htt:\Omega\to\mathbb Z$ that induces $\htt:\Lambda\to\mathbb Z$ as above, there is an exact category $(\scrA,\scrE)$, as constructed in Example \ref{example1}. Thus, $(\scrA,\scrE)$ consists of the full subcategory $\scrA=\scrA_l$
of  $\sH$-mod whose objects are left $\sH$-modules, finite and torsion-free over $\sZ$, and the (full) ``subcategory'' $\scrE=\scrE_l$ (in the sense of footnote \ref{exact sub} in Chapter 1) of short exact sequences
$0\to M'\to M\to M''\to0$ in $\scrA$ satisfying the condition that $0\to M'_h\to M_h\to M''_h\to0$ is exact,  for all heights $h$.
%\footnote{If we view $\scrE$ as a category, then the definition for a full exact subcategory at the beginning of Subsection 1.3A can be interpreted as both $\scrA$ and $\scrE$ are full subcategories of $\scrB$ and $\scrF$, respectively.}
(Compare the right-module version given in \cite[Constr. 3.5]{DPS17} or Theorem \ref{A1} below.)

  Below, building on $(\scrA,\scrE)=(\scrA_l,\scrE_l)$, we present three more examples of exact categories:
$$\sH\text{-mod}\geq(\scrA,\scrE)\geq (\scrA(\scrL),\scrE(\scrL))\geq (\scrA(\scrL),\scrE^\flat(\scrL))\geq (\scrA^\flat(\scrL),\scrE^\flat(\scrL)),$$
which will be important in the sequel.  Here ``$\geq$'' indicates ``a full subcategory''. In particular, $\Hom_{\mathscr C}(M,N)=\Hom_\sH(M,N)$ for objects $M,N$ in $\mathscr C$ and every $\mathscr C=\scrA,\scrA(\scrL)$, etc.
  
\medskip%\noindent
{\bf 2.1A. The exact category $\boldsymbol{(\scrA(\scrL),\scrE(\scrL))}$.}  \index{exact category! $\sim$ $(\scrA(\scrL),\scrE(\scrL))$}
 Its construction involves the category $\sH$-mod
of (finite) left $\sH$-modules, but it closely follows the more abstract discussion in \cite[Assump. 3.2 and Constr. 3.8]{DPS17} which worked with the category mod-$H$ of right modules for a suitable algebra $H$ more general than the generic Hecke
algebra $\sH$.

For any integer $h$, let $\scrL_h$ be the full subcategory
of $\scrA$ consisting of modules $N$ which are a finite direct sum of modules isomorphic to left-cell modules $S(\omega)$ of height $h$.
 Let $\scrL$ be the set-theoretic union of the $\scrL_h$.
Next, let 
$\scrA(\scrL)$ be the full additive subcategory of $\sH$-mod with objects $M$ (necessarily in $\scrA_l$) whose $\htt$-filtration
\eqref{lcf}
%\begin{equation}\label{lcf}
%M=M_0\supseteq M_1\supseteq M_2\supseteq\cdots\supseteq M_{n-1}\supseteq M_n=0,
%\end{equation}
has the property that, for any integer $h$,  $M_{h-1}/M_h$ is an object in 
$\scrL_{-h}$.\footnote{The minus sign is chosen to be compatible with the duality theory developed later in Sections 2.2 and 2.3. In particular, we will consider dual left-cell modules $S_\omega$, for each $\omega\in\Omega$, and assign to each $S_\omega$ a positive height, negative to that of $S(\omega)$. See the beginning paragraphs of Section 2.3 and also (\ref{ht pair}). Note that, with Notation \ref{notation1}, this height filtration coincides with the one defined in Example \ref{example1}.}
We will also informally call such an $\htt$-filtration {\it a left-cell filtration}.\footnote{\label{LcellF}Clearly, every  $\htt$-filtration here may be refined to a filtration with sections isomorphic to left-cell modules. This refined filtration is a left-cell filtration  in the sense of \cite[\S2.2]{DPS98a}.} \index{cell filtration} \index{cell filtration! left $\sim$}
 
 By analogy with
\cite[\S3]{DPS17}, let $\scrE(\scrL)$ consist of those sequences $(A\to B\to C)$ of objects
in $\scrA(\scrL)$ such that, for every $h$, 
$$0\longrightarrow A_{h-1}/A_{h}\longrightarrow B_{h-1}/B_{h}\longrightarrow C_{h-1}/C_{h}\longrightarrow 0$$ is a {\it split}
short exact sequence in the additive category $\scrL_{-h}$ (or, equivalently, in the category $\sH$--mod). Then, 
\begin{equation}\label{display1}{\text{\rm{$(\scrA(\scrL),\scrE(\scrL))$ is an
exact category.}}}\end{equation}
This was proved in the more abstract setting of right $H$-modules in \cite[Th. 3.9]{DPS17} for an analogous
category $(\scrA(\scrS),\scrE(\scrS))$  associated to $H$. The proof in the present situation
is entirely similar. The result can also be deduced from the special case $H=\sH^{\text{op}}$ of the cited theorem.

 Note that left-cell modules are, by definition, finite and free over $\sZ$.  This implies that any object in 
 $\scrA(\scrL)$ is also finite and free over $\sZ$. Looking ahead to Section 2.2, this 
 implies that $\scrA(\scrL)\subseteq\scrA'_l$,  the
 left-hand version of the category  $\scrA'$ introduced in Theorem \ref{B2}. Thus, $\scrA'_l$ denotes the full additive subcategory of objects $M\in\scrA_l$ for which $M_h/M_{h+1}$ is projective as a $\scrK$-module, for each $h\in\mathbb Z$, and $\scrE'_l=\scrE_l|_{\scrA_l'}$.
 Also, $\scrE(\scrL)\subseteq\scrE'_l$, so
 that $(\scrA(\scrL), \scrE(\scrL))$ is a (full) exact subcategory of $(\scrA'_l,\scrE'_l)$; see the definition at the beginning of Subsection 1.3A.  
 
Note also that, for each object $M$ in $\scrA(\scrL)$ with the $\htt$-filtration given in display \eqref{lcf}, each inclusion $M_h\subseteq M_{h-1}$ gives a conflation $(M_h\to M_{h-1}\to M_{h-1}/M_h)\in\scrE(\scrL)$, for all $1\leq h\leq n$, so that its $\sZ$-dual $M^*$ satisfies the condition (1) in the Stratification Hypothesis \ref{hypothesis}. 
 
\medskip%\noindent
{\bf 2.1B. The exact category $\boldsymbol{(\scrA(\scrL),\scrE^\flat(\scrL))}$.} \index{exact category! $\sim$ $(\scrA(\scrL),\scrE^\flat(\scrL))$}
With the notation above, let $\scrE^\flat(\scrL)$ denote the full subcategory of $\scrE(\scrL)$ with objects
$(M\to N\to P)\in\scrE(\scrL)$ such that, for any integer $h$ and any subset $J$ of $S$, 
the sequence
\begin{equation}\label{shortie}0\longrightarrow (M_h)^{\sH_J} \longrightarrow (N_h)^{\sH_J} \longrightarrow (P_h)^{\sH_J}   \longrightarrow 0\end{equation}
of fixed points is exact. Note that, for any $h$, each $0\to M_h/M_{h-1}\to N_h/N_{h-1}\to P_h/P_{h-1}\to 0$ is split in
$\sH$-mod, since $(M\to N\to P)\in\scrE(\scrL)$, but this does not mean that the short exact sequence $0\to M_h\to N_h\to P_h\to 0$ is split. In particular, the exactness of (\ref{shortie}) is a nontrivial property.

We have:

\begin{prop}\label{Prop2} 
$(\scrA(\scrL),\scrE^\flat(\scrL))$
is an exact category.
\end{prop}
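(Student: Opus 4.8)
The plan is to realize $(\scrA(\scrL),\scrE^\flat(\scrL))$ as the full exact subcategory of the already-constructed exact category $(\scrA(\scrL),\scrE(\scrL))$ carved out by a family of left-exact additive functors, and then to invoke the general principle behind \cite[Lem. 3.1]{DPS17} in its left-module form. First I would observe that, by the canonicity of the $\htt$-filtration recalled in Notation \ref{notation1} and Example \ref{example1}, each truncation $M\mapsto M_h$ is an additive endofunctor of $\scrA(\scrL)$: an $\sH$-map $f\colon M\to N$ restricts to $M_h\to N_h$, and truncation commutes with finite direct sums. Composing with the evidently left-exact, additive ``fixed-point'' functor $(-)^{\sH_J}\colon\sH\text{-mod}\to\sZ\text{-mod}$ (left exact as noted just after Lemma \ref{onto}), one obtains, for each $h\in\Z$ and each $J\subseteq S$, an additive functor $E_{h,J}\colon\scrA(\scrL)\to\sZ\text{-mod}$, $E_{h,J}(M)=(M_h)^{\sH_J}$, and $\scrE^\flat(\scrL)$ is by definition the class of $\xi\in\scrE(\scrL)$ for which every $E_{h,J}(\xi)$ is the \emph{short} exact sequence \eqref{shortie}.

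The key preliminary step is to show that every $E_{h,J}$ carries a conflation $(A\to B\to C)\in\scrE(\scrL)$ to a \emph{left}-exact sequence $0\to(A_h)^{\sH_J}\to(B_h)^{\sH_J}\to(C_h)^{\sH_J}$. Since $(-)^{\sH_J}$ is left exact, it suffices to prove that $0\to A_h\to B_h\to C_h\to 0$ is a short exact sequence of $\sH$-modules for every $h$, and I would prove this by descending induction on $h$: at the top of the $\htt$-filtration all three terms vanish, and for the inductive step one applies the nine lemma to the $3\times 3$ commutative diagram (commutativity from functoriality of truncation) whose rows are the honest short exact sequences $0\to M_h\to M_{h-1}\to M_{h-1}/M_h\to 0$ for $M=A,B,C$, whose right-hand column is the split exact graded-piece sequence supplied by the definition of $\scrE(\scrL)$, and whose left-hand column is exact by the inductive hypothesis. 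Given this, ``$E_{h,J}(\xi)$ short exact'' is exactly the surjectivity asserted in \eqref{shortie}, so $\scrE^\flat(\scrL)$ is the promised functor-defined class.

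Finally I would quote (or, for completeness, reprove) the abstract statement: if $(\scrB,\scrF)$ is an exact category and $\{E_\alpha\colon\scrB\to\mathcal A_\alpha\}$ is a family of additive functors to abelian categories, each carrying $\scrF$-conflations to left-exact sequences, then the conflations $\xi\in\scrF$ with all $E_\alpha(\xi)$ short exact form an exact structure $\scrF^\flat$ on $\scrB$, and $(\scrB,\scrF^\flat)$ is a full exact subcategory of $(\scrB,\scrF)$ --- this is the left-module counterpart of \cite[Lem. 3.1]{DPS17}. Its axioms are formal; the representative non-automatic point is base change (and dually cobase change), where one uses that $B\times_C C'$ sits in an $\scrF$-conflation $0\to B\times_C C'\to B\oplus C'\to C\to 0$, so left-exactness of each $E_\alpha$ on conflations exhibits $E_\alpha(B\times_C C')$ as the pullback of $E_\alpha(B)\twoheadrightarrow E_\alpha(C)\leftarrow E_\alpha(C')$, together with the fact that a pullback of an epimorphism in an abelian category is an epimorphism. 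Taking $(\scrB,\scrF)=(\scrA(\scrL),\scrE(\scrL))$ and $\{E_\alpha\}=\{E_{h,J}\}_{h\in\Z,\,J\subseteq S}$ then gives the proposition. I expect the only place with genuine content --- as opposed to bookkeeping or citation --- to be the left-exactness input of the middle paragraph, namely that $\scrE(\scrL)$-conflations survive every height truncation as short exact sequences; once that is in hand the rest is the formal machinery above. Note that Lemma \ref{onto} is \emph{not} needed for this proof; it enters only later, when one must verify that specific objects such as $\sH x_J$ (and specific conflations built from dual-left-cell filtrations) actually lie in $(\scrA^\flat(\scrL),\scrE^\flat(\scrL))$.
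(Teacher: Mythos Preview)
Your proposal is correct and follows essentially the same route as the paper: bundle the functors $E_{h,J}=(-)_h^{\sH_J}$, observe they are $\scrE(\scrL)$-left exact, and invoke \cite[Lem.~3.1]{DPS17}. The paper compresses your middle paragraph into the single assertion that the combined functor $F=\bigoplus_{J,h}((-)_h)^{\sH_J}$ is $\scrE(\scrL)$-left exact (indeed, $\scrE(\scrL)\subseteq\scrE_l$ already forces each height-truncated sequence to be short exact, so your descending induction is reproving that containment), and then applies the lemma directly; your remark that Lemma~\ref{onto} is not needed here is also correct.
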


\begin{proof} Let $\mathscr C$ be the abelian category of all finite $\sZ$-modules. Put
$$F(-):=\bigoplus_{J,h}((-)_h)^{\sH_J}:\scrA(\scrL)\to \scrC.$$
Then, $F$ is an $\scrE(\scrL)$-left exact functor. Now we apply \cite[Lem. 3.1]{DPS17}, with $\scrE$ there
taken to be $\scrE^\flat(\scrL)$ as defined above.
\end{proof}

%\smallskip\noindent
%{\bf The left $\bold{\sH}$-{\bf modules} ${\sH x_J, \,\,J{\subseteq} S}$.} 
For $J{\subseteq} S$, if $W_J$ is the parabolic subgroup of $W$ defined by $J$, then $\sH x_J$ is the usual (left) $q$-permutation module determined by any $J\subseteq S$, where
\begin{equation}\label{xJ} x_J:=\sum_{w\in W_J}T_w\end{equation}
was defined in display \eqref{longest}. 
For future use in Section 5.2, we also define
\begin{equation}\label{yJ} y_J:=\sum_{w\in W_J}\sgn(w)(t^2)^{-L(w)}T_w,\end{equation}
where $\sgn(w):=(-1)^{\ell(w)}$ is the sign of $w$.  See \cite[pp. 324--326]{DPS98c}, taking $q=t^2$ in our current notation.

\begin{prop}\label{HxJ}The $\sH$-module $\sH x_J$ belongs to $\scrA(\scrL)$.  
\end{prop}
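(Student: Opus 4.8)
The plan is to reduce the statement to the classical fact that $q$-permutation modules carry a filtration by left-cell modules, and then to pass from such a filtration to membership in $\scrA(\scrL)$ using property \eqref{order} together with the construction of the $\htt$-filtration from Example~\ref{example1}.

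First I would dispose of two routine points. For $w$ a minimal-length representative of a coset $wW_J$ and $u\in W_J$ one has $T_wT_u=T_{wu}$ with additive lengths, so $\{T_w x_J\}$, as $w$ ranges over these minimal coset representatives, is a $\sZ$-basis of $\sH x_J$; hence $\sH x_J$ is finite and free over $\sZ$, so it is an object of $\scrA_l=\scrA$. Moreover \eqref{longest} gives $x_J=t^{L(w_{0,J})}\scc_{w_{0,J}}$ with $t^{L(w_{0,J})}$ a unit of $\sZ$, so $\sH x_J=\sH\scc_{w_{0,J}}$; in particular, by \eqref{hxyz}, $\sH x_J$ is contained in the left ideal $\sH_{\le_L w_{0,J}}$.

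The main input is that $\sH x_J$ admits a \emph{left-cell filtration}: a filtration $0=N^0\subseteq N^1\subseteq\cdots\subseteq N^r=\sH x_J$ by $\sH$-submodules whose sections $N^i/N^{i-1}$ are left-cell modules $S(\omega_i)$, $\omega_i\in\Omega$. This is the left-module incarnation of the dual-left-cell filtration of the $q$-permutation module $x_J\sH$ recorded in \cite[\S2.2]{DPS98a} (and revisited in Section~3.1); it comes ultimately from the parabolic Kazhdan--Lusztig ($W$-graph) structure on the basis $\{T_w x_J\}$ of $\sH x_J=\sH\scc_{w_{0,J}}$, whose graph-theoretic cells furnish the sections.

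Finally I would pass from a left-cell filtration to membership in $\scrA(\scrL)$. By Example~\ref{example1} the $\htt$-filtration \eqref{lcf} of $M:=\sH x_J$ is intrinsically attached to $M$, and I would check that it is the coarsening of the left-cell filtration of the previous paragraph obtained by grouping together the sections of equal height; thus each $M_{h-1}/M_h$ inherits a filtration whose pieces are left-cell modules $S(\omega)$ with $\htt(\omega)=-h$. That $M_{h-1}/M_h$ is moreover a \emph{direct sum} of such modules --- i.e. lies in $\scrL_{-h}$ --- is where the standing hypothesis enters: distinct left cells of a common height lie in a common two-sided cell (Corollary~\ref{discussion}), hence are $\le_L$-incomparable by property \eqref{order}, whence, via the left-ideal filtration \eqref{H<} of $\sH$, the sections of $M_{h-1}/M_h$ admit no mutual extensions. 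Granting this, $M\in\scrA(\scrL)$. I expect the substantive step to be the existence of the left-cell filtration of $\sH x_J$ --- classical, from the theory of $q$-permutation modules, but not formal --- while its conversion into $\scrA(\scrL)$-membership is bookkeeping with the height-filtration formalism plus \eqref{order}; the one subtlety to watch in that conversion is the treatment of isotypic parts (repeated cells of a common height), which the preorder argument alone does not control and which may need the exactness structure of $\scrE(\scrL)$.
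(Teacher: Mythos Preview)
Your overall strategy---produce a left-cell filtration of $\sH x_J$, coarsen it by height, and argue that each height-layer splits---can be completed, but as written it contains a factual error and a genuine gap, and it is more roundabout than the paper's argument.

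The error: you assert that ``distinct left cells of a common height lie in a common two-sided cell (Corollary~\ref{discussion}).'' This is false in general; a height function compatible with $\leq_{LR}$ need not be injective on $\overline\Omega$. What Corollary~\ref{discussion} (with display~\eqref{preorder}) actually gives is that $\omega<_L\omega'$ forces $\htt(\omega)<\htt(\omega')$, so left cells of equal height are $\leq_L$-incomparable. That is the conclusion you want, but for a different reason.

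The gap: from $\leq_L$-incomparability alone you cannot deduce ``no mutual extensions,'' and you rightly flag the isotypic issue yourself. There is no abstract $\Ext^1$-vanishing in $\sH$-mod that would split an arbitrary filtration of $M_{h-1}/M_h$ by cell modules of a fixed height, so this step needs something else.

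The paper sidesteps both problems by working directly with the Kazhdan--Lusztig basis. From \eqref{longest} one has $\sH x_J=\sH\,\scc_{w_{0,J}}$, and via \eqref{hxyz} together with Lemma~\ref{w_0,J} this module has $\sZ$-basis $\{\scc_v:v\in V\}$, where $V=\{w:w\leq_L w_{0,J}\}$ is a $\leq_L$-ideal and hence a union of left cells. One then takes $M_{h-1}$ to be the $\sZ$-span of those $\scc_v$ whose left cell has height $\leq -h$; compatibility of $\htt$ with $\leq_L$ makes each such $V_{h-1}$ a $\leq_L$-ideal, so $M_{h-1}$ is an $\sH$-submodule. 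The quotient $M_{h-1}/M_h$ then has $\sZ$-basis indexed by the disjoint union of left cells in $V$ of height exactly $-h$, and the span of the basis elements from any single such $\omega$ is already an $\sH$-submodule isomorphic to $S(\omega)$ (acting by $\scc_s$ either stays in $\omega$ or strictly drops in $\leq_L$, hence drops in height, and those terms vanish in the quotient). So the direct-sum decomposition of each section is immediate from the basis---no reordering of a pre-existing filtration and no extension argument is needed.
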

\begin{proof}%This may be proved as follows: 
Let $M$ denote $\sH x_J$. % From \cite[(2.3.3) \& Lem. (2.3.5)]{DPS98a}, 
Using display \eqref{hxyz}, we find
$M$ has a basis of elements $\scc_v$, with $v\in W$, in
a certain quasi-poset ideal $V$ for $\leq_L$, necessarily closed under $\sim_L$. In fact, we have
$V=\{w\in W\mid ws<w,\forall s\in J\}=\{w\in W\mid w\leq_Lw_{0,J}\}$; see \cite[(2.3.3)]{DPS98a} or Lemma \ref{w_0,J} in this monograph.
The quasi-poset ideal $V$ is, thus, a union of
left cells. For any integer $h$, let $V_{h-1}$ be the set of all $v\in V$ belonging to a left cell $\omega$ with 
$\htt(\omega)\leq -h$. Then, each $V_{h-1}$ is, by construction, a union of left cells, and is a quasi-poset ideal, since
$\htt$ is compatible with  $\leq_L$ on $\Omega$. (This was observed in Notation \ref{notation1} as a consequence of (\ref{order}).) The $\sZ$-span $M_{h-1}$ of $V_{h-1}$ is, thus, a left  
$\sH$-module, as follows from the definition of $\leq_L$ \cite[p. 43]{Lus03}.  Similar considerations identify
$M_{h-1}/M_h$ as a direct sum of left-cell modules of height $-h$, one summand for each left cell in  $ V$ of height  
$-h$. Thus, $M\in\scrA(\scrL)$, proving the assertion.
%Let $M$ denote $\sH x_J$. From \cite[(2.3.3) \& Lem. (2.3.5)]{DPS98a}, $M$ has a basis of elements $\scc_v$ with $v\in W$ in
%a certain quasi-poset ideal $V$ for $\leq_L$, necessarily closed under $\sim_L$. In fact, we have
%$V=\{w\in W\mid ws<w;,\forall s\in J\}.$
%The quasi-poset ideal $V$ is, thus, a union of
%left cells. For any integer $h$, let $V_{h-1}$ be the set of all $v\in V$ belonging to a left cell $\omega$ with 
%$\htt(\omega)\leq -h$. Then each $V_{h-1}$ is, by construction, a union of left cells, and is a quasi-poset ideal, since
%$\htt$ is compatible with  $\leq_L$ on $\Omega$. (This was observed in Notation \ref{notation1} as a consequence of (\ref{order}).) The $\sZ$-span $M_{h-1}$ of $V_{h-1}$ is, thus, a left  
%$\sH$-module, as follows from the definition of $\leq_L$ \cite[p. 43]{Lus03}.  Similar considerations identify
%$M_{h-1}/M_h$ as a direct sum of left cell modules of height $-h$, one summand for each left cell in  $ V$ of height  
%$-h$. Thus, $M\in\scrA(\scrL)$, proving the assertion. 
\end{proof}

%\begin{rem} 
We remark that improvements of Proposition \ref{HxJ} occur in several places in this monograph. For example, Corollary \ref{3.7a} gives the sharper result that, for $J\subseteq S$, $\sH x_J\in \scrA^\flat(\scrL)$. See 
Theorem \ref{huh?} for a right-module version involving the exact category $(\scrA_\flat(\scrL^*),\scrE_\flat(
\scrL^*))$.
%\end{rem}

%Later, in Corollary \ref{3.7a}, we will show $M$ belongs to a more refined category $\scrA^\flat(\scrL)$.

\begin{cor}\label{hotcor} Let $M=\sH x_J$, and consider  the decreasing filtration 
$$M=M_0\supseteq M_1\supseteq \cdots\supseteq M_{l_J}=0$$
defined by the height function $\htt$. For any $h$, the short exact sequence $0\to M_h\to M_{h-1}\to M_{h-1}/M_h\to 0$
belongs of $\scrE^\flat(\scrL)$.
 \end{cor}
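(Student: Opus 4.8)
The plan is to check directly the two defining conditions for membership of the conflation $(M_h\to M_{h-1}\to M_{h-1}/M_h)$ in $\scrE^\flat(\scrL)$, after first making the relevant $\htt$-filtrations explicit using the proof of Proposition \ref{HxJ}. Recall from that proof that, with $V=\{w\in W\mid w\leq_L w_{0,J}\}$ and $V_{j-1}=\{v\in V\mid \htt(\omega_v)\leq -j\}$ (where $\omega_v$ denotes the left cell containing $v$), the term $M_{j-1}$ of the $\htt$-filtration of $M=\sH x_J$ is the $\sZ$-span of $\{\scc_v\mid v\in V_{j-1}\}$. In particular each $M_{j-1}$ is a left ideal of $\sH$ spanned by the Kazhdan--Lusztig basis elements it contains, and $V_{j-1}$ is a $\leq_L$-quasi-poset ideal that is a union of left cells; so the argument of Proposition \ref{HxJ}, applied to $V_{j-1}$ in place of $V$, shows $M_{j-1}\in\scrA(\scrL)$ and identifies its $\htt$-filtration with the truncation $M_{j-1}=\cdots=M_{j-1}\supseteq M_j\supseteq M_{j+1}\supseteq\cdots\supseteq 0$ of that of $M$. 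Concretely, $(M_h)_{h'}=M_{\max(h,h')}$ and $(M_{h-1})_{h'}=M_{\max(h-1,h')}$. Likewise $Q:=M_{h-1}/M_h$ is a direct sum of left-cell modules of height $-h$, so $Q\in\scrL_{-h}\subseteq\scrA(\scrL)$ with $Q_{h-1}=Q$, $Q_h=0$; and $(M_{h-1})_{h'}/(M_h)_{h'}=Q_{h'}$ for every $h'$.

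Given these identifications, membership in $\scrE(\scrL)$ is immediate: the degree-$j$ graded piece of $0\to M_h\to M_{h-1}\to Q\to 0$ is $0\to 0\to Q\to Q\to 0$ when $j=h$, is $0\to M_{j-1}/M_j\overset{\sim}{\longrightarrow}M_{j-1}/M_j\to 0\to 0$ when $j>h$, and is the zero sequence when $j<h$; each is a split short exact sequence in $\scrL_{-j}$. For the $\scrE^\flat(\scrL)$-condition, fix an integer $h'$ and a subset $K\subseteq S$. When $h'\geq h$ the sequence $0\to ((M_h)_{h'})^{\sH_K}\to ((M_{h-1})_{h'})^{\sH_K}\to (Q_{h'})^{\sH_K}\to 0$ is $0\to (M_{h'})^{\sH_K}\to (M_{h'})^{\sH_K}\to 0\to 0$, which is exact. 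So only $h'\leq h-1$ remains, where the sequence is
$$0\longrightarrow (M_h)^{\sH_K}\longrightarrow (M_{h-1})^{\sH_K}\longrightarrow (M_{h-1}/M_h)^{\sH_K}\longrightarrow 0 .$$
Left-exactness of the functor $(-)^{\sH_K}$ gives exactness at the first two terms, and surjectivity onto $(M_{h-1}/M_h)^{\sH_K}$ is exactly Lemma \ref{onto} applied to the pair of left ideals $\mathfrak I=M_h\subseteq \mathfrak K=M_{h-1}$, each spanned by the Kazhdan--Lusztig basis elements it contains. Hence the conflation lies in $\scrE^\flat(\scrL)$.

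I expect the only real work here to be bookkeeping: correctly reading off the induced $\htt$-filtrations on the submodules $M_h,M_{h-1}$ and on the quotient $Q$ from the proof of Proposition \ref{HxJ}, and then recognizing that the sole non-formal ingredient in the $\scrE^\flat(\scrL)$-condition is the fixed-point surjectivity, which is precisely Lemma \ref{onto}. There is no additional hard estimate; the compatibility of $\htt$ with $\leq_L$ that underlies the left-ideal/Kazhdan--Lusztig-basis structure of the $M_j$ is already available from Notation \ref{notation1}.
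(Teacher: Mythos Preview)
Your proof is correct and follows essentially the same approach as the paper: the $\scrE(\scrL)$-membership comes from the Kazhdan--Lusztig basis description in Proposition~\ref{HxJ}, and the fixed-point surjectivity from Lemma~\ref{onto}. You are simply more explicit than the paper about the filtration-level bookkeeping in the $\scrE^\flat(\scrL)$ definition, correctly observing that all cases $h'$ are either trivial or reduce to the single application of Lemma~\ref{onto}.
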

\begin{proof}By the discussion at the end of the proof of Proposition \ref{HxJ}, each inclusion $M_{h}\subseteq M_{h-1}$ is an inflation. Thus, each
$(M_{h}\to M_{h-1}\to M_{h-1}/M_{h})\in\scrE(\scrL)$. By
 Lemma \ref{onto}, the  sequence
$0\to (M_{h})^{\sH_J}\to (M_{h-1})^{\sH_J}\to (M_{h-1}/M_{h})^{\sH_J}\to 0$ is exact for all $J\subseteq S$.
Hence,  by definition (\ref{shortie}), the short exact sequence $0\to M_h\to M_{h-1}\to M_{h-1}/M_h\to 0$
belongs of $\scrE^\flat(\scrL)$.
\end{proof}

In the following theorem, the $\Ext^1_{\scrE^\flat(\scrL)}$-group is calculated in the exact category $(\scrA(\scrL),
\scrE^\flat(\scrL))$. Subsection 2.1C below introduces a second exact category  $(\scrA^\flat(\scrL),
\scrE^\flat(\scrL))$, possibly setting up a conflict of notation. However, as pointed out in Proposition \ref{prop3} (or the proof of its corollary), the $\Ext^1_{\scrE^\flat(\scrL)}(M,N)$-groups can
be calculated in either exact category, as long as $M,N$ both belong to $\scrA^\flat(\scrL)$. We also remark that
the condition (\ref{strange}) plays an essential role in the proof of the theorem.

\begin{thm}\label{ext0} For any $J\subseteq S$, $\Ext_{\scrE^\flat(\scrL)}^1(\sH x_J,S(\omega))=0$, for all $\omega\in\Omega$.
\end{thm}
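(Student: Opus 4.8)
The goal is to show $\Ext^1_{\scrE^\flat(\scrL)}(\sH x_J, S(\omega)) = 0$ for every $\omega\in\Omega$. An element of this group is represented by a conflation $(S(\omega)\to N\to \sH x_J)$ in $\scrE^\flat(\scrL)$, and I want to split it. The natural strategy is to lift the identity map $\sH x_J\to \sH x_J$ through the deflation $N\twoheadrightarrow \sH x_J$, i.e., to produce an $\sH$-module section. Since $\sH x_J$ is a cyclic $\sH$-module generated by $x_J$, such a section amounts to choosing a preimage $\tilde x_J\in N$ of $x_J$ which satisfies the same $\sH$-relations, i.e., which lies in $N^{\sH_J}$ (the fixed-point submodule for $\sH_J$), because $x_J$ spans $(\sH x_J)^{\sH_J}$ (this is essentially the defining property of $x_J$, cf. \eqref{longest} and Lemma \ref{onto}'s context — $x_J$ is annihilated appropriately by the $\scc_s$, $s\in J$, up to the eigenvalue $(t_s+t_s^{-1})$).

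First I would reduce to a statement about fixed points: a splitting of the conflation exists iff the surjection $N^{\sH_J}\to (\sH x_J)^{\sH_J}$ hits $x_J$ (one then checks the induced $\sH$-map $\sH x_J\to N$ is a genuine splitting in $\scrA(\scrL)$, using that $\sH x_J$ is $\sZ$-free so the map is an inflation, and that it is compatible with height filtrations since $x_J$ sits in the bottom height piece). Here is where the definition of $\scrE^\flat(\scrL)$ is used essentially: by \eqref{shortie}, applied with this particular $J$ and with $h$ equal to the bottom height of $\sH x_J$ (the height $-l_J$ where $x_J$ lives, as identified in the proof of Proposition \ref{HxJ}), the sequence
\begin{equation}\label{F}
0\to (S(\omega)_h)^{\sH_J}\to (N_h)^{\sH_J}\to ((\sH x_J)_h)^{\sH_J}\to 0
\end{equation}
is exact. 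Since $x_J$ lies in $(\sH x_J)_h$ for that bottom $h$ (the height filtration of $\sH x_J$ has $x_J$ in its smallest nonzero term), and the right-hand term contains the image of $x_J$, surjectivity of \eqref{F} gives a preimage $\tilde x_J\in (N_h)^{\sH_J}\subseteq N^{\sH_J}$. This is exactly the needed section on generators.

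The main obstacle — and the step requiring care — is verifying that the resulting $\sH$-homomorphism $\varphi\colon \sH x_J\to N$, $x_J\mapsto \tilde x_J$, is (i) well-defined, (ii) a morphism in the exact category $(\scrA(\scrL),\scrE^\flat(\scrL))$, i.e. compatible with the height filtrations strata-by-strata, and (iii) actually splits the conflation inside $\scrE^\flat(\scrL)$ (not merely in $\sH$-mod). For (i), well-definedness follows because the annihilator of $x_J$ in $\sH$ is generated by the relations forcing membership in a fixed-point module — so any $\sH_J$-fixed preimage works; I would invoke the description of $\sH x_J$ from \cite[(2.3.3)]{DPS98a}/Lemma \ref{w_0,J} to make the annihilator explicit. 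For (ii), since $\varphi$ sends the bottom height piece of $\sH x_J$ into $N_h$ and $\sH$-module maps are automatically filtration-compatible once they respect the bottom (the higher strata are generated over $\sH$ from below, and $\htt$ is compatible with $\leq_L$), the induced maps on sections $M_{h-1}/M_h$ land correctly; one checks these are split injections into $\scrL_{-h}$, using that $\scrE(\scrL)$-conflations are strata-split. For (iii), composing with the deflation $N\to \sH x_J$ gives the identity by construction (both are $\sH$-maps agreeing on the generator $x_J$), and then the splitting is inherited in $\scrE^\flat(\scrL)$ because the fixed-point exactness condition \eqref{shortie} is automatically preserved under a split conflation. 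I expect (ii) to be the genuinely fiddly part, but it is routine given the machinery already set up (Proposition \ref{HxJ}, Corollary \ref{hotcor}, and the compatibility of $\htt$ with $\leq_L$ recorded in Notation \ref{notation1}).
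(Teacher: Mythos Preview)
Your approach is essentially the paper's: lift $x_J$ to an $\sH_J$-fixed preimage in the middle term using the defining exactness of $\scrE^\flat(\scrL)$, then extend to an $\sH$-map splitting the deflation. However, the paper's execution is much shorter, and several of your concerns are unnecessary.

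First, you do not need a specific height $h$: the condition \eqref{shortie} is required for \emph{all} $h$, so taking $h=0$ gives exactness of $0\to S(\omega)^{\sH_J}\to N^{\sH_J}\to (\sH x_J)^{\sH_J}\to 0$ directly, and $x_J$ lies in $(\sH x_J)^{\sH_J}$ trivially. (In fact your claim that $x_J$ sits in the \emph{bottom} height piece is backwards: since $w_{0,J}$ is $\leq_L$-maximal in the ideal $\{w\leq_L w_{0,J}\}$, the element $\scc_{w_{0,J}}$ lies in the \emph{top} section of the decreasing filtration; but this is moot once you use $h=0$.)

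Second, your checks (i)--(iii) dissolve. For (i), the paper invokes Frobenius reciprocity: $\sH x_J$ is induced from the rank-one $\sH_J$-module $\sZ x_J$, so any element of $N^{\sH_J}$ determines a unique $\sH$-map $\sH x_J\to N$. For (ii) and (iii), recall that $\scrA(\scrL)$ is a \emph{full} subcategory of $\sH$-mod, so any $\sH$-module map between its objects is already a morphism in $\scrA(\scrL)$; and a conflation that splits in $\sH$-mod is isomorphic (in $\scrA(\scrL)$, hence in $\scrE^\flat(\scrL)$) to the trivial split conflation, so represents zero in $\Ext^1_{\scrE^\flat(\scrL)}$. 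No filtration compatibility or strata-level analysis is required.
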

\begin{proof}Consider an exact sequence 
\begin{equation}\label{E}
0\longrightarrow S(\omega)\longrightarrow E\ \overset g\longrightarrow\sH x_J\longrightarrow 0
\end{equation} in $\scrE^\flat(\scrL)$. We want to prove that the sequence is split. By definition, we obtain an exact sequence
\begin{equation}\label{F}
0\longrightarrow S(\omega)^{\sH_J}\longrightarrow E^{\sH_J}\overset{g_J}\longrightarrow (\sH x_J)^{\sH_J}\longrightarrow 0.
\end{equation}
Consider the $\sH_J$-submodule $\sZ x_J$ of $(\sH x_J)^{\sH_J}$. The exactness of (\ref{F}) implies that there exists $e\in E^{\sH_J}$ such that $g_J(e)=x_J$. Thus, $\sZ x_J\cong\sZ e$ as $\sH_J$-modules. Since $\sZ e\subseteq E$, this induces an $\sH_J$-module homomorphism $f_J:\sZ x_J\to E$. By Frobenius reciprocity, there is an $\sH$-module homomorphism
$f:\sH x_J\to E$ such that $f(x_J)=e$. Hence, $gf(x_J)=x_J$, and, consequently, $gf$ is the identity map on $\sH x_J$. In other words, the sequence \eqref{E} is split.
\end{proof}

Again, we note that $\scrE^\flat(\scrL)\subseteq\scrE(\scrL)\subseteq \scrE'_l$, the latter defined in Section 2.2 (see
Theorem \ref{B2}). Thus,
 $(\scrA(\scrL),\scrE^\flat(\scrL))$ is a full exact subcategory of $(\scrA'_l,\scrE'_l)$. 
 
 Note that, in the exact category $(\scrA(\scrL),\scrE^\flat(\scrL))$, not every object satisfies the property in Corollary \ref{hotcor}. This motivates the definition of the following full subcategory.

\medskip%\noindent
{\bf 2.1C. The exact category $\boldsymbol{(\scrA^\flat(\scrL),\scrE^\flat(\scrL))}$.} \index{exact category! $\sim$ $(\scrA^\flat(\scrL),\scrE^\flat(\scrL))$}
Let $\scrA^\flat(\scrL)$ be the full subcategory of $\scrA(\scrL)$ with objects $M$
having the property that, for any integer $h$ and any $J\subseteq S$, the map
\begin{equation}\label{surjective}(M_{h-1})^{\sH_J}\longrightarrow (M_{h-1}/M_{h})^{\sH_J}\end{equation}
is surjective.  In other words, each conflation
$(M_{h}\to M_{h-1}\to M_{h-1}/M_{h})$ in $\scrE(\scrL)$ gives rise, for each $J\subseteq S$, to a short exact seqeunce
$0\to (M_{h})^{\sH_J}\to (M_{h-1})^{\sH_J}\to (M_{h-1}/M_{h})^{\sH_J}\to0$. Hence, $(M_{h}\to M_{h-1}\to M_{h-1}/M_{h})\in\scrE^\flat(\scrL)$.

\begin{prop} \label{prop3} Let $M,P\in\scrA^\flat(\scrL)$, and assume a given $(M\to N\to P)\in\scrE(\scrL)$.
Then, $N\in\scrA^\flat(\scrL)$ if and only if $(M\to N\to P)\in\scrE^\flat(\scrL)$.
In particular, the full subcategory $\scrA^\flat(\scrL)$ of $\scrA(\scrL)$ is closed under extensions in $(\scrA(\scrL),
\scrE^\flat(\scrL))$.  \end{prop}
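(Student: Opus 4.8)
The plan is to reduce the whole statement, for each fixed height $h\in\mathbb Z$ and each $J\subseteq S$, to a comparison of short exact sequences of $\sZ$-modules via the Snake Lemma and the Five Lemma. First I would record the ``free'' exactness facts: since $(M\to N\to P)\in\scrE(\scrL)\subseteq\scrE_l$, each column $0\to M_h\to N_h\to P_h\to 0$ is exact, and each induced sequence $0\to M_{h-1}/M_h\to N_{h-1}/N_h\to P_{h-1}/P_h\to 0$ is split exact in $\scrL_{-h}$, hence split as $\sH$-modules; consequently, applying the additive, left-exact functor $(-)^{\sH_J}$ to the latter always yields a short exact sequence. Likewise, applying $(-)^{\sH_J}$ to a conflation $(M_h\to M_{h-1}\to M_{h-1}/M_h)$ always gives a left-exact sequence $0\to(M_h)^{\sH_J}\to(M_{h-1})^{\sH_J}\to(M_{h-1}/M_h)^{\sH_J}$, which is right exact precisely when $M\in\scrA^\flat(\scrL)$ at this $(h,J)$; the same applies to $N$ and $P$. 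So everything takes place inside the natural $3\times3$ commutative diagram obtained by applying $(-)^{\sH_J}$ to the diagram whose rows are the filtration steps $M_h\subseteq M_{h-1}\twoheadrightarrow M_{h-1}/M_h$ (and likewise for $N,P$) and whose columns are the three level-wise conflations of $(M\to N\to P)$.

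For the implication $(M\to N\to P)\in\scrE^\flat(\scrL)\Rightarrow N\in\scrA^\flat(\scrL)$, the hypothesis \eqref{shortie} supplies exactness of the $(-)^{\sH_J}$-images of the columns at levels $h$ and $h-1$. I would view these two as a morphism of short exact sequences with injective vertical maps and apply the Snake Lemma, obtaining a short exact sequence $0\to C_M\to C_N\to C_P\to 0$ of cokernels, where $C_M=(M_{h-1})^{\sH_J}/(M_h)^{\sH_J}$, and similarly for $N,P$. Left-exactness of $(-)^{\sH_J}$ identifies $C_M$ with the image of $(M_{h-1})^{\sH_J}$ in $(M_{h-1}/M_h)^{\sH_J}$, which is everything because $M\in\scrA^\flat(\scrL)$; similarly $C_P\cong(P_{h-1}/P_h)^{\sH_J}$; while $C_N$ is a priori only the image of $(N_{h-1})^{\sH_J}$ in $(N_{h-1}/N_h)^{\sH_J}$. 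Comparing the sequence of cokernels with the always-exact sequence $0\to(M_{h-1}/M_h)^{\sH_J}\to(N_{h-1}/N_h)^{\sH_J}\to(P_{h-1}/P_h)^{\sH_J}\to0$ through the evident commutative ladder, the Five Lemma forces $C_N\cong(N_{h-1}/N_h)^{\sH_J}$; i.e., $(N_{h-1})^{\sH_J}\to(N_{h-1}/N_h)^{\sH_J}$ is surjective, which is the defining property of $\scrA^\flat(\scrL)$ at $(h,J)$.

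For the converse $N\in\scrA^\flat(\scrL)\Rightarrow(M\to N\to P)\in\scrE^\flat(\scrL)$, left-exactness of $(-)^{\sH_J}$ reduces matters to surjectivity of $(N_h)^{\sH_J}\to(P_h)^{\sH_J}$ for all $h,J$, and I would prove this by descending induction on $h$ (trivial once $M_h=N_h=P_h=0$). Granting it at level $h$, I would form the morphism of short exact sequences given by the $(-)^{\sH_J}$-images of the $M$-row and the $N$-row---both short exact since $M,N\in\scrA^\flat(\scrL)$---with injective verticals, and again invoke the Snake Lemma to get a short exact sequence of cokernels whose outer terms are identified, using the inductive hypothesis and the splitting of the graded column, with $(P_h)^{\sH_J}$ and $(P_{h-1}/P_h)^{\sH_J}$, and whose middle term is the image of $(N_{h-1})^{\sH_J}$ in $(P_{h-1})^{\sH_J}$. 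Comparing with the $(-)^{\sH_J}$-image of the $P$-row (short exact since $P\in\scrA^\flat(\scrL)$), the Five Lemma makes the middle map an isomorphism, so $(N_{h-1})^{\sH_J}\to(P_{h-1})^{\sH_J}$ is onto, completing the induction. The last assertion of the proposition is exactly the case $M,P\in\scrA^\flat(\scrL)$ of the first implication.

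The hard part here is essentially bookkeeping rather than a new idea: one must keep straight which of the numerous short exact sequences hold by hypothesis \eqref{shortie}, which hold automatically by left-exactness of $(-)^{\sH_J}$ or by the splitting of the graded pieces coming from membership in $\scrE(\scrL)$, and which are the ones to be deduced. The one genuinely non-formal point is that in the converse direction the $(-)^{\sH_J}$-images of the level-$h$ columns are not available a priori---obtaining them is the whole point---so the argument there must be run as an induction on the height, feeding the level-$h$ conclusion into the level-$(h-1)$ Snake Lemma.
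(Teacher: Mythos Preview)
Your proof is correct and follows essentially the same approach as the paper: both arguments work with the same $3\times 3$ diagram of $\sH_J$-fixed points, and both use descending induction on $h$ for the converse direction. The only difference is cosmetic: where the paper simply invokes the $3\times 3$ Lemma (citing \cite[Ex.~2, p.~51]{Mac94}) to pass from exactness of two rows and two columns to exactness of the remaining row/column, you unwind that lemma explicitly via Snake plus Five.
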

\begin{proof} First, assume that $(M\to N\to  P)\in\scrE^\flat(\scrL)$. For an integer $h$ and a subset $J\subseteq S$, form the commutative diagram

\begin{equation}\label{Mac}\begin{CD}
(M_{h-1}/M_{h})^{\sH_J}@>>> (N_{h-1}/N_{h})^{\sH_J}@>>> (P_{h-1}/P_{h})^{\sH_J}\\
    @AAA@AAA @AAA \\
(M_{h-1})^{\sH_J}@>>> (N_{h-1})^{\sH_J}@>>> (P_{h-1})^{\sH_J}\\
    @AAA @AAA @AAA \\
(M_{h})^{\sH_J}@>>> (N_{h})^{\sH_J}@>>> (P_{h})^{\sH_J.}\end{CD}\end{equation}
Since $M,P\in\scrA^\flat(\scrL)$, columns 1 and 3 are short exact sequences. Since  $(M\to N\to P)\in\scrE^\flat(\scrL)$, the three rows are all short exact sequences as well. Row 1 is already, before taking fixed points, a split short
exact sequence of $\sH$-modules, because $(M\to N\to P)\in\scrE(\scrL)$. Thus, it remains exact upon taking $\sH_J$-fixed points. Hence, column 2 is also exact, using \cite[Ex. 2, p. 51]{Mac94}, so
that $N\in\scrA^\flat(\scrL)$, as required.

Conversely, if $N\in\scrA^\flat(\scrL)$, then all columns in (\ref{Mac}) are short exact sequences. 
 Row 1 is exact as in the previous paragraph. By induction on $h$, the bottom row is also a short exact sequence. By \cite[Ex. 2, p. 51]{Mac94} again,
row 2 is exact, proving that $(M\to N\to P)\in\scrE^\flat(\scrL)$, as required.
\end{proof}

If $\mathscr F$ is a category of composible morphism pairs from an additive category $\mathscr B$, and $\mathscr C$ is an additive
subcategory of $\mathscr B$, let $\mathscr F|_{\mathscr C}$ denote the category of pairs from $\mathscr F$ 
whose objects belong to $\mathscr C$. 
 In the corollary below and elsewhere, we abuse notation by writing
$(\scrA^\flat(\scrL),\scrE^\flat(\scrL))$ for $(\scrA^\flat(\scrL),\scrE^\flat(\scrL)|_{\mathscr A^\flat(\mathscr L)})$.

\begin{cor} \label{3.7a} With the above notation, $(\scrA^\flat(\scrL),\scrE^\flat(\scrL))$ is an exact category, and is a full exact subcategory of $(\scrA(\scrL),\scrE^\flat(\scrL))$.  Moreover, for every $J\subseteq S$, $\sH x_J\in \scrA^\flat(\scrL)$.
(In particular, Theorem \ref{ext0} holds in the full exact subcategory $(\scrA^\flat(\scrL),\scrE^\flat(\scrL))$.)
\end{cor}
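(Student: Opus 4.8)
The plan is to build the corollary out of three ingredients already in hand: Proposition \ref{prop3} (closure of $\scrA^\flat(\scrL)$ under extensions), the standard fact that a full additive extension-closed subcategory of an exact category inherits an exact structure, and Corollary \ref{hotcor} (the fixed-point exactness attached to the height filtration of $\sH x_J$). All the genuine work lies in those earlier results, so the corollary itself is essentially bookkeeping.

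First, the exact-category assertion. The full subcategory $\scrA^\flat(\scrL)$ of $\scrA(\scrL)$ is additive: it contains the zero module (which vacuously satisfies \eqref{surjective}), and it is closed under finite direct sums, because $M\mapsto M_{h-1}$, $M\mapsto M_{h-1}/M_h$, and the fixed-point functors $(-)^{\sH_J}$ all commute with finite direct sums, so the surjectivity in \eqref{surjective} is inherited by a direct sum from its summands. By Proposition \ref{prop3}, $\scrA^\flat(\scrL)$ is moreover closed under extensions in the exact category $(\scrA(\scrL),\scrE^\flat(\scrL))$ of Proposition \ref{Prop2}. Hence, by the standard criterion for inheriting an exact structure (B\"uhler \cite[Lem.~10.20]{Bu10}), the class $\scrE^\flat(\scrL)|_{\scrA^\flat(\scrL)}$ of those conflations of $(\scrA(\scrL),\scrE^\flat(\scrL))$ all of whose three terms lie in $\scrA^\flat(\scrL)$ makes $\scrA^\flat(\scrL)$ an exact category. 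Writing $\scrE^\flat(\scrL)$ for $\scrE^\flat(\scrL)|_{\scrA^\flat(\scrL)}$ as in the abuse of notation preceding the statement, and noting that the inclusions $\scrA^\flat(\scrL)\subseteq\scrA(\scrL)$ (full) and $\scrE^\flat(\scrL)|_{\scrA^\flat(\scrL)}\subseteq\scrE^\flat(\scrL)$ hold by construction, we conclude that $(\scrA^\flat(\scrL),\scrE^\flat(\scrL))$ is a full exact subcategory of $(\scrA(\scrL),\scrE^\flat(\scrL))$ in the sense of Subsection 1.3A.

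Next, that $\sH x_J\in\scrA^\flat(\scrL)$. We already have $\sH x_J\in\scrA(\scrL)$ by Proposition \ref{HxJ}, so only the defining condition \eqref{surjective} need be checked, with $M=\sH x_J$ and $M_h$ the terms of its height filtration. By Corollary \ref{hotcor}, for each $h$ the conflation $(M_h\to M_{h-1}\to M_{h-1}/M_h)$ belongs to $\scrE^\flat(\scrL)$; unravelling the definition \eqref{shortie} at the relevant filtration level (equivalently, applying Lemma \ref{onto} to the left ideals $M_h\subseteq M_{h-1}$, which are spanned by the Kazhdan--Lusztig basis elements they contain, as shown in the proof of Proposition \ref{HxJ}), this gives the surjectivity of $(M_{h-1})^{\sH_J}\to (M_{h-1}/M_h)^{\sH_J}$ for every $h$ and every $J\subseteq S$. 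That is exactly \eqref{surjective}, so $\sH x_J\in\scrA^\flat(\scrL)$.

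Finally, the parenthetical remark. Every left-cell module $S(\omega)$ lies in $\scrA^\flat(\scrL)$ as well, since its height filtration has a single nonzero step, on which the map \eqref{surjective} is the identity or is zero, hence surjective. Since $\sH x_J$ and $S(\omega)$ both belong to the full exact subcategory $(\scrA^\flat(\scrL),\scrE^\flat(\scrL))$ of $(\scrA(\scrL),\scrE^\flat(\scrL))$, Strategy \ref{rem2.2} furnishes an injection $\Ext^1_{\scrE^\flat(\scrL)|_{\scrA^\flat(\scrL)}}(\sH x_J,S(\omega))\hookrightarrow\Ext^1_{\scrE^\flat(\scrL)}(\sH x_J,S(\omega))$; the target vanishes by Theorem \ref{ext0}, so the source does too. (In fact this map is an isomorphism, since by Proposition \ref{prop3} any conflation in $\scrE^\flat(\scrL)$ with outer terms in $\scrA^\flat(\scrL)$ automatically has its middle term there, but injectivity alone suffices here.) The only point needing care in the whole argument is keeping the two a priori distinct meanings of $\scrE^\flat(\scrL)$ straight and confirming that $\Ext^1$ is computed compatibly across the nested exact categories; that is precisely what fullness and extension-closedness provide, so there is no real obstacle.
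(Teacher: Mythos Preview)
Your proof is correct and follows essentially the same route as the paper: closure under extensions via Proposition~\ref{prop3}, then B\"uhler's \cite[Lem.~10.20]{Bu10} for the exact structure, and Lemma~\ref{onto} (or equivalently Corollary~\ref{hotcor}) for the membership $\sH x_J\in\scrA^\flat(\scrL)$. You supply more detail than the paper does---notably the explicit check of additivity and the justification of the parenthetical remark via the $\Ext^1$ injection---but there is no substantive divergence.
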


\begin{proof} Observe that the category $\scrA^\flat(\scrL)$  is  closed under extensions in the exact category $(\scrA(\scrL), \scrE^\flat(\scrL))$ by Proposition \ref{prop3}. So $(\scrA^\flat(\scrL), \scrE^\flat(\scrL))$ is itself exact
by \cite[Lem. 10.20]{Bu10}. (The proof in \cite{Bu10} is largely left as an exercise, in part using the
diagram of \cite[Lem. 3.5]{Bu10}.)
 The assertion of the corollary regarding $\sH x_J$ follows from Lemma \ref{onto}. Alternately, it follows from a downward induction on $h$ by applying Proposition \ref{prop3} to Corollary \ref{hotcor}. 
 \end{proof}

 \begin{lem}\label{3.7} Let $M\in\scrA^\flat(\scrL)$, $h<a$ be integers, and $J\subseteq S$. Then, $M_h$ and $M_h/M_a$ belong to $\scrA^\flat(\scrL)$, and the natural map
  \begin{equation}\label{surj2} (M_h)^{\sH_J}\longrightarrow (M_h/M_a)^{\sH_J}\end{equation}
  is surjective. The same result is true if $M_h$ is replaced by $M$.\end{lem}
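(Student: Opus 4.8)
\textbf{Proof plan for Lemma \ref{3.7}.}

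The plan is to show that each of the truncations $M_h$ and the subquotients $M_h/M_a$ inherits membership in $\scrA^\flat(\scrL)$ from $M$, and then to deduce the claimed surjectivity \eqref{surj2} from the defining property \eqref{surjective} of objects in $\scrA^\flat(\scrL)$. First I would check that $M_h$ lies in $\scrA^\flat(\scrL)$. The $\htt$-filtration of $M_h$ is simply the tail $M_h\supseteq M_{h+1}\supseteq\cdots\supseteq M_n=0$ of the $\htt$-filtration of $M$; in particular, for $k>h$ the section $(M_h)_{k-1}/(M_h)_k$ equals $M_{k-1}/M_k$, which lies in $\scrL_{-k}$, while for $k\le h$ the section is zero. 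Hence $M_h\in\scrA(\scrL)$, and the surjectivity condition \eqref{surjective} for $M_h$ at level $k>h$ is exactly the surjectivity condition for $M$ at that level, which holds by hypothesis; the remaining levels are trivial. So $M_h\in\scrA^\flat(\scrL)$, and the same reasoning applied to $M$ itself (the case ``$M_h$ replaced by $M$'') is immediate.

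Next I would treat $M_h/M_a$. Its $\htt$-filtration is the image filtration $M_h/M_a\supseteq M_{h+1}/M_a\supseteq\cdots\supseteq M_a/M_a=0$, whose sections are again $M_{k-1}/M_k$ for $h<k\le a$, so $M_h/M_a\in\scrA(\scrL)$. For the $\scrA^\flat$-condition, I want, for each such $k$ and each $J\subseteq S$, the surjectivity of $\bigl((M_h/M_a)_{k-1}\bigr)^{\sH_J}\to\bigl((M_h/M_a)_{k-1}/(M_h/M_a)_k\bigr)^{\sH_J}$, i.e.\ $(M_{k-1}/M_a)^{\sH_J}\to(M_{k-1}/M_k)^{\sH_J}$. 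Here the cleanest route is Lemma \ref{onto}: the left ideals $\mathfrak I:=M_a$ and $\mathfrak K:=M_{k-1}$ of $\sH$ are each spanned by the Kazhdan--Lusztig basis elements they contain (this is how the $M_i$ arise — as $\sZ$-spans of unions of left cells, exactly as in the proof of Proposition \ref{HxJ}), so the natural map $(M_{k-1})^{\sH_J}\to(M_{k-1}/M_a)^{\sH_J}$ is surjective; composing with the surjection $(M_{k-1}/M_a)^{\sH_J}\to(M_{k-1}/M_k)^{\sH_J}$ — which exists because $M_a\subseteq M_k$ for $k\le a$ — forces the second map to be surjective as well. This gives $M_h/M_a\in\scrA^\flat(\scrL)$. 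I should double-check that the hypothesis of Lemma \ref{onto} (both ideals spanned by the KL basis elements they contain) is genuinely available here; this is where I expect to spend a sentence verifying that the $\htt$-truncations $M_i$ of an arbitrary $M\in\scrA^\flat(\scrL)$ are indeed KL-spanned, which follows because the $\htt$-filtration is by definition cut out by the height function on left cells, hence each $M_i$ is the $\sZ$-span of a union of left cells.

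Finally, having established $M_h\in\scrA^\flat(\scrL)$, the surjectivity \eqref{surj2} is an instance of what we have just proved: apply Lemma \ref{onto} with $\mathfrak K=M_h$ and $\mathfrak I=M_a$ (both KL-spanned left ideals) to conclude that $(M_h)^{\sH_J}\to(M_h/M_a)^{\sH_J}$ is onto; alternatively one telescopes the single-step surjections $(M_k)^{\sH_J}\to(M_k/M_{k+1})^{\sH_J}$ coming from the $\scrA^\flat$-condition on $M_h$ together with the long exact sequences in fixed points. The main obstacle, such as it is, is not deep: it is simply the bookkeeping needed to confirm that the relevant truncations are KL-spanned left ideals so that Lemma \ref{onto} applies, and that the re-indexed $\htt$-filtrations of $M_h$ and $M_h/M_a$ really do have their sections in the correct $\scrL_{-k}$; once that is in place every assertion reduces directly to Lemma \ref{onto} and the definition of $\scrA^\flat(\scrL)$.
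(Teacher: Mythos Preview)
There is a genuine gap in your main approach. Lemma \ref{onto} applies only to left ideals $\mathfrak I\subseteq\mathfrak K$ in $\sH$ that are spanned by the Kazhdan--Lusztig basis elements they contain. But $M$ here is an \emph{arbitrary} object of $\scrA^\flat(\scrL)$: all you know is that its $\htt$-filtration has sections that are direct sums of left-cell modules. This does not make $M$ (or any $M_i$) a left ideal in $\sH$, let alone one spanned by $\scc_x$'s. Your sentence ``each $M_i$ is the $\sZ$-span of a union of left cells'' conflates the abstract statement that the sections $M_{i-1}/M_i$ lie in $\scrL_{-i}$ with the much stronger (and generally false) claim that $M$ embeds in $\sH$ as a KL-spanned ideal. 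Think, for instance, of the objects $X_\omega$ built later by Theorem \ref{relative inj}: these are constructed as iterated extensions and have no reason to sit inside $\sH$ at all.

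The paper's proof uses exactly the ``alternative'' you mention at the end, made precise. It argues by induction on $a-h$: the base case $a=h+1$ is the defining condition \eqref{surjective} of $\scrA^\flat(\scrL)$, and the inductive step is a diagram chase on the commutative square with exact rows
\[
\begin{CD}
 (M_{h+1})^{\sH_J} @>>> (M_h)^{\sH_J} @>>> (M_{h}/M_{h+1})^{\sH_J}\\
 @VfVV  @VgVV @| \\
( M_{h+1}/M_a)^{\sH_J} @>>> (M_h/M_a)^{\sH_J} @>>> (M_h/M_{h+1})^{\sH_J},
 \end{CD}
\]
where $f$ is surjective by induction, forcing $g$ to be surjective. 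Membership of $M_h/M_a$ in $\scrA^\flat(\scrL)$ is then deduced \emph{from} the surjectivity \eqref{surj2}, by factoring $M_h\to M_h/M_{h+1}$ through $M_h/M_a$. Your telescoping idea is the right one; the appeal to Lemma \ref{onto} is not.
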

 
 \begin{proof} Obviously, $M_h\in\scrA^\flat(\scrL)$ by definition. Once the surjectivity of (\ref{surj2}) is proven, it follows that $M_h/M_a
 \in\scrA^\flat(\scrL)$, by writing the map $M_{h}\longrightarrow M_h/M_{h+1}$ as the composite
 $M_h\to M_h/M_a\to M_h/M_{h+1}$ and taking  $\sH_J$-fixed points. It remains to prove the surjectivity
 of (\ref{surj2}).  Of course, it holds for $a=h+1$ by the definition
 of $\scrA^\flat(\scrL)$. By induction on $a-h$, it holds if $h$ is replaced by $h+1$. The required surjectivity follows from, say,
 a diagram chase in $\sZ$-mod, using the commutative diagram
 $$\begin{CD}
 M^{\sH_J}_{h+1} @>>> M_h^{\sH_J} @>>> (M_{h}/M_{h+1})^{\sH_J}\\
 @VfVV  @VgVV @| \\  
( M_{h+1}/M_a)^{\sH_J} @>>> (M_h/M_a)^{\sH_J} @>>> (M_h/M_{h+1})^{\sH_J},
 \end{CD}$$
 whose rows are short exact sequences.  
 \end{proof}
  
  \begin{cor} \label{3.8} \begin{itemize}\item[(a)] If $M\in\scrA^\flat(\scrL)$ has the $\htt$-filtration \eqref{lcf}, %Suppose that $M\in\scrA^\flat(\scrL)$, 
  and $a\in\mathbb Z$ with $0\leq a\leq n$, then
  $$ (M_a\longrightarrow M\longrightarrow M/M_a)\in \scrE^\flat(\scrL).$$
In particular,  $(M_{h}\to M_{h-1}\to M_{h-1}/M_{h})\in\scrE^\flat(\scrL)$, for all $1\leq h\leq n$. 
  
  \item[(b)] Suppose $(M\to N\to P)\in\scrE^\flat(\scrL)$.  If $h\in\mathbb Z$, then both $(M_h\to N_h\to P_h)$ and
  $(M/M_h\to N/N_h\to P/P_h)$ belong to $\scrE^\flat(\scrL)$. \end{itemize}
  \end{cor}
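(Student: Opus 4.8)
The plan is to reduce both parts to elementary bookkeeping with the height filtrations of the sub- and quotient modules involved, the left exactness of the fixed-point functors $(-)^{\sH_J}$, and the surjectivity supplied by Lemma \ref{3.7}; the only genuine diagram chase is a single instance of the nine lemma, needed for the quotient statement in (b). The starting point, used throughout, is the identification of the height filtrations of the pieces of \eqref{lcf}: for $M\in\scrA^\flat(\scrL)$ and $0\le a\le n$, the construction of the $\htt$-filtration recalled in Notation \ref{notation1} gives $(M_a)_h=M_{\max(h,a)}$ and $(M/M_a)_h=M_{\min(h,a)}/M_a$ (the filtration being decreasing), and both $M_a$ and $M/M_a$ lie in $\scrA^\flat(\scrL)$ by Lemma \ref{3.7}. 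With these identifications, the section sequence attached at each height $h$ to $(M_a\to M\to M/M_a)$ is of the form $0\to X\overset{\sim}{\to}X\to 0\to 0$ (when $h\ge a$) or $0\to 0\to X\overset{\sim}{\to}X\to 0$ (when $h<a$), hence split; so $(M_a\to M\to M/M_a)\in\scrE(\scrL)$.

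For the $\scrE^\flat(\scrL)$-membership asserted in (a), I would check, for each $h$ and each $J\subseteq S$, exactness of $0\to (M_{\max(h,a)})^{\sH_J}\to (M_h)^{\sH_J}\to (M_{\min(h,a)}/M_a)^{\sH_J}\to 0$. When $h\ge a$ this is the identity-map sequence $0\to (M_h)^{\sH_J}\to (M_h)^{\sH_J}\to 0\to 0$. When $h<a$ it is obtained by applying $(-)^{\sH_J}$ to $0\to M_a\to M_h\to M_h/M_a\to 0$: left exactness of $(-)^{\sH_J}$ gives exactness everywhere except possibly at the right-hand term, and the surjectivity there is precisely the map \eqref{surj2} of Lemma \ref{3.7}. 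This proves (a). The ``in particular'' clause is the special case of (a) applied to $M_{h-1}\in\scrA^\flat(\scrL)$ (with the role of ``$a$'' played by $h$); equivalently, it is the defining surjectivity \eqref{surjective} of $\scrA^\flat(\scrL)$ combined with left exactness of $(-)^{\sH_J}$, as already noted after \eqref{surjective}.

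For (b) I would work in the ambient exact category $(\scrA^\flat(\scrL),\scrE^\flat(\scrL))$, so $M,N,P\in\scrA^\flat(\scrL)$, whence $M_h,N_h,P_h$ and $M/M_h,N/N_h,P/P_h$ all lie in $\scrA^\flat(\scrL)$ by Lemma \ref{3.7}. For the submodule sequence, using $(X_h)_g=X_{\max(g,h)}$ for $X=M,N,P$, the section of $(M_h\to N_h\to P_h)$ at height $g$ is the height-$g$ section of $(M\to N\to P)$ when $g>h$ and $0$ otherwise, hence split; and the fixed-point exactness required at $(g,J)$, namely of $0\to (M_{\max(g,h)})^{\sH_J}\to (N_{\max(g,h)})^{\sH_J}\to (P_{\max(g,h)})^{\sH_J}\to 0$, is literally the height-$\max(g,h)$ instance of the hypothesis $(M\to N\to P)\in\scrE^\flat(\scrL)$. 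The $\scrE(\scrL)$-membership of $(M/M_h\to N/N_h\to P/P_h)$ is checked the same way, using $(X/X_h)_g=X_{\min(g,h)}/X_h$. For its $\scrE^\flat(\scrL)$-condition at $(g,J)$, set $g':=\min(g,h)\le h$: the case $g'=h$ is trivial, and for $g'<h$ I would get the required exactness of $0\to (M_{g'}/M_h)^{\sH_J}\to (N_{g'}/N_h)^{\sH_J}\to (P_{g'}/P_h)^{\sH_J}\to 0$ from the $3\times 3$ commutative diagram whose three rows are $0\to (X_h)^{\sH_J}\to (X_{g'})^{\sH_J}\to (X_{g'}/X_h)^{\sH_J}\to 0$ for $X=M,N,P$ (each short exact: left exactness of $(-)^{\sH_J}$ together with the surjectivity \eqref{surj2} of Lemma \ref{3.7}, which is where $X\in\scrA^\flat(\scrL)$ is used) and whose first two columns are the height-$h$ and height-$g'$ instances of $(M\to N\to P)\in\scrE^\flat(\scrL)$ (short exact), via the nine lemma \cite[Ex. 2, p. 51]{Mac94}.

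The only place I anticipate any real friction is that last diagram chase: it is the sole use of the nine lemma, and it is essential that $M,N,P\in\scrA^\flat(\scrL)$ so that Lemma \ref{3.7} renders the rows of the $3\times 3$ diagram right exact --- without that surjectivity the argument does not close. Everything else is $\max$/$\min$ index bookkeeping and the left exactness of $(-)^{\sH_J}$.
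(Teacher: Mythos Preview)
Your argument is correct and structurally parallel to the paper's. Part (a) is the paper's ``follows from Lemma \ref{3.7}, using $M$ for $M_h$'' made explicit, and your treatment of the submodule sequence in (b) is exactly what the paper dismisses as ``clear.'' The one genuine difference is in the quotient case of (b): the paper applies the $3\times 3$-lemma for exact categories \cite[Cor.~3.6]{Bu10} once, directly in $(\scrA^\flat(\scrL),\scrE^\flat(\scrL))$, to the diagram whose columns are $(X_h\to X\to X/X_h)$ for $X=M,N,P$ (supplied by (a)) and whose first two rows are $(M_h\to N_h\to P_h)$ and $(M\to N\to P)$; you instead descend to $\sZ$-modules, apply $(-)^{\sH_J}$ at each height, and invoke the ordinary nine lemma pointwise. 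Your route is more elementary (no exact-category machinery needed) but requires the height-by-height bookkeeping; the paper's is cleaner but leans on B\"uhler's categorical lemma. Both rest on the same essential input, namely that $M,N,P\in\scrA^\flat(\scrL)$ so that Lemma~\ref{3.7} (equivalently, part (a)) is available---you are right to flag this as the one place the argument would not close otherwise.
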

  
  \begin{proof} (a) follows from Lemma \ref{3.7}, using $M$ for $M_h$.
  
  (b) Clearly, $(M_h\to N_h\to P_h)\in\scrE^\flat(\scrL)$. The corresponding statement for the quotients follows from the
  $3\times 3$-lemma for exact categories \cite[Cor. 3.6]{Bu10}, using (a) to obtain the columns.\end{proof}
 
 Note that $\scrA^\flat(\scrL)$ is, by definition, a full subcategory of $\scrA(\scrL)$ and, thus, of $\scrA'_l$. The latter is defined, along
 with $\scrE'_l$, in Section 2.2.
 Also, $\scrE^\flat(\scrL)\subseteq\scrE'_l$ so that $(\scrA^\flat(\scrL),\scrE^\flat(\scrL))$  is a full 
 exact subcategory of $(\scrA'_l,\scrE'_l)$.\footnote{See the discussion above (\ref{modules}) for the terminology.} By the discussion in Section 2.2 below Proposition \ref{dual ft} (including Remark \ref{dual ft1}), the contravariant
 dual $(\scrA^\flat(\scrL),\scrE^\flat(\scrL))^{\op}$ is  realized  as $(\scrA^\flat(\scrL)^*,\scrE^\flat(\scrL)^*)$
 inside $(\scrA^{\prime *}_l,\scrE^{\prime *}_l)=(\scrA'_r,\scrE'_r)=(\scrA',\scrE')$.
 
  Remarkably, it is this dual of the exact category in the corollary that interests us most. We pursue this in
  Section 2.3. 

\section{Some generalities on duality} 

This section uses the (more abstract) right-module setup of \cite[Assump. 3.2]{DPS17}; see Example \ref{example1} in Appendix B for the left-module setup.
Thus, $\scrK$ is a Noetherian integral domain, $K$ is its fraction field, and $H$ is a finite and torsion-free algebra 
over $\scrK$ such that $H_K$ is semisimple.   The (isomorphism classes of) irreducible {\it right} $H_K$-modules are indexed by a given set $\Lambda$, and $\htt:\Lambda\to\mathbb Z$ is a height function.

It was also assumed ``for convenience" in \cite[Assump. 3.2]{DPS17} that $\htt$ takes non-negative values.  We do not make that assumption here. The ``convenience'' was in being able to use 0 as a lower bound for indexing elements of $\Lambda$ via $\htt$. In no case is the validity of a stated result in \cite{DPS17} affected by dropping the requirement that $\htt$ takes non-negative values. One such general result is the following.

\begin{thm}[{\cite[Th. 3.6]{DPS17}}] \label{A1}Let $\scrA$ denote the category of right $H$-modules which are finite and torsion-free over $\scrK$. For each $M$ in $\scrA$, and $h\in\mathbb Z$, let $M^h$ denote $M\cap (M_K)^{\leq h}$, where $(M_K)^{\leq h}$ is the sum in $M_K$ of all irreducible right $H_K$-submodules indexed by $\lambda\in\Lambda$ with $\htt(\lambda)\leq h$. Let $\scrE$ denote the set of short exact sequences $0\to X\overset i\to Y\overset d\to Z\to 0$ in mod-$H$ of objects $X,Y,Z\in\scrA$, such that, for each $h\in\mathbb Z$, the induced sequence
\begin{equation}\label{short} 0\longrightarrow X^h/X^{h-1}\longrightarrow Y^h/Y^{h-1}\longrightarrow Z^h/Z^{h-1}\longrightarrow 0\end{equation}
is also exact in mod-$H$. Then, $(\scrA,\scrE)$ is an exact category.
\end{thm}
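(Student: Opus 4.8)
The plan is to verify directly that the pair $(\scrA,\scrE)$ described in Theorem \ref{A1} satisfies the Quillen axioms for an exact category, in the short form of Keller \cite[Appendix A]{K90} / B\"uhler \cite[Def. 2.1]{Bu10}. The crucial structural observation is that the assignment $M\mapsto \gr M := \bigoplus_{h} M^h/M^{h-1}$ is an additive functor from $\scrA$ to the category of finite torsion-free $\scrK$-modules (graded by $\mathbb Z$), because $M^h = M\cap (M_K)^{\leq h}$ is functorial in $M$: any $H$-map $f:M\to N$ carries $(M_K)^{\leq h}$ into $(N_K)^{\leq h}$ since these are sums of isotypic components of semisimple $H_K$-modules sorted by the height of the simple, and intersecting with $M$, $N$ is automatic. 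The class $\scrE$ is then, by definition, exactly the short exact sequences in $\modH$ of objects of $\scrA$ that $\gr$ sends to short exact sequences. So I would package the verification as: $\gr$ is additive, reflects the relevant exactness, and $\scrE$ is the preimage of the (split) short exact sequences under $\gr$ — a setup tailor-made for \cite[Lem. 3.1]{DPS17}, which is precisely the tool used to prove Proposition \ref{Prop2} and which the text already advertises as the model for this situation.

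Concretely, the steps I would carry out are the following. First, record that $\scrA$ is an additive (indeed $\scrK$-linear) category closed under finite direct sums, kernels and cokernels of split maps, and that $M^h$ is a subobject with $M^h/M^{h-1}$ finite torsion-free over $\scrK$ (finiteness is clear; torsion-freeness holds because $M^h$ is a submodule of the torsion-free $M$, and the filtration quotient is torsion-free since $M^{h-1}=M^h\cap (M_K)^{\leq h-1}$ is ``saturated'' in $M^h$). Second, show every pair $(i,d)\in\scrE$ is a kernel-cokernel pair: this is inherited from $\modH$ since the underlying sequence $0\to X\to Y\to Z\to 0$ is already short exact there, and kernels/cokernels in $\scrA$ agree with those in $\modH$ when they land in $\scrA$. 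Third, check the exactness axioms: identities are deflations/inflations; compositions of inflations are inflations and of deflations are deflations — here one uses that $\gr$ of a composite is the composite of the $\gr$'s, together with $\gr X^{\,\text{th row}}$ being split exact (componentwise short exact sequences of torsion-free $\scrK$-modules; these are automatically split if one first inverts nothing — but note the $\gr$-level sequences in $\scrE$ are required merely short exact in $\modH$, not split, so I should be careful and instead invoke \cite[Lem. 3.1]{DPS17} which handles exactly this ``$F$-left-exact functor, $\scrE = $ sequences made short exact by $F$'' scenario). Fourth, verify the pushout/pullback axioms: given an inflation $X\rightarrowtail Y$ and any $X\to X'$ in $\scrA$, form the pushout in $\modH$, check it lands in $\scrA$ (the quotient is unchanged, $=Z$, and $Y'$ is torsion-free because it sits in an extension of $Z$ by $X'$ with both torsion-free), and check the pushed-out sequence is in $\scrE$ by applying $\gr$ and using right-exactness of pushouts plus the filtration-compatibility; dually for pullbacks along deflations.

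The cleanest route, which I would actually follow, is to avoid re-deriving the axioms and instead cite \cite[Lem. 3.1]{DPS17} with the functor $F := \gr = \bigoplus_h ((-)^h/(-)^{h-1})$ from $\scrA$ to finite $\scrK$-modules, which is $\scrK$-linear and left exact on the ambient exact category $(\scrA_0,\scrE_0)$ of all finite torsion-free right $H$-modules with \emph{all} short exact sequences (or, more precisely, using the split exact structure on the target and the obvious honest exact structure on $\scrA_0$); then $\scrE$ is exactly the class of sequences in $\scrA_0$ that remain exact after applying $F$ componentwise, and \cite[Lem. 3.1]{DPS17} says this defines an exact subcategory. The text itself says ``The proof in the present situation is entirely similar'' and offers the deduction from $H = \sH^{\op}$ of an already-proved theorem, so strictly speaking the proof is a citation; the point of writing it out is to identify the functor $F$ and check its left exactness, which is the one genuinely verifiable ingredient.

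The main obstacle — really the only subtle point — is confirming that $F = \gr$ is left exact on $\scrA_0$ and that the filtration $M^h$ behaves well under the pushout/pullback constructions, i.e. that $Y\mapsto Y^h$ commutes appropriately with these and that the torsion-freeness is preserved at each step. Left exactness of $\gr$ amounts to: if $0\to X\to Y\to Z\to 0$ is short exact in $\modH$ with all three in $\scrA_0$, then $0\to X^h/X^{h-1}\to Y^h/Y^{h-1}\to Z^h/Z^{h-1}$ is exact on the left and in the middle for every $h$; the injectivity on the left is immediate ($X^h = X\cap Y^h$ inside $Y$, compatibly with the $h-1$ stage), and exactness in the middle is a short diagram chase using $X^h = X\cap Y^h$ again. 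That is genuinely routine, so I expect no real difficulty — the theorem is essentially a packaging result, and the substance lies entirely in having set up the functorial filtration $M\mapsto (M^h)_h$ correctly, which the statement has already done for us.
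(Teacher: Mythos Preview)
Your strategy via \cite[Lem.~3.1]{DPS17} is the right one (and the paper itself merely cites \cite[Th.~3.6]{DPS17} without proof, so this is exactly the kind of reconstruction intended). However, there is a genuine gap: the functor $\gr=\bigoplus_h (-)^h/(-)^{h-1}$ is \emph{not} left exact on $(\scrA,\scrE_0)$, contrary to what you claim in the last paragraph. A counterexample: take $\scrK=\mathbb Z$, $H=\mathbb Z[C_2]$ with $C_2=\langle\sigma\rangle$, heights $\htt(\text{triv})=1$, $\htt(\text{sign})=2$, and the sequence
\[
0\longrightarrow \mathbb Z(1-\sigma)\longrightarrow \mathbb Z[C_2]\overset{a+b\sigma\,\mapsto\, a+b}{\longrightarrow}\mathbb Z_{\text{triv}}\longrightarrow 0.
\]
At $h=2$ the graded sequence is $0\to\mathbb Z\xrightarrow{\times 2}\mathbb Z\to 0$, which fails exactness in the middle. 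Your ``short diagram chase'' for middle exactness breaks down precisely because $Y^{h-1}\to Z^{h-1}$ need not be surjective, so $d(y)\in Z^{h-1}$ does not force $y\in X+Y^{h-1}$.

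The fix is simple: use $F=\bigoplus_h(-)^h$ instead of $\gr$. This functor \emph{is} left exact on all short exact sequences in $\scrA$ --- indeed $X^h=X\cap Y^h$ gives $\ker(Y^h\to Z^h)=X^h$ immediately (this is \cite[Prop.~3.3(a)]{DPS17}, quoted in Example~\ref{example1}). Then observe that $F$-exactness (i.e., $Y^h\twoheadrightarrow Z^h$ for all $h$) is equivalent to the graded condition defining $\scrE$: one direction is the $3\times 3$ lemma, the other is induction on $h$ starting from $h\ll 0$. With this correction, \cite[Lem.~3.1]{DPS17} applies to the ambient exact category $(\scrA,\scrE_0)$ (all short exact sequences with terms in $\scrA$; this is exact since $\scrA$ is extension-closed in $\text{mod-}H$) and yields the result.
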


We also have

\begin{thm}\label{B2} Keep the notation above, but let $\scrA'$ denote the full additive subcategory of objects $M\in\scrA$ for which $M^h/M^{h-1}$ is projective as a $\scrK$-module, for each $h\in\mathbb Z$. If $\scrE'$ denotes $\scrE|_{\scrA'}$, the short exact sequences in $\scrE$ whose objects all belong to $\scrA'$, then $(\scrA',\scrE')$ is an exact category.
\end{thm}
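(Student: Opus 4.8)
The plan is to verify that $(\scrA',\scrE')$ satisfies the exact category axioms by leveraging the fact that $(\scrA,\scrE)$ is already known to be exact (Theorem \ref{A1}), so that all the axioms involving composable pairs will be inherited; the only real content is to check that $\scrA'$ is closed under the relevant constructions. The cleanest route is to invoke \cite[Lem. 10.20]{Bu10}: if $\scrA'$ is a full additive subcategory of $\scrA$ that is closed under extensions in the exact category $(\scrA,\scrE)$, then $(\scrA',\scrE|_{\scrA'})$ is itself an exact category and a full exact subcategory of $(\scrA,\scrE)$. This is exactly the pattern already used in the proof of Corollary \ref{3.7a}. So the entire proof reduces to two points: first, that $\scrA'$ is additive (closed under finite direct sums and summands inside $\scrA$), and second, that $\scrA'$ is closed under extensions in $(\scrA,\scrE)$.

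For the additivity, note that $M \mapsto M^h$ commutes with finite direct sums, since $(M\oplus N)_K = M_K\oplus N_K$ and the sum of the isotypic pieces indexed by $\htt(\lambda)\le h$ is taken componentwise; hence $(M\oplus N)^h/(M\oplus N)^{h-1}\cong M^h/M^{h-1}\oplus N^h/N^{h-1}$, and a finite direct sum of $\scrK$-projective modules is $\scrK$-projective. Closure under summands follows because a direct summand of a $\scrK$-projective module is $\scrK$-projective, once one checks that $M'\subseteq M$ a summand in $\scrA$ forces $(M')^h$ to be a summand of $M^h$ compatibly (this holds because the idempotent of $\End_H(M)$ cutting out $M'$ acts $K$-linearly on $M_K$ and preserves each isotypic component, hence preserves $(M_K)^{\le h}$ and therefore $M^h = M\cap(M_K)^{\le h}$).

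For closure under extensions, suppose $0\to M\overset{i}\to N\overset{d}\to P\to 0$ lies in $\scrE$ with $M,P\in\scrA'$; I must show $N\in\scrA'$, i.e.\ that each $N^h/N^{h-1}$ is $\scrK$-projective. By the defining property of $\scrE$ (display \eqref{short}), the sequence $0\to M^h/M^{h-1}\to N^h/N^{h-1}\to P^h/P^{h-1}\to 0$ is exact in $\modH$, hence in particular exact as a sequence of $\scrK$-modules. Since $P^h/P^{h-1}$ is $\scrK$-projective, this short exact sequence of $\scrK$-modules splits, so $N^h/N^{h-1}\cong M^h/M^{h-1}\oplus P^h/P^{h-1}$ as $\scrK$-modules, which is a direct sum of two $\scrK$-projective modules and therefore $\scrK$-projective. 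This holds for every $h\in\mathbb Z$, so $N\in\scrA'$, establishing closure under extensions. Applying \cite[Lem. 10.20]{Bu10} then gives that $(\scrA',\scrE')$ is an exact category and a full exact subcategory of $(\scrA,\scrE)$, as claimed.

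The main obstacle I anticipate is not any single hard step but rather making the summand-closure argument in the additivity claim fully rigorous: one must be slightly careful that the functor $M\mapsto M^h$ behaves well under idempotents, i.e.\ that $(eM)^h = e(M^h)$ for an idempotent $e\in\End_H(M)$, which uses that $e$ extends $K$-linearly and preserves the $H_K$-isotypic decomposition of $M_K$. Once that compatibility is in hand, everything else is routine; in fact, if one is willing to simply take "full additive subcategory" to include closure under summands as a hypothesis on how $\scrA'$ is presented (as the statement of the theorem arguably intends, by analogy with the treatment of $\scrA^\flat(\scrL)$ in Subsection 2.1C), then the proof is essentially just the extension-closure paragraph above together with the citation to \cite{Bu10}.
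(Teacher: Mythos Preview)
Your proposal is correct and follows essentially the same approach as the paper: both verify that $\scrA'$ is closed under extensions in $(\scrA,\scrE)$ via the exactness of display \eqref{short} and the projectivity of the outer terms, then invoke \cite[Lem.~10.20]{Bu10}. The paper is more terse (it treats additivity as implicit in the statement and also remarks that $\scrA'$ coincides with a special case of $\scrA(\scrS)$ from \cite[Constr.~3.8]{DPS17}), but your extension-closure argument and its justification are identical in substance.
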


\begin{proof}
The category $\scrA'$ is precisely the category $\scrA(\scrS)$ in \cite[Constr. 3.8]{DPS17} with $\scrS=\bigcup_{i\in\mathbb Z},\scrS_i$ in the special case that each $\scrS_i$ is the full additive category of $\scrA$ consisting of objects $S$ projective over $\scrK$ with $S_K$ a sum of irreducible right $H_K$-modules of height $i$, for all $i\in\mathbb Z$.

Note that $\scrA'$ is closed under extensions in $\scrA$, since the middle term of the exact sequence (\ref{short}) is projective over $\scrK$
when the other terms are projective over $\scrK$. Consequently, \cite[Lem. 10.20]{Bu10} implies that $(\scrA',\scrE')$
is an exact category.  
\end{proof}

Observe that the category $\scrE(\scrS)$, as defined in \cite[Constr. 3.8]{DPS17},  is not used in the proof of the above result.  Nevertheless, $(\scrA(\scrS),
\scrE(\scrS))$ is a (full) exact subcategory of $(\scrA',\scrE')$, while $(\scrA',\scrE')$ provides us with a general place
to park duals, arising from the construction below. Those duals of most interest to us will actually belong to 
$(\scrA(\scrS),\scrE(\scrS))$ (with $\scrS$ restricted to suitable direct sums of dual left-cell modules).

%\begin{rem} The same argument shows $(\AAA,\scrE')$ is an exact category for any choice of $\scrS$ as required in %\cite[Constr. 3.8]{DPS17}, provided all objects in $\scrS$ are, additionally, projective over $\scrK$. Notice that the description
%of  $M^h$ as an intersection in the statement of Theorem \ref{A1} is automatic in this
%case, just from the fact that all members of the various $\scrS_h$  are torsion-free,
%with distinct composition factors (for distinct choices of $h$) after base change to $H_K$.
%\end{rem}

\vspace{.3cm}
%\noindent
{\bf 2.2A. Duality.}  In Section 2.1, we emphasized the differences between left
and right modules, because of the historical use of left-cell modules
and their duals.  Here, we want to do the opposite and try to view left and right modules
on a more equal footing.

The discussion and results above in this section are formulated in terms of right $H$-modules. They all have analogs for left $H$-modules as may be seen from the right $H$-module case by substituting the opposite algebra $H^{\text{op}}$ for $H$. In a context where both left and right $H$-modules are used, it is useful to use a subscript $l$ or $r$ to distinguish the two cases. Thus, $\scrA_l$, $(\scrA_l,\scrE_l)$, $(\scrA_l',\scrE_l')$ are $\scrA$, $(\scrA,\scrE)$, $(\scrA',\scrE')$ analogs in the left $H$-module case, and a subscript $r$ could be used similarly to give a balanced treatment of the (original) right $H$-module case.

Next, let $(-)^*=\Hom_{\scrK}(-,\scrK)$ be the usual duality functor---actually, there are two functors, $\scrA_l\rightarrow
\scrA_r$ and $\scrA_r\rightarrow \scrA_l$. Both are contravariant and are denoted by the same
symbol. The functors apply also to $\scrA_l'$ and $\scrA_r'$ and induce equivalences of additive categories 
$$ \scrA_l'\overset\sim\longrightarrow \scrA_r^{\prime\text{op}}\quad{\text{\rm and}}\quad\scrA_r'{\overset{\sim}\longrightarrow \scrA_l^{\prime\text{\op}}}.$$
Similarly, we can apply $(-)^*$ to $\scrE'_l$ and $\scrE'_r$.  Finally, $(-)^*$ can be applied to the pairs $(\scrA'_l,\scrE'_l)$
and $(\scrA'_r,\scrE'_r)$ as well as the opposite categories.

Our notation hides the height function $\htt$, which affects the definition of $\scrA',\scrE',\cdots$. If we wish, we can always add a constant integer to any given $\htt$ function without affecting $\scrA',\scrE'\ldots$. We assume in the proposition below, and unless otherwise noted, that the dual of any irreducible right $H$-module indexed by $\la\in\Lambda$ is also indexed by $\la$ and that 
\begin{equation}\label{ht pair}\htt_l(\la)=-\htt(\la)=-\htt_r(\lambda),\end{equation} for all $\la\in\Lambda$.
\begin{prop} \label{dual ft}The duality functor $(-)^*$ induces a contravariant equivalence between $(\scrA'_l,\scrE'_l)$ and $(\scrA'_r,\scrE'_r)$.
\end{prop}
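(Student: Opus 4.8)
The plan is to reduce the statement to a single identity describing the effect of $(-)^* = \Hom_\scrK(-,\scrK)$ on height filtrations. By the discussion preceding the proposition, $(-)^*$ already induces a contravariant additive equivalence $\scrA'_l \overset{\sim}{\longrightarrow} \scrA_r^{\prime\op}$, with $(-)^*$ in the opposite direction as quasi-inverse. Indeed every object $M$ of $\scrA'_l$ is a finite $\scrK$-projective module: its $\htt_l$-filtration $M = M_0 \supseteq M_1 \supseteq \cdots \supseteq M_n = 0$ is finite and each section $M_{h-1}/M_h$ is $\scrK$-projective, so every short exact sequence $0 \to M_h \to M_{h-1} \to M_{h-1}/M_h \to 0$ splits over $\scrK$, and downward induction gives $M = M_0$ projective over $\scrK$; hence the natural map $M \to M^{**}$ is an isomorphism (and symmetrically on $\scrA'_r$). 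So the only thing left to prove is that $(-)^*$ is \emph{exact} in both directions, i.e.\ that it carries the class $\scrE'_l$ of conflations onto $\scrE'_r$ and reflects it.

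First I would record the filtration identity. The functionals on $M$ vanishing on $M_h$ form the submodule $\{\phi \in M^* : \phi|_{M_h} = 0\} \cong (M/M_h)^*$ of $M^*$; using the convention \eqref{ht pair} that $\htt_l(\lambda) = -\htt_r(\lambda)$ together with the perfect $\scrK$-pairing $M_K \times (M_K)^* \to K$, one checks that this submodule is exactly the height term $(M^*)^h$ of $M^*$ in the sense of Theorem \ref{A1}, so $(M^*)^h \cong (M/M_h)^*$ naturally in $M$. Dualizing the short exact sequence $0 \to M_{h-1}/M_h \to M/M_h \to M/M_{h-1} \to 0$ of $\scrK$-projective modules then yields a natural isomorphism of sections
\[
(M^*)^h/(M^*)^{h-1} \;\cong\; (M_{h-1}/M_h)^*,
\]
so the index-$h$ section of $M^*$ is the $\scrK$-dual of the index-$h$ section of $M$ (the sign reversal of \eqref{ht pair} being absorbed into the passage from $\htt_l$ to $\htt_r$ on the dual irreducibles). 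In particular each such section is finite $\scrK$-projective, which re-confirms $M^* \in \scrA'_r$.

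Granting this, the verification is mechanical. Let $(A \overset{i}{\to} B \overset{d}{\to} C)$ be a conflation in $\scrE'_l$. Since $C$ is $\scrK$-projective, $\Ext^1_\scrK(C,\scrK) = 0$, so $0 \to C^* \overset{d^*}{\to} B^* \overset{i^*}{\to} A^* \to 0$ is a short exact sequence of right $H$-modules whose terms all lie in $\scrA'_r$; in particular $(d^*, i^*)$ is a kernel-cokernel pair in $\scrA'_r$. For each $h$, applying the section identification to $A^*, B^*, C^*$ turns the index-$h$ section sequence of $(C^* \to B^* \to A^*)$ into the $\scrK$-dual of the index-$h$ section sequence $0 \to A_{h-1}/A_h \to B_{h-1}/B_h \to C_{h-1}/C_h \to 0$ of $(A \to B \to C)$. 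The latter is exact because $(A \to B \to C) \in \scrE'_l$, and it is a sequence of $\scrK$-projective modules, so applying $(-)^*$ (which is exact on such sequences, as they split over $\scrK$) keeps it exact. Hence $(d^*, i^*) \in \scrE'_r$, so $(-)^*$ sends conflations to conflations; the same argument in the opposite direction together with the natural isomorphism $\id \cong (-)^{**}$ shows that it also reflects them. This gives the asserted contravariant equivalence of exact categories.

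The one place needing genuine care is the filtration identity of the second paragraph: the $\htt_l$-filtration of a left module is decreasing while the $\htt_r$-filtration of Theorem \ref{A1} is increasing, and $\scrK$-duality negates heights via \eqref{ht pair}, so one must track carefully that the annihilator of $(M_h)_K$ inside $(M_K)^*$ is precisely the $\htt_r$-height term $((M_K)^*)^h$ — i.e.\ that, after the sign reversal, the two indexings line up term-for-term rather than being shifted. Once that is pinned down, everything else — splitting of $\scrK$-projective short exact sequences, exactness of $(-)^*$ on them, and finiteness and reflexivity of the objects — is entirely standard.
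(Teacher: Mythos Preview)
Your proof is correct and follows precisely the approach the paper sketches: the paper leaves the proof to the reader, noting only that projectivity of objects in $\scrA'_l$ is preserved by $(-)^*$, that $(-)^*$ sends $\scrE'_l$ to $\scrE'_r$, and that $(-)^{**}$ is naturally isomorphic to the identity. You have supplied exactly these details, and the filtration identity $(M^*)^h \cong (M/M_h)^*$ you establish carefully is the same one the paper invokes without proof in Section~2.3.
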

We leave the proof to the reader, except for noting that the projectivity (over $\scrK$) property of objects in $\scrA_l'$ is preserved by $(-)^*$ and also guarantees that $(-)^*$ sends $\scrE_l'$ to $\scrE_r^{\prime}$. Similar properties hold
with $r$ and $l$ interchanged. Also, $(-)^{**}=((-)^*)^*$ is naturally isomorphic to the identity functor. 

\begin{rem}\label{dual ft1} The functor $(-)^*$ above is more than a contravariant equivalence. It defines  an isomorphism of one category with a 
full representative subcategory of another. (Recall that a representative subcategory contains a member of every isomorphism class of objects in the larger category. If it is also full, its inclusion is an equivalence.)
\end{rem}

In view of the proposition, we may think of  $(\scrA'_r,\scrE'_r)$ as a dual of $(\scrA'_l,\scrE'_l)$, via the contravariant functor $(-)^*$. More interesting to note is the possibility of using $(-)^*$ to construct new exact categories inside $(\scrA'_r,\scrE'_r)$ as duals (i.e., strict images under $(-)^*$) of exact subcategories of $(\scrA'_l,\scrE'_l)$. More precisely, if $(\scrB,\scrF)$ is a full exact subcategory of $(\scrA_l',\scrE_l')$, let $\scrB^*$ denote the (strict) image of $\scrB$ under $(-)^*$ in $\scrA'_r$, and define
$\scrF^*\subseteq \scrE'_r$ similarly. Then, $(-)^*$ induces a contravariant equivalence $(\scrB,\scrF)\overset\sim\to
(\scrB^*,\scrF^*)$. That is,  the subcategory $(\scrB^*,\scrF^*)\subseteq(\scrA',\scrE')$ is equivalent to the contravariant dual
$(\scrB,\scrF)^{\text{\rm op}}$ of $(\scrB,\scrF)$.

\medskip

In general, the notation $M^h$ applies to both the left- and right-module case.
If it is really needed to distinguish the left case from the right,  subscripts
may be used, with the convention $M_h =M^{-h}$.  This notation is implicit in Section 2.1, and we use it below.

\vspace{.3cm}
%\noindent
{\bf 2.2B. Balanced subcategories.} A property we are looking for in exact subcategories of $(\scrA',\scrE')$ $(=(\scrA'_r,\scrE'_r))$ is {\it balance.} More generally, if $(\scrB,\scrF)$ is a full\footnote{\label{balanced}We could 
use a weaker condition than``full" in this definition. All we really need is that, for any $X,Y\in\scrB$,
if $f\in\Hom_{\scrA'}(X,Y)$ is an isomorphism in $\scrA'$, then
$f\in\Hom_{\scrB}(X,Y)$.  One could call the embedding $\scrB\to\scrA'$ {\it isomorphism-full} under such a condition.
Note that the strict closure $\overline{\scrB}$   of $\scrB$ in $\scrA'$ (throwing in all objects in $\scrA'$ isomorphic to an object in $\scrB$) makes sense as a category in the presence of this condition, using for 
morphisms $f:X'\to Y'$ between its objects those morphisms $f$ in $\scrA'$ for which there are in $\scrA'$ isomorphisms $e:X\to X'$ and $g:Y'\to Y$ with objects $X,Y\in \scrB$ and composite $efg$ a morphism in $\scrB$.
 As such, the inclusion of $\scrB$ into
$\overline{\scrB}$ is full and an equivalence of categories, and $\scrB$ is  a subcategory of $\scrA'$ closed under object
isomorphisms.} exact
subcategory of $(\scrA',\scrE')$, we say $(\scrB,\scrF)$ is {\it balanced} provided, for each $M\in \scrB$ and $h\in\mathbb Z$, the objects $M^h$ and $M/M^h$ are both isomorphic to objects in $\scrB$, and the sequence $(M^h\to M\to M/M^h)$ (which is obviously in $\scrE'$) is isomorphic to a member of $\scrF$. Here, we are using the defining properties of $\scrA'$ and $\scrE'$ introduced in Theorems \ref{A1} and \ref{B2} above. Of course, $(\scrA',\scrE')$ itself is balanced. 

We have not assumed in the definition above that $\scrB$ is closed under isomorphism in $\scrA'$. However, $(\scrB,
\scrF)$ is balanced if and only if its strict closure in $(\scrA',\scrE')$ is balanced. See footnote \ref{balanced} above
for further discussion.
   
   A similar discussion applies for $(\scrA'_l,\scrE'_l)$.
   
\begin{prop}\label{balanced1)} Suppose $(\scrB,\scrF)$ is a balanced exact subcategory of $(\scrA'_l,\scrE'_l)$. If $(\scrB^*,\scrF^*)$ denotes its strict image in $(\scrA_r',\scrE_r')$ under $(-)^*$, then
$(\scrB^*,\scrF^*)$ is balanced. 
 \end{prop}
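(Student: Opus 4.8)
The plan is to transport the defining properties of balance across the contravariant equivalence $(-)^*\colon(\scrA'_l,\scrE'_l)\overset\sim\to(\scrA'_r,\scrE'_r)$ of Proposition \ref{dual ft}, using the fact that this functor respects the height-indexed subobjects $M^h$ up to the sign convention \eqref{ht pair}. The first step is to pin down the behavior of $(-)^*$ on the canonical subobject filtration. Concretely, for $N\in\scrA'_l$ I would show that $(-)^*$ carries the conflation $(N_h\to N\to N/N_h)$ in $\scrA'_l$ (an element of $\scrE'_l$, hence of $\scrE'$, by the definitions in Theorems \ref{A1} and \ref{B2}) to a conflation isomorphic to $((N/N_h)^*\to N^*\to (N_h)^*)$ in $\scrA'_r$, and that under the convention \eqref{ht pair} one has natural identifications $(N^*)^h\cong (N/N_{-h-1})^*$ and $N^*/(N^*)^h\cong (N_{-h-1})^*$ (the exact indices depending on the sign bookkeeping, which I would fix once and carry through). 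This is the ``duality interchanges $M^h$ with $M/M^{h'}$'' phenomenon already implicit in Section 2.2A, and it is essentially a computation with the semisimple algebras $H_K$ and $H_K^{\op}$ together with the exactness of $(-)^*=\Hom_\scrK(-,\scrK)$ on $\scrK$-projective modules.

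Given this, the proof is a diagram-chase-free transport argument. Let $M\in\scrB^*$ and $h\in\mathbb Z$; write $M\cong N^*$ with $N\in\scrB$. Since $(\scrB,\scrF)$ is balanced in $(\scrA'_l,\scrE'_l)$, for the appropriate index $h'$ (matching $h$ via \eqref{ht pair}) the objects $N_{h'}$ and $N/N_{h'}$ are isomorphic to objects of $\scrB$, and the sequence $(N_{h'}\to N\to N/N_{h'})$ is isomorphic to a member of $\scrF$. Applying $(-)^*$ and using the identifications from the first step, $M^h\cong (N/N_{h'})^*$ and $M/M^h\cong (N_{h'})^*$ are isomorphic to objects of $\scrB^*$ (by definition of $\scrB^*$ as the strict image of $\scrB$), and $(M^h\to M\to M/M^h)$ is isomorphic to the $(-)^*$-image of a member of $\scrF$, hence to a member of $\scrF^*$. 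Finally, since $(-)^*$ is known (Remark \ref{dual ft1}) to induce an isomorphism onto a full representative subcategory and it sends $\scrE'_l$ to $\scrE'_r$, the pair $(M^h\to M\to M/M^h)$ already lies in $\scrE'_r$, so the only content is membership in $\scrF^*$, which we have just established. This gives balance of $(\scrB^*,\scrF^*)$.

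The step I expect to be the main (mild) obstacle is the index bookkeeping in the first step: making the identification $(N^*)^h\cong (N/N_{h'})^*$ precise and natural, with the correct shift $h'$ coming from \eqref{ht pair}, and checking it is compatible with the filtration inclusions so that conflations go to conflations. Everything else---closure of $\scrB^*$ under the relevant objects, membership of the image conflation in $\scrF^*$, and the passage through strict closures---is formal once $(-)^*$ is a well-understood contravariant equivalence respecting the height structure, which is already recorded in Proposition \ref{dual ft} and the surrounding discussion. One should also remark that, by the sentence following the definition of balance, it suffices to check the condition on the strict closures, so there is no loss in replacing $\scrB$ and $\scrB^*$ by their strict closures at the outset, which streamlines the ``isomorphic to an object in'' phrasing throughout.
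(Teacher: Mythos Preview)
Your proposal is correct and is essentially what the paper expects: the paper itself simply writes ``We leave the proof of this result to the reader,'' so there is no alternative approach to compare against. Your outline---transporting the balance condition through the contravariant equivalence $(-)^*$ using the identification $(N^*)^h\cong(N/N_{h'})^*$ for the appropriate index shift---is exactly the intended argument, and your caution about the index bookkeeping is well placed (the paper records the relevant identity $(N^*)^i=(N/N_i)^*$ explicitly in Section~2.3).
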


We leave the proof of this result to the reader. 

We have already observed in Corollary \ref{3.8} and Proposition \ref{3.8bis} {\color{blue} below} that the exact categories $(\scrA^\flat(\scrL), \scrE^\flat(\scrL))$ and $(\scrA_\flat(\scrL^*), \scrE_\flat(\scrL^*))$ are balanced. As a consequence, the following proposition applies to both.   Note that $\scrA^\flat(\scrL)$ and $\scrA_\flat(\scrL^*)$ are, respectively, closed under isomorphism of objects in $\scrA'_l$ and $\scrA'_r$.
 
\begin{prop}\label{second} Let $(\scrA',\scrE')$ denote either $(\scrA'_r,\scrE'_r)$ or its left-hand counterpart. Suppose $(\scrB,\scrF)$ is a balanced exact subcategory of $(\scrA',\scrE')$.
 If $M\in\scrB$, $h\in\mathbb Z$, then the sequence
\begin{equation}\label{pull} M^{h-1}\longrightarrow M^h\longrightarrow M^h/M^{h-1}\end{equation}
belongs to $\scrE'$ and is
 isomorphic to a sequence in $\scrF$. In fact, the sequence belongs to $\scrF$, if $\scrB$ is closed under isomorphisms in $\scrA'$.
\end{prop}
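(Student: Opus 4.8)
The plan is to deduce the whole statement by applying the definition of \emph{balanced} not to $M$ itself but to its subobject $M^h$, with the index lowered from $h$ to $h-1$. Two elementary facts are needed first. (i) The height filtration is invariant under isomorphisms in $\scrA'$: if $\phi\colon X\to Y$ is an isomorphism in $\scrA'$, then $\phi(X^a)=Y^a$ for every $a\in\mathbb Z$, because $\phi_K\colon X_K\to Y_K$ is an $H_K$-module isomorphism, $(X_K)^{\le a}$ is intrinsically the sum of the irreducible $H_K$-submodules of $X_K$ of height $\le a$, so $\phi_K\bigl((X_K)^{\le a}\bigr)=(Y_K)^{\le a}$, and intersecting with $Y=\phi(X)$ gives the claim. (ii) The formal identity $(M^h)^{h-1}=M^{h-1}$: clearing denominators shows $(M^h)_K=(M_K)^{\le h}$, and since $(M_K)^{\le h-1}\subseteq (M_K)^{\le h}$ the height-$\le(h-1)$ part of $(M^h)_K$ is $(M_K)^{\le h-1}$; intersecting with $M^h$, hence with $M$, gives $(M^h)^{h-1}=M^{h-1}$, and likewise $M^h/(M^h)^{h-1}=M^h/M^{h-1}$. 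Thus \eqref{pull} is precisely the three-term sequence attached, in the sense of the definition of balance, to the object $M^h$ and the integer $h-1$.

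With this in hand the main claim is immediate. Since $(\scrB,\scrF)$ is balanced and $M\in\scrB$, the object $M^h$ is isomorphic in $\scrA'$ to some $N\in\scrB$; fix an isomorphism $\phi\colon M^h\to N$. Applying the balance hypothesis to $N$ with the integer $h-1$, the sequence $(N^{h-1}\to N\to N/N^{h-1})$ is isomorphic to a member of $\scrF$. By (i), $\phi$ carries $(M^h)^{h-1}=M^{h-1}$ isomorphically onto $N^{h-1}$, so it restricts and descends to an isomorphism of conflations from \eqref{pull} to $(N^{h-1}\to N\to N/N^{h-1})$ in $\scrA'$. Composing, \eqref{pull} is isomorphic to a member of $\scrF\subseteq\scrE'$; since $\scrE'$ is closed under isomorphism (it is part of an exact structure), \eqref{pull} belongs to $\scrE'$, and by construction it is isomorphic to a member of $\scrF$. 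The argument is identical for the left-module version $(\scrA'_l,\scrE'_l)$, so no separate treatment is needed.

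For the last assertion, suppose $\scrB$ is closed under isomorphism in $\scrA'$. Then $M^h\in\scrB$ outright, and applying balance to $M^h$ and $h-1$ shows in addition that $M^{h-1}=(M^h)^{h-1}$ and $M^h/M^{h-1}$ lie in $\scrB$, and that \eqref{pull} is isomorphic, via morphisms of $\scrA'$, to a conflation in $\scrF$. Because $\scrB$ is a \emph{full} subcategory of $\scrA'$, those connecting morphisms are morphisms of $\scrB$, so the isomorphism takes place inside $(\scrB,\scrF)$; as $\scrF$ is closed under isomorphism, \eqref{pull} itself belongs to $\scrF$.

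I do not anticipate a real obstacle here: the only points requiring care are the invariance of $(-)^h$ under $\scrA'$-isomorphisms and the index shift from $h$ to $h-1$ when invoking the definition of balance; the rest is formal bookkeeping with the already-established exactness of $(\scrA',\scrE')$ and with closure of exact structures under isomorphism.
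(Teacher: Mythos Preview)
Your proof is correct and follows essentially the same route as the paper: apply the balance hypothesis twice (first to $M$ at level $h$, then to the resulting object at level $h-1$), use the identity $(M^h)^{h-1}=M^{h-1}$, and conclude via closure of $\scrE'$ and $\scrF$ under isomorphism. If anything, you are slightly more careful than the paper in distinguishing ``isomorphic to an object in $\scrB$'' from ``in $\scrB$'' by explicitly passing to $N\in\scrB$ with $N\cong M^h$, whereas the paper's diagram tacitly treats $M^h$ as already in $\scrB$.
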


\begin{proof} We treat the right-hand case $(\scrA',\scrE')=(\scrA'_r,\scrE'_r)$, which
also implies the left-hand version of the proposition. 

 Fix $M$ in $\scrB$ and let $h$ be an integer. Applying the definition of balanced twice, we obtain that  $M^h, 
(M^h)^{h-1},$ and $M^h/(M^h)^{h-1}$ all belong to $\scrB$. Also, $((M^h)^{h-1}\to M^h\to M^h/M^{h-1})\in \scrF$. Using the definition of
$\scrA$ in Theorem \ref{A1} we obtain the following commutative diagram in $\scrA$ with all vertical arrows isomorphisms.

\[
\begin{CD}
(M^h)^{h-1}       @>>>    M^h @>>> M^h/(M^h)^{h-1} \\
@V{\wr}VV        @|                 @V{\wr}VV \\
M^{h-1}         @>>>    M^h    @>>>             M^h/M^{h-1}.
\end{CD}
\]
Since $\scrA'$ is closed under isomorphism
(and full) in $\scrA$, this diagram and its isomorphic images are all in $\scrA'$.  The top row is in $\scrB$ and, in fact, belongs to $\scrF$.  The bottom rows belongs to $\scrE'$ and are isomorphic to the top row (as a diagram). This proves
the first assertion of the proposition.  

To prove the second, assume $\scrB$ is closed under isomorphism.
Then, the objects and morphisms in the bottom row all belong to $\scrB$, so the bottom row belongs to $\scrF$. 
(Recall that the conflations in any exact category are required to be closed under isomorphism as part of their definition.)
This completes the proof of the proposition.\end{proof}

The last part of the proof above implicitly used the fact that $\scrB$ is full, assumed in the definition of balanced. It would
have been enough to assume the weaker property in the footnote \ref{balanced} of this chapter.

\section{The dual category of $(\scrA^\flat(\scrL),\scrE^\flat(\scrL))$}
We now turn back to the generic Hecke algebra $\sH$ associated with a standard finite Coxeter system $(W,S,L)$.
We first consider the right $\sH$-module analog $(\scrA_r(\scrL^*),\scrE_r(\scrL^*))$ of $(\scrA_l(\scrL),\scrE_l(\scrL))$ defined in 
display (\ref{display1}) (see \eqref{display2} below).

Thus, $\scrA_r$ is the full
subcategory of right $\sH$-modules which are finite and torsion-free over $\scrK=\sZ$. We often
denote the category $\scrA_r$ simply as $\scrA$ when no confusion can arise.  

\begin{notation}\label{notation2} {\rm It will be convenient in this section to use the height function $\htt=\htt_r$, which is the negative of the height function $\htt_l$
introduced in Notation \ref{notation1}. Thus, the symbol $\htt$ in this section, without further comment, denotes
$\htt_r=-\htt_l$;  it takes positive values. It is also compatible with the preorder $\leq_{LR}^{\op}$ and partial order $\leq_L^{\op}$ on $\Omega$.}
\end{notation}

 For an integer $h$, let $\scrL^*_h$ be the full subcategory of $\scrA$ consisting of right $\sH$-modules $N$ such that $N$ is a finite direct sum of modules which are isomorphic to dual left-cell modules  % \index{cell module! dual left $\sim$, $S_\omega$} 
$S_\omega:=S(\omega)^*$ of height $h$. Then let $\scrL^*:=\bigcup_h\scrL^*_h$. We remind the reader that, by a slight abuse of notation, a left-cell module $S(\omega)$ has a negative height
 $h=\htt_l(\omega)< 0$, and the dual left-cell module $S_\omega$ has height $-h=\htt_r(\omega)>0$.

Let $\scrA(\scrL^*)$ be the full additive subcategory of mod-$\sH$ with objects $M$ whose $\htt_r$-filtration
\begin{equation}\label{dlcf}
0=M^0\subseteq M^1\subseteq M^2\subseteq\cdots\subseteq M^{n-1}\subseteq M^n=M
\end{equation}
has the property that, for any integer $h> 0$, $M^h/M^{h-1}$ is an object in $\scrL^*_h$. See \cite[\S3]{DPS17}, which
also allowed $h=0$ (but did not use this case). Like the refinement in footnote \ref{LcellF}, such a filtration will be informally called {\it a dual left-cell filtration}. \index{cell filtration! dual left $\sim$}

By analogy with
\cite[\S3]{DPS17}, let $\scrE(\scrL^*)$ consist of those sequences $(M\to N\to P)$
in $\scrA$ such that, for every $h$, 
$$0\to M^h/M^{h-1}\to N^h/N^{h-1}\to P^h/P^{h-1}\to 0$$
 is a split
short exact sequence in the additive category $\scrL^*_h$. Then, as a special case of
\cite[Th. 3.9]{DPS17}, we have
\begin{equation}\label{display2}{\text{\rm{$(\scrA(\scrL^*),\scrE(\scrL^*))$ is an
exact category.}}}\end{equation}

%\medskip\medskip

%At this point, the reader may wish to review section 2.2 on duality. In particular,
Recall from Section 2.2 the standard $\sZ$-duality functor $(-)^*$ discussed in the context of the exact categories used
here.  Thus, if $N\in\scrA_l(\scrL)$, then $N^*\in\scrA_r(\scrL^*)$ and $(N_{h-1}/N_h)^*\cong (N^*)^{h}/(N^*)^{ h-1}$,
for each $h\in\mathbb Z$. 
More precisely, if $N$ has a (left-cell) $\htt_l$-filtration as in \eqref{lcf}, then
$N^*$ has a (dual left-cell) $\htt_r$-filtration
$$0=(N^*)^0\subseteq(N^*)^1\subseteq (N^*)^2\subseteq\cdots\subseteq(N^{n-1})^*\subseteq (N^*)^n=N$$
as given in \eqref{dlcf}, where $(N^*)^i=(N/N_i)^*$ so that $(N^*)^h/(N^*)^{h-1}\cong (N_{h-1}/N_h)^*\in(\scrL_{-h})^*=(\scrL^*)_h$, for all $1\leq h\leq n$. Hence, $N^*$ is an object of $\scrA_r(\scrL^*)$. Similarly, for a conflation $(M\to N\to P)\in\scrE_l(\scrL)$, we have $(P^*\to N^*\to M^*)\in\scrE_r(\scrL^*)$,
since $\big((P^*)^h/(P^*)^{h-1}\to (N^*)^h/(N^*)^{h-1}\to (M^*)^h/(M^*)^{h-1}\big)$, which is isomorphic to
$\big((P_{h-1}/P_h)^*\to (N_{h-1}/N_h)^*\to (M_{h-1}/M_h)^*\big)$, is split exact, for all $1\leq h\leq n$.

Hence, the exact category $(\scrA(\scrL^*),\scrE(\scrL^*))$ is naturally isomorphic to the dual category
$(\scrA_l(\scrL)^*,\scrE_l(\scrL)^*)$ of $(\scrA_l(\scrL),\scrE_l(\scrL))$.
As in Section 2.2, $\mathscr A_l(\mathscr L)^*$ and $\mathscr E_l(\mathscr L)^*$ both denote strict images (closed under isomorphism).

Let $\scrE_\flat(\scrL^*)=\scrE^\flat(\scrL)^*$ and let $\scrA_\flat(\scrL^*)=\scrA^\flat(\scrL)^*$.  \index{exact category! dual $\sim$ $(\scrA_\flat(\scrL^*),\scrE_\flat(\scrL^*))$}
We easily prove the following key proposition, motivated by the weaker \cite[Cor. 4.5]{DPS15} and
\cite[Th. 2.3.9]{DPS98a}. For $J\subseteq S$, 
\begin{equation}\label{duality} x_J\sH\cong (\sH x_J)^*\end{equation}
by \cite[Lem.~2.3.9]{DPS98a}.  In particular, 
\begin{equation}\label{coset}  x_J\sH\,\,\, {\text{\rm belongs to $\scrA_\flat(\scrL^*)$ }} \end{equation}
by Corollary
\ref{3.7a}.

\begin{prop}\label{keyprop} With the notation above, $(\scrA(\scrL^*), \scrE_\flat(\scrL^*))$ is an exact category. 
Also, $(\scrA_\flat(\scrL^*),\scrE_\flat(\scrL^*))$ is a full exact subcategory of $(\scrA(\scrL^*), \scrE_\flat(\scrL^*))$ which is closed under extensions.
 Moreover, for any left cell $\omega\in\Omega$ and any $J\subseteq S$, there
is a vanishing
\begin{equation}\label{bigshot}\Ext^1_{\scrE_\flat(\scrL^*)}(S_\omega, x_J\sH)=0\end{equation}
in both of these categories.
\end{prop}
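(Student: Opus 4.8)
The plan is to obtain all three assertions by dualizing the left-module results of Section 2.1 via the $\sZ$-duality functor $(-)^*=\Hom_\sZ(-,\sZ)$. Recall the identifications made just above: $(\scrA(\scrL^*),\scrE(\scrL^*))$ is the strict dual $(\scrA_l(\scrL)^*,\scrE_l(\scrL)^*)$, and by definition $\scrA_\flat(\scrL^*)=\scrA^\flat(\scrL)^*$, $\scrE_\flat(\scrL^*)=\scrE^\flat(\scrL)^*$; moreover, by Proposition \ref{dual ft} and Remark \ref{dual ft1}, $(-)^*$ restricts to a contravariant equivalence between $(\scrA'_l,\scrE'_l)$ and $(\scrA'_r,\scrE'_r)$ identifying each with a full subcategory of the other closed under isomorphism, carrying $\htt_l$-filtrations to $\htt_r$-filtrations under the height convention \eqref{ht pair}. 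Now $(\scrA(\scrL),\scrE^\flat(\scrL))$ is an exact category by Proposition \ref{Prop2} and a full exact subcategory of $(\scrA'_l,\scrE'_l)$ (since $\scrE^\flat(\scrL)\subseteq\scrE'_l$); since the Quillen--Keller exact-category axioms are self-dual, $(-)^*$ carries it to a full exact subcategory of $(\scrA'_r,\scrE'_r)$, which by the above identifications is exactly $(\scrA(\scrL^*),\scrE_\flat(\scrL^*))$. Hence $(\scrA(\scrL^*),\scrE_\flat(\scrL^*))$ is an exact category.

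Next, by Corollary \ref{3.7a}, $(\scrA^\flat(\scrL),\scrE^\flat(\scrL))$ is a full exact subcategory of $(\scrA(\scrL),\scrE^\flat(\scrL))$, and by Proposition \ref{prop3} the subcategory $\scrA^\flat(\scrL)$ is closed under extensions there. Both properties are self-dual, so they are preserved by the contravariant equivalence $(-)^*$; applying it shows that $(\scrA_\flat(\scrL^*),\scrE_\flat(\scrL^*))$ is a full exact subcategory of $(\scrA(\scrL^*),\scrE_\flat(\scrL^*))$, closed under extensions.

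Finally, for the vanishing \eqref{bigshot}: Theorem \ref{ext0} gives $\Ext^1_{\scrE^\flat(\scrL)}(\sH x_J,S(\omega))=0$ for all $\omega\in\Omega$ and $J\subseteq S$, and by Proposition \ref{prop3} (as noted just above Theorem \ref{ext0}) this group may be computed equally in $(\scrA(\scrL),\scrE^\flat(\scrL))$ or in its full exact subcategory $(\scrA^\flat(\scrL),\scrE^\flat(\scrL))$. A contravariant equivalence of exact categories $F$ induces isomorphisms $\Ext^1_{\scrE}(X,Y)\cong\Ext^1_{F(\scrE)}(F(Y),F(X))$, since $\Ext^1$ is the group of equivalence classes of conflations and $F$ sends conflations to conflations with source and target interchanged. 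Taking $F=(-)^*$, $X=\sH x_J$, $Y=S(\omega)$, and using $S(\omega)^*=S_\omega$ from \eqref{dualleft} together with $(\sH x_J)^*\cong x_J\sH$ from \eqref{duality}, we obtain $\Ext^1_{\scrE_\flat(\scrL^*)}(S_\omega,x_J\sH)=0$. Since $x_J\sH\in\scrA_\flat(\scrL^*)$ by \eqref{coset}, and $S_\omega\in\scrA_\flat(\scrL^*)$ because $S(\omega)$, being pure of a single height, has a one-step $\htt_l$-filtration and hence trivially satisfies the surjectivity \eqref{surjective} defining $\scrA^\flat(\scrL)$, the dual of Proposition \ref{prop3} (applied exactly as in the proof of Corollary \ref{3.7a}) shows that this $\Ext^1_{\scrE_\flat(\scrL^*)}$-group agrees whether computed in $(\scrA(\scrL^*),\scrE_\flat(\scrL^*))$ or in $(\scrA_\flat(\scrL^*),\scrE_\flat(\scrL^*))$, so the vanishing holds in both. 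The argument is essentially formal --- the genuine content sits in Section 2.1 --- and the only steps needing care are the matching of heights under $(-)^*$ and the observation that $S_\omega$ lies in the smaller category $\scrA_\flat(\scrL^*)$, which is why the proposition is flagged as easily proved.
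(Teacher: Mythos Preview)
Your proof is correct and follows essentially the same approach as the paper: both dualize Proposition~\ref{Prop2}, Corollary~\ref{3.7a}, Proposition~\ref{prop3}, and Theorem~\ref{ext0} via $(-)^*$ to obtain the three assertions. You are somewhat more explicit than the paper in verifying that $S_\omega\in\scrA_\flat(\scrL^*)$ and in justifying why the $\Ext^1$-vanishing passes to the smaller category; the paper handles the latter point more tersely by invoking the full exact subcategory inclusion (Strategy~\ref{rem2.2}), which already gives an injection of $\Ext^1$-groups, so vanishing in the larger category automatically yields vanishing in the smaller one once both objects lie there.
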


\begin{proof} The category $(\scrA(\scrL^*),\scrE_\flat(\scrL^*))$ is the (contravariant) dual
$(\scrA(\scrL),\scrE^\flat(\scrL))$. Hence, it is exact, since  $(\scrA(\scrL),\scrE^\flat(\scrL))$ is exact by Proposition
\ref{Prop2}. Similarly, $(\scrA_\flat(\scrL^*),\scrE_\flat(\scrL^*))$ is exact, by Corollary \ref{3.7a}. Clearly, $(\scrA_\flat(\scrL^*),\scrE_\flat(\scrL^*))$ is a full exact subcategory of $(\scrA(\scrL^*),\scrE_\flat(\scrL^*))$ and is closed under extensions by Proposition \ref{prop3}. The displayed vanishing, in the case of $(\scrA(\scrL^*),\scrE_\flat(\scrL^*))$, follows from Theorem \ref{ext0}
and duality.  However, we have already noted that $(\scrA_\flat(\scrL^*),\scrE_\flat(\scrL^*))$ is an exact subcategory of
$(\scrA(\scrL^*),\scrE_\flat(\scrL^*))$, and $\scrA_\flat(\scrL^*)$ is full in $\scrA(\scrL^*)$,  so the displayed vanishing also holds for $(\scrA_\flat(\scrL^*),\scrE_\flat(\scrL^*))$. This completes the proof of the proposition. \end{proof} 

 We conclude this section with the following proposition, a version of Corollary \ref{3.8} in our present dual setting.  The
 proposition is essentially equivalent to that corollary, and can be deduced from it using the duality functor $(-)^*$.
 We omit the details. An alternative argument can be given using the duality functors for part (a); then using 
 the $3\times 3$-Lemma.  See the proof of Proposition \ref{3.8}(b).  
 
\begin{prop}\label{3.8bis}  (a) If $M\in\scrA_\flat(\scrL^*)$ has the  $\htt$-filtration \eqref{dlcf}, and $a\in\mathbb Z$ with $0\leq a\leq n$. Then,
  $$ (M^a\to M\to M/M^a)\in \scrE_\flat(\scrL^*).$$
 In particular, each inclusion $M^{h-1}\subseteq M^h$ gives a conflation $(M^{h-1}\to M^{h}\to M^{h}/M^{h-1})\in\scrE^\flat(\scrL)$.

  (b) Suppose $(M\to N\to P)\in\scrE_\flat(\scrL^*)$.  If $h\in\mathbb Z$, then both $(M^h\to N^h\to P^h)$ and
  $(M/M^h\to N/N^h\to P/P^h)$ belong to $\scrE_\flat(\scrL^*)$.
  \end{prop}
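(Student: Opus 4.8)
The plan is to reduce both parts to the left-module statement, Corollary \ref{3.8}, by transporting along the contravariant duality $(-)^*=\Hom_\sZ(-,\sZ)$. Recall from Section 2.3 that this functor realizes $(\scrA_\flat(\scrL^*),\scrE_\flat(\scrL^*))$ as the dual of $(\scrA^\flat(\scrL),\scrE^\flat(\scrL))$ — by definition $\scrA_\flat(\scrL^*)=\scrA^\flat(\scrL)^*$ and $\scrE_\flat(\scrL^*)=\scrE^\flat(\scrL)^*$ — and that it converts the increasing $\htt_r$-filtration \eqref{dlcf} of a right module $M$ into the decreasing $\htt_l$-filtration \eqref{lcf} of $M^*$, via $(M^*)_i=(M/M^i)^*$, hence also $M^*/(M^*)_i\cong(M^i)^*$. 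Since $\scrA^\flat(\scrL)$ and $\scrA_\flat(\scrL^*)$ are closed under isomorphism (in $\scrA'_l$, resp. $\scrA'_r$) and $(-)^{**}$ is naturally isomorphic to the identity, every $M\in\scrA_\flat(\scrL^*)$ has $M^*\in\scrA^\flat(\scrL)$; and since conflations in an exact category are closed under isomorphism, all the identifications below are legitimate.

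For part (a): apply Corollary \ref{3.8}(a) to $M^*\in\scrA^\flat(\scrL)$ with the same $a$, obtaining $\big((M^*)_a\to M^*\to M^*/(M^*)_a\big)\in\scrE^\flat(\scrL)$, that is $\big((M/M^a)^*\to M^*\to(M^a)^*\big)\in\scrE^\flat(\scrL)$. Applying $(-)^*$ once more — which reverses the order of the three terms and carries $\scrE^\flat(\scrL)$ into $\scrE^\flat(\scrL)^*=\scrE_\flat(\scrL^*)$ — turns this into $(M^a\to M\to M/M^a)\in\scrE_\flat(\scrL^*)$, as claimed. The ``in particular'' assertion is then the case $a=h-1$ applied to the object $M^h$ in place of $M$: one has $M^h\in\scrA_\flat(\scrL^*)$ because $(M^h)^*\cong M^*/(M^*)_h$ lies in $\scrA^\flat(\scrL)$ by Lemma \ref{3.7} (in the ``$M$ in place of $M_h$'' form of that lemma), while $(M^h)^{h-1}=M^{h-1}$ and $M^h/(M^h)^{h-1}=M^h/M^{h-1}$.

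For part (b): from $(M\to N\to P)\in\scrE_\flat(\scrL^*)$ duality gives $(P^*\to N^*\to M^*)\in\scrE^\flat(\scrL)$. Corollary \ref{3.8}(b) applied with the integer $h$ yields both $\big((P^*)_h\to(N^*)_h\to(M^*)_h\big)\in\scrE^\flat(\scrL)$ and $\big(P^*/(P^*)_h\to N^*/(N^*)_h\to M^*/(M^*)_h\big)\in\scrE^\flat(\scrL)$; rewriting $(X^*)_h=(X/X^h)^*$ and $X^*/(X^*)_h\cong(X^h)^*$ for $X\in\{M,N,P\}$ and dualizing back produces exactly $(M/M^h\to N/N^h\to P/P^h)\in\scrE_\flat(\scrL^*)$ and $(M^h\to N^h\to P^h)\in\scrE_\flat(\scrL^*)$. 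Alternatively, as suggested in the paragraph preceding the statement, part (b) can be proved without duality, mirroring the proof of Corollary \ref{3.8}(b): $(M^h\to N^h\to P^h)\in\scrE_\flat(\scrL^*)$ is immediate from the definition of $\scrE_\flat(\scrL^*)$ (restrict the defining fixed-point exactness to filtration degrees $\le h$), and the quotient conflation then follows from the $3\times 3$-lemma for exact categories \cite[Cor.~3.6]{Bu10}, using part (a) to furnish the columns $(M^h\to M\to M/M^h)$, $(N^h\to N\to N/N^h)$, $(P^h\to P\to P/P^h)$. There is no genuine obstacle in any of this; the only point requiring care is the contravariant bookkeeping — $(-)^*$ interchanges sub- and quotient-objects of the height filtrations ($M^h\leftrightarrow M^*/(M^*)_h$, $M/M^h\leftrightarrow(M^*)_h$) and reverses the order of the terms of a conflation — but once the Section 2.3 dictionary is applied consistently, both parts fall out of Corollary \ref{3.8}.
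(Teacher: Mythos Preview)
Your proof is correct and follows exactly the approach the paper indicates: reduce to Corollary~\ref{3.8} via the duality functor $(-)^*$, with the alternative $3\times3$-Lemma argument for part (b) also noted there. You have supplied the details the paper omits; the contravariant bookkeeping $(M^*)_i=(M/M^i)^*$, $M^*/(M^*)_i\cong(M^i)^*$ is handled correctly.
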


%Note that, with mild changes of notation, we can view $\scrL^*$ as an example of an $\scrS$ used in  \cite[Th 4.7]{DPS17}. From now on. we simple take $\scrS=\scrL^*$, $(\scrA_\flat,\scrE_\flat)$ then becomes

\section{Another abstract setting}
Our main goal in this section is to prove Theorem \ref{relative inj}, which builds a kind of injective envelope for certain exact categories, improving upon \cite[Th. 4.7]{DPS17}.  This goal is achieved by 
 refining the abstract framework used in
 the earlier paper. At least some abstraction is desirable.  The category $(\scrA_\flat(\scrL^*), \scrE_\flat(\scrL^*))$ used in the main Theorem \ref{4.4} is not easy to investigate directly, but it inherits many properties
  from its ``dual" $(\scrA^\flat(\scrL),\scrE^\flat(\scrL))$. Those dual properties are often most easily checked
  in the abstract setting of an exact category satisfying Conditions \ref{condition} below.

 We follow the notation of \cite[Assump. 3.2 and Constr. 3.8]{DPS17}.\footnote{In \cite[Const.3.8]{DPS17} at the end of the first paragraph, it should be assumed that $M^j/M^{j-1}$ is isomorphic to, not necessarily equal to, an object in $\scrS$
for all $j$. This is not a problem usually, since one often requires that $\scrS$ be closed under isomorphism.}
 Thus, $\scrK$ is a Noetherian integral
domain with fraction field $K$. Also, $H$ is a finite and torsion-free $\scrK$-algebra such that $H_K$ is semisimple.
Thus, $H_K$ has a finite set $\Lambda$ of isomorphism classes of irreducible modules. In this section, ``modules" for
$H$ and $H_K$ are taken as right modules, We are given a ``height function"
 $\htt:\Lambda\to \mathbb Z$, as defined in Appendix B.  We will assume  $\htt$ takes positive values only on
 $\Lambda$, unless
 noted otherwise.

Often $\scrK$ is the algebra $\sZ=\mathbb Z[t,t^{-1}]$ of Laurent polynomials, and $H=\mathcal H$ is a generic Hecke
algebra (as defined in Section 1.2 above and used in Sections 2.1 and 2.3). To pass from the height function domain $\Omega$ used in Sections 2.1 and 2.3 to the set $\Lambda$ above, see Notation \ref{notation1} and Remark \ref{A7}.  We will later apply the abstract results of this section to Section 2.3, where the height function $\htt=\htt_r$ also takes positive values.

Let $(\scrA,\scrE)$ be the exact category given in Theorem \ref{A1}.  In particular, $\scrA$ is the full subcategory of mod-$H$ consisting of right $H$-modules which are finite and torsion-free
over $\scrK$.   For each integer $i$,  let $\scrS_i$  be a set of objects in a full additive 
subcategory of $\scrA$.  For each  $S\in\scrS_i$, it is required that $S_K$  be a direct sum of finitely many irreducible modules of the same height $i$. If $i$ is not in the image of $\htt$, set $\scrS_i=[0]$, a full subcategory of zero objects in $\scrA$. Let $\scrS:=\bigcup_i\scrS_i$.
  Finally, there is an exact category 
$(\scrA(\scrS),\scrE(\scrS))$ which is   constructed in \cite[Constr. 3.8]{DPS17}. 
(This construction with some details has already been
referenced in 
a  left-module setting in (\ref{F}). See also (\ref{display2}) for a right-module setting.) The construction shows that
$\scrA(\scrS)$ is a full subcategory of $\scrA$ (and thus of mod-$H$), and it shows that $\scrE(\scrS)$ is contained in $\scrE$. 

\begin{rem}\label{strict} For convenience, we will assume that each category $\scrS_i$, and thus $\scrA(\scrS)$, is closed under isomorphism in mod-$H$. \end{rem}

In particular, $(\scrA(\scrL^*),\scrE(\scrL^*))$, in the Section 2.3 setting,  is an example of an exact category $(\scrA(\scrS),
\scrE(\scrS))$.

We will be interested in exact categories $(\scrA_\flat,\scrE_\flat)$, which satisfy the following   
\begin{conds} \label{condition}  $(\scrA_\flat,\scrE_\flat)$ is a full exact subcategory of $(\scrA(\scrS),\scrE(\scrS))$, with
$\scrA_\flat\hookrightarrow\scrA(\scrS)$ an inclusion of $\scrK$-categories, such
that the following hold:
\begin{enumerate}
\item $\scrA_\flat$ is object-closed under isomorphisms in $\scrA(\scrS)$ (and thus, by Remark \ref{strict} above,
in mod-$H$).
  \item All objects of $\scrS$ are contained in $\scrA_\flat$.
\item If $M\in\scrA_\flat$ and $h$ is an integer, then $M^h$, $M^{h-1}$ and $M^h/M^{h-1}$ belong to $\scrA_\flat$, and $(M^{h-1}\to M^h\to M^h/M^{h-1})$  belongs to $\scrE_\flat$.
\item For $M,N,P\in\scrA_\flat$, if $(M\to N\to P)\in\scrE_\flat$, then $(M^h\to N^h\to P^h)\in\scrE_\flat$ for 
all integers $h$.
\end{enumerate}
\end{conds}

%We list several examples.

\begin{ex} (a) 
The exact category $(\scrA(\scrS),\scrE(\scrS))$, a full exact subcategory of itself, satisfies
the conditions of  Conditions \ref{condition}.  In particular, in the setting of Section 2.3, $(\scrA(\scrL^*),\scrE(\scrL^*))$
provides an example.

(b) Another example is $(\scrA_\flat(\scrL^*),\scrE_\flat(\scrL^*))$, constructed in
Section 2.3 and viewed as a full exact subcategory of $(\scrA(\scrL^*),\scrE(\scrL^*))$. Conditions (1) and (2) are clear,
while conditions (3) and (4) (for $H=\sH$) are explicit in Proposition \ref{3.8bis}. %Condition (3) follows from Proposition \ref{second} and the remarks above it.

(c) The exact category $(\scrA(\scrL^*),\scrE_\flat(\scrL^*))$ discussed in Proposition \ref{keyprop}
does {\bf not} satisfy the Conditions \ref{condition}. In fact, condition (3) fails to hold in general.\end{ex}

\medskip
We now state several lemmas on $\Ext^1_{\scrE_\flat}$-vanishing which hold for an exact category $(\scrA_\flat,\scrE_\flat)$ satisfying the above conditions. Lemma \ref{claim} is difficult, while the remaining ones are more routine generalizations of known results for  the exact categories $(\scrA(\scrS),\scrE(\scrS))$ studied  and proved in
\cite[\S4]{DPS17}.
Thus, we give a detailed proof of Lemma \ref{claim}, and just brief comments on the remaining ones. Everything
is pulled together in Theorem \ref{relative inj}.

\medskip
{\it Throughout the rest of this section, 
 $(\scrA_\flat,\scrE_\flat)$ denotes a fixed exact category which satisfies Conditions \ref{condition}. }
 
 \begin{lem}\label{dog}If $M,N\in \scrA_\flat$,
 then the natural maps
  $\Ext^1_{\scrE_\flat}(M,N)\rightarrow \Ext^1_{\scrE(\scrS)}(M,N)$ and $\Ext^1_{\scrE_\flat}(M,N) \rightarrow
  \Ext^1_H(M,N)$, as well as $\Ext^1_{\scrE(\scrS)}(M,N)\rightarrow\Ext^1_H(M,N)$, are inclusions of $\scrK$-modules.
 \end{lem}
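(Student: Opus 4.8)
The statement asserts that three comparison maps between $\Ext^1$-groups are injective. The cleanest route is to reduce everything to the single principle recorded in Strategy \ref{rem2.2}: if $(\scrA,\scrE)$ is a \emph{full} exact subcategory of $(\scrB,\scrF)$, then the inclusion $\scrE\subseteq\scrF$ induces an injection $\Ext^1_{\scrE}(M,N)\hookrightarrow\Ext^1_{\scrF}(M,N)$, $\scrK$-linear whenever the inclusion of categories is $\scrK$-linear. So the work is almost entirely bookkeeping: identify the relevant full exact subcategory inclusions and invoke that principle.

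First I would record the three inclusions of exact categories in play. By the preamble (Conditions \ref{condition}), $(\scrA_\flat,\scrE_\flat)$ is by hypothesis a \emph{full exact subcategory} of $(\scrA(\scrS),\scrE(\scrS))$, with $\scrA_\flat\hookrightarrow\scrA(\scrS)$ an inclusion of $\scrK$-categories; this gives the first map $\Ext^1_{\scrE_\flat}(M,N)\hookrightarrow\Ext^1_{\scrE(\scrS)}(M,N)$ directly from Strategy \ref{rem2.2}, with $\scrK$-linearity from \cite[Prop. A.2]{DPS17}. Next, by the construction of \cite[Constr. 3.8]{DPS17} (recalled just above), $\scrA(\scrS)$ is a full subcategory of $\scrA$ (hence of $\modH$) and $\scrE(\scrS)\subseteq\scrE$; since $\scrA$ is itself a full subcategory of $\modH$ with $\scrE$ the short exact sequences in $\modH$ with terms in $\scrA$, the composite $(\scrA(\scrS),\scrE(\scrS))$ is a full exact subcategory of $(\modH,\{\text{all short exact sequences}\})$. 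Applying Strategy \ref{rem2.2} again yields the injection $\Ext^1_{\scrE(\scrS)}(M,N)\hookrightarrow\Ext^1_H(M,N)$, again $\scrK$-linear. Finally the map $\Ext^1_{\scrE_\flat}(M,N)\to\Ext^1_H(M,N)$ is the composite of the two injections just produced, hence injective; alternatively it follows from Strategy \ref{rem2.2} applied to the full exact subcategory $(\scrA_\flat,\scrE_\flat)$ of $(\modH,\text{all ses})$, and these two descriptions of the composite agree because the comparison maps are functorial in the subcategory inclusions.

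The one point requiring a little care — and the place I expect the only genuine obstacle — is making sure the hypotheses of the transitivity step are literally met: that $\scrE$ (the class defining the exact structure on $\scrA\subseteq\modH$) really consists of \emph{all} short exact sequences of $\modH$ whose three terms lie in $\scrA$, and likewise that $\scrE(\scrS)$ consists of sequences of $\scrA(\scrS)$ that are short exact in $\modH$, so that fullness of $\scrA(\scrS)$ in $\modH$ translates into the ``each $\scrF$-equivalence class meets at most one $\scrE$-class'' condition underlying the footnote argument for Strategy \ref{rem2.2}. Strictly, the exact structure $\scrE$ of Theorem \ref{A1} is \emph{not} all short exact sequences with terms in $\scrA$ — it imposes the extra exactness condition \eqref{short} on the $\htt$-filtration quotients — but that is harmless: a short exact sequence in $\modH$ with terms in $\scrA$ that fails \eqref{short} simply does not lie in $\scrE$, and the relevant statement is only that $\scrE\subseteq\{\text{ses in }\modH\}$ with the subcategory full, which is what Strategy \ref{rem2.2} needs. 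So I would phrase the argument as three successive applications of Strategy \ref{rem2.2} (for the inclusions $(\scrA_\flat,\scrE_\flat)\subseteq(\scrA(\scrS),\scrE(\scrS))\subseteq(\scrA,\scrE)$, together with the trivial observation that $(\scrA,\scrE)$ sits fully-exactly inside $\modH$ with its maximal exact structure), and note that the composite of the first two recovers the third comparison map by functoriality, with all maps $\scrK$-linear by \cite[Prop. A.2]{DPS17}.
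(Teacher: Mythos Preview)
Your proof is correct and is essentially the paper's own argument: the paper simply writes ``The above lemma follows from the discussion of Strategy~\ref{rem2.2},'' and you have spelled out exactly that, carefully tracking the chain of full exact subcategory inclusions $(\scrA_\flat,\scrE_\flat)\subseteq(\scrA(\scrS),\scrE(\scrS))\subseteq\modH$ and noting that the composite gives the third map. Your caution about $\scrE$ not being the maximal exact structure on $\scrA$ is well placed but, as you observe, irrelevant to the injectivity claim since Strategy~\ref{rem2.2} only needs $\scrE\subseteq\scrF$ with the ambient additive category full.
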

 
 The above lemma follows from the discussion of Strategy  \ref{rem2.2}.

\begin{lem}  \label{claim}   Let 
$h$ be an integer. Let $S\in\scrS_h$ and $M\in\Af$, and suppose the map
$$\Hom_{\Af}(S, M^h/M^{h-1})\longrightarrow\Ext^1_{\Ef}(S, M^{h-1})$$
defined by pull-back is surjective. Then, $\Ext^1_{\Ef}(S,M)=0$. \end{lem}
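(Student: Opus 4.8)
The plan is to analyze an arbitrary conflation $(M\to E\to S)\in\scrE_\flat$ and show it splits, using the filtration structure guaranteed by Conditions \ref{condition}(3)--(4). Given such a conflation, I would first apply Condition (3) to $S\in\scrS_h$: since $S_K$ is a direct sum of irreducibles of height $h$, we have $S^{h}=S$ and $S^{h-1}=0$. Condition (4) then tells us that restricting the conflation to the $h$-level and the $(h-1)$-level stays in $\scrE_\flat$; concretely, $(M^{h-1}\to E^{h-1}\to S^{h-1})=(M^{h-1}\to E^{h-1}\to 0)$, so $E^{h-1}\cong M^{h-1}$, and $(M^{h}\to E^{h}\to S^h)=(M^h\to E^h\to S)$ is a conflation in $\scrE_\flat$ as well. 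The upshot is that, without loss of generality, we may replace $E$ by $E^h$ and $M$ by $M^h$, i.e., we may assume $M=M^h$ (so $M^j=0$ for $j<h-1$... actually $M^{h-1}\subseteq M^h=M$, with only two nonzero filtration steps potentially). Wait — more carefully: after this reduction $M$ has $\htt$-filtration concentrated in degrees $h-1$ and $h$, i.e. $0\subseteq M^{h-1}\subseteq M^h=M$.

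Now pull the conflation $(M^{h-1}\to E^{h-1}\to 0)$ — trivial — and instead push forward along the inclusion $M^{h-1}\hookrightarrow M$: the class of $(M\to E\to S)$ in $\Ext^1_{\Ef}(S,M)$ sits in an exact sequence coming from $(M^{h-1}\to M\to M/M^{h-1})=(M^{h-1}\to M\to M^h/M^{h-1})\in\scrE_\flat$ (Condition (3)). This gives a long exact sequence of $\scrK$-modules in $\Ext^1_{\Ef}(S,-)$, the relevant piece being
\begin{equation*}
\Hom_{\Af}(S,M^h/M^{h-1})\longrightarrow \Ext^1_{\Ef}(S,M^{h-1})\longrightarrow \Ext^1_{\Ef}(S,M)\longrightarrow \Ext^1_{\Ef}(S,M^h/M^{h-1}).
\end{equation*}
The first map here is exactly the pull-back/connecting map assumed surjective in the hypothesis. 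The term $\Ext^1_{\Ef}(S,M^h/M^{h-1})$ vanishes: $M^h/M^{h-1}$ lies in $\scrS_h$ (Condition (3) says it's in $\scrA_\flat$, and by construction of $\scrA(\scrS)$ its $\htt$-filtration is concentrated in degree $h$, so it is — up to isomorphism — an object of $\scrS_h$), so an extension of $S$ by an object of $\scrS_h$ in $\scrE_\flat$ is, level by level, a split sequence of objects in the additive category $\scrS_h$, hence splits. With the first map surjective and the fourth term zero, exactness forces $\Ext^1_{\Ef}(S,M)=0$.

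I expect the main obstacle to be bookkeeping rather than conceptual: making rigorous the claim that after the reduction via Condition (4) the module $M^h/M^{h-1}$ really is (isomorphic to) an object of $\scrS_h$ with vanishing $\Ext^1_{\Ef}$ against $S$, and confirming that the connecting homomorphism arising from the conflation $(M^{h-1}\to M\to M^h/M^{h-1})$ coincides with the pull-back map named in the hypothesis. The latter is essentially the functoriality of $\Ext^1_{\scrE_\flat}$ in the second variable together with the identification of the pull-back action of $\scrK$ described after Remark \ref{2.4}; one must check the connecting map $\Hom_{\Af}(S,M^h/M^{h-1})\to\Ext^1_{\Ef}(S,M^{h-1})$ in the long exact sequence is literally the map called ``pull-back'' in the statement — which follows from the standard description of the connecting map via pulling back the conflation $(M^{h-1}\to M\to M^h/M^{h-1})$ along a given morphism $S\to M^h/M^{h-1}$. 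Once those identifications are in place, the argument is just a four-term exact sequence together with the ``split at each height'' vanishing already used for $(\scrA(\scrS),\scrE(\scrS))$ in \cite[\S4]{DPS17}.
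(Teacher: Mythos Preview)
Your argument is correct and follows essentially the same skeleton as the paper's: both first reduce to $M=M^h$ (via the observation that $[M\to E\to S]$ is the pushout of $[M^h\to E^h\to S]\in\Ext^1_{\scrE_\flat}(S,M^h)$ along $M^h\hookrightarrow M$), and both use the vanishing $\Ext^1_{\scrE_\flat}(S,M^h/M^{h-1})\subseteq\Ext^1_{\scrE(\scrS)}(S,M^h/M^{h-1})=0$. From there the approaches diverge in presentation. You invoke the covariant six-term $\Hom$--$\Ext^1$ sequence for the conflation $(M^{h-1}\to M\to M^h/M^{h-1})\in\scrE_\flat$ and read off the conclusion. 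The paper instead unwinds this by hand: given $\chi=[M\to E\to S]$, it builds the $3\times3$ diagram with middle column $(E^{h-1}\to E\to E/E^{h-1})$, splits the top row by some $\alpha:S\to E/E^{h-1}$, then uses the surjectivity hypothesis to adjust $\alpha$ by $j\circ\beta$ (for suitable $\beta:S\to M^h/M^{h-1}$) so that the corrected map lifts to an honest splitting $\alpha':S\to E$. That diagram chase is precisely a proof of exactness of the six-term sequence at the two $\Ext^1$ terms, specialized to this situation; your version packages it more cleanly but leans on a piece of exact-category machinery that the paper's direct citation \cite[Prop.~A.2]{DPS17} only states in four-term form (the full six-term version is of course standard, e.g.\ in \cite{Bu10}).

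One small correction: after the reduction to $M=M^h$, the filtration of $M$ is \emph{not} ``concentrated in degrees $h-1$ and $h$''---the tower $\cdots\subseteq M^{h-2}\subseteq M^{h-1}$ can be nontrivial. This slip is harmless, since your argument only uses the single conflation $(M^{h-1}\to M\to M^h/M^{h-1})$.
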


\begin{proof} First, we claim it is sufficient to treat the case $M=M^h$: Given $(M\to E\to S)\in \scrE_\flat$,
$(M^h\to E^h\to S)\in \scrE_\flat$ by Condition \ref{condition}(4).   The commutative diagram

\begin{equation}\begin{CD}\label{bottom} M @>>>E @>>> S \\
@A\cup AA  @AAA  @| \\
 M^h @>>> E^h @>>> S \end{CD}
\end{equation}
identifies the original sequence $(M\to E\to S)$ as a push-out of $(M^h\to E^h\to S)$ in mod-$H$. Since we have full subcategory
inclusions
$\scrA_\flat\subseteq\scrA(\scrS)\subseteq
\scrA\subseteq{\text{mod-}}H$, each map in the diagram (\ref{bottom}) above belongs to $\scrA_\flat$.  Also, each row is a
conflation in $(\scrA_\flat,\scrE_\flat)$. Thus, the diagram (\ref{bottom}) defines the original sequence $(M\to E\to S)$ as a push-out in $\scrA_\flat$. 
That is, 
the top row $(M\to E\to S)$ is isomorphic to the push-out of the bottom row, regarded as a conflation in
$(\scrA_\flat,\scrE_\flat)$. Obviously, this push-out is 0 in $\Ext^1_{\scrE^\flat}(S,M)$ if it is a push-out of 0
in $\Ext^1_{\scrE_\flat}(S,M^h)$.
  Thus, from now on,
we can assume that $M=M^h$. (A somewhat shorter argument  can be made using \cite[Prop. 2.12]{Bu10}, which relates push-outs to rectangles like (\ref{bottom}) in general, in an exact category.)

Let $\chi\in \Ext^1_{\Ef}(S,M)$ and let $M\overset\iota\to E\overset \delta\to S$ be a representative conflation sequence for $\chi$. Without loss of generality, we can assume $M$ is a (right) $H$-submodule of $E$, with $i$ its inclusion. Also, $M^{h-1}$ identifies with $E^{h-1}$. Form the push-out diagram of the deflation $M=M^h\to M^h/M^{h-1}$ and the sequence $M\overset\iota\to E\overset \delta\to S$, to get a commutative diagram in the abelian category mod-$H$:
\begin{equation}\label{top row}\begin{CD}
M^h/M^{h-1} @>>> E/E^{h-1} @>>> S\\
@AAA @AAA @|\\
M @>\subseteq >>  E@>>> S \\
@AAA @AAA\\
M^{h-1} @>=>> E^{h-1} 
\end{CD}
\end{equation}
with exact rows and columns (after adding 0 maps at the beginning and end of each row). Also, all objects in the diagram are in $\Af$, and each-three term row or column is in $\Ef$. (The only non-obvious case, given our hypothesis on $(\Af,\Ef)$, is the top row. However, this row is also in $\Ef$, since the push-out of an inflation is an inflation; see \cite[A.1(a)]{DPS17} which is based on \cite{K90}.)

In particular, the top row is defined by an element of 
$$\Ext^1_{\Ef}(S,M^h/ M^{h-1})\subseteq\Ext^1_{\EEE}(S, M^h/M^{h-1})=0.$$
It is, thus, split by an $H$-module map $\alpha:S\to E/E^{h-1}$. Let $j$ denote the map $M^h/M^{h-1}\to E/E^{h-1}$ in the top row, and let $\beta:S\to M^h/M^{h-1}$ be any (right) $H$-module homomorphism. Fullness of $\Af$ in $\AAA$, and of the latter in mod-$H$, guarantees that $\beta$ is a map in the additive category defined by $\Af$. In particular, the pull-back through $\beta$ of the element of $\text{Ext}^1_\Ef(M^h/M^{h-1}, M^{h-1})$ defining the left-hand column is an element of $\Ext^1_{\Ef}(S, M^{h-1})$. Indeed, this process defines the natural homomorphism
$$\Hom_{\Af}(S, M^h/M^{h-1})\longrightarrow\Ext^1_{\Ef}(S, M^{h-1}),$$
which our hypothesis in the lemma guarantees to be surjective. Notice that the image of $\beta$ may also be described as the image of $j\circ \beta$ under a similarly defined map
$$\Hom_{\Af}(S, E/E^{h-1})\longrightarrow\Ext^1_{\Ef}(S, E^{h-1})=\text{Ext}^1_\Ef(S,M^{h-1}).$$
(The top left square in diagram (\ref{top row}) defines $M$ as a pull-back with top row $j$. Extend it to the left by a
pull-back square with top row $\beta$, and consider the composite $j\circ\beta$.)

Observe that the map $\alpha+j\circ\beta:S\to E/E^{h-1}$ is still a splitting map of the top row in (\ref{top row}). Now choose $\beta$ so that its image in ${\Ext}^1_\Ef(S,M^{h-1})$ is the negative of the image of $\alpha$ under the display above, using its identification of $E^{h-1}$ with $M^{h-1}$. With this choice, the image of $\alpha+j\circ\beta$ is 0. This implies that the map $\alpha+j\circ\beta:S\to E/E^{h-1}$ can be lifted to a map $\alpha':S\to E$ of right $H$-modules. That is, the composition of $\alpha'$ with $E\to E/E^{h-1}$ is equal to $\alpha+j\circ\beta$. Composing further with $E/E^{h-1}\to S$ gives the identity map of $S$ to itself.

We have thus constructed a splitting map of the map $E\to S$ in mod-$H$. That is, the short exact sequence defined by $\chi$ is 0 in $\Ext^1_H(S,M)$. It is, therefore, also 0 in $\Ext^1_\Ef(S,M)$. 

This completes the proof of the lemma.
\end{proof}

The following result is a generalization of \cite[Lem. 4.1]{DPS17}. See \cite[Prop. A.2]{DPS17} for the definition of Ext$^1_\scrE$-groups.

\begin{lem}\label{In particular} Let $M,N\in\scrA_\flat$ and $h\in\mathbb Z$. 
\begin{itemize}
\item[(a)] There is a
natural isomorphism
$\Ext^1_{\scrE_\flat}(N^h,M)\cong\Ext^1_{\scrE_\flat}(N^h, M^h)$.

\item[(b)] In particular, if $S\in{\mathscr S}_h,$ we have
$$\Ext^1_{\scrE_\flat}(S,M)\cong\Ext^1_{\scrE_\flat}(S, M^h).$$

\item[(c)] Assume that $S\in{\mathscr S}_h$. If $M=M^h$ and $M^{h-1}=0$, then $\Ext^1_{\scrE_\flat}(S,M)=0$.
\end{itemize}
\end{lem}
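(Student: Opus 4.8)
The plan is to deduce all three parts of Lemma \ref{In particular} from Lemma \ref{claim} together with the structural Conditions \ref{condition}, reasoning in the order (c), then (b), then (a).

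For part (c), the hypothesis is $M=M^h$ and $M^{h-1}=0$, so the pull-back map in Lemma \ref{claim} has target $\Ext^1_{\scrE_\flat}(S,M^{h-1})=\Ext^1_{\scrE_\flat}(S,0)=0$; hence that map is trivially surjective, and Lemma \ref{claim} gives $\Ext^1_{\scrE_\flat}(S,M)=0$ directly. This is the clean base case.

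For part (b), I want to reduce the computation of $\Ext^1_{\scrE_\flat}(S,M)$ for $S\in\scrS_h$ to $\Ext^1_{\scrE_\flat}(S,M^h)$. The idea is to use the conflation $(M^h\to M\to M/M^h)\in\scrE_\flat$ (available from Condition \ref{condition}(3), applied repeatedly, or from the balanced-type property; note $M^h\to M\to M/M^h$ is a conflation since its graded pieces in degrees $>h$ carry $M/M^h$ and those $\leq h$ carry $M^h$). Apply the long exact sequence for $\Ext_{\scrE_\flat}$ in the second variable (available via \cite[Prop. A.2]{DPS17}): one gets
$$\Hom_{\scrA_\flat}(S,M/M^h)\to\Ext^1_{\scrE_\flat}(S,M^h)\to\Ext^1_{\scrE_\flat}(S,M)\to\Ext^1_{\scrE_\flat}(S,M/M^h).$$
The two outer terms involving $M/M^h$ vanish: $\Hom_{\scrA_\flat}(S,M/M^h)=0$ because $S$ is a sum of dual cell modules of height $h$ while $M/M^h$ has a $\htt$-filtration with sections of height $>h$, and a height-$h$ module has no nonzero map into a module all of whose composition factors (over the fraction field) have strictly larger height — this is the standard height/compatibility argument from Notation \ref{notation1} and the preorder comparison \eqref{order}. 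For the vanishing of $\Ext^1_{\scrE_\flat}(S,M/M^h)$, one iterates: $M/M^h$ again lies in $\scrA_\flat$ (Condition \ref{condition}(3)) and its lowest nonzero graded piece sits in degree $>h$, so by induction on the length of the filtration, combined with part (c) for the bottom section and the same long exact sequence argument, one kills it. Thus $\Ext^1_{\scrE_\flat}(S,M)\cong\Ext^1_{\scrE_\flat}(S,M^h)$.

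For part (a), the object $N^h$ need not be an element of $\scrS_h$, but it is an object of $\scrA_\flat$ (Condition \ref{condition}(3)) whose $\htt$-filtration is concentrated in degrees $\leq h$ — precisely, $(N^h)^j=N^j$ for $j\leq h$ and $(N^h)^j=N^h$ for $j\geq h$. So I would run the same long exact sequence argument with $N^h$ in place of $S$: using the conflation $(M^h\to M\to M/M^h)\in\scrE_\flat$, it suffices to check $\Hom_{\scrA_\flat}(N^h,M/M^h)=0$ and $\Ext^1_{\scrE_\flat}(N^h,M/M^h)=0$. The Hom-vanishing is again the height argument (all sections of $N^h$ have height $\leq h$, all sections of $M/M^h$ have height $>h$). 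For the Ext$^1$-vanishing I would either filter $N^h$ by its own $\htt$-filtration and reduce to the case of a single graded section $S\in\scrS_j$ with $j\leq h$, then invoke part (b) and the fact that $(M/M^h)^j=0$ for $j\leq h$ to apply part (c); or, more uniformly, filter $M/M^h$ and dévissage. Either way one lands in the situation of (c).

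The main obstacle I anticipate is not any single step but making the dévissage fully rigorous inside the exact category $(\scrA_\flat,\scrE_\flat)$: I must be careful that every subquotient appearing (the $N^j$, the $M/M^h$, their graded pieces) genuinely lies in $\scrA_\flat$ and that the short exact sequences connecting them are genuinely conflations in $\scrE_\flat$ — this is exactly what Conditions \ref{condition}(3) and (4) are designed to supply, so the work is in citing them at each inductive stage rather than in any new idea. A secondary point requiring care is the naturality claim in (a): the isomorphism $\Ext^1_{\scrE_\flat}(N^h,M)\cong\Ext^1_{\scrE_\flat}(N^h,M^h)$ should be induced by the inclusion $M^h\hookrightarrow M$, and I would verify functoriality in $M$ by observing that the connecting maps in the long exact sequence are natural, so that the squeeze between two vanishing terms produces a natural isomorphism.
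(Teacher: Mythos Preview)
There is a gap in your argument for part (b). You claim that $\Ext^1_{\scrE_\flat}(S,M/M^h)=0$ follows ``by induction on the length of the filtration, combined with part (c) for the bottom section.'' But part (c) does not apply to the bottom section $M'\in\scrS_{j_0}$ of $M/M^h$: since $j_0>h$, one has $(M')^h=0$ rather than $M'=(M')^h$, so the hypothesis $M=M^h$ in (c) fails. The fix is that no induction or d\'evissage is needed at all: apply Lemma~\ref{claim} directly to $N:=M/M^h$. Since $N^h=0=N^{h-1}$, the pull-back map in Lemma~\ref{claim} is the zero map $0\to 0$, trivially surjective, and the lemma yields $\Ext^1_{\scrE_\flat}(S,N)=0$ immediately. (Unwinding the reduction step in that proof, this is just: given $(N\to E\to S)\in\scrE_\flat$, Condition~\ref{condition}(4) gives $(0\to E^h\to S)\in\scrE_\flat$, whence $E^h\cong S$ and the inclusion $E^h\hookrightarrow E$ splits the deflation.) With this correction your long-exact-sequence route to (b), and then (a), goes through; your argument for (c) via Lemma~\ref{claim} is already correct.

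For comparison, the paper proceeds more directly and in the opposite order: for (a) it exhibits mutually inverse maps, taking $\phi:\Ext^1_{\scrE_\flat}(N^h,M)\to\Ext^1_{\scrE_\flat}(N^h,M^h)$ to be application of $(-)^h$ to conflations (legitimate by Condition~\ref{condition}(4)), and its inverse to be push-out along $M^h\hookrightarrow M$; that these are inverse is inherited from the ambient category $(\scrA(\scrS),\scrE(\scrS))$ via Lemma~\ref{dog} and \cite[Prop.~4.1]{DPS17}. Part (b) then follows from (a) since $S=S^h$, and (c) is quoted from \cite[Lem.~4.1(c)]{DPS17} together with Lemma~\ref{dog}. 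Your approach avoids the external citation for (c) and is more self-contained once patched, but the underlying mechanism---applying $(-)^h$ to collapse the extension---is the same.
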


\begin{proof}Note that part (b) follows from part (a). Part (a)  follows from \cite[Prop. 4.1]{DPS17}   and its description of the isomorphism 
$ \Ext^1_{\scrE(\scrS)}(N^h,M)\cong\Ext^1_{\scrE(\scrS}(N^h,M^h)$, together with
Condition \ref{condition}(4) and Lemma \ref{dog}.  Since the proof will be quoted later, we provide some of the details. 

First, we give a map $$\Ext^1_{\scrE(\scrS)}(N^h,M)\overset\phi\longrightarrow\Ext^1_{\scrE(\scrS)}(N^h,M^h)$$
and show that it is an isomorphism by explicitly giving an inverse. The map $\phi$ is obtained by applying the
functor $(-)^h$ to a conflation.  The inverse of $\phi$ is defined by push-out
through $M^h\rightarrow M$.   The restriction of $\phi$ to $\scrE_\flat$ makes sense using Condition \ref{condition}(4) and Lemma \ref{dog}.

This push-out also make sense in the exact category
 $(\scrA_\flat, \scrE_\flat)$ 
 and agrees with the push-out  in $(\scrA(\scrS),\scrE(\scrS))$. (This is an elementary argument. The agreement of push-outs
also follows from \cite[Prop. 2.12]{Bu10}.)  Thus, the isomorphism $\phi$ induces a corresponding
isomorphism when $\scrE(\scrS)$ is replaced by $\scrE_\flat$ .

Finally,  
part (c) (which is independent of (a) and (b)) follows from \cite[Lem. 4.1(c)]{DPS17} and Lemma \ref{dog} above. \end{proof}

\begin{lem}\label{3lemma}
Let $M\in \mathscr A_\flat$. If $S\in{\mathscr S}_h$, then 
$$\Ext^1_{\scrE_\flat}(S,M^{h-1})\hookrightarrow\Ext^1_{\scrE(\scrS)}(S,M^{h-1})\cong\Ext^1_H(S,M^{h-1}).$$\end{lem}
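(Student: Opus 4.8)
Looking at Lemma \ref{3lemma}, the statement asks to show two things: an injection $\Ext^1_{\scrE_\flat}(S,M^{h-1})\hookrightarrow\Ext^1_{\scrE(\scrS)}(S,M^{h-1})$, and an isomorphism $\Ext^1_{\scrE(\scrS)}(S,M^{h-1})\cong\Ext^1_H(S,M^{h-1})$, where $S\in\scrS_h$ and $M\in\scrA_\flat$.

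\textbf{Proof proposal.} The plan is to handle the two claims separately, as they have quite different characters. The first map, the injection into $\Ext^1_{\scrE(\scrS)}$, follows immediately from Lemma \ref{dog}: since $(\scrA_\flat,\scrE_\flat)$ is a full exact subcategory of $(\scrA(\scrS),\scrE(\scrS))$ (Condition \ref{condition}), and $M^{h-1}\in\scrA_\flat$ by Condition \ref{condition}(3) (while $S\in\scrS_h\subseteq\scrA_\flat$ by Condition \ref{condition}(2)), Lemma \ref{dog} gives that $\Ext^1_{\scrE_\flat}(S,M^{h-1})\to\Ext^1_{\scrE(\scrS)}(S,M^{h-1})$ is an inclusion of $\scrK$-modules. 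So the first half is essentially a citation.

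For the isomorphism $\Ext^1_{\scrE(\scrS)}(S,M^{h-1})\cong\Ext^1_H(S,M^{h-1})$, I would invoke the structure of $(\scrA(\scrS),\scrE(\scrS))$ as constructed in \cite[Constr. 3.8]{DPS17}. The key observation is a height/grading argument: $S$ lives purely in height $h$ (i.e.\ $S_K$ is a sum of irreducibles of height $h$), while $M^{h-1}$ is concentrated in heights $\leq h-1$ by construction. Given any short exact sequence $0\to M^{h-1}\to E\to S\to 0$ in mod-$H$ with $E\in\scrA$ torsion-free over $\scrK$, one checks that $E$ automatically lies in $\scrA(\scrS)$: its height filtration satisfies $E^{h-1}=M^{h-1}$ (since the part of $E$ in heights $\leq h-1$ must map to $0$ in $S$ and hence lands in $M^{h-1}$, and conversely $M^{h-1}\subseteq E^{h-1}$), and $E^h/E^{h-1}\cong S$ is in $\scrS_h$; all other graded pieces come from $M^{h-1}$. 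Moreover the sequence automatically belongs to $\scrE(\scrS)$: the induced sequences on graded pieces $0\to (M^{h-1})^j/(M^{h-1})^{j-1}\to E^j/E^{j-1}\to S^j/S^{j-1}\to 0$ are, for $j\neq h$, just $0\to (M^{h-1})^j/(M^{h-1})^{j-1}\to (M^{h-1})^j/(M^{h-1})^{j-1}\to 0\to 0$ (trivially split), and for $j=h$ it is $0\to 0\to S\to S\to 0$ (trivially split). So the forgetful map $\Ext^1_{\scrE(\scrS)}(S,M^{h-1})\to\Ext^1_H(S,M^{h-1})$ is surjective; combined with the injectivity from Lemma \ref{dog} (applied to $(\scrA(\scrS),\scrE(\scrS))$ inside mod-$H$), it is an isomorphism. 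This argument is essentially \cite[Lem. 4.1]{DPS17} or a close variant, so in practice I would cite that and indicate why it applies here with $\scrS=\scrL^*$ (or the abstract $\scrS$).

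\textbf{Main obstacle.} The genuinely delicate point is verifying that an arbitrary extension $E$ of $S$ by $M^{h-1}$ in mod-$H$ has its height filtration behaving correctly, i.e.\ that $E^{h-1}$ really coincides with $M^{h-1}$ and that no ``unexpected'' graded piece of height $h$ appears beyond the copy of $S$. This uses that heights are determined after base change to $K$ and that $H_K$ is semisimple, so the filtration splits over $K$ and the graded pieces are controlled by the irreducible constituents of $S_K$ and $(M^{h-1})_K$; since $S$ is concentrated strictly above $M^{h-1}$ in height, there is no room for interference. Once this bookkeeping is done, everything else is formal. I expect the whole proof to be two or three lines in the paper, reducing to Lemma \ref{dog} and the cited \cite[Lem. 4.1]{DPS17}.
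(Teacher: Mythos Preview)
Your proposal is correct and matches the paper's approach exactly: the paper's proof is precisely the two-line argument you anticipated, citing Lemma~\ref{dog} for the injection and a lemma from \cite{DPS17} for the isomorphism. The only discrepancy is that the paper cites \cite[Lem.~4.2]{DPS17} rather than \cite[Lem.~4.1]{DPS17}; your sketched height-filtration argument is presumably the content of that lemma, so the substance is right even if the numbering is off by one.
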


\begin{proof}The containment $\Ext^1_{\scrE_\flat}\hookrightarrow\Ext^1_{\scrE(\scrS)}$ in the display 
is obtained from Lemma \ref{dog}. The isomorphism in the display is from \cite[Lem. 4.2]{DPS17}. 
\end{proof}

%\begin{lem}\label{4lema}  $S\in {\mathscr S}_h$,  let $M\in{\scrA_\flat}$, and let $j$ be a non-negative integer. There is a 6-term exact sequence
%$$\begin{aligned}
%0\to\Hom_{\scrA_\flat}(S, M^j)\to&\Hom_{\scrA_\flat}(S, M)\to \Hom_{\scrA_\flat}(S,M/M^j) \\ \overset{f}\to &
%\Ext^1_{\scrE_\flat}(S, M^j)\overset{g}\to\Ext^1_{\scrE_\flat}(S, M)\overset{}\to\Ext^1_{\scrE_\flat}(S,M/M^j).\end{aligned}$$
%It is compatible with the first 6 terms of the long exact sequence for the functor $\Hom_{{\mathscr A}_\flat}%(S,-)=\Hom_{H}(S,-)$ applied to
%the short exact sequence $0\to M^j\to M\to M/M^j\to 0$ of $H$-modules. This sequence belongs to ${\mathscr E}_\flat$.  \end{lem}

\begin{lem}\label{prevcor}Let $S\in\mathscr S_h$, and let $M\in\mathscr A_\flat$.
Suppose that $\Ext^1_{\scrE_\flat}(S,M^{h-1})$ is generated as a $\mathscr K$-module by $\epsilon_1, \ldots,
\epsilon_n$ (regarded as elements $\Ext^1_H(S, M^{h-1})$ using the inclusion above). Let $M^{h-1}\to N\to S^{\oplus n}$ represent the element of 
$\Ext^1_{H}(S^{\oplus n},M^{h-1})$ 
corresponding to $\chi:=\epsilon_1\oplus\cdots\oplus\epsilon_n$ above. Suppose there is
a commutative diagram in {\text{\rm mod-}}$H$
$$\begin{CD}
M^{h-1} @>>> N @>>> S^{\oplus n}\\ @| @VVV @VfVV \\
M^{h-1} @>>> M^h @>>> M^h/M^{h-1}.\end{CD}
$$
 Then, $\Ext^1_{\scrE_\flat}(S,M)=0$.
\end{lem}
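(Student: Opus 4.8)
The statement of Lemma \ref{prevcor} is exactly set up so that its hypotheses feed directly into Lemma \ref{claim}: the conclusion $\Ext^1_{\scrE_\flat}(S,M)=0$ will follow from Lemma \ref{claim} once I verify that the pull-back map
$$\Hom_{\scrA_\flat}(S, M^h/M^{h-1})\longrightarrow \Ext^1_{\scrE_\flat}(S, M^{h-1})$$
is surjective. So the whole proof reduces to extracting this surjectivity from the given commutative diagram. First I would recall, via Lemma \ref{In particular}(b), that $\Ext^1_{\scrE_\flat}(S,M)\cong\Ext^1_{\scrE_\flat}(S,M^h)$, so it suffices to treat the case $M=M^h$ (exactly as in the reduction at the start of the proof of Lemma \ref{claim}); in that case the bottom row of the displayed diagram is the canonical conflation $(M^{h-1}\to M^h\to M^h/M^{h-1})$, which lies in $\scrE_\flat$ by Condition \ref{condition}(3).

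\textbf{Key step: surjectivity of the pull-back map.} Fix $\epsilon_i\in\Ext^1_{\scrE_\flat}(S,M^{h-1})$. Under the inclusion into $\Ext^1_H(S,M^{h-1})$ of Lemma \ref{3lemma}, $\epsilon_i$ is a summand of the class $\chi$ represented by $M^{h-1}\to N\to S^{\oplus n}$. The given diagram says $\chi$, pushed along $M^{h-1}\xrightarrow{=}M^{h-1}$ — i.e. $\chi$ itself — is the pull-back along $f:S^{\oplus n}\to M^h/M^{h-1}$ of the bottom conflation's class in $\Ext^1_H(S^{\oplus n},M^{h-1})$. Restricting $f$ to the $i$-th summand $S\hookrightarrow S^{\oplus n}$ gives a map $f_i:S\to M^h/M^{h-1}$, and pull-back of the bottom class along $f_i$ recovers $\epsilon_i$ in $\Ext^1_H(S,M^{h-1})$. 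Since $\epsilon_i$ was already assumed to lie in the subgroup $\Ext^1_{\scrE_\flat}(S,M^{h-1})$, and the bottom conflation is in $\scrE_\flat$, the pull-back of that conflation along $f_i$ is a conflation in $\scrE_\flat$ (pull-backs of conflations along arbitrary morphisms exist and are conflations in an exact category), so $f_i$ is a genuine preimage of $\epsilon_i$ under the pull-back map into $\Ext^1_{\scrE_\flat}(S,M^{h-1})$. As the $\epsilon_i$ generate $\Ext^1_{\scrE_\flat}(S,M^{h-1})$ as a $\scrK$-module and the pull-back map is $\scrK$-linear (by \cite[Prop. A.2]{DPS17}, using that $\scrA_\flat\hookrightarrow\scrA(\scrS)$ is a $\scrK$-category inclusion), surjectivity follows.

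\textbf{Conclusion.} With the surjectivity of $\Hom_{\scrA_\flat}(S,M^h/M^{h-1})\to\Ext^1_{\scrE_\flat}(S,M^{h-1})$ established, Lemma \ref{claim} (applied with this $M=M^h$) yields $\Ext^1_{\scrE_\flat}(S,M^h)=0$, hence $\Ext^1_{\scrE_\flat}(S,M)=0$ by the reduction above. \textbf{The main obstacle} I anticipate is bookkeeping around the compatibility of the two notions of pull-back — the ambient one in $\mathrm{mod}$-$H$ used implicitly in the hypothesis diagram, and the one internal to the exact category $(\scrA_\flat,\scrE_\flat)$ — together with checking that all objects and morphisms appearing ($N$, $f_i$, the pulled-back conflations) really do land in $\scrA_\flat$ and $\scrE_\flat$. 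This is handled by the fullness of $\scrA_\flat$ in $\scrA(\scrS)$ and in $\mathrm{mod}$-$H$, by Lemma \ref{dog} (the $\Ext^1$-inclusions), and by Condition \ref{condition}(3), much as in the proof of Lemma \ref{claim}; \cite[Prop. 2.12]{Bu10} can be cited if one wants to identify push-outs/pull-backs across the nested exact categories cleanly. Nothing beyond these tools should be needed.
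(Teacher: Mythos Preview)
Your proposal is correct and follows essentially the same route as the paper: restrict $f$ to the $j$-th summand to get $g_j:S\to M^h/M^{h-1}$, observe that pull-back of the bottom conflation along $g_j$ recovers $\epsilon_j$, conclude surjectivity of the connecting map, and invoke Lemma~\ref{claim}. Your explicit reduction to $M=M^h$ via Lemma~\ref{In particular}(b) is harmless but unnecessary, since Lemma~\ref{claim} is already stated for general $M$ (the reduction is performed inside its proof).
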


\begin{proof}  
 By the construction of addition in $\Ext^1_H(S, M^{h-1})$, for $1\leq  j\leq n$, 
there is an inclusion $\iota_j:S\to S^{\oplus n}$, inducing by pull-back a map $\Ext^1_H(S,M)\longrightarrow
\Ext^1_H(S^{\oplus n}, M^{h-1})$ taking $\chi$ to $\epsilon_j$. This gives the top rectangle in the commutative
diagram below, with top row representing $\epsilon_j$.  
$$\begin{CD}
M^{h-1} @>>> N_j @>>> S \\ @| @VVV @VVV\\
M^{h-1} @>>> N @>>> S^{\oplus n}\\ @| @VVV @VfVV \\
M^{h-1} @>>> M^h @>>> M^h/M^{h-1}.
\end{CD}
$$

Thus, the conflation corresponding to $\epsilon_j$
may be obtained by a pull-back of $M^{h-1}\to M^h\to M^h/M^{h-1}$ using the evident composite $$g_j:S\to S^{\oplus n}\overset
f \longrightarrow M^h/M^{h-1}.$$ Consequently, the image of $g_j\in\Hom_{\scrA_\flat}(S, M^h/M^{h-1})\subseteq
\Hom_H(S, M^h/M^{h+1})$ under the connecting homomorphism to $\Ext^1_{\scrE_\flat}(S, M^{h-1})\subseteq
\Ext^1_H(S, M^{h-1})$ is $\epsilon_j$. Since $j$ was arbitrary, the connecting homomorphism is surjective, so $\Ext^1_{\scrE_\flat}(S,M)=0$ by Lemma \ref{claim}.
\end{proof}

The next result will assume the following  hypothesis for $\scrS$:
\begin{fhyp} \label{finite}{\rm
 For any integer $h$,  $\scrS_h$ is strictly generated as an additive category by a finite set $F(h)$, i.e., every object in $\scrS_h$ is isomorphic to a finite direct sum of objects chosen from $F(h)$.}
 \end{fhyp} 
\begin{thm}[{}] \label{relative inj}Suppose $(\scrA_\flat,\scrE_\flat)$ is a full 
exact subcategory of $(\scrA(\scrS),\scrE(\scrS))$ satisfying conditions (1)--(4) in Conditions \ref{condition}. Assume $\scrS$
satisfies 
Hypothesis \ref{finite}.
Then, given $M\in \scrA_\flat$, there exists an object $X=X_M$ in $\mathscr A_\flat$ and an inflation $M\overset{i}\to X$ 
in $(\scrA_\flat,\scrE_\flat)$ 
such that $\Ext^1_{\scrE_\flat}(S,X)=0$, for all
$S\in\mathscr S$.
In addition,
 if $h$ is chosen minimal such that $M^{h-1}\neq 0$, it may  be assumed that the inflation $M\overset{i}\longrightarrow X$ induces an isomorphism $M^{h-1}\cong X^{h-1}$.
\end{thm}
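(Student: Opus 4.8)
The plan is to build the object $X = X_M$ by an iterative ``adding extensions'' procedure, one height at a time, mimicking the construction of $X_\omega$ in \cite[Th. 4.7]{DPS17} but now carried out inside the abstract exact category $(\scrA_\flat,\scrE_\flat)$ satisfying Conditions \ref{condition}. The engine of the construction is Lemma \ref{prevcor}: to kill $\Ext^1_{\scrE_\flat}(S,M)$ for a single $S\in\scrS_h$ it suffices, by Lemma \ref{In particular}(b), to work at height $h$, and there Lemma \ref{prevcor} tells us that if we enlarge $M^h$ to some $N'$ (pushing out the universal extension of $S^{\oplus n}$ by $M^{h-1}$ along the inclusion into $M^h$) so that the connecting map $\Hom(S, N'/M^{h-1})\to\Ext^1_{\scrE_\flat}(S,M^{h-1})$ becomes surjective, the resulting module has $\Ext^1_{\scrE_\flat}(S,-)=0$. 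The finiteness needed to form the universal extension with a finite $n$ comes from the Finiteness Hypothesis \ref{finite} together with the fact (Lemma \ref{3lemma}) that $\Ext^1_{\scrE_\flat}(S,M^{h-1})$ embeds in $\Ext^1_H(S,M^{h-1})$, a finitely generated $\scrK$-module since $H$ is finite over the Noetherian ring $\scrK$.

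The key steps, in order, are as follows. First I would reduce, via Lemma \ref{In particular}(b) and Condition \ref{condition}(3)--(4), to treating one height $h$ at a time, processing heights in increasing order starting from the minimal $h_0$ with $M^{h_0-1}\neq 0$; at the bottom height Lemma \ref{In particular}(c) already gives the required vanishing with no modification, which is what secures the final ``in addition'' clause that $X^{h_0-1}\cong M^{h_0-1}$. Second, for the current height $h$, I would pick a finite generating set $F(h)$ of $\scrS_h$ (Hypothesis \ref{finite}), and for each $S\in F(h)$ choose finitely many $\scrK$-generators $\epsilon_1,\dots,\epsilon_n$ of $\Ext^1_{\scrE_\flat}(S,M^{h-1})$, realized in $\Ext^1_H(S,M^{h-1})$ via Lemma \ref{3lemma}; form the corresponding extension $M^{h-1}\to N\to S^{\oplus n}$ in mod-$H$. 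Third, I would splice this extension into $M^h$ along a suitable map: the point is that $M^h/M^{h-1}$ need not already admit a map from $S^{\oplus n}$ realizing $\chi$, so one replaces $M^h$ by a pushout/amalgam $\widetilde M^h$ that sits in a conflation $M^{h-1}\to\widetilde M^h\to (M^h/M^{h-1})\oplus S^{\oplus n}$, and one checks $\widetilde M^h\in\scrA_\flat$ using that $\scrA_\flat$ is closed under the relevant extensions (Condition \ref{condition}(1), plus that $S^{\oplus n}\in\scrS_h\subseteq\scrA_\flat$ by Condition \ref{condition}(2), and the pushout of an inflation is an inflation, \cite[A.1(a)]{DPS17}). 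This produces an inflation $M\hookrightarrow M'$ in $\scrE_\flat$ with $M'^{h-1}=M^{h-1}$, $M'^{h}=\widetilde M^h$, and $M'^j=M^j$ for $j>h$ arranged so that Lemma \ref{prevcor} applies, giving $\Ext^1_{\scrE_\flat}(S,M')=0$ for all $S\in F(h)$, hence for all $S\in\scrS_h$ by additivity. Fourth, I would iterate: since $\Ext^1_{\scrE_\flat}(S,-)$ for $S\in\scrS_{h}$ depends only on the degree-$\le h$ part (Lemma \ref{In particular}(b)), and since the modification at height $h$ leaves the parts of degree $>h$ untouched up to the needed conflation structure, once a height is ``cleared'' it stays cleared as we move up; after finitely many heights (all objects are finite over $\scrK$, so the height filtration is finite) we reach the desired $X$.

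The main obstacle will be the third step: arranging the modification of $M^h$ so that it stays inside $\scrA_\flat$ \emph{and} so that the hypotheses of Lemma \ref{prevcor} are literally met, in particular producing the commutative square with vertical map $f\colon S^{\oplus n}\to M'^{h}/M'^{h-1}$ through which the universal extension pulls back. One must be careful that the enlargement is an inflation in $\scrE_\flat$ (not merely a monomorphism in mod-$H$), which is where Condition \ref{condition}(3), the stability of $\scrE_\flat$ under pushout along inflations, and the full-exactness of $(\scrA_\flat,\scrE_\flat)$ inside $(\scrA(\scrS),\scrE(\scrS))$ all get used; and one must verify that the higher-degree pieces $M'^{j}/M'^{j-1}$ for $j>h$ remain objects of $\scrS_j$, so that $M'\in\scrA(\scrS)$ and hence $M'\in\scrA_\flat$. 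A secondary technical point, handled as in \cite[Th. 4.7]{DPS17}, is bookkeeping the passage from the finite set $F(h)$ to all of $\scrS_h$ and checking that the finitely many $\scrK$-generators of $\Ext^1_{\scrE_\flat}(S,M^{h-1})$ suffice simultaneously; this is routine given Lemma \ref{3lemma} and the Noetherian hypothesis on $\scrK$. The final sentence of the theorem is then immediate from the observation that at the minimal height $h_0$ no modification is performed, so $i$ restricts to an isomorphism $M^{h_0-1}\xrightarrow{\sim} X^{h_0-1}$.
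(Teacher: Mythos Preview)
Your approach matches the paper's proof: pick the minimal height $h$ with nonvanishing $\Ext^1_{\scrE_\flat}(S,M)$, form the extension $M^{h-1}\to X^h\to (M^h/M^{h-1})\oplus\bigoplus_i S_i^{\oplus n_i}$ classified by $\chi'\oplus\chi$ (your $\widetilde M^h$), apply Lemma~\ref{prevcor} to obtain vanishing at height $h$, push out along $M^h\hookrightarrow M$ to build the full $X$, and iterate. One small correction: after the pushout it is $X^j=M^j$ for $j\le h-1$ (not for $j>h$ as you wrote), while for $j>h$ only the \emph{sections} $X^j/X^{j-1}\cong M^j/M^{j-1}$ agree---the filtration pieces themselves strictly enlarge; this $j\le h-1$ preservation is precisely what delivers the ``in addition'' clause.
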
 
\begin{proof}Without loss of generality, we can assume that $M\not=0$ and that $\Ext^1_{\scrE_\flat}(S, M)\neq
0$, for some $S\in \mathscr S$.  Choose an integer $h$ minimal with
such a non-vanishing occurring for some $S\in \mathscr S_h$. Note that
$M^{h-1}\neq 0$ by Lemma \ref{In particular}(c). We will enlarge
$M$ to an object $X$, closer to the $X$ required in the theorem.

  Let $S_1,\ldots, S_m$ be generators for $\mathscr S_h$. For each index $i$, let $\epsilon_{i,1},\cdots,\epsilon_{i,n_i}$ be a finite
set of generators for $\Ext^1_{\scrE_\flat}(S_i,
M^{h-1})$. Form conflations
 $0\to M^{h-1}\to Y_i \to S_i^{\oplus n_i}\to 0$ corresponding to
 $\chi_i:=\epsilon_{i,1}\oplus\cdots\oplus\epsilon_{i,n_i}
\in\Ext^1_{\scrE_\flat}(S_i^{\oplus n_i}, M^{h-1})$. Put $\chi:=\chi_1\oplus \cdots\oplus \chi_m$, and let
$\chi'\in\Ext^1_{\scrE_\flat}(M^h/M^{h-1}, M^{h-1}) $ correspond to the
conflation $M^{h-1}\to M^h\to M^h/M^{h-1}$. Put $Z:=\oplus_i S_i^{\oplus n_i}
\oplus M^h/M^{h-1}$, and let $M^{h-1}\to X^h\to Z$ correspond to
$\chi\oplus\chi'$. Observe that there is a  
commutative diagram
$$
\begin{CD}
M^{h-1}@ >>> Y_i @>>> S_i^{\oplus n_i}\\
@| @VVV @VVV\\
M^{h-1} @>>> X^h @>>> Z,\end{CD}
$$
in which the top row corresponds to $\chi_i$ and the bottom row to
$\chi\oplus\chi'$ .  Note that the bottom row belongs to $\scrE_\flat$.
Comparison with 
Lemma \ref{prevcor}, allowing for the differences in notation, shows 
$\Ext^1_{\scrE_\flat}(S_i,X^h)=0$, for all $i$. Thus, $\Ext^1_{\scrE_\flat}(S,X^h)=0$, for all $S$ in ${\mathscr S}_h$.    Note that we have
the same vanishing for $S\in{\mathscr S}_j$ with $j<h$,  by our
choice of $h$.  In all cases, we can replace $X^h$ with any $X'$
containing it with $(X')^h=X^h$ by Lemma \ref{In particular}.

     So far, we have not constructed an object $X$, only $X^h$. However,
the latter may be viewed as the middle term of an exact sequence of right
$H$-modules $0\to M^h\to X^h\to S'\to 0$, where $S':=\bigoplus
S_i ^{\oplus n_i}\in \mathscr S_h$.  This sequence clearly corresponds to a conflation in $\scrE_\flat$.
(Notice that $X^h\to S'$ is the composition of the deflation $X^h\to Z$ and the split deflation $Z\to S'$, both for
$\scrE_\flat$. It is therefore a deflation for $\scrE_\flat$. The morphism $M^h\to X^h$ belongs to $\scrA_\flat$ and is a kernel of $X^h\to S'$ in mod-$H$, hence is a kernel in $\scrA_\flat.$)  Applying a push-out construction using $M^h\to M$ (see
Lemma \ref{In particular}(a,b) and their proof), we obtain an object $X$ in
$\mathscr A_\flat$ which contains a copy of $M$ under an
inflation and has our constructed $X^h$ as its image under the
functor $(-)^h$. In addition, $X^j=M^j$ for $j\leq h-1$.

   As noted above, $\Ext^1_{\scrE_\flat}(-,X)$
vanishes on all objects in $\mathscr S_j$ with $j\leq h$. Now repeat
the argument with $X$ in the role of $M$. This requires a bigger
$h$, unless $\Ext^1_{\scrE_\flat}(S, X)$ already vanishes, for all $S\in
\mathscr S$. Eventually the process stops, at which point $X$
satisfies all requirements of the theorem.
\end{proof}

\begin{rem} Note that \cite[Th. 4.7]{DPS17} is the special case of the theorem with $\scrA_\flat=\scrA(\scrS)$ and $\scrE_\flat=\scrE(\scrS)$.   
The result was applied in the main theorem \cite[Th. 4.9]{DPS17}
there to build, for each dual left-cell module $S_\omega$, an object $X_\omega$ in $\scrA(\scrS)$ such that 
$S_\omega\subseteq X_\omega$ (part of the filtration structure required for $\scrA(\scrS)$) and such that
$\Ext^{1}_{\scrE(\scrS)}(S,X_\omega)=0$, for all $S\in\scrS$.   We can apply the same construction here.\end{rem}

In preparation for the next section, we now assume that $H=\sH$ is the generic Hecke algebra over $\sZ=\mathbb Z[t,t^{-1}]$ as discussed in Section 1.2.  Thus, the Noetherian domain $\scrK$ will be specialized to $\sZ$.

\begin{const}\label{const}
{\rm Let $(\scrA_\flat, \scrE_\flat)=(\scrA_\flat(\scrL^*),\scrE_\flat(\scrL^*))$, using a height
function $\htt=\htt_r$ as in Notation \ref{notation2}. (Thus, $\htt$ is compatible with the preorder $\leq_{LR}^{\text{\rm op}}$.) Let $\Omega$ be the set of (Kazhdan-Lusztig) left cells in $W$.

\begin{itemize}
\item[(a)]  Let $\Omega'$ be the set of left cells that do not contain the longest element $w_{0,J}$ in a
parabolic subgroup $W_J$, $J\subseteq S$. % {\color{red}in their associated two-sided cells}. 
For $\omega\in\Omega'$, define (as given in Theorem \ref{relative inj})
$X_\omega=X_M$, for $M=S_\omega$, and let 
$X:=\bigoplus_{\omega\in\Omega'}X_\omega$. %Also, if $\omega\in\Omega\backslash\Omega'$, let $T_\omega
%:=x_J\sH$, where $J$ is the unique subset of $S$ with $w_{0,J}\in\omega$.  
\item[(b)] Define right $\sH$-modules
    \begin{equation}\label{T^+}\begin{cases} T:=\bigoplus_{J\subseteq S} \x_J\sH=\bigoplus_{\omega\in\Omega\backslash\Omega'}T_\omega,\\ T^+:=T\bigoplus X,\end{cases}\end{equation}
and form the generic Hecke endo-algebra and its extended version
    \begin{equation}\label{endo} \begin{cases}A:=\End_\sH(T),\\ A^+:=\End_{\sH}(T^+).\end{cases}\end{equation}
    \index{generic Hecke endo-algebra! $\sim$ $A$}\index{generic Hecke endo-algebra! extended $\sim$ $A^+$}
%    For a left cell $\omega$, put $T^+_\omega:=X_\omega$ if $\omega\in\Omega'$; and, if $\omega\not\in\Omega'$, set  $T^+_\omega=T_\omega=x_J\sH$, as above.
\item[(c)]    Finally,  for $\omega\in\Omega\backslash\Omega'$, let $T^+_\omega:=x_J\sH$, where $J$ is the unique subset of $S$ with $w_{0,J}\in\omega$, and $T^+_\omega=X_\omega$, for $\omega\in\Omega'$. 
    Define the following left $A^+$-modules
    \begin{equation}\label{delta}\begin{cases}\Delta(\omega):=\Hom_{\sH}(S_\omega,T^+),\\
    P(\omega):=\Hom_\sH(T^+_\omega,T^+),\,\,{\text{\rm for any}}\,\, \omega\in\Omega.\end{cases}\end{equation}
\end{itemize} }\end{const}

As considered in \eqref{modules} and also for later application in Section 5.4, we define, for any
 poset ideal $\Theta$ of $(\Omega,\leq_L)$, 
 \begin{equation}\label{Tdagger}
T^\ddag_\Theta:=\bigoplus_{\omega\in\Theta}(T^+_\omega)^{m_\omega}\;\text{ and }\; A^\ddagger_\Theta=\End_\sH(T^\ddagger_\Theta),
%\in\scrA_\flat %\text{ with all $m_\omega>0$},
\end{equation}
%the multiply extended versions%, following \cite[(4.2)]{DPS18}, 
%\begin{equation}\label{Tdagger}
%T^\ddagger:=\Big(\bigoplus_{\omega\in\Omega\backslash\Omega'}T_\omega^{\oplus m_\omega}\Big)\oplus
%\Big(\bigoplus_{\omega\in\Omega'}X_\omega^{\oplus m_\omega}\Big),
%\end{equation}
and
\begin{equation}\label{Adagger}
A^\ddagger=\End_\sH(T^\ddagger_\Omega),
\end{equation}
where $\{m_\omega\}_{\omega\in\Theta}$ is any fixed set  of positive integers.\footnote{We view $T^\ddagger$ as a modification of $T^+$, rather than $T$.}

\index{generic Hecke endo-algebra! multiply extended $\sim$ $A^\ddagger$}

\chapter{The conjecture and its proof}

The conjecture in the title of this chapter is Conjecture \ref{conjecture}, which is
\cite[Conj.1.2]{DPS15} and also captures essentially the earlier conjecture \cite[Conj. 2.5.2]{DPS98a}, as
discussed in Remarks \ref{Rem7.4}.  A special local case \cite[Th. 3.6]{DPS15} of the \cite{DPS15} conjecture was proved in a Cherednik algebra context, using an Ext$^1$ vanishing result
\cite[Cor. 5.3]{DPS15} obtained in part from \cite{GGOR03}. The latter results seemed to be hard to generalize. Nevertheless, in Chapter 2 here, we have used
 exact categories to formulate by analogy a tractable global version of the Ext$^1$ vanishing condition, not
mentioning localization or Cherednik algebras. Using it, we are now ready to prove this new version of the conjecture.

\medskip
{\it For this chapter, we continue the notation in the previous chapter, but, as in Construction \ref{const},  return to the special case where $H$ is the generic Hecke algebra $\sH$ (so that $\scrK=\sZ$) 
associated to a standard finite Coxeter system $(W,S,L)$.  }
\medskip
\section{$q$-permutation modules and their filtrations}
\medskip
Let $\sH$ be a generic Hecke algebra over $\sZ=\mathbb Z[t,t^{-1}]$ associated to a standard finite Coxeter system $(W,S,L)$.  Let $\Omega$ be the set of left cells of $W$. The preorders $\leq_L$, $\leq_{LR}$ on $W$
induce partial order $\leq_L$ and preorder $\leq_{LR}$ on $\Omega$; see \eqref{order}.
 We take this opportunity to repeat and highlight the following result, mentioned after Proposition \ref{prop1}.

\begin{prop}\label{208} Each two-sided cell module
is a direct sum of left-cell modules. \end{prop}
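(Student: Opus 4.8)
The plan is to exploit the height function $\htt=\htt_r:\Omega\to\mathbb Z$ and the resulting height filtration on a two-sided cell module, together with the basic compatibility property \eqref{order}. Recall that for a two-sided cell $\mathbf c$ we have the $\sH$-bimodule $M(\mathbf c)=\sH_{\leq_{LR}\mathbf c}/\sH_{<_{LR}\mathbf c}$, and that the two-sided cell module (as a left, resp. right, $\sH$-module) is obtained from $M(\mathbf c)$ by forgetting one side. First I would write down, for a fixed $\mathbf c$, the left $\sH$-module $M=M(\mathbf c)$ regarded as a quotient of $\sH_{\leq_{LR}\mathbf c}$; by \eqref{hxyz} and the structure of the ideals \eqref{H<} it has a Kazhdan--Lusztig basis indexed by the elements of $\mathbf c$, and $\mathbf c$ is a union of left cells $\omega_1,\dots,\omega_k$, all of which lie in the \emph{same} two-sided cell. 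By \eqref{order} (equivalently, the strict domination of $\leq_{LR}$ by $\leq_L$, which is our standing assumption, valid since $(W,S,L)$ is standard finite), any two distinct left cells $\omega_i,\omega_j\subseteq\mathbf c$ are $\leq_L$-incomparable: if $\omega_i<_L\omega_j$ then $\overline{\omega_i}<_{LR}\overline{\omega_j}$, contradicting $\overline{\omega_i}=\overline{\omega_j}=\mathbf c$.

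The key step is then the following: because the left cells inside $\mathbf c$ are pairwise $\leq_L$-incomparable, the left $\leq_L$-order restricted to the basis of $M(\mathbf c)$ is trivial (an antichain at the level of cells). Concretely, for each $\omega_i\subseteq\mathbf c$ the $\sZ$-span $N_i$ of the basis elements $\scc_y$, $y\in\omega_i$, is itself a left $\sH$-submodule of $M$: indeed, for $s\in S$ and $y\in\omega_i$, \eqref{CsCw} expresses $\scc_s\scc_y$ in terms of $\scc_{sy}$ and $\scc_x$ with $x<_L y$, and after passing to the quotient $M=\sH_{\leq_{LR}\mathbf c}/\sH_{<_{LR}\mathbf c}$ every surviving $\scc_x$ has $x\sim_{LR}y$, hence (since $x\leq_L y$ and $\overline x=\overline y$, and $\leq_L$ strictly dominates $\leq_{LR}$) forces $x\sim_L y$, i.e. $x\in\omega_i$. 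Thus each $N_i$ is a left submodule, the $N_i$ are spanned by disjoint sets of basis elements whose union is the whole basis, and $M(\mathbf c)=\bigoplus_i N_i$ as a left $\sH$-module. Finally I would identify $N_i$ with the left-cell module $S(\omega_i)=\sH_{\leq_L\omega_i}/\sH_{<_L\omega_i}$: the inclusion $\sH_{\leq_L\omega_i}\hookrightarrow\sH_{\leq_{LR}\mathbf c}$ induces a map onto $N_i$ whose kernel is exactly $\sH_{<_L\omega_i}$ (again using that the surviving basis elements in $N_i$ are precisely those in $\omega_i$, not merely $\leq_L\omega_i$).

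An alternative, cleaner route avoids the explicit bimodule bookkeeping: apply the $\htt_r$-height filtration of Notation~\ref{notation2} directly to $M=M(\mathbf c)$. Since $\htt$ is constant on left cells belonging to the same two-sided cell (Notation~\ref{notation1}, a consequence of \eqref{order}), all the left cells inside $\mathbf c$ share a single height value $h_0$, so the height filtration \eqref{dlcf} of $M$ is concentrated in a single step: $M^{h_0-1}=0$ and $M^{h_0}=M$. By construction the single section $M^{h_0}/M^{h_0-1}=M$ is then a direct sum of left-cell modules, which is exactly the assertion. I expect the main obstacle to be the careful verification that the surviving constituents in the quotient $M(\mathbf c)$ really are confined to a single left cell (not just $\leq_L$-below it) — this is where the strict domination \eqref{order} does the essential work, and it is worth stating as a small lemma before assembling the direct sum decomposition. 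Everything else is routine once that point is in hand; the right-module statement follows by the symmetric argument with $\leq_R$, or by applying $(-)^*$.
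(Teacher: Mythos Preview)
Your primary argument is correct and is essentially the paper's proof: define the span $N_i$ of the $\scc_y$ with $y\in\omega_i$ inside $M(\mathbf c)$, use strict domination \eqref{order} to see that any $x\leq_L y$ surviving in the quotient must satisfy $x\sim_L y$, hence each $N_i$ is a submodule, and then identify $N_i\cong S(\omega_i)$ via the map induced by $\sH_{\leq_L\omega_i}\hookrightarrow\sH_{\leq_{LR}\mathbf c}$. The paper phrases the submodule step slightly differently (it simply writes down $S'(\omega_j)$ and checks the kernel and image of the induced map), but the content is the same.

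Your alternative route via the height filtration, however, has a gap. The height filtration from Example~\ref{example1} (or its right-module analog in \eqref{dlcf}) is defined for \emph{any} torsion-free module $M$, but its sections are not automatically direct sums of left-cell modules; that conclusion is precisely the defining property of membership in $\scrA(\scrL)$ (or $\scrA(\scrL^*)$). Saying ``by construction the single section \ldots\ is a direct sum of left-cell modules'' assumes $M(\mathbf c)\in\scrA(\scrL)$, which is equivalent to the proposition you are proving. So this shortcut is circular as written; it would only work after you have already established the direct-sum decomposition by your first argument.
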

\begin{proof}Let $\frak c$ be a two-sided cell of $W$, and set $M(\mathfrak c):=\sH_{\leq_{LR}\mathfrak c}/\sH_{<_{LR}\mathfrak c}$, the associated two-sided cell module, where $\sH_{\leq_{LR}\mathfrak c}=\sH_{\leq_{LR} w}$ 
 for $w\in\mathfrak c$  is defined in \eqref{H<}.
Write $\mathfrak c=\cup_{i=1}^m\omega_i$ (a disjoint union) with $\omega_i\in\Omega$. Then, the free $\sZ$-module $M(\mathfrak c)$ is the direct sum $\bigoplus_{j=1}^m S'(\omega_j)$, where each submodule $S'(\omega_j):=\sum_{y\in\omega_j}(\sZ\scc_y+\sH_{<_{LR}\mathfrak c})/\sH_{<_{LR}\mathfrak c}$ is a free $\sZ$-module. It remains to prove that each $S'(\omega_j)$ is isomorphic to the left-cell module $S(\omega_j)=\sH_{\leq_{L}\omega_j}/\sH_{<_{L}\omega_j}$. This is clear, since the inclusion $\sH_{\leq_{L}\omega_j} \subseteq \sH_{\leq_{LR}\mathfrak c}$ induces  by \eqref{order} an $\sH$-module homomorphism $\sH_{\leq_{L}\omega_j}\to M(\mathfrak c)$ with image $S'(\omega_j)$ and with kernel $\sH_{<_{L}\omega_j}=\sH_{\leq_{L}\omega_j}\cap \sH_{<_{LR}\mathfrak c}$. (Observe that,  for some $z\in \omega_j$,  by \eqref{order}, if $y<_Lz$, then $y<_{LR} z(\in\mathfrak c)$, and if $y\leq_Lz$ and $y<_{LR}w$ for some $w\in\mathfrak c$, then $y<_Lz$.)
\end{proof}
Observe that the argument for this result uses display (\ref{order}). A similar fact holds for right-cell modules or dual left-cell modules.
Also, the $q$-permutation modules $\sH x_J$ in Proposition \ref{HxJ} have natural sections  arising from $\mathfrak h_l$ which are direct sums of left-cell modules. Once more, this goes back to (\ref{order}), as discussed at the end of the proof of Proposition \ref{HxJ}. The (right) $q$-permutation modules $x_J\sH$ have, similarly, filtrations by the direct sum of dual left-cell modules $S_\omega:= S(\omega)^*$ defined as the $\sZ$-linear dual of the left-cell module $S(\omega)$; see Notation \ref{notation1}.

We now define a new preorder $\preceq=\preceq_\Omega$ \index{preorder $\preceq$} on $\Omega$ by setting, for, $\omega,\omega'\in\Omega$,
\begin{equation}\label{preorder1} 
\omega\preceq\omega'\iff \htt(\omega)<\htt(\omega'), \text{ or } \htt(\omega)=\htt(\omega')\text{ and }\omega\sim_{LR}\omega'.\end{equation} 
Here, the height function $\htt=\htt_r=-\htt_l$ is chosen so that
it is compatible with $\leq^\op_{LR}$ on $\Omega$. See Notation \ref{notation2} and display (\ref{preorder}) in Appendix B. 
In particular, 
$\htt$ is  constant on two-sided cells. 
 Thus, $\preceq$ above is well-defined on $\Omega$ and even induces a partial order on the set $\overline\Omega$
of two-sided cells.

It also follows from the definition in display \eqref{preorder1} that the equivalence classes of the preorder $\preceq$ are precisely the two-sided cells, 
satisfying a requirement in \cite[Conj. 1.2]{DPS15}. More is true along these lines: the preorders $\leq^\op_{LR}$ and $\preceq$ share  a compatible height function $\htt$. In addition, $\leq^\op_{LR}$ strictly dominates $\preceq$ (see Definition \ref{dominates} in Appendix B)\footnote{To check this, suppose that $\omega,\omega'\in\Omega$ and
$\omega<^\op_{LR}\omega'$. Since $\htt$ is compatible with $\leq^\op_{LR}$, we have $\htt(\omega)<\htt(\omega')$.
Thus, $\omega\preceq\omega'$. However, it is not possible for $\omega\sim_{LR}\omega'$, since $\htt$ is constant on  two-sided cells. So $\omega\prec\omega'$. Since we already know  for $\omega,\omega'\in \Omega$, that $\omega\sim_{LR} \omega'$ implies $\omega\sim_\prec\omega'$, we have proved that $\leq^\op_{LR}$ strictly 
dominates $\preceq$.} \index{preorder! strictly dominates} so that any height function compatible with $\preceq$ is also compatible 
with $\leq_{LR}^\op$ (as well as with $\leq_L^\op$).
See Proposition \ref{prop2} (and Proposition \ref{prop1}.) Again, the discussion traces back to (\ref{order}).

The {\it right set} of an element $w\in W$ is defined as the set $\sR(x)=\{s\in S\,|\, xs<x\}.$ Then,
if $x\leq_Ly$, it is known  (and elementary) that $\sR(x)\supseteq\sR(y)$ \cite[Lem. 8.6]{Lus03}. For example, if $J\subseteq S$, let $w_{0,J}$ be the longest
word in $W_J$. Clearly, $\sR(w_{0,J})=J$.   Thus, if $\omega\in\Omega$ contains $w_{0,I}$ and $w_{0,J}$, for subsets $I,J$ of $S$,
then, if $w_{0,I}\leq_Lw_{0,J}\leq_Lw_{0,I}$, so $I=J$.

 We have the following elementary result.

\begin{lem}\label{w_0,J}Let $J\subseteq S$, and let $w_{0,J}$ be the longest word in the parabolic subgroup $W_J$.
The following sets $A_J$, $B_J$, $C_J$ are equal:

\begin{itemize}
\item[(1)] $A_J=\{w\in W\,|\, \sR(w)\supseteq J\}.$

\item[(2)] $B_J=\{w=uw_{0,J}\,|\,\ell(w)=\ell(u)+\ell(w_{0,J})\}$.

\item[(3)] $C_J  = \{w\in W\,|\,w\leq_L w_{0,J}\}$.
\end{itemize}

\end{lem}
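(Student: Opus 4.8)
\textbf{Proof strategy for Lemma \ref{w_0,J}.}

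The plan is to prove the chain of inclusions $A_J \subseteq C_J \subseteq B_J \subseteq A_J$, which will establish that all three sets coincide. Each inclusion is elementary once the right tool is invoked, and the three tools are, respectively, a standard fact about right descent sets and the left preorder, the structure theory of minimal-length coset representatives for $W_J$ in $W$, and a direct length computation.

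First I would show $A_J \subseteq C_J$. Take $w$ with $\sR(w) \supseteq J$. The key is to recall from \eqref{longest} and the structure constants \eqref{hxyz} that $\sH x_J = \sH_{\leq_L w_{0,J}}$, or, more directly, to use the known fact (cited in the proof of Proposition \ref{HxJ} via \cite[(2.3.3)]{DPS98a}) that $\{w \in W \mid ws < w, \,\forall s \in J\}$ is exactly $\{w \in W \mid w \leq_L w_{0,J}\}$; since $\sR(w) \supseteq J$ says precisely that $ws < w$ for all $s \in J$, this gives $w \leq_L w_{0,J}$, i.e. $w \in C_J$. Alternatively, one can argue directly: $w \leq_L w_{0,J}$ follows because $\scc_w$ appears (with nonzero coefficient) in a product expressing it as an element of the left ideal $\sH x_J$, using \eqref{CsCw} to move down in the left order while $s$ ranges over $J$.

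Next, $C_J \subseteq B_J$: if $w \leq_L w_{0,J}$ then, by the descent-set monotonicity $\sR(w) \supseteq \sR(w_{0,J}) = J$ quoted just above the lemma (\cite[Lem. 8.6]{Lus03}), we again get $\sR(w) \supseteq J$. Now write $w = uv$ with $v \in W_J$ of maximal length and $u$ the minimal-length representative of the coset $wW_J$; then $\ell(w) = \ell(u) + \ell(v)$. The condition $\sR(w) \supseteq J$ forces $vs < v$ for all $s \in J$ (since $u$ has no right descents in $J$, any right descent of $w$ in $J$ must come from $v$), hence $v = w_{0,J}$, giving $w = u w_{0,J}$ with $\ell(w) = \ell(u) + \ell(w_{0,J})$, i.e. $w \in B_J$. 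Finally, $B_J \subseteq A_J$ is immediate: if $w = u w_{0,J}$ with lengths additive, then for any $s \in J$ we have $\ell(ws) = \ell(u w_{0,J} s) = \ell(u) + \ell(w_{0,J} s) = \ell(u) + \ell(w_{0,J}) - 1 = \ell(w) - 1 < \ell(w)$, so $s \in \sR(w)$; thus $\sR(w) \supseteq J$ and $w \in A_J$.

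The only mildly delicate point — and the one I would expect to need the most care — is the step $C_J \subseteq B_J$, specifically the claim that $\sR(w) \supseteq J$ implies $w$ has $w_{0,J}$ as its ``right factor''. This is where one must correctly invoke the parabolic decomposition $W = {}^JW \cdot W_J$ (or $W_J$-coset representatives) and the exchange/deletion property to conclude that the $W_J$-part is the longest element. Everything else is bookkeeping with lengths and the already-cited descent-set lemma. I would present the argument exactly in the cyclic order $A_J \subseteq C_J \subseteq B_J \subseteq A_J$ so that no inclusion is proved twice.
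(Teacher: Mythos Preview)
Your cycle $A_J \subseteq C_J \subseteq B_J \subseteq A_J$ has a genuine gap at the first inclusion. For $A_J \subseteq C_J$ you invoke ``the known fact (cited in the proof of Proposition \ref{HxJ} via \cite[(2.3.3)]{DPS98a})'' that $A_J = C_J$; but that is exactly the content of the present lemma (indeed, Proposition \ref{HxJ} cites both \cite[(2.3.3)]{DPS98a} \emph{and} Lemma \ref{w_0,J} for it), so quoting it here is circular. Your alternative sketch --- that $\scc_w$ lies in the left ideal $\sH x_J$ by ``moving down in the left order while $s$ ranges over $J$'' --- does not work as stated: the condition $\sR(w)\supseteq J$ concerns \emph{right} descents, and \eqref{CsCw} governs \emph{left} multiplication by $\scc_s$; there is no direct mechanism here producing $\scc_w$ inside $\sH\scc_{w_{0,J}}$ from right descent information alone.

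You have also misidentified the delicate step. The inclusion you flag, $C_J\subseteq B_J$, is routine Coxeter theory (and your argument for it is fine; in fact it proves $C_J\subseteq A_J$ and $A_J\subseteq B_J$ separately). The nontrivial inclusion is $A_J\subseteq C_J$ (equivalently $B_J\subseteq C_J$), because it is the only place where the Kazhdan--Lusztig preorder $\leq_L$ must be produced rather than consumed. The paper handles this by first passing to $B_J$: write $w=uw_{0,J}$ with lengths additive, take a reduced expression $u=s_{i_1}\cdots s_{i_r}$, and use the elementary fact (immediate from \eqref{CsCw}) that $sv>v$ implies $sv\leq_L v$ to obtain the chain $w=s_{i_1}\cdots s_{i_r}w_{0,J}\leq_L s_{i_2}\cdots s_{i_r}w_{0,J}\leq_L\cdots\leq_L w_{0,J}$. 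That one-line inductive argument is the missing ingredient in your proposal.
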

\begin{proof} Let $w\in A_J$, and write $w=uv$, where $v\in W_J$ and $u$ is a distinguished left $W_J$-coset
representative.  For $s\in J$, it follows that $ws<w$, so $vs<v$. Thus, $v=w_{0,J}$ and $w\in B_J$. Conversely,
if $w=uw_{0,J}\in B_J$, then it follows easily that $u$ is a distinguished left $W_J$-coset representative. So
$J\subseteq \sR(uw_{0,J})$, and $B_J\subseteq A_J$. Thus, $A_J=B_J$. Next, $w\leq_L w_{0,J}$ implies $\sR(w)\supseteq
\sR(w_{0,J})$, so $C_J\subseteq A_J$. Given $w=uw_{0,J}\in B_J$, let $s_{i_1}\cdots s_{i_r}$ be a reduced expression for
$u$. Because $\ell(w)=\ell(u)+\ell(w_{0,J})$, it follows that, if $1< m\leq r$ and $u_m:=s_{i_m}\cdots s_{i_r}$, then
$s_{-_{m-1}}u_mw_{0,J}> u_mw_{0,J}$, so that $s_{i_{m-1}}u_mw_{0,J}\leq_L u_mw_{0,J}$. Thus, $w\leq_Lw_{0,J}$
and $w\in C_J$.\end{proof}

\begin{thm}\label{huh?} Let $\omega\in\Omega$ and assume that, for some $J\subseteq S$,  $w_{0,J}\in\omega$,
where $w_{0,J}$ is the longest  
word in $W_J$. Then $x_J\sH$ has an increasing filtration in the exact category $(\scrA_\flat(\scrL^*), \scrE_\flat(\scrL^*))$ which satisfies Condition \ref{condition}. The  sections of the filtration are direct sums of  dual left-cell modules
$S_{\omega'}$, starting with $S_\omega\subseteq x_J\sH$. The other sections have summands $S_{\omega'}$
satisfying $\omega<_L\omega'$ (equivalently, $\omega' >^\op_L\omega$).  %An example of such a filtration is the height filtration for any height function $\htt$ compatible with $\leq_{LR}^\op$.
\end{thm}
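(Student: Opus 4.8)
The plan is to establish Theorem \ref{huh?} by combining three facts already available in the excerpt: first, that $x_J\sH \in \scrA_\flat(\scrL^*)$ (this is display \eqref{coset}, coming from Corollary \ref{3.7a} via the duality $x_J\sH \cong (\sH x_J)^*$); second, that objects of $\scrA_\flat(\scrL^*)$ come equipped with their $\htt_r$-filtration \eqref{dlcf} whose successive quotients lie in $\scrL^*_h$, i.e.\ are direct sums of dual left-cell modules of height $h$; and third, that Proposition \ref{3.8bis}(a) shows each inclusion $M^{h-1}\subseteq M^h$ is a conflation in $\scrE_\flat(\scrL^*)$, so this filtration is genuinely a filtration in the exact category. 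Since $(\scrA_\flat(\scrL^*),\scrE_\flat(\scrL^*))$ satisfies Conditions \ref{condition} (Example (b) following those conditions), the filtration-in-the-exact-category claim is essentially immediate once the module is placed in the category.

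The real content of the theorem is the \emph{identification of the sections}, in particular the claim that the bottom section is exactly $S_\omega$ (with $\omega\ni w_{0,J}$) and that every other section is a sum of $S_{\omega'}$ with $\omega <_L \omega'$. For this I would dualize back to the left-module side, where Proposition \ref{HxJ} and especially the discussion at the end of its proof already describe the $\htt_l$-filtration of $\sH x_J$: its basis consists of the $\scc_v$ with $v\in V = C_J = \{w \mid w\leq_L w_{0,J}\}$ (this set is exactly $A_J = B_J = C_J$ of Lemma \ref{w_0,J}), $V$ is a union of left cells, and $M_{h-1}/M_h$ is the direct sum of the left-cell modules $S(\omega')$ for left cells $\omega'\subseteq V$ of height $-h$. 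Dualizing via $(-)^* = \Hom_\sZ(-,\sZ)$ and using $\htt_r = -\htt_l$ together with \eqref{duality}, this turns into: $x_J\sH$ has a dual left-cell filtration with sections $\bigoplus S_{\omega'}$ over left cells $\omega'\subseteq V$ of height $\htt_r(\omega')$. The two key order-theoretic points are then (i) $w_{0,J}\in V$ and every $v\in V$ satisfies $v\leq_L w_{0,J}$, and among all left cells meeting $V$, the cell $\omega$ containing $w_{0,J}$ is the \emph{unique maximal} one for $\leq_L$ (indeed for any $v\in V$, $v\leq_L w_{0,J}$, and $w_{0,J}\leq_L v$ would force $v\sim_L w_{0,J}$, i.e.\ $v\in\omega$); hence $\htt_r(\omega)$ is the \emph{largest} height occurring, so $S_\omega$ is the top section on the left side, equivalently the \emph{bottom} section $S_\omega\subseteq x_J\sH$ on the right side after dualizing; and (ii) every other cell $\omega'\subseteq V$ satisfies $\omega' <_L \omega$, which is precisely $\omega <_L \omega'$ reversed, i.e.\ $\omega' >^\op_L \omega$, as the statement requires (being careful that $\htt_r$ being compatible with $\leq_{LR}^\op$ and the strict domination property \eqref{order} give the needed compatibility with $\leq_L^\op$, so the filtration order is consistent).

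Concretely the steps in order are: (1) recall $x_J\sH \cong (\sH x_J)^* \in \scrA_\flat(\scrL^*)$ from \eqref{coset}; (2) invoke Proposition \ref{3.8bis}(a) to get that the $\htt_r$-filtration \eqref{dlcf} of $x_J\sH$ is a filtration by conflations in $\scrE_\flat(\scrL^*)$, and cite Example (b) after Conditions \ref{condition} for the fact that $(\scrA_\flat(\scrL^*),\scrE_\flat(\scrL^*))$ satisfies Condition \ref{condition}; (3) use Proposition \ref{HxJ} and Lemma \ref{w_0,J} to identify, on the left-module side, the index set $V = C_J$ and the sections of the $\htt_l$-filtration of $\sH x_J$ as sums of $S(\omega')$ over left cells $\omega'\subseteq V$; (4) apply $(-)^*$ and the relation $(N_{h-1}/N_h)^* \cong (N^*)^h/(N^*)^{h-1}$ from Section 2.3 together with $\htt_r = -\htt_l$ to transfer this to the dual left-cell filtration of $x_J\sH$; (5) prove the order claim: $w_{0,J}$ generates the unique $\leq_L$-maximal cell in $V$, so its dual cell module $S_\omega$ is the bottom ($h$ minimal among the reversed/dual heights) section, and all other $\omega'\subseteq V$ satisfy $\omega <_L \omega'$. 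The main obstacle, and the place to be most careful, is the bookkeeping of orders and signs in step (5)—making sure that ``top section on the left'' correctly becomes ``bottom section $S_\omega\subseteq x_J\sH$ on the right,'' and that ``$\omega'\leq_L w_{0,J}$, $\omega'\neq\omega$'' translates into the stated ``$\omega<_L\omega'$'' after the contravariant duality reverses the cell order; this is exactly the kind of subtlety flagged in the footnotes about minus signs in the height conventions, so it deserves an explicit sentence rather than being swept into ``by duality.''
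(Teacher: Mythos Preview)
Your approach is essentially the paper's: pass to the left module $\sH x_J$, use Proposition~\ref{HxJ} and Lemma~\ref{w_0,J} to see that $V=C_J=\{w\mid w\leq_L w_{0,J}\}$ and that $\omega$ is the unique $\leq_L$-maximal left cell contained in $V$, then dualize. Your extra scaffolding (citing \eqref{coset}, Proposition~\ref{3.8bis}(a), and Example~(b) after Conditions~\ref{condition}) is correct and slightly more explicit than the paper's terse ``the theorem now follows by duality.''

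There is, however, one genuine slip precisely at the point you flagged as the main obstacle. The contravariant duality $(-)^*$ reverses the direction of the filtration (top section of $\sH x_J$ becomes bottom section of $x_J\sH$), but it does \emph{not} reverse the cell preorder: the relation $\omega'<_L\omega$ is a statement about elements of $W$, independent of which module category you are in. So after correctly deriving $\omega'<_L\omega$ for the other cells in (ii), you should keep $\omega'<_L\omega$ after dualizing; your step~(5) assertion ``all other $\omega'\subseteq V$ satisfy $\omega <_L \omega'$'' and the phrase ``after the contravariant duality reverses the cell order'' are both wrong. Note that $\omega'>_L^{\op}\omega$ is \emph{equivalent} to $\omega'<_L\omega$ (just unwind the definition of $\leq^{\op}$), which matches your (ii) and the paper's proof; the clause ``$\omega<_L\omega'$'' in the theorem statement is a typo, inconsistent with its own parenthetical and with the proof.
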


\begin{proof}%The last line follows from the observation above Lemma \ref{w_0,J}. 
We consider the left module $\sH x_J$ and then apply the contravariant $\sZ$-duality $(-)^*$ in Section 2.2. We use
 the construction of the height filtration for $\sH x_J$ in the proof of Proposition \ref{HxJ}. 
The quasi-poset ideal $V$ used in the construction from \cite[Lem. 2.3.5]{DPS98a} is, by inspection of the latter lemma, the 
set $A_J$ in Lemma \ref{w_0,J}, which is also the set $C_J$. In particular, $\omega$ generates $V$ as 
a quasi-poset ideal with preorder $\leq_L$. The construction uses a height function which we will take to be
$\htt_l=-\htt$ for a given height function $\htt=\htt_r$ compatible with $\leq_{LR}^\op$. Thus, $\htt_l=-\htt$ is compatible with $\leq_{LR}$ and, therefore, with $\leq_L$.  Hence, $\omega$ has a unique maximal height among all
left cells in the construction.  The left-cell module $S(\omega)$ is the top section in this height filtration of $\sH x_J$, and the
left-cell modules $S(\omega') $, which must satisfy  $\omega'<_L\omega$, must appear in lower sections. The theorem
now follows by duality.
\end{proof} 

\section{The conjecture}
A main theorem in this monograph is Theorem \ref{4.4}, which, we will see, implies Conjecture \ref{conjecture} below. As noted in the introduction to this chapter, this latter conjecture was stated in \cite[Conj. 1.2]{DPS15}, which revised slightly\footnote{See Remarks \ref{Rem7.4}.} an earlier formulation \cite[Conj. 2.5.2]{DPS98a}.  In assertion (2), the ``strict" stratifying system in \cite{DPS15} is simply called a stratifying system here (with the same meaning).
 
 Recall that the set $\Omega$ of left cells (unequal parameter in the sense of Lusztig \cite{Lus03}) in the Weyl group $W$ has a natural preorder $\leq_{LR}^{\op}$ which partitions
$\Omega$ into its ``two-sided Kazhdan-Lusztig cells." That is, $\omega$ and $\omega'$ belong to the same two-sided cell provided 
$\omega\leq_{LR}^\op\omega'$ and $\omega'\leq_{LR}^\op\omega$. Similarly, any preorder $\leq$ on $\Omega$
also partitions $\Omega$ into ``cells." Let $\widetilde\Omega$ be the corresponding set of cells. We say that $\leq$ is {\it strictly compatible} with
$\leq_{LR}^\op$ provided $\overline\Omega=\widetilde\Omega$. The preorder $\preceq$ in display (\ref{preorder1}) is clearly
an example of such a preorder.

 Recall also that given a left cell $\omega\in\Omega$, $S_\omega$ denotes the corresponding dual left-cell module $S(\omega)^*=\Hom_\sZ(S(\omega),\sZ)$; see Notation \ref{notation1}. For the definition of the element $x_J$, see \eqref{xJ}.
% See also see \eqref{xJ} for the definition of the element $x_J$.
 
%For the notation $S_\omega$, see  Notation \ref{notation1}. For the definition of the element $x_J$, see \eqref{xJ}.

 \begin{conj}\label{conjecture} {\it There exist a preorder $\leq$ on the set $\Omega$ of left cells in $W$, strictly compatible with its partition $\overline \Omega$ into two-sided cells, and a right $\sH$-module $X$ such that the following statements hold, where $T=\bigoplus_{J\subseteq S}x_J\sH$:

\begin{enumerate}
\item[(1)] $X$ has a finite filtration with sections of the form $S_\omega$, $\omega\in\Omega$.

\item[(2)] Let $T^+:=\!T\oplus X$, $A^+:=\End_{\sH}(T^+)$ and, for any $\omega\in\Omega$, put 
$$\begin{cases} 
\Delta(\omega):=\Hom_{\sH}(S_\omega,T^+),\\
P(\omega):=\Hom_{\sH}(T^+_\omega,T^+),\;\text{as in \eqref{delta}.}\\
\end{cases}$$
\end{enumerate}
Then, for any commutative, Noetherian\! $\sZ$-algebra\!  $R$, the set $\{\Delta(\omega)_R,P(\omega)_R\}_{\omega\in\Omega}$ is a stratifying system for $A^+_R$-mod relative to the quasi-poset\!
$(\Omega,\leq)$. 
}
\end{conj}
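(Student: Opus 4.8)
The plan is to deduce Conjecture \ref{conjecture} from the machinery already assembled in Chapter 2, by checking that the data $(\scrA_\flat(\scrL^*),\scrE_\flat(\scrL^*))$, $T^+$, and the modules $S_\omega$, $T^+_\omega$ from Construction \ref{const} satisfy the Stratification Hypothesis \ref{hypothesis}, so that Theorem \ref{thm2.5} applies. First I would take the preorder $\leq$ to be $\preceq=\preceq_\Omega$ of display \eqref{preorder1}, whose equivalence classes are exactly the two-sided cells (as noted just after that display), so it is strictly compatible with $\overline\Omega$; and I would take $X=\bigoplus_{\omega\in\Omega'}X_\omega$ with $X_\omega=X_{S_\omega}$ produced by Theorem \ref{relative inj}, applied to the exact category $(\scrA_\flat(\scrL^*),\scrE_\flat(\scrL^*))$, which satisfies Conditions \ref{condition} by Example (b) following those conditions, and whose $\scrS=\scrL^*$ satisfies Hypothesis \ref{finite} since there are finitely many left cells. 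Statement (1) of the conjecture is then immediate: $X$ is a finite direct sum of the $X_\omega$, each of which lies in $\scrA_\flat(\scrL^*)$ and hence carries a finite filtration with sections direct sums of dual left-cell modules, with bottom section $S_\omega$.

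For statement (2) I would set $R=\sH$, $B=A^+=\End_\sH(T^+)$, $\Lambda=\Omega$ with preorder $\preceq$, and verify the three parts of Hypothesis \ref{hypothesis} with $S_\omega$, $T^+_\omega$ in the roles of $S_\lambda$, $T_\lambda$. Part (1) splits into two cases. For $\omega\in\Omega\setminus\Omega'$, $T^+_\omega=x_J\sH$ with $w_{0,J}\in\omega$, and Theorem \ref{huh?} provides exactly the required increasing filtration in $(\scrA_\flat(\scrL^*),\scrE_\flat(\scrL^*))$ with bottom section $S_\omega$ and higher sections built from $S_{\omega'}$ with $\omega<_L\omega'$; since $<_L$ strictly dominates $\preceq$ (traced back to \eqref{order}), $\omega<_L\omega'$ forces $\overline\omega\prec\overline{\omega'}$, i.e. $\bar\mu>\bar\lambda$ in the notation of the hypothesis. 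For $\omega\in\Omega'$, $T^+_\omega=X_\omega$ and the needed filtration with bottom section $S_\omega$ is part of the membership $X_\omega\in\scrA_\flat(\scrL^*)$, again using that the height function $\htt_r$ underlying $\preceq$ is compatible with $\leq_L^\op$ so the higher-height sections satisfy the strict comparison. Part (2) of the hypothesis, $\Hom_\sH(S_\mu,T^+_\lambda)\neq0\Rightarrow\lambda\preceq\mu$, follows from the fact that a nonzero map $S_\mu\to T^+_\lambda$ forces $S_\mu$ to embed into a section of the dual left-cell filtration of $T^+_\lambda$, all of whose constituents $S_\nu$ satisfy $\nu=\lambda$ or $\nu>_L^\op\lambda$ (equivalently $\htt(\nu)\geq\htt(\lambda)$), combined with the semisimplicity of $\sH_{\mathbb Q(t)}$ to see that $S_\mu$ can only match a section of the same or higher height; this is the argument already used implicitly in \cite{DPS98a} and here is packaged by the compatibility of $\htt$ with $\leq_L^\op$.

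Part (3) of Hypothesis \ref{hypothesis}, the exactness after applying $(-)^\diamond=\Hom_\sH(-,T^+)$, is by Remark \ref{2.4} implied by the vanishing $\Ext^1_{\scrE_\flat(\scrL^*)}(S_\omega,T^+)=0$ for all $\omega\in\Omega$. Since $T^+=\bigl(\bigoplus_{J\subseteq S}x_J\sH\bigr)\oplus\bigoplus_{\omega\in\Omega'}X_\omega$, this reduces to two vanishings: $\Ext^1_{\scrE_\flat(\scrL^*)}(S_\omega,x_J\sH)=0$, which is exactly \eqref{bigshot} of Proposition \ref{keyprop}; and $\Ext^1_{\scrE_\flat(\scrL^*)}(S_\omega,X_{\omega'})=0$ for all $\omega,\omega'\in\Omega'$, which is precisely the defining property of $X_{\omega'}=X_{S_{\omega'}}$ guaranteed by Theorem \ref{relative inj} ($\Ext^1_{\scrE_\flat}(S,X)=0$ for all $S\in\scrS=\scrL^*$). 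This is the step I expect to be the crux — not because either ingredient is new at this point, but because one must be careful that the $\Ext^1$ is computed in the right exact category: by Lemma \ref{dog} (or Proposition \ref{prop3}) the group $\Ext^1_{\scrE_\flat(\scrL^*)}$ does not change between $(\scrA_\flat(\scrL^*),\scrE_\flat(\scrL^*))$ and the larger $(\scrA(\scrL^*),\scrE_\flat(\scrL^*))$ where Proposition \ref{keyprop} is stated, so the two vanishings live in the same group and their direct sum vanishes. With all of Hypothesis \ref{hypothesis} verified, Theorem \ref{thm2.5} yields that $\{\Delta(\omega),P(\omega)\}_{\omega\in\Omega}$ is a stratifying system for $A^+$-mod relative to $(\Omega,\preceq)$; the projectivity over $\sZ$ of each $\Delta(\omega)$ needed in that theorem is automatic by Remark \ref{2.5}(c) since $\sZ$ is regular of Krull dimension $2$. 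Finally, base change to an arbitrary commutative Noetherian $\sZ$-algebra $R$ preserves the stratifying system, as recorded in the paragraph following \eqref{strat} (citing \cite[p.~231]{DPS15}, \cite[Lem.~1.2.5]{DPS98a}), because $\Delta(\omega)$, $P(\omega)$ and $T^+$ are all $\sZ$-projective; this gives the statement for $A^+_R$ and completes the proof of the conjecture.
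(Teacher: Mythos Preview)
Your proposal is correct and follows essentially the same route as the paper's proof (Theorem \ref{4.4} plus the short paragraph completing the conjecture): choose $\preceq$ from \eqref{preorder1}, build $X$ via Theorem \ref{relative inj}, verify the three parts of Hypothesis \ref{hypothesis} using Theorem \ref{huh?} and Proposition \ref{keyprop} for the $x_J\sH$ summands and Theorem \ref{relative inj} for the $X_\omega$ summands, then apply Theorem \ref{thm2.5} and base change. The only notable difference is in your verification of part (2) of the hypothesis: the paper argues directly that a nonzero $\Hom_{\sH_K}(S_{\omega,K},S_{\nu,K})$ forces $\omega\sim_{LR}\nu$ (shared two-sided cell), whence $\tau\preceq\nu\sim_\preceq\omega$, whereas your height-based phrasing (``$\nu=\lambda$ or $\nu>_L^\op\lambda$, equivalently $\htt(\nu)\geq\htt(\lambda)$'') is slightly imprecise---for the $X_\omega$ summands the higher sections need not be $\leq_L$-comparable to $\lambda$, only of strictly larger height---but the argument goes through once you use that a nonzero $K$-level Hom forces the same two-sided cell.
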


In \cite[p. 231]{DPS15}, in the notion  of a stratifying system for $A^+$-mod, it is assumed, for each $\Delta(\omega)$, that there 
is a certain projective $A^+$-module $P(\omega)$ satisfying certain properties.       In the notation of the present monograph, reviewed in Section 1.3, the projective modules $P(\omega)$ are explicitly mentioned as part of the data for a stratifying system.
 The $P(\omega)$'s are required to  satisfy the same properties. Finally, as already noted above Remark \ref{1.3.2}, the notion of a stratifying system behaves well with respect to base change, so it suffices to prove the conjecture for $R=\sZ$ and, thus, for
 $A^+_R=A^+$. 

\section{Proof of the conjecture}
Before proceeding, we state and prove our main result, Theorem \ref{4.4}.  The preorder $\preceq$ in the statement of the theorem is defined explicitly in display (\ref{preorder1}).  It depends upon a suitable height function $\htt$.
Also, the exact category $(\scrA_\flat(\scrL^*),\scrE_\flat(\scrL^*))$ depends on the same height function, as discussed, as does Construction \ref{const}.

\begin{thm} \label{4.4} Let $(\scrA_\flat,\scrE_\flat)=(\scrA_\flat(\scrL^*), \scrE_\flat(\scrL^*))$, $T, X, T^+$,  and  $\htt=\htt_r$ be as
in Construction \ref{const}.  For $\omega\in\Omega$, let 
$$T^+_\omega=\begin{cases} X_\omega, &\text{if }\omega\in\Omega',\\ x_J\sH,&\text{if } w_{0,J}\in\omega.\end{cases}$$ 
%\begin{equation}\label{zoo1}
%\begin{cases}T:=\bigoplus_{J\subset S}x_J\sH, \\
%X:=\bigoplus_{\omega\in\Omega' }X_\omega,\\ 
%T^+:=T\oplus X \end{cases}
%\end{equation}
%be the right $\sH$-modules defined in Construction \ref{const}.
(Recall that $X_\omega$
is constructed from $S_\omega$, possibly non-uniquely, as in Theorem \ref{relative inj}. Here $\Omega'$ is defined in Construction \ref{const}(a).)

Form the  endomorphism algebra

\begin{equation} A^+:=\End_{\sH}(T^+).\end{equation}
Similarly, for $\omega
\in\Omega$,  $\Delta(\omega):=\Hom_{\sH}(S_{\omega},T^+)$ and $P(\omega):=
\Hom_{\sH}(T_\omega^+,T^+)$.  

Then $\{\Delta(\omega), P(\omega)\}_{\omega\in\Omega}$ is a stratifying system for the category $A^+$-\text{mod} with respect to the quasi-poset $(\Omega,\preceq)$. 
\end{thm}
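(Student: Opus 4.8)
The plan is to verify the three conditions of the Stratification Hypothesis \ref{hypothesis} for the exact category $(\scrA_\flat,\scrE_\flat)=(\scrA_\flat(\scrL^*),\scrE_\flat(\scrL^*))$ with $R=\sH$, $T=T^+$, $B=A^+$, quasi-poset $(\Omega,\preceq)$, and the indexed objects $S_\omega$ (in the role of $S_\lambda$) and $T^+_\omega$ (in the role of $T_\lambda$), and then invoke Theorem \ref{thm2.5}. Once Hypothesis \ref{hypothesis} is checked, Theorem \ref{thm2.5} immediately gives that $\{\Delta(\omega),P(\omega)\}_{\omega\in\Omega}$ is a stratifying system, provided each $\Delta(\omega)$ is projective over $\sZ$; but this last point is automatic by Remark \ref{2.5}(b),(c) since $\sZ=\mathbb Z[t,t^{-1}]$ is regular of Krull dimension $2$ and the ambient objects are $\sZ$-free.

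First I would address the set-up hypotheses preceding Hypothesis \ref{hypothesis}: $(\scrA_\flat,\scrE_\flat)$ is a full exact subcategory of $\modH$ whose conflations are genuine short exact sequences of $\sH$-modules (by construction in Section 2.3 and Proposition \ref{keyprop}); $T^+\in\scrA_\flat$ because $T=\bigoplus_J x_J\sH\in\scrA_\flat$ by \eqref{coset} and $X=\bigoplus_{\omega\in\Omega'}X_\omega\in\scrA_\flat$ by Theorem \ref{relative inj}; each $T^+_\omega$ is $\sZ$-free (it is either an $x_J\sH$ or an $X_\omega$, both objects of $\scrA_\flat\subseteq\scrA'_r$), and $B=\End_\sH(T^+)=\End_{\scrA_\flat}(T^+)$ by fullness. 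Condition (1) of Hypothesis \ref{hypothesis}: for $\omega\in\Omega\setminus\Omega'$, i.e. $T^+_\omega=x_J\sH$ with $w_{0,J}\in\omega$, the required increasing filtration with bottom section $S_\omega$ and higher sections direct sums of $S_{\omega'}$ with $\omega'>^\op_L\omega$, and all inclusions in $\scrE_\flat$, is exactly Theorem \ref{huh?}; moreover $\omega<_L\omega'$ forces $\bar\omega<_{LR}\bar\omega'$ by \eqref{order}, hence $\omega\prec\omega'$, so the section condition ``$\bar\mu>\bar\lambda$ in the poset'' holds for $\preceq$. For $\omega\in\Omega'$, $T^+_\omega=X_\omega=X_{S_\omega}$ is built in Theorem \ref{relative inj} as an iterated inflation-extension of $S_\omega$ by objects of $\scrS_h$'s (dual left-cell modules); one reads off from that construction a filtration in $\scrE_\flat$ with $T^{0}_\omega\cong S_\omega$ and higher sections sums of $S_{\nu}$ with $\htt(\nu)>\htt(\omega)$, hence (by \eqref{preorder1}) $\nu\succ\omega$ at the poset level. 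This is the step I expect to require the most care, since it means unwinding precisely which $S_\nu$ appear in Theorem \ref{relative inj} and checking that they all have strictly larger height; the height-minimality bookkeeping in the proof of Theorem \ref{relative inj} is exactly what makes this work.

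Condition (2) is the $\Hom$-vanishing: $\Hom_{\scrA_\flat}(S_\mu,T^+_\lambda)\neq0\implies \lambda\preceq\mu$. By fullness this is $\Hom_\sH(S_\mu,T^+_\lambda)\neq0$. A nonzero map detects that the head (a composition factor) of $S_\mu$ occurs as a composition factor of $T^+_\lambda$; since $T^+_\lambda$ has a dual left-cell filtration whose sections $S_\nu$ all satisfy $\nu\succeq\lambda$ (the bottom one being $S_\lambda$ itself), and since left-cell modules with non-comparable two-sided cells have disjoint sets of composition factors after base change to $\mathbb Q(t)$ (the two-sided cell decomposition of $\Lambda=\irr(\sH_{\mathbb Q(t)})$, via \eqref{Irr}), a nonzero $\Hom$ forces $\mu\succeq\nu\succeq\lambda$ for one of these $\nu$; one then cites the standard cell-order comparison (e.g. the argument of \cite[Lem. 8.6]{Lus03} together with \eqref{order}) to upgrade this to $\lambda\preceq\mu$. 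I would formulate this via the height function: any section $S_\nu$ of $T^+_\lambda$ has $\htt(\nu)\ge\htt(\lambda)$, and $\Hom(S_\mu,S_\nu)\ne0$ forces $\htt(\mu)\ge\htt(\nu)$, with equality only if $\mu\sim_{LR}\nu$; combined with the bottom section being $S_\lambda$ and a standard argument comparing the $\leq^\op_L$-order on the support, this yields $\lambda\preceq\mu$.

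Condition (3) is the exactness of $(-)^\diamond=\Hom_\sH(-,T^+)$ on the conflations \eqref{inclinf}. By Remark \ref{2.4}, it suffices to show $\Ext^1_{\scrE_\flat}(S_\omega,T^+)=0$ for all $\omega\in\Omega$, and since $T^+=T\oplus X=\bigoplus_J x_J\sH\oplus\bigoplus_{\omega'\in\Omega'}X_{\omega'}$, it suffices to show $\Ext^1_{\scrE_\flat}(S_\omega,x_J\sH)=0$ for all $J\subseteq S$ and $\Ext^1_{\scrE_\flat}(S_\omega,X_{\omega'})=0$ for all $\omega'\in\Omega'$. The first is precisely the vanishing \eqref{bigshot} of Proposition \ref{keyprop} (equivalently Theorem \ref{ext0} transported by duality). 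The second is the defining property of $X_{\omega'}=X_{S_{\omega'}}$ in Theorem \ref{relative inj}: $\Ext^1_{\scrE_\flat}(S,X_{\omega'})=0$ for all $S\in\scrS=\scrL^*$, in particular for $S=S_\omega$. This assembles all three conditions, so Theorem \ref{thm2.5} applies and yields the asserted stratifying system for $A^+$-mod with respect to $(\Omega,\preceq)$, completing the proof.
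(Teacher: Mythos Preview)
Your proposal is correct and follows essentially the same approach as the paper: verify the three conditions of the Stratification Hypothesis~\ref{hypothesis} in the exact category $(\scrA_\flat(\scrL^*),\scrE_\flat(\scrL^*))$, using Theorem~\ref{huh?} and Theorem~\ref{relative inj} for Condition~(1), a base-change to $\mathbb Q(t)$ and two-sided-cell argument for Condition~(2), and the $\Ext^1_{\scrE_\flat}$-vanishing from Proposition~\ref{keyprop} and Theorem~\ref{relative inj} (via Remark~\ref{2.4}) for Condition~(3), and then invoke Theorem~\ref{thm2.5} with Remark~\ref{2.5}(b),(c) for $\sZ$-projectivity. The paper's treatment of Condition~(2) is slightly more streamlined---it goes directly to the shared composition factor over $\mathbb Q(t)$ and uses that $\preceq$ is strictly compatible with $\leq_{LR}^{\op}$---but your height-function formulation reaches the same conclusion.
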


\begin{proof} By Remarks \ref{2.5}(b)\&(c), the left $A^+$-modules $\Delta(\omega)$, $\omega\in\Omega$,  are automatically $\sZ$-projective, even $\sZ$-free.   If $\omega\in\Omega'$, $X_\omega$ is, by construction, a free $\sZ$-module. Also, for $\omega\in\Omega\backslash\Omega'$,
$T^+_\omega\cong x_J\sH$, for some $J\subseteq S$.  By Theorem \ref{huh?}, $T^+_\omega$ has a filtration by
dual left-cell modules $S_\tau$, $\tau\in\Omega$. (This follows already from (\ref{coset}).)
Each $S_\tau$ is, by construction, $\sZ$-free, so $T^+_\omega$ is
also $\sZ$-free. 

Now we must verify conditions (1), (2), and (3) in the Stratification Hypothesis \ref{hypothesis}.
Given $\omega\in\Omega'$, Condition (1) follows from Theorem \ref{relative inj}.  If $\omega\in \Omega\backslash\Omega'$, then Condition (1) follows from Theorem \ref{huh?}.  

Suppose that $\omega,\tau\in\Omega$, and $\Hom_{\scrA_\flat(\scrL^*)}(S_\omega, T^+_{\tau})\not=0$. 
Then, setting $K=\mathbb Q(t)$, $\Hom_{\sH_K}(S_{\omega,K},S_{\nu,K})\not=0$, so for some $\tau\in\Omega$, $\tau\preceq
\nu$. Since $S_{\omega,K}$ and $S_{\nu,K}$ share a composition factor, they belong to the same (dual) two-sided cell
module. Thus, $\omega$ and $\nu$ belong to the same two-sided cell for the preorder $\leq_{LR}$, or, equivalently
for $\leq_{LR}^{\op}$. Since $\preceq$ is strictly compatible with $\leq_{LR}^\op$, $\omega$ and $\nu$ belong to the same two-sided cell for $\preceq$. It follows that $\tau\preceq\omega$.
This proves (2).  

Next, we prove condition (3). In case $\omega\in\Omega'$, we can apply the 4-term exact sequence
in \cite[Prop. A.2]{DPS17} and the vanishing of $\Ext^1_{\scrE_\flat(\scrL^*)}(S,X)$ in Theorem \ref{relative inj}
to conclude that condition (3) holds in this case. If $\omega\in\Omega\backslash\Omega'$, we can argue in the same 
way, but using Theorem \ref{huh?} and Proposition \ref{keyprop}.\end{proof}

The following is a corollary of the proof of the theorem, and  Construction \ref{const}, using Theorem \ref{relative inj},  Theorem \ref{huh?}, Proposition
\ref{keyprop}, displays (\ref{coset}), and (\ref{preorder1}). The last assertion of the corollary follows from Theorem \ref{relative inj} ($\omega\in\Omega'$) and the last assertion of Theorem
\ref{huh?} (when $\omega\not\in\Omega'$).  

\begin{cor} \label{qperm}
Let $(\scrA_\flat,\scrE_\flat)=(\scrA_\flat(\scrL^*), \scrE_\flat(\scrL^*))$ and keep the notation of the theorem above.  Then, $S_\omega$ and
$T^+_{\tau}$ belong to $\scrA_\flat$, for all $\omega, \tau\in \Omega$. Also, 
$$\Ext^1_{\scrE_\flat}(S_\omega,T^+_{\tau})=0.$$
In addition, $S_\omega$ is a submodule (via an inflation in $\scrE_\flat$) of $T^+_\omega$ and is the lowest nonzero term in its height filtration. \end{cor}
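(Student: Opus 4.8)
The plan is to deduce Corollary \ref{qperm} directly from the ingredients assembled in the proof of Theorem \ref{4.4}, since all three assertions are essentially repackagings of facts already established there and in Construction \ref{const}. For the first assertion, that $S_\omega$ and $T^+_\tau$ lie in $\scrA_\flat=\scrA_\flat(\scrL^*)$: each dual left-cell module $S_\omega$ is itself an object of $\scrL^*_{\htt(\omega)}\subseteq\scrA_\flat(\scrL^*)$ by Conditions \ref{condition}(2) (the objects of $\scrS=\scrL^*$ are contained in $\scrA_\flat$), applied to the exact category $(\scrA_\flat(\scrL^*),\scrE_\flat(\scrL^*))$. For $T^+_\tau$, there are two cases mirroring Construction \ref{const}(c): if $\tau\in\Omega\backslash\Omega'$, then $T^+_\tau=x_J\sH$ for the unique $J$ with $w_{0,J}\in\tau$, and $x_J\sH\in\scrA_\flat(\scrL^*)$ by display \eqref{coset} (equivalently Corollary \ref{3.7a} together with the duality $x_J\sH\cong(\sH x_J)^*$); if $\tau\in\Omega'$, then $T^+_\tau=X_\tau=X_M$ with $M=S_\tau$ is produced by Theorem \ref{relative inj}, whose conclusion places $X_M$ in $\scrA_\flat$.

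For the $\Ext^1$-vanishing $\Ext^1_{\scrE_\flat}(S_\omega,T^+_\tau)=0$, I would again split on whether $\tau\in\Omega'$. If $\tau\in\Omega'$, then $T^+_\tau=X_\tau$, and Theorem \ref{relative inj} gives precisely $\Ext^1_{\scrE_\flat}(S,X_\tau)=0$ for all $S\in\scrS=\scrL^*$; taking $S=S_\omega$ yields the claim. If $\tau\in\Omega\backslash\Omega'$, then $T^+_\tau\cong x_J\sH$ for some $J\subseteq S$, and Proposition \ref{keyprop} (display \eqref{bigshot}) gives $\Ext^1_{\scrE_\flat(\scrL^*)}(S_\omega,x_J\sH)=0$ for every left cell $\omega$ and every $J\subseteq S$. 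This is exactly the vanishing recorded near the end of the proof of Theorem \ref{4.4} when verifying condition (3) of the Stratification Hypothesis, so nothing new is needed.

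For the last assertion, that $S_\omega$ is a submodule of $T^+_\omega$ via an inflation in $\scrE_\flat$ and is the lowest nonzero term in the height filtration: when $\omega\in\Omega'$, this is the ``in addition'' clause of Theorem \ref{relative inj}, which constructs the inflation $M\overset{i}\to X_M$ with $M=S_\omega$ and arranges (taking $h$ minimal with $M^{h-1}\neq 0$) that $i$ is compatible with the lowest piece of the height filtration; since $S_\omega\in\scrL^*_{\htt(\omega)}$ is concentrated in a single height, it is literally the bottom section $(X_\omega)^h$ with $(X_\omega)^{h-1}=0$. When $\omega\in\Omega\backslash\Omega'$, so $w_{0,J}\in\omega$ for a unique $J$ and $T^+_\omega=x_J\sH$, this is the content of Theorem \ref{huh?}: $x_J\sH$ has an increasing filtration in $(\scrA_\flat(\scrL^*),\scrE_\flat(\scrL^*))$ starting with $S_\omega\subseteq x_J\sH$, with the inclusion an inflation (this is how the filtration in $\scrE_\flat$ is built), and all other summands $S_{\omega'}$ satisfy $\omega<_L\omega'$, hence $\htt(\omega')>\htt(\omega)$ by compatibility of $\htt=\htt_r$ with $\leq_L^{\op}$, so $S_\omega$ occupies the lowest height.

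I do not anticipate a serious obstacle here, since the corollary is explicitly flagged in the text as ``a corollary of the proof of the theorem''; the only care needed is bookkeeping to match the two constructions of $T^+_\omega$ (via $X_\omega$ for $\omega\in\Omega'$, via $x_J\sH$ otherwise) against the appropriate cited result in each case, and to note that the single-height concentration of each dual left-cell module $S_\omega$ is what makes ``lowest nonzero term of the height filtration'' literally correct rather than merely ``a bottom section.'' If anything is delicate, it is confirming that the inflation supplied by Theorem \ref{relative inj} genuinely realizes $S_\omega$ as $(X_\omega)^h$ with $(X_\omega)^{h-1}=0$; this follows from the minimality of $h$ in that theorem's statement together with $S_\omega\in\scrL^*_{\htt(\omega)}$, so $(S_\omega)^{h-1}=0$ and the isomorphism $M^{h-1}\cong X^{h-1}$ forces $(X_\omega)^{h-1}=0$ as well.
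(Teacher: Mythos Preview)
Your proof is correct and follows exactly the approach the paper indicates: split on whether $\tau\in\Omega'$, invoke Theorem \ref{relative inj} for $X_\tau$ and Proposition \ref{keyprop} (display \eqref{bigshot}) for $x_J\sH$, and use Theorem \ref{huh?} together with the ``in addition'' clause of Theorem \ref{relative inj} for the final assertion. One small slip in your closing parenthetical: in Theorem \ref{relative inj} the minimal $h$ is chosen so that $M^{h-1}\neq 0$, so for $M=S_\omega$ that theorem gives $S_\omega=M^{h-1}\cong X_\omega^{h-1}$ (not $M^{h-1}=0$); the vanishing of the next lower term $X_\omega^{h-2}$ then follows because $X_\omega^{h-2}\subseteq X_\omega^{h-1}\cong S_\omega$ is torsion-free with $(X_\omega^{h-2})_K$ supported in heights $<\htt(\omega)$ while $(S_\omega)_K$ lives entirely in height $\htt(\omega)$.
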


If $\Theta$ is a poset ideal of $(\Omega,\leq_L)$, then $(\Theta,\preceq)$ is a quasi-poset co-ideal of $(\Omega,\preceq)$. Thus,
$T^\ddag_\Theta:=\bigoplus_{\omega\in\Theta}(T^+_\omega)^{m_\omega}\in\scrA_\flat$ with all $m_\omega>0$, as defined in \eqref{Tdagger}, satisfies Stratification Hypothesis \ref{hypothesis} relative to the exact category $(\scrA_\flat,\scrE_\flat)$. 
Theorems \ref{thm2.5} and \ref{4.4} immediately imply the following.

\begin{cor}\label{Adagger1}
Maintain the notation introduced above and form the endomorphism algebra $A^\ddag_\Theta=\End_{\sH}(T^\ddag_\Theta)$ and $A^\ddag_\Theta$-modules $\Delta^+(\omega)=\Hom_{\sH}(S_\omega, T^\ddag_\Theta)$ and
$P^+(\omega)=\Hom_{\sH}(T^+_\omega, T^\ddag_\Theta)$. Then, $\{\Delta^+(\omega), P^+(\omega)\}_{\omega\in\Theta}$ is a stratifying system for the category $A^\ddag_\Theta$-\text{mod} with respect to the quasi-poset $(\Theta,\preceq)$. 
\end{cor}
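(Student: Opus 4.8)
The plan is to derive the corollary from Theorem \ref{thm2.5}, exactly as Theorem \ref{4.4} was derived, by exhibiting the needed Stratification Hypothesis. Concretely, I would take $R=\sH$ (so $\scrK=\sZ$), the exact category $(\scrA_\flat,\scrE_\flat)=(\scrA_\flat(\scrL^*),\scrE_\flat(\scrL^*))$, the module $T:=T^\ddag_\Theta=\bigoplus_{\omega\in\Theta}(T^+_\omega)^{\oplus m_\omega}$, the algebra $B:=A^\ddag_\Theta=\End_\sH(T^\ddag_\Theta)$, and the finite quasi-poset $\Lambda:=(\Theta,\preceq)$, with indexed objects $S_\lambda:=S_\omega$ and $T_\lambda:=T^+_\omega$ for $\omega\in\Theta$. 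Since a stratifying system base-changes to one over any Noetherian commutative $\sZ$-algebra (as recalled above Remark \ref{1.3.2}), it is enough to treat $R=\sZ$, so the whole task is to check that this data satisfies Stratification Hypothesis \ref{hypothesis}; then Theorem \ref{thm2.5} produces the stratifying system $\{\Delta^+(\omega),P^+(\omega)\}_{\omega\in\Theta}$ for $A^\ddag_\Theta$-mod relative to $(\Theta,\preceq)$.

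Next I would assemble the ingredients, all of which are already available. By Corollary \ref{qperm}, every $T^+_\omega$ lies in $\scrA_\flat$, so the finite direct sum $T^\ddag_\Theta$ does too, and $B=\End_\sH(T^\ddag_\Theta)=\End_{\scrA_\flat}(T^\ddag_\Theta)$ by fullness. The $S_\omega$ are $\sZ$-free, hence so are the $T^+_\omega$ (Theorems \ref{huh?} and \ref{relative inj}), and $\Delta^+(\omega)=\Hom_\sH(S_\omega,T^\ddag_\Theta)$ is $\sZ$-projective, indeed $\sZ$-free, by Remarks \ref{2.5}(b)--(c). For condition (1), if $\omega\in\Theta$ contains a longest element $w_{0,J}$, then Theorem \ref{huh?} gives an increasing filtration of $T^+_\omega=x_J\sH$ in $\scrE_\flat$ whose bottom section is $S_\omega$ and whose higher sections are direct sums of dual left-cell modules $S_{\omega'}$ with $\omega'$ strictly below $\omega$ in $\leq_L$; as $\Theta$ is a $\leq_L$-ideal, each such $\omega'$ lies in $\Theta$, and by \eqref{order} together with the compatibility of $\htt=\htt_r$ with $\leq_{LR}^\op$ one has $\htt_r(\omega')>\htt_r(\omega)$, so $\overline\omega\prec\overline{\omega'}$. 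If instead $\omega\in\Theta\cap\Omega'$, then $T^+_\omega=X_\omega$ and Theorem \ref{relative inj} furnishes the analogous filtration in $\scrE_\flat$ with bottom $S_\omega$ and higher sections dual left-cell modules of strictly larger $\htt_r$. Condition (2) is the verbatim argument from the proof of Theorem \ref{4.4}: a nonzero map $S_\mu\to T^+_\omega$ forces $S_{\mu,\mathbb Q(t)}$ and $S_{\omega,\mathbb Q(t)}$ into a common (dual) two-sided cell module, whence $\omega\sim_{LR}\mu$ and, by strict compatibility of $\preceq$ with $\leq_{LR}^\op$, $\omega\preceq\mu$. Condition (3) follows from the $\Ext^1_{\scrE_\flat}$-vanishing in Corollary \ref{qperm}: since $\Ext^1_{\scrE_\flat}(S_\mu,T^+_\omega)=0$ for all $\mu,\omega$, also $\Ext^1_{\scrE_\flat}(S_\mu,T^\ddag_\Theta)=0$, and Remark \ref{2.4} (via \cite[Prop. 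A.2]{DPS17}) turns this into the required exactness of $(-)^\diamond=\Hom_\sH(-,T^\ddag_\Theta)$ on the filtration conflations.

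The one place where genuine care is needed is the bookkeeping behind condition (1)/(SS3): for $\omega\in\Theta$ one must know that, among the dual left-cell modules $S_\mu$ occurring as sections of $T^+_\omega$, those with $\mu\notin\Theta$ disappear after applying $(-)^\diamond=\Hom_\sH(-,T^\ddag_\Theta)$, so that the induced filtration of $P^+(\omega)$ is a genuine $\Delta^+$-filtration over $(\Theta,\preceq)$. When $w_{0,J}\in\omega$ this is moot, since by the $\leq_L$-ideal property of $\Theta$ and the explicit filtration of Theorem \ref{huh?} all sections $S_\mu$ already have $\mu\in\Theta$. When $\omega\in\Omega'$, the filtration of $X_\omega$ from Theorem \ref{relative inj} may a priori involve $S_\mu$ with $\mu\notin\Theta$; here I would invoke the observation recorded just before the corollary that $(\Theta,\preceq)$ is a $\preceq$-co-ideal, so that for such $\mu$ condition (2) forces $\Hom_\sH(S_\mu,T^+_\nu)=0$ for every $\nu\in\Theta$, whence $\Hom_\sH(S_\mu,T^\ddag_\Theta)=0$; such sections contribute zero and drop out, leaving precisely the sections $\Delta^+(\mu)$ with $\mu\in\Theta$ and $\overline\mu\succ\overline\omega$. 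Once this is settled, Theorem \ref{thm2.5} applies and gives the claim; the rest is a transcription of the proofs of Theorem \ref{4.4} and Corollary \ref{qperm}.
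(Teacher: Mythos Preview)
Your approach is exactly the paper's: its entire proof is the sentence preceding the corollary, which asserts that the $\leq_L$-ideal $\Theta$ is a $\preceq$-co-ideal of $(\Omega,\preceq)$, so that $T^\ddag_\Theta$ satisfies Stratification Hypothesis \ref{hypothesis}, and then Theorems \ref{thm2.5} and \ref{4.4} apply. You expand this in detail—verifying conditions (1)--(3) and correctly isolating the one nontrivial bookkeeping point (possible sections $S_\mu$ of $X_\omega$ with $\mu\notin\Theta$)—and resolve it via the paper's co-ideal assertion and the resulting vanishing of $\Hom_\sH(S_\mu,T^\ddag_\Theta)$, just as the paper intends.
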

%Note that, for $\Theta=\Omega$, $T^\ddag_\Theta=T^\ddag$ and $A^\ddag_\Theta=A^\ddag$ are defined in displays \eqref{Tdagger} and \eqref{Adagger}.

%We can now complete the proof of the conjecture.

\medskip\noindent

{\bf We now complete the proof of Conjecture \ref{conjecture}.}
The above theorem proves
 part (1) of Conjecture \ref{conjecture}.  Part (1) of the conjecture deals with the filtration of $X$ by dual left-cell modules and follows easily from the construction given above. Our construction here is, in fact, much more informative than that envisioned by 
 part (1) of the conjecture. For example, $X_\omega$ is built as an object in $\scrA_\flat(\scrL^*)$.
As such, it has a height filtration with sections that are direct sums of dual left-cell modules. Also,  left cells associated to distinct sections always lie in distinct two-sided cells.

 Part (2) of the conjecture
is phrased in the language of base ring extensions of $\sZ$, but it is sufficient to check it over $\sZ$, by \cite[Lem. 1.2.5(4)]{DPS98a}. (Stratifying systems behave well under base change.) Now taking $R=\sZ$, part (2) of the conjecture follows from Theorem \ref{4.4}, provided we know the preorder $\preceq$ has the same equivalence classes
as $\leq_{LR}^\op$. The actual preorder $\preceq$ in Theorem \ref{4.4} has this property (and is constructed from
$\leq_{LR}^\op$, sharing the height function $\htt$ with it; see display (\ref{preorder1})).

This completes the proof. \qed

%\medskip
%{\color{blue} Let $T^\ddagger$ be the $\sH$-module defined by allowing multiplicities of $T_\omega^+$'s (see \eqref{Tdagger}) and consider the Hecke endomorphism algebra $A^\ddagger=\End_\sH(T^\ddagger)$.
%\begin{cor}The algebra $A^\ddagger$ is Morita equivalent to $A^+$. Moreover, for $\omega
%\in\Omega$,  let $\Delta^+(\omega):=\Hom_{\sH}(S_{\omega},T^\ddagger)$ and $P^+(\omega):=
%\Hom_{\sH}(T_\omega^\ddagger,T^\ddagger)$.  
%Then $\{\Delta^+(\omega), P^+(\omega)\}_{\omega\in\Omega}$ is a stratifying system for the category $A^\ddagger$-mod with respect to the quasi-poset $(\Omega,\preceq)$.
%\end{cor}
%\begin{proof}
%Since the Stratification Hypothesis \ref{hypothesis} is stated for an $\sH$-module $T$ that is a direct sum of $T_\lambda^{\oplus m_\lambda}$ with $m_\lambda>0$ (see \eqref{modules}), the assertion follows from Theorem \ref{thm2.5} and the proof of Theorem \ref{4.4}.
%\end{proof}}

\medskip

The following remarks discuss further the difference between the Conjecture proved in Theorem \ref{4.4} and its
 original version given in \cite{DPS98a}. It can be skipped by readers interested only in the more recent version
 as stated in Theorem \ref{4.4} or in \cite[Conj. 1.2]{DPS15}.  

\medskip\begin{rems}\label{Rem7.4}

(a) When \cite[Conj. 2.5.2]{DPS98a} was written and checked for all rank 2 cases,\footnote{\label{error}We correct some typos around the third display on \cite[p. 203]{DPS98a}: \dots Each module ${\widetilde S}_{\{u\}}^{\mathscr R}=S_{0'}\oplus S_{0''}$ and ${\widetilde S}_{\{v\}}^{\mathscr R}=S_{0}\oplus S_{0''}$ has a length two filtration with two dual left-cell modules ${\widetilde S}_{\omega k}$ as sections, corresponding to left cells $\omega=B, C_1, C_2$, and $D$ (in the notation of (3.5) below). The filtrations are of the form:
$$ {\widetilde S}_{\{u\}}^{\mathscr R}=\boxed{\begin{matrix}C_1\\D\end{matrix}}\;\text{ and }\;
 {\widetilde S}_{\{v\}}^{\mathscr R}=\boxed{\begin{matrix}B\\C_2\end{matrix}},
$$ and \dots}
 the best choice of a preorder $\leq$ seemed to be
$\leq_{LR}^\op$. In the revised conjecture \cite[Conj.1.2]{DPS15} (identical to Conjecture \ref{conjecture} above), we realized that this choice of
a preorder was too strong, and so it was weakened to require only a  preorder with the same equivalence classes as those of $\leq_{LR}^\op$. As
discussed in (a), the preorder $\preceq$ here in Theorem \ref{4.4} satisfies this condition and more,  sharing also a height function $\htt$ with $\leq_{LR}^\op$.  Moreover,  $\preceq$ is a preorder completely determined by its equivalence classes and the height function $\htt$, a specificity 
giving it some advantage over $\leq^\op_{LR}$. For example, suppose $\htt$ is given by Lusztig's $a$-function (see \eqref{afun} below),\index{$a$-function} a choice of height function we can certainly make, provided conjectures P4 and P11 hold; see \cite[13.6, 14.1]{Lus03}. If $\omega,\omega'$ belong to distinct 2-sided cells, to decide if $\omega
\preceq\omega'$, (equivalent here to $\omega\prec\omega'$),
 we need only check that $a(\omega)<a(\omega')$---which is a much more computable condition 
than $\omega\leq^\op_{LR}\omega'$ or $\omega<_{LR}^\op\omega'$.

(b) The other main difference between the conjecture in \cite{DPS98a} and its version in  \cite{DPS15} is that
we generally worked over $\mathbb Z[t^2, t^{-2}]$ in \cite{DPS98a} rather than $\sZ$. A case for using $\sZ$ rather than $\mathbb Z[t^2, t^{-2}]$ is that Lusztig's explicit isomorphism $\sH_{\mathbb Q(t)}\cong \mathbb Q(t)W$ holds without exception (for the standard finite Coxeter system, even when $G$ has type $^2F_4$). These Hecke algebras are all semisimple and, in fact, split semisimple except for the case $^2F_4$. We mention that the explicit isomorphisms can be obtained by applying \cite[18.12]{Lus03} to
the cases $R=\mathbb Q$ and $R=\mathbb Q(t)$. This works for any finite $W$ satisfying P1--P15, and semisimplicity follows. The splitting, except for the $^2F_4$-case, follows from the isomorphisms $\sH_{\mathbb Q(t)}\cong \mathbb Q(t)W$ above, and the fact that $\mathbb Q(t)W$ (in fact, $\mathbb QW$) is split semisimple; see \cite{B71}, \cite{BC71}. The latter reference discusses the Weyl groups arising in the ``split" (Chevalley group) cases,  also including those $W$ for the ``Steinberg twisted'' cases. The remaining groups, associated to the Ree and Suzuki groups, are finite dihedral groups, and the only new group $W$  arising, that has not already been considered, is
the dihedral group $I_2(8)$ of order 16 associated to $^2F_4$. It has $\mathbb Q[\sqrt{2}]$ as its unique minimal splitting field.

(c) \cite[Conj. 2.5.2]{DPS98a} left an ambiguity, removed in \cite[Conj. 1.2]{DPS15}. In detail, the second sentence of part (2) 
of \cite[Conj. 2.5.2]{DPS98a} contains an expression ``$\widetilde A_{\sZ'}= \End_{\widetilde H}({T}^+_{\sZ'})$" which
is a misprint, and should have been written ``$\wA_{\sZ'}=\End_{\widetilde H}(T^+)_{\sZ'}$.'' However, it is shown in 
\cite[Th. 2.4.4(b)]{DPS98a} that ``$\End_{\sH}(T)_{\sZ'}\cong\End_{\sH}(T_{\sZ'})$," not using the ``$+$" as a superscript on $T$. 
 Possibly the same isomorphism holds using $+$, but we only know this to be true if $\sZ'$ is flat over $\sZ$. 
 
 (d) Finally, we observe that while the methods of \cite{DPS98a} are useful in studying low rank examples of the
 Conjecture, they do not provide much help in the ultimate proof of the Conjecture!
\end{rems}

\section{A uniqueness theory} %This section treats some quite general uniqueness results. Our motivation is two fold:
%\smallskip\noindent (1) Although the uniqueness theory of quasi-hereditary algebras is fairly well developed---e.g., by Rouquier's ``quasi-hereditary cover'' theory
%\cite{Ro08}---there is no corresponding theory for stratified algebras. 
%\smallskip
%noindent
%(2) There has been no uniqueness tuned to constructions arising in an exact category context. 
%\smallskip
%Theorem \ref{11.2} (and its corollary) address both of these issues. In the interest of generality, the
% notation below is independent of previous sections, but has been chosen to be similar.
The main aim of this section is to provide a companion uniqueness theorem for (the Morita classes of) the algebras $A^+$ resulting from Construction \ref{const}.  
If $A^+$ is quasi-hereditary, at least one such uniqueness result is available in Rouquier's theory of quasi-hereditary covers; see \cite[\S4]{Ro08}. 

However, $A^+$ may not be quasi-hereditary. 
The method we use instead applies in a quite general, but natural, context  of endomorphism algebras of suitably filtered injective--like objects  in an exact category (e.g., the object $T^+$ in Theorem \ref{4.4}).  We begin with a description of this general context (with a notation that is independent of previous sections, though chosen to be suggestive).

Let $\scrK$ be a commutative Noetherian ring, and let $H$ be a finite and projective
$\scrK$-algebra.  We view $H$ and its modules as described by $\scrK$-module structure.
Let $(\scrA,\scrE)$ be a strict and full exact subcategory of mod--$H$. Recall this means that $\scrA$ is strict (closed under
isomorphisms) and full in mod--$H$ and that all conflations in $\scrE$ are also short  exact sequences in mod$-H$. Given $M\in\scrA$, a filtration 
$0=M^{-1}\subseteq M^0\subseteq\cdots\subseteq M^\ell=M$ in mod--$H$ is called an $\scrE$-filtration provided 
each short exact sequence $0\to M^{i-1}\to M^i\to M^i/M^{i-1}\to 0$ ($0\leq i \leq \ell$) in mod-$H$ is a conflation in $\scrE$. (In particular, each $M^i$ belongs to $\scrA$.)

\begin{lem}\label{11.1} Suppose $M\in\scrA$ has an $\scrE$-filtration, indexed   as  above, and that  $T$ is an object in $\scrA$ such that $\Ext^1_\scrE(N,T)=0$ for each section $N=M^i/M^{i-1}$, $0\leq i\leq\ell$. Then, 
$$0\to \Hom_H(M^j/M^i,T)\to\Hom_H(M^j,T)\to\Hom_H(M^i,T)\to 0$$
 is a short exact sequence, for all $0\leq i<j\leq \ell$.  In particular,  the map
$$\Hom_H(M,T)\to\Hom_H(M^0,T)$$
is surjective. \end{lem}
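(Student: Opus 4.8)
The plan is to prove the claim by induction on $j-i$, the length of the piece of the filtration under consideration. The base case is $j-i=1$, where we consider the conflation
$$0\to M^{i-1}\to M^i\to M^i/M^{i-1}\to 0$$
in $\scrE$, with section $N=M^i/M^{i-1}$. Apply the contravariant functor $\Hom_H(-,T)$. Since $(\scrA,\scrE)$ is a full exact subcategory of mod-$H$, we may use the exact-category $\Ext^1$-theory: by \cite[Prop. A.2]{DPS17} there is a long exact sequence
$$0\to\Hom_H(M^i/M^{i-1},T)\to\Hom_H(M^i,T)\to\Hom_H(M^{i-1},T)\to\Ext^1_\scrE(M^i/M^{i-1},T)\to\cdots.$$
By hypothesis the $\Ext^1_\scrE$-term vanishes (the section $N=M^i/M^{i-1}$ is one of the sections for which $\Ext^1_\scrE(N,T)=0$), so the connecting map is zero and the three-term sequence is short exact. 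This handles $j=i+1$.

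For the inductive step, suppose $j-i\geq 2$ and the result is known for all shorter gaps. First I would observe that $M^{i+1}/M^i\subseteq M^j/M^i$ and that the quotient $(M^j/M^i)/(M^{i+1}/M^i)\cong M^j/M^{i+1}$; moreover the induced filtration $0\subseteq M^{i+1}/M^i\subseteq\cdots\subseteq M^j/M^i$ is again an $\scrE$-filtration with the same sections $M^{a}/M^{a-1}$, $i+1\leq a\leq j$. (This uses that $\scrE$ is closed under the relevant operations; concretely, each $0\to M^{a-1}/M^i\to M^a/M^i\to M^a/M^{a-1}\to 0$ is a conflation in $\scrE$, which one deduces from the given $\scrE$-filtration of $M$ by an exact-category diagram chase --- in the applications of interest, e.g. $\scrE=\scrE_\flat(\scrL^*)$, this is also guaranteed directly by Conditions \ref{condition}(3)(4) together with Corollary \ref{3.8}.) Now apply $\Hom_H(-,T)$ to the short exact sequence
$$0\to M^{i+1}/M^i\to M^j/M^i\to M^j/M^{i+1}\to 0$$
and to the short exact sequence $0\to M^i\to M^j\to M^j/M^i\to 0$, and compare via the natural commutative diagram whose rows are the $\Hom_H(-,T)$-images of $0\to M^i\to M^{i+1}\to M^{i+1}/M^i\to 0$ and $0\to M^i\to M^j\to M^j/M^i\to 0$. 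The base case gives surjectivity of $\Hom_H(M^{i+1},T)\to\Hom_H(M^i,T)$; the inductive hypothesis applied to the gap $j-(i+1)<j-i$ gives surjectivity of $\Hom_H(M^j,T)\to\Hom_H(M^{i+1},T)$; composing, $\Hom_H(M^j,T)\to\Hom_H(M^i,T)$ is surjective. The kernel is $\Hom_H(M^j/M^i,T)$ by left-exactness of $\Hom_H(-,T)$, which yields the asserted short exact sequence. Taking $i=0$ (so $M^0$ is the bottom term), $j=\ell$, gives the surjectivity of $\Hom_H(M,T)\to\Hom_H(M^0,T)$ as the final assertion.

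I do not expect a serious obstacle here: the only point needing care is the bookkeeping that the quotient filtrations remain $\scrE$-filtrations with the expected sections, so that the $\Ext^1_\scrE$-vanishing hypothesis applies at each stage --- and this is exactly the kind of exact-category formalism already set up (Conditions \ref{condition}, Corollary \ref{3.8}, Proposition \ref{3.8bis}). Everything else is the standard dévissage/five-lemma argument transported verbatim into the exact category $(\scrA,\scrE)$ using the $\Ext^1_\scrE$ long exact sequence of \cite[Prop. A.2]{DPS17}.
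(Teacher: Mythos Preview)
Your proof is correct and follows essentially the same approach as the paper, which simply refers back to the Claim in the proof of Theorem~\ref{thm2.5}: factor the inclusion $M^i\hookrightarrow M^j$ into single-step inflations $M^{a-1}\hookrightarrow M^a$, use the $\Ext^1_\scrE$-vanishing on each section together with \cite[Prop.~A.2]{DPS17} to get surjectivity of each $\Hom_H(M^a,T)\to\Hom_H(M^{a-1},T)$, then compose; the identification of the kernel is immediate from left-exactness of $\Hom_H(-,T)$. Your inductive phrasing is equivalent to this. Note that your digression about the quotient filtration $M^{i+1}/M^i\subseteq\cdots\subseteq M^j/M^i$ being an $\scrE$-filtration is unnecessary here---you never use it, since the inductive hypothesis applies directly to the sub-filtration $M^{i+1}\subseteq\cdots\subseteq M^j$ of the original one---and the references to Conditions~\ref{condition}, Corollary~\ref{3.8}, Proposition~\ref{3.8bis} are to a more specialized context than the general setting of this lemma.
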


For the proof, see the Claim in the proof of Theorem \ref{thm2.5}.
\medskip

Now let $\scrS$ be a (strict) full additive subcategory of $\scrA$ generated by a finite set $\{S_\omega\}_{\omega\in\Omega}$ of
nonzero objects indexed by a finite set $\Omega$. That is, every object in $\scrS$ is a finite direct sum of objects
$S_\omega$, $\omega\in\Omega$. Define $\scrA_{\scrE}(\scrS)$ to be the subcategory of objects $M$ in $\scrA$ which have
an $\scrE$-filtration with every section $M^i/M^{i-1}$ in $\scrS$.

Note that, by Corollary \ref{qperm}, the theorem below applies in particular to the algebra $A^+$ in the conclusion of the main Theorem \ref{4.4}, using $A^+$ for
$A^\dagger$.

\begin{thm}\label{11.2}Assume that, for each $\omega\in\Omega$, there exists an object $X_\omega\in\scrA_\scrE(\scrS)$
such that

\medskip
\begin{itemize} \item [(1)] $X_\omega\in\scrA_\scrE(\scrS)$ has an $\scrE$-filtration with lowest section $S_\omega$ (and all other sections in $\scrS$), and

\item [(2)] $\Ext^1_{\scrE}(S_{\omega'}, X_\omega)=0$, for all $\omega,\omega'\in\Omega$. \end{itemize}

\medskip If $A^\dagger=\End_H(T^\dagger)$ where $T^\dagger=\bigoplus_{\omega\in\Omega}X_\omega$, 
then the Morita class of $A^\dagger$ is independent of the choices of the $X_\omega$ satisfying (1) and (2).
\end{thm}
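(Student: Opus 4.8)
The plan is to show that any two choices of the modules $X_\omega$ yield the \emph{same} stratifying system up to isomorphism, and hence Morita-equivalent endomorphism algebras, by an ``injective-hull uniqueness'' argument inside the exact category $(\scrA,\scrE)$. Suppose we have two families $\{X_\omega\}_{\omega\in\Omega}$ and $\{X'_\omega\}_{\omega\in\Omega}$, each satisfying (1) and (2), and set $T^\dagger=\bigoplus_\omega X_\omega$, $T^{\dagger\prime}=\bigoplus_\omega X'_\omega$, with $A^\dagger=\End_H(T^\dagger)$ and $A^{\dagger\prime}=\End_H(T^{\dagger\prime})$. The strategy is to produce, for each $\omega$, an isomorphism $X_\omega\cong X'_\omega$ in $\scrA$, from which $T^\dagger\cong T^{\dagger\prime}$ and thus $A^\dagger\cong A^{\dagger\prime}$ follow immediately. (In fact it is enough to get a Morita equivalence, so if an outright isomorphism proves awkward one can instead compare the two stratifying systems via Theorem \ref{thm2.5} and the standard fact that a standardly stratified algebra is determined up to Morita equivalence by its stratifying system; I would keep this as a fallback.)

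First I would fix $\omega$ and use the $\scrE$-filtration of $X'_\omega$ with lowest section $S_\omega$: there is an inflation $S_\omega\hookrightarrow X'_\omega$ in $\scrE$. Since $X_\omega$ has an $\scrE$-filtration whose sections lie in $\scrS$, and $\Ext^1_\scrE(S_{\omega'},X_\omega)=0$ for all $\omega'$ by hypothesis (2), Lemma \ref{11.1} applies termwise along that filtration of $X'_\omega$ and shows that $\Hom_H(X'_\omega,X_\omega)\to\Hom_H(S_\omega,X_\omega)$ is surjective. Hence the inclusion $S_\omega\hookrightarrow X_\omega$ (which exists by (1) for $X_\omega$, as the lowest section) extends to a morphism $f\colon X'_\omega\to X_\omega$. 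Symmetrically one gets $g\colon X_\omega\to X'_\omega$ extending $S_\omega\hookrightarrow X'_\omega$. By construction $gf$ and $fg$ restrict to the identity on the common bottom section $S_\omega$.

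The remaining work — and this is where I expect the main obstacle to lie — is to upgrade ``$gf$ is the identity on $S_\omega$'' to ``$gf$ is an automorphism of $X_\omega$'' (and likewise for $fg$). The clean way is to observe that both $X_\omega$ and $X'_\omega$ are \emph{relative injective envelopes} of $S_\omega$ in the appropriate sense: by Theorem \ref{relative inj} (resp. its hypotheses (1),(2) here) each is a minimal object of $\scrA_\scrE(\scrS)$ containing $S_\omega$ with $\Ext^1_\scrE(\scrS,-)=0$. One then runs the usual Fitting/Nakayama-style argument: the endomorphism $gf$ of $X_\omega$ is the identity on the bottom term of the height filtration, so $1-gf$ annihilates $X_\omega^0=S_\omega$; using Lemma \ref{11.1}/Lemma \ref{In particular} to peel off the filtration one shows $1-gf$ lowers filtration length, hence is nilpotent, so $gf=1-(1-gf)$ is invertible. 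Because $\scrK$ is Noetherian and all objects are finite $\scrK$-modules, the endomorphism rings are finite $\scrK$-algebras and this nilpotence argument is legitimate. Concluding: $f$ is split mono and split epi, hence an isomorphism $X'_\omega\xrightarrow{\sim}X_\omega$; summing over $\omega$ gives $T^{\dagger\prime}\cong T^\dagger$ and therefore $A^{\dagger\prime}\cong A^\dagger$, in particular Morita equivalent. The delicate point to get right is exactly the minimality/length bookkeeping that forces $1-gf$ to be nilpotent rather than merely ``small'', and making sure the push-out and fixed-point exactness machinery from Conditions \ref{condition} and Proposition \ref{3.8bis} is invoked in the right order so that all the maps produced genuinely live in $\scrA$ (equivalently $\scrA_\flat$) and respect $\scrE$.
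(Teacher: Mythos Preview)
Your approach has a genuine gap: the modules $X_\omega$ are \emph{not} determined up to isomorphism by conditions (1) and (2), so the attempt to prove $X_\omega\cong X'_\omega$ cannot succeed. Indeed, the paper explicitly remarks (in the introduction to Section 3.4) that ``no claim is made that the modules $X_\omega$ are unique.'' For a concrete obstruction: if $X_\omega$ satisfies (1) and (2), and $\tau\in\Omega$ has $S_\tau$ appearing only above $S_\omega$ in the height filtration, then $X_\omega\oplus X_\tau$ typically also satisfies (1) and (2) (the bottom section is still $S_\omega$, and the $\Ext^1_\scrE$-vanishing is additive), yet has different $\scrK$-rank from $X_\omega$. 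Your nilpotence argument fails precisely at the ``peeling'' step: knowing that $gf$ is the identity on $X_\omega^0=S_\omega$ tells you nothing about the induced map on $X_\omega^1/X_\omega^0$, because there is no essential-extension hypothesis here. The analogy with injective envelopes breaks down since (1) and (2) impose no minimality. Your fallback is also problematic: the assertion that ``a standardly stratified algebra is determined up to Morita equivalence by its stratifying system'' is not a standard fact available at this point and is essentially what needs to be proved.

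The paper's proof is quite different and avoids comparing the $X_\omega$ directly. It reduces Theorem \ref{11.2} to Corollary \ref{11.3}, which asserts that $\End_H(\bigoplus_\omega X_\omega)$ is Morita equivalent to $\End_H(\bigoplus_\omega(X_\omega\oplus Y_\omega))$ whenever the $Y_\omega$ satisfy (2). Applying this twice (once with the roles of $X_\omega$ and $Y_\omega$ swapped) gives the theorem. The corollary is proved by an idempotent argument: let $e\in A^{\dagger\prime}=\End_H(T^{\dagger\prime})$ be projection onto $T^\dagger$, so $A^\dagger\cong eA^{\dagger\prime}e$; then one shows $A^{\dagger\prime}e\cong\Hom_H(T^\dagger,T^{\dagger\prime})$ is a progenerator. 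This uses Lemma \ref{11.1} twice: first to see that $A^{\dagger\prime}$ is filtered by modules $\Hom_H(S_\omega,T^{\dagger\prime})$, and second to see that each such $\Hom_H(S_\omega,T^{\dagger\prime})$ is a quotient of $\Hom_H(T^\dagger,T^{\dagger\prime})$ (since $S_\omega$ sits at the bottom of $X_\omega$, a summand of $T^\dagger$). Hence every irreducible $A^{\dagger\prime}$-module is a quotient of $A^{\dagger\prime}e$, which gives $A^{\dagger\prime}eA^{\dagger\prime}=A^{\dagger\prime}$.
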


As a corollary, we have the following result, whose statement will also be used in the proof of the theorem.

\begin{cor}\label{11.3} Suppose that, for each  $\omega\in\Omega$, there are given objects $X_\omega, Y_\omega\in\scrA_\scrE(\scrS)$ such that conditions (1) and (2) in Theorem \ref{11.2} hold for each
 $X_\omega$ and maintain the
notations $T^\dagger$
and $A^\dagger$. Assume also that each $Y_\omega$ satisfies condition (2) in the sense that $\Ext^1_{\scrE}(S_{\omega'}, Y_\omega)=0$, for all $\omega^\prime\in\Omega$.

\smallskip 
Put 
$T^{\dagger\prime}=\bigoplus_{\omega\in\Omega}(X_\omega\oplus Y_\omega)$ and $A^{\dagger\prime}:=\End_H(T^{\dagger\prime})$.
Then, $A^\dagger$ and $A^{\dagger\prime}$ are Morita equivalent.
\end{cor}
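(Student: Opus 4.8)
\textbf{Proof proposal for Corollary \ref{11.3}.}

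The plan is to show that $A^\dagger$ is obtained from $A^{\dagger\prime}$ by cutting down with an idempotent $e$ corresponding to the summand $T^\dagger\subseteq T^{\dagger\prime}$, and that this idempotent is "full" in the Morita sense, i.e., $A^{\dagger\prime}eA^{\dagger\prime}=A^{\dagger\prime}$. First I would set $e\in A^{\dagger\prime}=\End_H(T^{\dagger\prime})$ to be the projection of $T^{\dagger\prime}=\bigoplus_{\omega}(X_\omega\oplus Y_\omega)$ onto its subobject $T^\dagger=\bigoplus_\omega X_\omega$ along $\bigoplus_\omega Y_\omega$; then $eA^{\dagger\prime}e\cong\End_H(T^\dagger)=A^\dagger$ canonically. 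It remains to prove $A^{\dagger\prime}eA^{\dagger\prime}=A^{\dagger\prime}$, since a standard fact (e.g.\ a full idempotent gives a Morita equivalence between $A^{\dagger\prime}$ and $eA^{\dagger\prime}e$) then finishes the argument.

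The key step, then, is to show that the identity map of $Y_\omega$ factors through a power of $T^\dagger$ in a way visible to $A^{\dagger\prime}$; equivalently, that $Y_\omega$ is (isomorphic to) a direct summand of a finite direct sum of copies of $T^\dagger$. I would prove this by induction on the length of an $\scrE$-filtration of $Y_\omega$ (which exists since $Y_\omega\in\scrA_\scrE(\scrS)$), building a map $Y_\omega\to (T^\dagger)^{\oplus N}$ that splits. For the induction, let $Y^0_\omega\cong S_\nu$ be the lowest section of an $\scrE$-filtration of $Y_\omega$; choose the summand $X_\nu$ of $T^\dagger$, whose lowest filtration section is $S_\nu$ by condition (1), and use the inflation $S_\nu\hookrightarrow X_\nu$ together with Lemma \ref{11.1} (applied to $M=Y_\omega$, $T=X_\nu$, using condition (2) on $X_\nu$) to lift $S_\nu\hookrightarrow X_\nu$ to a morphism $Y_\omega\to X_\nu$. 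Condition (2) assumed for $Y_\omega$ (namely $\Ext^1_\scrE(S_{\omega'},Y_\omega)=0$ for all $\omega'$) lets us run the dual side of the bookkeeping: applying $\Hom_H(-,Y_\omega)$ to the $\scrE$-filtration of $X_\nu$ via Lemma \ref{11.1} again, one lifts the inclusion of the lowest section the other way and produces a splitting of the composite. Iterating over all sections of the filtration of $Y_\omega$, and taking direct sums of the various $X_\nu$ that occur (with multiplicity), one obtains an inflation $Y_\omega\hookrightarrow (T^\dagger)^{\oplus N}$ that is split in mod-$H$; hence $Y_\omega$ is a summand of $(T^\dagger)^{\oplus N}$, and $1_{Y_\omega}\in A^{\dagger\prime}$ lies in $A^{\dagger\prime}eA^{\dagger\prime}$.

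Granting that each $Y_\omega$, and symmetrically the $X_\omega$ themselves, are summands of powers of $T^\dagger$, the identity of $T^{\dagger\prime}=\bigoplus_\omega(X_\omega\oplus Y_\omega)$ decomposes as a sum of endomorphisms each factoring through $T^\dagger$, so $1_{A^{\dagger\prime}}\in A^{\dagger\prime}eA^{\dagger\prime}$ and $e$ is full. I expect the main obstacle to be the precise two-sided lifting argument producing the splitting $Y_\omega\mid(T^\dagger)^{\oplus N}$: one must carefully juggle the $\scrE$-filtrations of both $Y_\omega$ and the relevant $X_\nu$'s and invoke the $\Ext^1_\scrE$-vanishing on both sides (condition (2) for the $X$'s to lift maps \emph{into} them, and condition (2) assumed for the $Y$'s to lift maps \emph{out of} them), all while staying inside the exact structure $\scrE$ rather than just mod-$H$. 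Once that splitting is in hand, the Morita equivalence is formal. The uniqueness statement of Theorem \ref{11.2} will then follow by applying Corollary \ref{11.3} twice, comparing any two choices $\{X_\omega\}$ and $\{X'_\omega\}$ to the common enlargement $\{X_\omega\oplus X'_\omega\}$.
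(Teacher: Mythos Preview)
Your setup with the idempotent $e$ and the reduction to $A^{\dagger\prime}eA^{\dagger\prime}=A^{\dagger\prime}$ is exactly how the paper begins. From there, however, the paper takes a different and shorter route than yours. Rather than showing each $Y_\omega$ is a direct summand of some $(T^\dagger)^{\oplus N}$ on the $H$-module side, the paper works on the $A^{\dagger\prime}$-module side: it shows that every irreducible $A^{\dagger\prime}$-module $L$ is a quotient of $A^{\dagger\prime}e\cong\Hom_H(T^\dagger,T^{\dagger\prime})$. The argument applies Lemma~\ref{11.1} twice, first (with $M=T^{\dagger\prime}$, $T=T^{\dagger\prime}$) to filter $_{A^{\dagger\prime}}A^{\dagger\prime}$ by modules $\Hom_H(S_\omega,T^{\dagger\prime})$, and second (with $M=X_\omega$, $T=T^{\dagger\prime}$, using that $S_\omega$ is the bottom section of $X_\omega$ by condition~(1)) to exhibit each $\Hom_H(S_\omega,T^{\dagger\prime})$ as a quotient of $\Hom_H(X_\omega,T^{\dagger\prime})$, which is a summand of $A^{\dagger\prime}e$. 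This completely avoids any induction on the filtration of $Y_\omega$.

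Your approach can also be made to work, but the ``iteration'' you describe is the delicate point, and the most obvious reading of it has a gap: after the first step you only get that the composite $Y_\omega\to X_\nu\to Y_\omega$ is the identity on the bottom section $S_\nu$, not on all of $Y_\omega$, and you cannot simply pass to $Y_\omega/S_\nu$ and induct, since there is no reason for that quotient to inherit condition~(2). A correct induction instead shows, for increasing $i$, that the \emph{inclusion} $Y^i\hookrightarrow Y_\omega$ factors through $\bigoplus_{j\le i}X_{\nu_j}$: at each stage one extends the previously built map $Y^{i-1}\to\bigoplus_{j<i}X_{\nu_j}$ across $Y^i$ using $\Ext^1_\scrE(S_{\nu_i},X_{\nu_j})=0$, then corrects the resulting error (a map $S_{\nu_i}\to Y_\omega$) by factoring it through $S_{\nu_i}\hookrightarrow X_{\nu_i}$ via condition~(2) for $Y_\omega$ and Lemma~\ref{11.1}. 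At $i=\ell$ this yields $1_{Y_\omega}$ factored through $(T^\dagger)^{\oplus N}$. Your closing remark, deducing Theorem~\ref{11.2} from the corollary by comparing each of two choices to their direct sum, matches the paper's display~\eqref{equivalences}.
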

Note that, as a special case of this corollary, $A^\ddag:=A^\ddag_\Omega$ in \eqref{Adagger} is Morita equivalent to $A^+$.
\begin{proof}
It is clear that the theorem implies the corollary by using $X_\omega\oplus Y_\omega$ for $X_\omega$
satisfying (1) and (2). The filtration of the direct sum starts with that of $X_\omega$, then uses that of
$Y_\omega$.
However, it is also true that the corollary implies the theorem. In fact,  if $Y_\omega$ also
satisfies condition (1), then the roles of $X_\omega$ and $Y_\omega$ can be interchanged in the corollary, thus
giving Morita equivalences (designated here by $\sim$)
\begin{equation}\label{equivalences}
\left\{
\aligned
&\End_H(\bigoplus_\omega Y_\omega)\sim \End_H(\bigoplus_\omega(Y_\omega\oplus X_\omega))\\
\cong& \End_H(\bigoplus_\omega(X_\omega\oplus Y_\omega))\sim\End_H(\bigoplus_\omega X_\omega).
\endaligned\right.
\end{equation}
Thus, the corollary and the theorem are equivalent, so it remains to prove the corollary directly. We follow the
strategy in \cite[proof of Th. 4.1]{DPS18}. 

Let
$e\in A^{\dagger\prime}=\End_H(T^{\dagger\prime})$ be the idempotent projection of $T^{\dagger\prime}$ onto $T^\dagger\subseteq
T^{\dagger\prime}$ along $\bigoplus_\omega Y_\omega$. Since $A^\dagger\cong eA^{\dagger\prime}e$, it suffices to show that the projective
module $A^{\dagger\prime}e$ is a progenerator, or, equivalently, that every irreducible $A^{{+\prime}}$-module $L$ is a homomorphic image of $A^{\dagger\prime}e$. (A better-known criterion is that $A^{\dagger\prime}eA^{\dagger\prime}=
A^{\dagger\prime}$. However, if this fails, then the two-sided ideal generated by $e$ is contained in
a maximal left ideal $\mathfrak m$ of $A^{\dagger\prime}$. Take
$L=A^{\dagger\prime}/\mathfrak m$, and note that $eL=0$, so that $L$ cannot be a homomorphic image of $A^{\dagger\prime}e$.)

However, $A^{\dagger\prime}$ is filtered by modules
$\Hom_H(S_\omega,T^{\dagger\prime})$ by  Lemma \ref{11.1}, and each of these is a homomorphic image of $\Hom_H(T^\dagger, T^{\dagger\prime})$ again by Lemma \ref{11.1}. The corollary now  follows from the identification
$A^{\dagger\prime}e\cong \Hom_H(T^\dagger, T^{\dagger\prime})$ of $A^{\dagger\prime}$-modules. This completes the proof of the corollary and, as noted, the theorem. 
\end{proof}

\begin{rem}
The proof of Corollary \ref{11.3} can be used to give an explicit description of a Morita ($\scrK$-category) equivalence 
$A^\dagger\text{-mod} \overset\sim\to A^{\dagger\prime}\text{-mod}$, where $A^\dagger$ and $A^{\dagger\prime}$ are as stated in the corollary. First, identify $A^\dagger$-mod with $eA^{\dagger\prime}e$-mod as above. Then, use the well-known equivalence $$A^{\dagger\prime}\otimes_{eA^{\dagger\prime}e}(-):eA^{\dagger\prime}e\text{-mod}\longrightarrow A^{\dagger\prime}\text{-mod}.$$
\end{rem}

\begin{thm} \label{11.5} Let $A^+$ be a $\sZ$-algebra as constructed in the hypothesis of
Theorem \ref{4.4} and let $A^{+\prime}$ be a $\sZ$-algebra also constructed to satisfy this hypothesis. (Here we use the same
height function for $A^+$ and $A^{+\prime},$ but allow the $X_\omega$ to vary.)  Then $A^+$ is Morita equivalent to $A^{+\prime}$.\end{thm}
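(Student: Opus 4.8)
The plan is to derive Theorem \ref{11.5} directly from the general uniqueness result, Theorem \ref{11.2}, by checking that the set-up of Theorem \ref{4.4} (equivalently, Construction \ref{const}) is a special case of the abstract situation in Theorem \ref{11.2}. Concretely, I would take $H=\sH$, $(\scrA,\scrE)=(\scrA_\flat(\scrL^*),\scrE_\flat(\scrL^*))$, $\Omega$ the set of left cells, and $S_\omega$ the dual left-cell module for $\omega\in\Omega$; the finite additive category $\scrS$ is the one generated by the $S_\omega$, and $\scrA_\scrE(\scrS)$ is then exactly the category of objects of $\scrA_\flat(\scrL^*)$ with a dual left-cell filtration. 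The objects $X_\omega$ playing the role required by Theorem \ref{11.2} are, for $\omega\in\Omega'$, the modules $X_\omega=X_{S_\omega}$ produced by Theorem \ref{relative inj}, and, for $\omega\in\Omega\setminus\Omega'$ (i.e. $w_{0,J}\in\omega$), the $q$-permutation modules $x_J\sH$. With these choices $T^\dagger=\bigoplus_{\omega\in\Omega}X_\omega$ agrees (up to the harmless multiplicities $m_\omega$, which only change $A^+$ within its Morita class) with the module $T^+=T\oplus X$ of Construction \ref{const}, so $A^\dagger$ is Morita equivalent to $A^+$.

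The two hypotheses (1) and (2) of Theorem \ref{11.2} then need to be verified for each of these $X_\omega$. Condition (1), that $X_\omega$ has an $\scrE_\flat(\scrL^*)$-filtration with lowest section $S_\omega$ and remaining sections in $\scrS$, is exactly the output of Theorem \ref{relative inj} when $\omega\in\Omega'$ and of Theorem \ref{huh?} (together with display \eqref{coset}) when $\omega\in\Omega\setminus\Omega'$ — note that Corollary \ref{qperm} already records precisely that $S_\omega$ is the lowest nonzero term of the height filtration of $T^+_\omega$ and that $T^+_\omega\in\scrA_\flat$. Condition (2), the vanishing $\Ext^1_{\scrE_\flat(\scrL^*)}(S_{\omega'},X_\omega)=0$ for all $\omega',\omega\in\Omega$, is the conclusion of Theorem \ref{relative inj} for $\omega\in\Omega'$ and of Proposition \ref{keyprop} (the vanishing \eqref{bigshot}) for $\omega\in\Omega\setminus\Omega'$; this is also summarized in Corollary \ref{qperm}. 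Thus both hypotheses hold verbatim.

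Once the hypotheses are checked, Theorem \ref{11.2} gives that the Morita class of $A^\dagger=\End_\sH(\bigoplus_{\omega}X_\omega)$ is independent of the particular choice of the $X_\omega$ satisfying (1) and (2) — provided one uses a fixed height function $\htt$, which is what pins down $(\scrA_\flat(\scrL^*),\scrE_\flat(\scrL^*))$, $\scrS$, and $\preceq$. Since $A^+$ and $A^{+\prime}$ are both built by Construction \ref{const} with the same height function but possibly different choices of the $X_\omega$ (the $x_J\sH$ pieces are canonical, only the $X_\omega$ for $\omega\in\Omega'$ vary), they are two instances of $A^\dagger$ for the same data, hence Morita equivalent. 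The one bookkeeping point to handle is the multiplicities $m_\omega$: strictly, $T^+$ uses $m_\omega=1$, but Corollary \ref{11.3} (or its proof) shows $A^\ddagger_\Omega$ with arbitrary positive $m_\omega$ is Morita equivalent to $A^+$, so this causes no trouble; I would remark on this in passing rather than belabour it.

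The main obstacle is essentially only one of verifying that the abstract conditions (1) and (2) of Theorem \ref{11.2} are literally the statements already established — i.e. matching notation between Construction \ref{const}/Theorem \ref{4.4} and the self-contained setup of Theorem \ref{11.2} — since all the genuine content (the existence of the $X_\omega$, the $\Ext^1$-vanishing, the filtration with the correct bottom section, membership in $\scrA_\flat(\scrL^*)$) has already been done in Theorems \ref{relative inj}, \ref{huh?}, Proposition \ref{keyprop}, and Corollary \ref{qperm}. So the proof is short: invoke Corollary \ref{qperm} to see the hypotheses of Theorem \ref{11.2} are met, apply Theorem \ref{11.2}, and note that $A^+$ and $A^{+\prime}$ differ only in the permissible choices of $X_\omega$ with the same height function.
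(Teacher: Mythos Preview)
Your proposal is correct and follows exactly the paper's approach: the paper's proof is one sentence, stating that the result is immediate from Corollary \ref{qperm} using Theorem \ref{11.2}. Your additional remarks on matching the abstract setup of Theorem \ref{11.2} with Construction \ref{const} and on the multiplicities $m_\omega$ are accurate elaborations of what the paper leaves implicit (the paper already notes, just above Theorem \ref{11.2}, that Corollary \ref{qperm} makes Theorem \ref{11.2} applicable to $A^+$).
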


The proof is immediate from Corollary \ref{qperm} using Theorem \ref{11.2}. Further details on the Morita
equivalence can be obtained from the proof of the latter theorem and its corollary.

\begin{rems} We now mention several useful applications of Theorem \ref{11.2} and its corollary:

\medskip
(a) The identification $A^{\dagger\prime}e\cong\text{Hom}_H(T^\dagger,T^{\dagger\prime})$ in the proof of the corollary, together with the corresponding identification for $eA^{\dagger\prime}$ and the equality $A^{\dagger\prime}eA^{\dagger\prime}=A^{\dagger\prime}=\text{End}_H(T^{\dagger\prime})$, shows:
\begin{itemize}
\item[(i)] the trace of $T^\dagger$ is all of $T^{\dagger\prime}$. In particular,
\item[(ii)] any $H$-projective direct summand of $T^{\dagger\prime}$  is isomorphic to a direct summand of $(T^{\dagger})^{\oplus m}$, for a sufficiently large $m\in\mathbb N$.
\end{itemize}

(b) Part (ii) of (a) may be used to prove the claim made in \cite[Remark 4.10(a)]{DPS17} that $H=\mathcal H$ is a direct summand of $T^\dagger$ there in the conclusion of \cite[Th. 4.9]{DPS17}. (In the context of \cite[Remark 4.10(a)]{DPS17}, we are allowed to replace $T^\dagger$ here by $(T^\dagger)^{\oplus m}$.) In fact, it can also be proved that $T^\dagger$ on \cite[p. 245]{DPS17}, taking the multiplicities $m_\omega$ all equal to 1, always has the regular representation $H=\mathcal H$ as a direct summand. 

(c) Other Morita equivalences in the spirit of Corollary 
\ref{11.3} are developed in \cite[\S4]{DPS18}.  Conversely, Theorem \ref{11.2} can be used to prove \cite[Th. 4.3]{DPS18}. The theorem gives a Morita equivalence between localizations of stratified algebras in \cite{DPS17} with algebras in \cite{DPS15} which base change to algebras arising in a Cherednik category $\sO$ context (with equal parameters). 
\end{rems}

\section{The standard stratification of $A^+$}
We now use the stratifying system $\{\Delta(\omega), P(\omega)\}_{\omega\in\Omega}$ in the category $A^+$-mod of finite $A^+$-modules given in Theorem \ref{4.4} to construct a sequence of stratifying ideals in $A^+$ and to show that $A^+$ is a standardly stratified algebra.

Let $\scrK$ be a commutative Noetherian ring and let $B$ be a $\scrK$-algebra which is finite and projective as a $\scrK$-module.
\begin{defn}[{\cite[Defs.~3.1\&3.2]{DPS18}}]\label{SSA}  An ideal $J$ in $B$ is called a {\it standard stratifying ideal} if the following conditions hold:
\begin{itemize}
\item[(HI1)] $B/J$ is projective over $\scrK$;
\item[(HI2)]  $_BJ$ is $B$-projective as a left $B$-module;
\item[(HI3)] $J^2=J$.
\end{itemize}
 The algebra $B$ over $\scrK$ is called a {\it standardly stratified algebra} (SSA) provided there exists
a finite ``defining sequence'' of ideals in $B$:
\begin{equation}\label{SStratification}0=J_0\subseteq J_1\subseteq J_2\subseteq\cdots\subseteq J_m=B
\end{equation} 
such that, for $0<i\leq m$, $J_i/J_{i-1}$ is a standard stratifying ideal in $B/J_{i-1}$. We also call  \eqref{SStratification} a {\it standard stratification}.% or {\it stratifying chain}.
\end{defn}

We will see in Section 4.3 that stratifying ideals and standardly stratified algebras are weakened (more general) versions of heredity ideals and quasi-hereditary algebras.

We first observe the following nice properties.

\begin{prop}(1) Every ideal in the standard stratification \eqref{SStratification} is an idempotent ideal.

(2) Suppose that $J$ is a stratifying ideal in a $\scrK$-algebra $B$. If $M,N$ are $B/J$-modules which are also regarded as $B$-modules by inflation, then, for any integer $n\geq 0$, there is
a $\scrK$-module isomorphism
\begin{equation*}\label{coho}\Ext^n_{B/J}(M,N)\longrightarrow\Ext^n_B(M,N).\end{equation*}
\end{prop}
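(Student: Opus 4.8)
The plan for part (1) is a short induction on $i$ along the defining sequence $0=J_0\subseteq J_1\subseteq\cdots\subseteq J_m=B$. The case $i=0$ is trivial. Assume $J_{i-1}^2=J_{i-1}$. By Definition \ref{SSA}, $J_i/J_{i-1}$ is a standard stratifying ideal in $B/J_{i-1}$, so (HI3) gives $(J_i/J_{i-1})^2=J_i/J_{i-1}$; unwinding this in $B$ yields $J_i^2+J_{i-1}=J_i$. Since $J_{i-1}\subseteq J_i$, we have $J_{i-1}=J_{i-1}^2\subseteq J_i^2$, so the term $J_{i-1}$ is absorbed and $J_i^2=J_i$. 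This proves (1).

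For part (2), first note that only the two conditions $J^2=J$ and ${}_BJ$ left projective --- i.e.\ (HI3) and (HI2) --- will be needed; (HI1) is irrelevant. The case $n=0$ is immediate, since a $B$-linear map between $B/J$-modules automatically kills $J$, hence is $B/J$-linear. For $n=1$ I would show the natural inflation map $\Ext^1_{B/J}(M,N)\to\Ext^1_B(M,N)$ is bijective: in a short exact sequence $0\to N\to E\to M\to 0$ of $B$-modules, $JE$ maps into $JM=0$, so $JE\subseteq N$, whence $J\cdot(JE)\subseteq JN=0$ and therefore $JE=J^2E=0$; thus $E$ is already a $B/J$-module, which gives surjectivity, and injectivity follows because any $B$-linear splitting of such a sequence is $B/J$-linear.

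The heart of part (2) is the vanishing
\[ \Ext^i_B(B/J,N)=0\quad\text{for all }i\ge 1,\ N\text{ a }B/J\text{-module.}\]
To obtain it I would use that, ${}_BB$ and ${}_BJ$ both being projective, the sequence $0\to {}_BJ\to {}_BB\to {}_B(B/J)\to 0$ is a length-one projective resolution of $B/J$; applying $\Hom_B(-,N)$ shows $\Ext^i_B(B/J,N)=0$ for $i\ge 2$ and exhibits $\Ext^1_B(B/J,N)$ as a quotient of $\Hom_B(J,N)$. But $J=J^2$, so for $B$-linear $f\colon J\to N$ and $j,j'\in J$ we get $f(jj')=j\,f(j')\in JN=0$; hence $\Hom_B(J,N)=0$ and the vanishing holds. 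As any projective $B/J$-module is a summand of a free one, this gives $\Ext^i_B(P,N)=0$ for every projective $B/J$-module $P$ and $i\ge1$.

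Finally I would finish by dimension shifting: choose a short exact sequence $0\to M'\to P\to M\to 0$ of $B/J$-modules with $P$ projective over $B/J$; it is also exact over $B$. For $n\ge 2$ the long exact $\Ext$-sequences over $B/J$ and over $B$ both collapse --- over $B$ by the vanishing just proved, over $B/J$ by projectivity of $P$ --- to natural isomorphisms $\Ext^n_{B/J}(M,N)\cong\Ext^{n-1}_{B/J}(M',N)$ and $\Ext^n_B(M,N)\cong\Ext^{n-1}_B(M',N)$, compatible (by exactness of inflation) with the connecting maps. An induction on $n$, the cases $n=0,1$ being settled above, then yields $\Ext^n_{B/J}(M,N)\cong\Ext^n_B(M,N)$ for all $n\ge0$. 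I do not expect a serious obstacle here; the only genuinely content-bearing points are the two elementary observations that make the $n=1$ case work and that force $\Hom_B(J,N)=0$ --- the rest is routine homological bookkeeping.
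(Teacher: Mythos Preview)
Your proof is correct. Part (1) is essentially identical to the paper's argument, which also deduces $J_i^2=J_i$ from $(J_i/J_{i-1})^2=J_i/J_{i-1}$ and $J_{i-1}^2=J_{i-1}$ by the same absorption trick. For part (2) the paper gives no argument at all, simply citing \cite[Appendix B]{DPS17}; your direct proof via the vanishing $\Ext^i_B(B/J,N)=0$ (obtained from the length-one projective resolution $0\to J\to B\to B/J\to 0$ together with $\Hom_B(J,N)=0$ forced by $J=J^2$ and $JN=0$) followed by dimension shifting is the standard elementary route and is what one expects to find in the cited appendix.
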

\begin{proof} A proof for (2) can be found in \cite[Appendix B]{DPS17}. We now prove (1).
 Since $J_1^2=J_1$ and $(J_2/J_1)^2=J_2/J_1$, it follows that 
$$J_2^2=J_2^2+J_1^2=J_2^2+J_1=J_2+J_1=J_2.$$
Hence, $J_2$ is an idempotent ideal. An induction argument proves the first assertion. 
\end{proof}
We now prove that the Hecke endomorphism $\sZ$-algebra $A^+$ is a standardly stratified algebra. There is a general proof of such a fact for any $\scrK$-algebra $B$ which is finite projective over $\scrK$, provided that  the category $B$-mod has a stratifying system; see \cite[\S3]{DPS18}. The proof for $A^+$ below is independent and more direct, laying down a foundation for a further proof of the quasi-heredity of a base-changed $A^{+\natural}$ in the next chapter. 

\begin{thm}\label{SSAA+}
If $A^+$ is the (extended) generic Hecke endo-algebra associated with a standard finite Coxeter system $(W,S,L)$ as defined in \eqref{endo},
then the  $\sZ$-algebra $A^+$ is a standardly stratified algebra.
\end{thm}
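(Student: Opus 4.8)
The plan is to build the defining sequence \eqref{SStratification} for $A^+=\End_\sH(T^+)$ directly from the stratifying system $\{\Delta(\omega),P(\omega)\}_{\omega\in\Omega}$ produced by Theorem \ref{4.4}, relative to the quasi-poset $(\Omega,\preceq)$. First I would choose a total order $\omega_1,\omega_2,\ldots,\omega_m$ on $\Omega$ refining the preorder $\preceq$, so that $\omega_i\preceq\omega_j\implies i\le j$; in particular the $\preceq$-maximal two-sided cells come last. Since the equivalence classes of $\preceq$ are exactly the two-sided cells, each initial segment $\Omega_i:=\{\omega_1,\ldots,\omega_i\}$ that is a union of full $\preceq$-classes is a quasi-poset co-ideal of $(\Omega,\preceq)$ --- these are the indices where I will place the terms $J_i$ of the defining sequence. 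For such an index $i$, set $T^+_{(i)}:=\bigoplus_{\omega\in\Omega_i}(T^+_\omega)^{\oplus m_\omega}$ (using multiplicities so that $_{A^+}A^+\cong\bigoplus_\omega P(\omega)^{\oplus m_\omega}$), and let $J_i\subseteq A^+$ be the trace ideal $\tau_{T^+_{(i)}}(A^+)$, i.e.\ the sum of images of all $A^+$-module maps $\Hom_\sH(T^+_{(i)},T^+)\to A^+$ --- equivalently the ideal generated by the idempotent projecting $T^+$ onto the summand $T^+_{(i)}$. With this choice $A^+/J_i\cong\End_\sH(T^+/T^+_{(i)}\text{-part})$ up to the usual identifications, and we get a chain $0=J_0\subseteq J_1\subseteq\cdots\subseteq J_m=A^+$.

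Next I would verify the three conditions (HI1)--(HI3) for each quotient $J_i/J_{i-1}$ inside $B_{i-1}:=A^+/J_{i-1}$, where the step from $\Omega_{i-1}$ to $\Omega_i$ adjoins a single two-sided cell $\mathbf c$. Condition (HI3), idempotency, is immediate because $J_i$ is a trace ideal of a module that is itself an $A^+$-summand-image, hence generated by an idempotent; idempotency passes to the quotient. For (HI1), $B_{i-1}/(J_i/J_{i-1})\cong A^+/J_i$, and by the argument of Theorem \ref{thm2.5} (the Claim and the $\diamond$-functor computation) $A^+$, and likewise each $A^+/J_i$, is filtered by the $\Delta(\omega)$'s, each of which is $\sZ$-free by Remarks \ref{2.5}(b)--(c); so $A^+/J_i$ is $\sZ$-projective. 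Condition (HI2) is the heart of the matter: I must show $J_i/J_{i-1}$ is projective as a left $B_{i-1}$-module. The key input is Corollary \ref{qperm}: $\Ext^1_{\scrE_\flat}(S_\omega,T^+_\tau)=0$ for all $\omega,\tau$, whence $\Ext^1_{\scrE_\flat}(S_\omega,T^+)=0$. Applying the contravariant $\diamond$-functor $\Hom_\sH(-,T^+)$ to the $\scrE_\flat$-filtration of $T^+_{(i)}$ with sections dual left-cell modules (Theorem \ref{huh?}, Theorem \ref{relative inj}) and using this vanishing via \cite[Prop. A.2]{DPS17}, the induced filtration of $J_i/J_{i-1}$ (as a left module over $B_{i-1}$) has sections isomorphic to the projective $B_{i-1}$-modules attached to the cells in $\mathbf c$ --- more precisely, the sections coming from the bottom sections $S_\omega$ with $\bar\omega=\mathbf c$ give exactly the indecomposable projectives of $B_{i-1}$ labelled by $\mathbf c$ (because $\preceq$ restricted to $\Omega_i$ makes $\mathbf c$ minimal there), and the higher sections, labelled by strictly larger cells, already lie in $J_{i-1}$ and so vanish in the quotient. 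Thus $J_i/J_{i-1}$ is a direct sum of projective $B_{i-1}$-modules.

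Assembling these, the chain $0=J_0\subseteq\cdots\subseteq J_m=A^+$ is a standard stratification in the sense of Definition \ref{SSA}, so $A^+$ is a standardly stratified algebra. I would streamline the writeup by noting that conditions (SS1)--(SS3) of a stratifying system translate almost formally into (HI1)--(HI3) for the trace-ideal filtration --- this is the content of \cite[\S3]{DPS18} cited in the statement --- but I prefer to give the direct argument for $A^+$, since, as the paper emphasizes, it prepares the quasi-heredity proof of $A^{+\natural}$ in Chapter 4 (there the sections $S_\omega$ become, after inverting bad primes, genuinely projective \emph{indecomposable}, not just summands of a fixed projective generator). The main obstacle is (HI2): one has to be careful that the $\diamond$-image of the dual left-cell filtration really produces \emph{projective} left $B_{i-1}$-modules and not merely $\Delta$-filtered ones --- this is where the full strength of the $\Ext^1_{\scrE_\flat}$-vanishing in Corollary \ref{qperm}, together with the fact that the adjoined cell $\mathbf c$ is $\preceq$-minimal in the relevant co-ideal, is indispensable; everything else is bookkeeping with trace ideals and the Claim from the proof of Theorem \ref{thm2.5}.
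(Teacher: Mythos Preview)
Your overall strategy---build the defining sequence from trace ideals indexed by a chain of sets of two-sided cells---is exactly the paper's strategy, and your instinct to prepare the argument for the quasi-heredity proof in Chapter~4 is right. But there are two real problems.

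\textbf{The ordering is reversed.} You enumerate $\Omega$ so that $\omega_i\preceq\omega_j\implies i\le j$, hence $\preceq$-maximal cells come \emph{last} and each initial segment $\Omega_i$ is a $\preceq$-\emph{ideal}, not a co-ideal. But the rest of your argument assumes the opposite: you call $\Omega_i$ a co-ideal, you say the newly adjoined cell $\mathbf c$ is $\preceq$-minimal in $\Omega_i$, and you claim the ``higher sections, labelled by strictly larger cells, already lie in $J_{i-1}$''. All three are false for the ordering you wrote down and correct for its reverse. The trace ideal $J_1$ must be generated by projectives $P(\omega)$ with $\omega$ $\preceq$-\emph{maximal} (these are the ones with $P(\omega)=\Delta(\omega)$ by (SS3)); with minimal-first the whole induction collapses. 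The paper achieves the correct direction by using the height function: $P_j=\text{trace}_P(\bigoplus_{\htt(\mu)\ge j}P(\mu))$ with $j$ \emph{decreasing} along the defining sequence, so the maximal-height (hence $\preceq$-maximal) cells enter first.

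\textbf{The (HI2) argument conflates two different modules.} Applying $(-)^\diamond=\Hom_\sH(-,T^+)$ to the dual left-cell filtration of $T^+_{(i)}$ produces a $\Delta$-filtration of the \emph{left ideal} $A^+e_{(i)}=\bigoplus_{\omega\in\Omega_i}P(\omega)^{m_\omega}$, not of the \emph{two-sided ideal} $J_i=A^+e_{(i)}A^+$. These differ in general: $J_i$ is the trace of $A^+e_{(i)}$ in all of $A^+$, so it has components inside every $P(\nu)$, not just the $P(\omega)$ with $\omega\in\Omega_i$. The assertion ``higher sections already lie in $J_{i-1}$'' is exactly the statement that $\text{trace}_{P(\nu)}(\bigoplus_{\htt(\mu)\ge j}P(\mu))$ coincides with the bottom part of the $\Delta$-filtration of $P(\nu)$, and this is not a consequence of Corollary~\ref{qperm} alone---it is the content of \cite[Prop.~2.2]{DPS17} and \cite[Prop.~3.9]{DPS18}, which the paper invokes explicitly. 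Once that is in hand, the paper shows $J_i/J_{i-1}\cong\bigoplus_\nu(P(\nu)_{j(i)}/P(\nu)_{j(i)+1})^{m_\nu}$ is a direct sum of $\Delta(\omega)$'s with $\htt(\omega)=j(i)$, and each such $\Delta(\omega)$ is a summand of $B_{i-1}=A^+/J_{i-1}$ because $P(\omega)/P(\omega)_{j(i)+1}=\Delta(\omega)$ for these $\omega$. Your sketch gestures at this but does not supply the trace-filtration identification that makes it work.
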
 

\begin{proof}We maintain the notation of Theorem \ref{4.4}, so, in particular,
$A^+=\End_\sH(T^+)$ is a certain endomorphism algebra over $\sZ$, where $T^+=\bigoplus_{\omega\in\Omega}T^+_\omega$, and the category $A^+$-mod has a stratifying system $\{\Delta(\omega),
P(\omega)\}_{\omega\in\Omega}$.  % with $\Delta(\omega)=\Hom_\sH(S_\omega,T^+)$ and $P(\omega)=\Hom_\sH(T^+_\omega,T^+)$.
  {
Recall also from Theorem \ref{4.4} that, for each $\omega\in\Omega$, the left $A^+$-module{\color{black}s $\Delta(\omega)$ and} $P(\omega)$ are defined as {\color{black}$\Hom_\sH(S_\omega,T^+)$ and} $\Hom_\sH(T_\omega^+,T^+)${\color{black}, respectively,} with the action of $A^+:=\Hom_\sH(T^+,T^+)$ through compositions of morphisms (between right $\sH$-modules). 

Now, put $P:=\bigoplus_{\omega\in\Omega}P(\omega)$ and observe that $P\cong A^+$ as left $A^+$-modules. (Use the equality $\bigoplus_{\omega\in\Omega}T^+_\omega=T^+$ of right $\sH$-modules.)

Let $\htt:\Omega\to\mathbb Z$ and $\preceq$ be as discussed above Proposition \ref{208}, and used in Theorem \ref{4.4}.  Here $\Omega$ is the set of (Kazhdan--Lusztig) left cells of $W$, and $(\Omega,\preceq)$ is the quasi-poset used in Theorem \ref{4.4}. For more details on our set-up here, see the paragraph above the statement of Theorem \ref{4.4} and the internal references it provides.

Our immediate strategy is to construct a tractable ``defining sequence''  of ideals in $A^+$ (see Def. \ref{SSA})
\begin{equation}\label{Jseq}
0=J_0\subseteq J_1\subseteq\cdots\subseteq J_m=A^+
\end{equation}
such that all the conditions (HI1)--(HI3) in Definition \ref{SSA}, for every $J=J_i/J_{i-1}$, hold as an ideal of $B=A^+/J_{i-1}$.
%, except for condition (HI4), to say that the defining sequence provides $A^+$ with a structure of a {\it standardly stratified algebra}. \index{standardly stratified algebra} See op. cit. We will return this later in the proof. 
%Our approach will be to construct a fairly explicit defining sequence as above for $A^+$, then base change to $A^{+\natural}$ and re-examine property (HI4), using the based-changed defining sequence.

We construct the defining sequence for $A^+$ as follows. First, for any $\omega\in\Omega$ and $j\in \mathbb Z$, we define a submodule $P(\omega)_j\subseteq P(\omega)$ as the sum of all images in $P(\omega)$ of all maps in $\Hom_{A^+}(P(\mu),P(\omega))$ with $\htt(\mu)\geq j$. Using the ring-theoretic language of traces, we may write this as \index{trace of a module}
$$P(\omega)_j=\sum_{\htt(\mu)\geq j}\text{trace}_{P(\omega)}(P(\mu))=\text{trace}_{P(\omega)}\Big(\bigoplus_{\htt(\mu)\geq j}P(\mu)\Big)
:=\sum_{f\in\Hom_{A^+}(X,P(\omega))}\im(f),$$
where $X=\bigoplus_{\htt(\mu)\geq j}P(\mu)$.
Observe that $P(\omega)_{j+1}\subseteq P(\omega)_j$. It is proved\footnote{The method of proof reorders the sections $\Delta(\nu)$ in any given filtration of $P(\omega)$ that has sections given by various $\Delta(\nu)$'s with $\nu\in\Omega$. The multiplicities of any given $\Delta(\nu)$ does change in this process.} in \cite[Prop.~2.2]{DPS17} and \cite[Prop.~3.9]{DPS18} that $P(\omega)_j/P(\omega)_{j+1}$ is a direct sum of modules $\Delta(\nu)$ with $\nu\in\Omega$ and $\htt(\nu)=j$. Multiplicities are allowed. That is, isomorphic copies of the same $\Delta(\nu)$ may appear in the direct sum.

Next, we may repeat the above discussion of $P(\omega)_j$ with $P$ replacing $P(\omega)$. That is, we define
\begin{equation}\label{P_j}
P_j=\text{trace}_{P}\Big(\bigoplus_{\htt(\mu)\geq j}P(\mu)\Big).
\end{equation}
Clearly, $P_j=\bigoplus_{\omega\in\Omega}P(\omega)_j$, since $P=\bigoplus_{\omega\in\Omega}P(\omega)$ as a left $A^+$-module. From this we obtain $P_{j+1}\subseteq P_j$ and deduce also that $P_j/P_{j+1}$ is a direct sum of $A^+$-modules isomorphic to $\Delta(\nu)$, for some $\nu\in\Omega$, with $\htt(\nu)=j$. If the equality holds for a given $\nu$, then some $\Delta(\nu)$ does occur.

We are now ready to return to $A^+$ and its ideals. We have observed near the beginning of this proof that $P\cong A^+$ as left $A^+$-modules. For each $j\in\mathbb Z$, let $A_j^+$ denote the left ideal in $A^+$ identifying with $P_j\subseteq P$.
\smallskip
\begin{itemize}
%\noindent
\item[{\bf Claim.}] Each $A_j^+$ is a two-sided ideal of $A^+$. It is idempotent (allowing zero), and $A^+/A^+_j$ is free as a $\mathcal Z$-module.
\end{itemize}
\smallskip
To prove the claim, we first show that each $A^+_j$ is a two-sided ideal in $A^+$. By definition, it is the left ideal of $A^+$ corresponding to the left $A^+$-submodule $P_j$ of $P$ under the natural identification $P\cong A^+$ of left $A^+$-modules discussed above. Similarly, $A^+/A^+_j$ is isomorphic to $P/P_j$ as left $A^+$-modules. As such, it is filtered by copies of various modules $\Delta(\nu)$ with $\htt(\nu)< j$, and $A^+_j$ is similarly filtered by modules $\Delta(\tau)$ with $\htt(\tau)\geq j$. (In both cases, ``filtered by'' refers informally to a filtration by left $A^+$-modules, with the given $\Delta$'s as the only nonzero sections.)

If $A^+_j$ were not a two-sided ideal in $A^+$, it would not be a right ideal, and there would be an element $a\in A^+$ with $A^+_ja\not\subseteq A^+_j$. This implies that the composition
$$A^+_j\longrightarrow A^+_j a\longrightarrow (A^+_ja+A^+_j)/A^+_j\subseteq A^+/A^+_j$$
of left $A^+$-module homomorphisms would be non-zero. Combining this with the above paragraph, we find from this that there is a nonzero map $\Delta(\tau) \to\Delta(\nu)$ with $\htt(\tau)>\htt(\nu)$. However, this is impossible. It leads to a nonzero composite $P(\tau)\twoheadrightarrow \Delta(\tau) \to\Delta(\nu)$, implying $\tau\preceq\nu$, by the axiom (SS2) for stratifying systems. But  $\tau\preceq\nu$ and $\htt(\tau)>\htt(\nu)$ cannot both hold, by the compatibility of $\htt$ with $\preceq$; see \eqref{preorder1}. This contradiction shows that each $A^+_j$ is a two-sided ideal. 

Next, we show each ideal $A_j^+$ is idempotent. In fact, we will show it is generated by an idempotent element $e_j\in A^+_j$ (if $A^+_j\neq 0$). 

First, we need some refinements of our notation. Define  
$$P_{j,0}=\bigoplus_{\htt(\omega)\geq j}P(\omega)_j\;\;\text{ and }\;\; 
P_{j,1}=\bigoplus_{\htt(\omega)<j}P(\omega)_j.$$ Note that the trace definition $P(\omega)_{j}=\text{trace}_{P(\omega)}\Big(\bigoplus_{\htt(\mu)\geq j}P(\mu)\Big)$ implies $P(\omega)_j\subseteq P(\omega)$, for any $\omega\in\Omega$, and $P(\omega)_j=P(\omega)$ if $\htt(\omega)\geq j$. (In particular, $P_{j,0}=\bigoplus_{\htt(\omega)\geq j}P(\omega)$.) Recalling the identity $P_j=\bigoplus_{\omega\in\Omega} P(\omega)_j$, we have
$P_j=P_{j,0}\oplus P_{j,1}$. It is also useful to set $\widehat P_{j,1}=\bigoplus_{\htt(\omega)< j}P(\omega)$; so that $P_{j,1}\subseteq\widehat P_{j,1}$ and $P=P_{j,0}\oplus \widehat P_{j,1}$.

We are almost ready to define $e_j$. First, let $A^+,A^+_j, A^+_{j,0}, A^+_{j,1}, \widehat A^+_{j,1},\ldots$ correspond to $P,P_j,P_{j,0},P_{j,1}, \widehat P_{j,1},\ldots$ under our standard isomorphism $A^+\cong P$ of left $A^+$-modules. Next, let $f_j:P\twoheadrightarrow 
P_{j,0}\subseteq P_j\subseteq P$ be the obvious idempotent endomorphism of $P$ passing through $P_j$. Then, there is a unique idempotent $e_j\in A^+$ such that right multiplication by $e_j$ gives the map $A^+\twoheadrightarrow A^+_{j,0}\subseteq A^+_j\subseteq A^+$ corresponding to $f_j$. Thus, $e_j$ is idempotent (since $e_j^2=1\cdot e_j^2=f_j^2(1)=f_j(1)=1\cdot e_j=e_j$), and $e_j\in A^+_j$ (since $e_j=1\cdot e_j\in A^+_{j,0}\subseteq A_j^+$). 

To complete the proof that $A^+_j$ is an idempotent ideal in $A^+$, it is enough now (and even necessary and sufficient) to show $A^+e_j A^+=A^+_j$. This is actually fairly easy. The submodule $A^+ e_j$ of $A^+_j$ corresponds, by construction, to the submodule $P_{j,0}$ of $P$. Also, we have the equality $P_j=\text{trace}_P(P_{j,0})$ by definition. It follows, passing to $A^+$, that
\begin{equation}\label{AfA}
A^+_j=\text{trace}_{A^+}(A^+_{j,0})=\text{trace}_{A^+}(A^+e_j)=\sum_{f\in\Hom_{A^+}(A^+e_j,A^+)}\im(f)=A^+e_jA^+.
\end{equation}
Here the last equality follows from a well-known identification of $\Hom_{A^+}(A^+e_j,A^+)$ with $e_jA^+$, which leads to an identification of trace$_{A^+}(A^+e_j)$ with $A^+e_jA^+$. %Thus, $A^+_j=A^+e_jA^+$.

To prove the last remaining assertion of the claim that $A^+/A^+_j$ is free as a $\sZ$-module, it is enough to show $A^+_i/A^+_{i+1}$ is free for every $i<j$. However, we have already observed that the isomorphic module $P_i/P_{i+1}$ is a direct sum of $A^+$-modules isomorphic to $\Delta(\nu)$ for some $\nu\in\Omega$. Thus, it is projective (of finite rank) over $\sZ$, hence free, by Swan's theorem. See Remark \ref{2.5}(c). The claim is now proved. 

We are now able to construct a candidate for the defining sequence \eqref{Jseq} %$0= J_0\subseteq J_1\subseteq \cdots \subseteq J_m=A^+$
 previewed earlier in this proof.

Observe that $A^+_j=0$, for all sufficiently large integers $j$. In fact, it suffices to take $j>\htt(\omega)$, for all $\omega\in\Omega$, using the trace formula for $P_j$. Put $J_0=0$, and define $J_1=A^+_{j(1)}$, where $j(1)$ denotes the largest value of $\htt$ on $\Omega$. Continuing, define $J_2=A^+_{j(2)}$, where $j(2)$ denotes the largest value of $\htt$ on $\Omega\backslash\Omega_{j(1)}$ with $\Omega_{j(1)}=\{\omega\in\Omega\mid\htt(\omega)=j(1)\}$. This process continues until $j(1),j(2),\ldots,j(m)$ have been defined as all the elements of $\htt(\Omega)$, listed in decreasing order. For each integer $i$ with $1\leq i\leq m$, we set $J_i=A^+_{j(i)}$, generalizing our definitions for $J_1$ and $J_2$. We note that $A^+_{j(m)}=A^+$, since $P_{j(m)}=P$ by the trace formula. Thus, $J_m=A^+_{j(m)}=A^+$, the terminal member of our ascending sequence of $J_i$'s: % Notice that the inclusions 
\begin{equation}\label{j(i)}
0= J_0\subseteq J_1\subseteq \cdots \subseteq J_m=A^+\; \text{ with }\;j(1)>j(2)>\cdots>j(m).
\end{equation}
 (The general inclusion $P_j\subseteq P_{j-1}$, which holds for any $j\in\mathbb Z$,  also implies that $P_j\subseteq P_{j-a}$,   for any positive $a\in\mathbb Z$. Consequently,
$A^+_j\subseteq A^+_{j-a}$. So $A^+_{j(i)}\subseteq A^+_{j(i+1)}$, using $j=j(i)$ and $a=j(i)-j(i+1)$. Here, $i$ is an integer with $1\leq i\leq m-1$. Since we have defined  $J_i=A^+_{j(i)}$ for each $i$, as well as $i=m$, we have the desired sequence $0= J_0\subseteq J_1\subseteq \cdots \subseteq J_m=A^+$.)

Continuing the discussion above, we now wish to analyze more closely the left $A^+$-modules $J_1=J_1/J_0$ and $J_i/J_{i-1}$ for $2\leq i\leq m$. We start with $J_1=A^+_{j(1)}$. Recall that $j(1)$ is the largest value of the height function so that $A^+_j=0$, for $j>j(1)$. (Use the trace formula for $P_j$.) Thus, $A^+_{j(1)}=A^+_{j(1)}/A^+_{j(1)+1}$ is a direct sum of various left $A^+$-modules $\Delta(\omega)$ with $\htt(\omega)=j(1)$. 
These modules are projective for $A^+$, by the maximality of $\htt(\omega)$. (Just observe that $P(\omega)=\Delta(\omega)$ by axiom (SS3).)

We now consider $J_i/J_{i-1}$ for $2\leq i\leq m$ (the case $i=1$ having been just discussed). We note 
\begin{equation}\label{JiJi-1}
J_i/J_{i-1}=A^+_{j(i)}/A^+_{j(i-1)}=A^+_{j(i)}/A^+_{j(i)+1}.
\end{equation}
The right-hand equality holds because $j(i)<j(i-1)$ are adjacent values of the height function. Consequently, for any integer $j$ with $j(i)+1\leq j<j(i-1)$,  the $A^+$-module $A^+_j/A^+_{j+1}$ must be 0. (Use our description, earlier in this proof, of $P_j/P_{j+1}$ as a direct sum of modules
$\Delta(\omega)$ with $j=\htt(\omega)$.) But this gives $A^+_j=A^+_{j+1}$, with repeated application giving $A^+_{j(i)+1}=A^+_{j(i-1)}$, as desired.

We are now in a position to prove condition (HI2), the projectivity of $J:=J_i/J_{i-1}$ as a left $B$-module, for $B:=A^+/J_{i-1}$, $1\leq i\leq m$. (The case $i=1$ has already been treated.) We have $J=A^+_{j(i)}/A^+_{j(i)+1}$ by \eqref{JiJi-1}. %the previous paragraph. 
Since $j(i)$ is a height function value, we know that $J$ in this latter quotient form is the direct sum of various $A^+/A^+_{j(i)+1}$-modules isomorphic to $\Delta(\omega)$ with $\htt(\omega)=j(i)$. (Such a module may be regarded as a left $A^+$- or $A^+/A^+_{j(i)+1}$-module.) Recall that we have direct sum formulas $P=\bigoplus_{\omega\in\Omega}P(\omega)$ and $P_j=\bigoplus_{\omega\in\Omega}P(\omega)_j$ for each integer $j$, and variations. In particular, note that $P/P_{j+1}=\bigoplus_{\omega\in\Omega}P(\omega)/P(\omega)_{j+1}$.
When interpreted with $A^+$ replacing $P$ and $j=j(i)$, we find that the left-hand side is $B=A^+/A^+_{j(i)+1}$, and the right-hand side, viewed a left $A^+$-module, has $\Delta(\omega)$ as a direct summand whenever $\htt(\omega)=j(i)$, as is the case for $\Delta(\omega)$ above. Thus, $\Delta(\omega)$, as a left $B$-module, is projective. Since the $\Delta(\omega)$ also represents the general direct summand of $J$, we have proved $J$ is projective as a left $B$-module, as desired. (Note also the $A^+$ analog of the formula $P_j/P_{j+1}=\bigoplus_{\omega\in\Omega}P(\omega)_j/P(\omega)_{j+1}$. For $j=j(i)$, the left-hand side becomes $A^+_{j(i)}/A^+_{j(i)+1}=A^+_{j(i)}/A^+_{j(i-1)}=J_i/J_{i-1}$, which is $J$ in the current context. On the right, we obtain a direct sum of modules $\Delta(\omega)$ with $\htt(\omega)=j(i)$, all now known to be projective as left $B$-modules.)

This completes the proof of (HI2) for $B$ and $J$ and all choices of $i$ with $1\leq i\leq m$. Properties (HI1) and (HI3) hold as immediate consequence of our claim studied several paragraphs above. 
}This completes the proof of the theorem.
\end{proof}

In Chapter 4, we will consider the localization $\sZ^\natural$ of $\sZ$ at the multiplicative set generated by all bad primes for $W$; see Remark \ref{good bad}. By base change from $\sZ$ to $\sZ^\natural$, \eqref{Jseq} gives rise to a sequence of ideals of $A^{+\natural}:=\sZ^\natural\otimes A^+$:
\begin{equation}\label{JJseq}
0=J_0^\natural\subseteq J_1^\natural\subseteq\cdots\subseteq J_m^\natural=A^{+\natural}.
\end{equation}
We will prove in Section 4.3 that every ideal $J^\natural:=J_i^\natural/J_{i-1}^\natural$ ($1\leq i\leq m$) is a ``heredity ideal of split type'' in $B^\natural:=A^{+\natural}/J_{i-1}^\natural$. More precisely, every $\End_{B^\natural}(J^\natural)$ is a direct product of matrix algebras $M_{n}(\sZ^\natural)$. Consequently, if $W$ is not of type ${}^2F_4$, $A^{+\natural}$ is a (split) quasi-hereditary algebra. 

\begin{rem}\label{Adagger2}
Replacing $\Omega$ by a poset ideal $\Theta$ of $(\Omega,\leq_L)$, $T^+$ by the $\sH$-module $T^\ddagger_\Theta$, $A^+$ by $A^\ddagger_\Theta$ as defined in \eqref{Tdagger}, %and \eqref{Adagger}  
and using the stratifying system in Corollary \ref{Adagger1} (thus, replacing $\Delta(\omega),P(\omega), P$, etc. by $\Delta^+(\omega),P^+(\omega), P^+$, etc.) throughout the proof of Theorem \ref{SSAA+}, we may construct a standard stratification
\begin{equation}\label{J+seq}
0=J_0^+\subseteq J_1^+\subseteq\cdots\subseteq J_m^+=A^\ddagger_\Theta.
\end{equation}
for the standardly stratified algebra $A^\ddagger_\Theta$.
\end{rem}

\chapter{The quasi-heredity of $A^{+\natural}$} 

We proved in the last chapter that the enlarged Hecke endomorphism algebra $A^+$ is standardly stratified, a property reminiscent of quasi-hereditary algebras \cite{CPS88}. In this chapter, we will prove that its base change $A^{+\natural}:=\sZ^\natural\otimes_{\sZ}A^+$ over $\sZ^\natural$ (in which all bad primes are invertible) is {\it integral} quasi-hereditary; see, e.g., \cite{CPS90} (also \cite{DPS18}) for integral quasi-hereditary algebras.

\section[Maximal orders  $\sJ^\natural:=\sZ^\natural\otimes_{\mathbb Z} \sfJ$]{Maximal orders  $\sJ^\natural:=\sZ^\natural\otimes_{\mathbb Z} \sfJ$ arising from  Lusztig's asymptotic form $\sfJ$}
Let $(W,S,L)$ be a standard finite Coxeter system associated to a finite group $G$ of Lie type, and let $\sH$ be its generic Hecke algebra over $\sZ:=\mathbb Z[t,t^{-1}]$. As noted in Section 1.2, Lusztig's conjectures P1--P15 all hold for this situation. Also, $W$ is, of course, finite, so
the standing hypotheses of Lusztig used in \cite[\S18]{Lus03} hold.  {\it In this section, we temporarily exclude the case that $G$ is of
type $^2F_4$, i.e., we assume $W$ is not the dihedral group $I_2(8)$ of order 16.} We will treat this
case in a short appendix at the end of this section. (However, everything up through (\ref{themap}) holds for
$^2F_4$. Recall that the smallest splitting field of $I_2(8)$ is $\mathbb Q(\sqrt{2})$, the field generated by its irreducible character values.
The other Weyl groups have
splitting field $\mathbb Q$.)

Let $\sZ^\natural:=\sS^{-1}\sZ$, the localization of $\sZ$ at the multiplicative set $\sS$ generated by the 
bad primes of $W$.\footnote{As noted with more detail in Remark \ref{good bad}, one could use ``bad prime" in the traditional sense here, or, for our purposes, simply define it to be a prime divisor of the generic degree denominator $m$ for a principal series character.} Set $\sH^\natural:=\sZ^\natural\otimes_\sZ\sH$.  Of course, $\sZ^\natural$ is a unique factorization domain of Krull dimension 2 (which is regular). In particular, its height one
prime ideals $\mathfrak p$ are all principal and are given (in this case) as follows:
(i) $\mathfrak p= p\sZ^\natural$, where $p\in\mathbb Z$ is a good prime for $W$; and (ii) $\mathfrak p=f(t)\sZ^\natural$,
where $f(t)$ is an irreducible polynomial in $\mathbb Z[t]$, of positive degree and not equal to $\pm t$.
 This description follows easily from \cite[Example H, p. 74]{Mum88}, noting that the prime ideals of $\sZ$ correspond
 naturally to those of $\mathbb Z[t]$ which do not contain $t$.

Before going further we introduce some general terminology on lattices and orders over a  Noetherian integral
domain $\scrK$. We have $\scrK=\sZ$ or $\sZ^\natural$ in mind. Often it will be convenient to let $K$ be the fraction
field of $\scrK$, e.g., $K=\mathbb Q(t)$, if $\scrK=\sZ$ or $\sZ^\natural$.

\begin{defns}\label{orders}  (a) For any finite-dimensional $K$-space $V$, a {\it (full) $\scrK$-lattice} in $V$ is a finite
 $\scrK$-submodule $M$ in $V$ such that $K M = V$, where
$$K M :=\Big\{\sum r_im_i (\text{finite sum})\mid r_i\in K, m_i \in M\Big\}.$$
 Following \cite[p.~108]{Reiner03}, a {\it $\scrK$-order}\index{$\scrK$-order} is a $\scrK$-subalgebra $\sA$ of a finite-dimensional $K$-algebra $\mathfrak A$, having the same identity as $\mathfrak A$, such that $\sA$ is a full $\scrK$-lattice in $\mathfrak A$.  
  Unless otherwise noted, from now on, the term ``lattice" refers to a ``full lattice."    
  
  \smallskip
 (b) Also, by some abuse of notation, we shall refer to a module $M$ for a $\scrK$-order 
 $\sA$ as an $\sA$-lattice if it inherits a $\scrK$-lattice structure from the action of $\sA$.
  A $\scrK$-order $\sA$ in $\mathfrak A$ is a {\it maximal $\scrK$-order}\index{$\scrK$-order! maximal $\sim$} if it is not  properly contained in a larger $\scrK$-order in $\mathfrak A$; see \cite[p. 110]{Reiner03}. 
\end{defns}

To give a common example of a maximal order in a finite-dimensional $K$-algebra $\mathfrak A$, assume
that $\scrK$ is integrally closed and $\mathfrak A$ is a direct product $\prod_iM_{n_i}(K)$ of a finite number of matrix algebras $M_{n_i}(K)$.  Then $\sA:=
\prod_iM_{n_i}(\scrK)$ is a maximal $\scrK$-order in $\mathfrak A$. (See \cite[Th. 8.7]{Reiner03}.)

\medskip

We need some further notation, mostly taken from \cite{Lus03}.  For $w\in W$, let $\text{\sgn}(w)=(-1)^{\ell(w)}$.  Recall the notation $\ssT_w$ from display \eqref{newtildebasis}. The map
 \begin{equation}\label{daggerdefn} \ssT_w\longmapsto \ssT_w^\dagger:=\text{sgn}(w)\ssT_{w^{-1}}^{-1}\end{equation}
 defines a
$\sZ$-algebra involution $h\mapsto h^\dagger$ of $\sH$.  See \cite[3.5]{Lus03}.
 Next,
for $z\in W$, let $a(z)$ \index{$a$-function} be the smallest non-negative integer such that
\begin{equation}\label{afun}
t^{a(z)}h_{x,y,z}\in\mathbb Z[t],\;\; \text{for all }x,y\in W.
\end{equation}  Here $h_{x,y,z}
$ is the structure constant defined in
(\ref{structureconstants}). Then let $\gamma_{x,y,z}\in\mathbb Z$ be the coefficient of $t^{a(z)}$ in $h_{x,y,z}$.  

Obviously, $\sH$ is a $\scrK$-order in $\sH_{\mathbb Q(t)},$ using $\scrK=\sZ$. Next, we define two further orders in (isomorphic copies of) $\sH_{\mathbb Q(t)}$ in the sense of Definition \ref{orders}. The first one $\sJ:=\sZ\otimes_{\mathbb Z}\sfJ$ 
is the base change of a $\mathbb Z$-algebra $\sfJ$; the latter, defined below, is called {\it Lusztig's asymptotic form}. \index{$\sfJ$, asymptotic form} \index{$\sfJ$, asymptotic form! $\sJ:=\sZ\otimes_{\mathbb Z}\sfJ$} As we shall see, the order
$\sJ$  contains a copy
of $\sH$, as discussed below. The second order $\sJ^{\natural}:=\sZ^\natural\otimes_{\mathbb Z}\sfJ\cong\sZ^\natural\otimes_\sZ\sJ$, a $\sZ^\natural$-order, is obtained from the first by inverting all bad primes for $W$. \index{$\sfJ$, asymptotic form! $\sJ^{\natural}:=\sZ^\natural\otimes_{\mathbb Z}\sfJ$}
%As an abbreviation, with a slight abuse of notation, we write $\nsJ$
%for this $\sZ^\natural$-algebra. 

The algebra $\sfJ$ is a free $\mathbb Z$-algebra 
with $\mathbb Z$-basis $\{\sfj_x\,|\,x\in W\}$ and multiplication given by
\begin{equation}\label{mult} \sfj_x\sfj_y=\sum_{z\in W}\gamma_{x,y,z^{-1}}\sfj_z,\quad x,y\in W.\end{equation} 
Then $\sfJ$ is associative and unital \cite[18.3]{Lus03}. (Note that rank$(\sfJ)=|W|$.)

Recall that $W$ has a subset $\mathcal D$ of distinguished involutions \cite[P6, 14.1]{Lus03}.  Each left cell
 $\omega$   contains a unique element  $d\in {\mathcal D}$. Also, for $d\in\mathcal D$, an integer $n_d=\pm 1$ is defined in
 \cite[14.1, P5 \& P13]{Lus03}. Given $z\in W$, let $d\in\mathcal D$ be the unique distinguished involution
satisfying $z^{-1}\sim_L d$, and set $\hat{n}_z:= n_d$. 
By \cite[Th. 18.9]{Lus03}, there is a $\sZ$-algebra 
homomorphism 
$\phi:\sH\longrightarrow \sJ$ defined by
\begin{equation}\label{phiformula}
\phi(\scc_w^\dagger)=\sum h_{w,d,z}\hat{n}_z\sfj_z.\end{equation}
In this expression, $w\in W$ and the sum is over all pairs $(d,z)\in \mathcal D\times W$ such that $a(z)=a(d)$.
Then,  for any commutative $\sZ$-algebra $R$, $\phi$ induces an
$R$-algebra homomorphism $\phi_R:\sH_R\to \sJ_R$. By \cite[Prop. 18.12a]{Lus03}, the kernel of $\phi_R$ is a nilpotent ideal. Thus,  when $R$ is a field and $\sH_R$ is a semisimple algebra, $\phi_R$
is injective and, thus, an isomorphism by
dimension considerations. In particular, $\phi_{\mathbb Q(t)}$ is an isomorphism. Of course, this means that $\phi$ is an injection, so that 
$\sJ$ and $\nsJ$ both contain subalgebras isomorphic to $\sH$.
The case $R=\mathbb Q$ leads to a similar isomorphism $\sH_{\mathbb Q}\overset\sim\rightarrow \sJ_{\mathbb Q}$, regarding $\mathbb Q$ as a $\sZ$-algebra by means of the specialization $t\mapsto 1$. Note $\sJ_{\mathbb Q}\cong\sfJ_{\mathbb Q}$ by construction. For later use in the proof of the theorem below, note also that
$\sH_{\mathbb Q}\cong{\mathbb Q}W$, and so, $\sfJ_{\mathbb Q}\cong \sJ_{\mathbb Q}\cong\sH_{\mathbb Q}\cong \mathbb QW$.

\begin{rem}\label{adddagger} All the assertions of the above paragraph (below display (\ref{phiformula})) hold as written if the homomorphism $\phi$ is replaced
by the composition 
\begin{equation}\label{varpi}\varpi:=\phi\circ\dagger:\sH\longrightarrow\sJ, \end{equation}
where $\dagger$ is defined in display (\ref{daggerdefn}).\index{$\varpi$, the embedding $\sH\to\sJ$}
 This is not needed right away, but will be useful in showing in the next section that all left-cell modules
for $\sH$ extend to $\sJ$-modules. For the convenience of the reader, we write down explicitly the expression for the composite $\varpi=\phi\circ\dagger$:
 \begin{equation}\label{themap}\varpi(\scc_w)= 
 \sum_{z\in W}\sum_{d\in\mathcal D\atop a(d)=a(z)}h_{w,d,z}\hat n_z\sfj_z\quad (w\in W).\end{equation} 
We will also use $\varpi$ in the context of $\sH^\natural$ and $\nsJ$, in which case we define $\varpi^\natural$
to be the base-changed map
\begin{equation}\label{8.0.37}
\varpi^\natural:\, \sH^\natural=\sH_{\sZ^\natural}\overset{\varpi_{\sZ^\natural}}\longrightarrow\sJ_{\sZ^\natural}=\sJ^{\natural}.
\end{equation}
Moreover, the isomorphisms $\sfJ_{\mathbb Q}\cong\sH_{\mathbb Q}\cong \mathbb QW$ and
$\sfJ_{\mathbb Q(t)}=\sJ_{\mathbb Q(t)}\cong\sH_{\mathbb Q(t)}$ induce an isomorphism
$\mathbb Q(t)W\cong \sH_{\mathbb Q(t)}$ and, hence, a bijection
\begin{equation}\label{IrrWH}
\text{Irr}(\mathbb QW)\longrightarrow\text{Irr}(\sH_{\mathbb Q(t)}),\;E\longmapsto E_{\mathbb Q(t)}.
\end{equation}
If $G$ is of type ${}^2F_4$, $\mathbb Q$ should be replaced by $\mathbb Q(\sqrt2)$.
\end{rem}

Next, we pursue some of the remarkable properties of the $\sZ^\natural$-order $\nsJ$.  Here,
$\scrK=\sZ^\natural$ and $K=\mathbb Q(t)$.
 
Note that $\nsJ_{{\mathbb Q}(t)}$ identifies with $\sJ_{\mathbb Q(t)}$ which is isomorphic (via the inverse of  $\phi_{\mathbb Q(t)}$) to
$\sH_{{\mathbb Q}(t)}$.   The latter is semisimple by Remark \ref{Rem7.4}(b)  and even  split, since $\mathbb QW$ is split semisimple.
(Recall that we have postponed the case $^2F_4$ to an appendix at the end of this section.)
%, as commented at the beginning of this section.)  
Hence, there is a ``natural" isomorphism
\begin{equation}\label{iso1}\sJ^\natural_{{\mathbb Q}(t)}\overset\sim\longrightarrow\prod_i\End_{{\mathbb Q}(t)}(E_i),\end{equation}
where $E_i$ ranges over the distinct irreducible $\sJ_{{\mathbb Q}(t)}$-modules. 
The map is obtained from the action of $\nsJ_{\mathbb Q(t)}=\sJ_{\mathbb Q(t)}$ on  the various $E_i$.
We can assume that $E_i=KL_i$
where $L_i$ is a $\nsJ$-stable lattice in $E_i$. In fact, using Auslander-Goldman theory (see \cite[Cor. C.18]{DDPW08}), it can  also be assumed (after replacing
$L_i$ by its double dual $L_i^{**}$) that all the $L_i$
are projective over $\sZ^\natural$ .  Then, by the result of Swan mentioned in 
Remark \ref{2.5}(c)), the $L_i$ are free over $\sZ^\natural$.

Thus, (\ref{iso1}) may be obtained by base change from $\sZ^\natural$ to $\mathbb Q(t)$
of an injection 
\begin{equation}\label{bigmap} \psi:\nsJ\to \prod_i\End_{\sZ^\natural}(L_i),\end{equation}
 where each $L_i$ is $\sZ^\natural$-free. Note that each $\nsJ$-module $L_i$ is an $\sH^\natural$-module by restriction
 through the map $\varpi^\natural$ in (\ref{8.0.37}).
\medskip\medskip
 
We can now prove:

\begin{thm} \label{maxorder} Maintain the assumptions on $W$, $\sH$, etc. above. The map $\psi$ in (\ref{bigmap}) is an isomorphism
of $\sZ^\natural$-algebras.
 \end{thm}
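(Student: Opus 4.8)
The plan is to show that the injection $\psi$ in \eqref{bigmap} is an isomorphism by a localization-at-height-one-primes argument, combined with the fact that $\nsJ$ is a maximal order. First I would recall the general structure of the target: since $\sZ^\natural$ is integrally closed (indeed a regular UFD of Krull dimension 2), each $\prod_i\End_{\sZ^\natural}(L_i)\cong\prod_i M_{n_i}(\sZ^\natural)$ is a maximal $\sZ^\natural$-order in $\prod_i\End_{\mathbb Q(t)}(E_i)\cong\sJ^\natural_{\mathbb Q(t)}$, by \cite[Th. 8.7]{Reiner03}. So the right-hand side of \eqref{bigmap} is a maximal order, and $\psi$ realizes $\nsJ$ as a $\sZ^\natural$-suborder of it. Thus it suffices to prove that $\nsJ$ is itself a maximal $\sZ^\natural$-order in $\sJ^\natural_{\mathbb Q(t)}$; then the inclusion $\psi(\nsJ)\subseteq\prod_i M_{n_i}(\sZ^\natural)$ of orders, with the larger one not properly containing the smaller by maximality of $\psi(\nsJ)$, forces equality.

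To prove maximality of $\nsJ$, I would use the standard criterion that over a Noetherian integrally closed domain $\scrK$, a $\scrK$-order $\sA$ in a separable $K$-algebra is maximal if and only if its localization $\sA_{\mathfrak p}$ at every height-one prime $\mathfrak p$ is a maximal $\scrK_{\mathfrak p}$-order (see \cite[Ch. 11]{Reiner03}, especially the reduction to reflexive modules and the fact that a maximal order is a reflexive $\scrK$-lattice). Since $\sJ_{\mathbb Q(t)}\cong\sH_{\mathbb Q(t)}\cong\mathbb Q(t)W$ is split semisimple (the case $^2F_4$ being deferred to the appendix), separability holds. The height-one primes $\mathfrak p$ of $\sZ^\natural$ were listed explicitly just after the definition of $\sZ^\natural$: either $\mathfrak p=p\sZ^\natural$ with $p$ a good prime for $W$, or $\mathfrak p=f(t)\sZ^\natural$ with $f$ irreducible of positive degree. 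In the second case the residue characteristic is $0$, so $\sH_{\scrK_{\mathfrak p}/\mathfrak p}$ is semisimple (characteristic-zero group algebra of $W$, up to the isomorphisms above), and in the first case $p$ good for $W$ also gives semisimplicity of $\F_pW$ when $p\nmid|W|$; when $p\mid|W|$ but $p$ is still good, one uses instead that $\sfJ_{\F_p}\cong\F_pW$ is still of the right dimension and that the bad primes are precisely those that could obstruct the argument. In all cases one obtains that $\nsJ_{\mathfrak p}$ has semisimple reduction modulo $\mathfrak p$, i.e. $\nsJ_{\mathfrak p}$ is a hereditary (even maximal) order over the discrete valuation ring $\sZ^\natural_{\mathfrak p}$; this is where the inversion of bad primes is essential and is the crux of the whole theorem.

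Concretely, for each height-one $\mathfrak p$ I would argue: $\sZ^\natural_{\mathfrak p}$ is a DVR with residue field $k_{\mathfrak p}$; the algebra $\nsJ\otimes_{\sZ^\natural}k_{\mathfrak p}$ is a $k_{\mathfrak p}$-algebra of dimension $|W|$ which, via the map induced by $\phi$ (or $\varpi$) on reductions and the isomorphism $\sfJ_{\mathbb Q}\cong\mathbb QW$, is identified with a form of $k_{\mathfrak p}W$; because $\mathfrak p$ avoids all bad primes, this reduction is semisimple. A semisimple reduction of an order over a DVR implies the order is maximal (this is a classical fact, \cite[Th. 18.7 and Ch. 11]{Reiner03}). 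Hence $\nsJ_{\mathfrak p}$ is maximal for every height-one $\mathfrak p$, so $\nsJ$ is a maximal $\sZ^\natural$-order, and therefore $\psi$ is an isomorphism.

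I expect the main obstacle to be verifying the semisimplicity of the reduction $\nsJ\otimes k_{\mathfrak p}$ at primes $\mathfrak p=p\sZ^\natural$ where $p$ is a good prime that nonetheless divides $|W|$ — here one cannot simply invoke Maschke's theorem, and must instead use the specific structure of $\sfJ$ (its basis $\{\sfj_x\}$ indexed by $W$, the distinguished involutions $\mathcal D$, and the integers $n_d$) together with Lusztig's results \cite[18.9, 18.12]{Lus03} on $\phi$, and the characterization in Remark \ref{good bad} of bad primes as exactly those dividing the relevant generic-degree denominators, to see that the obstruction to semisimplicity of $\sfJ\otimes\F_p$ is controlled precisely by the bad primes. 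A secondary technical point is ensuring that the $L_i$ may be taken $\sZ^\natural$-free (this is handled by the Auslander--Goldman/double-dual trick and Swan's theorem, as already sketched in the text above the theorem), so that $\prod_i\End_{\sZ^\natural}(L_i)$ really is a product of full matrix algebras over $\sZ^\natural$.
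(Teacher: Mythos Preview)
Your overall framework---reduce to showing $\nsJ$ is a maximal $\sZ^\natural$-order, then check maximality at each height-one prime via the Auslander--Goldman criterion---is correct and matches the paper's approach exactly. The case $\mathfrak p=(f(t))$ is also essentially right: since $\mathbb Q\subseteq\sZ^\natural_{\mathfrak p}$, one has $\nsJ_{\mathfrak p}\cong\sZ^\natural_{\mathfrak p}\otimes_{\mathbb Q}\sfJ_{\mathbb Q}\cong\prod_i M_{n_i}(\sZ^\natural_{\mathfrak p})$ directly.

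The genuine gap is in the case $\mathfrak p=(p)$ with $p$ good but $p\mid|W|$. Your claim that $\nsJ\otimes k_{\mathfrak p}$ ``is identified with a form of $k_{\mathfrak p}W$'' is false: the isomorphism $\sfJ_{\mathbb Q}\cong\mathbb QW$ does \emph{not} descend to $\mathbb F_p$. Indeed, $\mathbb F_pW$ is not semisimple when $p\mid|W|$ (Maschke fails), whereas $\sfJ_{\mathbb F_p}$ \emph{is} semisimple for good $p$---so the two algebras cannot be isomorphic, and your proposed identification collapses precisely where you need it. You correctly flag this as the main obstacle, but the resolution you sketch (comparison with the group algebra, or invoking $\sfJ_{\mathbb F_p}\cong\mathbb F_pW$) does not work.

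The paper sidesteps this by never trying to analyze $\sfJ_{\mathbb F_p}$ directly. Instead it uses the \emph{Hecke} algebra $\sH_{\mathbb F_p(t)}$---with $t$ still a generic variable over $\mathbb F_p$---which \cite[Ths.~4.2.1,~4.2.2]{DPS98a} shows is split semisimple whenever $p$ is good, via Schur matrix relations and the fact that good primes avoid the generic-degree denominators. Since $\sH$ acts on each $L_i$ through its inclusion into $\sJ$, the composite $(\sH_{\sZ^\natural})_{\mathfrak p}\hookrightarrow\nsJ_{\mathfrak p}\to M_{n_i}(\sZ^\natural_{\mathfrak p})\to M_{n_i}(\mathbb F_p(t))$ is surjective; hence so is $\nsJ_{\mathfrak p}\to\prod_i M_{n_i}(\mathbb F_p(t))$, and Nakayama finishes. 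The generic parameter $t$ is doing real work here: $\sH_{\mathbb F_p(t)}$ is semisimple even when $\mathbb F_pW$ is not. If you want to rescue your direct approach to $\sfJ_{\mathbb F_p}$, you would need an independent proof that it is split semisimple for good $p$ (this is known in the equal-parameter case by Lusztig and Geck; see Remark~\ref{Corofmaxorder}(3)), but that is a nontrivial input your argument does not supply.
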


\begin{proof}  
We have already noted that the map $\psi$ is  an injection (since its base change to ${\mathbb Q}(t)$ is obviously
an injection). Clearly, $\nsJ$ is a $\sZ^\natural$-order (and, in particular,  a full lattice) in $\nsJ_{\mathbb Q(t)}$. To prove the theorem, it suffices to prove that $\nsJ$ is a maximal $\sZ^\natural$-order in $\nsJ_{\mathbb Q(t)}$. This will show that $\psi$ is an isomorphism, since it is an inclusion of orders.   To do this, it is necessary and sufficient, by the Auslander-Goldman theorem (see \cite[Th. 11.4]{Reiner03}), to check two conditions: 
\begin{itemize}
\item[(a)] the $\sZ^\natural$-order $\Xi:=\nsJ$ is reflexive in the sense that $\Xi^{**}\cong \Xi$ as $\sZ^\natural$-modules;
\item[(b)] for any height one prime ideal $\mathfrak p$ in $\sZ^\natural$, $\nsJ_{\mathfrak p}$ is a maximal $\sZ^\natural_{\mathfrak p}$-order in $\nsJ_{\mathbb Q(t)}$.
\end{itemize}

But $\nsJ$ is $\sZ^\natural$-free, so it is certainly reflexive. Thus, (a) holds.

 For part (b), it suffices to show, for any height one prime ideal $\mathfrak p$ in $\sZ^\natural$, that 
  there is an abstract algebra isomorphism
  \begin{equation}\label{abstract}
\nsJ_{\mathfrak p}\cong\prod_iM_{n_i}(\sZ^\natural_{\mathfrak p})\end{equation}
for some finite list of positive integers $n_i$. See \cite[Ths. 8.7, 10.5]{Reiner03}.

Consider first the case in which ${\mathfrak p}=(p)$, where $p$ is a good prime
for $W$. Then \cite[Ths. 4.2.1, 4.2.2]{DPS98a} and their proofs show  that
\begin{equation}\label{dogformula}\sH_{\mathbb F_p(t)}\cong \prod_i\End_{\mathbb F_{p}(t)}(L_{i{\mathbb F_p(t)}})\end{equation}
with $L_i$ as above, and $L_{i{\mathbb F_p(t)}}=(L_i)_{\mathbb F_p(t)}$ denoting the 
base-changed module $\mathbb F_p(t)\otimes_{\sZ^\natural}L_i$. Note that $\mathbb F_p(t)$ is the residue field of $\sZ^\natural_{\mathfrak p}$. (Essentially, the argument in \cite{DPS98a} uses Schur matrix relations and the fact that good
primes avoid generic degree denominators \cite[(4.1.2)]{DPS98a}  to show that all matrix units on the right have preimages on the left.)
Since the action of $\sH_{\sZ^\natural}$ on $L_i$ is by restriction
through the algebra
$\nsJ$, we have a surjective composite map
$$(\sH_{\sZ^\natural})_{\mathfrak p}\hookrightarrow\nsJ_{\mathfrak p}\longrightarrow 
M_{n_i}(\sZ^\natural_{\mathfrak p})
\longrightarrow M_{n_i}({\mathbb F}_p(t)),$$
where the two matrix algebras are the full endomorphism rings of $(L_i)_{\sZ^\natural_{\mathfrak p}}$
and $(L_i)_{\mathbb F_p(t)}$, respectively.
Consequently, the  map 
$$\nsJ_{\mathfrak p}\longrightarrow\prod_iM_{n_i}({\mathbb F}_p(t))$$
is surjective, as is the map
$$\nsJ_{\mathfrak p}\longrightarrow\prod_iM_{n_i}(\sZ^\natural_{\mathfrak p})\qquad (\text{by Nakayama's Lemma}).$$
Since the latter map is an injection, it is also an isomorphism.

Finally, it is enough to show that an isomorphism (\ref{abstract}) holds whenever ${\mathfrak p}=(f(t))$ with $f(t)\in{\mathbb Z}[t]$
an irreducible polynomial of positive degree and $\not=\pm t$. (These are the remaining height one primes in $\sZ^\natural$.) In this case, no nonzero integer $z\not=0$ is divisible
by $f(t)$ in the unique factorization domain $\sZ^\natural$. Thus, $\mathbb Q\subseteq \sZ^\natural_{\mathfrak p}$.
  Since 
$$\sfJ_{\mathbb Q}\cong{\mathbb Q}W\cong\prod_jM_{n_i}({\mathbb Q})$$
for some positive integers $n_i$,\footnote{The first isomorphism is discussed above Remark \ref{adddagger}. It may be regarded as one of the many well-known results of Lusztig generalized in \cite{Lus03} to an unequal parameter setting.} we have
$$\aligned
\sJ^\natural_{\mathfrak p}:=\sZ^\natural_{\mathfrak p}\otimes_{\sZ^\natural}\sJ^\natural&\cong 
\sZ^\natural_{\mathfrak p}\otimes_{\sZ^\natural}\sZ^\natural\otimes_{\sZ}\sJ\cong
\sZ^\natural_{\mathfrak p}\otimes_{\sZ^\natural}\sZ^\natural\otimes_{\sZ}\sZ\otimes_{\mathbb Z}\sfJ\\
&\cong\sZ^\natural_{\mathfrak p}\otimes_{\mathbb Z}\sfJ\cong\sZ^\natural_{\mathfrak p}\otimes_{\mathbb Q}\mathbb Q\otimes_{\mathbb Z}\sfJ
\cong \sZ^\natural_{\mathfrak p}\otimes_{\mathbb Q}\sfJ_{\mathbb Q}
 \cong \prod_iM_{n_i}(\sZ^\natural_{\mathfrak p}),
 \endaligned$$
completing part (b) of the proof. %The theorem is proved.
\end{proof}

This has several consequences. The first corollary, which we call a theorem, follows either from the proof of Theorem \ref{maxorder} or from its statement. (Remember that $K=\mathbb Q(t)$ and $G$ is not of type $^2F_4$.)

\begin{thm} \label{maximalorder} %\begin{itemize}
{\rm(a)} The $\sZ^\natural$-algebra 
$\nsJ=\sJ_{\sZ^\natural}$ is isomorphic to a maximal order in $\sH_K$. 

{\rm(b)} Furthermore,
\begin{equation}\label{maximalorder1}
\nsJ=\sJ_{\sZ^\natural}\cong\prod_iM_{n_i}(\sZ^\natural),
\end{equation}
for a finite sequence $n_1,n_2, \cdots$ of positive integers. %\end{itemize}
\end{thm}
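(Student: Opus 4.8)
\textbf{Proof proposal for Theorem \ref{maximalorder}.}

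The plan is to deduce both parts of this "theorem-corollary" directly from Theorem \ref{maxorder} together with the facts already established about $\nsJ_{\mathbb Q(t)}$. For part (a): Theorem \ref{maxorder} asserts that the injection $\psi:\nsJ\to\prod_i\End_{\sZ^\natural}(L_i)$ of display (\ref{bigmap}) is an isomorphism of $\sZ^\natural$-algebras. The target is a finite product $\prod_i M_{n_i}(\sZ^\natural)$ after choosing $\sZ^\natural$-bases for the free modules $L_i$ (recall each $L_i$ is $\sZ^\natural$-free by the discussion following (\ref{bigmap}), using Auslander--Goldman plus Swan). Since $\sZ^\natural$ is integrally closed (indeed a regular UFD), the common example recorded just after Definition \ref{orders}---namely that $\prod_i M_{n_i}(\scrK)$ is a maximal $\scrK$-order in $\prod_i M_{n_i}(K)$ by \cite[Th. 8.7]{Reiner03}---shows the right-hand side is a maximal $\sZ^\natural$-order in $\prod_i\End_{\mathbb Q(t)}(E_i)$. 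As noted in the lead-up to (\ref{iso1}), $\nsJ_{\mathbb Q(t)}\cong\sH_K$; hence, transporting the maximal-order structure across the isomorphism $\psi$, we conclude that $\nsJ$ is isomorphic to a maximal order in $\sH_K$. This is exactly (a).

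For part (b), I would simply observe that the isomorphism $\psi$ of Theorem \ref{maxorder} already produces the required explicit description: upon fixing $\sZ^\natural$-bases of the $L_i$ we obtain $\nsJ=\sJ_{\sZ^\natural}\cong\prod_i\End_{\sZ^\natural}(L_i)\cong\prod_i M_{n_i}(\sZ^\natural)$, where $n_i:=\operatorname{rank}_{\sZ^\natural}L_i$, a finite sequence of positive integers (finite because $\Lambda=\mathrm{Irr}(\sH_{\mathbb Q(t)})$ is finite). So (b) is literally the content of Theorem \ref{maxorder} restated with bases chosen. Alternatively, as the statement hints, (b) can be read off from the internal argument of the proof of Theorem \ref{maxorder}: the displayed isomorphisms (\ref{abstract}) localize $\nsJ$ at every height-one prime to a product of matrix algebras, and combined with reflexivity this forces $\nsJ$ itself to have the global form $\prod_i M_{n_i}(\sZ^\natural)$; but invoking the already-proven Theorem \ref{maxorder} is cleaner.

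I do not anticipate a genuine obstacle here, since all the hard work is in Theorem \ref{maxorder} (and behind it, the good-prime structure result \cite[Ths. 4.2.1, 4.2.2]{DPS98a} and the branching behavior of the height-one primes of $\sZ^\natural$). The only points requiring a sentence of care are: (i) confirming that the $L_i$ are $\sZ^\natural$-free, not merely projective, so that $\End_{\sZ^\natural}(L_i)\cong M_{n_i}(\sZ^\natural)$ as algebras---this is Swan's theorem as invoked in Remark \ref{2.5}(c) and already used in setting up (\ref{bigmap}); and (ii) recalling that we are temporarily excluding type ${}^2F_4$, so that $\mathbb Q W$ is split semisimple and the factors are honest matrix algebras over $\sZ^\natural$ rather than over an order in a division ring---this is exactly the standing exclusion stated at the start of the section and flagged in the parenthetical remark before the statement of Theorem \ref{maximalorder}. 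With those two remarks in place, the proof is a two-line corollary: "(a) and (b) are immediate from Theorem \ref{maxorder}, choosing $\sZ^\natural$-bases for the $L_i$ and applying \cite[Th. 8.7]{Reiner03} to identify $\prod_i M_{n_i}(\sZ^\natural)$ as a maximal order in $\sH_K$."
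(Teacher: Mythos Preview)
Your proposal is correct and matches the paper's approach exactly: the paper states that this theorem ``follows either from the proof of Theorem \ref{maxorder} or from its statement,'' and you have spelled out precisely that deduction, including the use of \cite[Th.~8.7]{Reiner03} (already invoked in the example after Definition \ref{orders}) and Swan's theorem for freeness of the $L_i$.
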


\begin{rems} \label{Corofmaxorder}
(1) The above theorem has an analogous version in which the $\sZ^\natural$-algebra $\sJ^\natural$ is replaced by
the $\mathbb Z^\natural$-algebra $\sfJ^\natural$.   In this context, another consequence of Theorem \ref{maximalorder}
is that $\sfJ^\natural$ is a maximal order in $\sH_{\mathbb Q}=\mathbb QW.$ Explicitly, $\sfJ^\natural$ is isomorphic to a
direct product of full matrix algebras over $\mathbb Z^\natural$. This follows by a base-change argument using the
specialization $\sZ^\natural\rightarrow \mathbb Z^\natural$ given by $t\mapsto 1$. Finally, all corollaries below
have an analogous version for $\sfJ^\natural$. They are easily derived through a similar proof, or through a base-change
argument together with the statements for $\sZ$.

(2) By the theorem, we have $\sJ_K=\sJ^\natural_K\cong\prod_{i=1}^sM_{n_i}(K)$, for some $s$, when $K=\mathbb Q(t)$. Thus, analyzing projections, there are a total of $s$ non-isomorphic irreducible $\sJ_K$-modules $L_1,\ldots,L_s$ such that, for each $i$ with $1\leq i\leq s$, $L_i\cong K^{n_i}$ as $\sJ_K$-modules, where $K^{n_i}$ is the irreducible left $M_{n_i}(K)$-module via matrix multiplication. The discussion of the map $\psi$, in and above Theorem \ref{maxorder}, implies that $L_i$ has a $\sJ^\natural$-lattice $L_{i,\sZ^\natural}$ isomorphic to $(\sZ^\natural)^{n_i}$. In particular, $L_{i,\sZ^\natural}$ is $\sZ^\natural$-free.

(3) In the equal parameter case, Lusztig first proved in \cite[(3.1j),(1.2a)]{Lus87d} that, for a large enough field $F$, $\sfJ_F$ is split semisimple. Further, in \cite[Remark 2.5]{G98}, Geck observed that this result applies to the case where the characteristic of $F$ is a good prime. See also \cite[Props. 1.5.7\&2.3.16]{GJ11} for a similar result in general over a sufficiently large field $\mathbb K$ of characteristic 0.
\end{rems}

We note that a maximal order copy of $\nsJ$ in $\sH_{K}$ is $\phi_K^{-1}(\nsJ)$, in the notation of (\ref{phiformula}). Another useful copy is obtained by applying to this copy the involution on $\sH_K$ induced
 by $\dagger$. The resulting copy may also be written $\varpi_K^{-1}(\nsJ)$, with $\varpi$ as in Remark \ref{adddagger}.

Recall that if $R$ is a commutative ring, an algebra $B$ over $R$ is separable if $B$ is a
projective $B\otimes_RB^{\op}$-module; see Auslander-Goldman \cite[p. 1]{AG60b}. 

\begin{cor}\label{yahoo}
%\begin{itemize}
{\rm(a)}
The algebra $\nsJ$ is separable over $\sZ^\natural$. 

{\rm(b)} Every finite module for $\nsJ$, which is projective over $\sZ^\natural$, is projective over $\nsJ$.
%\end{itemize}
\end{cor}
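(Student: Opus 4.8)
\textbf{Proof proposal for Corollary \ref{yahoo}.}

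The plan is to derive both parts directly from Theorem \ref{maximalorder}(b), which gives the explicit isomorphism $\nsJ \cong \prod_i M_{n_i}(\sZ^\natural)$ as $\sZ^\natural$-algebras. For part (a), I would invoke the standard fact that a full matrix algebra $M_n(R)$ over a commutative ring $R$ is separable over $R$: indeed $M_n(R) \cong \End_R(R^n)$ and the evident element $\sum_{i} e_{i1}\otimes e_{1i}$ in $M_n(R)\otimes_R M_n(R)^{\op}$ is a separability idempotent. Since a finite direct product of separable $R$-algebras is separable over $R$ (the separability idempotents combine componentwise), $\prod_i M_{n_i}(\sZ^\natural)$ is separable over $\sZ^\natural$, and hence so is $\nsJ$ by transport of structure along the isomorphism of Theorem \ref{maximalorder}(b). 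One should note that in the excluded case ${}^2F_4$, the appendix to this section presumably supplies a variant of Theorem \ref{maximalorder} with $\sZ^\natural$ replaced by its extension by $\sqrt{2}$; separability is preserved under that harmless base change, so the conclusion still holds --- this is the only place I would need to flag the ${}^2F_4$ exception.

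For part (b), the strategy is again to reduce to the matrix-algebra picture. Let $M$ be a finite $\nsJ$-module that is projective (equivalently, by Swan's theorem as recalled in Remark \ref{2.5}(c), free) over $\sZ^\natural$. Under $\nsJ \cong \prod_i M_{n_i}(\sZ^\natural)$, the module $M$ decomposes as $M = \bigoplus_i M_i$ where $M_i$ is a module over the block $M_{n_i}(\sZ^\natural)$. By Morita theory over the commutative base $\sZ^\natural$ --- $M_{n_i}(\sZ^\natural)$ is Morita equivalent to $\sZ^\natural$ via $(\sZ^\natural)^{n_i}$ --- each $M_i$ corresponds to a finitely generated $\sZ^\natural$-module $N_i$, namely $N_i = e_{11}M_i$, with $M_i \cong (\sZ^\natural)^{n_i}\otimes_{\sZ^\natural} N_i$ as $M_{n_i}(\sZ^\natural)$-modules. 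Since $M_i$ is projective over $\sZ^\natural$ (being a summand of $M$), so is $N_i$ (it is a $\sZ^\natural$-summand of $M_i$, or alternatively one uses faithful flatness); hence $N_i$ is $\sZ^\natural$-free, say $N_i \cong (\sZ^\natural)^{r_i}$. Then $M_i \cong ((\sZ^\natural)^{n_i})^{\oplus r_i}$, a direct sum of copies of the projective $M_{n_i}(\sZ^\natural)$-module $(\sZ^\natural)^{n_i}$, hence projective over $M_{n_i}(\sZ^\natural)$. Summing over $i$ shows $M$ is projective over $\prod_i M_{n_i}(\sZ^\natural) \cong \nsJ$.

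There is a cleaner alternative for part (b) that avoids the explicit Morita computation: a finitely generated module over a matrix algebra $M_n(R)$, $R$ commutative, is $R$-projective if and only if it is $M_n(R)$-projective, because projectivity is detected locally at the primes of $R$ and over the local ring $R_\mathfrak{p}$ every finitely generated module is free (when $R_\mathfrak{p}$ is, say, a PID or more generally when the module in question is flat), so the projective $R_\mathfrak{p}$-module $(M_i)_\mathfrak{p}$ is $R_\mathfrak{p}$-free, hence --- being a module over $M_n(R_\mathfrak{p})$ --- a direct sum of copies of $(R_\mathfrak{p})^n$, which is the unique indecomposable projective. Either route works; I would present whichever reads most economically given what the monograph has already set up about localizations of $\sZ^\natural$. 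The main (and essentially only) obstacle is bookkeeping: making sure the isomorphism of Theorem \ref{maximalorder}(b) is used as an $\sZ^\natural$-algebra isomorphism so that $\sZ^\natural$-module structures match on both sides, and handling the ${}^2F_4$ case by the obvious base-change remark. Neither step is deep; the corollary is genuinely a formal consequence of the structure theorem, which is why the authors call it a corollary.
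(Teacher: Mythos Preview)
Your argument is correct. Part (a) matches the paper's approach exactly: the paper cites \cite[Prop.~5.1]{AG60b} for the separability of $M_n(R)$ over $R$ and notes that direct products of separable algebras are separable, while you spell out the separability idempotent explicitly; the content is the same.

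For part (b) you take a genuinely different route. The paper deduces (b) from (a) via a general theorem of Hattori \cite[Th.~2.11]{Hat63}: over any separable $\scrK$-algebra, a finite module that is $\scrK$-projective is automatically projective over the algebra. You instead bypass separability entirely and argue directly from the matrix decomposition of Theorem~\ref{maximalorder}(b), using Morita equivalence between $M_{n_i}(\sZ^\natural)$ and $\sZ^\natural$ together with Swan's theorem. Both work. The paper's approach is a one-line citation and explains why (a) is placed before (b); yours is more self-contained and would go through even without (a), but requires the explicit product-of-matrix-algebras form rather than mere separability. Your ``cleaner alternative'' paragraph is essentially re-proving Hattori's result in this special case.

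One small point: your remark about ${}^2F_4$ is unnecessary here, since the statement of the corollary sits in a section that explicitly excludes that case; the paper handles ${}^2F_4$ separately in the appendix to the section (see Remark~\ref{moreremarks}(b)), and there the descent from $\sZ^{\natural\prime}$ to $\sZ^\natural$ does require the faithfully flat descent theorem~\ref{tjf}, not merely ``base change preserves separability'' as you wrote.
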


\begin{proof} The reader may directly check that the
direct product of a finite set of separable algebras is a separable algebra. Then, in view of Theorem \ref{maxorder}, part
(a) follows from \cite[Prop. 5.1]{AG60b}. Finally, part (b) follows from \cite[Th. 2.11]{Hat63}.
(See also \cite[Ex. 9, p. 106]{Reiner03}.)
\end{proof}

\begin{cor}\label{ress} Let $M$ and $M'$ be finite $\nsJ$-modules, projective over $\sZ^\natural$. If $M_K\cong M'_K$ as
$\nsJ_K$-modules, then $M\cong M'$ as $\nsJ$-modules.\end{cor}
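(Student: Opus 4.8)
\textbf{Proof proposal for Corollary \ref{ress}.} The plan is to exploit the two structural facts just established: $\nsJ \cong \prod_i M_{n_i}(\sZ^\natural)$ by Theorem \ref{maximalorder}(b), and the consequence (Corollary \ref{yahoo}(b)) that every finite $\nsJ$-module which is $\sZ^\natural$-projective is already $\nsJ$-projective. The strategy is essentially the one behind classical Noether--Deuring-type arguments for maximal orders: reduce the isomorphism question over $\scrK = \sZ^\natural$ to the isomorphism question over the fraction field $K = \mathbb Q(t)$ by using projectivity and the product-of-matrix-algebras shape of $\nsJ$.

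First I would observe that, since $M$ and $M'$ are $\sZ^\natural$-projective $\nsJ$-modules, they are projective $\nsJ$-modules by Corollary \ref{yahoo}(b). Using the decomposition $\nsJ \cong \prod_i M_{n_i}(\sZ^\natural)$, the category of finite $\nsJ$-modules splits as a product of the module categories of the $M_{n_i}(\sZ^\natural)$, and Morita theory identifies $M_{n_i}(\sZ^\natural)$-mod with $\sZ^\natural$-mod; under this identification the finitely generated projective $M_{n_i}(\sZ^\natural)$-modules correspond to finitely generated projective $\sZ^\natural$-modules, which by Swan's theorem (as recalled in Remark \ref{2.5}(c)) are free. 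Hence each of $M$ and $M'$ is, on each block $i$, a free module of some finite rank, i.e. isomorphic to $(\sZ^\natural)^{n_i}$-copies assembled according to a vector of non-negative integer multiplicities $(a_i)$ for $M$ and $(a_i')$ for $M'$. So $M \cong \bigoplus_i L_{i,\sZ^\natural}^{\oplus a_i}$ and $M' \cong \bigoplus_i L_{i,\sZ^\natural}^{\oplus a_i'}$, in the notation of Remarks \ref{Corofmaxorder}(2), where $L_{i,\sZ^\natural}$ is the natural $\sZ^\natural$-free lattice.

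Next I would extend scalars to $K$. Base change is exact and sends $L_{i,\sZ^\natural}$ to the irreducible $\nsJ_K$-module $L_i$ (by Remarks \ref{Corofmaxorder}(2)), and the $L_i$ for distinct $i$ are pairwise non-isomorphic. Thus $M_K \cong \bigoplus_i L_i^{\oplus a_i}$ and $M'_K \cong \bigoplus_i L_i^{\oplus a_i'}$ as $\nsJ_K$-modules, and since $\nsJ_K$ is semisimple with the $L_i$ as its complete list of simples, the hypothesis $M_K \cong M'_K$ forces $a_i = a_i'$ for every $i$ by uniqueness of the isotypic decomposition (equivalently, by counting $K$-dimensions of $\Hom_{\nsJ_K}(L_j, -)$). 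Feeding these equalities back into the block description of $M$ and $M'$ over $\sZ^\natural$ yields $M \cong M'$ as $\nsJ$-modules, completing the proof.

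I do not anticipate a serious obstacle here; the corollary is a fairly direct consequence of Theorem \ref{maximalorder} and Corollary \ref{yahoo}. The only point requiring mild care is the passage from ``$\sZ^\natural$-projective $\nsJ$-module'' to ``direct sum of the standard lattices $L_{i,\sZ^\natural}$ with well-defined multiplicities,'' which rests on the freeness of finite projective $\sZ^\natural$-modules (Swan) together with the Morita identification coming from the product-of-matrix-algebras structure; once that normal form is in hand, the comparison over $K$ is immediate.
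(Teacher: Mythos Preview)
Your proposal is correct and follows essentially the same route as the paper: projectivity over $\nsJ$ via Corollary \ref{yahoo}(b), reduction to a single matrix-algebra block via Theorem \ref{maximalorder}, Morita equivalence to $\sZ^\natural$-modules, Swan's theorem for freeness, and then rank comparison after base change to $K$. The only cosmetic difference is that the paper phrases the reduction as ``assume $n=1$'' while you write out the resulting normal form $\bigoplus_i L_{i,\sZ^\natural}^{\oplus a_i}$ explicitly.
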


\begin{proof} Note that $M$ and $M'$ are projective over $\nsJ$ by Corollary \ref{yahoo}(b).  By Theorem
\ref{maximalorder}, we can reduce to the case where $M$ and $M'$ are finite projective modules for a matrix algebra $M_n(\sZ^\natural)$
 which become isomorphic over the fraction field $K$. By Morita theory, we can assume $n=1$. (Let 
 $F:\text{mod-}M_m(\sZ^\natural)
 \rightarrow{\text{ mod-}}\sZ^\natural$ be an equivalence of $\sZ^\natural$-categories, e.g., a suitable idempotent
 projection. Note that $F$ induces by base change an equivalence $F_K:{\text {mod-}}M_m(K)\rightarrow \text{mod-}
 K$. For $X,Y\in{\text {mod-}}M_m(\sZ^\natural)$, we clearly have $X\cong Y$ if and only if $F(X)\cong F(Y)$, and $X_K\cong 
 Y_K$ in mod-$M_m(K)$ if and only if $F(X)_K\cong F(Y)_K$ in mod-$K$.)
 By Swan's theorem (see Remark \ref{2.5}(c)), we can assume that $M$ and $M'$ are free $\sZ^\natural$-modules which are isomorphic over $K$. Obviously, this means $M\cong M'$.
\end{proof}
Finally, we have:% the following result.

\begin{cor}\label{irred} If $M$ is a $\nsJ$-module which is finite and projective over $\sZ^\natural$, 
then
\begin{itemize}
\item[(a)] $M$ is a direct sum of $\nsJ$-modules which are projective over $\nsJ$ and which have irreducible base change to $\nsJ_K$-modules. 

\item[(b)] Moreover, $M$ is uniquely determined (up to isomorphism) by the multiplicity of each isomorphism type  of the irreducible summands of the $\nsJ_K$-module $M_K$. \end{itemize}\end{cor}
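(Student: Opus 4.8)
The plan is to reduce Corollary \ref{irred} to the matrix-algebra situation provided by Theorem \ref{maximalorder}(b), where everything becomes elementary. First I would invoke Corollary \ref{yahoo}(b): since $M$ is finite and projective over $\sZ^\natural$, it is automatically projective over $\nsJ$. Next, using the isomorphism $\nsJ\cong\prod_i M_{n_i}(\sZ^\natural)$ from Theorem \ref{maximalorder}(b), the module category of $\nsJ$ decomposes as a product of the module categories of the blocks $M_{n_i}(\sZ^\natural)$; correspondingly $M$ decomposes canonically as $M\cong\bigoplus_i M^{(i)}$, where $M^{(i)}$ is a finite $M_{n_i}(\sZ^\natural)$-module that is projective over $\sZ^\natural$ (hence, again by Corollary \ref{yahoo}(b) or directly, projective over $M_{n_i}(\sZ^\natural)$). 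So it suffices to treat a single block $B=M_n(\sZ^\natural)$.

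For a single block, I would apply Morita equivalence between $M_n(\sZ^\natural)$ and $\sZ^\natural$ (the standard idempotent-projection equivalence $F:\text{mod-}M_n(\sZ^\natural)\overset\sim\to\text{mod-}\sZ^\natural$, exactly as in the proof of Corollary \ref{ress}), which commutes with base change to $K=\mathbb Q(t)$. A finite projective $M_n(\sZ^\natural)$-module $M^{(i)}$ corresponds under $F$ to a finite projective $\sZ^\natural$-module, which by Swan's theorem (Remark \ref{2.5}(c)) is free, say of rank $r_i$. Thus $M^{(i)}\cong P_i^{\oplus r_i}$, where $P_i$ is the unique indecomposable projective of $M_n(\sZ^\natural)$ (the column module $(\sZ^\natural)^n$), whose base change $P_{i,K}\cong K^n$ is the irreducible $M_n(K)$-module. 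This proves part (a): $M$ is a direct sum of $\nsJ$-modules, each projective over $\nsJ$, each with irreducible base change to $\nsJ_K$. For part (b), the multiplicity of the $i$-th irreducible summand in $M_K$ is precisely $r_i$, so the tuple $(r_i)_i$ is determined by $M_K$, and conversely $M\cong\bigoplus_i P_i^{\oplus r_i}$ is determined up to isomorphism by $(r_i)_i$; this is essentially a restatement of Corollary \ref{ress} made constructive. Alternatively, part (b) follows directly from Corollary \ref{ress} applied to $M$ and $M':=\bigoplus_i P_i^{\oplus r_i}$.

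I do not anticipate a serious obstacle here: the heavy lifting has already been done in Theorem \ref{maximalorder} and Corollary \ref{yahoo}, and what remains is a routine unwinding via block decomposition, Morita theory, and Swan freeness. The only point requiring a little care is keeping the base-change compatibility straight—checking that the block decomposition $M\cong\bigoplus_i M^{(i)}$ and the Morita equivalence $F$ both commute with $(-)_K$, so that the rank counts over $\sZ^\natural$ and the multiplicity counts over $K$ agree. This compatibility is immediate because both constructions are given by idempotents in $\nsJ$ (central idempotents for the block decomposition, a matrix idempotent for $F$), and tensoring with $K$ preserves idempotents and their images. If one prefers to avoid even this mild bookkeeping, part (b) can simply be quoted from Corollary \ref{ress} and part (a) established by the block/Morita/Swan chain described above.
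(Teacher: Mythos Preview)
Your proposal is correct and follows essentially the same route as the paper's proof: reduce to a single matrix block via Theorem \ref{maximalorder}, apply Morita equivalence to pass to $\sZ^\natural$, and then invoke Swan's theorem to obtain freeness; part (b) is obtained by reversing this reduction (or, as you note, by appealing to Corollary \ref{ress}). The paper's argument is terser but structurally identical, so there is nothing to add.
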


\begin{proof} We first prove part (a).  By Theorem \ref{maximalorder}, we may replace $\nsJ$ by one of its direct
factors $M_m(\sZ^\natural)$. Using Morita theory, we can even replace $M_m(\sZ^\natural)$ by $\sZ^\natural$.  Once again, applying Swan's theorem, the $\sZ^\natural$-module $M$ must be free, and so clearly is a direct sum of $\sZ^\natural$-modules with irreducible base change to $K$. This proves (a). 

For the proof of (b), the reduction is simply reversed. Further details are left to the reader.\end{proof}

\medskip \noindent
\subsection*{Appendix to Section 4.1: the case of the Ree groups of type $^2\!F_4$} %$\bold{^2F}_{\bold 4}({\bold 2^{{\bold 2{\bold n}+{\bold 1}}}})$.
 The Weyl group $W$ is the dihedral group $I_2(8)$ of order 16, generated by
reflections $s_1$ and $s_2$ with $s_1s_2$ having order 8. The weights are determined from the longest words in each
individual orbit of a graph ``symmetry";  see \cite[pp. 36-38]{C85} or by analogy with the construction \cite[16.1--16.2]{Lus03} formulated in the quasi-split case.  We can assume $L(s_1)=2$ and $L(s_2)=4$ for the values of the  weight function $L$ on fundamental reflections $s_1$, $s_2$. See \cite[pp. 489-490]{C85} for the character degrees of the unipotent characters in the principal series. (Note that the degree associated to $\rho_2$ should be corrected to 
$(1/2) q^4\Phi_8^2\Phi_{24}$.)\footnote{The character degree given in \cite{C85} does not appear in Lusztig's list of unipotent character degrees. The value we have given does appear in \cite[p. 374]{Lus84} but just as a unipotent character
degree. It can be seen to be a generic/principal series degree by direct calculation using \cite[pp. 
270-272]{GP00}.} There are 7 degrees in all, labeled $1,  \epsilon', \epsilon^{\prime\prime}, \epsilon$ (for the
degree 1 representations of ${\mathbb C}W$) and $\rho'_2, \rho_2^{\prime\prime}, \rho_2$ (for the degree 2 representations of ${\mathbb C}W$). The degrees of all the unipotent characters are in terms of polynomial functions, elements of
 $\mathbb Q(\sqrt{2})[q]$, evaluated with $q$ an odd power of $\sqrt{2}$. The generic degrees are obtained by replacing $q$ by the variable
 $t$. From the corrected list, we see that the generic degrees have the form
 \begin{equation}\label{Dchi}
 {\frac{1}{m}}t^a\Phi'_{b_1}(t)\cdots\Phi'_{b_r}(t),
 \end{equation}
 where $a$ is a non-negative integer. Also, $\Phi'_{b_i}(t)$ is a factor in $\mathbb Z[\sqrt{2}][t]$ of the corresponding cyclotomic polynomial $\Phi_{b_i}(t)$ in $\mathbb Z[t]$.  The integer $m$ is equal to 1, 2, or 4.
 
 The expression (\ref{Dchi}) is sufficiently similar to \cite[(4.1.2)]{DPS98a} that analogs of
 \cite[Th. 4.2.1]{DPS98a} and \cite[Th. 4.2.2]{DPS98a} may be proved with parallel arguments. It is only 
 necessary to replace $\mathbb Z$ by $\mathbb Z'=\mathbb Z[\sqrt{2}]$,  $\mathbb Q$ by $\mathbb Q'=\mathbb Q[\sqrt{2}]$, $\sH$ by $\sH'=\sH_{\mathbb Z'[t,t^{-1}]}$, $K$ by $K'=\mathbb Q'(t)=K[\sqrt{2}]$, etc. 
 
 It will also be
 useful to define $\sZ'=\mathbb Z'[t,t^{-1}]=\mathbb Z[\sqrt{2}, t, t^{-1}]$, $ \sZ^{\prime\natural}=\sZ'[1/\sqrt{2},t, t^{-1}]$,
 $\mathbb Z^\natural=\mathbb Z[1/2],$ and $\mathbb Z^{\natural\prime}=\mathbb Z[1/2][1/\sqrt{2}]=
 \mathbb Z[1/\sqrt{2}]$. Finally, defining $\sZ^{\natural\prime}=\mathbb Z^{\natural\prime}[t,t^{-1}]$, we have
 $\sZ^{\natural\prime}=\sZ^{\prime\natural}.$
 
 The only ``bad prime" is $p=2$  in $\mathbb Z$ (or $p=\sqrt{2}$ in $\mathbb Z'$) in the sense of dividing $m$
 in (\ref{Dchi}). Fortunately,
 $\mathbb Z'=\mathbb Z[\sqrt{2}]$ is a principal ideal domain. Using these facts, the argument in the proof of
 Theorem \ref{maxorder} establishing  the intermediate isomorphism (\ref{dogformula}) determines an  analogous
 isomorphism. In the analogy, $\sH'$ replaces $\sH$, and $\sZ^{\prime\natural}$-free $\mathcal J^{\prime\natural}$-modules 
 $L_i'$ replaces $L_i$, for each $i$. Also,  $\mathbb F_p(t)$ is replaced by $\mathbb F'_p(t)$, the residue field
 of $\sZ'_{\mathfrak p}$ (which equals the residue field of  $\sZ^{\prime\natural}_{\mathfrak p}.$) Here,
 $\mathfrak p$ is the ideal generated by $p\in\sZ'$, where $p$ is a prime not dividing $\sqrt{2}$.  With this analog
 of the isomorphism (\ref{dogformula}) in hand, an evident version of the isomorphism (\ref{abstract}) follows for each $\mathfrak p$, using the argument in the proof of Theorem \ref{maxorder}.
 Consequently, each $\mathcal J_{\mathfrak p}^{\prime\natural}$ is a maximal order. Thus, $\mathcal J^{\prime\natural}$, which is clearly reflexive, is also a maximal order, and the map 
  \begin{equation}\label{psianalog} \psi:\mathcal J^{\prime\natural}\to \prod_i\End_{\sZ^{\prime\natural}}(L'_i),\end{equation}
  analogous to (\ref{bigmap}), 
 is an isomorphism of $\sZ^{\prime\natural}$-algebras.  This proves the following theorem, an analog of Theorem
 \ref{maxorder} above, in the Ree group setting
 of this appendix.

 \begin{thm}\label{new8.3} The map $\psi$ in (\ref{psianalog}) is an isomorphism of $\sZ^{\prime\natural}$-algebras.
 \end{thm}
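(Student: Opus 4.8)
\textbf{Proof plan for Theorem \ref{new8.3}.} The strategy is to mimic, essentially verbatim, the proof of Theorem \ref{maxorder}, carrying along the substitutions dictated by the paragraph preceding the statement: replace $\mathbb Z$ by $\mathbb Z'=\mathbb Z[\sqrt 2]$, $\mathbb Q$ by $\mathbb Q'=\mathbb Q[\sqrt 2]$, $\sZ^\natural$ by $\sZ^{\prime\natural}$, $\sH$ by $\sH'$, $\nsJ$ by $\mathcal J^{\prime\natural}$, and each lattice $L_i$ by its primed analog $L_i'$. Since $\psi$ in \eqref{psianalog} is, by construction, the map whose base change to $K'=\mathbb Q'(t)$ is the obvious Wedderburn isomorphism (using that $\sH'_{K'}\cong \mathbb Q'(t)W$ is split semisimple, as $\mathbb Q(\sqrt 2)$ is the minimal splitting field of $I_2(8)$), it is automatically injective. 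As in the unprimed proof, it therefore suffices to show $\mathcal J^{\prime\natural}$ is a maximal $\sZ^{\prime\natural}$-order in $\mathcal J^{\prime\natural}_{K'}$, which by the Auslander--Goldman criterion (\cite[Th. 11.4]{Reiner03}) reduces to (a) reflexivity of $\mathcal J^{\prime\natural}$ and (b) maximality of each localization $\mathcal J^{\prime\natural}_{\mathfrak p}$ at every height-one prime $\mathfrak p$ of $\sZ^{\prime\natural}$.

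For (a), $\mathcal J^{\prime\natural}$ is free over $\sZ^{\prime\natural}$ (it is $\sZ^{\prime\natural}\otimes_{\mathbb Z'}\sfJ'$ with $\sfJ'=\sfJ_{\mathbb Z'}$ free of rank $|W|$), hence reflexive. For (b), I would split into the same two cases. When $\mathfrak p=(p)$ for a rational prime $p\ne 2$, the analog of the isomorphism \eqref{dogformula}, namely $\sH'_{\mathbb F'_p(t)}\cong\prod_i\End_{\mathbb F'_p(t)}((L_i')_{\mathbb F'_p(t)})$, holds by the primed versions of \cite[Th. 4.2.1, 4.2.2]{DPS98a}, which go through with parallel arguments precisely because the generic degree formula \eqref{Dchi} has the same shape as \cite[(4.1.2)]{DPS98a} and because $\mathbb Z'$ is a PID in which $p$ (not dividing $\sqrt 2$) avoids the denominators $m\in\{1,2,4\}$; then the Nakayama-plus-injectivity argument of the unprimed proof forces $\mathcal J^{\prime\natural}_{\mathfrak p}\cong\prod_iM_{n_i}(\sZ^{\prime\natural}_{\mathfrak p})$. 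When $\mathfrak p=(f(t))$ for an irreducible $f(t)\in\mathbb Z'[t]$ of positive degree, $\mathfrak p$ contains no nonzero element of $\mathbb Z'$, so $\mathbb Q'\subseteq\sZ^{\prime\natural}_{\mathfrak p}$, and then the same telescoping tensor-product computation as in the unprimed proof, using $\sfJ'_{\mathbb Q'}\cong\mathbb Q'W\cong\prod_jM_{n_j}(\mathbb Q')$ (the split-semisimplicity of $\mathbb Q(\sqrt2)W$), gives $\mathcal J^{\prime\natural}_{\mathfrak p}\cong\prod_iM_{n_i}(\sZ^{\prime\natural}_{\mathfrak p})$. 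In all cases \cite[Ths. 8.7, 10.5]{Reiner03} then yield maximality of the localization, completing (b) and hence the proof that $\psi$ is an isomorphism.

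The only genuinely nontrivial input is the primed version of \cite[Th. 4.2.1, 4.2.2]{DPS98a}, i.e.\ the Schur-relation / matrix-unit-lifting argument showing $\sH'_{\mathbb F'_p(t)}$ is split semisimple of the expected Wedderburn shape for good primes $p$; I expect this to be the main obstacle, though the excerpt asserts it ``may be proved with parallel arguments,'' so I would simply carry out that translation carefully, checking that $\rho_2$'s corrected degree $(1/2)q^4\Phi_8^2\Phi_{24}$ and the other six degrees all genuinely have denominators in $\{1,2,4\}$ and that no good prime divides any of them. Everything else—reflexivity, the tensor-product telescoping, and the Nakayama argument—is routine once the substitution bookkeeping is set up.
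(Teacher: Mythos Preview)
Your proposal is correct and follows essentially the same approach as the paper, which likewise proves the theorem by transporting the proof of Theorem \ref{maxorder} through the substitutions $\mathbb Z\rightsquigarrow\mathbb Z'$, $\sZ^\natural\rightsquigarrow\sZ^{\prime\natural}$, etc., invoking the Auslander--Goldman criterion and the primed analog of \eqref{dogformula}. One small bookkeeping point: the height-one primes of $\sZ^{\prime\natural}$ lying over an odd rational prime $p$ are generated by prime elements $\pi$ of $\mathbb Z'=\mathbb Z[\sqrt2]$ above $p$ (which may split), not by $p$ itself; this does not affect the argument, and the paper is comparably informal on this point.
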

 
 Each of the endomorphism (``matrix") components of (\ref{psianalog}) is  a maximal order in some $M_n(K')$ by \cite[Th. 8.7]{Reiner03}, since $\sZ^{\prime\natural} $ is integrally closed in $K'=\mathbb Q'(t)$. The result below is a corollary of the above theorem, but given its importance, we upgrade it to a theorem. 
 
  \begin{thm}\label{new8.4} The $\sZ^{\prime\natural}$-algebra $\mathcal J^{\prime\natural}=\mathcal J_{\sZ^{\prime\natural}}$
 is a maximal order in $\sH_{\mathbb Q'(t)}$. %In particular, 
%$\nsJ=\sJ_{\sZ^\natural}$ is isomorphic to a maximal order in $\sH_K$.
 Moreover,
\begin{equation}\label{maximalorder2}
\sJ^{\prime\natural}=\sJ_{\sZ^{\prime\natural}}\cong\prod_iM_{m_1}(\sZ^{\prime\natural})\end{equation}
for a finite sequence $m_1,m_2, \cdots$ of positive integers. 
 
 \end{thm}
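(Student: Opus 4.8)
\textbf{Proof plan for Theorem \ref{new8.4}.} The statement is the $^2F_4$-analog of Theorems \ref{maxorder}, \ref{maximalorder}, \ref{maximalorder1}, so the plan is to extract it from Theorem \ref{new8.3} exactly as those earlier results were extracted from Theorem \ref{maxorder}. First I would observe that $\sJ^{\prime\natural}$ is a full $\sZ^{\prime\natural}$-lattice in $\sJ_{\mathbb Q'(t)}$: indeed $\sJ^{\prime\natural}$ is $\sZ^{\prime\natural}$-free (being the base change of the free $\mathbb Z$-algebra $\sfJ$), and its rank is $|W|=\dim_{\mathbb Q'(t)}\sJ_{\mathbb Q'(t)}$. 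Via $\phi^{-1}_{\mathbb Q'(t)}$ (or, more precisely, the involution-twisted copy $\varpi^{-1}_{\mathbb Q'(t)}$ discussed just below Corollary \ref{irred}), $\sJ_{\mathbb Q'(t)}$ is identified with $\sH_{\mathbb Q'(t)}$, which is semisimple by Remark \ref{Rem7.4}(b) and split, since $\mathbb Q'=\mathbb Q(\sqrt 2)$ is the splitting field of $W=I_2(8)$. Hence a copy of $\sJ^{\prime\natural}$ sits as a $\sZ^{\prime\natural}$-order inside $\sH_{\mathbb Q'(t)}$.

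Next I would invoke Theorem \ref{new8.3}: the map $\psi:\mathcal J^{\prime\natural}\to\prod_i\End_{\sZ^{\prime\natural}}(L'_i)$ is an isomorphism of $\sZ^{\prime\natural}$-algebras, where each $L'_i$ is $\sZ^{\prime\natural}$-free. Choosing $\sZ^{\prime\natural}$-bases of the $L'_i$ gives $\End_{\sZ^{\prime\natural}}(L'_i)\cong M_{m_i}(\sZ^{\prime\natural})$ with $m_i:=\rk_{\sZ^{\prime\natural}}L'_i$, so $\psi$ yields the displayed isomorphism $\sJ^{\prime\natural}\cong\prod_i M_{m_i}(\sZ^{\prime\natural})$, proving the ``Moreover'' clause. (One should note the typo-level point that the subscripts in \eqref{maximalorder2} should read $m_i$, not $m_1$.) For the maximality assertion, since $\sZ^{\prime\natural}$ is a unique factorization domain (by the appendix's remarks, $\mathbb Z^{\natural\prime}=\mathbb Z[1/\sqrt2]$ is a PID, so $\sZ^{\prime\natural}=\mathbb Z[1/\sqrt2][t,t^{-1}]$ is a UFD) it is integrally closed in its fraction field $K'=\mathbb Q'(t)$; therefore each matrix factor $M_{m_i}(\sZ^{\prime\natural})$ is a maximal $\sZ^{\prime\natural}$-order in $M_{m_i}(K')$ by \cite[Th. 8.7]{Reiner03}, and a finite direct product of maximal orders is a maximal order. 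Transporting through the identification $\sJ_{\mathbb Q'(t)}\cong\sH_{\mathbb Q'(t)}$ of the previous paragraph, $\sJ^{\prime\natural}$ is a maximal order in $\sH_{\mathbb Q'(t)}$.

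I do not anticipate a genuine obstacle here, since Theorem \ref{new8.3} already does the real work (that theorem is where the intermediate isomorphism \eqref{dogformula} and the Auslander--Goldman criterion \cite[Th. 11.4]{Reiner03} are used, replacing $\mathbb Z,\mathbb Q$ throughout by $\mathbb Z',\mathbb Q'$). The only mild care point is bookkeeping: one must be consistent about which copy of $\sJ$ inside $\sH_{\mathbb Q'(t)}$ is used, and confirm that the $L'_i$ are genuinely $\sZ^{\prime\natural}$-free rather than merely projective --- but this follows from Swan's theorem (see Remark \ref{2.5}(c)) exactly as in the non-$^2F_4$ case, because $\sZ^{\prime\natural}$ is again regular of Krull dimension $2$. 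With those remarks in place the theorem is immediate.
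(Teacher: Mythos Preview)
Your proposal is correct and follows essentially the same route as the paper: the paper states Theorem~\ref{new8.4} as a corollary of Theorem~\ref{new8.3}, noting just before the statement that each factor $\End_{\sZ^{\prime\natural}}(L'_i)$ is a maximal order in $M_{m_i}(K')$ by \cite[Th.~8.7]{Reiner03} since $\sZ^{\prime\natural}$ is integrally closed in $K'$. Your write-up simply makes explicit the details the paper leaves implicit (why $\sZ^{\prime\natural}$ is integrally closed, why the $L'_i$ are free via Swan, the identification with $\sH_{\mathbb Q'(t)}$, and the typo $m_1\to m_i$).
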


 \begin{rems} \label{moreremarks}(a) Similarly, Corollaries \ref{yahoo}--\ref{irred} above have analogs for $^2F_4$ in the $\sZ^{\natural\prime}=\sZ^{\prime\natural}$-setting.  We leave the details and proofs, which follow the original arguments, to the reader. Theorem \ref{new8.4} above is similarly analogous to Theorem \ref{maximalorder}.  
 
 (b) Also, Corollaries \ref{yahoo} to \ref{irred} and Theorem \ref{maximalorder}(a)
  remain true in the
 $^2F_4$-case, as stated, that is, using $\sZ^\natural:=\mathbb Z[{{1}\over{2}}, t, t^{-1}]$, and $\sJ^{\natural}
 :=\sJ_{\sZ^{\natural}}$. (However, Theorem \ref{maximalorder}(b) is not true without further modification; see display \eqref{decomp} below.)
 The proofs below, in this new setting for the $^2F_4$ case, are somewhat more involved.
 
  To prove Theorem \ref{maximalorder}(a) in this new setting, note that if
 $\nsJ$ were properly contained in a $\sZ^\natural$-order---call it $\sJ^\clubsuit$---then the $\sZ^{\natural\prime}$-order
 $\sJ^\clubsuit_{\sZ^{\natural\prime}}$ would properly contain $\sJ^{\natural\prime}$, contradicting Theorem \ref{new8.4} above.
 
 Next, to prove Corollary \ref{yahoo} in the new setting, observe first that part (b) follows from part (a), using the argument already
 given in the original context.   Part (a) follows from the following faithfully flat descent theorem: 
 \begin{thm}[{\cite[Th. 8.1.20(1)]{F17}}]\label{tjf} Let $R$ be a commutative ring,
 $B$ a finite $R$-algebra. Suppose $S$ is a commutative faithfully flat $R$-algebra such that $B\otimes_RS$ is separable over $S$. Then,
 $B$ is separable over $R$.\end{thm}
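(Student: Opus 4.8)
The plan is to reduce the assertion to the existence of a \emph{separability idempotent} and then to descend that existence along the faithfully flat map $R\to S$, the point being that module-finiteness of $B$ over $R$ makes $B$ a finitely presented module over its enveloping algebra, so that forming separability data commutes with flat base change. Write $B^{\op}$ for the opposite algebra, $B^e:=B\otimes_R B^{\op}$ for the enveloping algebra, and $\mu\colon B^e\to B$, $a\otimes b\mapsto ab$, for the multiplication map; it is surjective and makes $B$ a cyclic left $B^e$-module generated by $1_B$. By definition $B$ is separable over $R$ precisely when $B$ is projective over $B^e$, equivalently (as $\mu$ is onto) when $\mu$ splits as a map of left $B^e$-modules, equivalently when $1_B$ lies in the image of
\[
\mu_*\colon \Hom_{B^e}(B,B^e)\longrightarrow \Hom_{B^e}(B,B)=Z(B),\qquad g\longmapsto \mu\circ g,
\]
where $Z(B)$ is the center of $B$; under the identification $\Hom_{B^e}(B,B^e)\cong\{z\in B^e:(b\otimes1)z=(1\otimes b)z\ \forall b\in B\}$ given by $g\mapsto g(1)$, the map $\mu_*$ is simply $z\mapsto\mu(z)$. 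No finiteness hypothesis is needed for this reformulation.

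First I would observe that $B$ is a finitely presented $B^e$-module. The kernel of $\mu$ is always generated as a left ideal of $B^e$ by the elements $b\otimes1-1\otimes b$ ($b\in B$), and this family is $R$-linear in $b$; hence if $b_1,\dots,b_n$ generate $B$ as an $R$-module, then $\ker\mu$ is generated by $b_1\otimes1-1\otimes b_1,\dots,b_n\otimes1-1\otimes b_n$, so $B=B^e/\ker\mu$ is finitely presented over $B^e$. This is the one step where the module-finiteness of $B$ over $R$ is genuinely used.

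Next I would set up the base-change compatibilities. Put $B_S:=B\otimes_R S$, a module-finite $S$-algebra with $(B_S)^e=(B_S)\otimes_S(B_S)^{\op}\cong B^e\otimes_R S$, and under this identification $\mu_{B_S}$ corresponds to $\mu\otimes_R S$ and $B_S$ to $B\otimes_{B^e}(B^e\otimes_R S)$ as $(B_S)^e$-modules. The ring map $B^e\to B^e\otimes_R S$, being a base change of the faithfully flat map $R\to S$, is again faithfully flat. Since $B$ is finitely presented over $B^e$, $\Hom$ out of $B$ commutes with this flat base change, giving natural isomorphisms
\[
\Hom_{B^e}(B,B^e)\otimes_R S\;\xrightarrow{\ \sim\ }\;\Hom_{(B_S)^e}(B_S,(B_S)^e),\qquad Z(B)\otimes_R S\;\xrightarrow{\ \sim\ }\;Z(B_S),
\]
compatible with $\mu_*$; that is, $\mu_*\otimes_R S$ is identified with $(\mu_{B_S})_*$. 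Now let $M:=\ck(\mu_*)$ and let $\bar1\in M$ be the class of $1_B$. Cokernels commute with $-\otimes_R S$, so $M\otimes_R S\cong\ck\big((\mu_{B_S})_*\big)$ and $\bar1\otimes1$ is the class of $1_{B_S}$. By hypothesis $B_S$ is separable over $S$, so $1_{B_S}\in\operatorname{image}((\mu_{B_S})_*)$; hence $\bar1\otimes1=0$ in $M\otimes_R S$. Since $R\to S$ is faithfully flat, the canonical map $M\to M\otimes_R S$ is injective, so $\bar1=0$, i.e.\ $1_B\in\operatorname{image}(\mu_*)$, and therefore $B$ is separable over $R$.

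The main obstacle is the base-change step: proving that the formation of the module of separability elements $\Hom_{B^e}(B,B^e)$, together with the map $\mu_*$, commutes with the faithfully flat base change. This rests on the finite-presentation fact of the second paragraph, which is exactly where the module-finiteness of $B$ over $R$ cannot be removed; granted that, the concluding descent argument is the routine observation that a relation which becomes trivial after a faithfully flat base change was already trivial (faithfully flat maps being universally injective).
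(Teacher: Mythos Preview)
Your proof is correct. Note, however, that the paper does not supply its own proof of this statement: the theorem is quoted from Ford's book \cite{F17} and used as a black box (in Remark~\ref{moreremarks}(b) to obtain Corollary~\ref{yahoo}(a) in the $^2F_4$ setting, and again at the end of the proof of Theorem~\ref{quasi-hereditary}). Your argument---reformulating separability as $1_B$ lying in the image of $\mu_*\colon\Hom_{B^e}(B,B^e)\to Z(B)$, using module-finiteness of $B$ over $R$ to conclude that $B$ is finitely presented over $B^e$ so that $\Hom_{B^e}(B,-)$ commutes with the flat base change $B^e\to B^e\otimes_R S$, and then descending the hitting of $1_B$ via the pure injectivity $M\hookrightarrow M\otimes_R S$ coming from faithful flatness---is the standard proof of this descent result and is presumably close to what appears in \cite{F17}.
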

 
 For a more elementary proof of Corollary \ref{yahoo}(a),  first show, using the maximality of $\nsJ$ (Theorem \ref{maximalorder}(a)),
  that
  $\sJ^{\natural}$ is a direct product of $1\times 1$ and $2\times 2$ full matrix algebras over $\sZ^{\natural}$, and a
 $2\times 2$ full matrix algebra over $\sZ^{\natural\prime}$:
 \begin{equation}\label{decomp}
 \sJ^{\natural}\cong \sZ^{\natural}\oplus \sZ^{\natural}\oplus\sZ^{\natural}\oplus\sZ^{\natural}\oplus M_2(\sZ^{\natural})
 \oplus M_2(\sZ^{\natural\prime}).\end{equation}
 Here, the factor $M_2(\sZ^{\natural\prime})$ arises, using the field automorphism  $\sqrt{2}\mapsto-\sqrt{2}$, as a
 ``diagonal" $\sZ$-subalgebra of $M_2(\sZ^{\natural\prime})\oplus M_2(\sZ^{\natural\prime})$.
 It suffices to prove the separability of this factor over
 $\sZ^{\natural}$, which reduces to the separability of $\sZ^{\natural \prime}$ over $\sZ^{\natural}$, and then to the
 separability of $\mathbb Z^{\natural\prime}$ over $\mathbb Z^{\natural}$. A splitting of the (bimodule) multiplication surjection $\mathbb Z^{\natural\prime}\otimes_{\mathbb Z^{\natural}}\mathbb Z^{\natural^\prime}\longrightarrow \mathbb Z^{\natural\prime}$ is given by sending $1\in\mathbb Z^{\natural\prime}$ to $(1/4)(\sqrt{2}\otimes\sqrt{2})+ 
 (1/2)(1\otimes 1)$. This proves the needed separability and completes the elementary proof. 
 
 To prove Corollary \ref{ress} in the new setting, note that proving a similar statement
for each component algebra of $\nsJ$ in (\ref{decomp}) is equivalent. The original proof obviously works for all
components except for the $\sZ^{\natural}$-algebra $M_n(\sZ^{\natural\prime})$. Here,
it is necessary and sufficient to show (call it a claim) that  projective modules  $M,M'$ for this component, which are isomorphic
after base change to $K$, are themselves isomorphic. 
If we view $M_n(\sZ^{\natural\prime})$ as a $\sZ^{\natural\prime}$-algebra in this claim, and replace $K$ by $K'$, its validity follows from the proof
of the original Corollary \ref{ress}, in the spirit of (a) above.   However, the module categories
for this matrix component, whether viewed as a $\sZ^{\natural}$-  or $\sZ^{\natural\prime}$-algebra,
are the  same. Moreover, the respective base-change functors,
$K \otimes_{\sZ^{\natural}}(-)$  and $K'\otimes _{\sZ^{\natural \prime}}(-)$ are naturally isomorphic,
even in the larger category of $\sZ^{\natural\prime}$-modules. In view of these identifications,
the cited argument proves the claim, and the  version of Corollary \ref{yahoo} in new setting follows.
 
 Corollary \ref{irred} follows in the new setting from its current proof together with the decomposition of $\sJ^\natural$ given in (\ref{decomp}) above. The above identification of $K-$ and $K'-$base changes for
 $M_n(\sZ^{\natural\prime})$-modules is also useful here.
 \end{rems}
 
Note, too, that specializing $t\mapsto 1$ in the expression (\ref{decomp}) gives an analogous and very precise version of Remark \ref{Corofmaxorder} in the $^2F_4$ case.
 
 Finally, we alert the reader that Theorem \ref{tjf} will be needed in Section 4.3.

 %Similarly, the other results in \S8 have analogues in the setting of this Appendix. Let $\sZ'=\mathbb Z'[t,t^{-1}]$. The
% involution $\dagger$ on $\sH$ extends to an involution, still denoted $\dagger$, on $\sH'$ by base change from
 %$\sZ$ to $\sZ'$. Next, $\sJ':=\sJ_{\mathbb Z'}= \mathbb Z'\otimes_{\mathbb Z}\sJ$.  Also, 
 %$$\sJ_{\sZ'}=\sZ'\otimes_{\mathbb Z'}{\sJ'}\cong\sZ'\otimes_{\mathbb Z}\sJ.$$
 %We can also write $(\sJ_\sZ)'=\sJ_{\sZ'}$. Define
 %$$\begin{cases}
 %\mathbb Z^{\#\prime}={\mathbb Z}[\frac{1}{{2}}][\sqrt{2}]=\mathbb Z[\frac{1}{\sqrt{2}}]\\
%  %\sZ^{\#\prime}= \sZ[\frac{1}{\sqrt{2}}]. \end{cases}
  %$$
%and so $\mathbb Z^{\prime\#}=\mathbb Z^{\#\prime}$, $\sZ^{\prime\#}=\sZ^{\#\prime}$. Next, define
%$$\sJ^{\prime\#}:=\sJ^\prime_{\sZ^{\#\prime}}\cong ({\sJ_{\mathbb Z'})_{\sZ^{\#\prime}}}\cong \sJ_{\sZ^{\prime\#}}.$$
%and put $\sJ^{\#\prime}=\sJ^{\prime\#}.$

%In the discussion of \S8, $\mathbb Z$ is replaced by
%$\mathbb Z'$, $\mathbb Q$ by $\mathbb Q'$, $K$ by $K':=K[\sqrt{2}]$, $\sJ_\sZ$ by $\mathcal K_{\sZ'}$, $\sJ_\sZ$
%by $(\sJ_\sZ)'$, and $\sJ^\#$ by $\sJ^{\prime\#}$.

%\begin{rem}\label{newmaxorder} 

\section{Left cells and the order $\sJ:=\sZ\otimes_{\mathbb Z} \sfJ$} Recall from (\ref{varpi}) that  $\varpi:\sH\rightarrow \sJ$ is the $\sZ$-algebra homomorphism $\varpi=\phi\circ\dagger$, where $\phi:\sH\to\sJ$ is defined in (\ref{phiformula}) and $\dagger:\sH\to\sH$ is defined in  (\ref{daggerdefn}).\index{$\varpi$, the embedding $\sH\to\sJ$} Given
a $\sJ$-module $M$, there is a unique $\sH$-module $M^\varpi=\{m^\varpi\,|\, m\in M\}$ obtained by letting
$\sH$-act on $M$ through $\varpi$, i.e.,  $hm^\varpi=(\varpi(h)m)^\varpi$, for each $h\in\sH$, $m\in M$.  In other words, $M^\varpi$ is the restriction of the $\sJ$-module $M$ to
$\sH$ through $\varpi$. (Throughout this section, modules are taken to be left modules, unless otherwise indicated.)

One approach to understanding the representation theory
of a generic Hecke algebra $\sH$ is to relate it to the representation theory of the  ``simpler"
asymptotic algebra $\sJ$, by means of the inclusion $\varpi:\sH\rightarrow \sJ$.  For
example, as just explained, any
$\sJ$-module $M$ can be restricted to $\sH$, giving rise to a functor $M\mapsto M^\varpi$. 
Conversely, Theorem \ref{cellmodule} below shows that (under the standing hypotheses on $\sH$ of Section 1.2) for certain $\sH$-modules $S$, there exists a unique (up to isomorphism)
$\sJ$-module $V$ such that $V^\varpi\cong S$. In fact, we prove that this is true whenever $S$ is a finite
direct sum of left-cell modules for $\sH$. 

If bad primes are inverted as in Section 4.1, then $\sZ$ becomes $\sZ^\natural$, and $\sJ$ becomes a maximal (and separable) order $\sJ^\natural$ over $\sZ^\natural$.  See Theorem \ref{maximalorder}, and Remark
\ref{moreremarks}(b) in the case $^2F_4$. This implies that every finite and $\sZ^\natural$-projective $\sJ^{\natural}$-module is a direct sum of finite projective $\sJ^\natural$-modules, each of which becomes irreducible upon base change to $\sJ_{\mathbb Q(t)}$. Such a decomposition is unique up to
isomorphism.  Also, because $\sZ^\natural$ is regular of Krull dimension $\leq 2$ (it is dimension 2, in fact), every irreducible $\sJ_{\mathbb Q(t)}$-module is the base change of a $\sJ^{\natural}$-module which
is projective over $\sZ^\natural$. In fact, applying Swan's theorem, any such module is free
over $\sZ^\natural$. In effect, this provides a new refinement of the set of left-cell modules.

\begin{thm}\label{cellmodule} Let $S$ be a left-cell module (or even a finite direct sum of left-cell modules) for $\sH$.  Then, there exists a $\sJ$-module $M$, unique up to isomorphism, such that $S\cong M^\varpi$.  \end{thm}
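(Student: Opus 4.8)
The plan is to establish the theorem in two stages: first an existence statement realizing each left-cell module as a restriction along $\varpi$, then a uniqueness statement. For existence, the idea is to use Lusztig's asymptotic algebra $\sfJ$ (and its base change $\sJ=\sZ\otimes_{\mathbb Z}\sfJ$) as a convenient ``home'' for cell modules. Recall from Lusztig's work that $\sfJ$ decomposes as a direct sum of ``cell pieces'' $\sfJ_{\mathbf c}$ indexed by two-sided cells $\mathbf c$, and that each two-sided cell piece further decomposes, using the distinguished involutions $d\in\mathcal D$ and the structure constants $\gamma_{x,y,z}$, into a product of matrix-like blocks whose left modules correspond naturally to left cells $\omega\subseteq\mathbf c$. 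Concretely, for a left cell $\omega$ the subspace $\sfJ[\omega]:=\bigoplus_{z\in\omega}\mathbb Z\,\sfj_z$ is a left ideal of $\sfJ$ (by the cell-theoretic vanishing of $\gamma_{x,y,z^{-1}}$ unless $z\leq_L y$), hence a $\sfJ$-module, and its base change $\sJ[\omega]$ is a $\sJ$-module, free over $\sZ$. I would then show $\sJ[\omega]^\varpi\cong S(\omega)$ as $\sH$-modules. This identification should follow from the explicit formula \eqref{themap} for $\varpi(\scc_w)=\sum_{z}\sum_{d:a(d)=a(z)}h_{w,d,z}\hat n_z\,\sfj_z$: applying $\varpi(\scc_w)$ to a basis vector $\sfj_{z_0}$ with $z_0\in\omega$ produces, via \eqref{mult}, a $\sZ$-combination of $\sfj_z$ ($z\in\omega$) whose coefficients are governed by the $h$'s and $\gamma$'s, and these are exactly the matrix entries describing the action of $\scc_w$ on the left-cell module $S(\omega)=\sH_{\leq_L\omega}/\sH_{<_L\omega}$ (after the $\dagger$-twist built into $\varpi$, which matches the use of $\scc_w^\dagger$ in $\phi$). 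This is essentially a known compatibility in the equal-parameter case and holds in the present standard finite (unequal parameter) setting because P1--P15 are available. For a finite direct sum $S=\bigoplus_i S(\omega_i)$ one simply takes $M=\bigoplus_i\sJ[\omega_i]$.

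For uniqueness, suppose $M$ and $M'$ are $\sJ$-modules with $M^\varpi\cong (M')^\varpi\cong S$. Since $S$ is a finite direct sum of left-cell modules, it is finite and $\sZ$-free, so $M$ and $M'$ are finite and $\sZ$-free as well (the $\sZ$-module structure is the same before and after restriction along $\varpi$). Base-changing to $K=\mathbb Q(t)$: since $\phi_{\mathbb Q(t)}$ (hence $\varpi_{\mathbb Q(t)}$) is an isomorphism $\sH_K\overset\sim\to\sJ_K$, the $\sJ_K$-modules $M_K$ and $M'_K$ are determined by the $\sH_K$-module $S_K$, so $M_K\cong M'_K$ as $\sJ_K$-modules. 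Now I would pass to $\sZ^\natural$: by Theorem \ref{maximalorder} (and Theorem \ref{new8.4}/Remark \ref{moreremarks}(b) in type $^2F_4$), $\sJ^\natural=\sJ_{\sZ^\natural}$ is a maximal order, isomorphic to a product of matrix algebras over $\sZ^\natural$ (with one $^2F_4$-block over $\sZ^{\natural\prime}$), hence separable; Corollary \ref{irred} then gives that $M^\natural:=\sZ^\natural\otimes_\sZ M$ and $M'^\natural$ are each uniquely determined up to isomorphism by the multiplicities of irreducible summands of their common $K$-base change $M_K\cong M'_K$. Therefore $M^\natural\cong M'^\natural$ as $\sJ^\natural$-modules. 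It then remains to descend this isomorphism from $\sZ^\natural$ back to $\sZ$, i.e., to show that two finite $\sZ$-free $\sJ$-modules that agree over $\sZ^\natural$ (and over each localization of $\sZ$ at a good prime, where the same maximal-order argument applies since inverting that single good prime already makes $\sJ$ a product of matrix algebras) must be isomorphic over $\sZ$ itself.

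The descent step is where I expect the main difficulty, since $\sJ$ over $\sZ$ is \emph{not} maximal at the bad primes, so Corollary \ref{irred} does not apply directly over $\sZ$. The plan is to build the isomorphism by a local-global gluing argument: an isomorphism $M\to M'$ of $\sZ$-modules exists iff $M$ and $M'$ have the same $\sZ$-rank (clear from the $K$-isomorphism), and one must arrange it to be $\sJ$-linear. I would first produce an $\sH$-linear isomorphism $f\colon M^\varpi\to (M')^\varpi$ (given), lift/adjust it so that $f_K$ is $\sJ_K$-linear (possible since $\varpi_K$ is onto), and then show the obstruction to $f$ itself being $\sJ$-linear over $\sZ$ lies in a module that vanishes because $\sJ$, although not maximal, is still an order between $\sH$ and $\sJ^\natural$ and the discrepancy is controlled at the finitely many bad primes, which are fixed integers (not polynomial primes); there one can use that $\sH_{\mathbb F_p}\cong\mathbb F_p W$-type reductions and the explicit block decomposition of $\sfJ$ (e.g. \eqref{decomp} in type $^2F_4$) to check matching directly. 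Alternatively, and perhaps more cleanly, one avoids descent entirely by only ever \emph{constructing} the canonical $M=\bigoplus_i\sJ[\omega_i]$ in the existence half and proving directly that \emph{any} $\sJ$-module $N$ with $N^\varpi\cong S$ must be isomorphic to this canonical one, by exhibiting, from the $\sH$-isomorphism and the surjectivity of $\varpi_K$, an explicit $\sJ$-module map $N\to M$ whose base change to $K$ is an isomorphism and whose source and target are $\sZ$-free of equal rank — then Nakayama at each prime of $\sZ$ (using that $\varpi$ generates enough of $\sJ$ after localization) forces it to be an isomorphism. I will adopt whichever of these makes the bad-prime bookkeeping lightest; the content is the same, and the only genuinely new input beyond the cited results is the explicit matching of the $\varpi$-action on the $\sfj_z$ with the Kazhdan--Lusztig action on $S(\omega)$.
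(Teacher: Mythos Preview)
Your uniqueness argument is dramatically overcomplicated, and the descent step you flag as the ``main difficulty'' is a red herring. The key observation, which the paper isolates as Lemma~\ref{general}, is completely elementary: if $\mathfrak A\subseteq\mathfrak B$ are orders in the same $K$-algebra (here $\mathfrak A=\varpi(\sH)$ and $\mathfrak B=\sJ$, both sitting in $\sJ_K$), then restriction from $\scrK$-torsion-free $\mathfrak B$-modules to $\mathfrak A$-modules is \emph{fully faithful}. Indeed, any $\mathfrak A$-linear map $f\colon M\to M'$ between torsion-free $\mathfrak B$-modules extends to an $\mathfrak A_K=\mathfrak B_K$-linear map $f_K\colon M_K\to M'_K$, and $f$ is just the restriction of $f_K$ to $M\subseteq M_K$, hence already $\mathfrak B$-linear. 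So your given $\sH$-isomorphism $M^\varpi\cong(M')^\varpi$ is \emph{automatically} a $\sJ$-isomorphism $M\cong M'$. No passage to $\sZ^\natural$, no maximal orders, no local-global gluing, no Nakayama---uniqueness is a one-line consequence of torsion-freeness and $\varpi_K$ being an isomorphism. Your second ``alternatively'' paragraph gropes toward this, but buries the point under unnecessary localization.

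For existence, your direct construction of $\sJ[\omega]=\bigoplus_{z\in\omega}\sZ\sfj_z$ and the asserted isomorphism $\sJ[\omega]^\varpi\cong S(\omega)$ is morally right, but you do not actually prove the action matches; ``this is essentially a known compatibility'' is not an argument. The paper takes a slightly different, cleaner route: rather than working cell-by-cell, it uses Lusztig's left $\sJ$-action $*$ on $\sH$ itself (from \cite[18.10]{Lus03}), twists it by $\dagger$ to get an action $\boxdot$, and invokes the precise congruence $h\scc_w\equiv\varpi(h)\boxdot\scc_w$ modulo $(\sH_{\geq a(w)+1})^\dagger$ from \cite[18.10(a)]{Lus03}. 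This exhibits each $a$-function section of $_\sH\sH$ simultaneously as a direct sum of left-cell modules and as the $\varpi$-restriction of a $\sJ$-module. Individual left-cell summands are then extracted using the same full-faithfulness lemma (part (d): idempotents in $\End_\sH$ are already idempotents in $\End_\sJ$). This avoids having to verify your coefficient-matching claim by hand, delegating it to Lusztig's formula.
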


\begin{proof}We use several results from \cite[Ch. 18]{Lus03}. Let $*$ be as defined
in \cite[18.10]{Lus03}; it is a left-module action of  $\sJ$ on $\sH$. Now define a new action $\boxdot$ of $\sJ$ on $\sH$ by putting $j\boxdot h:=(j*h^\dagger)^\dagger$, for $j\in \sJ$ and $h\in\sH$. For each integer $a$, let
\begin{equation}\label{sections}
\sH_a=\oplus_{w; a(w)=a}\sZ\scc^\dagger_w\quad{\text{and}}\quad\sH_{\geq a}=\oplus_{w; a(w)\geq a}\sZ\scc_w^\dagger\end{equation}
as in \cite[18.10]{Lus03}, where $a(-)$ is the Lusztig $a$-function defined in \eqref{afun}; see \cite[13.6]{Lus03}.\index{$a$-function} Lusztig observes that $\sfj_x *c^\dagger_w\in\sH_{a(w)}$, for all $x,w\in W$. 
In particular, $\sH_{\geq a}$ is stable under the $*$ action of $ \sJ$. 
He proves 
\cite[18.10(a)]{Lus03} that
$$h\scc_w^\dagger\equiv \phi(h)*\scc_w^\dagger\,\,{\rm mod} \,\,\sH_{\geq a(w)+1},\quad \forall w\in W.$$
Replacing $h$ by $h^\dagger$, and then applying $\dagger$ to both sides, we get 
\begin{equation}\label{starie}h\scc_w=(\phi(h^\dagger)* \scc^\dagger_w)^\dagger=\varpi(h)\boxdot\scc_w\quad{\rm mod}(\sH_{\geq a(w)+1})^\dagger.\end{equation}
 It follows from Lusztig's observation above that $(\sH_{\geq a})^\dagger$ is stable under the $\boxdot$-action of $ \sJ$,
as is $(\sH_{\geq q})^\dagger/(\sH_{\geq a+1})^\dagger$. Observe that $-a(-)$ is a height function in the sense of
Appendix B compatible with $\leq_{LR}$ by \cite[Ch. 14, {P4}]{Lus03}. By Corollary \ref{3.7a} above, the left $\sH$-module $\sH={_\sH\sH}$ has a height filtration with respect to the (compatible and negatively-valued) height function $-a(-)-1$. Note that $(\sH_{\geq a})^\dagger/(\sH_{\geq a+1})^\dagger$ is a section of
such a filtration. The equation (\ref{starie}) shows that this natural $\sH$-module is also the restriction through $\varpi$ of a module for
$ \sJ$, the latter acting via $\boxdot$ on the same $\sZ$-module. 
Since the $\sH$-module $_{\sH}\sH$
belongs to $\scrA(\scrL)$ using Corollary \ref{3.7a}, its filtration sections are direct sums of left-cell modules. Also, it is clear that every left-cell module
for $\sH$ appears as a direct summand of some such section. The theorem now follows from the elementary
Lemma \ref{general}
below, which proves uniqueness (see  part (c)) as well as completing the proof of existence (see  part (d)---the notation differs slightly).\end{proof} 

Temporarily, let $\scrK$ be any Noetherian integral domain, and let $K$ be its fraction field. The following lemma is inspired by the arguments in \cite[below (8.2), p. 109]{Reiner03}. Note the definition of $\mathfrak B$-mod$^f$ is given in part (a). A similar definition applies for $\mathfrak A$-mod$^f$.  Note also that part (b) will be used in the proof of Theorem \ref{quasi-hereditary}.

\begin{lem} \label{general}  Let $\mathfrak A$ be a finite and torsion-free $\scrK$-algebra and let $\mathfrak B$ be an order in the $K$-algebra $\mathfrak A_K$ containing $\mathfrak A$.  Then,

\begin{itemize}\item[(a)] The restriction  functor $\mathfrak B$-mod$\,^f\rightarrow \mathfrak A$-mod defines a fully faithful $\scrK$-linear functor from the $\scrK$-category 
$\mathfrak B$--mod$\,^f$ of $\scrK$-finite and $\scrK$-torsion-free $\mathfrak B$-modules 
to the $\scrK$-category of $\mathfrak A$-modules.

%(b) The restriction
%functor factors through $\mathfrak A$--mod$\,^f$

\item[(b)]  In particular, for any $M\in \mathfrak B$--mod$\,^f$, we have
$$\End_{\mathfrak B}(M)\cong\End_{\mathfrak A}(M|_{\mathfrak A})$$ as $\scrK$-algebras. Also, 
$M|_{\mathfrak A}\in\mathfrak A$--mod$\,^f$.

\item[(c)] Suppose that $M,M'\in \mathfrak B$-mod$\,^f$, and let $N,N'$ denote the $\mathfrak A$-modules which are their respective restrictions to $\mathfrak A$.  Then, $M\cong M'$ as $\mathfrak B$-modules if and only if $N\cong N'$ as $\mathfrak A$-modules.

\item[(d)] Suppose $M\in \mathfrak B$-mod$\,^f$. Assume that $N:=M|_{\mathfrak A}$ can be written as
a direct sum $N=N_1\oplus N_2$ in $\mathfrak A$-mod. Then, $M=M_1\oplus M_2$ in $\mathfrak B$-mod$^f$, where
$M_i|_{\mathfrak A}\cong N_i$, $i=1,2$. (Also, $N_1,N_2$ are in $\mathfrak A$--mod$^f$.)\end{itemize}\end{lem}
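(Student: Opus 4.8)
\textbf{Proof plan for Lemma \ref{general}.} The core of the lemma is part (a), the full faithfulness of the restriction functor; parts (b), (c), (d) follow from it by standard categorical arguments, so I would prove (a) first and then deduce the rest. For (a), the key observation is that $\mathfrak B\subseteq\mathfrak A_K$ is a full $\scrK$-lattice, so $\mathfrak B_K=\mathfrak A_K$; moreover, if $M\in\mathfrak B\text{-mod}^f$ then $M$ is $\scrK$-torsion-free, hence the natural map $M\to M_K=K\otimes_\scrK M$ is injective, and $M_K$ is naturally a $\mathfrak B_K=\mathfrak A_K$-module. Given $M,M'\in\mathfrak B\text{-mod}^f$ and an $\mathfrak A$-linear map $f\colon M|_{\mathfrak A}\to M'|_{\mathfrak A}$, I would base change to $K$ to get an $\mathfrak A_K$-linear map $f_K\colon M_K\to M'_K$. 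Since $\mathfrak A_K=\mathfrak B_K$, the map $f_K$ is automatically $\mathfrak B_K$-linear; restricting along $\mathfrak B\hookrightarrow\mathfrak B_K$ and using that $M\hookrightarrow M_K$, $M'\hookrightarrow M'_K$ are $\mathfrak B$-submodules, one sees $f_K(M)\subseteq M'$ and $f_K|_M=f$, so $f$ is $\mathfrak B$-linear. This shows the restriction map $\Hom_{\mathfrak B}(M,M')\to\Hom_{\mathfrak A}(M|_{\mathfrak A},M'|_{\mathfrak A})$ is surjective; injectivity is clear since the forgetful functor does not change underlying sets. Hence the functor is fully faithful, and it is evidently $\scrK$-linear and additive.

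For part (b), apply (a) with $M'=M$ to get the ring isomorphism $\End_{\mathfrak B}(M)\cong\End_{\mathfrak A}(M|_{\mathfrak A})$; that $M|_{\mathfrak A}$ is finite and torsion-free over $\scrK$ is immediate, since these are conditions on the underlying $\scrK$-module, which is unchanged by restriction. Part (c): if $M\cong M'$ as $\mathfrak B$-modules then obviously $N\cong N'$. Conversely, an $\mathfrak A$-isomorphism $N\to N'$ lifts, by full faithfulness from (a), to a $\mathfrak B$-module map $M\to M'$; its inverse also lifts, and since the restriction functor is faithful, the two lifts compose to the identities on each side, so $M\cong M'$ as $\mathfrak B$-modules.

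For part (d): a direct sum decomposition $N=N_1\oplus N_2$ in $\mathfrak A$-mod is equivalent to a pair of orthogonal idempotents $e_1,e_2\in\End_{\mathfrak A}(N)$ with $e_1+e_2=\id_N$ and $N_i=e_iN$. By (b), $\End_{\mathfrak A}(N)\cong\End_{\mathfrak B}(M)$, so $e_1,e_2$ correspond to orthogonal idempotents $\tilde e_1,\tilde e_2\in\End_{\mathfrak B}(M)$ summing to $\id_M$; setting $M_i:=\tilde e_iM$ gives $M=M_1\oplus M_2$ in $\mathfrak B$-mod. Since the restriction functor sends $\tilde e_i$ to $e_i$, we get $M_i|_{\mathfrak A}=e_iN=N_i$. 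Finally, $M_i$ is a $\mathfrak B$-submodule of the $\scrK$-finite torsion-free module $M$, hence is itself in $\mathfrak B\text{-mod}^f$, and consequently $N_i=M_i|_{\mathfrak A}$ lies in $\mathfrak A\text{-mod}^f$. The main (and only genuinely delicate) point is the surjectivity argument in part (a)—verifying that an $\mathfrak A$-linear map, after base change to $K$, is automatically $\mathfrak B_K$-linear and then descends to a $\mathfrak B$-linear map on the lattices; everything else is bookkeeping with idempotents and the faithful exact embedding $M\hookrightarrow M_K$.
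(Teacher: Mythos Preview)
Your proposal is correct and follows essentially the same approach as the paper: for (a) you both pass to $K$, use $\mathfrak A_K=\mathfrak B_K$, and identify $\Hom_{\mathfrak B}(M,M')$ and $\Hom_{\mathfrak A}(M,M')$ with the same set of maps in $\Hom_{\mathfrak B_K}(M_K,M'_K)$ carrying $M$ into $M'$; parts (b)--(d) are then deduced from (a) in both cases, with (d) handled via idempotent splitting in $\End_{\mathfrak B}(M)\cong\End_{\mathfrak A}(N)$.
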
   

\begin{proof} We first prove  part (a). Obviously, restriction is a $\scrK$-linear functor.  We have to prove it induces an isomorphism
on Hom-spaces between objects. 
For $M,M'\in\mathfrak B$--mod$\,^f$, note that $\Hom_{\mathfrak B}(M,M')$ 
identifies with the $\scrK$-submodule of $\Hom_{\mathfrak B_K}(M_K,M'_K)$ consisting of the elements of
the latter that send
$M\subseteq M_K$ to $M'\subseteq M'_K$. A similar identification holds for $\Hom_{\mathfrak A}(M|_{\mathfrak A},
M'|_{\mathfrak A})$ in $\Hom_{\mathfrak A_K}(M_K,M'_K)$. Since $\mathfrak B_K=K\mathfrak B=K\mathfrak A=\mathfrak A_K$, the isomorphism required in  part (a) holds.

Part (b) follows from the argument above with $M=M'$ (using the fact that restriction is a functor, hence it induces an algebra homomorphism).  The fact that $M|_\mathfrak A$ belongs to $\mathfrak A$--mod$^f$ is obvious.   Part (c) also follows from a functorial argument using (a). Finally, (d) follows from the functorial properties of  (a) and (b).  
It is also necessary to observe for  part (d) that idempotents split in $\mathfrak B$--mod$^f$. More precisely, in terms of the
argument for (a), if $e\in\End_{\mathfrak B}(M)$ is an idempotent, with $M\in\mathfrak B$--mod$\,^f$, then $eM$
also belongs to $\mathfrak B$--mod$^f$.
\end{proof}

Now return to the setting of Theorem \ref{cellmodule}, but consider modules over $\sZ^\natural$, rather than $\sZ$. For each $E\in\text{Irr}(\mathbb QW)$, let $E_{{\mathbb Q}(t)}$ be the corresponding irreducible
$\mathbb Q(t)W$-module, regarded as a $\sJ_{\mathbb Q(t)}$-module through the isomorphism
$\mathbb Q(t)W\cong \sJ_{\mathbb Q(t)}$. 
 (In the $^2F_4$ case, $E$ and $E_{\mathbb Q(t)}$ may not be absolutely irreducible.)  Thus, by Remark \ref{Corofmaxorder}(2), $E_{\mathbb Q(t)}\cong L_i$, for some $i$.
 Choose a $\sJ^\natural$-lattice $E_\natural$ in  
 $E_{\mathbb Q(t)}$ that is a projective as a $\sZ^\natural$-module
 (thus free by Swan's theorem \cite[first paragraph, p. 111]{Sw78}). For example, for a type other than $^2F_4$, we may choose $E_\natural\cong L_{i,\sZ^\natural}$, for some $i$, which is $\sZ^\natural$-free in the notation of Remark \ref{Corofmaxorder}(2).   
 Since $\sJ^\natural$ is projective over $\sZ^\natural$, the $\sJ^\natural$-module $E_\natural$ is projective over $\sJ^\natural$; see Corollary \ref{yahoo}(b) and its analog 
 in Remark \ref{moreremarks}(b).
 
 We remark that $E_\natural$ is uniquely determined (up to isomorphism) by Corollary \ref{ress} and
 its twisted $^2F_4$ analog in Remark \ref{moreremarks}(b). 
  Define 
 \begin{equation}\label{identify}
 S^\natural(E):=(E_\natural)^{\varpi^\natural}, \quad S^\natural_{E}:=\Hom_{\sZ^\natural}(S^\natural(E),\sZ^\natural),\end{equation}
 which belong to $\sH^\natural$-mod and mod-$\sH^\natural$, respectively.
% \begin{cor} Let $E$ be an irreducible ${\mathbb Q}W$-module. Then
Note that $S^\natural(E)$ and $S^\natural_{E}$  are natural $\sH^\natural$-forms for the $\sH_{{\mathbb Q}[t,t^{-1}]}$-modules $S(E)$ and $S_E$ defined in \cite[\S3]{DPS15}.  Note also that $S^\natural(E)_{\mathbb Q(t)}\cong E_{\mathbb Q(t)}\cong L_i$.

\begin{rem}\label{ss case}
Since $\{S^\natural(E)_K\mid E\in \text{Irr}(\mathbb QW)\}$, where $K=\mathbb Q(t)$, forms a complete set of irreducible left $\sH_K$-modules (see Remark \ref{Corofmaxorder}(2)), it follows that  $\{(S^\natural_E)_K\mid E\in \text{Irr}(\mathbb QW)\}$ is a complete set of irreducible right $\sH_K$-modules. Thus, since every  $(S_E^\natural)_K$ is an irreducible constituent of $T^+_K$, the set $$\{\Hom_{\sH^\natural_K}((S_E^\natural)_K,T^{+\natural}_K)\mid E\in \text{Irr}(\mathbb QW)\}$$ forms a complete set of  irreducible left $A^+_K$-modules. (Note that $A^+_K=\End_{\sH^\natural}(T^{+\natural})_K$. So, $A^+_K$ is split semisimple if $^2F_4$ is excluded.)
Now, if $\Delta(E):=\Hom_{\sH^\natural}(S_E^\natural,T^{+\natural})$, then $\Delta(E)\subseteq
\Hom_{\sH^\natural_K}((S_E^\natural)_K,T^{+\natural}_K)$ is an $A^{+\natural}$-submodule, which is a torsion-free $\sZ^\natural$-module. Hence, $\Delta(E)_K=K\Delta(E)$ is a (nonzero) $A^+_K$-submodule of $\Hom_{\sH^\natural_K}((S_E^\natural)_K,T^{+\natural}_K)$. Irreducibility (treated above) forces 
 $\Delta(E)_K=\Hom_{\sH^\natural_K}((S_E^\natural)_K,T^{+\natural}_K)$.
 % is a simple $A^+_K$-modules. (Note $\Delta(E)_K$ is a submodule of $\Hom_{\sH^\natural_K}((S_E^\natural)_K,T^{+\natural}_K)$ which is irreducible.) Hence, 
  
  Therefore, $\{\Delta(E)_K\mid E\in \text{Irr}(\mathbb QW)\}$ forms a complete set of irreducible left $A^+_K$-modules. To include the $^2F_4$ case, we may simply replace $\sZ^\natural$ by $\sZ^{\natural\prime}=\sZ^\natural[\sqrt2]$, $K$ by $K'$, and $\Delta(E)$ by $\Delta'(E):=\Delta(E)_{\sZ^{\natural\prime}}$ to get a complete set 
$\{\Delta'(E)_{K'}\mid E\in \text{Irr}(\mathbb Q'W)\}$  of irreducible left $A^+_{K'}$-modules. Here, the fields $K'$ , $\mathbb Q'$ are  obtained by adjoining $\sqrt 2$ to $K$, $\mathbb Q$, respectively.
\end{rem}

%In the next two results, we exclude the $^2F_4$ case for convenience. However, that case can be easily handled by replacing
%$\sZ$ by $\sZ'$ and $\mathbb Q$ by $\mathbb Q'$ as in the Appendix to \S8.

\begin{thm}\label{secondcellmodule} (a) Let $N$ be a finite direct sum of left-cell modules for $\sH$, and let $N^\natural
:=N_{\sZ^\natural}$ be the corresponding (base-changed) module for $\sH^\natural$. If $M$ is a direct summand of
the $\sH^\natural$-module $N^\natural$, then $M$ admits a direct sum decomposition
 $M=\oplus_{i=1}^r M_i$ in $\sH^\natural$-mod such that each $M_i\cong S^\natural(E_i)$, for some
$E_i\in\text{\rm Irr}({\mathbb Q}W)$. 
The modules  $M_i$ and their multiplicities in $M$ are determined up to isomorphism by $M_K$, where 
$K=\mathbb Q(t)$.

(b) Assume that $(W,S, L)$ is of type $^2F_4$ (so the base-change notations $(-)^{\natural\prime}$ and $(-)^\prime$ are defined).  Then, (a) also holds after
substituting the top row for the bottom row in the display (see the appendix to Section 4.1):
\begin{equation}\label{array}\begin{matrix} \sH'\, & \sZ^{\natural\prime}\,& \sH^{\natural\prime}\, &\mathbb Q'\, & K^\prime\,& N^{\natural \prime}\\ 
\sH\, & \sZ^\natural\,& \sH^\natural\, &\mathbb Q \, & K\, & N^\natural \end{matrix}\end{equation}
\end{thm}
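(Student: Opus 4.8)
\textbf{Proof plan for Theorem \ref{secondcellmodule}.} The strategy is to reduce both parts to the structure of the maximal order $\nsJ$ (or $\sJ^{\natural\prime}$ in the $^2F_4$ case) established in Section 4.1, via the restriction functor along $\varpi^\natural$. First I would recall from Theorem \ref{cellmodule} and the elementary Lemma \ref{general} that restriction through $\varpi$ identifies $\mathfrak B$-mod$^f$ with a full subcategory of $\mathfrak A$-mod, where here $\mathfrak A=\sH^\natural$ and $\mathfrak B=\nsJ$ (a genuine order in $\sH_{\mathbb Q(t)}$ containing $\sH^\natural$, by the injectivity of $\varpi$ discussed below display \eqref{themap}). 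In particular, any finite direct sum $N^\natural$ of left-cell modules for $\sH^\natural$ arises as $(V)^{\varpi^\natural}$ for a unique (up to isomorphism) $\nsJ$-module $V$ that is finite and torsion-free --- indeed free --- over $\sZ^\natural$; this is exactly the $\sZ^\natural$-analogue of Theorem \ref{cellmodule}, obtained by base change from $\sZ$ to $\sZ^\natural$ (or directly, since $-a(-)-1$ is a compatible negatively-valued height function and the argument of Theorem \ref{cellmodule} applies verbatim over $\sZ^\natural$).

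Next I would use Lemma \ref{general}(c),(d): since $M$ is a direct summand of $N^\natural=V^{\varpi^\natural}$ in $\sH^\natural$-mod and idempotents split in $\nsJ$-mod$^f$, the idempotent $e\in\End_{\sH^\natural}(N^\natural)=\End_{\nsJ}(V)$ (the isomorphism is Lemma \ref{general}(b)) cutting out $M$ already lives in $\End_{\nsJ}(V)$, so $M=W^{\varpi^\natural}$ for a $\nsJ$-summand $W$ of $V$. Now Corollary \ref{irred} applies to $W$: being finite and projective over $\sZ^\natural$, it decomposes as $W=\bigoplus_i W_i$ with each $W_i$ projective over $\nsJ$ and with irreducible base change $(W_i)_K$, and this decomposition (with multiplicities) is uniquely determined by the multiplicities of the irreducible summands of $W_K=M_K$. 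By Remark \ref{Corofmaxorder}(2) every such $(W_i)_K$ is isomorphic to some $L_j\cong E_{\mathbb Q(t)}$, and by Corollary \ref{ress} the $\nsJ$-lattice $W_i$ realizing it is unique up to isomorphism, hence $W_i\cong E_{i,\natural}$ for the chosen lattice $E_{i,\natural}$. Restricting, $M_i:=W_i^{\varpi^\natural}\cong (E_{i,\natural})^{\varpi^\natural}=S^\natural(E_i)$ by the definition \eqref{identify}. The uniqueness of the $M_i$ and their multiplicities in terms of $M_K$ then follows from the corresponding uniqueness in Corollary \ref{irred} together with the faithfulness of restriction (Lemma \ref{general}(c)).

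For part (b), the point is simply that every ingredient used above has a $^2F_4$-analogue already recorded in Section 4.1: Theorem \ref{new8.4} gives that $\sJ^{\natural\prime}=\sJ_{\sZ^{\natural\prime}}$ is a maximal order in $\sH_{\mathbb Q'(t)}$, isomorphic to a product of full matrix algebras over $\sZ^{\natural\prime}$; Remark \ref{moreremarks}(a) supplies the analogues of Corollaries \ref{yahoo}--\ref{irred} in the $\sZ^{\natural\prime}$-setting; and Lemma \ref{general} is stated over an arbitrary Noetherian domain, hence applies with $\scrK=\sZ^{\natural\prime}$, $K'=\mathbb Q'(t)$. So I would run the identical argument with $\sH'$, $\sZ^{\natural\prime}$, $\sH^{\natural\prime}$, $K'$, $N^{\natural\prime}$ in place of the unprimed objects, using $E\in\text{Irr}(\mathbb Q'W)$ and the lattice $E_{\natural\prime}=E_{\natural,\sZ^{\natural\prime}}$; the conclusion $M_i\cong S^{\natural\prime}(E_i)$ follows word for word.

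\textbf{Main obstacle.} The only real subtlety is the first step: verifying that the $\sZ^\natural$-version of Theorem \ref{cellmodule} (existence and uniqueness of a $\nsJ$-module $V$ with $V^{\varpi^\natural}\cong N^\natural$) is legitimate, i.e., that the height-filtration argument of Theorem \ref{cellmodule} --- which invokes Corollary \ref{3.7a} for $_{\sH}\sH\in\scrA(\scrL)$ and the $\boxdot$-action compatibility \eqref{starie} --- descends correctly to $\sZ^\natural$-coefficients and that $\nsJ$ genuinely contains $\sH^\natural$ as an order in $\sH_{\mathbb Q(t)}$ (this last is immediate from $\varpi^\natural$ being a base change of the injective $\varpi$ and from $\nsJ$ being a full $\sZ^\natural$-lattice by Theorem \ref{maximalorder}). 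Everything downstream is then a formal consequence of the maximal-order machinery of Section 4.1 and the purely categorical Lemma \ref{general}, so no new estimates or case analysis are needed beyond what Section 4.1 already provides.
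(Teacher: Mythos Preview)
Your proposal is correct and follows essentially the same route as the paper: lift $N^\natural$ to a $\nsJ$-module via Theorem \ref{cellmodule} (applied over $\sZ$ and then base-changed, so your flagged ``obstacle'' is not an issue), transfer the direct-summand $M$ to the $\nsJ$-side via Lemma \ref{general}(d), and then use the maximal-order structure (Corollaries \ref{yahoo}, \ref{ress}, \ref{irred}) to decompose and identify each piece as some $E_{i,\natural}$, hence $S^\natural(E_i)$ after restriction. The paper's treatment of (b) likewise just cites Remark \ref{moreremarks}(a) for the primed analogues, exactly as you do.
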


\begin{proof} (a) First, we establish some projectivity properties. Note that all left-cell modules for $\sH$ are projective (even free) over $\sZ$.  Thus,  $N$  is projective over $\sZ$, and  both $N^{\natural}$  and its direct summand  $M$  are projective over
 $\sZ^{\natural}$. Consequently, if $M \cong X^{\varpi^\natural}$ (the restriction of $X$ to $\sH^\natural$ through $\varpi_{\sZ^\natural}$) for a given $\sJ^\natural$-module $X$, then $X$ is projective over $\sZ^\natural$. Consequently, $X$ is also projective over
$\sJ^\natural$, by Corollary \ref{yahoo}(b). 

By Theorem \ref{cellmodule}, $N$ is (isomorphic to) the restriction (through $\varpi$) to $\sH$ of a module for $ \sJ$. 
Thus, $N^\natural$ is the restriction to $\sH^\natural$ of a module $Y$ for the order $\nsJ$, and the analogous
statement holds for $M$ by Lemma \ref{general}(d).  (Use $\mathfrak A=\sH^\natural$ and let $\mathfrak B=\varpi_K^{-1}(
\sJ^\natural)$.) %Also, put $X=N^\natural$, $X_1=M,$ and $Y|_{\mathfrak A}=X$.)
Accordingly, let $X$ be a $\sJ^\natural$-module such that  $M$ is the restriction $X^{\varpi^\natural}$, and note that $X$ is projective by the paragraph above. (Alternately, Lemma \ref{general}(d) gives an $X$ which is a direct summand of $Y$, and $Y$ can also be shown to be projective over $\sJ^\natural$.)
 Write
$X_K \cong L_1\oplus\cdots\oplus L_r$, where the $L_i$ are irreducible $\nsJ_K$-modules. Then, $L_i=(L'_i)_K$, for
some $\sJ^\natural$-lattice $L_i'$ in $L_i$. In fact, we may take $L_i\cong (E_i)_{\mathbb Q(t)}$, $E_i\in\text{Irr}(\mathbb QW)$, and $L'_i\cong E_{i\natural}$ as discussed above the theorem. Note also from that discussion that 
$E_{i\natural}$ is projective over $\sJ^\natural$ and restricts though $\varpi^\natural$ to $S^\natural(E_i)$. The direct sum of all of the $E_{i\natural}$ remains projective and has the same base change to $\sJ_K^\natural$ as $X_K$. Using the projectivity of $X$ and Corollary \ref{ress}, we deduce that $X$ is the direct sum of all the
$E_{i\natural}$. Applying the functor $(-)^{\varpi^\natural}$ to $X$ and its summands gives the desired description of $M$ in part (a)---all but the last sentence, which
%Thus, $M_K\cong \oplus_iL_i\cong \oplus S^\natural(E_i)_K$ and $M\cong \oplus_i S^\natural (E_i)$ by Corollary \ref{ress}.The last sentence of part (a) 
is again a consequence of Corollary \ref{ress}.
 The $L'_i$ and their multiplicities are uniquely determined by Corollary \ref{ress} as well. 
 
 This completes the proof of (a), while the proof of (b) is similar, {\it mutatis mutandis}. Remark \ref{moreremarks}(a)
 can be used as a guide.
\end{proof}

\begin{cor}\label{9.4} The following statements hold for $\sH$-mod:
\begin{itemize}
 \item[(a)] Let $N_1,N_2$ be left-cell modules for $\sH$, and suppose $M_1,M_2$ are direct summands of $N_1^\natural,N_2^\natural$, respectively. Then, $M_1\cong M_2$ as $\sH^\natural$-modules if and only if
$(M_1)_{\mathbb Q(t)}\cong (M_2)_{\mathbb Q(t)}$ as $\sH_{\mathbb Q(t)}$-modules.  

\item [(b)] In the type $^2F_4$ case, (a) also holds if changes in notation  analogous to those in Theorem \ref{secondcellmodule}(b) are made. \end{itemize}  \end{cor}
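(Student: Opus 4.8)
\textbf{Proof plan for Corollary \ref{9.4}.} The corollary is a direct transcription of Theorem \ref{secondcellmodule} (and its twisted analogue in part (b)) into the language of isomorphism over $\sZ^\natural$ versus isomorphism after base change to $K=\mathbb Q(t)$; the plan is essentially to extract the "last sentence" of Theorem \ref{secondcellmodule}(a) and package it. For part (a), the forward direction is trivial: if $M_1\cong M_2$ as $\sH^\natural$-modules then applying the base-change functor $K\otimes_{\sZ^\natural}(-)$ gives $(M_1)_K\cong (M_2)_K$ as $\sH_K$-modules. The content is in the converse. Here I would invoke Theorem \ref{secondcellmodule}(a) applied to each $M_i$: since $M_i$ is a direct summand of $N_i^\natural$ with $N_i$ a left-cell module (hence a fortiori a finite direct sum of left-cell modules), each $M_i$ decomposes as $M_i\cong\bigoplus_j S^\natural(E_{i,j})$ for suitable $E_{i,j}\in\text{Irr}(\mathbb QW)$, and the modules $S^\natural(E_{i,j})$ appearing together with their multiplicities are determined up to isomorphism by $(M_i)_K$. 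Thus if $(M_1)_K\cong(M_2)_K$, the two multiset decompositions agree, forcing $M_1\cong M_2$ as $\sH^\natural$-modules.

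To make the "determined by $(M_i)_K$" step airtight I would recall why it holds, since that is where the real input sits: each $S^\natural(E)$ is by construction $(E_\natural)^{\varpi^\natural}$, the restriction through $\varpi^\natural$ of a $\sJ^\natural$-lattice $E_\natural$ in $E_{\mathbb Q(t)}$ that is projective (in fact free) over $\sZ^\natural$; by Corollary \ref{yahoo}(b) (and its $^2F_4$ analogue in Remark \ref{moreremarks}(b)) $E_\natural$ is projective over the maximal order $\sJ^\natural$, and by Corollary \ref{ress} such a $\sJ^\natural$-module is determined up to isomorphism by its base change to $\sJ_K$. Because restriction through $\varpi^\natural$ is fully faithful on $\sZ^\natural$-finite torsion-free modules (Lemma \ref{general}(a)--(c), applied with $\mathfrak A=\sH^\natural$ and $\mathfrak B=\varpi_K^{-1}(\sJ^\natural)$), two such restrictions $S^\natural(E)$, $S^\natural(F)$ are isomorphic as $\sH^\natural$-modules iff $E_\natural\cong F_\natural$ as $\sJ^\natural$-modules iff $E_{\mathbb Q(t)}\cong F_{\mathbb Q(t)}$. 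Combining, the isomorphism class of $M_i$ over $\sH^\natural$ is recovered from $(M_i)_K$ by reading off which irreducible $\sH_K$-constituents $S^\natural(E)_K\cong E_{\mathbb Q(t)}$ occur and with what multiplicity; this is exactly the assertion needed.

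Part (b), the type $^2F_4$ case, follows by the identical argument after the systematic notational substitution indicated in display \eqref{array}: replace $\sZ^\natural$ by $\sZ^{\natural\prime}=\sZ^\natural[\sqrt2]$, $\sH^\natural$ by $\sH^{\natural\prime}$, $K$ by $K'=\mathbb Q'(t)$, $\mathbb Q$ by $\mathbb Q'=\mathbb Q(\sqrt2)$, and $N_i^\natural$ by $N_i^{\natural\prime}$, using Theorem \ref{secondcellmodule}(b) in place of part (a) and Remark \ref{moreremarks}(a),(b) in place of Corollaries \ref{yahoo} and \ref{ress}. I would state this as a one-line "{\it mutatis mutandis}" reduction rather than repeating the argument.

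The only step requiring any care—and thus the main (minor) obstacle—is making precise that $(M_i)_K$, as an abstract $\sH_K$-module, really does pin down the multiset $\{E_{i,j}\}$: one must note that $\sH_K\cong\mathbb Q(t)W$ is split semisimple (outside $^2F_4$), so the $E_{\mathbb Q(t)}$ are pairwise non-isomorphic irreducibles and a multiplicity count is unambiguous, and in the $^2F_4$ case the passage to $K'$ restores splitting so the same counting argument applies. Everything else is bookkeeping with the fully faithful restriction functor of Lemma \ref{general} and the uniqueness statement of Corollary \ref{ress}; there is no new homological input beyond what Theorem \ref{secondcellmodule} already provides.
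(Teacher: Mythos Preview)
Your proposal is correct and follows essentially the same approach as the paper: the paper's proof simply says part (a) is clear from Theorem \ref{secondcellmodule} and its preamble together with the split semisimplicity of $\mathbb Q(t)W$, and part (b) is similar using $\mathbb Q'(t)W$. You have unpacked exactly this argument in more detail, correctly identifying that the last sentence of Theorem \ref{secondcellmodule}(a) (the multiset $\{E_{i,j}\}$ is determined by $(M_i)_K$) carries the weight, and that split semisimplicity is what makes the multiplicity count unambiguous.
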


Part (a)  is clear from the above theorem and its preamble, using the split semsimplicity of $\mathbb Q(t)W$ and
algebras isomorphic to it. %, using Lemma \ref{general}(a). 
Part (b) is similar, replacing $\mathbb Q(t)W$ by $\mathbb Q'(t)W$.
%In fact, one can take $N_1, N_2$ to 
%%finite direct sums of left-cell modules. The proof shows that, if $M$ is a direct summand of $N^\natural$, then the
%isomorphism class of the $\sH^\natural$-module $M$ is determined by that of $\sH_K$-module $M_K$.

\section{The quasi-heredity of $A^{+\natural}$} 
Quasi-hereditary algebras (over a field) were first introduced by CPS (E. Cline and the second and third authors) \cite{CPS88} in order to deal with highest weight categories arising naturally in the representation theory of semisimple Lie algebras and algebraic groups. 
%These algebras and their representations may be regarded as a Lie theoretic approach to the representation theory of finite dimensional algebras. 
Examples of quasi-hereditary algebras include
Schur (or, more generally, $q$-Schur) algebras and Auslander algebras which arise rather naturally. With the maximal order theory developed in the previous sections, we are now ready to prove that the base change from $\sZ$ to $\sZ^\natural$ of the standardly stratified algebra $A^+$ is, in fact, (integrally) quasi-hereditary.

We begin with some terminology about (integral) quasi-hereditary algebras.
A good source for much of this material is \cite[\S2]{DPS18}. 
\begin{defn}\label{QHA}
(1) Let $B$ be an algebra over a commutative Noetherian ring $\scrK$.  Assume that $B$ is projective and finite
over $\scrK$.  An ideal $J$ of $B$ is called a {\it heredity
ideal} in $B$ \index{heredity ideal} if the following four conditions hold:
\begin{enumerate}
\item[(HI1)] $B/J$ is projective over $\scrK$;

\item[(HI2)] $_BJ$ is $B$-projective;

\item[(HI3)] $J^2=J$; and

\item[(HI4)]  The endomorphism algebra $E:=\End_B({_BJ)}$ is $\scrK$-semisimple.\footnote{This means that, for all $\mathfrak p
\in\Spec(\scrK)$, $E({\mathfrak p})$ is a semisimple algebra over the residue field $\scrK({\mathfrak p}):=\scrK_{\mathfrak p}/\mathfrak p\scrK_{\mathfrak p}$. Equivalently,
any short exact sequence of left $E$-modules which is split over $\scrK$, is split over $E$. (See \cite[\S2]{CPS90}.)}
\end{enumerate}

(2) The algebra $B$, still assumed to be projective over $\scrK$, is called a  {\it quasi-hereditary algebra} (QHA) \index{quasi-hereditary algebra}
provided 
there exists a finite ``defining sequence" $$0=J_0\subseteq J_1\subseteq \cdots\subseteq J_m=B$$ of ideals in $B$
such that, for $0<i\leq m$, $J_i/J_{i-1}$ is a heredity ideal in $B/J_{i-1}$. 

(3) If, in addition,  every $J_i/J_{i-1}$ is a heredity ideal of {\it split}  type, \index{heredity ideal! $\sim$ of split type} in the sense that
the $\scrK$-semisimple algebra $\End_{B/J_{i-1}}(J_i/J_{i-1})$ is isomorphic to a direct product of $\End_\scrK(P)$, where $P$ is a finite (faithful) $\scrK$-projective module, we call $B$ a {\it split} quasi-hereditary algebra (or a quasi-hereditary algebra of {\it split} type). \index{quasi-hereditary algebra! split $\sim$} 

Notice that, since $P$ is necessarily a projective generator for $\scrK$-mod, $\End_\scrK(P)$ is Morita equivalent to $\scrK$.
\end{defn}

Clearly, a heredity ideal is a stratifying ideal and every quasi-hereditary algebra is a standardly stratified algebra. The ``split'' quasi-hereditary algebras defined above are quite popular and useful. Some mild variations have been collected in Remark \ref{semisplit}
below.

\begin{rem}\label{semisplit}
(1)
Recall that an algebra $E$ is {\it separable} over $\scrK$, provided the multiplication map $E\otimes_\scrK E^\op \rightarrow E$ splits as an $(E,E^\op)$-bimodule map.  By definition,  $E$ is {\it semisplit} if it is a direct product of algebras, each of which is separable with center $\scrK$ (i.e., an Azumaya algebra).  If the $\scrK$-semisimple algebra $E:=\End_B({_BJ})$ in (HI4) is separable (respectively, semisplit) over $\scrK$, we say that $J$ is of {\it separable} (respectively, {\it semisplit) type} over $\scrK$.  \index{heredity ideal! $\sim$ of separable type} \index{heredity ideal! $\sim$ of semisplit type} 

Then,  if each $J_i/J_{i-1}$ is of separable (respectively, semisplit) type over $\scrK$, we say that the quasi-hereditary algebra $B$ is of {\it separable} (respectively, {\it semisplit}) {\it type} over $\scrK$. See \cite{CPS90} for more details.

(2) Following \cite[Def. 4.1.4]{Ro08}, a heredity ideal $J$ of split type is said to be {\it indecomposable}, or $J$ is an {\it indecomposable split heredity ideal}, \index{heredity ideal! indecomposable split $\sim$} 
if (HI4) is replaced by (HI4)$'$:
\begin{itemize}
\item[(HI4)$'$] The algebra $E:=\End_B({_BJ)}$ is Morita equivalent to $\scrK$.
\end{itemize} 

Rouquier established a very nice relation between indecomposable split heredity ideals $J$ and ``standard (projective) $B$-modules'' $L$ that are $\scrK$-faithful and satisfy one of the equivalent conditions in \cite[Lem. 4.5]{Ro08}:

Given $L$, the coresponding $J$ satisfies the following $B$-$B$-bimodule isomorphism 
\begin{equation}\label{Roiso}
J\cong L\otimes_{\End_B(L)}\Hom_B(L,A),
\end{equation} while, given $J$, the corresponding $L$ satisfies the left $B$-module isomorphism
\begin{equation}\label{CPS}
L\cong J\otimes_{\End_B(J)}P,
\end{equation} where $P$ is a progenerator for $\End_B(J)$ with $\End_{\End_B(J)}(P)\cong\scrK$. 

The construction of $L$ from $J$ in \eqref{CPS} is also given in \cite[p.157]{CPS90}. This, in particular, implies that $\End_B(L)\cong \scrK$.
We will call the isomorphism \eqref{Roiso} {\it Rouquier's isomorphism}. \index{Rouquier's isomorphism}This isomorphism is useful in the construction of a standard basis for the split quasi-hereditary algebra; see Section 5.1.

(3) Note also that the definition of split quasi-hereditary algebras in \cite[4.1.4]{Ro08},  using indecomposable split heredity ideals (\cite[Def. 4.1]{Ro08}) and a poset, is stronger (at least in formulation) than Definition \ref{QHA}(3).

%\index{heredity ideal! split $\sim$}

%If, in addition, each factor is the endomorphism algebra of a finite projective $k$-module, then $E$ is said to be split. The conditions (separable, semisplit, and split) on $E$ each imply $k$-semisimplicity.

%{\color{blue}We remark that quasi-hereditary of {\it separable} or {\it semisplit} type may also be similarly defined. \index{quasi-hereditary algebra! semisplit $\sim$}  \index{quasi-hereditary algebra! separable $\sim$} See \cite{CPS90} for more details.}
\end{rem}

The following result is needed in the proof of the theorem below and is obtained by adapting an argument of Hochster \cite{Ho07} (compare also \cite[Prop.~11, p.23]{Bo72}).
\begin{lem}\label{Hoch} Let $\cH$ be a $\sZ$-algebra which is finite and projective as a $\sZ$-module, and let $M,N$ be finite left $\cH$-modules. If $M$ is a finitely presented $\cH$-module, then there exists a $\sZ^\natural$-module isomorphism
\begin{equation}\label{phiNN}
\phi^{\sZ^\natural,\cH}_{M,N}:\sZ^\natural\otimes_\sZ\Hom_\cH(M,N)\overset\sim\longrightarrow\Hom_{\cH^\natural}(M^\natural,N^\natural),
\end{equation}
defined, for $u\in\sZ^\natural$ and $f\in\Hom_\cH(M,N)$, by
$$\phi^{\sZ^\natural,\cH}_{M,N}(u\otimes f)(v\otimes m)=uvf(m)\;\;(v\in\sZ^\natural, m\in M).$$
Moreover, when $M=N$, $\phi^{\sZ^\natural,\cH}_{N,N}:\sZ^\natural\otimes \End_\cH(N)\to\End_{\cH^\natural}(N^\natural)$ is a natural isomorphism of $\sZ^\natural$-algebras.
\end{lem}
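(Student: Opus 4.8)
The plan is to exhibit the map $\phi^{\sZ^\natural,\cH}_{M,N}$ as a base-change comparison map and to verify it is an isomorphism by a standard presentation argument, exploiting the flatness of $\sZ^\natural$ over $\sZ$. First I would observe that $\sZ^\natural = \sS^{-1}\sZ$ is a localization, hence flat (indeed faithfully flat) as a $\sZ$-module, so that for any $\sZ$-module $V$ we may identify $\sZ^\natural\otimes_\sZ V$ with the localization $V^\natural := \sS^{-1}V$. Under this identification $M^\natural = \sZ^\natural\otimes_\sZ M$ and $N^\natural = \sZ^\natural\otimes_\sZ N$, and the formula in the statement is exactly the canonical $\sZ^\natural$-linear comparison map
\begin{equation*}
\sZ^\natural\otimes_\sZ\Hom_\cH(M,N)\longrightarrow\Hom_{\cH^\natural}(\sZ^\natural\otimes_\sZ M,\,\sZ^\natural\otimes_\sZ N).
\end{equation*}
So the content is: \emph{localization commutes with $\Hom_\cH(M,-)$ when $M$ is finitely presented.}

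The key steps, in order, would be as follows. (1) Choose a finite presentation $\cH^{\oplus b}\overset{\partial}\to\cH^{\oplus a}\to M\to 0$ of $M$ as a left $\cH$-module, available by hypothesis. (2) Apply the left-exact functor $\Hom_\cH(-,N)$ to get an exact sequence $0\to\Hom_\cH(M,N)\to N^{\oplus a}\to N^{\oplus b}$, using $\Hom_\cH(\cH^{\oplus a},N)\cong N^{\oplus a}$ naturally. (3) Tensor with the flat $\sZ$-module $\sZ^\natural$; exactness is preserved, yielding $0\to\sZ^\natural\otimes_\sZ\Hom_\cH(M,N)\to (N^\natural)^{\oplus a}\to(N^\natural)^{\oplus b}$. (4) Independently apply $\Hom_{\cH^\natural}(-,N^\natural)$ to the base-changed presentation $(\cH^\natural)^{\oplus b}\overset{\partial^\natural}\to(\cH^\natural)^{\oplus a}\to M^\natural\to 0$ — which is still right-exact, i.e.\ a presentation of $M^\natural$ — to obtain $0\to\Hom_{\cH^\natural}(M^\natural,N^\natural)\to(N^\natural)^{\oplus a}\to(N^\natural)^{\oplus b}$. (5) Check that $\phi^{\sZ^\natural,\cH}_{M,N}$, together with the identity maps on $(N^\natural)^{\oplus a}$ and $(N^\natural)^{\oplus b}$, gives a commutative ladder between these two exact sequences; this is a direct diagram check from the explicit formula, using that the right-hand vertical maps are the (obvious) isomorphisms $\sZ^\natural\otimes_\sZ N^{\oplus a}\cong (N^\natural)^{\oplus a}$ and that $\partial^\natural$ is the base change of $\partial$. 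Then the five lemma (or just the universal property of kernels) forces $\phi^{\sZ^\natural,\cH}_{M,N}$ to be an isomorphism. For the final clause, when $M=N$ one checks $\phi^{\sZ^\natural,\cH}_{N,N}$ is multiplicative directly from the formula: $\phi(u\otimes f)\circ\phi(u'\otimes f')$ sends $v\otimes m$ to $\phi(u\otimes f)(u'v f'(m)) = u u' v\, f(f'(m))$, which is $\phi(uu'\otimes ff')(v\otimes m)$, matching the product $(u\otimes f)(u'\otimes f')=uu'\otimes ff'$ in the algebra $\sZ^\natural\otimes_\sZ\End_\cH(N)$; it visibly preserves the unit $1\otimes\id_N$, and naturality in $N$ is inherited from naturality of the comparison map.

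I do not expect a serious obstacle here; the only point requiring a little care is the finite-presentation hypothesis on $M$, which is precisely what is needed so that $\Hom_\cH(M,-)$ commutes with the filtered colimit (here, localization) — without it the comparison map need not be injective. In the intended application $\cH$ will be a Hecke-type algebra that is Noetherian (being finite over the Noetherian ring $\sZ$), so every finite $\cH$-module is automatically finitely presented, and this hypothesis is harmless; I would remark on this but not belabor it. A second, even more minor, point is to confirm that $\cH^\natural = \sZ^\natural\otimes_\sZ\cH$ is again finite and projective over $\sZ^\natural$ (immediate, since base change preserves both finiteness and projectivity), so that the right-hand side of \eqref{phiNN} lives in the same categorical setting; this is used only to make sense of the statement and plays no role in the proof itself.
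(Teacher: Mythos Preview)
Your proposal is correct and takes essentially the same approach as the paper: a finite presentation of $M$, flatness of $\sZ^\natural$ over $\sZ$, and a five-lemma/kernel comparison. The only structural difference is that the paper factors $\phi$ as a composite $\nu\circ\psi$ through the intermediate module $\Hom_\cH(M,N^\natural)$ and runs the presentation argument separately for $\psi$ and $\nu$, whereas you do it in one pass; your version is the more streamlined of the two.
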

\begin{proof} Let $\phi=\phi^{\sZ^\natural,\cH}_{M,N}$. 
It is helpful to think of $\phi$ as a composition $\nu\circ\psi$ of two simpler maps $\nu$ and $\psi$. The map 
 $$\psi:\sZ^\natural\otimes_\sZ\Hom_\cH(M,N)\longrightarrow
\Hom_\cH(M,N^\natural)$$
 is given by $\psi(u\otimes f)(m)=uf(m)$ ($u\in\sZ^\natural$, $f\in\Hom_\cH(M,N)$, $m\in M$). We will discuss $\nu$ in a moment, but, for the next two paragraphs, we continue to focus on $\psi$.
  
 We will, however, let $M$ vary. First, take $M={}_\cH\cH$, the left $\cH$-module which is $\cH$ itself. We abbreviate $_\cH\cH$ to $\cH$ when usage is clear from context. There are well-known isomorphisms $\Hom_\cH(\cH,N)\cong N$ (here $N$ could be any left $\cH$-module) in which $f\in\Hom_\cH(\cH,N)$ corresponds to $f(1)\in N$. Equivalently, $n\in N$ corresponds to $f_n\in\Hom_\cH(\cH,N)$, where $f_n(h)=hn$ ($h\in\cH$). Using these isomorphisms and correspondences, we obtain a commutative diagram 
 $$\begin{CD}
 \sZ^\natural\otimes N @>=>> N^\natural\\ 
 @VVV @AAA \\
\sZ^\natural\otimes\Hom_\cH(\cH,N) @>\psi>> \Hom_\cH(\cH,N^\natural).\end{CD}
$$
with the top arrow equality and each vertical map an isomorphism. As a consequence, the map $\psi$ is an isomorphism, under the $M={}_\cH\cH$ hypothesis. This carries over to the weaker assumption that $M$ is a direct sum of finitely many copies of the $\cH$-modules $_\cH\cH$. Finally, $\psi$ is an isomorphism whenever $M$ is a finitely presented over $\cH$.

We sketch the argument for this last assertion, which follows the spirit of \cite{Ho07}: Let $E\to G\to M\to 0$ be a presentation of $M$ in terms of finite direct sums $E$ and $G$ of the $\cH$-modules $_\cH\cH$. Use it to form the columns of the diagram below
$$\begin{CD}
\sZ^\natural\otimes\Hom_\cH(E,N) @>>> \Hom_\cH(E,N^\natural)\\ 
 @AAA @AAA \\
\sZ^\natural\otimes\Hom_\cH(G,N) @>>> \Hom_\cH(G,N^\natural)\\
@AAA@AAA\\
\sZ^\natural\otimes\Hom_\cH(M,N)@>>> \Hom_\cH(M,N^\natural)\\
@AAA@AAA\\
0@.0
\end{CD}
$$
The columns are exact, by virtue of partial exactness properties of $\Hom_\cH$, together with flatness of $\sZ^\natural$ over  $\sZ$. The bottom row is $\psi$ and the other two rows are  the versions of $\psi$ obtained by using $G$ and $E$, respectively, in the role of $M$. The squares are 
both commutative, as can be checked with the formulas for $\psi$ which apply in each row. Finally, since the top two rows are isomorphisms, we conclude that the bottom row is an isomorphism (by the 5-Lemma), as asserted.  

We now return to the map $\nu:\Hom_\cH(M,N^\natural)\to\Hom_{\cH^\natural}(M^\natural,N^\natural)$ and define it as follows. First, define 
$$\tilde \nu: \Hom_\sZ(M,N^\natural)\longrightarrow\Hom_{\sZ^\natural}(M^\natural,N^\natural)$$
 on $f\in \Hom_\sZ(M,N^\natural)$ by $\tilde\nu(f)(v\otimes f)=vf(m)$\;\;($m\in M,v\in\sZ^\natural$). From this formula, it is easily seen that $\tilde\nu(f)\in \Hom_{\sZ^\natural}(M^\natural,N^\natural)$. Moreover, if $f$ belongs to the Hom space $\Hom_\cH(M,N^\natural)$, then $\tilde\nu(f)$ belongs to $\Hom_{\cH^\natural}(M^\natural,N^\natural)$. Starting from $f\in\Hom_{\cH}(M,N^\natural)$, we have $hf(m)=f(hm)$, for $h\in\cH,m\in M$. If $v\in\sZ^\natural$, we have
$$h\tilde\nu(f)(v\otimes m)=hvf(m)=vhf(m)=vf(hm)=\tilde\nu(f)(v\otimes hm)=\tilde\nu(f)h(v\otimes m).$$
Thus, $\tilde\nu(f)\in\Hom_{\cH}(M^\natural,N^\natural)$. We already know that $\tilde\nu(f)\in\Hom_{\sZ^\natural}(M^\natural,N^\natural)$, so now we have $\tilde\nu(f)\in\Hom_{\sZ^\natural\cH}(M^\natural,N^\natural)=\Hom_{\cH^\natural}(M^\natural,N^\natural)$. At this point, we can (and do) define $\nu$ to be the restriction of $\tilde\nu$ to $\Hom_\cH(M,N^\natural)$:
$$ \nu: \Hom_\cH(M,N^\natural)\longrightarrow\Hom_{\cH^\natural}(M^\natural,N^\natural).$$

Our next aim is to show that $\nu$ is an isomorphism, assuming $M$ is finitely presented (as we assumed when dealing with $\psi$). As before, the case $M={}_\cH\cH$ (which we abbreviate to $\cH$) leads to a standard identification $\Hom_\cH(\cH,N^\natural)\cong N^\natural$, and there is a similar identification $\Hom_{\cH^\natural}(\cH^\natural,N^\natural)\cong N^\natural$. A calculation, much as we carried out for $\psi$, leads to a commutative diagram
 $$\begin{CD}
  N^\natural @>=>> N^\natural\\ 
 @VVV @VVV \\
\Hom_\cH(\cH,N^\natural) @>\nu>> \Hom_{\cH^\natural}(\cH^\natural,N^\natural).\end{CD}
$$
We provide a few more details about the diagram and its commutativity. Here, the two vertical maps are ``standard'', sending $n^\natural\in N^\natural$ to the unique function in the Hom space drawn below it, and taking the value $n^\natural$ at 1. We let $f_{n^\natural}\in\Hom_\cH(\cH,N^\natural)$ be the unique function on the left, and $f_{n^\natural}^\natural\in \Hom_{\cH^\natural}(\cH^\natural,N^\natural)$ on the right (keeping $n^\natural$ the same). Then, $\nu(f_{n^\natural})(1\otimes 1)=f_{n^\natural}(1)=n^\natural$. Consequently, $\nu(f_{n^\natural})=f_{n^\natural}^\natural$, and the diagram (which has equality as its top row, and the map $\nu$ as its bottom row) is now shown to be commutative.
Since the columns are isomorphisms, as is the top row, it follows that the bottom row $\nu$ is also an isomorphism. 

We can also use a similar argument to show that $\nu$ is an isomorphism when $M$ is a direct sum $\cH^{\oplus r}$ of finitely many copies of $\cH$ (say $r$ of them). In the diagram used for $M=\cH$, replace the variable $\cH$ inside the left ``Hom'' term with $\cH^{\oplus r}$ and replace $\cH^\natural$ inside the right ``Hom'' term with $(\cH^\natural)^{\oplus r}$. Both terms $N^\natural$ in the top row are replaced by $(N^\natural)^{\oplus r}$. Remaining details, showing $\nu$ is an isomorphism in this $M=\cH^{\oplus r}$ case, are left to the reader.

We can now show that $\nu$ is an isomorphism whenever the associated $\cH$-module $M$ has a presentation $E\to G\to M\to 0$ by finite direct sums $G$ and $E$ of copies of $\cH$. The relevant diagram is displayed below.
$$\begin{CD}
\Hom_\cH(E,N^\natural) @>>> \Hom_{\cH^\natural}(E^\natural,N^\natural)\\ 
 @AAA @AAA \\
\Hom_\cH(G,N^\natural) @>>> \Hom_{\cH^\natural}(G^\natural,N^\natural)\\
@AAA@AAA\\
\Hom_\cH(M,N^\natural)@>>> \Hom_{\cH^\natural}(M^\natural,N^\natural)\\
@AAA@AAA\\
0@.0
\end{CD}
$$
The right column is obtained by applying $\sZ^\natural\otimes(-)$ and, then, $\Hom_{\cH^\natural}(-,N^\natural)$ to $E\to G\to M\to 0$. The left column is obtained similarly,  omitting the $\sZ^\natural\otimes(-)$ step. Both columns are exact. The rows are given by the maps $\nu$ associated with $M,G,E$ on their respective rows. By our previous discussion, this implies that the top two rows are isomorphisms. Both squares are commutative, as can be checked with the formulas for $\nu$ that apply. By the 5-Lemma, the bottom row $\nu$ is also an isomorphism.

It now follows that the composite 
$$\phi=\nu\circ\psi: \sZ^\natural\otimes_\sZ\Hom_\cH(M,N)\overset\sim\longrightarrow\Hom_{\cH^\natural}(M^\natural,N^\natural)$$ is an isomorphism, proving the first assertion of the lemma. 

We now assume $M=N$ and prove the last assertion for 
$\phi=\phi_{N,N}^{\sZ^\natural,\cH}$. We begin by showing that $\phi$, as a $\sZ^\natural$-linear map, induces a natural homomorphism of $\sZ^\natural$-modules between its domain $\sZ^\natural\otimes\Hom_\cH(N,N)$ and its image $\Hom_{\cH^\natural}(N^\natural,N^\natural)$. More precisely, for any $u\in\sZ^\natural$, we show that $\phi(u\otimes 1_N)$ is an element of 
$\Hom_{\cH^\natural}(M^\natural,N^\natural)=\Hom_{\cH^\natural}(\sZ^\natural\otimes N,\sZ^\natural\otimes N)$ that acts on $\sZ^\natural\otimes N$ via multiplication by $u\otimes 1_N$: If $v\in\sZ^\natural$ and $n\in N$, then
$$\phi(u\otimes 1_N)(v\otimes n)=uvn=(u\otimes 1_N)(v\otimes n),$$
proving the naturality assertion.
Since $v$ and $n$ were arbitrary, this equation proves the assertion regarding $\phi(u\otimes 1_N)$. 

To establish the isomorphism of $\sZ^\natural$-algebras, we need only show that $\phi$ preserves multiplication. Let $u_1,u_2\in\sZ^\natural$ and $f_1,f_2\in\Hom_\cH(N,N)$. Then, for $v\in\sZ^\natural$ and $n\in N$, $\phi((u_1\otimes f_1)(u_2\otimes f_2))(v\otimes n)=\phi(u_1u_2\otimes f_1f_2)(v\otimes n)=u_1u_2vf_1f_2(n)$, and 
$\phi(u_1\otimes f_1)\cdot\phi(u_2\otimes f_2)(v\otimes n)=\phi(u_1\otimes f_1)(u_2v\otimes f_2(n))=u_1u_2vf_1f_2(n)$, as desired. This completes our arguments for $\phi=\phi_{N,N}^{\sZ^\natural,\cH}$ and, hence, the proof of the lemma. 
\end{proof}

We now prove
a result which strengthens Theorem \ref{SSAA+} upon localizing $\sZ$ away from bad primes. 
In the $^2F_4$ case,
we take $2$ as the only bad prime (thus, $\sZ^\natural:=\sZ [1/2]$ in this case). We maintain the notation of Theorem \ref{4.4}, so, in particular,
$A^+=\End_H(T^+)$ is a certain endomorphism algebra over $\sZ$ which has a stratifying system $\{\Delta(\omega),
P(\omega)\}_{\omega\in\Omega}$. Now, base change to $\sZ^\natural$ or $\sZ^{\natural\prime} :=\sZ^\natural [\sqrt 2]$, and set $A^{+\natural}:=\sZ^\natural\otimes
A^{+}$, $A^{+\natural\prime}=\sZ^{\natural\prime}
\otimes A^+$. %  where $\sZ^{\natural\prime} :=\sZ^\natural [\sqrt 2].$ 

\begin{thm}\label{quasi-hereditary} %\begin{itemize}
{\rm(a)} The algebra $A^{+\natural}$ over $\sZ^\natural$ is quasi-hereditary of separable type in all cases.

{\rm(b)} Except in the case of 
 $^2F_4$, the algebra $A^{+\natural}$ over $\sZ^\natural$ is quasi-hereditary of split type. 
 
{\rm(c)} In the $^2F_4$ case, the algebra $A^{+\natural\prime}$ over $\sZ^{\natural\prime}$ is quasi-hereditary of split type.
%\end{itemize}
\end{thm}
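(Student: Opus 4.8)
\textbf{Proof proposal for Theorem \ref{quasi-hereditary}.}

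The plan is to leverage Theorem \ref{SSAA+}, which already supplies, after base change to $\sZ^\natural$ (resp.\ $\sZ^{\natural\prime}$), a defining sequence of ideals \eqref{JJseq} for $A^{+\natural}$ whose successive quotients $J^\natural := J_i^\natural/J_{i-1}^\natural$ satisfy (HI1), (HI2), (HI3) as ideals of $B^\natural := A^{+\natural}/J_{i-1}^\natural$. So the entire content of the theorem reduces to checking condition (HI4)---and, for parts (b) and (c), the stronger split/semisplit condition---namely that $E := \End_{B^\natural}(J^\natural)$ is $\sZ^\natural$-separable (resp.\ a product of full endomorphism algebras $\End_{\sZ^\natural}(P)$ of finite projective modules). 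First I would recall, from the proof of Theorem \ref{SSAA+}, that $J^\natural = A^+_{j(i)}{}^\natural/A^+_{j(i)+1}{}^\natural$ is, as a left $B^\natural$-module, a direct sum of copies of the modules $\Delta(\omega)^\natural = \Hom_\sH(S_\omega, T^+)^\natural$ with $\htt(\omega) = j(i)$, each of which is projective over $B^\natural$ by the (HI2)-analysis there. Using Lemma \ref{Hoch} (with $\cH = \sH$, $M = S_\omega$, $N = T^+$), together with the base-change compatibility of stratifying systems and the isomorphism $A^{+\natural} = \End_{\sH^\natural}(T^{+\natural})$, I would identify $\Delta(\omega)^\natural$ with $\Hom_{\sH^\natural}(S_\omega^\natural, T^{+\natural})$, so that $E$ becomes a Hom/End algebra expressible purely in terms of $\sH^\natural$-modules.

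Next comes the key move: the modules $S_\omega^\natural$ are dual left-cell modules for $\sH^\natural$, and by Theorem \ref{secondcellmodule} (and Corollary \ref{9.4}) each $S_\omega^\natural$---or more precisely each direct summand occurring in the relevant $T^+_\tau$---decomposes over $\sH^\natural$ into pieces $S^\natural_E$, $E \in \mathrm{Irr}(\mathbb Q W)$ (resp.\ $\mathrm{Irr}(\mathbb Q' W)$ in the $^2F_4$ case), where each $S^\natural_E$ arises by restriction through $\varpi^\natural$ from a projective $\sJ^\natural$-module $E_\natural \cong L_{i,\sZ^\natural}$ which is $\sZ^\natural$-free and becomes \emph{irreducible} over $\sJ_{\mathbb Q(t)}$. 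By Lemma \ref{general}(b), $\End_{\sH^\natural}$ of such a restricted module equals $\End_{\sJ^\natural}$ of the $\sJ^\natural$-module, and Hom-spaces between distinct ones are likewise computed over $\sJ^\natural$. Since $\sJ^\natural \cong \prod_i M_{n_i}(\sZ^\natural)$ by Theorem \ref{maximalorder} (resp.\ by Theorem \ref{new8.4} and the decomposition \eqref{decomp} in the $^2F_4$ case), Schur's lemma over a matrix algebra gives $\End_{\sJ^\natural}(E_\natural) \cong \sZ^\natural$ (resp.\ $\sZ^{\natural\prime}$ for the one twisted factor) and $\Hom_{\sJ^\natural}$ between non-isomorphic irreducibles vanishes after base change to $\mathbb Q(t)$, hence is a torsion $\sZ^\natural$-submodule of a torsion-free module, hence zero. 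Assembling these computations, $E = \End_{B^\natural}(J^\natural)$ will be a direct product of matrix algebras $M_r(\sZ^\natural)$ (the $r$'s being the multiplicities of the various $\Delta(\omega)^\natural$), except possibly for one factor of the form $M_r(\sZ^{\natural\prime})$ in the $^2F_4$ case; this is exactly $\sZ^\natural$-separable (giving (a)), and of split type when $\sZ^{\natural\prime} = \sZ^\natural$, i.e.\ outside $^2F_4$ (giving (b)), while working over $\sZ^{\natural\prime}$ throughout in the $^2F_4$ case makes every factor $M_r(\sZ^{\natural\prime})$, giving split type and hence (c).

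The separability claim in part (a) for the genuine $^2F_4$ factor $M_r(\sZ^{\natural\prime})$ viewed as a $\sZ^\natural$-algebra is handled as in Remark \ref{moreremarks}(b): separability is Morita-invariant and reduces to the separability of $\sZ^{\natural\prime}$ over $\sZ^\natural$, which reduces to that of $\mathbb Z^{\natural\prime} = \mathbb Z[1/\sqrt2]$ over $\mathbb Z^\natural = \mathbb Z[1/2]$, for which the explicit splitting $1 \mapsto \tfrac14(\sqrt2 \otimes \sqrt2) + \tfrac12(1\otimes 1)$ of the multiplication map works; alternatively one invokes the faithfully flat descent Theorem \ref{tjf}. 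The main obstacle I anticipate is bookkeeping rather than conceptual: one must verify carefully that the defining sequence \eqref{JJseq} produced in Theorem \ref{SSAA+} is compatible with the cell-module decomposition of Theorem \ref{secondcellmodule} layer by layer---i.e.\ that passing to the quotient $B^\natural = A^{+\natural}/J_{i-1}^\natural$ does not disturb the identification of $J^\natural$ with a sum of $\Delta(\omega)^\natural$'s and the ensuing End-computation---and that the multiplicities and the single twisted factor are tracked correctly through the $^2F_4$ notational substitutions in the array \eqref{array}. Once these identifications are in place, conditions (HI1)--(HI3) are inherited verbatim from Theorem \ref{SSAA+} and (HI4) (with its split/separable refinements) follows from the maximal-order structure of $\sJ^\natural$, completing all three parts.
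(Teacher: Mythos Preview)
Your overall strategy matches the paper's: reduce to checking (HI4) on the sections $J^\natural$ of the defining sequence from Theorem \ref{SSAA+}, lift the relevant cell modules to $\sJ^\natural$ via Theorem \ref{cellmodule}, and use the maximal-order structure of $\sJ^\natural$ (Theorems \ref{maximalorder}, \ref{new8.4}) together with Lemma \ref{general}(b) to conclude that the endomorphism algebra is a product of matrix rings over $\sZ^\natural$ (resp.\ $\sZ^{\natural\prime}$). Your treatment of the $^2F_4$ case and the separability descent via Theorem \ref{tjf} is also in line with the paper.

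However, there is a genuine gap at the step you describe as making ``$E$ \dots\ expressible purely in terms of $\sH^\natural$-modules.'' Knowing that $\Delta(\omega)^\natural=\Hom_{\sH^\natural}(S_\omega^\natural,T^{+\natural})$ does \emph{not} by itself give you $\Hom_{A^{+\natural}}(\Delta(\omega)^\natural,\Delta(\omega')^\natural)$ in terms of $\Hom_{\sH^\natural}(S_{\omega'}^\natural,S_\omega^\natural)$: the contravariant functor $(-)^\diamond=\Hom_\sH(-,T^+)$ is not fully faithful on arbitrary objects, only on summands of $T^+$, and the $S_\omega$ are not summands of $T^+$. The paper devotes a substantial portion of the proof to establishing precisely this identification. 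For $\lambda,\mu$ of the same height it shows
\[
\Hom_\sH(S(\lambda),S(\mu))\;\cong\;\Hom_{A^+}(\Delta(\lambda),\Delta(\mu))
\]
via a chain of three nontrivial isomorphisms: (i) $\Hom_\sH(S_\mu,T^+_\lambda)\cong\Hom_{A^+}(P(\lambda),\Delta(\mu))$, extracted from the proof of Theorem \ref{thm2.5}; (ii) $\Hom_{A^+}(P(\lambda),\Delta(\mu))\cong\Hom_{A^+}(\Delta(\lambda),\Delta(\mu))$, using axioms (SS1) and (SS3) to kill contributions from the kernel of $P(\lambda)\twoheadrightarrow\Delta(\lambda)$; and (iii) $\Hom_\sH(S_\mu,S_\lambda)\cong\Hom_\sH(S_\mu,T^+_\lambda)$, proved by base-changing to $\mathbb Q(t)$ to see that $\Hom_\sH(S_\mu,T^+_\lambda/S_\lambda)$ is both torsion-free and zero after $\otimes\,\mathbb Q(t)$. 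Only after assembling these does one obtain the algebra isomorphism $\End_{B}(J)\cong\End_\sH(N)$ of display \eqref{NvzJ}, with $N$ the direct sum of \emph{left}-cell modules $S(\omega)$ (not dual ones), and then Lemma \ref{Hoch} transports this to $\sZ^\natural$. What you call ``bookkeeping'' is in fact the conceptual core of the argument; without it, the decomposition of $S_\omega^\natural$ provided by Theorem \ref{secondcellmodule} has no bearing on $\End_{B^\natural}(J^\natural)$.
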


\begin{proof} First, we prove (b). Thus, we are not in the case of $^2F_4$.  It has already been shown in Theorem \ref{4.4} that $A^+$ has a stratifying system $\{\Delta(\omega),P(\omega)\}_{\omega\in\Omega}$, where
$\Delta(\omega)=\Hom_\sH(S_\omega,T^+)$ and $P(\omega)=\Hom_\sH(T_\omega^+,T^+)$. Likewise,
Theorem \ref{SSAA+} gives us that $A^+$ is standardly stratified with the standard stratification
$$0=J_0\subseteq J_1\subseteq\cdots\subseteq J_m=A^+,$$
where $J_i=A^+_{j(i)}$ is an idempotent ideal, which is the image of the submodule $P_{j(i)}:=\text{trace}_P\big(\bigoplus_{\htt(\mu)\geq j(i)}P(\mu)\big)$ of $P=\oplus_{\omega\in\Omega}P(\omega)$ (see \eqref{P_j}) under the left $A^+$-module isomorphism $A^+\cong P$. Thus, each $J_i/J_{i-1}$ is an ideal of $A^+/J_{i-1}$ satisfying (HI1)--(HI3).

It remains to study (HI4) under the base change to $\sZ^\natural$ from $\sZ$, but we will not do that yet. Instead, we study some further interesting properties that do not require base change.

The first is the isomorphism
$$\Hom_{\sH}(S_\mu,T^+_\la)\overset\sim\longrightarrow\Hom_{A^+}(P(\la),\Delta(\mu)),$$
obtained (in greater generality) at the beginning of the proof of Theorem \ref{thm2.5}. Restricting $\la$ and $\mu$ to having the same height, we quickly find that $\Hom_{A^+}(P(\la),\Delta(\mu))\cong \Hom_{A^+}(\Delta(\la),\Delta(\mu))$. (Use (SS3) and (SS1).) Next, we observe an isomorphism $\Hom_{\sH}(S_\mu,S_\la)\cong\Hom_{\sH}(S_\mu,T^+_\la)$ from the exact sequence $0\to S_\la\to T^+_\la\to T^+_\la/S_\la\to0$ in mod-$\sH$ and the vanishing of $\sZ$-torsion-free module $\Hom_\sH(S_\mu,T^+_\la/S_\la)$. (Base change from $\sZ$ to $\mathbb Q(t)$ to get the vanishing. Then, observe that the kernel of the restricting map $\Hom_\sH(S_\mu,T^+_\la)\to\Hom_\sH(S_\mu,T^+_\la/S_\la)=0$ is the image of the injective map $\Hom_\sH(S_\mu,S_\la)\to\Hom_\sH(S_\mu,T^+_\la)$, which is, therefore, both injective and surjective.) Putting together isomorphisms and inverses of isomorphisms (which are also isomorphisms), 
we obtain the ($\sZ$-linear) composite isomorphism, for $\la,\mu$, of the same height:
$$\Hom_\sH(S_\mu,S_\la)\overset\sim\to \Hom_\sH(S_\mu,T^+_\la)\overset\sim\to \Hom_{A^+}(P(\la),\Delta(\mu)) \cong\Hom_{A^+}(\Delta(\la),\Delta(\mu)).$$
%$$ \Hom_{A^+}(\Delta(\la),\Delta(\mu))\cong  \Hom_{A^+}(P(\la),\Delta(\mu))\cong \Hom_\sH(S_\mu,T^+_\la)\cong \Hom_\sH(S_\mu,S_\la).$$
 Reading 
 from left to right, these isomorphisms fit in nicely with the commutative diagram
 $$\begin{CD}
 \Hom_\sH(S_\mu,T^+_\la) @>>> \Hom_{A^+}(P(\la),\Delta(\mu))\\ 
 @AAA @AAA \\
\Hom_\sH(S_\mu,S_\la) @>>> \Hom_{A^+}(\Delta(\la),\Delta(\mu)).\end{CD}
$$
Here, the top row and vertical columns are isomorphisms we have seen before, in which the map in the bottom row arises from the ($\sZ$-linear) contravariant functor $(-)^\diamond=\Hom_\sH(-,T^+)$ from mod-$\sH$ to $A^+$-mod. We leave it to the reader to check that the diagram is commutative.   

The remarkable consequence is that the bottom row map is an isomorphism. Preceding it with the isomorphism
$\Hom_\sH(S(\la),S(\mu))=\Hom_\sH((S_\la)^*,(S_\mu)^*)\cong\Hom_\sH(S_\mu,S_\la),$  it remains an isomorphism. In other words, the composite is a $\sZ$-linear isomorphism
$$\Hom_\sH(S(\la),S(\mu))\overset\sim\longrightarrow \Hom_{A^+}(\Delta(\la),\Delta_(\mu)),$$
now the specialization of a covariant $\sZ$-linear functor from $\sH$-mod to $A^+$-mod. (The functor in question is the composite $(-)^\diamond\circ(-)^*$,  which is being ``specialized'' in its domain category, to the full subcategory of $\sH$-mod formed by  the objects $S(\tau)$ with $\tau$ of the same height as $\la, \nu$.)

Now, return to the $A^+$-module $J_i/J_{i-1}$, which we have shown in \eqref{JiJi-1} to be isomorphic to $A^+_{j(i)}/A^+_{j(i)+1}$. In turn, the latter $A^+$-module is a direct sum, with various multiplicities, of modules $\Delta(\omega)$, with $\omega\in\Omega$ having height $\htt(\omega)=j(i)$. Ignoring uniqueness issues, fix a decomposition 
$$J_i/J_{i-1}\cong \bigoplus_{\omega\in\Omega,\, \htt(\omega)=j(i)}\Delta(\omega)^{\oplus n_\omega},$$
where $j(1)>j(2)>\cdots >j(m)$ by \eqref{j(i)}.
Using the same multiplicities $n_\omega$ of $\Delta(\omega)$ as a direct summand, as in the formula above, define a module $N$ for $\sH$ by the formula
\begin{equation}\label{LCmodule}
N:= \bigoplus_{\omega\in\Omega,\, \htt(\omega)=j(i)}S(\omega)^{\oplus n_\omega}.
\end{equation}
The reader will immediately notice that we have an isomorphism 
$\Hom_\sH(N,N)\overset\sim\to\Hom_{A^+}(J_i/J_{i-1},J_i/J_{i-1})$, via the functor $(-)^\diamond\circ(-)^*$, as discussed above. More is true, however. Because the isomorphism is induced by a $\sZ$-linear functor, it induces an isomorphism of $\sZ$-algebras 
\begin{equation}\label{NvzJ}
\End_\sH(N)\overset\sim\longrightarrow\End_{A^+}(J_i/J_{i-1})\cong\End_B(J),
\end{equation}
where, for notational simplicity in the sequel,  we set $J=J_i/J_{i-1}$ and $B=A^+/J_{i-1}$ when $i$ is fixed.
%The notation on the right may be modified in various ways. To review, $A^+$ may be replaced by $A^+/J_{i-1}$ (which is $B$) and $J_i/J_{i-1}$ may be replaced by $J$ (when $i$ is fixed).

We now make a base change from $\sZ$ to $\sZ^\natural$ to find a verifiable version of (HI4). Note that the base-changed version of (HI1), (HI2), and (HI3) all hold, since base change takes projectives over $\sZ$ (or $B$) to projectives over $\sZ^\natural$ (or $B^\natural$). Also, base change takes idempotent ideals to idempotent ideals.

It remains to consider the base-changed version of (HI4). Here, we have to use the flatness of $\sZ^\natural$ over $\sZ$ (which we have) and the existence of finite presentations of $N$ (and $J=J_i/J_{i-1}$) over $\sH$ (and $B=A^+/J_{i-1}$). We may apply Lemma \ref{Hoch}
to the $\sZ$-algebras $\cH=\sH$ and $B$, and the $\sH$-module $N$ and $B$-module $J$, respectively. We thus obtain algebra isomorphisms:
\begin{equation}\label{phiJJ}
\aligned
\phi_{N,N}^{\sZ^\natural,\sH}&:\sZ^\natural\otimes\End_\sH(N)\overset\sim\longrightarrow \End_{\sH^\natural}(N^\natural)\\
\phi_{J,J}^{\sZ^\natural,B}&:\sZ^\natural\otimes\End_B(J)\overset\sim\longrightarrow \End_{B^\natural}(J^\natural).
\endaligned
\end{equation}
Combining the two isomorphisms with the one in \eqref{NvzJ} gives  a string of $\sZ^\natural$-algebra isomorphisms:
\begin{equation}\label{stringiso}
\End_{B^\natural}(J^\natural)\overset\sim\longrightarrow\sZ^\natural\otimes\End_B(J)\overset\sim\longrightarrow
\sZ^\natural\otimes\End_\sH(N)\overset\sim\longrightarrow
\End_{\sH^\natural}(N^\natural).
\end{equation}
Reading from left to right, the first isomorphism is the inverse of $\phi_{J,J}^{\sZ^\natural,B}$ as given in \eqref{phiJJ},
%$$\phi_{J,J}^{\sZ^\natural,B}:\sZ^\natural\otimes\End_B(J)\overset\sim\to \End_{B^\natural}(J^\natural),$$
 the second is the inverse of the base change from $\sZ$ to $\sZ^\natural$ of \eqref{NvzJ}, %$\End_\sH(N)\overset\sim\to\End_B(J)$, 
 the latter map arising from our construction of $N$ using left-cell modules  (see \eqref{NvzJ}). The third and last isomorphism in the string is $\phi_{N,N}^{\sZ^\natural,\sH}$, the map in \eqref{phiNN}, for $M=N$. To complete the proof in progress of Theorem \ref{quasi-hereditary}(b), we must show that the $\sZ^\natural$-algebra $\End_{B^\natural}(J^\natural)$ on the left is a direct sum (categorical direct product) of (a finite number of) full matrix algebras over $\sZ^\natural$. By the string of isomorphisms above, it is enough to prove that property for the $\sZ^\natural$-algebra $\End_{\sH^\natural}(N^\natural)$ on the right.
 
 Applying Theorem \ref{cellmodule}, we find that $N^\natural$ is the restriction through $\varpi^\natural$ (see \eqref{8.0.37}) of a module $N^\natural_{\text{lift}}$ for $\sJ^\natural$. Of course $\End_{\sH^\natural}(N^\natural)$ is isomorphic to $\End_{\varpi^\natural\sH^\natural}(N^\natural_{\text{lift}})$, where the latter endomorphism algebra is isomorphic to $\End_{\sJ^\natural}(N^\natural_{\text{lift}})$ by the Reiner observation Lemma \ref{general}(b).

 Next, note from Theorem \ref{maximalorder}(b) that $\sJ^\natural$ is isomorphic to a direct product of full matrix algebras $M_{m_i}(\sZ^\natural)$, for a finite sequence $m_1,m_2,\ldots,m_s$ of positive integers; see \eqref{maximalorder1}. There is a corresponding decomposition of the identity $1\in\sJ^\natural$ into a sum $1=e_1+e_2+\cdots+e_s$ of pairwise orthogonal central idempotents $e_i$ of $\sJ^\natural$  with $e_i\sJ^\natural=\sJ^\natural e_i\cong
M_{m_i}(\sZ^\natural)$, $i=1,2,\ldots,s$.   There is also a decomposition of the $\sJ^\natural$-module $N_{\text{lift}}^\natural$ into $\oplus_ie_i N_{\text{lift}}^\natural$, each $e_i N_{\text{lift}}^\natural$ is a left $\sJ^\natural$-module, the latter acting through $e_i\sJ^\natural$ (and all $e_j\neq e_i$ acting as 0). Clearly, $\Hom_{\sJ^\natural}(e_iM,e_jM)=0$, for all left $\sJ^\natural$-modules $M$, whenever $i\neq j$. Thus,
$\End_{\sJ^\natural}(M)\cong\bigoplus_i\End_{e_i\sJ^\natural}(e_iM)$. (Here and below, we use $\bigoplus$ instead of $\Pi$, with some abuse of notation.) Taking $M=N_{\text{lift}}^\natural$ and using the previous paragraph, 
we have the reduction (all through $\sZ^\natural$-algebra isomorphisms)
\begin{equation}\label{Reiner}
\End_{\sH^\natural}(N^\natural)\cong \End_{\sJ^\natural}(N_{\text{lift}}^\natural)\cong\bigoplus_{i=1}^s\End_{e_i\sJ^\natural}(e_iN_{\text{lift}}^\natural).
\end{equation}

It is useful to note at this point that $N_{\text{lift}}^\natural$ is a projective $\sJ^\natural$-module, since it is projective (even free) as $\sZ^\natural$-module. See Corollary \ref{yahoo}(b). It follows easily that each $\sJ^\natural$-module $e_jN_{\text{lift}}^\natural$ is projective. Recall that the $\sZ^\natural$-algebra $e_i\sJ^\natural$ is isomorphic to the full matrix algebra $M_{m_i}(\sZ^\natural)$, well-known to be Morita equivalent to $\sZ^\natural$. 
A $\sZ^\natural$-linear Morita equivalence which achieves this is a multiplication by an idempotent $f_i\in e_i\sJ^\natural$, corresponding to an  idempotent matrix unit $e_{1,1}$ in $M_{m_i}(\sZ^\natural)$, on $e_i\sJ^\natural$-modules. %$M_{m_i}(\sZ^\natural)$-modules. 
Such an equivalence takes projectives to projectives and preserves endomorphism algebras up to $\sZ^\natural$-algebra isomorphism.

Since (finitely generated) projective $\sZ^\natural$-modules are free (see Remark \ref{2.5}(c)), the $\sZ^\natural$-module $f_ie_iN_{\text{lift}}^\natural$ is free and isomorphic to ${\sZ^\natural}^{\oplus n_i}$, for some integer $n_i\geq0 $, depending on $e_iN_{\text{lift}}^\natural$. 
We mention here that the ring $\sZ^\natural$ used as the base ring here is $e_{1,1} M_{m_i}(\sZ^\natural)e_{1,1}\cong f_ie_i\sJ^\natural f_i$, the target of the Morita equivalence given by left multiplication by $f_i$. We find now that the Morita equivalence  induces an isomorphism
\begin{equation}\label{Reiner1}
\End_{e_i\sJ^\natural}(e_iN_{\text{lift}}^\natural)\cong\End_{\sZ^\natural}({\sZ^\natural}^{\oplus n_i})\cong M_{n_i}(\sZ^\natural).
\end{equation}
 Finally, combining these isomorphisms with many others demonstrated above, tracing our way from the left to the right of \eqref{stringiso} and  \eqref{Reiner} and to the direct sum over $i$ of \eqref{Reiner1}, we obtain (again with some abuse of notation, using $\bigoplus$ instead of $\Pi$)
 \begin{equation}\label{Reiner2}
 \End_{B^\natural}(J^\natural)\cong \bigoplus_{i}M_{n_i}(\sZ^\natural).
 \end{equation}
 This establishes (HI4) %{\color{blue}Hence, $A^{+\natural}$ is quasi-hereditary of split type with the defining sequence (as promised in \eqref{JJseq})
% \begin{equation}\label{JJseq}
%$$0=J_0^\natural\subseteq J_1^\natural\subseteq\cdots\subseteq J_m^\natural=A^{+\natural}.$$}
%\end{equation}
and  completes the proof of part (b).

Note that (a) follows from (b) except in the $^2F_4$ case. Assume we are in the latter case. After making the same changes in the appendix to
Section 4.1 ($\sZ\rightarrow\sZ'$, $\mathbb Q\rightarrow \mathbb Q'$, etc.), the above argument shows, {\it mutadis mutandis,}
that $A^{+\natural\prime}$ is quasi-hereditary of split type. In particular, it is of separable type.  Therefore,
as a faithfully flat descent, Theorem \ref{tjf} implies that $A^{+\natural}$ itself is of separable type. (Note that each
$J_i^{\natural\prime}/J_{i-1}^{\natural\prime}$ is the base change of the projective module $J_i^\natural/J^\natural_{i-1}$, hence its endomorphism algebra is a corresponding base change of the latter module. Also, note  that
$\sZ^{\natural\prime}$ is faithfully flat over $\sZ^\natural$.) This not only completes the proof of (a) but also establishes (c).
 \end{proof}

We remark that the heredity ideals $J_i^{\natural}/J_{i-1}^{\natural}$ involved in the defining sequence above are not indecomposable in Rouquier's sense (see Remark \ref{semisplit}(2)).
The reader may ask if the defining sequence in the theorem can be refined so that only indecomposable split heredity ideals will occur. \index{heredity ideal! indecomposable split $\sim$} This is possible due to the fact that each $\Delta^\natural(\omega)$ is a direct sum of various $\Delta(E)$'s (Theorem \ref{secondcellmodule}), with the $E$'s all of the same height as $\omega$. Recall $\Lambda:=\irr(\mathbb QW)$ and the preorder $\preceq$ on
$\Omega$ given in \eqref{preorder1}, which induces a partial order $\preceq$ (using the same name) defined via \eqref{IrrWH} by
\begin{equation}\label{orderLa}
E\preceq E'\iff E_{\mathbb Q(t)}=E' _{\mathbb Q(t)}\quad {{\text or}} \quad \htt(E_{\mathbb Q(t)}) < \htt(E'_{\mathbb Q(t)}), \quad E,E'\in\Lambda.
\end{equation}
For each $E\in\Lambda$, $\Delta(E):=(S^\natural_E)^\diamond=\Hom_{\sH^\natural}(S_E^\natural,T^{+\natural}),$ where the $\sH^\natural$-module $S_E^\natural:=\Hom_{\sZ^\natural}(S^\natural(E),\sZ^\natural)$ is defined in the display \eqref{identify}.

The following corollary continues the argument from the proof of Theorem \ref{quasi-hereditary}(b). We leave the counterpart for the $^2F_4$ case in Theorem \ref{quasi-hereditary}(c) to the reader.
\begin{cor}\label{refine} The defining sequence $0=J_0^\natural\subseteq J_1^\natural\subseteq\cdots\subseteq J_m^\natural=A^{+\natural}$
 in the proof may be refined to a defining sequence of length $s=|\Lambda|$:
\begin{equation}\label{ids}
0=I_0\subseteq I_1\subseteq\cdots\subseteq I_{s-1}\subseteq I_s=A^{+\natural},
\end{equation}
which is called an {\sf indecomposable} defining sequence (partly our terminology).
Moreover, it may be assumed that, for every $1\leq j \leq s$, 
\begin{enumerate}
\item there exists an idempotent $e$ in $B':=A^{+\natural}/I_{j-1}$ such that $J':=I_j/I_{j-1}=B'eB'$.
\item $J'\cong\Delta(E)^{\oplus l_j}\cong\Delta(E)\otimes_{\sZ^\natural}\Hom_{B'}(\Delta(E),B')$, for some $E\in\Lambda$ and $l_j>0$. (We will see that the $l_j$ is equal to some $n_i$ in the display \eqref{Reiner1}).
\end{enumerate} 
Hence, $A^{+\natural}$ is also a split quasi-hereditary algebra in the sense of \cite[4.1.4]{Ro08}.
\end{cor}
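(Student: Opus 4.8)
The plan is to refine each heredity ideal $J_i^\natural/J_{i-1}^\natural$ of the defining sequence produced in the proof of Theorem~\ref{quasi-hereditary}(b) into a chain of indecomposable split heredity ideals, one for each $E\in\Lambda$ of height $j(i)$. Recall that in that proof we realized $J=J_i^\natural/J_{i-1}^\natural$ as $A^{+\natural}_{j(i)}/A^{+\natural}_{j(i)+1}$, which is isomorphic (via $(-)^\diamond\circ(-)^*$) to a direct sum $\bigoplus_{\htt(\omega)=j(i)}\Delta^\natural(\omega)^{\oplus n_\omega}$, and that $\End_{B^\natural}(J)\cong\End_{\sH^\natural}(N^\natural)\cong\bigoplus_i M_{n_i}(\sZ^\natural)$ via the decomposition $N^\natural_{\text{lift}}=\bigoplus_i e_iN^\natural_{\text{lift}}$ coming from the central idempotents $e_i$ of $\sJ^\natural$. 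By Theorem~\ref{secondcellmodule}, each $\Delta^\natural(\omega)$ further decomposes as a direct sum of modules $\Delta(E)$ with $E\in\Lambda$ of the same height $j(i)$; combining this with the matrix-block description, each block $M_{n_i}(\sZ^\natural)$ corresponds to a single $E\in\Lambda$ and $J$ is isomorphic to $\bigoplus_{\htt(E)=j(i)}\Delta(E)^{\oplus l_E}$ where $l_E$ is the multiplicity $n_i$ attached to that block.

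First I would fix, within each height level $j(i)$, an ordering $E_1,E_2,\dots$ of the finitely many $E\in\Lambda$ with $\htt(E)=j(i)$, and define $I_\bullet$ by inserting, between $J_{i-1}^\natural$ and $J_i^\natural$, the partial sums obtained by adjoining the $\Delta(E_k)^{\oplus l_{E_k}}$-summands one block at a time. Concretely, the central idempotents $e_i\in\sJ^\natural$ transport (via $\varpi^\natural$ and the functorial identifications $\End_{B^\natural}(J)\cong\End_{\sJ^\natural}(N^\natural_{\text{lift}})$) to a complete set of orthogonal central idempotents of $\End_{B^\natural}(J)$, hence to orthogonal idempotents $\bar f_E$ of $B':=A^{+\natural}/I_{j-1}$ lifting the block projections; the ideal $I_j/I_{j-1}=B'\bar f_E B'$ is then the trace of $\Delta(E)^{\oplus l_E}$ and equals $\bigoplus$ of copies of $\Delta(E)$. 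The verification that each such $I_j/I_{j-1}$ satisfies (HI1)--(HI3) in $B'$ follows exactly as in Theorem~\ref{SSAA+} and Theorem~\ref{quasi-hereditary} (projectivity over $\sZ^\natural$ and idempotency are preserved under the block decomposition, since these are direct-summand statements); and (HI4)$'$ holds because $\End_{B'}(I_j/I_{j-1})\cong \End_{\sZ^\natural}({\sZ^\natural}^{\oplus l_E})\cong M_{l_E}(\sZ^\natural)$ by \eqref{Reiner1}, which is Morita equivalent to $\sZ^\natural$. Thus $I_j/I_{j-1}$ is an indecomposable split heredity ideal, establishing \eqref{ids} and property~(1) with $e=\bar f_E$.

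For property~(2), the isomorphism $J'\cong\Delta(E)^{\oplus l_j}$ is what we have just arranged; the further identification $\Delta(E)^{\oplus l_j}\cong\Delta(E)\otimes_{\sZ^\natural}\Hom_{B'}(\Delta(E),B')$ is Rouquier's isomorphism \eqref{Roiso} applied with $L=\Delta(E)$, which is legitimate precisely because $\Delta(E)$ is $\sZ^\natural$-faithful, $B'$-projective (it is a summand of the projective $J'$), and has endomorphism ring $\End_{B'}(\Delta(E))\cong\sZ^\natural$ — the last point being exactly the content of (HI4)$'$ combined with the Morita reduction, so that $\Delta(E)$ is a progenerator-type standard module in the sense of \cite[Lem.~4.5]{Ro08}. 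One checks that $\Hom_{B'}(\Delta(E),B')\cong e B'$ for the relevant idempotent and that $\Delta(E)\otimes_{\sZ^\natural} eB'\cong (\Delta(E)\otimes_{\sZ^\natural}\sZ^\natural)^{\oplus l_j}$ recovers $B'eB'$, giving $l_j=l_E=n_i$ as claimed. Finally, the whole refined chain $0=I_0\subseteq\cdots\subseteq I_s=A^{+\natural}$, being built from indecomposable split heredity ideals indexed by the poset $(\Lambda,\preceq)$ in \eqref{orderLa} (the insertion order respects $\preceq$ because blocks of strictly lower height sit strictly lower in the chain, and blocks of equal height are unordered relative to $\preceq$ on $\Lambda$), is a split quasi-hereditary structure in the sense of \cite[4.1.4]{Ro08}.

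The main obstacle I anticipate is bookkeeping the three-layer identification $\End_{B^\natural}(J^\natural)\cong\sZ^\natural\otimes\End_{\sH^\natural}(N^\natural)\cong\End_{\sJ^\natural}(N^\natural_{\text{lift}})$ carefully enough to be sure that the central idempotents of $\sJ^\natural$ really do descend to \emph{central} idempotents of $\End_{B^\natural}(J^\natural)$ and hence to honest two-sided-ideal-generating idempotents $\bar f_E$ in each quotient $B'$ — in particular that passing to the successive quotients $A^{+\natural}/I_{j-1}$ does not spoil orthogonality or the block structure. This is essentially a matter of chasing the functors $(-)^\diamond$, $(-)^*$, $(-)^{\varpi^\natural}$ and the Reiner fully-faithfulness Lemma~\ref{general}, together with the observation that taking the quotient by an ideal generated by some of the $\bar f_E$ simply deletes the corresponding matrix blocks; but it requires care to present cleanly. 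Everything else is a routine application of results already in hand (Theorems~\ref{cellmodule}, \ref{maximalorder}, \ref{secondcellmodule}, \ref{quasi-hereditary}, Corollary~\ref{yahoo}, and the Morita-theoretic computation \eqref{Reiner1}), plus the general theory of \cite{Ro08} and \cite{CPS90}.
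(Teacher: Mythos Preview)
Your overall strategy matches the paper's: transport the central idempotents of $\sJ^\natural$ through the chain of isomorphisms \eqref{stringiso}--\eqref{Reiner} to central idempotents $\tilde e_i$ of $\mathcal E:=\End_{B^\natural}(J^\natural)$, use them to cut $J^\natural$ into pieces indexed by $E\in\Lambda$ of the given height, and check the heredity axioms blockwise. The paper does exactly this.

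However, the word ``hence'' in your sentence ``\dots a complete set of orthogonal central idempotents of $\End_{B^\natural}(J)$, hence to orthogonal idempotents $\bar f_E$ of $B'$'' hides a real gap, and it is precisely the obstacle you flag in your last paragraph. Central idempotents of $\End_{B^\natural}(J^\natural)$ are \emph{not} elements of $B^\natural$ or $B'$; they act on $J^\natural$ from the right (commuting with the left $B^\natural$-action), so a priori $\tilde e_iJ^\natural$ is only a left $B^\natural$-submodule of $J^\natural$, not a two-sided ideal, and there is no obvious element $\bar f_E\in B'$ in sight. The paper resolves this in two separate steps. First, it proves that each $J^{\natural(i)}:=\tilde e_iJ^\natural$ is in fact a right $B^\natural$-module (hence a two-sided ideal) by a short contradiction argument: if $J^{\natural(i)}b\not\subseteq J^{\natural(i)}$ for some $b\in B^\natural$, then right multiplication by $b$ followed by projection gives a nonzero element of $\Hom_{B^\natural}(\tilde e_iJ^\natural,\tilde e_jJ^\natural)=\tilde e_j\mathcal E\tilde e_i=0$ for some $j\neq i$. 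Second, to obtain an actual idempotent \emph{element} generating $J^{\natural(i)}$, the paper uses the idempotent $\bar f\in B^\natural$ already available from \eqref{AfA} (so $J^\natural=B^\natural\bar fB^\natural$) and sets $\tilde f_i:=\tilde e_i(\bar f)\in J^{\natural(i)}$; a one-line computation using centrality of $\tilde e_i$ and $\bar f^2=\bar f$ gives $\tilde f_i^2=\tilde f_i$, and then $J^{\natural(i)}=\tilde e_i(B^\natural\bar fB^\natural)=B^\natural\tilde f_iB^\natural$. This is the idempotent $e$ in part~(1); it is not the transported $\tilde e_i$ itself.

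Once these two points are in place, your verification of (HI1)--(HI4)$'$ and of part~(2) is along the right lines and agrees with the paper. One small correction: for the Rouquier isomorphism in part~(2), the paper does not invoke \cite[Lem.~4.5]{Ro08} directly but instead computes $\Hom_{B'}(\Delta(E),B')\cong(\sZ^\natural)^{\oplus n_i}$ using $\Hom_{B'}(J',B'/J')=0$ from \cite[Lem.~4.4]{Ro08}, which gives the tensor description at once.
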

\begin{proof}Continuing the argument from the proof of Theorem \ref{quasi-hereditary}(b), we refine each section $J^\natural_{h-1}\subseteq J_h^\natural$ ($1\leq h\leq m$). For a fixed $h$,
let $J^\natural=J_h^\natural/J^\natural_{h-1}$ and $B^\natural=A^{+\natural}/J^\natural_{h-1}$. Recall from Notation \ref{notation1} that the height function $\htt:\Omega\to\mathbb Z$ induces a height function $\htt:\Lambda\to\mathbb Z$. Recall also the idempotent decompsition $e_1+\cdots+e_s=1\in\sJ^\natural$, where $s=|\Lambda|$.
By displays \eqref{stringiso}, \eqref{Reiner}, and \eqref{Reiner1},\vspace{-1ex}
$$\mathcal E:=\End_{B^\natural}(J^\natural)\cong\End_{\sH^\natural}(N^\natural)\cong \End_{\sJ^\natural}(N_{\text{lift}}^\natural)\cong\bigoplus_{i=1}^{s_h}\End_{e_i\sJ^\natural}(e_iN_{\text{lift}}^\natural)\cong \bigoplus_{i=1}^{s_h}M_{n_i}(\sZ^\natural).$$
Here, after relabelling, we assume $n_i>0$, for all $1\leq i\leq s_h$, and $n_i=0$, for $s_h<i\leq s$. Note that the numbers $s_h$ are related to a partition (see \eqref{LCmodule})
\begin{equation}\label{LaPartition}
\Lambda=\bigcup_{i=1}^m\Lambda_{j(i)},\;\text{ where }%for the height $h'=j(h)$ 
\Lambda_{h'}:=\{E\in \Lambda\mid\htt(E)=h'\}\text{ and }|\Lambda_{j(h)}|=s_h.
\end{equation}

If $L:=N_{\text{lift}}^\natural$ and $L_i:=e_iN_{\text{lift}}^\natural$ ($1\leq i\leq s_h$), then $N^\natural=L^{\varpi^\natural}=\oplus_{i=1}^{s_h}L_i^{\varpi^\natural}$. Thus, denoting the canonical projection map $N^\natural\to L_i^{\varpi^\natural}\subseteq N^\natural$ by $e_i$ again, we have $e_i(N^\natural)=L_i^{\varpi^\natural}$ and the identity map $1_{N^\natural}=e_1+\cdots +e_{s_h}$ is decomposed into a sum of orthogonal central idempotents. Since the isomorphism $\phi:\End_{\sH^\natural}(N^\natural)\overset\sim\to\mathcal E$ is induced by the functor $(\;\,)^{*\diamond}=\Hom_{\sH^\natural}(-,T^{+\natural})\circ\Hom_{\sZ^\natural}(-,\sZ^\natural)$, the canonical projection $e_i$ induces the canonical projection $\tilde e_i=(e_i)^{*\diamond}:J^\natural\to  J^\natural$.
Hence, the element $1_{J^\natural}$ of $\mathcal E$ (i.e., the identity map on $J^\natural$) is decomposed into a sum of orthogonal central idempotents ${\tilde e}_i\in\mathcal E$: $1={\tilde e}_1+{\tilde e}_2+\cdots+{\tilde e}_s$, where ${\tilde e}_i=(e_i)^{*\diamond}$.  
Putting $J^{\natural(i)}={\tilde e}_iJ^\natural$, we have $J^\natural=\oplus_{i=1}^{s_h}J^{\natural(i)}$.

Note that $J^\natural$ is a $B^\natural$-$B^\natural$-bimodule. Also, each $J^{\natural(i)}$ is a left $B^\natural$-module, since the action of $\tilde e_i\in\mathcal E$ commutes with that of $B^\natural$.
We claim that each $J^{\natural(i)}$ ($1\leq i\leq s_h$) is also a right $B^\natural$-module. Indeed, suppose there exists $b\in B^\natural$ such that
$J^{\natural(i)}b\not\subseteq J^{\natural(i)}$. Then, the composition of the following homomorphisms
$$J^{\natural(i)}\longrightarrow J^{\natural(i)}b\longrightarrow (J^{\natural(i)}b+J^{\natural(i)})/J^{\natural(i)}\subseteq J^{\natural}/J^{\natural(i)}$$
is non-zero. This is impossible, since $\Hom_{B^\natural}({\tilde e}_iJ^{\natural},{\tilde e}_jJ^{\natural})={\tilde e}_j\mathcal E{\tilde e}_i=0$, for all $i\neq j$. Hence, each $J^{\natural(i)}$ is a $B^\natural$-$B^\natural$-bimodule.

 Let $J^\natural_{h-1,j}=\pi^{-1}(\oplus_{i=1}^jJ^{\natural(i)})$, where $\pi:J^\natural_h\to J_h^\natural/J^\natural_{h-1}$ is the obvious surjective homomorphism. Each $J^\natural_{h-1,j}$ is an ideal of $A^{+\natural}$ and  the section $J^\natural_{h-1}\subseteq J_h^\natural$ is refined as
$$J^\natural_{h-1}=J^\natural_{h-1,0}\subseteq J^\natural_{h-1,1}\subseteq J^\natural_{h-1,2} \subseteq\cdots\subseteq J^\natural_{h-1,s_h} = J_h^\natural.$$
Note that such a refinement can be done for any order on $\Lambda_{h'}$.
Combining these refinements for all $h$ results in a candidate defining sequence which has length $\sum_{h=1}^ms_h=s=|\Lambda|$. 

We now prove that it does give a defining sequence by checking (HI1)--(HI4) for each $J':=J^\natural_{h-1,i}/J^\natural_{h-1,i-1}\cong J^{\natural(i)}$ and $B':= A^{+\natural}/J^{\natural}_{h-1,i-1}\cong B^\natural/\oplus_{j=1}^{i-1}J^{\natural(j)}$.

  The proof for (HI1) and (HI2) is clear, since the $\sZ^\natural$-module
  $B'/J'\cong A^{+\natural}/J^{\natural}_{h-1,i}\cong A^{+\natural}/J^{\natural}_{h}\oplus J^\natural_h/J^{\natural}_{h-1,i}$ is
  a direct sum of two projective $\sZ^\natural$-modules, and $_{B'}J'={}_{B^\natural} J'$ is a direct summand of the projective $B^\natural$-module $_{B^\natural}J^\natural$.
(HI4) may be verified as follows:
\begin{equation}\label{sE'}
\End_{B'}(J')\cong\End_{B'}(\tilde e_i J^\natural)\cong\End_{B^\natural}(\tilde e_i J^\natural)\cong\tilde e_i\mathcal E\cong M_{n_i}(\sZ^\natural).
\end{equation}
For (HI3), we first note from the display \eqref{AfA} that the ideal $J_h^\natural=A_{j(h)}^{+\natural}$ is equal to $A^{+\natural}fA^{+\natural}$, for some idempotent $f$. Then the ideal $J^\natural=B^\natural\bar fB^\natural$, where $\bar f=f+J^\natural_{h-1}$ defines an idmpotent in $J^\natural$. If $\tilde f_i=\tilde e_i(\bar f)$, for each $i$ ($1\leq i\leq s_h$), then $\tilde f_i^2=\tilde e_i(\bar f)\tilde e_i(\bar f)=
\tilde e_i(\tilde e_i(\bar f)\bar f)=\tilde e_i(\tilde e_i(\bar f^2))=\tilde f_i$ and 
$J^{\natural(i)}=\tilde e_i(B^\natural\bar fB^\natural)=B^\natural\tilde f_iB^\natural$. Hence, $J'=B'\tilde f_iB'$ is idempotent, proving (HI3) and (1) as well.
%since $(\tilde e_iJ^\natural)(\tilde e_jJ^\natural)=0$ for all $i\neq j$,\footnote{\label{eiej}This is because $\tilde e_i(x)\tilde e_j(y)=\tilde e_j(\tilde e_i(x)y)=\tilde e_j(y_r(\tilde e_i(x)))=\tilde e_j((y_r\tilde e_i)(x))=\tilde e_j((\tilde e_iy_r)(x))=0$ for all $x,y\in J^\natural$, where $y_r:J^\natural\to J^\natural$ is the left $B^\natural$-module homomorphism $x\mapsto xy$.} it follows that
%$$(J^\natural)^2=(J^{\natural(1)})^2\oplus\cdots\oplus(J^{\natural(s_h)})^2.$$
%Hence, $(J^{\natural(i)})^2=J^{\natural(i)}$ for all $i$, proving (HI3).  This completes the proof of the first assertion and (2) of the corollary.

It remains to prove (2). The assertion that $J'$ is isomorphic to a direct sum of $\Delta(E)$, for some $E\in \Lambda$, can be seen as follows.   
 Since, by the proof of the theorem and recalling the notation $L_i=e_iN_{\text{lift}}^\natural$, $f_iL_i\cong \sZ^{\natural^{\oplus n_i}}$ (see the discussion above the diaplay \eqref{Reiner1}), it follows from the Morita equivalence (between $M_{n_i}(\sZ^\natural)$ and $\sZ^\natural$) that $L_i\cong E_\natural^{\oplus n_i}$, where $E=E_{h',i}\in \Lambda_{h'}:=\{E_{h',i}\mid 1\leq i\leq s_h\}$ for some height $h'=j(h)$ (see \eqref{LCmodule}), and the $\sH^\natural$-module $L_i^{\varpi^\natural}\cong  S^\natural(E)^{\oplus n_i}$; see \eqref{identify}. (Here we remind the reader that $E_\natural$ $(E\in\Lambda)$ is a $\sJ^\natural$-lattice of the irreducible $\sJ_{\mathbb Q(t)}$-module $E_{\mathbb Q(t)}$.) Hence, $(e_i(N^\natural))^{*\diamond}=(L_i^{\varpi^\natural})^{*\diamond}\cong
\Delta(E_{h',i})^{\oplus n_i}$.
Finally,  we have 
$$J^{\natural(i)}=\tilde e_i(J^\natural)=(e_i(N^\natural))^{*\diamond}\cong\Delta(E_{h',i})^{\oplus n_i}.$$ 
Since $\Hom_{B'}(J',B'/J')=0$ (see \cite[Lem. 4.4]{Ro08}), it follows that
$$\Hom_{B'}(\Delta(E_{h'i}),B')=\Hom_{B'}(\Delta(E_{h',i}),J')\cong (\sZ^\natural)^{\oplus n_i}.$$ 
This implies the last isomorphism in (2). 

It remains to verify Rouquier's definition \cite[4.1.4]{Ro08} for split quasi-hereditary algebras. By the definition in display \eqref{orderLa}, each coideal $\Gamma$ of $\Lambda$ has the form $\Gamma=\big(\cup_{h''>h'}\Lambda_{h''}\big)\cup\Phi$, where $\Phi\subseteq\Lambda_{h'}$ for some $h'=j(h)$.
Thus, each indecomposable defining sequence of the form \eqref{ids} defines a sequence of coideals $\{\Gamma_i\mid 1\leq i\leq s\}$ of $\Lambda$ such that the set of ideals $\{I_{\Gamma_i}:=I_i\mid 1\leq i\leq s\}$ satisfies the two conditions in \cite[4.1.4]{Ro08}.
%we may refine $J^\natural_{h-1}\subseteq J_h^\natural$ via $\Phi$ so that $I_\Gamma$ is defined as a term $I_j$ of an indecomposable defining sequence in \eqref{ids}. Clearly, if $\Gamma\subseteq\Gamma'$ are two coideals, then there is an indecomposable defining sequence of the form \eqref{ids} such that $I_\Gamma=I_i$ and $I_{\Gamma'}=I_j$ with $i<j$. 
This proves that $A^{+\natural}$ is a split quasi-hereditary algebra in the sense of Rouquier, completing the proof of the corollary.
%To see (2), we first note from \eqref{AfA} that the ideal $J_h^\natural=A^{+\natural}fA^{+\natural}$ for some idempotent $f$. Then the ideal $J^\natural=B^\natural\bar fB^\natural$, where $\bar f=f+J^\natural_{h-1}\in J^\natural$ is an idmpotent. Thus, $\bar f=\tilde f_1+\cdots+\tilde f_{s_h}$, where $\tilde f_i\in\tilde e_i(J^\natural)=J^{\natural(i)}$. Footnote \ref{eiej} shows that the $\tilde f_i$s are orthogonal idempotents. Hence, $B^\natural\bar fB^\natural=B^\natural\tilde f_1B^\natural\oplus\cdots\oplus B^\natural\tilde f_{s_h}B^\natural$. Consequently, each $B^\natural\tilde f_iB^\natural=J^{\natural (i)}$. Now, assertion (2) follows easily.
 %$\End_B(J^{\natural(i)})\cong\End_{\sJ^\natural}(L_i)\cong$
\end{proof}

\begin{cor}\label{B'e} Maintain the notations of Corollary \ref{refine}. There exists an idempotent $e_1\in B'$ such that $B'e_1\cong\Delta(E)$ as left $B'$-modules. 
\end{cor}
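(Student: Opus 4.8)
\textbf{Proof plan for Corollary \ref{B'e}.}
The plan is to extract the required idempotent $e_1$ from the structure already uncovered in the proof of Corollary \ref{refine}. There we showed that $J'=I_j/I_{j-1}$, viewed as a left $B'$-module, is isomorphic to $\Delta(E)^{\oplus l_j}$ (with $l_j=n_i>0$ in the notation of displays \eqref{Reiner1} and \eqref{sE'}), and that $J'=B'\tilde f_iB'$ is a split heredity ideal in $B'$ with $\End_{B'}(J')\cong M_{l_j}(\sZ^\natural)$. First I would recall the general fact that for a heredity ideal $J'$ in $B'$ one has $_{B'}J'\cong L\otimes_{\scrK}V$ for a ``standard'' projective $B'$-module $L$ and a free $\scrK$-module $V$ of rank equal to the multiplicity; concretely here $L\cong\Delta(E)$, $V\cong(\sZ^\natural)^{\oplus l_j}$, and $\End_{B'}(J')\cong\End_{\sZ^\natural}(V)\cong M_{l_j}(\sZ^\natural)$ (this is exactly Remark \ref{semisplit}(2), or \cite[p.~157]{CPS90}, applied with the split-type hypothesis verified in \eqref{sE'}).

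Second, I would produce the idempotent. Since $J'$ is generated by the idempotent $\tilde f_i\in B'$ (shown in the proof of Corollary \ref{refine}, $J'=B'\tilde f_iB'$), the left $B'$-module $B'\tilde f_i$ is a projective direct summand of $_{B'}J'\cong\Delta(E)^{\oplus l_j}$. By the Krull--Schmidt-type uniqueness available here --- more precisely, because $\Delta(E)$ has local endomorphism ring after base change to the residue fields of $\sZ^\natural$, or directly because $\End_{B'}(J')\cong M_{l_j}(\sZ^\natural)$ and its primitive idempotents are conjugate --- any indecomposable projective summand of $J'$ is isomorphic to $\Delta(E)$. Thus it suffices to choose a primitive idempotent inside $\End_{B'}(J')\cong M_{l_j}(\sZ^\natural)$, say the image of a matrix unit $e_{1,1}$, pull it back to an idempotent $\tilde f_i'\in\tilde f_iB'\tilde f_i\subseteq B'$ refining $\tilde f_i$, and set $e_1:=\tilde f_i'$. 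Then $B'e_1$ is the image of $J'$ under the corresponding primitive idempotent of its endomorphism ring, hence $B'e_1\cong\Delta(E)$ as left $B'$-modules. The lifting of the matrix-unit idempotent from $\End_{B'}(J')$ to an actual idempotent of $B'$ uses only that $\End_{B'}(B'\tilde f_i)\cong\tilde f_iB'\tilde f_i$ and that idempotents split in $B'\text{-mod}$.

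The main obstacle is the passage from ``$J'\cong\Delta(E)^{\oplus l_j}$ as a left $B'$-module'' to ``$\Delta(E)$ is a direct summand \emph{cut out by an idempotent of $B'$}.'' This is where one needs that $\Delta(E)$ is genuinely indecomposable and that the decomposition of $J'$ is compatible with the $B'$-bimodule (in particular right $B'$-module) structure; the $M_{l_j}(\sZ^\natural)$-description of $\End_{B'}(J')$ from \eqref{sE'} is precisely the tool that makes this work, since its primitive idempotents are conjugate and lift to primitive idempotents of $B'$ contained in $\tilde f_iB'\tilde f_i$. Once this is in place, the isomorphism $B'e_1\cong\Delta(E)$ is immediate, and no further computation is needed.
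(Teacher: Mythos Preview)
Your overall strategy is sound and close to the paper's, but there is one imprecise step that needs repair. You write that you will ``choose a primitive idempotent inside $\End_{B'}(J')\cong M_{l_j}(\sZ^\natural)$\ldots\ pull it back to an idempotent $\tilde f_i'\in\tilde f_iB'\tilde f_i\subseteq B'$.'' The problem is that there is no natural map $\End_{B'}(J')\to\tilde f_iB'\tilde f_i$ along which to pull back: these two algebras are only Morita equivalent (both Morita equivalent to $\sZ^\natural$), not isomorphic, and in general $l_j\neq m$ where $\tilde f_iB'\tilde f_i\cong M_m(\sZ^\natural)$. A primitive idempotent of $\End_{B'}(J')$ does cut out a summand of $J'$ isomorphic to $\Delta(E)$, but you have not shown that \emph{that} particular summand is of the form $B'e_1$ for an idempotent of $B'$.

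The fix is already implicit in your own argument: work with $\End_{B'}(B'\tilde f_i)\cong(\tilde f_iB'\tilde f_i)^{\op}$ rather than with $\End_{B'}(J')$. You correctly observe that $B'\tilde f_i$ is a direct summand of $J'\cong\Delta(E)^{\oplus l_j}$, and your remark that direct summands of $\Delta(E)^{\oplus l_j}$ are of the form $\Delta(E)^{\oplus k}$ (since idempotents in $M_{l_j}(\sZ^\natural)$ correspond, via Swan, to free summands of $(\sZ^\natural)^{l_j}$) gives $B'\tilde f_i\cong\Delta(E)^{\oplus m}$ for some $m\geq 1$. Hence $\tilde f_iB'\tilde f_i\cong\End_{B'}(\Delta(E)^{\oplus m})^{\op}\cong M_m(\sZ^\natural)$, and a matrix-unit idempotent $e_1\in\tilde f_iB'\tilde f_i$ now genuinely lives in $B'$. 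Since $e_1B'e_1\cong\sZ^\natural$, the same summand argument forces $B'e_1\cong\Delta(E)$. This is essentially the paper's proof, which also reaches $\tilde f_iB'\tilde f_i\cong M_m(\sZ^\natural)$ (writing $e$ for your $\tilde f_i$) but then argues via a case split $m=1$ versus $m>1$ and, in the latter case, first proves $J'=B'e_1B'$ before invoking Rouquier's \cite[Prop.~4.7]{Ro08}; your corrected route is a little more direct in that it bypasses the case split.
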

\begin{proof} By Corollary \ref{refine}(1) and \cite[Prop. 2.2]{DPS18}, we have
$$J'=B'eB'\cong B'e\otimes_{eB'e}eB',$$
 $\End_{B'}(J')$ is Morita equivalent to $eB'e$, and $eB'$ is a projective
$eB'e$-module.
By \eqref{sE'}, we see that $eB'e$ is Morita equivalent to $\sZ^\natural$.  Since projective finite $\sZ^\natural$-modules are free, it follows from the Morita equivalence that $eB'e\cong M_m(\sZ^\natural)$, for some $m\geq1$.

If $m=1$, then $eB'e\cong\sZ^\natural$ and $J'=B'eB'\cong B'e\otimes_{\sZ^\natural}eB'\cong B'e\otimes_{\sZ^\natural}\Hom_{B'}(B'e,B')$.
Thus, the proof of \cite[Prop.~4.7]{Ro08} and Corollary \ref{refine}(2) imply $B'e\cong\Delta(E)$. So, we take $e_1=e$ in this case.

Suppose now $m>1$. Then, there exist idempotents $e_1,\ldots,e_m$ (corresponding to the matrix idempotents $e_{1,1},\ldots,e_{m,m}$ in $M_m(\sZ^\natural)$) such that $e=e_1+\cdots+e_m$ and $e_ie_j=\delta_{i,j}e_i$. Thus, $B'e=B'e_1\oplus\cdots\oplus B'e_m$ and $\Hom_{B'}(B'e_i,B'e_j)\cong \sZ^\natural$. (The right action of $eB'e\cong M_m(\sZ^\natural)$ on $B'e$ may be interpreted as matrix multiplication.) Hence, $B'e\cong (B'e_1)^{\oplus m}$. Since the left $eB'e$-module $eB'$ is projective, it follows (from the Morita equivalence between $eB'e$ and $e_1B'e_1\cong \sZ^\natural$) that the $e_1B'e_1$-module $e_1B'=e_1(eB')$ is projective (over $\sZ^\natural$) and, hence, is free: $e_1B'\cong(\sZ^\natural)^{\oplus m'} $ for some $m'>0$. The Morita equivalence gives $eB'\cong ({\sZ^\natural}^m)^{\oplus m'}\cong C_1^{\oplus m'}$, where $C_1=M_m(\sZ^\natural)e_{1,1}$. Consequently,
$$J'\cong B'e\otimes_{eB'e}eB'\cong (B'e_1)^{\oplus m}\otimes_{M_m(\sZ^\natural)}C_1^{\oplus m'}\cong (B'e_1)^{\oplus m'}.$$
In other words, we may write $J'=P_1\oplus\cdots \oplus P_{m'}$, where $B'e_1\cong P_i$. Taking isomorphism
$\phi_i:B'e_1\to P_i$, we have
$P_i=\phi_i(B'e_1)=B'e_1\phi_i(e_1)\subseteq B'e_1B'$. Hence, $J'\subseteq B'e_1B'$. Clearly, we have $B'e_1B'=B'ee_1B'\subseteq B'eB'=J'$. This shows $J'=B'e_1B'$. By the argument for the $m=1$ case, we obtain $B'e_1\cong\Delta(E)$. The corollary is proven.
\end{proof}
 
Corollary \ref{refine} together with \cite[Th.~4.16]{Ro08} gives the following result.
\begin{cor} \label{refine3}If $\mathcal C:=A^{+\natural}\text{-mod}$ is the category of finite $A^{+\natural}$-modules,
then $(\mathcal C,\{\Delta(E)\}_{E\in\Lambda})$ is a highest weight category in the sense of \cite[Def.~4.11]{Ro08}, where $\Lambda$ is the poset with respect to the order defined in display \eqref{orderLa}.
\end{cor}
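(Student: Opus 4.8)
\textbf{Proof proposal for Corollary \ref{refine3}.}

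The plan is to quote Rouquier's Theorem 4.16 in \cite{Ro08}, which asserts that a split quasi-hereditary algebra in the sense of \cite[Def. 4.1.4]{Ro08}, over a commutative Noetherian base ring, has module category a highest weight category in the sense of \cite[Def. 4.11]{Ro08}, with standard modules indexed by the underlying poset. Essentially all the work needed to invoke this has already been done: Corollary \ref{refine} establishes that $A^{+\natural}$ is a split quasi-hereditary algebra over $\sZ^\natural$ in Rouquier's precise sense, by producing the indecomposable defining sequence \eqref{ids} and the associated chain of coideals $\{\Gamma_i\}$ of $\Lambda$ satisfying the two conditions of \cite[4.1.4]{Ro08}, with the standard modules $\Delta(E)$, $E\in\Lambda$, appearing (up to multiplicity) as the quotients $I_j/I_{j-1}$ via the Rouquier isomorphism \eqref{Roiso}. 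The poset structure is exactly the partial order $\preceq$ on $\Lambda$ given in display \eqref{orderLa}.

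First I would recall that $\sZ^\natural$ is a commutative Noetherian ring and that $A^{+\natural}$ is finite and projective (in fact free) over $\sZ^\natural$, so the hypotheses of \cite[Th. 4.16]{Ro08} are met. Next I would identify the data: the poset is $(\Lambda,\preceq)$ with $\Lambda=\irr(\mathbb QW)$ (or $\irr(\mathbb Q'W)$ in the $^2F_4$ case) and $\preceq$ as in \eqref{orderLa}; the standard modules are the $A^{+\natural}$-modules $\Delta(E)=\Hom_{\sH^\natural}(S_E^\natural,T^{+\natural})$ for $E\in\Lambda$, which by Corollary \ref{refine}(2) and the Morita-equivalence analysis in its proof are precisely the standard modules attached to the indecomposable split heredity ideals $I_j/I_{j-1}$; in particular each $\End_{A^{+\natural}}(\Delta(E))\cong\sZ^\natural$ by \eqref{CPS} and the discussion following it. Then I would invoke \cite[Th. 4.16]{Ro08} directly to conclude that $(\mathcal C,\{\Delta(E)\}_{E\in\Lambda})$ is a highest weight category in the sense of \cite[Def. 4.11]{Ro08}.

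The main (and really only) obstacle is bookkeeping: ensuring that the notion of ``split quasi-hereditary algebra" used in Corollary \ref{refine} matches exactly the hypothesis of \cite[Th. 4.16]{Ro08}, and that the poset in Rouquier's conclusion is the one we want, namely $(\Lambda,\preceq)$ rather than some coarsening. This is handled by observing that the coideals $\Gamma_i$ produced in the proof of Corollary \ref{refine} are exactly the order coideals of $(\Lambda,\preceq)$ obtained by the defining sequence \eqref{ids}: for each $h'$ in the image of $\htt$ the layer $\Lambda_{h'}$ is totally unordered under $\preceq$ (since $\htt$ is constant on $\preceq$-equivalence classes and those classes are the two-sided cells), so refining within a layer in any chosen order is legitimate, and the refinement can be chosen to realize every principal coideal of $(\Lambda,\preceq)$ as some $I_{\Gamma_i}$. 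Hence Rouquier's highest weight category is attached to the poset $(\Lambda,\preceq)$, as claimed. Since there is no further computation required, a one- or two-line citation to \cite[Th. 4.16]{Ro08} combined with a reminder of Corollary \ref{refine} completes the proof.
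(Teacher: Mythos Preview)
Your proposal is correct and follows essentially the same approach as the paper: the paper's proof is the single sentence ``Corollary \ref{refine} together with \cite[Th.~4.16]{Ro08} gives the following result,'' and your argument is a careful elaboration of precisely this citation. One small slip in your bookkeeping paragraph: the $\preceq$-equivalence classes on $\Lambda$ are singletons (it is already a partial order by \eqref{orderLa}), not two-sided cells; but your conclusion that distinct elements of the same height layer $\Lambda_{h'}$ are incomparable is nonetheless correct, and the rest of the argument is unaffected.
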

Note that the projective modules $P(E)$ used to form the stratifying system $\{\Delta(E),P(E)\}_{E\in\Lambda}$ may be recursively constructed; see the proof of \cite[Lem.~4.12]{Ro08}.

\medskip

Similarly to $A^{+\natural}$ and $A^{+\natural\prime}$
as defined above the statement of Theorem \ref{quasi-hereditary}, we define, for a poset ideal $\Theta$ of $(\Omega,\leq_L)$, $A^{\ddagger\natural}_\Theta:=\sZ^\natural\otimes_\sZ A^\ddagger_\Theta$ and $A^{\ddagger\natural\prime}_\Theta:=\sZ^{\natural\prime}\otimes_\sZ A^\ddagger_\Theta$, where $A^\ddagger_\Theta$ is defined in \eqref{Tdagger}. For later use in Sections 5.1 and 5.4, we need the quasi-hereditary structures of these algebras.

\begin{thm}\label{Adagger3}
All the statements {\rm(a)}, {\rm(b)}, and {\rm (c)} in Theorem \ref{quasi-hereditary} are true if $A^{+\natural}$ and $A^{+\natural\prime}$ are replaced by $A^{\ddagger\natural}_\Theta$ and $A^{\ddagger\natural\prime}_\Theta$, respectively. Moreover, the base-changed sequence of the defining sequence \eqref{J+seq} to $\sZ^\natural$ (or $\sZ^{\natural\prime}$) may be refined to a defining sequence of length $s=|\Lambda_\Theta|$, where $\Lambda_\Theta:=\{E\in \Lambda\mid E_{\mathbb Q(t)}|S(\omega)_{\mathbb Q(t)},\text{ for some }\omega\in\Theta\}$,
$$0=I_0^+\subseteq I_1^+\subseteq\cdots\subseteq I_s^+=A^{\ddagger\natural}_\Theta$$
such that $\bar I_j:=I_j/I_{j-1}$ is an indecomposable heredity ideal of $B_j:=A^{\ddagger\natural}_\Theta/I_{j-1}$. In particular, the highest weight category $A^{\ddagger\natural}_\Theta$-mod has standard objects
$$\Delta^+(E):=\Hom_{\sH^\natural}(S^\natural_E,T^{\ddagger\natural}),\;\;\forall E\in\Lambda$$
such that, if $\Delta^+(E)\otimes_{\sZ^\natural}\Hom_{B_j}(\Delta^+(E),B_j)\cong \bar I_j$, for some $1\leq j\leq s$, then $\Delta^+(E)\cong B_je$ for some idempotent $e\in B_j$ with $eB_je\cong\sZ^\natural$.
\end{thm}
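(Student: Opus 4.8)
\textbf{Plan for the proof of Theorem \ref{Adagger3}.} The strategy is to run the argument for $A^{+\natural}$ verbatim, with $T^{+}$ replaced by $T^{\ddagger}_\Theta=\bigoplus_{\omega\in\Theta}(T^+_\omega)^{m_\omega}$ and $\Omega$ replaced by the poset ideal $\Theta$, using the stratifying system $\{\Delta^+(\omega),P^+(\omega)\}_{\omega\in\Theta}$ provided by Corollary \ref{Adagger1} and the standard stratification \eqref{J+seq} provided by Remark \ref{Adagger2}. First I would observe that Corollary \ref{Adagger1} already puts $A^\ddagger_\Theta$-mod in the framework of Theorem \ref{thm2.5}, so the proof of Theorem \ref{SSAA+} applies (as noted in Remark \ref{Adagger2}) to give \eqref{J+seq}. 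Then, base-changing along $\sZ\to\sZ^\natural$ (respectively $\sZ\to\sZ^{\natural\prime}$ in the $^2F_4$ case), the arguments of Theorem \ref{quasi-hereditary} carry over: conditions (HI1)--(HI3) are preserved under base change of projectives and idempotent ideals, exactly as in that proof. The key input, as before, is the $\sZ^\natural$-algebra isomorphism $\End_{B^\natural}(J^\natural)\cong\End_{\sH^\natural}(N^\natural)$, obtained by transporting along the $\sZ$-linear functor $(-)^\diamond\circ(-)^*$ (now with $(-)^\diamond=\Hom_{\sH}(-,T^\ddagger_\Theta)$) together with Lemma \ref{Hoch}, where $N$ is a suitable finite direct sum of left-cell modules $S(\omega)$, $\omega\in\Theta$, built to match the multiplicities of the $\Delta^+(\omega)$'s in a chosen decomposition of each section. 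Theorem \ref{cellmodule} lifts $N^\natural$ to a $\sJ^\natural$-module $N^\natural_{\text{lift}}$, Lemma \ref{general}(b) identifies $\End_{\sH^\natural}(N^\natural)$ with $\End_{\sJ^\natural}(N^\natural_{\text{lift}})$, and Theorem \ref{maximalorder}(b) (respectively Theorem \ref{new8.4} for $^2F_4$) together with the Morita reduction in \eqref{Reiner}--\eqref{Reiner2} gives a direct product of full matrix algebras $M_{n_i}(\sZ^\natural)$, establishing (HI4) of split (hence separable) type. For the $^2F_4$ algebra $A^{\ddagger\natural}_\Theta$ itself one invokes the faithfully flat descent Theorem \ref{tjf} exactly as in the proof of Theorem \ref{quasi-hereditary}(a).

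For the refinement to an indecomposable defining sequence, I would repeat the construction in the proof of Corollary \ref{refine}, splitting each section $I^+_{h-1}\subseteq I^+_h$ of \eqref{J+seq} according to the central idempotent decomposition $1=\tilde e_1+\cdots+\tilde e_{s_h}$ of $\End_{B_h}(J^+_h/J^+_{h-1})\cong\bigoplus_i M_{n_i}(\sZ^\natural)$, where now only those $E\in\Lambda$ with $E_{\mathbb Q(t)}\,|\,S(\omega)_{\mathbb Q(t)}$ for some $\omega\in\Theta$ survive. Here the relevant index set is $\Lambda_\Theta:=\{E\in\Lambda\mid E_{\mathbb Q(t)}|S(\omega)_{\mathbb Q(t)},\ \omega\in\Theta\}$, and by Remark \ref{ss case} (applied with $T^{\ddagger}_\Theta$ in place of $T^{+}$) the modules $\{\Delta^+(E)_K\mid E\in\Lambda_\Theta\}$ form a complete set of irreducible $A^\ddagger_{\Theta,K}$-modules, so the total length of the refined sequence is $|\Lambda_\Theta|$. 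Theorem \ref{secondcellmodule} guarantees that each section of the refined sequence is isomorphic to $\Delta^+(E)^{\oplus l_j}$ for a single $E\in\Lambda_\Theta$ of the appropriate height, and the idempotent-and-trace computations in the proof of Corollary \ref{refine} (together with the bimodule arguments using $\Hom_{B_j}(\tilde e_iJ^\natural,\tilde e_jJ^\natural)=0$ for $i\neq j$) show each $\bar I_j=I^+_j/I^+_{j-1}$ is $B_j eB_j$ for an idempotent $e$ with $\bar I_j\cong \Delta^+(E)\otimes_{\sZ^\natural}\Hom_{B_j}(\Delta^+(E),B_j)$. The final clause, $\Delta^+(E)\cong B_je$ with $eB_je\cong\sZ^\natural$, follows by the argument of Corollary \ref{B'e}: from $\End_{B_j}(\bar I_j)\cong M_{n_i}(\sZ^\natural)$ one passes to a primitive matrix idempotent $e_1$, uses that finite projective $\sZ^\natural$-modules are free (Remark \ref{2.5}(c)), and concludes $\bar I_j=B_je_1B_j$ with $e_1B_je_1\cong\sZ^\natural$, so $B_je_1\cong\Delta^+(E)$ by \cite[Prop.~4.7]{Ro08} and Corollary \ref{refine}(2). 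Finally, invoking \cite[Th.~4.16]{Ro08} exactly as in Corollary \ref{refine3} yields that $A^{\ddagger\natural}_\Theta$-mod is a highest weight category with standard objects $\Delta^+(E)$, $E\in\Lambda_\Theta$.

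The main obstacle I anticipate is bookkeeping rather than a genuinely new idea: one must check that replacing $\Omega$ by the poset ideal $\Theta$ and $T^+$ by $T^\ddagger_\Theta$ (with arbitrary positive multiplicities $m_\omega$) does not disturb any of the structural statements. Specifically, I need to confirm (i) that Corollary \ref{qperm}'s vanishing $\Ext^1_{\scrE_\flat}(S_\omega,T^+_\tau)=0$ and the membership $S_\omega,T^+_\tau\in\scrA_\flat$ still suffice when $\tau$ ranges only over $\Theta$ and multiplicities are introduced — this is immediate since $\Ext^1$ commutes with finite direct sums and $\Theta\subseteq\Omega$ — and (ii) that the quasi-poset $(\Theta,\preceq)$ is genuinely a co-ideal of $(\Omega,\preceq)$, which is exactly the compatibility already recorded in the sentence preceding Corollary \ref{Adagger1}. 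The one point requiring a little care is that the set $\Lambda_\Theta$ indexing the refined sequence is precisely the set of irreducible $A^\ddagger_{\Theta,K}$-modules; this is where Remark \ref{ss case} must be re-examined with $T^\ddagger_\Theta$ in place of $T^+$, using that every $(S^\natural_E)_K$ with $E\in\Lambda_\Theta$ is a constituent of $(T^\ddagger_\Theta)_K$ while those outside $\Lambda_\Theta$ are not, so that $\Delta^+(E)_K\neq 0$ iff $E\in\Lambda_\Theta$. Once this identification is in hand, the height-function compatibility and the matrix-algebra reductions go through word for word, and the theorem follows.
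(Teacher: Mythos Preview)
Your proposal is correct and follows exactly the approach of the paper's proof, which is simply to repeat the arguments of Theorem \ref{quasi-hereditary}, Corollary \ref{refine}, and Corollary \ref{B'e} with $T^+$ replaced by $T^\ddagger_\Theta$ and $\Omega$ by $\Theta$ (the paper also notes the first assertion can alternatively be deduced from the Morita equivalence of Section 3.4). Your write-up is considerably more detailed than the paper's two-sentence proof, but the ingredients and logical flow are identical.
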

\begin{proof} Note that, by Remark \ref{Adagger2}, Theorem \ref{thm2.5} is applicable in this case. Thus, the proof of  Theorem \ref{quasi-hereditary} can be easily modified by simply adding a $^+$ to the superscripts of the various objects throughout its proof to get a proof for the first assertion. (The first assertion also follows easily from the Morita equivalence discussed in Sec 3.4.)

The remaining assertions follow from arguments similar to the proofs of Corollaries \ref{refine} and \ref{B'e}.
\end{proof}
%{\color{red} Recall the endomorphism algebra $A^\ddag_\Theta$ associated with a quasi-poset ideal $\Theta$ of $(\Omega,\leq_L)$ defined in \eqref{Tdagger}.
%\begin{cor}\label{Adagger4}
%For any quasi-poset ideal $\Theta$ of $(\Omega,\preceq)$, the $\sZ^\natural$-algebra $A^{\ddag\natural}_\Theta:=\sZ^\natural\otimes A^\ddag_\Theta$ is quasi-hereditary.
%\end{cor}
%We remark that a defining sequence for $A^\ddag_\Theta$ may be constructed by following the proof of Corollary \ref{refine},  replacing $\Lambda=\irr(\mathbb QW)$ by 
%$$\Lambda_\Theta:=\{E\in \Lambda\mid E_{\mathbb Q(t)}|S(\omega)_{\mathbb Q(t)}\text{ for some }\omega\in\Theta\}.$$ }

Finally, in a different direction, the following remark  will be useful in the next chapter.
\begin{rem} \label{433}
Let $B$ be a split quasi-hereditary algebra over $\sZ$. (For example, $B =A^{+\natural}$ as in Theorem \ref{quasi-hereditary}(b) above.) 
{\it If $R$
is a commutative Noetherian $\sZ$-algebra, $B_R:=R\otimes_\sZ B$ is a split quasi-hereditary algebra over $R$.} This is verified by directly checking the
axioms at the start of this section. For example,  if $J$ is a heredity ideal in $B$, then $_BJ$ is a projective $B$-module. It follows easily, using this projectivity,  that 
the $R$-algebra 
$\End_{B_R}(_{B_R}J)$ is isomorphic to the $R$-algebra $R\otimes_{\sZ}\End_B(_BJ)$ and so,
is split over $R$. Thus, the operation $R\otimes_\sZ -$ carries a defining sequence for $B$ (exhibiting a splitting) into a similar defining sequence for $B_R$. The other steps in checking that
$B_R$ is split quasi-hereditary are straightforward.

The other parts (a) and (c) of Theorems \ref{quasi-hereditary} and \ref{Adagger3} similarly behave well under base change.
\end{rem}

   \chapter{Some related applications}
 We now present some applications of our theory to several related topics.  First, the split quasi-heredity of $A^{+\natural}$ gives rise to a full standard basis for $A^{+\natural}$, making its centralizer subalgebras such as $A$ and $\sH$ into standardly based algebras. Second, the quasi-heredity of $A^{+\natural}$ gives a unitriangular property of several decomposition matrices, including the unipotent principal blocks of the corresponding finite groups of Lie type in cross characteristic. Third, using a recent Schur duality of Bao--Wang for certain $i$-quantum groups and Hecke algebras of type $B/C$, we relate representations of finite orthogonal/symplectic groups to those of $i$-quantum groups. Finally, we end this chapter with some open problems.
 
  \setcounter{section}{-1}
 \section{Introduction}
 
 The usual decomposition number framework for  finite groups is the classical one of Brauer, involving integral representations over DVRs (discrete valuation rings, usually complete); see, e.g., \cite{Se77}. However, the theory is not adequate for generic Hecke algebras $\sH$ (or  $A$ or $A^+$), because the generic versions of these algebras are defined over
$\sZ=\mathbb Z[t,t^{-1}]$, and some authors introduce even more variables. Geck and Rouquier provided in \cite{GR97} a general theory which applies to $\sZ$ and many higher dimensional commutative rings.
The authors of this monograph introduced in  \cite{DPS98a} a decomposition number theory applicable to $\sZ$-algebras, which (though not applicable to commutative base rings of Krull dimension $>2$) is closer to the original Brauer framework.

We will use the  \cite{DPS98a} theory in this section. An exposition, with several new results, is provided in Appendix C.  We suggest that the reader look at this section first, then read some of Appendix C.  After that, both the present section and Appendix C may be read in tandem.

\medskip%\medskip
 %\noindent 
 {\bf  Regular local triples.} Consider a ``local triple" $(R, Q, F)$,\index{local triple} with a local Noetherian integral domain $R$
 (with maximal ideal $\mathfrak m$), fraction field $Q$, and residue field $F=R/\mathfrak m$ of $R$.  Several evident variations on this notational theme are useful. For example, if $R$ is just a domain, call $(R,Q,F)$ a ring triple. As before, $Q$ is the fraction field of $R$, and there is given an identification of $F$ with $R/\mathfrak m$ for an implicitly given maximal ideal $\mathfrak m$ of $R$. Also, if  $R$ is a DVR (automatically ``local"), then $(R,Q,F)$ is a ``DVR triple."\index{local triple! DVR triple}  If $R$ is  a regular ring (in the sense of
 \cite{M86}),
 then the ring triple $(R,Q,F)$ is a ``regular triple." Regular triples will play an important role in this discussion---and $R$ will usually be a regular local ring of Krull dimension $\leq 2$.
 
 {\it In case $R$ is a regular local ring of Krull dimension $d$, we call $(R,Q,F)$ a $d$-regular local triple.}\index{local triple! regular $\sim$}
 
% Note that every ring triple $(R,Q,F)$ has an associated local triple $(R_{\mathfrak m}, Q, F)$, where $\mathfrak m$ is %the maximal ideal for which $F$ identifies with $R/\mathfrak m$.
  
  Let $B$ be an $R$-free and finite algebra. Also, assume that $R$ is a regular local ring of Krull dimension $\leq 2$. Let
 $B_Q:=B\otimes_RQ$ be the corresponding finite-dimensional $Q$-algebra, and define $B_F$ similarly. 
  A basic result given in Theorem \ref{AG2} (and based on work of Auslander-Goldman \cite{AG60a,AG60b})  states that if $\mathfrak X$ is a finite $B_Q$-module, then $\mathfrak X\cong X_Q$, for some finite $R$-free $B$-submodule $X\subseteq \mathfrak X$.
 Furthermore, assume that $X_Q$ is irreducible. If $Y$ is a second $R$-free $B$-submodule such that $\mathfrak X\cong Y_Q$, then $X_F$ and $Y_F$ have the same composition factors (with the same multiplicities) as $B_F$-modules. This means the decomposition numbers of $B$ defined below depend only on $(R,Q,F)$.
 
 Thus, under the hypotheses and notation of Theorem \ref{AG2}, there is a natural way to relate the representation theory of 
 $B_F$ (often a {\it modular theory}, usually meaning that $F$ has positive characteristic,\footnote{For algebras over $\sZ$, if the order of $t$ in $F$ is finite (that is, the image of $t$ is a root of unity), that order is often called the quantum characteristic. ``Modular theory'' can occur in this case even when $F$ has characteristic zero!}) with that of $B_Q$ (often a {\it characteristic zero theory}).  In this regard,
 we are particularly interested in the {\it decomposition map} \index{decomposition map} in display (\ref{decomMap}). Note that the hypothesis that the Krull dimension of $R$ is at most 2 is often satisfied in the present monograph, since $R$ is usually closely related to the ring $\sZ$ of integral Laurent polynomials in a variable $t$.
  \medskip

% \noindent{\underline{\bf Decomposition matrices; the unitriangular property.} }  

{\bf Decomposition matrices.} Let $(R,Q,F)$ be a regular local triple, where $R$  has Krull dimension $\leq 2$. (See above.)
   If $B=B_R$ is a finite and $R$-free algebra, we can use the triple to associate decomposition maps, decomposition numbers, and decomposition matrices to $B$, imitating the classical constructions.  
    More explicitly, suppose that 
  $X$ is a $B$-module which is finite and projective (thus free) over $R$. Assume that 
  $X_{Q}$ is an irreducible $B_{Q}$-module. If $L$ is an irreducible $B_F$-module, then the decomposition number $d_{L,X_{Q}}$ is defined to be the multiplicity $[X_F:L]$ of $L$ as a composition factor of $X_F$. This number depends only on $X_{Q}$ not on the choice of $X$ \cite[\S1.1]{DPS98a}. Collectively, the decomposition numbers form the {\it decomposition matrix},\index{decomposition matrix} with rows indexed by ``all non-isomorphic" irreducible $B_F$-modules, and columns indexed 
  by ``all non-isomorphic" irreducible $B_K$-modules.\footnote{Many authors take the decomposition matrix to be the transpose
  of our decomposition matrix. Thus, the rows of the decomposition matrix index the irreducible $B_K$-modules. This causes no difficulty, provided the reader is aware of the different conventions.}

  %(Another notion of a decomposition number and decomposition matrix is due to Geck and Rouquier, and is described in \cite[\S3.1]{GJ11}. It is compatible with our approach, but applicable to more general base rings. This will not concern us here.) Also, some authors use a transpose decomposition matrix notation; see
  %\cite[\S3.1]{GJ11}.  
  
 Notice that, as seen above, these decomposition numbers for $B$ depend only on $B$ and the underlying triple $(R,Q,F)$. It is, however, common also to assume (although we will not do it) that 
 the irreducible modules indexing the rows and columns 
  are absolutely irreducible. Equivalently, both $B_F$ and $B_Q$ are split as algebras. In many cases, this
    can be achieved by enlarging 
  $R$ and replacing $B$ by $B_R$. See Propositions  \ref{D11} and \ref{D12}, as well as Proposition \ref{D9} regarding uniqueness. 
  
  Although we often speak of ``the" decomposition matrix, it is, in fact, only determined up to a
  permutation of its rows and a permutation of its columns, unless some specific (or restricted) ordering of irreducible modules is required. We will usually be somewhat informal about this and hope that it does not cause too much confusion. 
  
  \medskip
  
{\bf Decomposition submatrices and unitriangularity.}
  Given two (rectangular) matrices $D=(d_{ij})$ and $D'=(d'_{rs})$ (decomposition matrices or not), we say that $D$ is a submatrix of $D'$ if there are injective assignments $i\mapsto i'$ and $j\mapsto j'$ of the row (respectively, column) indices of $D$ such that
  $$ d_{i,j}=d'_{i',j'},$$
  for each row index $i$ and column index $j$ of $D$.
  
  A   rectangular matrix $D=(d_{ij})$ over any ring is said to be (upper) {\it unitriangular}\index{decomposition matrix! unitriangular $\sim$} if
  
  \medskip (1) $D$ has at least as many columns as rows;
  
  \smallskip
  (2) $d_{ij}=0$ if $i>j$;  and
  
  \smallskip
  (3) $d_{ii}=1$ for each row index $i$.

   \section{Standardly based algebras}
    In this section, we consider the algebras $A$ and $A^+$ from the point of view of the theory of standardly
   based algebras as developed in Du-Rui \cite{DR98} (see \cite{KM20} for a new development). Such algebras are quite close to the Graham-Lehrer theory of cellular algebras \cite{GL96}. For example, they have a similar family of bilinear forms (see (\ref{bilinform}) below), and all cellular algebras are standardly based. But, there is no requirement that a standardly based algebra have an involution as in \cite{GL96}.

  \begin{defn}
Let $\mathcal A$ be an algebra over a commutative ring $\scrK$.
We say that $\mathcal A$ is a {\it standardly based algebra} (SBA)\index{standardly based algebra} with respect to a poset $(\Lambda,\leq)$  if the following
conditions hold:
 
(a) For any $\lambda\in \Lambda$, there are index sets
$I(\lambda)$ and $J(\lambda)$ and subsets of $\mathcal A$,
$$\sfB^\la=\{a_{i,j}^\la\mid (i,j)\in I(\la)\times J(\la)\}$$
such that $ \sfB:=\bigcup_{\la\in\Lambda} \sfB^\la$ is a disjoint union and forms a $\scrK$-basis for $\mathcal A$.
 
(b) For any $a\in \mathcal A$, $a_{i,j}^\la\in \sfB$, we have
$$\aligned
& a\cdot a_{i,j}^\la\equiv \sum_{i'\in I(\la)} f_{i',\la}
(a,i) a_{i',j}^\la\mod (\mathcal A^{>\la})\cr
&a_{i,j}^\la\cdot a\equiv \sum_{j'\in J(\la)} f^\o_{\la,j'} (a,j)
a_{i,j'}^\la \mod(\mathcal A^{>\la}),\cr
\endaligned$$
where $\mathcal A^{>\la}$ is the $\scrK$-submodule of $\mathcal A$ generated by $ \sfB^\mu$ with $\mu>\la$, and the coefficients $f_{i' ,\la} (a,i)$ and $f^\o_{\la,j'} (a,j)\in\scrK$, defined by the display, are independent of $j$
and $i$, respectively.

We also call $\sfB$ a {\it standard basis} for $\mathcal A$. \index{standard basis}
%A based algebra $(\mathcal A,\sB)$ is said to be {\it standardly based} if $\sB$is standard. We shall also call $\sB$ a {\it defining base} of the standardly based algebra $A$.
\end{defn}

For any coideal $\Gamma$ of $\Lambda$, the submodule $A^\Gamma:=\text{span}(\cup_{\lambda\in\Gamma}\sfB^\lambda)$ is an ideal of $\sA$. For a finite $\Lambda$, if we order $\Lambda=\{\la_1,\la_2,\ldots,\la_m\}$ such that $\Gamma_i=\{\la_1,\la_2,\ldots,\la_i\}$ is a coideal for every $1\leq i\leq m$, then we obtain a sequence of ideals
\begin{equation}\label{std seq}
0=J_0\subseteq J_1\subseteq J_2\subseteq\cdots\subseteq J_m=\sA,
\end{equation}
where $J_i=\sA^{\Gamma_i}$. Thus, every section $A^{\la_i}:=J_i/J_{i-1}$ is an $\sA$-$\sA$-bimodule.

Given a SBA as above, we may introduce, for each $\la\in\Lambda$, a
function $f_\la:J(\la)\times I(\la)\to\scrK$ defined as follows:
for any $a_{ij}^\lambda, a_{i'j'}^\lambda \in \sfB^\la$, there is a unique
element $f_\la(j,i')\in\scrK$ (in other words, $f_\la(j,i'):=f_{i,\la}(a_{i,j}^\la,i')=f^\o_{\la,j'}(a_{i',j',}^\la,j)$) such that
$$ a_{ij}^\lambda a_{i'j'}^\lambda\equiv f_\la(j,i') a_{ij'}^\la
\mod(\mathcal A^{>\lambda}).$$

\begin{defn} For $\la\in\Lambda$, we define $\Delta(\la)$ (resp.
$\Delta^{\!\o}(\la)$) to be the left (resp. right) $\mathcal A$-module with $\scrK$-basis
$\{a_i^\la\mid i\in I(\la)\}$ (resp. $\{ b_{j}^\la \mid j\in J(\la)\}$)
and the module action defined by
$$aa_{i}^\la=\sum_{i'\in I(\la)} f_{i',\la}(a,i)a_{i'}^\la\quad (\text{resp.}, b_{j}^\la a=\sum_{j'\in J(\la)}f^\o_{\la, j'}(a,j)b_{j'}^\la.) $$

%Let $\nabla(\la)=\Delta^\op(\la)^\ast$. 
We shall call
the modules $\Delta(\la)$ (respectively, 
$\Delta^\o(\la)$) the {\it standard} modules\index{standard module} in the category $\mathcal A$-{mod} (respectively, {mod}-${\mathcal A}$).
\end{defn}
One sees easily from the definition that there is an $\sA$-$\sA$-bimodule isomorphism
$A^\la\cong \Delta(\la)\otimes \Delta^\o(\la)$.
%Assume now that $\mathscr K$ is a regular ring and $\mathcal O$ is a localisation of $\mathscr K$ at a prime ideal with %residue field $k$ and fraction field $K$. 

%\begin{lem}[{\cite[(2.4.6)]{DR98}}] If $\mathcal A$ is standardly based and $\mathcal A_K$ is semisimple, then the %decomposition matrix relative to $(K,\mathcal O,k)$ is upper unitriangular.
%\end{lem}

We define (compare \cite[2.3]{GL96}) a bilinear pairing
$\beta_{\la}: \Delta(\la) \times \Delta^\o(\la)\rightarrow \scrK$ by putting
\begin{equation}\label{bilinform}\beta_{\la}(a_i^\la, b_j^\la)=f_{\la}(j,i).\end{equation}
Clearly, $\beta_\lambda$ is ``associative" in a natural sense, see \cite[Lem. 2.3.1(a)]{DR98}.
A SBA $\sA$ is called a  {\it standardly full-based algebra}\index{standardly full-based algebra} if $\text{im}(\beta_\la)=\scrK$ for every $\la$.

When $\scrK$ is a field, these forms can be used to determine a classification of irreducible left $\sA$-modules. In this case, we first observe that $\Delta(\la)$ is a cyclic $\sA$-module if $\beta_\la\neq0$.
Let $\Lambda_1=\Lambda_{1,\scrK}=\{\la\in \Lambda\mid \beta_\la\neq0\}$ and, for $\la\in \Lambda_1$, let
$$L(\la):=\Delta(\la)/\text{\rm rad}\Delta(\la),\;\text{ where }
\text{\rm rad}\Delta(\la)=\{v\in\Delta(\la)\mid \beta_\la(v,y)=0\;\forall y\in\Delta^\o(\la)\}$$
\begin{lem}[{\cite[Th. 2.4.1]{DR98}}]\label{La1}Maintain the notation introduced above and assume that $\sA$ is a finite-dimensional standardly based algebra 
over a field $\scrK$ with respect to a given poset $\Lambda$. Then: 
\begin{itemize}
\item[(a)] For each $\la\in\Lambda_1$, $L(\la)$ is absolutely irreducible, and $\{L(\la)\}_{\la\in \Lambda_1}$ forms a complete set of all non-isomorphic irreducible $\sA$-modules.
\item[(b)] In particular, $|\Lambda_1|$ is the number of non-isomorphic irreducible $\sA$-modules.
\item[(c)] If $L(\la)$ is a composition factor of $\Delta(\mu)$, then $\la\leq \mu$.
\item[(d)] If $\la\in\Lambda_1$, then $[\Delta(\la):L(\la)]=1$.
\end{itemize}
\end{lem}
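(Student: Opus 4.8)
\textbf{Proof proposal for Lemma \ref{La1}.}

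The plan is to adapt the classical argument for cellular algebras of Graham--Lehrer \cite{GL96}, working with the bilinear forms $\beta_\lambda$ in place of the symmetric forms used there. First I would establish the structural reduction: for a coideal-generated filtration $0=J_0\subseteq J_1\subseteq\cdots\subseteq J_m=\sA$ as in \eqref{std seq}, each section $J_i/J_{i-1}$ is the $\sA$-$\sA$-bimodule $\Delta(\lambda_i)\otimes\Delta^\o(\lambda_i)$, with multiplication controlled ``up to higher terms'' by $\beta_{\lambda_i}$; this is already recorded after the definition of the standard modules. The key computational input is the identity $a^\lambda_{ij}a^\lambda_{i'j'}\equiv f_\lambda(j,i')\,a^\lambda_{ij'}\pmod{\sA^{>\lambda}}$, which says that left multiplication by an element of $\sfB^\lambda$ acts on $\Delta(\lambda)$ through $\beta_\lambda$-pairing against the right index.

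For part (c), I would argue that if $L(\lambda)$ occurs in $\Delta(\mu)$ then $\sA^{>\mu}$ does not annihilate $\Delta(\mu)$'s relevant subquotient, and use the defining congruences (b) to see that $\sA^{>\mu}\cdot\Delta(\mu)=0$ while $\sA^{>\lambda}$ acts nontrivially on $L(\lambda)$; comparing, one forces $\lambda\le\mu$. Part (d) is immediate once we know $\Delta(\lambda)/\operatorname{rad}\Delta(\lambda)=L(\lambda)$ is a simple quotient appearing with multiplicity one, which follows because $\beta_\lambda$ descends to a nondegenerate pairing on $\Delta(\lambda)/\operatorname{rad}\Delta(\lambda)\times \Delta^\o(\lambda)/\operatorname{rad}\Delta^\o(\lambda)$, so $\operatorname{rad}\Delta(\lambda)$ is a proper submodule containing every other proper submodule (here one uses that $\beta_\lambda\ne 0$ for $\lambda\in\Lambda_1$). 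For part (a), absolute irreducibility of $L(\lambda)$ follows from the nondegeneracy of the induced form together with a Jacobson-density-style argument: the image of $\sA$ in $\End_\scrK(L(\lambda))$ contains enough rank-one operators (coming from the $a^\lambda_{ij}$) to generate the full endomorphism algebra after base change to $\bar{\scrK}$. That $\{L(\lambda)\}_{\lambda\in\Lambda_1}$ is a complete irredundant list: distinctness follows from (c) (if $L(\lambda)\cong L(\mu)$ then $\lambda\le\mu$ and $\mu\le\lambda$, so $\lambda=\mu$), and completeness follows by counting, using the filtration \eqref{std seq}: every simple module is a composition factor of some $\Delta(\lambda)$, hence by (c) is a quotient of some $\Delta(\lambda)$ with $\beta_\lambda\ne 0$, hence isomorphic to $L(\lambda)$. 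Part (b) is then a restatement of (a).

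Since this is precisely \cite[Th.~2.4.1]{DR98}, the cleanest exposition is simply to invoke that theorem, noting that our standing hypotheses (finite-dimensional SBA over a field with respect to the poset $\Lambda$) are exactly its hypotheses, and to indicate the role of the forms $\beta_\lambda$ and the set $\Lambda_1$. The main obstacle, if one wants a self-contained proof, is the absolute irreducibility claim in (a): one must check that nondegeneracy of $\beta_\lambda$ on the simple head is preserved under arbitrary field extension and genuinely forces the action to be absolutely irreducible, rather than merely irreducible; this is where the ``full'' tensor-decomposition $A^\lambda\cong\Delta(\lambda)\otimes\Delta^\o(\lambda)$ of the bimodule section does the real work, guaranteeing that the matrix coefficients $f_\lambda(j,i')$ realize all of $\operatorname{Hom}_\scrK$ between the relevant spaces. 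I would therefore present the proof as a citation to \cite{DR98}, with a short remark recalling how $\Lambda_1$ and the radical of $\beta_\lambda$ enter, and leave the verification of absolute irreducibility to the reference.
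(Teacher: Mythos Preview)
Your proposal is correct and matches the paper's approach: the paper states this lemma purely as a citation of \cite[Th.~2.4.1]{DR98} and gives no proof, exactly as you ultimately recommend. Your sketched argument is a reasonable summary of the Du--Rui proof, though the phrasing for part (c) is slightly garbled (it is $\sA^{>\mu}$ that annihilates $\Delta(\mu)$ and hence $L(\lambda)$, while $\sA^{\geq\lambda}$ does \emph{not} annihilate $L(\lambda)$, forcing $\lambda\leq\mu$); since you are citing the reference anyway, this does not matter.
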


\begin{cor}\label{SFBA}
Let $(K,\sO,k)$ be a local triple in which $\sO$ is regular of Krull dimension $\leq 2$ and suppose that $\sA$ is a SBA over $\sO$ associated with the poset $\Lambda$ such that $\sA_K$ is finite dimensional and semisimple. Then the decomposition matrix $D$ of $\sA$ is unitriangular. If, in addition, $\sA$ is standardly full-based, then $D$ is also square.
\end{cor}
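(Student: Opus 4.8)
The plan is to deduce the corollary from Lemma \ref{La1} by a standard ``generic semisimplicity forces unitriangularity'' argument, carried out over the regular local ring $\sO$ and then specialized. First I would observe that, since $\sO$ is a local triple $(K,\sO,k)$ with $\sO$ regular of Krull dimension $\leq 2$, the decomposition matrix $D$ of $\sA$ is well-defined by the Auslander--Goldman-type results quoted in the Introduction to Chapter 5 (see Theorem \ref{AG2} and the surrounding discussion): the decomposition numbers $d_{L,\mathfrak X}$ depend only on the triple and on $\sA$, not on the choice of $\sO$-free lattices. So it suffices to identify the rows and columns of $D$ and compute the relevant multiplicities using the standard basis.

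The key steps, in order, are as follows. (1) Apply the SBA axioms to the base changes $\sA_K$ and $\sA_k$: for any commutative $\sO$-algebra $\scrK'$, $\sA_{\scrK'}$ is again standardly based with respect to the same poset $\Lambda$, with standard modules $\Delta(\lambda)_{\scrK'}$ and bilinear forms $\beta_\lambda$ obtained by base change. (2) Since $\sA_K$ is semisimple, every standard module $\Delta(\lambda)_K$ must be irreducible (a standardly based algebra all of whose standard modules have a filtration-compatible triangular structure is semisimple iff each $\beta_\lambda$ is nondegenerate over $K$, hence in particular $\beta_\lambda \neq 0$ over $K$); thus $\Lambda_{1,K} = \Lambda$ and $\{\Delta(\lambda)_K\}_{\lambda\in\Lambda}$ is a complete, irredundant set of irreducible $\sA_K$-modules, indexing the columns of $D$. (3) Over the residue field $k$, Lemma \ref{La1} gives that $\{L(\lambda)\}_{\lambda\in\Lambda_{1,k}}$ is the complete set of irreducible $\sA_k$-modules, indexing the rows of $D$, with $\Lambda_{1,k}\subseteq\Lambda$. (4) Now compute $D$ entrywise. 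Choosing the natural $\sO$-lattice inside $\Delta(\lambda)_K$ with reduction $\Delta(\lambda)_k$, the $(\mu,\lambda)$-entry of $D$ is $[\Delta(\lambda)_k : L(\mu)]$ for $\mu\in\Lambda_{1,k}$. By Lemma \ref{La1}(c), this is zero unless $\mu\leq\lambda$, giving the triangularity (2) in the definition of unitriangular after choosing a linear extension of $\leq$; and by Lemma \ref{La1}(d), $[\Delta(\lambda)_k : L(\lambda)] = 1$ whenever $\lambda\in\Lambda_{1,k}$, giving the unit diagonal (3). (5) For condition (1) — at least as many columns as rows — note $|\Lambda_{1,k}| = \#\{\text{rows}\}\leq|\Lambda| = \#\{\text{columns}\}$. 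This proves $D$ is unitriangular (after discarding the columns indexed by $\lambda\notin\Lambda_{1,k}$, or rather: $D$ has a unitriangular block and the remaining columns have zeros above the diagonal — here I would be careful to phrase the conclusion exactly as in the paper's definition of ``unitriangular,'' which only requires $d_{ij}=0$ for $i>j$ and $d_{ii}=1$, both of which hold with the row/column orderings induced by a common linear extension of $(\Lambda,\leq)$). (6) Finally, if $\sA$ is standardly full-based, then $\text{im}(\beta_\lambda) = \sO$ for every $\lambda$, so in particular $\beta_\lambda$ is nonzero mod $\mathfrak m$, i.e. $\Lambda_{1,k} = \Lambda$; hence the number of rows equals $|\Lambda|$ equals the number of columns, and $D$ is square.

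The main obstacle I anticipate is step (2): showing that semisimplicity of $\sA_K$ forces each $\beta_\lambda$ (over $K$) to be nondegenerate, equivalently that each $\Delta(\lambda)_K$ is irreducible. This is essentially the statement that a standardly based algebra is semisimple iff all its cell bilinear forms are nondegenerate; it can be extracted from the general theory in \cite{DR98} (the filtration \eqref{std seq} has sections $A^{\lambda_i}\cong\Delta(\lambda_i)\otimes\Delta^{\o}(\lambda_i)$, and over $K$ semisimplicity makes each such section, hence each $\Delta(\lambda_i)_K$, semisimple; combined with Lemma \ref{La1}(d) over $K$ — i.e. $[\Delta(\lambda)_K:L(\lambda)]=1$ for $\lambda\in\Lambda_{1,K}$ — and a dimension/cardinality count, one forces $\Lambda_{1,K}=\Lambda$ and each $\Delta(\lambda)_K$ irreducible). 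I would either cite the relevant statement from \cite{DR98} directly or include a short lemma to this effect. Everything else is bookkeeping with the orderings on $\Lambda$ and the base-change compatibility of standard modules, which is routine given the machinery already assembled.
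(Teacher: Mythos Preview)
Your proposal is correct and follows essentially the same approach as the paper: show $\Lambda_{1,K}=\Lambda$ from semisimplicity, then apply Lemma \ref{La1}(c),(d) over $k$ for unitriangularity, and use the full-based hypothesis to get $\Lambda_{1,k}=\Lambda$ for squareness. The one place where the paper is slightly more direct than your step (2) is in establishing $\Lambda_{1,K}=\Lambda$: rather than a dimension/cardinality count, the paper observes that semisimplicity of $\sA_K$ makes each section $I:=\sA_K^{\lambda_i}$ of the filtration \eqref{std seq} an honest (two-sided) ideal of $\sA_K$, hence idempotent, hence $I=\sA_K e\sA_K$ for some idempotent $e$; this forces $\beta_{\lambda_i}\neq 0$ immediately (and, since there are $|\Lambda|$ such ideals and $|\Lambda|$ simple blocks, each $\Delta(\lambda)_K$ is irreducible).
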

\begin{proof}Clearly, $\sA_K$ is a SBA with standard modules $\Delta(\la)_K$, $\lambda\in\Lambda$, and has a filtration of ideals obtained by base changing \eqref{std seq} to $K$ whose sections $I:=\sA_K^{\la_i}$
 %isomorphic to $(\Delta(\la)\otimes\Delta(\lambda)^\op)_K$. 
 are ideals of $\sA_K$ (since $\sA_K$ is semisimple). Thus, $I^2=I$ and, hence,  there exists an idempotent $e\in I$ such that $I=\sA_Ke\sA_K$. This implies $\Lambda_{1,K}=\Lambda$. Now the first assertion for the $|\Lambda_{1,k}|\times|\Lambda|$-matirx $D$ follows from the lemma above. If $\sA$ is standardly full-based, then $\Lambda_{1,k}=\Lambda$. Hence, $D$ is a square matrix.
\end{proof}

The next proposition follows from a general result proved in \cite[(4.2.7)]{DR98}. The assumption there that the base ring $k$ be a commutative local ring is not necessary here when $k=\sZ^\natural$ (or $\sZ^{\natural\prime}$ if we are in the ${}^2F_4$ case). This is because Swan's result \cite[p.111]{Sw78} implies that any finite projective $\sZ$-module is automatically free.\footnote{Note that $\sZ^\natural=\mathbb Z^\natural[t,t^{-1}]$, where $\mathbb Z^\natural$ is a PID. Similarly, $\sZ^{\natural\prime}=\sZ^{\prime\natural}=\mathbb Z^{\prime\natural}[t,t^{-1}]$, where $\mathbb Z^{\prime\natural}$ is a PID. Thus, Swan's result \cite[p.111]{Sw78} applies.}  For completeness, we provide a constructive argument for the proposition.

\begin{prop} \label{prop125} (a) Assume that we are not in the $^2F_4$ case. Then, the split quasi-hereditary ${\mathcal Z}^\natural$-algebra $A^{+\natural}$  is a standardly full-based algebra.

(b) In the case of $^2F_4$, the split quasi-hereditary $\sZ^{\natural\prime}$-algebra $A^{+\natural\prime}$ is a standardly full-based algebra. 
\end{prop}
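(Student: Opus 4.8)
The plan is to show that the split quasi-hereditary structure of $A^{+\natural}$ (Theorem~\ref{quasi-hereditary}(b)) automatically furnishes a standard full basis. The key bridge is the observation, recorded after Definition~\ref{QHA}, that a split heredity ideal $J$ in an algebra $B$ over $\scrK$ carries a $B$-$B$-bimodule isomorphism $J\cong L\otimes_{\scrK}\Hom_B(L,B)$ (Rouquier's isomorphism \eqref{Roiso}, combined with the fact that $\End_B(L)\cong\scrK$ for an indecomposable split heredity ideal, see Remark~\ref{semisplit}(2) and Corollary~\ref{B'e}). Comparing this with the bimodule factorization $A^{\lambda}\cong\Delta(\lambda)\otimes\Delta^\o(\lambda)$ that any standardly based algebra enjoys, the strategy is to use the indecomposable defining sequence $0=I_0\subseteq I_1\subseteq\cdots\subseteq I_s=A^{+\natural}$ from Corollary~\ref{refine} as the ideal filtration \eqref{std seq} for the would-be standard basis.

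First I would fix, for each $1\le j\le s$, the idempotent $e\in B':=A^{+\natural}/I_{j-1}$ with $I_j/I_{j-1}=B'eB'$ and $B'e\cong\Delta(E_j)$ (Corollary~\ref{B'e}), where $E_j\in\Lambda$ is the unique label attached to that section. Since $eB'e\cong\sZ^\natural$ (again Corollary~\ref{B'e}), lifting along $B'e\cong\Delta(E_j)$ and $eB'\cong\Hom_{B'}(B'e,B')\cong\Delta^\o(E_j)$ gives a bimodule isomorphism $I_j/I_{j-1}\cong \Delta(E_j)\otimes_{\sZ^\natural}\Delta^\o(E_j)$. Next, because $\Delta(E_j)$ and $\Delta^\o(E_j)$ are finite free $\sZ^\natural$-modules (free by Swan's theorem, cf.\ Remark~\ref{2.5}(c) and the footnote in the excerpt on $\sZ^\natural=\mathbb Z^\natural[t,t^{-1}]$ being a polynomial ring over a PID), I would choose $\sZ^\natural$-bases $\{a_i^{E_j}\}_{i\in I(E_j)}$ and $\{b_k^{E_j}\}_{k\in J(E_j)}$ and set $a_{i,k}^{E_j}$ to be a fixed preimage in $I_j$ of $a_i^{E_j}\otimes b_k^{E_j}$ in $I_j/I_{j-1}$. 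The union of all these over $j$ is then a $\sZ^\natural$-basis of $A^{+\natural}$ by a standard telescoping argument through the filtration.

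The multiplication axioms (b) of a standardly based algebra should then fall out of the bimodule structure: for $a\in A^{+\natural}$, multiplying $a_{i,k}^{E_j}$ on the left and passing to the quotient $I_j/I_{j-1}\cong\Delta(E_j)\otimes\Delta^\o(E_j)$ shows $a\cdot a_{i,k}^{E_j}$ is congruent modulo $\bigl(A^{+\natural}\bigr)^{>E_j}$ (spanned by the higher sections, i.e.\ $I_{j-1}$ together with the coideal of larger $E$) to a $\sZ^\natural$-combination of the $a_{i',k}^{E_j}$ with coefficients the matrix entries of $a$ acting on $\Delta(E_j)$, independent of $k$; the right-module version is symmetric. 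The poset here is $\Lambda=\irr(\mathbb QW)$ with the order $\preceq$ from \eqref{orderLa}, and one must check that the coideal $\Gamma_i$ associated to $I_i$ really is a $\preceq$-coideal — this is exactly the content of the paragraph in the proof of Corollary~\ref{refine} identifying $\Gamma_i=\bigl(\bigcup_{h''>h'}\Lambda_{h''}\bigr)\cup\Phi$. Finally, \emph{standardly full-based} requires $\mathrm{im}(\beta_{E})=\sZ^\natural$ for every $E$; since the section $I_j/I_{j-1}$ is an indecomposable \emph{split} heredity ideal, $eB'e\cong\sZ^\natural$ forces the associativity pairing $\beta_{E_j}$ to be unimodular — concretely, the element $f_{E_j}(k,i)$ computing $a_{i,k}^{E_j}a_{i',k'}^{E_j}$ corresponds to the composite $\Delta^\o(E_j)\otimes\Delta(E_j)\to eB'e\cong\sZ^\natural$, which is surjective because $e\in B'eB'$. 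The case $^2F_4$ is handled \emph{mutatis mutandis}, replacing $\sZ^\natural$ by $\sZ^{\natural\prime}=\sZ^\natural[\sqrt2]$ and invoking Theorem~\ref{quasi-hereditary}(c), Corollary~\ref{refine} in its $^2F_4$ form, and the fact that $\mathbb Z^{\natural\prime}$ is again a PID.

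\textbf{Main obstacle.} The routine-looking but genuinely delicate point is the \emph{compatibility of the bimodule factorizations across the filtration}: one needs the preimages $a_{i,k}^{E_j}\in I_j$ to be chosen so that the left and right structure constants are simultaneously independent of $k$ and $i$ respectively, \emph{modulo the same ideal} $(A^{+\natural})^{>E_j}$ that the standardly based definition demands — and that this ideal coincides with $I_{j-1}$ plus the span of higher-section bases. This requires knowing that $\End_{A^{+\natural}/I_{j-1}}(I_j/I_{j-1})\cong\sZ^\natural$ acts on $\Delta(E_j)$ through scalars and not merely through a matrix algebra, which is precisely where \emph{indecomposability} of the split heredity ideals in Corollary~\ref{refine} (as opposed to the coarser split heredity ideals of the original defining sequence) is essential; without the refinement one would only get a standardly based structure over a product of matrix algebras, not over $\sZ^\natural$ itself.
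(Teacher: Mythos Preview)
Your proposal is correct and follows essentially the same route as the paper's proof: both use the indecomposable defining sequence $0=I_0\subseteq\cdots\subseteq I_s=A^{+\natural}$ from Corollary~\ref{refine}, invoke the bimodule factorization $I_j/I_{j-1}\cong\Delta(E_j)\otimes_{\sZ^\natural}\Delta^\o(E_j)$ via Rouquier's isomorphism and Corollary~\ref{B'e}, choose free $\sZ^\natural$-bases (Swan), lift through the filtration, and deduce fullness from $e_iB'e_i\cong\sZ^\natural$. The paper is terser because it cites \cite[(4.2.7)]{DR98} for the general mechanism, whereas you spell out the verification of axiom~(b); your ``main obstacle'' is exactly the point the paper handles by the inductive lifting and the indecomposability hypothesis you correctly isolate.
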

\begin{proof}We only give the proof in case (a).
Let ${\mathcal A}:=A^{+\natural}$ and
let $\Lambda=\text{Irr}({\mathbb QW} )$. Then, by Theorem \ref{quasi-hereditary}(b), $\sA$ is a split quasi-hereditary algebra, and
$\Lambda$ becomes a poset with the partial order given in \eqref{orderLa} which  %\eqref{order}, 
is induced from the preorder \eqref{preorder1}. 
%For each $\la\in\Lambda$,  let $\Delta(\la)=\Delta^\natural(\la)$ and $\Delta(\la)^\op=\Hom_B(\Delta^\natural(\la),B)$ denote the standard left $\mathcal A$-modules and standard right $\mathcal A$-modules, which are $\mathcal Z^\natural$-projective. Hence, both are free $\mathcal Z^\natural$-modules.
Consider the defining sequence of length $n=|\Lambda|$ given in Corollary \ref{refine}: 
$$0=I_0\subset I_1\subset I_2\subset\cdots\subset I_{n}={\mathcal A}.$$
Assume that the chain is associated with the linear order $\la^{(1)},\la^{(2)},\ldots,\la^{(n)}$ on $\Lambda$, compatible with the partial order in $\Lambda$.
For each $i=1,\ldots,n$, ${\mathcal A}_i:={\mathcal A}/I_{i-1}$ is quasi-hereditary and $\bar I_i=I_{i}/I_{i-1}$ is a heredity ideal of $\sA_i$. The split condition then implies, by Corollary \ref{refine}(2), that there is an ${\mathcal A}_i$-${\mathcal A}_i$-bimodule isomorphism 
\begin{equation}\label{AeA}
\bar I_i\overset{\phi_i}\cong \Delta(E)\otimes_{\mathcal Z^\natural} \Hom_{\sA_i}(\Delta(E),\sA_i),
\end{equation}
where  $\la^{(i)}=E$. Since both
$\Delta(\la^{(i)}):=\Delta(E)$ and $\Delta^\o(\la^{(i)}):=\Hom_{\sA_i}(\Delta(E),\sA_i)$ are $\mathcal Z^\natural$-projective, they are free $\mathcal Z^\natural$-modules.

Now, we may construct a standard basis by induction. The case for $i=n$ is clear, since bases for  $\Delta(\la^{(n)})$ and $\Delta^\o(\la^{(n)})$ give rise to a standard basis for ${\mathcal A}_n$. Assume now, for $1<k\leq n$, that a standard basis $\sfB_k=\bigcup_{l=k}^n\{b_{i,j}^{\la^{(l)}}\mid i\in I(\la^{(l)}),j\in J(\la^{(l)})\}$, for $\mathcal A_k={\mathcal A}/I_{k-1}$ has been constructed.  Use \eqref{AeA} and the bases $\{a_i\}$ and $\{b_j\}$ for $\Delta(\la^{(k-1)})$ and $\Delta^\o(\la^{(k-1})$, respectively, to get a basis $\{a^{\la^{(k-1)}}_{i,j}=\phi_{k-1}^{-1}(a_i\otimes b_j)\}_{i,j}$ for $\bar I_{k-1}$. This together with a pre-image $\{a_{i,j}^{\la^{(l)}}\}_{l,i,j}$ of the basis $\sfB_k$
%$\{b_{i,j}^{\la^{(l)}}\}_{l,i,j}$ $(i\leq l\leq m$, $i\in I(\la^{(l)}), j\in J(\la^{(l)})$) 
under $\pi_k:\sA_{k-1}\to\sA_k$, the natural homomorphism, gives a basis 
$$\sfB_{k-1}=\bigcup_{l=i-1}^n\{a_{i,j}^{\la^{(l)}}\mid i\in I(\la^{(l)}),j\in J(\la^{(l)})\},$$ for $\sA_{k-1},$. Here, we require $\sfB_{k-1}$ to satisfy the condition
% that $\bigcup_{l=k-1}^k \{a_{i,j}^{\la^{(l)}}\mid i\in I(\la^{(l)}),j\in J(\la^{(l)})\}$ forms a basis for $J_l:=I_{l}/I_{l-1}$ for all $l=k-1,k,\ldots,n$ and 
 $(a_{i,j}^{\la^{(l)}}+I_{k-1})/I_{k-1}=b_{i,j}^{\la^{(l)}}$, for all $l=k,\ldots,n$. Now, by induction, $\sfB=\sfB_0$ (for $i=1$) is a desired standard basis. 

Finally, the basis $\sfB$ is a full-standard basis, since every heredity ideal $\bar I_i$ has the form $\bar I_i=\sA_ie_i\sA_i$ with $\Delta(\la^{(i)})\cong \sA_ie_i$, $\Delta^\o(\la^{(i)})\cong e_i\sA_i$ (see Corollary \ref{B'e}), and
$e_i\sA_ie_i\cong \sZ^\natural$ for some idempotents $e_i\in\sA_i$, where $i=1,2,\ldots,n$.
%$Ae$ is a standard module and $eA$ is a dual costandard module.
\end{proof}
We now use Theorem \ref{Adagger3} to get a more general version of the proposition.

\begin{cor}\label{Adagger4}
For a poset ideal $\Theta$ of $(\Omega,\leq_L)$,
the statements {\rm(a)} and  {\rm(b)} in Proposition \ref{prop125} are true if $A^{+\natural}$ and $A^{+\natural\prime}$ are replaced by the quasi-hereditary algebras $A^{\ddagger\natural}_\Theta$ and $A^{\ddagger\natural\prime}_\Theta$, respectively.
\end{cor}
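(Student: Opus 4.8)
The statement to prove is Corollary \ref{Adagger4}: for a poset ideal $\Theta$ of $(\Omega,\leq_L)$, the quasi-hereditary algebras $A^{\ddagger\natural}_\Theta$ (and $A^{\ddagger\natural\prime}_\Theta$ in the ${}^2F_4$ case) are standardly full-based algebras. The plan is to run the argument of Proposition \ref{prop125} verbatim, with the objects $A^{+\natural}$, $\Delta(E)$, $P(E)$, etc.\ systematically replaced by their ``$\ddagger$''-decorated counterparts, relying on Theorem \ref{Adagger3} to supply the split quasi-hereditary structure and the indecomposable defining sequence that drives the construction.

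First I would invoke Theorem \ref{Adagger3}: it tells us that $A^{\ddagger\natural}_\Theta$ is split quasi-hereditary over $\sZ^\natural$, that the base change to $\sZ^\natural$ of the defining sequence \eqref{J+seq} refines to an indecomposable defining sequence $0=I_0^+\subseteq I_1^+\subseteq\cdots\subseteq I_s^+=A^{\ddagger\natural}_\Theta$ of length $s=|\Lambda_\Theta|$, and that each section $\bar I_j=I_j^+/I_{j-1}^+$ is an indecomposable split heredity ideal of $B_j:=A^{\ddagger\natural}_\Theta/I_{j-1}^+$ satisfying a Rouquier-type isomorphism $\bar I_j\cong\Delta^+(E)\otimes_{\sZ^\natural}\Hom_{B_j}(\Delta^+(E),B_j)$ for a suitable $E\in\Lambda_\Theta$, with $\Delta^+(E)\cong B_je$ for an idempotent $e\in B_j$ with $eB_je\cong\sZ^\natural$. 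Then I would order $\Lambda_\Theta=\{\lambda^{(1)},\ldots,\lambda^{(s)}\}$ compatibly with the partial order induced by \eqref{orderLa}, so that the $I_j^+$'s correspond to the coideals $\Gamma_j=\{\lambda^{(1)},\ldots,\lambda^{(j)}\}$. Since $\Delta^+(\lambda^{(j)}):=\Delta^+(E)$ and $\Delta^{+\o}(\lambda^{(j)}):=\Hom_{B_j}(\Delta^+(E),B_j)$ are $\sZ^\natural$-projective, hence $\sZ^\natural$-free by Swan's theorem (Remark \ref{2.5}(c)), the bimodule isomorphism $\bar I_j\cong\Delta^+(\lambda^{(j)})\otimes_{\sZ^\natural}\Delta^{+\o}(\lambda^{(j)})$ lets me build a basis of $\bar I_j$ out of bases of the two tensor factors.

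Next I would carry out the downward induction on $j$ exactly as in the proof of Proposition \ref{prop125}: starting from the top section (where bases of $\Delta^+(\lambda^{(s)})$ and $\Delta^{+\o}(\lambda^{(s)})$ already give a standard basis of $B_s$), at each stage lift a standard basis $\sfB_k$ of $B_k=A^{\ddagger\natural}_\Theta/I_{k-1}^+$ through the natural surjection $\pi_k:B_{k-1}\to B_k$, adjoin the basis $\{(\phi_{k-1})^{-1}(a_i\otimes b_j)\}$ of $\bar I_{k-1}$ obtained from the Rouquier isomorphism, and observe that the defining relations (b) for a standardly based algebra hold modulo the appropriate ideal because $\bar I_{k-1}$ is a heredity ideal and the $\Delta^+$'s are genuine standard modules. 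The induction terminates with $\sfB=\sfB_0$, a standard basis of $A^{\ddagger\natural}_\Theta$ indexed by the poset $\Lambda_\Theta$, so $A^{\ddagger\natural}_\Theta$ is a standardly based algebra. Full-basedness follows because each $\bar I_j=B_je_jB_j$ with $\Delta^+(\lambda^{(j)})\cong B_je_j$, $\Delta^{+\o}(\lambda^{(j)})\cong e_jB_j$, and $e_jB_je_j\cong\sZ^\natural$ (the analog of Corollary \ref{B'e}, whose proof works here too since Theorem \ref{Adagger3} supplies all the needed input), which forces $\operatorname{im}(\beta_{\lambda^{(j)}})=\sZ^\natural$. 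For the ${}^2F_4$ case one replaces $\sZ^\natural$ by $\sZ^{\natural\prime}$, $\mathbb Q$ by $\mathbb Q'$, and uses the corresponding parts of Theorem \ref{Adagger3}.

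The only thing that requires genuine checking rather than cosmetic substitution is that Theorem \ref{Adagger3} really does deliver, for each section $\bar I_j$, a Rouquier isomorphism together with an idempotent $e_j$ with $e_jB_je_j\cong\sZ^\natural$ and $B_je_j\cong\Delta^+(\lambda^{(j)})$ --- i.e.\ the full analog of Corollaries \ref{refine} and \ref{B'e} in the $\ddagger$-setting. The statement of Theorem \ref{Adagger3} asserts precisely this (``$\Delta^+(E)\cong B_je$ for some idempotent $e\in B_j$ with $eB_je\cong\sZ^\natural$''), and its proof was reduced there to ``arguments similar to the proofs of Corollaries \ref{refine} and \ref{B'e}.'' So the main (and really the only) obstacle is bookkeeping: making sure the linear refinement of $\Lambda_\Theta$ used in building the standard basis is the same one used in Theorem \ref{Adagger3}'s defining sequence, and that the multiplicities $l_j$ appearing in $\bar I_j\cong\Delta^+(\lambda^{(j)})^{\oplus l_j}$ match the ranks of the free $\sZ^\natural$-modules $\Delta^{+\o}(\lambda^{(j)})$. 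Once that alignment is in place, the induction is purely formal, and the corollary follows.
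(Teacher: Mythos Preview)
Your proposal is correct and follows exactly the approach the paper intends: the paper gives no explicit proof for this corollary, merely prefacing it with ``We now use Theorem \ref{Adagger3} to get a more general version of the proposition,'' and your argument does precisely that---rerunning the proof of Proposition \ref{prop125} with Theorem \ref{Adagger3} supplying the split quasi-hereditary structure, the indecomposable defining sequence, the Rouquier isomorphisms, and the idempotents $e$ with $eB_je\cong\sZ^\natural$ in the $\ddagger$-setting.
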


The following lemma follows from the proof of the above proposition. The base ring $\sZ^\natural$ can be replaced by any commutative Noetherian ring with the property that any finite projective module is free.

\begin{lem}\label{fAf}
If $\mathcal A$ is a standardly based algebra over $\sZ^\natural$ (or 
$\sZ^{\natural\prime}$), and if $e\in\mathcal A$ is an idempotent, then $e\mathcal Ae$ is standardly based.  \end{lem}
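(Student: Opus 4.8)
The statement to be proved is Lemma~\ref{fAf}: if $\mathcal A$ is a standardly based algebra over $\sZ^\natural$ (or $\sZ^{\natural\prime}$) with respect to a poset $\Lambda$, and $e\in\mathcal A$ is an idempotent, then $e\mathcal Ae$ is standardly based. As the text following Proposition~\ref{prop125} indicates, this is essentially the content of \cite[(4.2.7)]{DR98}, specialized to a base ring over which finite projective modules are automatically free (true for $\sZ^\natural$ and $\sZ^{\natural\prime}$ by Swan's theorem, cf.\ Remark~\ref{2.5}(c) and the footnote to Proposition~\ref{prop125}). The plan is therefore to reproduce the argument of Du--Rui in the present setting, checking only that the local-ring hypothesis used there can be replaced by the freeness of finite projective modules.

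First I would set up the candidate standard basis for $e\mathcal Ae$. Given the standard basis $\sfB=\bigcup_{\lambda\in\Lambda}\sfB^\lambda$ of $\mathcal A$ with $\sfB^\lambda=\{a^\lambda_{i,j}\mid (i,j)\in I(\lambda)\times J(\lambda)\}$, one considers for each $\lambda$ the left $\mathcal A$-module $\Delta(\lambda)$ with basis $\{a^\lambda_i\}_{i\in I(\lambda)}$ and the right module $\Delta^{\!\o}(\lambda)$ with basis $\{b^\lambda_j\}_{j\in J(\lambda)}$. The idempotent $e$ acts on $\Delta(\lambda)$, and $e\Delta(\lambda)$ is a direct summand of the $\sZ^\natural$-free module $\Delta(\lambda)$, hence finite projective, hence free over $\sZ^\natural$; fix a basis $\{\tilde a^\lambda_p\}_{p\in \tilde I(\lambda)}$ of $e\Delta(\lambda)$, and similarly a basis $\{\tilde b^\lambda_q\}_{q\in \tilde J(\lambda)}$ of $\Delta^{\!\o}(\lambda)e$. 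The subset $\tilde\Lambda:=\{\lambda\in\Lambda\mid e\Delta(\lambda)\neq 0\neq \Delta^{\!\o}(\lambda)e\}$ will index the standard basis of $e\mathcal Ae$. Using the bimodule isomorphism $A^\lambda\cong \Delta(\lambda)\otimes_{\sZ^\natural}\Delta^{\!\o}(\lambda)$ recorded in the excerpt, one has $eA^\lambda e\cong e\Delta(\lambda)\otimes_{\sZ^\natural}\Delta^{\!\o}(\lambda)e$, and lifting the pure tensors $\tilde a^\lambda_p\otimes\tilde b^\lambda_q$ back through the filtration $\{J_i=\sA^{\Gamma_i}\}$ produces elements $\tilde a^\lambda_{p,q}\in e\mathcal Ae$. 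I would check that $\{\tilde a^\lambda_{p,q}\mid \lambda\in\tilde\Lambda,\ p\in\tilde I(\lambda),\ q\in\tilde J(\lambda)\}$ is a $\sZ^\natural$-basis of $e\mathcal Ae$: the coideal filtration of $\mathcal A$ restricts to a filtration of $e\mathcal Ae$ by the ideals $eJ_ie$, with sections $eA^\lambda e$, and freeness over $\sZ^\natural$ of each section (again a direct summand of a free module) lets one splice bases of the sections into a basis of $e\mathcal Ae$.

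Next I would verify the two triangular multiplication rules defining a standardly based algebra. For $x\in e\mathcal Ae$ and a basis element $\tilde a^\lambda_{p,q}$, write $x=exe$ and compute $x\cdot \tilde a^\lambda_{p,q}$ inside $\mathcal A$; the standard-basis relations for $\mathcal A$ modulo $\mathcal A^{>\lambda}$ give $x\cdot a^\lambda_{i,j}\equiv\sum_{i'}f_{i',\lambda}(x,i)a^\lambda_{i',j}$, and multiplying on the left by $e$ and reorganizing via the basis $\{\tilde a^\lambda_p\}$ of $e\Delta(\lambda)$ yields coefficients $\tilde f_{p',\lambda}(x,p)\in\sZ^\natural$ independent of $q$, modulo $e\mathcal A^{>\lambda}e=$ the span of the $\sfB$-images with index $>\lambda$, which is exactly the span of $\tilde\sfB^\mu$ with $\mu>\lambda$ restricted to $\tilde\Lambda$. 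The right-module relation is symmetric. One also checks that $\tilde\Lambda$ inherits a poset structure from $\Lambda$ (it is just the induced order on a subset), and that the resulting $\tilde A^\lambda:=eJ_ie/eJ_{i-1}e$ sits in the filtration correctly. The bulk of this is bookkeeping identical to \cite[\S4.2]{DR98}, so I would cite that and only indicate the points where freeness replaces localness.

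The main obstacle I anticipate is purely the freeness issue: in \cite{DR98} the hypothesis that $k$ be local is used precisely to guarantee that $e\Delta(\lambda)$, $\Delta^{\!\o}(\lambda)e$, and the sections $eA^\lambda e$ are free (projectives over a commutative local ring are free), so that one can choose the $\tilde a,\tilde b$ bases and splice. Over $\sZ^\natural$ (respectively $\sZ^{\natural\prime}=\sZ^\natural[\sqrt 2]$) this is supplied instead by Swan's theorem, since $\sZ^\natural=\mathbb Z^\natural[t,t^{-1}]$ with $\mathbb Z^\natural$ a PID (and likewise $\sZ^{\natural\prime}$), so every finite projective module over it is free; this is already the observation used in the footnote to Proposition~\ref{prop125}. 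Once freeness of the relevant summands is in hand, the Du--Rui argument goes through verbatim, and no further obstruction arises. I would close by remarking, as the statement of the lemma already notes, that the same proof works over any commutative Noetherian ring with the property that every finite projective module is free.
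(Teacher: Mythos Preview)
Your proposal is correct and follows essentially the same approach as the paper: restrict the coideal filtration $\{J_i\}$ of $\mathcal A$ to $\{eJ_ie\}$, use the bimodule isomorphism $J_i/J_{i-1}\cong\Delta(\lambda^{(i)})\otimes\Delta^{\!\o}(\lambda^{(i)})$ to identify $eJ_ie/eJ_{i-1}e\cong e\Delta(\lambda^{(i)})\otimes\Delta^{\!\o}(\lambda^{(i)})e$, invoke Swan's theorem to get freeness of the summands $e\Delta(\lambda)$ and $\Delta^{\!\o}(\lambda)e$, and lift bases through the filtration. The paper's proof is terser (it concludes with ``By induction, we may now construct a standard basis for $e\sA e$ using the bases for $e\Delta(\la)$ and $\Delta^\o(\la)e$'') and does not single out the subposet $\tilde\Lambda$ explicitly, but the substance is identical.
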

\begin{proof}Suppose $\sfB=\bigcup_{\la\in\La}\sfB^\la$ is a standard basis for $\sA$ and assume $\La=\{\la^{(1)},\la^{(2)},\ldots,\la^{(m)}\}$ such that $i<j$ whenever $\la^{(i)}>\la^{(j)}$. Then, the two-sided ideals $J_i=\text{span}(\sfB^{\la^{(1)}}\cup\cdots\cup \sfB^{\la^{(i)}})$ of  $\sA$ gives a sequence in $\sA$
$$0=J_0\subseteq J_1\subseteq \cdots \subseteq J_m=\sA.$$
This, in turn, gives a sequence of two-sided ideals in $e\sA e$
$$0=eJ_0e\subseteq eJ_1e\subseteq \cdots \subseteq eJ_me=e\sA e.$$
Since all standard modules $\Delta(\la)$ and $\Delta^\o(\la)$ of $\sA$ are $\sZ^\natural$-free, it follows that $e\Delta(\la)$ and $\Delta^\o(\la) e$ are projective $\sZ^\natural$-modules. Hence, they are $\sZ^\natural$-free by Swan's result (if non-zero). Since $J_i/J_{i-1}\cong
\Delta(\la^{(i)})\otimes\Delta^\o(\la^{(i)})$ as $\sA_i$-$\sA_i$-bimodules, where $\sA_i=\sA/J_{i-1}$, it follows that
$eJ_ie/eJ_{i-1}e\cong e\Delta(\la^{(i)})\otimes\Delta^\o(\la^{(i)}) e$ for all $i=1,\ldots,m$.
By induction, we may now construct a standard basis for $e\sA e$ using the bases for $e\Delta(\la)$ and $\Delta^\o(\la)e$.
\end{proof}

%\begin{proof}
%The standard basis is constructed from the local bases by induction. Since $\mathcal A$ is quasi-hereditary, a heredity ideal has the form $J=\mathcal Ae\mathcal %A\cong \mathcal Ae\otimes_{e\mathcal Ae} e\mathcal A$ (a $\mathcal A$-$\mathcal A$-bimodule isomorphism). If $fJf$ is nonzero, then the isomorphism
%$J\cong \mathcal Ae\otimes e\mathcal A$ induces isomorphism $fJf\cong f\mathcal Ae\otimes e\mathcal Af$. Thus, bases for $f\mathcal Ae$ and $e\mathcal Af$ can be used to construct a standard basis for $\mathcal A$ by induction (see the proof of \cite[(4.3.2)]{DR98}.
%\end{proof}

%For a commutative local Noetherian ring $\scrK$, a $\scrK$-algebra $\mathcal B$ is split quasi-hereditary  if and only if it is a standardly full-based algebra.

As a corollary, we obtain the following theorem. 
\begin{thm}\label{DM} (a) Assume that we are not in the case of  $^2F_4$. The $\sZ^\natural$-algebras $A^\natural$, $A^{+\natural}$ and $\sH^\natural$ are all standardly based algebras.  

(b) The analogous result holds in type $^2F_4$ for the corresponding $\sZ^{\natural\prime}$--algebras. (See the appendix to Section 4.1.)
\end{thm}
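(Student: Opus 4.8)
\textbf{Proof proposal for Theorem \ref{DM}.} The plan is to deduce all assertions from the split quasi-hereditary structure of $A^{+\natural}$ (Theorem \ref{quasi-hereditary}(b), and its $^2F_4$ counterpart in part (c)) together with the fact that $A^\natural$ and $\sH^\natural$ are centralizer subalgebras $e A^{+\natural}e$ for suitable idempotents $e\in A^{+\natural}$, and then invoke Lemma \ref{fAf}. More precisely, first I would note that $A^{+\natural}=\End_{\sH^\natural}(T^{+\natural})$ with $T^{+\natural}=T^\natural\oplus X^\natural$, so the idempotent projection $e\in A^{+\natural}$ of $T^{+\natural}$ onto $T^\natural$ satisfies $eA^{+\natural}e\cong\End_{\sH^\natural}(T^\natural)=A^\natural$. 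Likewise, $\sH^\natural$ appears (up to Morita equivalence, hence up to a further idempotent truncation) as $\End_{\sH^\natural}(\sH^\natural)$, and by the remarks following Theorem \ref{11.2} (see Remarks after Corollary \ref{11.3}(b)) the regular module $\sH^\natural$ is a direct summand of $T^{+\natural}$ (or of a finite direct sum of copies of it); thus there is an idempotent $e'\in A^{+\natural}$ (or in a matrix algebra over it) with $e'A^{+\natural}e'\cong\End_{\sH^\natural}(\sH^\natural)\cong(\sH^\natural)^{\op}$, whence $\sH^\natural$ itself is a centralizer subalgebra of an algebra of the same Morita-and-idempotent-truncation type.

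Next I would apply Proposition \ref{prop125}, which already tells us that $A^{+\natural}$ (respectively $A^{+\natural\prime}$ in type $^2F_4$) is a standardly full-based algebra over $\sZ^\natural$ (respectively $\sZ^{\natural\prime}$). Combining this with Lemma \ref{fAf}, which asserts that an idempotent truncation $e\mathcal A e$ of a standardly based $\sZ^\natural$-algebra $\mathcal A$ is again standardly based, gives immediately that $A^\natural=eA^{+\natural}e$ is standardly based. For $\sH^\natural$, since truncation by an idempotent commutes (up to the obvious identifications) with passing to the opposite algebra, and since the opposite of a standardly based algebra is standardly based (interchange the roles of the index sets $I(\lambda)$ and $J(\lambda)$ in the definition), one concludes $\sH^\natural$ is standardly based as well. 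Part (b), the type $^2F_4$ case, follows by the identical argument with $\sZ^\natural$ replaced by $\sZ^{\natural\prime}=\sZ'[1/\sqrt 2]$, $A^{+\natural}$ by $A^{+\natural\prime}$, using Proposition \ref{prop125}(b), the $^2F_4$ part of Lemma \ref{fAf} (which holds verbatim, since $\sZ^{\natural\prime}=\mathbb Z^{\prime\natural}[t,t^{-1}]$ with $\mathbb Z^{\prime\natural}$ a PID, so Swan's theorem still gives freeness of finite projective modules), and the corresponding $^2F_4$ version of the summand statement for the regular module.

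The main obstacle I anticipate is the bookkeeping around $\sH^\natural$: one must be careful that the relevant $e'$ genuinely lives in $A^{+\natural}$ (or a fixed matrix algebra over it) rather than only in a Morita-equivalent algebra, since Lemma \ref{fAf} is stated for honest idempotent truncations. This is handled by the remark in the monograph that $T^{+\natural}$, or $(T^{+\natural})^{\oplus m}$ for some $m$, contains $\sH^\natural$ as a direct summand (an instance of Remarks \ref{2.5}-type trace arguments together with the uniqueness theory of Section 3.4); passing to a matrix algebra $M_m(A^{+\natural})=\End_{\sH^\natural}((T^{+\natural})^{\oplus m})$ is harmless because $M_m$ of a standardly based algebra is standardly based (take $I(\lambda)$ and $J(\lambda)$ to be the old index sets times $\{1,\dots,m\}$), and then the required projection idempotent does lie in $M_m(A^{+\natural})$. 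A secondary point worth a line of care is that Lemma \ref{fAf} only uses that finite projective $\sZ^\natural$-modules are free; this is exactly Swan's result quoted in Remark \ref{2.5}(c), so there is nothing new to check, but it should be cited explicitly when invoking the lemma in both cases (a) and (b).
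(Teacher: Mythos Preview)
Your approach is essentially the paper's own: deduce the result for $A^{+\natural}$ from Proposition \ref{prop125}, then pass to $A^\natural$ and $\sH^\natural$ as idempotent truncations via Lemma \ref{fAf}. Two small points are worth cleaning up. First, you overcomplicate the case of $\sH^\natural$: since $x_\emptyset=1$, the summand $x_\emptyset\sH=\sH$ already sits inside $T=\bigoplus_{J\subseteq S}x_J\sH$, hence inside $T^+$; no passage to $(T^{+\natural})^{\oplus m}$ or to matrix algebras is needed, and the projection idempotent lies in $A^{+\natural}$ itself. Second, for a right $\sH$-module one has $\End_{\sH}(\sH_\sH)\cong\sH$ (via $\phi\mapsto\phi(1)$, which is a ring isomorphism), not $(\sH)^{\op}$; your detour through opposite algebras is therefore unnecessary, though harmless since, as you note, the opposite of a standardly based algebra is again standardly based.
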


This result for the $\sZ^\natural$-algebra $A^\natural$ (and for $A^{+ \natural}$, of course) is new.
Geck \cite{G07} obtained a stronger result in the case of  $\sH^\natural$, namely,  that it has a cellular basis in the sense of 
\cite{GL96}.   As mentioned at the beginning of this section, cellular bases  require the existence of an anti-involution which interacts with the algebra's basis and its bilinear forms.

By analyzing the structure of endomorphism algebras of tilting objects, G. Bellamy and U. Thiel proved that Hecke algebras associated to finite complex reflection groups are standardly based algebras over a field $K$. For real reflection groups, these bases are even cellular; see \cite[Th.~3.8, Ex.~3.25]{BT22}.

%%%%%%%%%%%%%%%%%%%%%
%%%%%%%%%%%%%%%%%%%%%
%%%%%%%%%%%%%%%%%%%%

 \section{Decomposition matrices, I} 
 
 This and the next section present some applications to decomposition numbers for representations of finite
groups of Lie type (generally in cross-characteristic) and associated algebras, such as the generic  Hecke algebra $\sH$ from Section 1.2 and the endomorphism algebras 
$A$ and $A^+$ of display \eqref{endo}.     

  \medskip%\noindent%\medskip\noindent{\underline
  {\bf  5.2A Decomposition matrices  associated with $A$, $A^+$, and $\sH$.} In this subsection, we begin our study of decomposition numbers of the endomorphism algebras $A$ and $A^+$, defined in display \eqref{endo}, relative to a local triple $(\sO,K,k)$. These algebras, as well as an underlying (generic) Hecke algebra $\sH$, are built from a standard finite Coxeter system $(W,S,L)$ arising. by the definition in Section 1.1, from a finite group $G$ of Lie type. The ``generic" aspect of this subsection comes from our naming of $\sH$  in Section 1.2 as a generic Hecke algebra.
  
  We do not directly use $G$ in this subsection, although its underlying Lie theory context is in the background, especially the defining characteristic $p$ of the ambient algebraic group. 
  
  In particular, finite groups of Lie type do appear informally throughout this section, so we digress slightly to introduce some further notation regarding them.  Our assumption (in Section 1.2) that the Coxeter diagram of $W$ is connected implies we may take $\bG$ simple in the sense of algebraic groups \cite[p, 17, top]{C85}. (Alternatively, we can take $\bG$ to be connected reductive, provided  its semisimple part is simple. See \cite[
\S1.9]{C85}. Unless otherwise noted, we take $\bG$ to be simple.)
  The group $G$ may then be viewed as part of a Lie-type series associated to $\bG$ and a permutation $\rho$ (a graph automorphism) of its Dynkin diagram.
  There is also associated to $G$ a power $q$ of $\sqrt{p}$ described explicitly on \cite[pp. 39--42]{C85}.\footnote{In the split Chevalley group case, the group $G$ itself may be written $\bold G(q)$. Within each series, the allowable parameters $q$ determine a unique group $G$.} 
  
  As previously remarked, the group $G$ plays a relatively minor role in the present subsection, but we will return to it in the next section.  We do take this opportunity to remark that, if two groups $G$ are in the same series, then they have the same standard finite Coxeter system $(W,S,L)$, as shown by the construction \cite[pp. 34--36]{C85}. We also note that the prime $p$ will remain fixed throughout this section (as the defining characteristic of $\bG$), and the defining parameter $q$ of $G$ is a positive integer power of $\sqrt{p}$.  In fact, if the Ree and Suzuki groups are excluded, $q$ is a positive  integer power
   of $p$.
   
  We now return to the main task of studying decomposition numbers of $A$, $A^+$, and $\sH$.  The reader may recall the definition of a 2-regular local triple
 $(\sO',K',k)$ from Section 5.0. We call such a triple {\it well-adapted} provided the following three conditions hold:
  
  \medskip
  (1) $\sZ$ is a subring of $\sO'$;
  
  (2) The field $k$ has positive characteristic $r$ distinct from $p$; and
  
  (3) $K'$ is a splitting field for $\sH_{K'}$. 
  
  \medskip 
Note that (1) implies that $\mathbb Q(t)$ is contained in $K'$, and so $\sH_{K'} $ is always semisimple, even split, by (3). The split condition holds automatically except in the ${}^2F_4$ case. 
See footnote \ref{semisimpleH} in Chapter 1 for the semisimplicity  of $\sH_{\mathbb Q(t)}$.

  Also, {\it the split 
  semisimplicity of $\sH_{K'}$ implies that of $A_{K'}$ and $A^+_{K'}$, as endomorphism algebras of modules for a split semisimple algebra. }
  
An example of a well-adapted 2-regular local triple is the Laurent triple associated with a DVR triple $(\sO,K,k)$,
where char$(K)=0$ and char$(k)=r$ different from the defining characteristic of $G$. Also assume $\sqrt2\in K$ if $G$ is of type $^2F_4$. See Section C3 in Appendix C.
  
  Finally, we mention that, in the context of (1) above, condition (3) is always true when $G$ is quasi-split or has rank 1.
  This covers all $G$ except those of type $^2F_4$, in which case the field $K'$ containing 
   $\mathbb Q(t)$ is a splitting field if and only if $\sqrt{2}\in K'$.  It is also the case that $\sqrt{2}\in K'$ if and only if $\sqrt{2}\in \sO'$. This can be proved using the fact that $\sO'$ is a regular local ring, hence a UFD (Auslander-Buchsbaum), hence integrally closed \cite[Ex. 1, p. 414]{Jac89}.
    In turn, $\sqrt{2}\in\sO'$ if and only if 
    \begin{itemize}
    \item[(1$'$)]
    $\sZ'\subseteq \sO'$, where $\sZ':=\mathbb Z'[t,t^{-1}]$ with $\mathbb Z'=\mathbb Z[\sqrt{2}].$
    \end{itemize}
     (This condition is analogous to condition (1), and we call it condition (1$'$).)
 Consequently, a 2-regular local triple $(\sO', K',k)$ is well-adapted when $G$ is not of type $^2F_4$, provided   conditions (1)  and (2) hold and provided (1$'$) and (2) hold when $G$ is of type $^2F_4$.                                                             
    
    The reader may notice that $p=2$ in the $^2F_4$ case, so that requiring $\sqrt{2}\in\sO'$ is the same as 
    requiring $\sqrt{p}\in \sO'$. Later (such as in Lemma \ref{decompMAT} below), we will need to consider similar requirements for other types, and even $\sqrt[4]{p}\in\sO'$ in some cases, but this will not be necessary in Theorem \ref{12.1a} below.
        
  In the subsection ``Laurent triples" of Appendix C,  we show how to construct many 2-regular local triples $(\sO', K',k)$,
  starting with a DVR triple $(\sO,K,k)$ and using for $\sO'$ a localization of $\sO[t,t^{-1}]$. If we take $k$ to have positive characteristic $r\not=p$, as in (2) above, and $K$ to have characteristic 0, then conditions (1) and (2) will be satisfied. Also, (1$'$) will be satisfied if $\sqrt{2}\in K$.

\begin{rem} \label{good r}Assume that the prime $r$ is good for the group $G$. Then $\sZ^\natural$  is contained in
$\sO'$, while in type $^2F_ 4$, the ring $\sZ^{\prime\natural}$ is a subring of $\sO'$. Thus, the algebra
$A^{+}_{\sO'}$ is a split quasi-hereditary algebra, as follows from the main result of the previous chapter, Theorem \ref{quasi-hereditary}, together with Remark \ref{433}.
 \end{rem}

  \begin{thm} \label{12.1a} Let $(\sO',K',k)$  be a well-adapted 2-regular local triple with
   $r$ ($=$ the characteristic of $k$) good for
  $G$.
  %In particular,
   %$A^+_{\sO'}$ is split quasi-hereditary. (See Theorem \ref{quasi-hereditary}.)

  Let $D_{\sO'}$ be the decomposition matrix of $A_{\sO'}$, {and let $D^+_{\sO'}$ be the decomposition matrix of $A^+_{\sO'}$.} Then, the following hold.
  
  \begin{itemize}
 \item [(a)] With respect to an ordering of the (non-isomorphic) irreducible modules for $A^+_{K'}$ and $A^+_k$, the decomposition
  matrix $D^+_{\sO'}$ of $A^+_{\sO'}$ is unitriangular and square.
  
 \item[(b)] Let $D^+_{\sO'}$ be arranged as in (a). Then, a version of $D_{\sO'}$ can be obtained by removing some (possibly none) of the rows of $D^+_{\sO'}$.  Also, this puts $D_{\sO'}$  in row echelon form with no row of zeros. 
  
\item[(c)] In particular, the indices of the columns of $D_{\sO'}$ in (b) above can be permuted so that $D_{\sO'}$ has the form $[D_1,D_2]$, where $D_1$ is a square unitriangular matrix and $D_2$ is a rectangular matrix with the same number of rows as $D_1$.  In particular, this version of $D_{\sO'}$ is itself unitriangular.
  
  \item[(d)] $D^+_{\sO'}$ and $D_{\sO'}$ have the same number of columns. Also, $A^+_{K'}$ and $A_{K'}$  have the same number
  of non-isomorphic irreducible representations as $\sH_{K'}$. 
  
  \item[(e)] Parts (b) and (c) hold if $D_{\sO'}$ is replaced by the decomposition matrix $D^\circ_{\sO'}$ of $\sH_{\sO'}$. Hence, there exist submatrices $D'$ and $D^{\circ\prime}$ of $D^+_{\sO'}$, $D_{\sO'}$, respectively, such that
 $$D^+_{\sO'}=\begin{pmatrix} D_{\sO'}\\ D'\end{pmatrix} \text{ and } D_{\sO'}=\begin{pmatrix}D^\circ_{\sO'}\\D^{\circ\prime}\end{pmatrix}.$$
   \end{itemize}
  \end{thm}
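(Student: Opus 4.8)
\textbf{Proof plan for Theorem \ref{12.1a}.}
The strategy is to deduce everything from the split quasi-heredity of $A^+_{\sO'}$ (available by Remark \ref{good r}, since $r$ is good) together with the fact that $A$ and $\sH$ are recovered from $A^+$ by idempotent truncation. First I would prove part (a): a split quasi-hereditary algebra over a regular local ring of Krull dimension $\leq 2$ has, by standard theory (see \cite[\S2]{DPS18} and the references therein to \cite{CPS90}), a decomposition matrix which is unitriangular with respect to the partial order $\preceq$ on $\Lambda = \irr(\mathbb{Q}W)$ of display (\ref{orderLa}); squareness comes from the fact that the standard modules $\Delta(E)$ remain irreducible after base change to $K'$ (split semisimplicity of $A^+_{K'}$, itself a consequence of that of $\sH_{K'}$ as an endomorphism algebra — the passage noted just after Remark \ref{good r}), so the rows (irreducible $A^+_k$-modules, i.e.\ the heads $L(E)$ of the $\Delta(E)$) and columns (irreducible $A^+_{K'}$-modules $\Delta(E)_{K'}$) are both indexed by $\Lambda$. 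In the $^2F_4$ case one works over $\sO'$ containing $\sZ^{\prime\natural}$, using Theorem \ref{quasi-hereditary}(c) and the appendix to Section 4.1; the matrices are the same.

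Next, parts (b)--(e) all follow from a single mechanism: if $B = A^+_{\sO'}$ and $e \in B$ is an idempotent with $eBe \cong A_{\sO'}$ (resp.\ $\cong \sH_{\sO'}$, resp.\ recovering an intermediate algebra), then the exact functor $e(-): B\text{-mod} \to eBe\text{-mod}$ sends irreducible $B$-modules either to irreducible $eBe$-modules or to zero, and induces an injection on those columns/rows where the result is nonzero. For the columns: since $B_{K'}$ is semisimple, $eB_{K'}e$ is semisimple and every irreducible $eB_{K'}e$-module is $e(\Delta(E)_{K'})$ for a unique $E$; by part (d) (which I would establish first, by comparing $\irr(\sH_{K'})$ with $\irr(A_{K'})$ and $\irr(A^+_{K'})$ via the standard endomorphism-algebra argument that a split semisimple algebra and the endomorphism algebra of any faithful module over it have the same irreducibles — here $T$, $T^+$ are faithful since $\sH$ is a summand, cf.\ Remark \ref{433}(a)-type reasoning) the number of columns agrees for all three matrices, so $e(-)$ is in fact a bijection on columns. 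For the rows: $e(L(E))$ is either $0$ or an irreducible $eBe$-module, and the nonzero ones exhaust $\irr(eBe_k)$; applying $e(-)$ to a composition series of $\Delta(E)_k$ and using that $e(-)$ kills some composition factors and preserves the rest shows that the decomposition matrix of $eBe$ is obtained from $D^+_{\sO'}$ by deleting the rows $E$ with $e(L(E)) = 0$ and then, since the column count is unchanged while the row count drops, the surviving matrix is in row echelon form with no zero row. This gives (b) for $A_{\sO'}$ and (e) for $\sH_{\sO'}$; the nested form $D^+_{\sO'} = \binom{D_{\sO'}}{D'}$, $D_{\sO'} = \binom{D^\circ_{\sO'}}{D^{\circ\prime}}$ follows by choosing the idempotents compatibly ($\sH = e' A e'$ inside $A = e A^+ e$). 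Finally (c) is a formal consequence of (b): a unitriangular square matrix with some rows deleted, after reordering columns to bring the surviving diagonal entries to the front, has the block form $[D_1, D_2]$ with $D_1$ square unitriangular.

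The main obstacle I anticipate is the bookkeeping in the row-deletion argument: one must verify carefully that $e(-)$ applied to the canonical filtration / composition series behaves as claimed, i.e.\ that the multiplicity $[e(\Delta(E)_k) : e(L(F))]$ equals $[\Delta(E)_k : L(F)]$ whenever $e(L(F)) \neq 0$, and $0$ can only be introduced, never created spuriously. This is the standard idempotent-truncation fact ($e(-)$ is exact, $e(L(F))$ is irreducible or zero, and $[eM : eL] = [M:L]$ for $eL \neq 0$), but stating it cleanly over the ring $\sO'$ rather than a field requires invoking base change to $k$ first (the decomposition numbers are computed in $eBe \otimes_{\sO'} k = e_k B_k e_k$, and $B_k$, $eBe_k$ are finite-dimensional over the field $k$, so the classical statement applies). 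A secondary technical point is ensuring the explicit idempotent $e$ with $eA^+e = A$ is the one from Construction \ref{const} (projection of $T^+$ onto $T$) and that its analogue $e' \in A$ with $e'Ae' = \sH$ exists — the latter because $\sH = x_\emptyset\sH$ is one of the summands $x_J\sH$ of $T$ (take $J = \emptyset$), so projection onto that summand does the job; I would spell this out at the start so that (e) and the nested form are immediate.
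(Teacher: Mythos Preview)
Your proposal is correct and follows essentially the same approach as the paper: split quasi-heredity of $A^+_{\sO'}$ (via Remark \ref{good r} and the highest weight category structure of Corollary \ref{refine3}) for part (a), then idempotent truncation $eA^+e\cong A$ and $e'Ae'\cong\sH$ (using $J=\emptyset$) together with the standard fact that $e(-)$ sends irreducibles to irreducibles or zero for parts (b)--(e). The paper organizes things slightly differently---it proves (d) inside the argument for (b) rather than first, and cites \cite[Th.~(6.2g), (6.2a)]{Gr80} explicitly for the idempotent-truncation facts you call ``standard''---and it also records a second proof of (a) via the standardly full-based algebra structure (Proposition \ref{prop125} and Corollary \ref{SFBA}), but the substance is the same.
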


  \begin{proof}  We will give arguments for the split, quasi-split,  and some $^2F_4$ cases, leaving remaining $^2F_4$ arguments, and the rank 1  Ree and
  Suzuki groups to the reader.
  
   We first prove (a),\footnote{For a proof using standard basis theory, see Remark \ref{SBApf}(c) below.}  noting that the $\sZ^\natural$-algebra $A^{+\natural}$ in Corollary \ref{refine3} gives rise to highest weight category $(\mathscr C,\{\Delta(E)\}_{E\in\Lambda})$ in the sense of Rouquier \cite[Def. 4.11]{Ro08}. Here, $\mathscr C=A^{+\natural}\!\!-\!\!\text{mod}$, the poset $\Lambda$ is discussed in the paragraph surrounding \eqref{orderLa}, and the $\Delta(E)$'s (and $\Delta'(E)$'s) are discussed in 
Remark \ref{ss case}.  (The $\Delta'(E)$'s are variations on the $\Delta(E)$'s. They are used in the $^2F_4$ case.)

The $\Delta(E)$'s above, in Remark \ref{ss case}, agree with those in Corollary \ref{refine3}, as may be shown by tracing (backwards) through the discussion above Corollary \ref{refine}. (Here the discussion of the $^2F_4$ case is left to the reader.) A big advantage is that all $\Delta(E)$'s in Remark \ref{ss case} are shown to have $\Delta(E)_{\mathbb Q(t)}$ irreducible. Applying this, and then base changing, we may conclude that $\Delta(E)_{K'}$ is irreducible, in our $(\sQ', K',k)$ context, over $A^+_{K'}=(A^+_{\sO'})_{K'}$.

We begin with the highest weight category 
%$(\mathscr C,\{\Delta(E)\}_{E\in\Lambda})$ studied in  Corollary \ref{refine3}. The category 
$\mathscr C_{\sO'}:=A^{+\natural}_{\sO'}\!-$mod by base changing $\mathscr C$ above from $\sZ^\natural$ to $\sO'$.
Using the notation in Corollary \ref{refine3}, together with base-change properties such as \cite[Lem. 1.2.5(4)]{DPS98a} and \cite[Prop. 4.14]{Ro08},\footnote{Compare also Remark \ref{433}.} it is useful to define $V(\lambda)=\Delta(\lambda)_{\sO'}$ in $\mathscr C_{\sO'}$, modifying $E$ to $\lambda\in \Lambda$ by abuse of notation. Here $\Lambda=\text{Irr}(\mathbb QW)$ (or $\Lambda=\text{Irr}(\mathbb Q(\sqrt2)W)$ if $G$ is of type ${}^2F_4$), and $\Delta(\lambda)=(S^\natural_\lambda)^\diamond$ with $S^\natural_\lambda$ as defined in display \eqref{identify}. 
  %We first prove (a). By Remark \ref{good r}, the $\sO'$-algebra $A^+_{\sO'}$ is quasi-hereditary. Of course, $\sO'$ is local and Noetherian, so $A^+_{\sO'}$--mod is an $\sO'$-finite highest weight category in the sense of \cite[Prop. 2.1.1 and Defn. 2.1]{DS94}.\footnote{\color{red}The proof in the non-local split case mentioned in \cite{DS94} requires a similar discussion at the end of Section 2 in \cite{CPS90}. }
%  There are several consequences. 
Thus, passing from $\sO'$ to $k$, $A^+_k$--mod is a highest weight category in the original sense of \cite{CPS88} for the poset  $(\Lambda,\preceq)$, where $\preceq$ is defined in \eqref{orderLa}.  
In particular, $\Lambda$ is in bijection with the distinct irreducible modules
$\{L(\lambda)\}_{\lambda \in \Lambda}$ of $A^+_k$--mod, where $L(\lambda)$ is the head of 
 the corresponding standard modules $V(\lambda)_k$. 
 %The  $\overline{(-)}$  notation  is  used because $\bar V(\lambda):=V(\lambda)_k$  for the $\sO'$-free object $V(\lambda)$ in $A^+_{\sO'}$--mod. 
% See  \cite[Defn. 2.1]{DS94} for the general structure of an $\sO'$-finite highest weight category. 

Next, we note that each $V(\lambda)_{K'}$ is irreducible and that all irreducible objects in $A^+_{K'}$--mod have the form $V(\lambda)_{K'}$. (This follows from Remark \ref{ss case}.) Hence, all the decomposition numbers needed in (a) have the form $%[V(\lambda)_{K'}:L(\mu)]_{\sO'}=
[V(\lambda)_k:L(\mu)]$, for any $\lambda, \mu \in \Lambda$. This gives a $|\Lambda|\times|\Lambda|$ square(!) matrix as well as  %Alternatively, }the needed irreducibility of all $V(\lambda)_{K'}$ follows from the complete reducibility of all modules in $A^+_{K'}$--mod (in particular, of $V(\lambda)_{K’}$), together with the fact that the head of  $V(\lambda)_k$ is irreducible, and appears just once in any composition series of $V(\lambda)_k$.
 the unitriangular property, since, by \cite[Def. 2.1(i)]{DS94}, we have
    $$
[V(\lambda)_k:L(\mu)]=\begin{cases}1,&\text{ if }\lambda=\mu;\\
0,&\text{ unless }\mu\prec \lambda.\end{cases}$$
This completes the proof of (a). As noted, Remark \ref{SBApf}(c) gives an alternate proof.

%  Next, note that Remark \ref{good r} shows that $A^+_{\sO'}\cong \sO'\otimes_{\sZ^\natural}A^{+\natural}$ is the base-change of the split quasi-hereditary algebra $A^{+\natural}$. Now observe
%  that all the standard modules $\Delta(\la)_{\sO'},\la\in\La$, are irreducible after base change to $K'$ and have simple heads after base change $k$. Statement (a) follows.
  
 To prove  Part (b) (and Part (d)),\footnote{This proof is inspired in part by
arguments for  \cite[Th. 6.3.2]{DS00a}, but we include further details here.}  we first observe from the notation in Construction \ref{const} that $A^+\cong
 {\small\begin{pmatrix}\Hom_{\sH}(T,T)&\Hom_{\sH}(X,T)\\\Hom_{\sH}(T,X)&\Hom_{\sH}(X,X)\end{pmatrix}}$.
This isomorphism induces isomorphisms $eA^+e\cong \begin{pmatrix}A&0\\0&0\end{pmatrix}\cong A$, where
$e\in A^+$ is the idempotent corresponding to $\begin{pmatrix}1_T&0\\0&0\end{pmatrix}$. Clearly, by
noting that $T^+={T\choose X}$,
$e$ defines the natural projection $T^+\to T\subseteq T^+$.  Thus, by combining the isomorphisms above, we may now identify $A$ with $eA^+e$ and $A_\scrK$ with $eA^+_\scrK e$ for any commutative ring $\scrK$.
In the rest of the proof, we  will need only the cases $\scrK = k$ and $\scrK = K'$. These are fields, of course, allowing us to quote from
\cite[Section 6.2]{Gr80} for algebras $S$ over them such as $S=A^+_k$ and idempotents $e \in S$, taking, for instance, 
$eSe =eA^+_k e=A_k$,

  If $L$ is an irreducible $A^+_k$-module, then $eL$ is either 0 or an irreducible $A_k$-module (observing  that $A_k=eA^+_ke$). Further, the irreducible $A_k$-modules all arise this way. Also, if $L,L'$ are irreducible $A^+_k$-modules such that $eL\not=0\not= eL'$, then $eL\cong eL'$ if and only if $L\cong L'$. For these assertions, see \cite[Th. (6.2g)]{Gr80}; their analogs also hold with $K'$ replacing $k$.  In fact, the latter case can be used to prove Part (d).\footnote{See Remark \ref{ss case} for an alternate proof of Part (d).}  (Since  the module $\sH_{K'}=x_\emptyset\sH_{K'}$, %=(\sH_{K'})_{\sH_{K'}}
with $x_\emptyset$ defined in display \eqref{longest},  is a direct summand 
of both $T^+_{K'}$ and $T_{K'}$, we have, in this semisimple case, $eV(\lambda)_{K'}=e\Delta(\lambda)_{K'}\neq0$, for all $\lambda\in\Lambda$ since $\Hom_{\sH_K}((S_{\lambda}^{\natural})_K,\sH_K)\neq0$. Thus, $\{eV(\lambda)_{K'}\mid \lambda\in\Lambda\}$ forms a complete set of irreducible $A_{K'}$-modules.
Note that $\Lambda$ also indexes irreducible modules of $\sH_{K'}$; see \eqref{IrrWH}. Hence,  Part (d) follows.)
  
  Continuing now the proof of Part (b), note that if $L=V_i/V_{i-1}$ is a composition factor of $V(\lambda)_k$, where $V_{i-1}\subseteq V_i$ are submodules of $V(\lambda)_k$, and $eL\neq 0$, then $eL$ is irreducible and  \cite[(6.2a)]{Gr80} now implies that $eL\cong eV_i/eV_{i-1}$ is a composition factor of $eV(\lambda)_k$. Also, every composition factor of $eV(\lambda)_k$ has such a form. This shows that  a composition series $0=V_0\subseteq V_1\subseteq\cdots \subseteq V_m=V(\lambda)_k$ of  the left $A^+_k$-module $V(\lambda)_k$ gives rise to a composition series 
 $0=eV_0\subseteq eV_1\subseteq\cdots \subseteq eV_m=eV(\lambda)_k$ of the $A_k$-module $eV(\la)_k$ and, for every  $L$ with $eL\neq0$, $
[V(\lambda)_k:L]=
[eV(\lambda)_k:eL]$. 
 Hence, Part (b) follows except for its last assertion and that follows from Part (c).

%  may be obtained by applying $e$ to each factor of a composition series for the $A^+_k$-module $X_k$.  This argument, together with \cite[Th. 6.3.2]{DS00a}, proves Part (b)  except for its last assertion. The latter follows from Part (c), once (d) is proved. 
 
%To prove  (d), observe that $A^+_{K'},A_{K'}$, and $\sH_{K'}$ are all $\sH_{K'}$-endomorphism algebras of the $\sH_{K'}$-modules $T^+_{K'},T_{K'}$, and (the right regular module) $\sH_{K'}=(\sH_{K'})_{\sH_{K'}}$, respectively. The module $\sH_{K'}$ is a direct summand  of both $T^+_{K'}$ and $T_{K'}$. Since $\sH_{K'}$ is a split semisimple algebra (because ${K'}$ contains $\mathbb Q(t)$, and also
%contains $\sqrt{2}$ in the $^2F_4$ case), each of the endomorphism algebras is a direct product of full matrix algebras over $K'$.The number of matrix factors is the same in each case, namely the number of non-isomorphic irreducible $\sH_{K'}$-modules. (All irreducible $\sH_{K'}$-modules appear as summands of $T_{K'}, T^+_{K'}$ and $\sH_{K'}$.)
%If multiplication by
%$e$ killed any irreducible $A_K^+$-module, the algebras $A^+_K$ and $A_K=eA^+_Ke$ would have a different %number of non-isomorphic irreducible modules (and, hence, a different number of full matrix algebra factors). This proves  (d).

We leave the proofs of (c) and (e) to the reader. This completes the proof of the theorem.
   \end{proof}
   
%{\color{red} \begin{cor} For the decomposition matrices $D^+_{\sO'}$, $D_{\sO'}$, $D^\circ_{\sO'}$ of $A^+_{\sO'}$,  $A_{\sO'}$,  $\sH_{\sO'}$, respectively, there exist submatrices $A$ and $B$ of $D^+_{\sO'}$, $D_{\sO'}$, respectively, such that
 %$$D^+_{\sO'}=\begin{pmatrix} D_{\sO'}\\ A\end{pmatrix}, \text{ and } D_{\sO'}=\begin{pmatrix}D^\circ_{\sO'}\\B\end{pmatrix}.$$
% \end{cor}}
 
   \begin{rems}\label{SBApf}  (a) These results appear to be new, certainly for $A$ and $A^+$, but, because of our 
   ``generic" focus, also new for $\sH$.
   Notice that $\sH_{\sO'}$ is a local version of the {\it generic} Hecke algebra $\sH$, not one of its standard specializations (see Subsection 5.2B below), which typically require a replacement of the variable $t$ or $t^2$ with a power of $\sqrt{p}$. For a variation of the theorem
   for such a standard specialization of $\sH$, see
  \cite[Th. 4.4.1]{GJ11}. 
  
  %The same results for $D$ itself, however, appear to be new, and give evidence for \cite[Conj. 4.5.2]{GJ11}
  
    (b) Another variation of the theorem is obtained in \cite[(6.3.3)]{DS00a} in Lie type $B$, where a different version of $A^+$ is used, not involving Kazhdan-Lusztig cell modules. 
    
 (c) {\bf Second proof of Theorem \ref{12.1a}(a).}  The fact that $\sO'$-algebra $A^+_{\sO'}$ is quasi-hereditary of split type (see Remark \ref{good r}) implies that $A^+_{\sO'}$ is a standardly full-based algebra associated with a poset $\Lambda$ (see Proposition \ref{prop125}). Thus, every bilinear form associated with a cell module for $A^+_{K'}$ or $A^+_{k}$ is nonzero. Hence, we have $\Lambda_{1,K'}=\Lambda=\Lambda_{1,k}$ for both $A^+_{K'}$ and $A^+_k$ (see Lemma \ref{La1}(b) and  Corollary \ref{SFBA}). This proves that the decomposition matrix $D^+_{\sO'}$ is square.
The unitriangular property follows from Lemma \ref{La1}(c)\&(d).
    \end{rems}
    
   % \medskip\medskip\begin{color}{red} Jie:
    
   % We must explain the notation $\sO$ and $\sO'$. This comes between theorem 12.3 and stuff below on finite groups.
   % Integrate with half paragraph that begins with From .... std. specializations.  $A_\sO$ is the classical $q$-Schur alg. %with genuine $q$. \end{color}}}
    
    \medskip
    %\noindent
{\bf5.2B. From the generic case to standard specializations.} With the exception of the more general Lemma \ref{functor} below, we continue with the notations
    $\bG$ and $G$ of the previous subsection. In particular, $\bG$ is a simple algebraic group. We will fix $\bG$ and the Lie-type series to which $G$ belongs. In the setup
    of \cite[p. 31]{C85}, this means there is a ``Frobenius endomorphism"\footnote{Also called a ``Frobenoius map'' in Section 1.1. Thus, some power of $F$ is a standard Frobenius  endomorphism.}  $F$ of $\bG$ with $G=\bG^F$ and with $F$ inducing a fixed 
    permutation $\rho$ of the Dynkin diagram of $\bG$ \cite[pp.31--41]{C85}. Many of these---and other---details may
be found in Section 1.1 of this monograph---and others.
     %Within its Lie type series, the group $G=\bG^F$ is determined by 
   % (and determines) a parameter $q$ \cite[p. 35]{C85}, discussed further in the next paragraph.
    
%      
{\color{black}    We %keep $\bold G$ simple or semisimple, as context dictates, unless otherwise noted and 
retain the notations $F,G, \bold B, B$  introduced in Section 1.1, which may also be used in a more general context such as a connected reductive group equipped with a Frobenius endomorphism.}
     
     It is useful to consider connected reductive groups $\widetilde{\bold G}$ which are central products\footnote{See \cite[p.29]{Go80} for the definition of a central product.} $\widetilde\bZ\bold G$ of a torus $\widetilde\bZ$ with $\bold G$. We assume, in the structures we seek, that $F$ extends to a Frobenius endomorphism $\widetilde F$ of $\widetilde{\bold G}$ and write
    $\widetilde G=\widetilde{\bold G}^{\widetilde F}$. We also set $\bold{\widetilde B}=\widetilde\bZ{\bold B}$, 
    $\widetilde B=\widetilde{\bB}^{\widetilde F}$, and $\widetilde Z=\widetilde\bZ^{\widetilde F}$. Note that $\widetilde\bZ$ is the connected component of the identity in the center of $\widetilde \bG$, and, thus, is stable under $\widetilde F$.
    The lemma immediately below constructs functorially such a pair $\widetilde{\bold G}, \widetilde F$ in which  $\widetilde\bZ$ is the full center of
    $\widetilde{\bold G}$. (Compare \cite[pp.161--163]{Lus88} and \cite[\S\S1,2]{DF94}.)
    
    \begin{lem}\label{functor}  If $\bG$ is semisimple, then there exists a pair $\widetilde{\bG}, \widetilde F$ in the setup of the above paragraph, in which $\widetilde{\bG}=\widetilde\bZ\bG$ is connected reductive and $\widetilde\bZ$ is the full center of $\widetilde{\bG}$.
   (Equivalently, the center of $\widetilde\bG$ is connected.) Also, $\widetilde\bZ$ acts trivially on the coset space $\widetilde{\bG}/\widetilde{\bB}$. The $G$-sets $(\widetilde \bG/\widetilde{\bB})^{\widetilde F}$, $\widetilde G/\widetilde B$, $(\bG/\bB)^F$,
   and $G/B$ are all naturally isomorphic. 
    \end{lem}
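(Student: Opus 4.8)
The plan is to construct $\widetilde{\bG}$ as a quotient of a product $\bG \times \bT_0$, where $\bT_0$ is a suitable torus chosen so that its $F$-action and its identification of a finite central subgroup with (an image of) $Z(\bG)$ arrange for the center of $\widetilde{\bG}$ to become connected. Concretely, following the standard device (as in the references \cite{Lus88}, \cite{DF94}), I would take $\bT_0$ to be a torus containing an $\widetilde{F}$-stable copy of $Z(\bG)$, extend $F$ to $\bT_0$ compatibly, and set $\widetilde{\bG} := (\bG \times \bT_0)/Z$ where $Z$ is the anti-diagonally embedded copy of $Z(\bG)$. Functoriality of the construction in $(\bG, F)$ is then essentially automatic from the universal properties of products and quotients, provided the choice of $\bT_0$ and the embedding are made functorially; I would spell this out by noting that any morphism of pairs $(\bG, F) \to (\bG', F')$ induces one on the centers and hence on the tori, giving the required morphism $\widetilde{\bG} \to \widetilde{\bG}'$ commuting with the Frobenius maps. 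One checks directly that $\widetilde{\bG} = \widetilde{\bZ}\bG$ with $\widetilde{\bZ}$ the image of $1 \times \bT_0$, that this is the connected component of the identity in $Z(\widetilde{\bG})$, and that in fact $Z(\widetilde{\bG}) = \widetilde{\bZ}$ is connected (this is the point of the construction: the ``disconnectedness'' of $Z(\bG)$ has been absorbed into the torus).

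Next I would address the Borel subgroup and the coset space. Since $\bB \supseteq Z(\bG)$, the subgroup $\widetilde{\bB} := \widetilde{\bZ}\bB$ is a Borel subgroup of $\widetilde{\bG}$, and it is $\widetilde{F}$-stable because $\bB$ is $F$-stable and $\widetilde{\bZ}$ is $\widetilde{F}$-stable (being the connected center). The key observation is that $\widetilde{\bZ} \subseteq \widetilde{\bB}$, so $\widetilde{\bZ}$ acts trivially on $\widetilde{\bG}/\widetilde{\bB}$ by left translation; more precisely, the natural map $\bG/\bB \to \widetilde{\bG}/\widetilde{\bB}$ is a $\bG$-equivariant bijection because $\widetilde{\bG} = \widetilde{\bZ}\bG$ and $\widetilde{\bZ} \subseteq \widetilde{\bB}$ forces every coset $\widetilde{\bB}\widetilde{g}$ to meet $\bG$, while $\bG \cap \widetilde{\bB} = \bB$ (one checks the intersection is no larger, using that $\widetilde{\bZ} \cap \bG$ is central in $\bG$ and contained in $\bB$ already). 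This gives the isomorphism $\bG/\bB \cong \widetilde{\bG}/\widetilde{\bB}$ of $\bG$-varieties.

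Finally, to get the statement about fixed points I would pass to $\widetilde{F}$-fixed points in the above isomorphism. The map $\bG/\bB \to \widetilde{\bG}/\widetilde{\bB}$ is $\widetilde{F}$-equivariant, so it restricts to a bijection $(\bG/\bB)^F \to (\widetilde{\bG}/\widetilde{\bB})^{\widetilde{F}}$. The identification $(\bG/\bB)^F \cong G/B$ is standard (using the Lang--Steinberg theorem: $H^1(F, \bB) = 1$ since $\bB$ is connected, so every $F$-fixed coset has an $F$-fixed representative), and likewise $(\widetilde{\bG}/\widetilde{\bB})^{\widetilde{F}} \cong \widetilde{G}/\widetilde{B}$ because $\widetilde{\bB}$ is connected. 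Chaining these four natural isomorphisms of $G$-sets completes the proof. I expect the main obstacle to be the functoriality claim: one must be careful that the auxiliary torus $\bT_0$ and the embedding of $Z(\bG)$ can be chosen in a way that is genuinely natural in $(\bG, F)$ — the cleanest route is probably to build $\bT_0$ out of a fixed embedding of $Z(\bG)$ into a quasi-split torus determined canonically by the root datum and $\rho$, so that a morphism of pairs induces the required data with no choices; the rest of the argument is routine verification of intersections and fixed-point counts.
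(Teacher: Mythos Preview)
Your proposal is correct and follows essentially the same route as the paper: both form $\widetilde{\bG}$ as the central product (pushout) of $\bG$ with a torus containing a copy of $Z(\bG)$, extend $F$ to the torus, and then use Lang--Steinberg to identify the four $G$-sets. The paper makes the concrete choice $\bT_0 = \bT_1 :=$ an independent copy of an $F$-stable maximal torus $\bT$ of $\bG$, with $\widetilde{F}$ acting on $\bT_1$ exactly as $F$ acts on $\bT$; this makes the extension of the Frobenius and the verification that $Z(\widetilde{\bG}) = \bT_1$ is connected immediate, and avoids the abstract ``choose a torus containing $Z(\bG)$'' step. One small misreading: the label \texttt{functor} is just a tag for cross-referencing---the lemma does not assert functoriality in $(\bG,F)$, and the paper's proof does not address it, so your final paragraph worrying about making $\bT_0$ natural is unnecessary here.
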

    
    \begin{proof}
Observe that $Z(\bG)$ is finite and contained in all maximal tori $\bT$ of $\bG$. 
%(Notice we are not consistently using bold face when labelling maximal tori in the algebraic groups sense.) 
Take $\bT$ to be $F$-stable. (See \cite[p. 33]{C85}.) 
Let $\bT_1$ be a copy of $\bT$ independent of $\bG$. Let $Z(\bG)_1\subseteq \bT_1$ correspond to $Z(\bG)$.  Thus, the natural action of $F$ on $\bT$ gives rise to a corresponding action on $\bT_1$ and on $Z(\bG)_1\subseteq \bT_1$. The natural maps $Z(\bG) \cong Z(\bG)_1
   \subseteq \bT_1$ and $Z(\bG)\subseteq\bG$ give rise to a connected, reductive pushout  group: $\widetilde {\bG}=\bT_1 \bG$ with (connected) center $\bT_1$, and we now write $\widetilde\bZ=\bT_1$.
   Also,
   $F$ extends to a Frobenius endomorphism on $\widetilde \bG$, duplicating on $\bT_1$ its action on $\bT$.   
   
   Since the coset space $\widetilde\bG/\widetilde\bB$ identifies here, as a $\widetilde \bG$-set, with the set of $\widetilde\bG$-conjugates of $\widetilde\bB$, the action of $\bT_1=\widetilde\bZ:=Z(\widetilde\bG)$ is trivial.  The $G$-set identifications
    at the end of the lemma follow from elementary arguments and the Lang-Steinberg theorem, as illustrated in \cite[p. 33]{C85}. All the identifications are the ``obvious" candidates.  This completes the proof. \end{proof}
   
       \begin{rem}{\color{black} The lemma is also true if ``semisimple'' is replaced by ``connected reductive.'' We sketch a proof in this case. First, change notation so that the group in question is $\bG \bT_2$, where $\bG$ is semisimple and $\bT_2$ is a (central) torus. Apply the proof of the lemma for the group $\bG$ to show $\bT_1\bG$ has connected center $\bT_1$. Then construct $\bT_1\bG \bT_2$ and show it has center $\bT_1\bT_2$. The remaining details of the proof are left to the reader. (Later, we only use the lemma as stated, and only when $\bG$ is simple.)}
    \end{rem}

        {\it In the rest of subsection 5.2B},
       { let $(\sO,K,k)$ be a DVR triple (in some cases, a more general local triple could be used) in which}
      \begin{itemize}
       \item[$\bullet$] the field $k$ has positive characteristic $r$, distinct from the defining characteristic $p>0$ of $\bold G$;  
       \item[$\bullet$] $K$ has characteristic 0; and
       \item[$\bullet$] $\sqrt{2}\in \sO$ if $G$ is of type $^2F_4$.
       \end{itemize}
  These choices are natural when concerned with the representation theory of $kG$ (i.e., the
  cross-characteristic theory)---see specifically \cite[(1), p. 170]{DPS98a}.  
       
       We continue the assumption of this subsection
 that $G$ is the group ${\bold G}^F$ in a fixed Lie-type series  with $\bG$ simple (as discussed above). Associated with $G$ is a parameter $q$ (which determines $G$ in its series) and a parameter $\delta$ which equals $1$, $2$, or $3$, depending on the series, with $q^\delta$ a power of $p$ (an odd power, if the series is Ree or Suzuki).
  
  \medskip
  The following lemma, included for completeness, provides largely well-known ``standard specialization" properties of the generic Hecke algebra $\sH$. {\color{black}Recall the $\mathbb Z[t^2]$-form $\sH^o$ of $\sH$ considered in Lemma \ref{H^o}.

\begin{lem}\label{goodone}  Let $(\sO,K,k)$ be a DVR   triple as detailed above. Suppose $\sqrt{q}\in\sO$ is invertible in $\sO$; this gives $\sO$ a natural structure as a $\sZ$-algebra (via $t\mapsto\sqrt{q}$).  Then, the base-changed algebras $\sH_\sO$ and
$\sH^o_\sO$ are naturally isomorphic, and 
the base-changed algebras
 $\sH_K$ and $A_K$ are split semisimple.
\end{lem}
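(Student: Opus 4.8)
The plan is to verify the three claims in turn, all of which are essentially base-change statements. First I would establish the isomorphism $\sH_\sO\cong\sH^o_\sO$. Recall from Lemma \ref{H^o} that $\sH^o$ is the $\mathbb Z[t^2]$-subalgebra of $\sH$ generated by the $T_s$, $s\in S$; it is a $\mathbb Z[t^2]$-form of $\sH$, meaning $\sZ\otimes_{\mathbb Z[t^2]}\sH^o\cong\sH$ as $\sZ$-algebras, since the generators and relations \eqref{relations} make sense already over $\mathbb Z[t^2]$ and $\sZ$ is obtained from $\mathbb Z[t^2]$ by adjoining the square root $t$ (together with inverting $t$). Now $\sO$ is being given a $\sZ$-algebra structure via $t\mapsto\sqrt q$, with $\sqrt q$ invertible in $\sO$, so it is in particular a $\mathbb Z[t^2]$-algebra via $t^2\mapsto q$. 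The natural map $\sH^o_\sO=\sO\otimes_{\mathbb Z[t^2]}\sH^o\to\sO\otimes_\sZ\sH=\sH_\sO$ induced by $\sH^o\hookrightarrow\sH$ is then an isomorphism: one checks it is surjective because the image contains all $\widetilde T_w=t^{-L(w)}T_w$ (the factor $t^{-L(w)}=q^{-L(w)/2}$ being a unit in $\sO$), and injective by comparing $\sO$-ranks, both algebras being free of rank $|W|$ over $\sO$.

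Next I would prove that $\sH_K$ is split semisimple. Since $\sqrt q\in\sO$ is a unit and $K=\mathrm{Frac}(\sO)$ has characteristic $0$, the inclusion $\mathbb Z[t,t^{-1}]\to\sO\to K$ makes $K$ a $\sZ$-algebra containing a copy of $\mathbb Q$ (and, if $G$ is of type ${}^2F_4$, also $\sqrt 2$, by hypothesis $\sqrt 2\in\sO$). By footnote \ref{semisimpleH} and Remarks \ref{Rem7.4}(b), $\sH_{\mathbb Q(t)}$ is semisimple, and split except in type ${}^2F_4$, where $\mathbb Q(\sqrt 2,t)$ is a splitting field; moreover the isomorphism $\sH_{\mathbb Q(t)}\cong\mathbb Q(t)W$ of Lusztig (via \cite[18.12]{Lus03}) holds. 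The point is that $K$ contains the relevant splitting field of $W$ over $\mathbb Q$, namely $\mathbb Q$ itself in all cases except ${}^2F_4$, and $\mathbb Q(\sqrt 2)$ in the ${}^2F_4$ case (as recorded in Remarks \ref{Rem7.4}(b)). Base-changing the isomorphism $\sH_{\mathbb Q(t)}\overset\sim\to\mathbb Q(t)W$ (or its $\mathbb Q(\sqrt2,t)$-analog) along $\mathbb Q(t)\to K$ (respectively $\mathbb Q(\sqrt2,t)\to K$) gives $\sH_K\cong KW$, which is split semisimple since $KW$ is, as $K$ has characteristic $0$ and contains a splitting field of $W$.

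Finally, the split semisimplicity of $A_K$ is immediate from that of $\sH_K$: by definition $A=\End_\sH\big(\bigoplus_{J\subseteq S}x_J\sH\big)$ (display \eqref{q-Schuralgebra}), so $A_K\cong\End_{\sH_K}\big(\bigoplus_{J}(x_J\sH)_K\big)$ is the endomorphism algebra of a finitely generated module over the split semisimple algebra $\sH_K$, hence is itself split semisimple. (This is exactly the observation flagged in the italicized sentence of subsection 5.2A, ``the split semisimplicity of $\sH_{K'}$ implies that of $A_{K'}$ and $A^+_{K'}$, as endomorphism algebras of modules for a split semisimple algebra''.) I do not expect a genuine obstacle here; the only point requiring a little care is the bookkeeping in the first claim — confirming that the natural map $\sH^o_\sO\to\sH_\sO$ really is an isomorphism and not merely a map of the same rank — and, in the ${}^2F_4$ case, being careful to invoke the hypothesis $\sqrt 2\in\sO$ so that $K$ is indeed a splitting field.
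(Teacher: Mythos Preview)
Your treatment of the first claim ($\sH_\sO\cong\sH^o_\sO$) and the third claim ($A_K$ split semisimple) is fine and matches the paper's approach. The problem is in your argument for the split semisimplicity of $\sH_K$.

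You propose to base-change the isomorphism $\sH_{\mathbb Q(t)}\cong\mathbb Q(t)W$ along a map $\mathbb Q(t)\to K$ sending $t\mapsto\sqrt q$. But no such map exists: $\sqrt q$ is a root of the prime $p$, hence algebraic over $\mathbb Q$, so the ring map $\mathbb Q[t]\to K$ with $t\mapsto\sqrt q$ has nontrivial kernel and cannot be extended to the fraction field $\mathbb Q(t)$. The generic isomorphism over $\mathbb Q(t)$ simply does not specialize to $K$ in this way.

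The paper circumvents this by working with structures defined integrally over $\sZ^\natural$ rather than over $\mathbb Q(t)$. Specifically, Theorem~\ref{maxorder} shows $\sJ^\natural$ is a direct product of full matrix algebras over $\sZ^\natural$; since $K$ has characteristic $0$ the bad primes are units in $K$, so there \emph{is} a ring map $\sZ^\natural\to K$, and base change gives $\sJ_K$ as a product of matrix algebras over $K$. One then needs $\sH_K\cong\sJ_K$, which (by the discussion below display~\eqref{phiformula}) requires knowing $\sH_K$ is semisimple; the paper gets this from Lemma~\ref{H^o}, identifying $\sH_K\cong\sH^o_K$ with $\End_{KG}(KG/B)$, an endomorphism ring of a module over the semisimple algebra $KG$. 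The upshot is that the split semisimplicity of $\sH_K$ genuinely uses either the asymptotic form $\sJ$ or the finite-group interpretation, not a naive specialization of the $\mathbb Q(t)$-isomorphism.
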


 \begin{proof}  The isomorphism $\sH_\sO\cong\sH^o_\sO$ follows from the hypothesis.
It remains to verify the final assertion of the lemma.  Theorem \ref{maxorder} shows that $\sJ^\natural$ is a direct product of matrix algebras over $\sZ^\natural$. Base change from $\sZ^\natural$ to $K$ gives that $\sJ_K$ is a similar direct product of full matrix algebras.  Taking $R=K$ in the discussion below display (\ref{phiformula}), we have $\sH_K\cong\sJ_K$.     (Note that $\sH_K\cong \sH^o_K$ is semisimple, as the endomorphism ring of a $KG$-module.) The isomorphism with a direct product of matrix algebras shows that $\sH_K$ is split  semisimple.

Finally, $A_K$ is the endomorphism algebra of an $\sH_K$-module by \cite[Th. 2.4.4(b)]{DPS98a}. Since $\sH_K$ is split semisimple, it follows that $A_K$ is also split semisimple.
 \end{proof}}

Recall the definition of a Laurent triple associated with a DVR triple in Appendix C. (See the paragraph above  Example \ref{example} in Appendix C.)

  \begin{lem}\label{decompMAT} Let $(\sO,K,k)$ be a DVR triple as detailed above. %with $K$ of characteristic zero  and $k$ of positive characteristic $r \not= p$ (the defining characteristic of $\bG$). 
  Also, assume $\sqrt{q}\in\sO$.\footnote{We note that the latter assumption is implied by the condition that $\sqrt[4]{p} \in\sO$. If the series of $G$ is not Ree or Suzuki, this may be weakened to the condition that $\sqrt{p}\in\sO$.}   
   Let $(\sO',K(t),k)$ be the Laurent triple associated to the DVR triple $(\sO,K,k)$ and the
  element $a=\sqrt{q}$. 
  
  (a) The decomposition matrix $D_{\sO'}$ of $A_{\sO'}$ (with respect to the triple $(\sO', K(t),k)$) 
  agrees with the decomposition matrix $D_{\sO}$ of $A_\sO$ with respect to $(\sO,K,k)$. 
  
  (b) Similarly, the decomposition matrix of $\sH_{\sO'}$ agrees with that of $\sH_\sO$.  
   
  \end{lem}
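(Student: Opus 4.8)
\textbf{Plan for the proof of Lemma \ref{decompMAT}.} The idea is that a Laurent triple is designed precisely so that base change along it does not change decomposition numbers; the present lemma is the special case where the ambient algebra is a Hecke endomorphism algebra $A$ (or a Hecke algebra $\sH$) and the ``variable'' $t$ is specialized to $a=\sqrt q$. First I would recall the construction of the Laurent triple $(\sO',K(t),k)$ from the DVR triple $(\sO,K,k)$ and the element $a=\sqrt q\in\sO$ (from the subsection on Laurent triples in Appendix C): $\sO'$ is the localization of $\sO[t,t^{-1}]$ at a suitable prime over the maximal ideal of $\sO$, with fraction field $K(t)$, residue field $k$, and there is a ring homomorphism $\sO'\to\sO$ sending $t\mapsto a$ which induces the identity on residue fields and is compatible with the inclusion $\sO\hookrightarrow K\hookrightarrow K(t)$ after inverting $t-a$. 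The hypothesis $\sqrt q\in\sO$, together with $\sqrt q$ being invertible (as $q$ is a power of $p$, a unit in $\sO$ since $\operatorname{char}k=r\neq p$), gives $\sO$ the structure of a $\sZ$-algebra and hence makes $A_\sO$ and $\sH_\sO$ the specializations of the generic algebras at $t\mapsto\sqrt q$.

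The key point is a general base-change invariance statement for decomposition matrices along a map of triples which is flat and ``unramified at the relevant prime.'' Concretely, write $B$ for $A$ (part (a)) or $\sH$ (part (b)), regarded as a $\sZ$-algebra, so that $B_{\sO'}=\sO'\otimes_\sZ B$ and $B_\sO=\sO\otimes_\sZ B$, the latter via $t\mapsto a$. I would argue in two steps. \emph{Step 1 (generic fibre):} the irreducible modules of $B_{K(t)}$ and of $B_K$ are in canonical bijection, compatibly with the choice of $\sO'$- (resp.\ $\sO$-)lattices. For $B=\sH$ this follows because $\sH_{K(t)}$ and $\sH_K$ are both split semisimple (by Lemma \ref{goodone} and its $K(t)$-analog, using that $K(t)\supseteq\mathbb Q(t)$, with the $^2F_4$ caveat handled by $\sqrt2\in\sO$), with irreducibles indexed in both cases by $\Lambda=\irr(\mathbb QW)$ via \eqref{IrrWH}; for $B=A$ one then uses that $A_{K(t)}$ and $A_K$ are endomorphism algebras of the corresponding split semisimple Hecke algebras, so their irreducibles correspond as well. \emph{Step 2 (special fibre):} the residue fields coincide ($k=k$), and the specialization $t\mapsto a$ identifies $B_{\sO'}\otimes_{\sO'}k$ with $B_\sO\otimes_\sO k=B_k$ as $k$-algebras; hence the irreducible $B_k$-modules are literally the same objects on both sides. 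With these two identifications in hand, the decomposition number $d_{L,X_{K(t)}}=[X_k:L]$ computed for $B_{\sO'}$ equals $[Y_k:L]=d_{L,Y_K}$ computed for $B_\sO$, where $X$ is an $\sO'$-lattice in an irreducible $B_{K(t)}$-module and $Y$ is the $\sO$-lattice obtained from $X$ by base change along $\sO'\to\sO$ (and $X_k\cong Y_k$ because base change of an $\sO'$-free module along $\sO'\to\sO$ then to $k$ agrees with base change $\sO'\to k$ directly). Independence of the decomposition numbers from the choice of lattice (Theorem \ref{AG2}, valid since both $\sO$ and $\sO'$ are regular local of Krull dimension $\le 2$) ensures the two matrices agree entry by entry, up to the usual identification of row and column indices.

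The main obstacle, and the one I would spend the most care on, is verifying that the map of triples $(\sO',K(t),k)\to(\sO,K,k)$ genuinely sends lattices to lattices with the claimed compatibility on both fibres simultaneously --- i.e.\ checking that $\sO'\to\sO$ (sending $t\mapsto a$) and $\sO'\to K(t)$ are compatible in the sense that an $\sO'$-lattice $X$ in $X_{K(t)}$ base-changes to an $\sO$-lattice whose $K$-span is the module of Step 1 and whose $k$-reduction is the module of Step 2. This is where the precise definition of the Laurent triple is used: one needs $\sO'_{(t-a)}$ (or the relevant localization) to dominate $\sO$, or more precisely that the two arrows out of $\sO'$ have a common ``center'' at the prime $(t-a)$, so that reduction mod the maximal ideal is computed the same way along both. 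I expect this to be a short but slightly delicate commutative-algebra check, essentially bookkeeping with the construction in Appendix C; once it is done, parts (a) and (b) follow uniformly, the only difference between them being the replacement of $A$ by $\sH$ throughout, and the $^2F_4$ case requiring the extra hypothesis $\sqrt2\in\sO$ exactly as in the surrounding results. I would also remark that part (b) is really a lemma used in the proof of part (a) (via the endomorphism-algebra identification $A=\End_\sH(T)$ and its behavior under base change, \cite[Th.~2.4.4(b)]{DPS98a}), so it is natural to prove (b) first.
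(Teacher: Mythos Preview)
Your plan is correct in outline, but you are essentially re-deriving Theorem~\ref{cutie} from scratch rather than applying it. The paper's proof is a two-line application of that theorem: one takes $R=\sO'$ and the height-one coregular prime $\mathfrak p=(t-a)\subset\sO'$, so that $R/\mathfrak p\cong\sO$, $R(\mathfrak p)\cong K$, and the DVR triple $(R/\mathfrak p,R(\mathfrak p),F)$ is exactly $(\sO,K,k)$. The two hypotheses of Theorem~\ref{cutie} --- that $B_{K(t)}$ is split and that $B(\mathfrak p)=B_K$ is split semisimple --- are supplied by Lemma~\ref{goodone} (for $B=\sH$ or $B=A$), and the conclusion is precisely the statement of the lemma. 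Your ``Step~1'' (the canonical bijection between irreducibles of $B_{K(t)}$ and $B_K$, compatibly with lattices) is the content of Corollary~\ref{lastcor}, and your ``main obstacle'' paragraph (checking that the two reductions $\sO'\to K$ and $\sO'\to k$ are controlled by the single prime $(t-a)$) is exactly the factorization established in Theorem~\ref{decompdiagram}. So the commutative-algebra check you worry about has already been packaged in Appendix~C; there is no need to redo it.

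One small correction: there is not really a map of triples $(\sO',K(t),k)\to(\sO,K,k)$ in the sense of display~\eqref{partial}; rather, $\sO$ arises as the \emph{quotient} $\sO'/(t-a)$, not as a subring with compatible fraction field. Your remark that (b) is ``really a lemma used in the proof of (a)'' is reasonable but not how the paper proceeds --- the paper treats (a) and (b) in parallel, both as instances of the same application of Theorem~\ref{cutie} with $B=A_{\sO'}$ and $B=\sH_{\sO'}$ respectively.
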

  
  \begin{proof}The reader may check that the diagram in display (\ref{setup2}) in Appendix C is interpreted as follows for $B=A$ or $\sH$ and with the indicated prime ideal $\mathfrak p$

    \begin{equation}\label{setup}
\begin{CD}
B @>>> B_{\mathfrak p} @>>> {B_Q}\\
@VVV @VVV\\
B/\mathfrak pB @>>>B(\mathfrak p)\\
@VVV \\
B_F 
\end{CD}
\end{equation}   
where in case (a), $B= A_{\sO'}$ and $\mathfrak p=(t-a)\in\sO'$, while $B(\mathfrak p)$ identifies with 
$B_K=B_{(O'/\mathfrak p)_{\mathfrak p}}$. Now apply Theorem \ref{cutie} and Lemma \ref{goodone}.

 A similar argument
 applies in the Hecke algebra case (b), where $B=\sH_{\sO'}$. This proves the lemma.\end{proof}

% By now, we have enough properties of $A$ to give it a name. 
The notation $G=\bG^F$ below continues as above. Recall the generic Hecke endo-algebra $A$ defined in \eqref{q-Schuralgebra}.

 \begin{thm} \label{bigDecom}  Suppose $G_1, G_2$ are finite groups of Lie type in the same Lie-type series as $G$ and let $q_1^\delta, q_2^\delta$ be the powers of $p$ associated to $G_1, G_2$, respectively (as in \cite[pp. 38--41]{C85}). 
 
  Let $(\sO,K,k)$ be a DVR triple as in (the first sentence of) Lemma \ref{decompMAT}, and assume that $\sqrt{p}\in\sO.$ If $G$ is of Ree or Suzuki type, assume
 $\sqrt[4]{p}\in\sO$.

Fix $B=A$ (the generic Hecke endo-algebra) or $B=\sH$ (the generic Hecke algebra).

   Suppose that $D'$ and $D^{\prime\prime}$ are the decomposition matrices of $B_{\sO'}$  and $B_{\sO^{\prime\prime}}$, with respect to the  Laurent triples $(\sO',K',k)$ and $(\sO^{\prime\prime}, K^{\prime\prime},k)$ associated to 
 $(\sO,K,k) $ with $a_1=\sqrt{q_1}$ and $a_2=\sqrt{q_2}$, respectively.\footnote{Thus,
 $(\sO',K',k)$ and $(\sO^{\prime\prime},K^{\prime\prime},k)$ are defined in the paragraphs above Example \ref{example} in
 Appendix C with $a_1$ and $a_2$ as the parameter $a$ there. In particular, $K'=K''=K(t)$.}
  Also, viewing $\sO$ as the $\sZ$-algebra $\sO_1$ or $\sO_2$ via the specializations $t\mapsto a_1$ and $t\mapsto a_2$, respectively, let $D_1, D_2$ be the decomposition matrices for $B_{\sO_1}, B_{\sO_2}$, respectively, relative to
   the DVR triples $(\sO_1,K_1,k)$, $(\sO_2,K_2,k)$.
  
  Then, the decomposition matrices  $D'$, $D^{\prime\prime}$, $D_1$, and $D_2$
   can all be identified with the same matrix $D$, where $D$ depends only on the choice $B=A$ or $B=\sH$.
  \end{thm}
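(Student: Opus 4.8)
The plan is to show that all four decomposition matrices agree with a single matrix $D$ by chaining together the agreement results already established, and by exploiting the fact that the generic algebra $B$ (where $B=A$ or $B=\sH$) has a \emph{uniform} description over $\sZ$ that does not depend on which group $G_i$ in the series we specialize to. The key point is that a standard finite Coxeter system $(W,S,L)$ is an invariant of the Lie-type series, as recorded in Section 1.1 (via the construction on \cite[pp.~34--36]{C85}): since $G_1,G_2$ lie in the same series as $G$, they all give rise to the \emph{same} triple $(W,S,L)$, hence to the \emph{same} generic Hecke algebra $\sH$ over $\sZ$ and the \emph{same} generic Hecke endo-algebra $A=\End_\sH(\bigoplus_{J\subseteq S}x_J\sH)$ over $\sZ$. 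Thus $B$ is one fixed $\sZ$-algebra, and the various $\sO$-algebras $B_{\sO_1}$, $B_{\sO_2}$, $B_{\sO'}$, $B_{\sO''}$ are all base changes of this single $B$.

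The first step is to verify that each of the four triples in the statement is a well-adapted $2$-regular local triple (or at least that the relevant hypotheses of Lemma \ref{decompMAT} hold): for the Laurent triples $(\sO',K',k)$ and $(\sO'',K'',k)$ this is immediate from the construction in Appendix~C together with the assumptions $\sqrt{p}\in\sO$ (and $\sqrt[4]{p}\in\sO$ in the Ree/Suzuki case), which guarantee $\sqrt{q_i}\in\sO$ and hence give $\sO$ its $\sZ$-algebra structure $t\mapsto\sqrt{q_i}$. One checks the characteristic conditions ($\operatorname{char}k = r\neq p$, $\operatorname{char}K=0$) are inherited from the DVR triple $(\sO,K,k)$, and the splitting condition holds automatically except possibly in type $^2F_4$, where $\sqrt{2}\in\sO$ was assumed (note $p=2$ there, so $\sqrt{p}\in\sO$ already supplies this).

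The second step is the chaining of identifications. By Lemma \ref{decompMAT}(a) applied with $a=a_1=\sqrt{q_1}$, the decomposition matrix $D'$ of $B_{\sO'}$ with respect to $(\sO',K(t),k)$ agrees with the decomposition matrix $D_1$ of $B_{\sO_1}$ with respect to $(\sO_1,K_1,k)$; here one uses that $B=A$ or $B=\sH$, both covered by that lemma (part (a) for $A$, part (b) for $\sH$). Likewise, applying Lemma \ref{decompMAT} with $a=a_2=\sqrt{q_2}$ identifies $D''$ with $D_2$. It therefore remains to identify $D_1$ with $D_2$ --- equivalently, to identify $D'$ with $D''$. But $D'$ and $D''$ are both decomposition matrices of base changes of the \emph{same} $\sZ$-algebra $B$ along $\sZ$-algebra maps $\sZ\to\sO'$ and $\sZ\to\sO''$; and since $\sO'=\sO''=\sO[t,t^{-1}]_{\mathfrak q}$ (localized at the same prime, as both Laurent triples are built from the identical DVR triple $(\sO,K,k)$ and differ only in the bookkeeping parameter $a_i$, which does not affect $\sO'$ itself but only the specialization $\sO'/\mathfrak p$ used in Lemma \ref{decompMAT}), the $\sO'$-algebra $B_{\sO'}$ is literally the same as $B_{\sO''}$, with the same field triple $(\sO',K(t),k)$. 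Hence $D'=D''$ on the nose. Combining, $D'=D''=D_1=D_2=:D$, and by construction $D$ depends only on $B$, i.e.\ only on the choice $A$ versus $\sH$.

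The main obstacle --- and the step to treat with care --- is the second step's reduction of ``same series $\Rightarrow$ same generic algebra'': one must be sure that the weight function $L$ (not merely the Coxeter diagram) is determined by the series, which is exactly the content of the discussion in Section~1.1 around the construction of $L$ from the Dynkin-diagram permutation $\rho$, and that the parameter $\delta$ distinguishing $q$ from $q^\delta$ does not enter the \emph{generic} algebra $\sH$ over $\sZ$ (it enters only through which specialization $t\mapsto\sqrt{q_i}$ one makes). A secondary subtlety is ensuring that the auxiliary ring $\sO'$ is genuinely independent of the choice $a_1$ versus $a_2$ before specialization; this follows from the explicit description of the Laurent triple in Appendix~C, where $\sO'$ is a fixed localization of $\sO[t,t^{-1}]$ and the parameter $a$ only selects the height-one prime $(t-a)$ along which one further specializes --- and the decomposition matrix is insensitive to that further specialization by Theorem \ref{cutie} together with Lemma \ref{goodone}, which is precisely what Lemma \ref{decompMAT} already packaged. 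With these two points checked, the rest is a routine diagram chase of the form \eqref{setup}.
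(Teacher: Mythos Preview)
Your argument contains a genuine gap at the step claiming $\sO'=\sO''$. By the construction in Appendix~C (display~\eqref{O'}), the Laurent ring associated to the parameter $a$ is the localization $\sO[t,t^{-1}]_{\mathfrak m}$ at the maximal ideal $\mathfrak m = (\pi, t-a)$; this ideal, and hence the localization, \emph{does} depend on $a$. So for $a_1=\sqrt{q_1}\ne a_2=\sqrt{q_2}$ the rings $\sO'$ and $\sO''$ are localizations of $\sO[t,t^{-1}]$ at distinct maximal ideals and are not the same ring. Concretely, the residue homomorphism $\sO'\to k$ sends $t\mapsto a_1$ while $\sO''\to k$ sends $t\mapsto a_2$, so $B_{\sO'}\otimes_{\sO'}k$ and $B_{\sO''}\otimes_{\sO''}k$ are the specialized algebras over $k$ with parameters~$q_1$ and~$q_2$ respectively --- in general non-isomorphic $k$-algebras. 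Hence $D'$ and $D''$ cannot be declared equal by inspection, and your assertion that ``the parameter $a_i$ does not affect $\sO'$ itself but only the specialization $\sO'/\mathfrak p$'' is simply false: the parameter $a$ enters the very definition of $\sO'$.

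The paper bridges $D'$ and $D''$ by a different route: it exploits the hypothesis $\sqrt{p}\in\sO$ (respectively $\sqrt[4]{p}\in\sO$ in the Ree/Suzuki case) to form a \emph{third} Laurent triple $({}^a\sO',K(t),k)$ with $a=\sqrt{p}$ (respectively $\sqrt[4]{p}$), even though this value of $a$ need not correspond to any group in the series. Since each $a_i$ is a positive integral power $a^{n_i}$ of this common $a$, Example~\ref{example} supplies inclusions $(\sO',K',k)\le({}^a\sO',K(t),k)$ and $(\sO'',K'',k)\le({}^a\sO',K(t),k)$ via $t\mapsto t^{n_i}$, both residually equal. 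Two applications of Proposition~\ref{old prop 10} then identify $D'$ and $D''$ through this common ambient triple. Your use of Lemma~\ref{decompMAT} to obtain $D_1=D'$ and $D_2=D''$ is correct; what is missing is this intermediate device, which is precisely why the hypothesis $\sqrt{p}\in\sO$ (and not merely $\sqrt{q_i}\in\sO$) appears in the statement of the theorem.
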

 
 \begin{proof} 
First, assume $B=A$ and $G$ is not of Ree or Suzuki type. 

 Lemma \ref{decompMAT} gives an identification of $D_1$ with $D'$ and an identification of $D_2$ with
 $D^{\prime\prime}$, while Proposition \ref{old prop 10} identifies $D'$ and $D^{\prime\prime}$.  It is useful to note here that
 there is, in the notation of Example \ref{example}, a Laurent triple $({^a \sO'}, K',k)$ with $a=\sqrt{p}$, even if this value of $a$ is not
 $a_1, a_2$, or associated to any $G$ in the Lie-type series. So, the relations $(\sO',K',k)\leq( {^a\sO'}, K(t),k)$ 
and
 $(\sO^{\prime\prime}, K^{\prime\prime},k)  \leq ({^a\sO'}, K(t), k)$
 can be used with Proposition \ref{old prop 10} to identify $D^{\prime\prime}$
 with $D'$ as follows: First, note that there is, in the notation of Example \ref{example}, a Laurent triple $({^a\sO}',K(t),k)$ with $a=\sqrt{p}$, even if this value of $a$ is not $a_1,a_2$ or associated to $G$ in the Lie-type series. So, the relations $(\sO',K',k)\leq( {^a\sO'}, K(t),k)$ 
and
 $(\sO^{\prime\prime}, K^{\prime\prime},k)  \leq ({^a\sO'}, K(t), k)$ can be used with Proposition \ref{old prop 10} to identify $D^{\prime\prime}$ with $D'$.
 
 We leave the remaining cases, with either $A$ replaced  by $\sH$ or $G$ replaced by a group of Ree 
 or Suzuki type, to the reader. \end{proof}

  \section{Decomposition matrices, II} 
  
  We have seen from Theorem \ref{bigDecom} that the decomposition matrix $D$ is an invariant relative to the standard specializations arising from finite groups $G$ of Lie type in the same Lie-type series. We now prove that, for such a $G$ and DVR triple $(\sO,K,k)$, this matrix $D$ is part of the decomposition matrix of $\sO G$. This shows that $D$ is, in fact, an invariant associated with the Lie type series.  
% \medskip\noindent  {\bf5.2d. From standard specializations to some unipotent representations.}

     We continue with the notation in the previous section. The unipotent representations\index{unipotent representation} in this section are the irreducible constituents of the permutation module $\mathbb C G/B$,
where $B=\bB^F$ for an $F$-stable Borel subgroup $\bB$ of $\bG$.
     
     In the following theorem, as in Theorem \ref{bigDecom}, {\color{black}$(\sO,K,k)$ is a DVR triple with $\sO$ complete, $k$ is algebraically closed of characteristic $r\not=p$, and $K$ has characteristic 0} (see Appendix A, Main Theorem). Also, by Hensel's lemma, $\sO$ contains a primitive $p^{th}$-root of unity. Here $p$
     is the defining characteristic of $\bG$.  We also require, as in Theorem \ref{bigDecom}, that $\sqrt{p}\in \sO$ and
     $\sqrt[4]{p}\in\sO$ for $G$ of Ree or Suzuki type; thus, we always  have $\sqrt{q}\in\sO$.

     The proof of part (a) involves methods invented by Dipper many years ago
     (although we could not find an explicit statement in \cite{DG99}). Part (b) seems to be a remarkable fact, not considered before in the literature. It is based on our study (in Appendix C) of Krull dimension 2 decomposition numbers. 
Part (c) depends on our main theorem in Subsection 5.2B, and provides evidence for the conjecture \cite[Conj.~4.5.2]{GJ11}, which references work of Geck \cite{G90,G06} and Geck--Hiss \cite[Conj.~3.4]{GH97}. %The references to Geck include his 1990 Ph.D thesis and a later study in 2006.
%\cite[Conj. 4.5.2]{GH97} of Geck and Hiss ( Geck's 1990 Ph.D. thesis).   
 Geck's  conjecture \cite[Conj.~2.1]{G12}, originating from his 1990 Ph.D. thesis \cite{G90}, on unitriangular shape of decomposition matrices of unipotent blocks has been proved recently in \cite{BDT20}, under the assumption that the defining characteristic for $\bG$ is good and the non-defining characteristic $r$ is {\it very good}.\footnote{\label{BDT}Call $r$ a very good prime if $r$ is good and $r$ does not divide the order of $Z(\bG)_F$, the largest (finite) quotient of  $Z(\bG)$ on which $F$ acts trivially. See \cite[Th.~A]{BDT20}.} The following result suggests that, for certain unipotent blocks, these conditions could be weakened by simply assuming that $r$ is good. (Compare bottom line of \cite[p.587]{BDT20}.)

   \begin{thm} \label{12.3}  Let $G=\bG^F$ be as above.
  (Thus, $\bG$ is simple,\footnote{We continue to make this assumption for simplicity; see footnote \ref{simpleG} in Chapter 1. In fact, much of the argument applies if $\bG$ is just semisimple, or even connected reductive.} and we think of $G$ as a finite group in a given Lie-type series.)
    
   (a) Let the matrix $D$ be as described above in Theorem \ref{bigDecom} for the generic Hecke endo-algebra $A$.
   Then, $D$ is a submatrix of the decomposition matrix of $\sO G$.  The columns of $D$ are indexed by the irreducible
  constituents of the permutation module $\mathbb CG/B$.

(b) The same statements hold, with the same 
  matrix $D$, if $G$ is replaced by any group in the same finite Lie-type series.   
  
  (c) If $r$ is a good prime for $\bG$, then $D$ above is unitriangular. 
  \end{thm}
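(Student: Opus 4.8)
The plan is to prove the three parts of Theorem \ref{12.3} in order, exploiting the accumulated structural results about $A$, $A^+$, and their $\sZ^\natural$-forms. For part (a), the key point is to realize the generic Hecke endo-algebra $A$, after a suitable specialization, as an endomorphism algebra directly tied to $\sO G$. Specifically, I would start from Lemma \ref{H^o}, which gives $\sH^o_\sO\cong\End_{\sO G}(\sO G/B)$ once $t^2\mapsto q$, and from the definition \eqref{q-Schuralgebra} of $A$ as the endomorphism algebra of $\bigoplus_{J\subseteq S}x_J\sH$. Via Lemma \ref{decompMAT} I would pass from the generic/Laurent triple picture to the DVR triple $(\sO,K,k)$ with $t\mapsto\sqrt q$, so that the decomposition matrix $D$ of $A_{\sO'}$ agrees with that of $A_{\sO_a}$ for $a=\sqrt q$. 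The specialized algebra $A_\sO$ is then an endomorphism ring over $\sO G$ of a direct sum of $q$-permutation modules, each of which is (by Lemma \ref{H^o} applied to parabolic subgroups, using induction/restriction) a direct summand of a permutation module $\sO G/P$ for a parabolic $P$; these are themselves summands of $(\sO G/B)^{\oplus m}$ up to a power (by standard Mackey/transitivity arguments over $\sO$). Hence $A_\sO$ is Morita-equivalent to an algebra of the form $f\,\End_{\sO G}(M)f$ where $M\mid (\sO G/B)^{\oplus N}$; Green's theory \cite[\S6.2]{Gr80} then identifies the decomposition numbers of $A_\sO$ with a submatrix of those of $\End_{\sO G}(\sO G/B)$, and the latter's irreducible modules over $K$ index the unipotent (principal series) constituents of $\mathbb CG/B$. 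Dipper's techniques give the precise comparison of composition multiplicities; the columns of $D$ being indexed by the constituents of $\mathbb CG/B$ then follows from \eqref{IrrWH} and semisimplicity of $\sH_K$ (Lemma \ref{goodone}).

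For part (b), the argument is essentially a combination of part (a) with Theorem \ref{bigDecom}. That theorem already shows that the matrix $D$ attached to $A$ (or $\sH$) is the same matrix, under the standard specializations, for every group in a fixed Lie-type series; hence once part (a) realizes $D$ as a submatrix of the decomposition matrix of $\sO G$ for one $G$, the identical matrix $D$ is simultaneously a submatrix of the decomposition matrix of $\sO G_i$ for every $G_i$ in that series, with the same indexing of columns by the constituents of $\mathbb CG_i/B_i$. The only thing to check carefully here is that the column-indexing is genuinely compatible across the series, which follows because in each case the columns correspond (via Lemma \ref{goodone} and \eqref{IrrWH}) to $\mathrm{Irr}(\mathbb QW)$, an invariant of the series $(W,S,L)$.

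For part (c), I would invoke the quasi-heredity results of Chapter 4 together with the decomposition-matrix comparison of Section 5.2A. When $r$ is good for $\bG$, by Remark \ref{good r} (and Remark \ref{433}) the algebra $A^+_{\sO'}$ is split quasi-hereditary, so by Theorem \ref{12.1a}(a) its decomposition matrix $D^+_{\sO'}$ is square unitriangular, and by Theorem \ref{12.1a}(c) the decomposition matrix $D_{\sO'}$ of $A_{\sO'}$ is itself unitriangular (of the form $[D_1,D_2]$ with $D_1$ square unitriangular). Since by Lemma \ref{decompMAT} this $D_{\sO'}$ is the matrix $D$ of part (a), unitriangularity of $D$ follows at once. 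The main obstacle I expect is in part (a): cleanly establishing that each $q$-permutation module $x_J\sH$, under the specialization $t^2\mapsto q$, is (up to multiplicity) a direct summand of $\sO G/B$ as an $\sO G$-module, and that passing to the idempotent-cutdown $f\End_{\sO G}(M)f$ is compatible with the identification $A_\sO\cong\End_{\sH_\sO}(\bigoplus_J x_J\sH_\sO)$ at the level of decomposition numbers — this is exactly where Dipper's methods \cite{DG99} are needed, and where one must be careful that $\sO$ (a DVR containing $\sqrt q$ and a primitive $p$-th root of unity) is large enough for the relevant direct-sum decompositions to be defined integrally. Parts (b) and (c) are then essentially formal consequences of earlier results.
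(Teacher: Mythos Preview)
Your arguments for parts (b) and (c) are correct and match the paper's: both follow formally from part (a) together with Theorem \ref{bigDecom} and Theorem \ref{12.1a}(c), respectively.

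The gap is in part (a). You propose to realize $A_\sO$ as an endomorphism algebra of a direct sum of permutation modules $\sO G/P_J$ and then invoke Green's idempotent theory and ``Dipper's techniques'' to conclude that its decomposition matrix embeds in that of $\sO G$. But Dipper's result \cite[Cor.~4.10]{Dip90} does not apply to an arbitrary $\sO G$-module $M$: it requires that $M$ have a projective cover $P\twoheadrightarrow M$ whose kernel, over $K$, shares no irreducible constituent with $M_K$. The permutation modules $\sO G/P_J$ are not projective in general (since $r$ may divide $|T|$), and you give no argument for this kernel condition. This is precisely the obstacle the paper overcomes: it replaces $\sO G/P_J$ by the Harish-Chandra induced Steinberg lattices $R_{L_J}^G(\mathscr S_\sigma^{L_J})$, uses the Gelfand--Graev lattices $R_{L_J}^G(\Gamma_\sigma^{L_J})$ as projective covers, and establishes the kernel condition via Theorem \ref{kerfactors} (Appendix A). Only then does \cite[Cor.~4.10]{Dip90} apply, and one still needs the nontrivial identification $\End_{\sO G}(\bigoplus_J R_{L_J}^G(\mathscr S_\sigma^{L_J}))\cong\End_{\sH_\sO}(\bigoplus_J y_J\sH_\sO)\cong A_\sO$ via \cite{Dip98} and the $x_J/y_J$ duality \cite[Lem.~1.1(c)]{DPS98c}.

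You also do not address the hypothesis that $Z(\bG)$ be connected, which Theorem \ref{kerfactors} requires. The paper devotes a substantial argument (Part 2 of the proof) to removing this hypothesis, passing to the enlargement $\widetilde\bG$ of Lemma \ref{functor} and descending the kernel condition back to $G$.
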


  \begin{proof} 
  
  First, assuming that (a) holds, (b) follows from Theorem \ref{bigDecom} and (c) follows from Theorem \ref
   {12.1a}(c) and 
  Theorem \ref{bigDecom}.
  
  It remains to prove (a).   This assertion contains two parts, the second of which, counting columns, follows from the argument for Theorem \ref{12.1a}(d). It remains to prove the first part of (a), which asserts an inclusion of decomposition matrices.  We divide the proof into two parts.
  
  We give the proof when $\bG$ is simple of adjoint type in Part 1. In this case, we may directly apply Theorem \ref{kerfactors} in Appendix A. We also supply the main ingredient in the argument for the general case. 
In Part 2, we prove a variation on Theorem \ref{kerfactors} for $\bG$ simple, reducing the general case to the validity of Theorem \ref{kerfactors} for the group $\widetilde{\bG}$ in Lemma
  \ref{functor} (which has connected center).

\medskip\noindent  
  {\bf Part 1. The adjoint-type case}
  
%  We give the proof when $\bG$ is simple of adjoint type, then supply the main ingredient in the argument for the general case. The ingredient we prove is a variation on Theorem A.1 in Appendix A for $\bG$ simple, reducing that case to the validity of Theorem A.1 for the group $\widetilde{\bG}$ in Lemma
 % \ref{functor} (which has connected center).
  
  Let $\bG$ be a simple algebraic group of adjoint type. Then, by the proof of \cite[Prop. 9.15]{MT11},  $\bG$ has a trivial center $Z(\bG)=1$; see also the proof of \cite[Prop.~8.1.2]{C85} on page 254 for this fact. Thus, Theorem \ref{kerfactors} applies in the current (adjoint) case.
  
  Let
  $$\mathcal M=\mathcal M_\sO:=\underset{J\subseteq S}{\bigoplus} R_{L_J}^G(\mathscr{S}_\sigma^{L_J}),$$ 
  using, on the right-hand side, the notation of the proof of Theorem \ref{kerfactors}.  Then, Theorem \ref{kerfactors} holds for %$M_k$ as does its analog for 
  $\sM_K$.  %(This follows from Theorem A.1 or its proof.)  
  This provides  a projective cover of $\sM$ satisfying 
          \cite[2.3]{Dip91}. Equivalently, in the context of \cite[\S4]{Dip90}, the cover satisfies \cite[2.6]{Dip90}; see \cite[Th.~4.7]{Dip90}. Thus, by \cite[Cor. 4.10]{Dip90}, %\cite[2.6iii]{Dip91}, 
          the decomposition matrix of $\End_{\sO G}(\sM)$ is a submatrix of the decomposition matrix of $\sO G$. (See the formulation of \cite[Th. 4.1.14]{GJ11} which uses a slightly different hypothesis on ($\sO,K,k$) but also references the same result \cite[Cor.~4.10]{Dip90}.) 
          %, we mention, does not assume completeness.) 
           Now, assertion (a) follows from the algebra isomorphism $\End_{\sO G}(\sM)\cong A_\sO$, which is proved below.%see \cite[Th. 2.24]{DJ89} for the general linear group case.
           
           For an $F$-stable maximal torus $\bT\subseteq\bB$, let $T=\bT^F$.        Let $M_\emptyset:=R_{T}^G\sO=\sO G/B$, where $\sO$ is the trivial $T$-module. 
         By remarks before the statement of the theorem, $\sO$ contains a  primitive $p^{th}$-root of unity. Thus, the 
       underlying regular character $\sigma$ of $\Gamma_\sigma$ is defined over $\sO$.  (The reader may wish to review properties of Gelfand-Graev characters and their underlying modules in \cite[\S8.1]{C85} or the helpful summary in \cite[\S4.3]{G17}.)
       As an $\sO G$-module, $\Gamma_\sigma$ is induced to $G$ from $\sigma$ on $U$, and the latter is not trivial on any fundamental root subgroup of $U$, nor are any of the conjugates of $\sigma$ by elements of $T$.
       We mention that \cite[\S2.1--\S2.6]{C85} gives an excellent treatment of root subgroups and root  systems in the broad  context of algebraic groups with split BN-pairs, suitable for our use here.
       So, if $w\in W$ is not the longest word, neither is its inverse, and there are no nontrivial $B$-homomorphisms from $\ind_U^B\sO_\sigma$ to $\ind_{^wB\cap B}^B\sO$. (The point is that $^wB\cap B$ must contain a fundamental root group $X$, on which $\sigma$ and its conjugates through elements of $T$ are all nontrivial.)
      Mackey decomposition arguments now show that $\Hom_{\sO G}(\Gamma_\sigma,M_\emptyset)\cong\sO$, generated by a map $\beta_\emptyset:\Gamma_\sigma\longrightarrow M_\emptyset$ with image $\sO$-pure.
        By \cite[4.1]{G17}, $\dim_K\Hom_{KG}(K\Gamma_\sigma,\St_K)=1$. As is well known, $\St_K$ is a submodule of $KM_\emptyset$. Thus, the image of $\beta_\emptyset$ satisfies $K\im\beta_\emptyset= \St_K\subseteq KM_\emptyset$. 
          So the kernel $\ker\beta_\emptyset$ is just the intersection of $\Gamma_\sigma$ with the sum of all irreducible $KG$-submodules of $K\Gamma_\sigma$ not isomorphic to $\St_K$. (Recall that $K\Gamma_\sigma$ is multiplicity free by 
          \cite[Th. 8.1.3]{C85}. This reference requires that $\bG$ be connected reductive with connected center, a condition that holds for $\bG$ with adjoint type and also for $\widetilde{\bG}$ in the more general case discussed in Part 2. Corresponding results for Levi subgroups also hold in each case. The multiplicity freeness may then be deduced for $\widetilde G$ and its Levi subgroups by using \cite[Lem. 2.3(a),(c)]{DF94}.)
 %         elementary induction argument, viewing all the       Gelfand-Graev modules as appropriate induced modules; cf \cite[Lem. 2.3(a)]{DF94}.)} 
           A similar kernel analysis applies to the surjective map $\Gamma_\sigma\longrightarrow\scrS_\sigma$ discussed above \eqref{surjhom}. Thus, the latter map has
          the same kernel as $\beta_\emptyset$. Consequently,
          $$\im \beta_\emptyset\cong \scrS_\sigma.$$
The remaining discussion in Part 1 does not assume that $\bG$ is of adjoint type.
         
 The Steinberg $KG$-submodule   of $KM_\emptyset$ can also be written as 
$y_S KM_\emptyset$, where 
$$y_S=\sum_{w\in W}\sgn(w)t^{-2L(w)}{T}_w=\sum_{w\in W}\sgn(w)t^{-L(w)}\widetilde{T}_w
\in \sH_K.$$ 
This is just the module-theoretic realization
of the correspondence between irreducible $KG$-constituents of $KM_\emptyset$ and irreducible characters of its
$KG$-endomorphism algebra. The Steinberg character corresponds to the sign 
character; see  \cite[Th. 2.1, Lem. 2.2]{G17}, and Lemma \ref{goodone} above. Note that $T_sx=-x$,  for $x=y_S$, and,
 consequently, for any $x\in y_S KM_{\emptyset}$. For the $x=y_S$ case, see \cite[Lem. 1.1(b)]{DPS98c}.

There is thus a unique pure $\sO G$-submodule of $M_\emptyset$ which base changes over $K$ to $\St_K$,
namely, $M_\emptyset\cap y_SKM_\emptyset$. The latter module may also be written as $\sqrt{y_SM_\emptyset}$ in the (now common)
terminology of \cite[p.~251]{Dip90}. We have proved $\scrS_\sigma\cong\sqrt{y_SM_\emptyset}$. 

The argument for this isomorphism has not used the assumption that $\bG$ is simple of adjoint type.
There are similar characterizations of the modules $\scrS_\sigma^{L_J}$ introduced above Theorem \ref{kerfactors}. We may write
$$\scrS_\sigma^{L_J}=\sqrt{y_JM^{L_J}_\emptyset} ,$$
 where $y_J=\sum_{w\in W_J}(-1)^{\ell(w)}(t_s^2)^{-\ell(w)}T_w$, and $\sqrt{y_JM_\emptyset}\cong R^G_T(\scrS_\sigma^{L_J})$, the $\sO G$-submodule of $M_\emptyset$ generated by $\sqrt{y_JM_\emptyset^{L_J}}=\scrS_\sigma^{L_J}$.
          
Applying \cite[Ths.~1.30\&1.36]{Dip98} with $R=\sO$, and $M_{\sO}=M_\emptyset$ in Theorem 1.30 there, we obtain% Here the notation $\sqrt{y_JM_\emptyset}$ follows the notation in \cite[Th. 2.14]{Dip91}. By \cite[Th. 2.14]{Dip91},
\footnote{See also \cite[Th. 1]{Ca98} and \cite[Th. 1.10]{DG99}.} %\cite[Th. 4.17]{Dip90} each of which includes a proof.
  $$\End_{\sO G}\left(\bigoplus_J\sqrt{y_JM_\emptyset}\right)\cong
  \End_{\sH_\sO}\left(\bigoplus_J{y_J\sH_\sO}\right).$$
  However, there is a well-known identification 
  $$\Hom_{\sH_\sO}(y_J\sH_\sO,y_I\sH_\sO)\cong\Hom_{\sH_\sO}(x_J\sH_\sO, x_I\sH_\sO)$$
  for each $I,J\subseteq S$. See \cite[Lem. 1.1(c)]{DPS98c}.
    The left-hand side of this display sums to the right-hand side of the display above it. The lower display's own  righth-hand-side sums to the algebra $A_\sO$, using the base-change result \cite[Th. 2.4.4]{DPS98a}.  The left-hand side of the higher display has decomposition matrix a submatrix of that of $\sO G$, as noted in our discussion of $\End_{\sO G}(M)$ in the opening paragraph of this proof.  However, by Theorem \ref{bigDecom} (or Lemma \ref{decompMAT}), the decomposition matrix $D$ of $ A_{\sO'}$ agrees with that of $A_\sO$. 
  The theorem is now completely proved for $\bG$ simple of adjoint type.
 
  \medskip\noindent
  {\bf{Part 2. The general case}}
  
  Next, we consider the general case where $\bG$ is simple, but possibly not adjoint. (Much of the argument applies if $\bG$ is just semisimple, or even connected reductive.) Possibly, $\bG$ does not have connected center. In particular, Theorem \ref{kerfactors} is not directly available, but does apply to the group $\widetilde\bG$ as constructed in Lemma \ref{functor}. The theorem can at least be (re)stated for $G$ (that is, for $G=\bG^F$), using a fixed regular character $\sigma:U\to K^\times$ and drawing on the notation and discussion in Appendix A, which in turn draws on \cite[\S4]{G17}. We next give the main details and then use the version of Theorem \ref{kerfactors} in Appendix A to prove the modified version.
  
  In particular, an $\sO G$-lattice $\Gamma_\sigma$ is defined as in \cite[4.1]{G17} via an idempotent in $\sO U\subseteq\sO G$ constructed from $\sigma$. (The homomorphism $\sigma^{-1}$ applied to the idempotent is $1$.) Equivalently, $\Gamma_\sigma$ is the $\sO G$-module induced by the rank 1 $\sO U$-module determined by $\sigma^{-1}$ (a module we call $\sO_\sigma$ by abuse of notation).
  Briefly, $\Gamma_\sigma:=\ind_U^G\sO_\sigma$. This definition makes sense for $G=\bG^F$ (or for any finite group of Lie type).
  
  Next, keeping the same level of generality, let $P_J=U_J L_J$ be the standard parabolic subgroup of $G$ corresponding to $J\subseteq S$. A specific analog $\Gamma_\sigma^{L_J}$ of $\Gamma_\sigma$ is constructed ``by restriction" in Appendix A. This makes sense here, since $U\cap L_J$ is the analog for $L$ of $U$, and the restriction of $\sigma$ to $U\cap L_J$ is regular. We write
  $$ \Gamma_\sigma^{L_J}:=\ind^{L_J}_{U\cap L_J}\Res^U_{U\cap L_J}\sO_\sigma.$$
  
  Continuing, note from \cite[4.1]{G17} that the Steinberg module $\St_K$ appears in $K\Gamma_\sigma$ with multiplicity
  $1$.  It follows (see \cite[Prop.~4.2]{G17}, attributed to Hiss, and its proof) that there is a unique pure
  $\sO G$-sublattice $\Gamma'_\sigma$ such that $K (\Gamma_\sigma/\Gamma'_\sigma)\cong \St_K$. Both
  \cite{G17} and Appendix A take this property to define $\mathscr S_\sigma:=\Gamma_\sigma/\Gamma'_\sigma$, and we make the same definition here.  As in Appendix A, we also make the parallel definition
  $$\mathscr S_\sigma^{L_J}:=\Gamma_\sigma^{L_J}/(\Gamma^{L_J}_\sigma)'.$$
  This does not require the connected center hypothesis and makes sense in the general context $G=\bG^F$ with $\bG$ simple (or even just connected reductive).
  
  To summarize, we have now defined for $\bG$ all of the ingredients needed to define $\beta_K$ for the formal statement of Theorem \ref{kerfactors}. 
  
 {\it We claim that Theorem \ref{kerfactors} holds for $\bG$ with this notation.} We intend to prove this claim by using the fact that it holds for $\widetilde{\bG}$ with obvious changes in notation from $G,L_J, \beta, \Gamma_\sigma^{L_J}, \mathscr S^{L_J}_\sigma,\St_K^{L_J}$ to $\widetilde G, \widetilde L_J, \widetilde\beta, \Gamma_\sigma^{\widetilde L_J}, \mathscr S_\sigma^{\widetilde L_J}, \St^{\widetilde L_J}_K$, for any $J\subseteq S$.
  
 %reduction in the first paragraph of the proof of the 
 Theorem \ref{kerfactors} as
currently stated requires that we prove that  %shows that it is enough to pove that 
the kernel and image of 
\begin{equation}\label{betaK}
\beta_K:\bigoplus_{J\subseteq S}R^G_{L_J}(K\Gamma_\sigma^{L_J})\lra \bigoplus_{J\subseteq S}R^G_{L_J}(St_K^{L_J})
\end{equation} have no common nonzero constituents, a property that we know (and will prove) to be true in our context.
%for $\widetilde\beta_K$ from either the statement of Theorem A.1 and the first paragraph of it proof, or from the remaining paragraphs, which directly prove the reduction).
  
  For each $J\subseteq S$, let $\gamma^J:K\Gamma_\sigma^{L_J}\to K\mathscr S_\sigma^{L_J}\cong \St_K^{L_J}$, discussed 
  %(but not named) 
  in the proof of Theorem \ref{kerfactors}. We take $L_J$ to be the standard Levi subgroup of $G$ associated to $J$, and all the terms $\Gamma_\sigma^{L_J}, \mathscr S_\sigma^{L_\sigma}, \St_K^{L_J}$ as discussed above. Similarly, we have a map $\widetilde\gamma^J$, replacing $G$ with $\widetilde G$, and using
  $\Gamma_\sigma^{\widetilde L_J}$, $\mathscr S_\sigma^{\widetilde L_J}$, and $\St_K^{\widetilde L_J} $ in place of their counterparts for $G$.
  
  We pause here to note that
  $$\beta_K=\bigoplus_{J\subseteq S} R^G_{L_J}(\gamma^J)\quad{\text{\rm and}}\quad 
   \widetilde\beta_K=\bigoplus_{J\subseteq S}R^{\widetilde G}_{\widetilde L_J}(\widetilde\gamma^J).$$
   Recall that $\widetilde G$ is defined in the notation and setting of Lemma \ref{functor} as $\widetilde G=\widetilde\bG^{\widetilde 
       F}$,  where $\widetilde \bG=\bG  \widetilde\bZ$. Here, $\widetilde\bZ$ is an isomorphic copy of the  ``maximally split" torus $\bT$ of $\bG$ in the terminology of \cite[\S1.18]{C85} and $\widetilde\bZ= Z(\widetilde \bG)$. The torus $\bT\widetilde\bZ$ is maximally split in $\widetilde \bG$, and in $\widetilde G=G(\bT\widetilde\bZ)^{\widetilde F}$, as may be shown using the Lang-Steinberg theorem.  There is also an $F$-stable Borel subgroup $\bB$ of $\bG$ containing $\bT$ in the setting of Lemma \ref{functor}. A closed subgroup $\bN$ normalizing $\bT$ may then be chosen so that $\bB$, $\bN$ give a split BN-pair structure to $\bG$ as per the axioms (specialized for algebraic groups) of \cite[p.23]{C85}, with $\bB\cap \bN=\bT$. (The choice in \cite[p.22]{C85} of $\bN$ is $\bN=N_\bG(\bT)$, but this is not part of the axioms. We note that this choice of $\bN$ is $F$-stable, a useful property.) We now have three additional candidates for split BN-pairs: the pair $\bB \widetilde\bZ, \bN\widetilde\bZ$ with intersection $\bT\widetilde\bZ$, the pair $B=\bB^F, N=\bN^F$ with intersection $T=\bT^F$, and, finally, the pair $(\bB \widetilde\bZ)^{\widetilde F}, (\bN \widetilde\bZ)^{\widetilde F}$ with intersection $(\bT\widetilde\bZ)^{\widetilde F}$. The first of these candidate pairs satisfies Carter's axioms by an easy inspection, while the last two are covered by \cite[pp. 33-34]{C85}, which also discusses their respective Weyl groups. 
   
   We now have four ``algebraic groups with split BN-pairs" in the sense of \cite{C85}, namely $\bG$, $\widetilde \bG=\bG \widetilde\bZ$, $G=\bG^F$\!, and $\widetilde G=\widetilde\bG^{\widetilde F}=G(\bT\widetilde\bZ)^{\widetilde F}$. The first two groups are connected reductive, and the second two are the respective finite fixed-point groups under Frobenius endomorphisms. According  to \cite[p. 61 bottom]{C85} these groups all ``satisfy the commutator relations" of Chevalley in the form displayed there. This is needed for the full Levi decomposition theory developed in \cite[\S2.6]{C85}, quoted in \cite[\S4]{G17}. The standard Levi subgroup $L_J$ of $G$ is introduced above \cite[Prop. 2.6.3]{C85}. Implicit in its definitions is the split BN-pair context provided by the pair $\bold B^F, \bN^F$ discussed above. If that pair is changed to $(\bB \widetilde\bZ)^{\widetilde F}, (\bN\widetilde\bZ)^{\widetilde F}$, the analogous definition gives the Levi subgroup $\widetilde L_J$ of $\widetilde G= (\bG \widetilde\bZ)^{\widetilde F}$.
   
   In more detail, $L_J$ is defined as the group generated by the BN-pair intersection $B\cap N=T$ and certain ``root groups" $X_{\alpha}$, $\alpha\in \Phi_J$, determined by the interaction of the unipotent radical $U=(\bU)^F$ of (the $B$-part of) the pair with its Weyl group, $W=N/T$. See \cite[p. 43, \S2.2, Prop. 2.3.1, p. 57]{C85}. We take the opportunity to observe that $W$ also identifies with the Weyl group $(\bN\widetilde\bZ)^{\widetilde F}/(\bT\widetilde\bZ)^{\widetilde F}$ of the split BN-pair $(\bB \widetilde\bZ)^\wF,$  $(\bN\widetilde\bZ)^\wF$. Briefly, using the Lang-Steinberg theorem and the observation that $\bN\cap \widetilde\bZ\subseteq \bG\cap \widetilde\bZ\subseteq \bT$, we have
   $$\aligned N/T &\cong\bN^F/\bT^F\cong (\bN/\bT)^F=(\bN/\bT(\bN\cap \widetilde\bZ))^F=(\bN/\bN\cap \bT\widetilde\bZ)^F\\
   &\cong (\bN\bT\widetilde\bZ/\bT\widetilde\bZ)^\wF\cong (\bN\widetilde\bZ)^\wF/(\bT\widetilde\bZ)^\wF.\endaligned$$
   Order considerations now imply that the natural map
   $$N/T\longrightarrow (\bN\widetilde\bZ)^\wF/(\bT\widetilde\bZ)^\wF$$
   is an isomorphism. (The map is injective, since 
   $$N\cap (\bT\widetilde\bZ)^\wF= (\bN\cap \bT\widetilde\bZ)^\wF=(\bT(\bN\cap \widetilde\bZ))^\wF
   \subseteq (\bT(\bG\cap \widetilde\bZ))^\wF=\bT^\wF=\bT^F=T.)$$ 
   Surjectivity gives the useful identity $(\bN\widetilde\bZ)^\wF=N(\bT\widetilde\bZ)^\wF$.
   
   Next, we note that the following statements all hold.
   \begin{itemize}
   
   \item[(1)] $T$ normalizes $U$.
   
   \item[(2)] If $V$ is any subgroup of $G$ or $\widetilde G$ normalized by $T$, then $^nV:=nVn^{-1}$ and $V^n:=n^{-1}Vn$ are also normalized
   by $T$, for any $n\in N$.
   
   \item[(3)] If $V_1$ and $V_2$ are subgroups of $G$ or $\widetilde G$, both normalized by $T$, then   $V_1\cap V_2$ and $\langle V_1,V_2\rangle$, the subgroup generated by $V_1,V_2$, are also normalized by $T$.
   
   \end{itemize}
   
  \noindent The proofs are all elementary, and the same properties (1), (2), (3) hold if $T$ is replaced by $(\bT\widetilde\bZ)^\wF.$ (To see that (1) holds in this case, note that $U=\bU^F$, and $\bT\widetilde\bZ$ normalizes $\bold U$.)
  
  The above paragraph contains an implicit recipe for computing root groups, in the sense of \cite[p. 50]{C85}, for $G$ or $\widetilde G$, using \cite[\S2.5]{C85}. The split BN-pair context we use for $G$ is $\bB^F, \bN^F$ and $(\bB \widetilde\bZ)^\wF, (\bN\widetilde\bZ)^\wF$ for $\widetilde G$. First, we use the first two lines of the display \cite[p. 50]{C85} to construct a ``fundamental" root group $X_i$ from $U$ and a fundamental reflection $s_i$ in $W$, represented by $n_i\in N$. A representative $n_0$ for any longest word is also needed. We then write
  $$X_i=U\cap U^{n_0n_i},$$
  which is the same group whether working with $G$ or $\widetilde G$. All the other root groups for $G$ have the form $nX_in^{-1}$ for some fundamental reflection $s_i$ and some element $n\in N=\bN^F$ \cite[Prop. 2.5.15]{C85}. A similar construction applies for $\widetilde G$, using $(\bN\widetilde\bZ)^{\wF}$ in place of $\bN^F$. However, it leads to the same root groups, since each $X_i$ is normalized by $(\bT\widetilde\bZ)^\wF$ and $(\bN\widetilde\bZ)^\wF=N(\bT\widetilde\bZ)^\wF$. 
  
  It will now be convenient to write $nX_in^{-1}$ above as $wX_iw^{-1}(={^wX_i}=X_i^{w^{-1}}$), where $w\in W$ is represented by $n\in N$.  More generally, if $X,Y$ are root groups and $Y=nXn^{-1}$, with $n$ representing $w$ ($n\in N$, $w\in W$), we write $Y=wXw^{-1}$ ($={^wX}=X^{w^{-1}}$). This gives a permutation action of $W$ on the set of root subgroups. This action agrees with the permutation action on the set of roots $\Phi$ in a ``root system"  constructed from $W$ as a finite Coxeter group in \cite[\S2.2]{C85}. See \cite[Prop. 2.5.15]{C85} and its proof, which gives the equivariant bijection. The notation $\Phi_J$ is introduced in \cite[Prop. 2.3.1]{C85}, using as $J$ a subset of the indices of the fundamental reflections $S$ of $W$. Our $J$ denotes the subset of these reflections themselves. With this modification, root groups $X_\alpha$ ``with $\alpha\in \Phi_J$" are those of the form $w(X_i)$ with $s_i\in J$ and
  $w\in W_J:=\langle J\rangle$. (The key to proving this is observing that the correspondence given in the proof of \cite[Prop. 2.5.15]{C85} is $W$-equivariant. It can then be applied to the definition of $\Phi_J=W_J(\Delta_J)$ in \cite[Prop. 2.3.1]{C85}, noting that $\Delta_J$ corresponds to the fundamental reflections in $J$.)
  
  We can now return to our discussion of $L_J$ and $\wL_J$ earlier in the proof. Let 
  $$X_J =\langle w(X_i)\,|\, s_i\in J\,\, \&\,\, w\in W_J\rangle.$$
  We now know 
  $L_J=\langle T, X_J\rangle$, from the definition in \cite[\S2.6]{C85} in terms of $\Phi_J$ and the reformulation above.  This is in the context of $G=\bG^F$ and its split BN-pair $\bB^F, \bN^F$. If instead we work with $\widetilde G=
  (\bG \widetilde\bZ)^\wF$, we obtain $\wL_J$ in place of $L_J$. Since the root groups for $G$ and $\widetilde G$ are the same, with the same Weyl group actions on them, we have 
  
  $$\wL_J=\langle(\bT\widetilde\bZ)^\wF, X_J\rangle=(\bT\widetilde\bZ)^\wF X_J=(\bT\widetilde\bZ)^{\wF}\bT^FX_J=(\bT\widetilde\bZ)^\wF L_J,$$
   taking into account the normalization of all the root  groups by both $T$ and $(\bT\widetilde\bZ)^\wF$.  We may also write
  $$
  \wL_J=L_J(\bT\widetilde\bZ)^\wF.$$
 Notice that the unipotent radical of $\widetilde B_J$ and  the unipotent radical of $B_J$ are the same. Hence, $\Gamma_{\sigma}^{\wL_J}\cong\text{Ind}_{L_J}^{\wL_J}(\Gamma_\sigma^{L_J})$ by definition.
  This is a good place to mention that $L_J$ (resp., $\widetilde L_J$) inherits a split BN-pair structure from $G$ (resp., $\widetilde G)$. See \cite[Prop. 2.6.3]{C85}. We denote the pair as $B_J, N_J$ (resp., $\widetilde B_J, \widetilde N_J$). We remark that $B_J\subseteq B$ and $N_J\subseteq N$ (resp., $\widetilde B_J\subseteq\widetilde  B$ and $\widetilde N_J\subseteq\widetilde N= \widetilde \bN^{\widetilde F}$, where $\widetilde  \bN=\bN\widetilde\bZ$).
  Also, we find, again using \cite[Prop. 2.6.3]{C85} with our discussions above (of root subgroups and of $W$), that 
  $$B_J\cap N_J=T,\quad \widetilde B_J\cap\widetilde N_J=(\bT\widetilde\bZ)^{\widetilde F},\quad N_J/T=W_J\subseteq W=N/T.$$
  The group $W_J$ in this display is the group generated by the reflections in $J$, in our notation, and by the reflections indexed by $J$ in the notation of \cite[Prop. 2.6(4)]{C85}. With some abuse of notation, we also have
 $$ \widetilde N_J/(\bT\widetilde\bZ)^{\widetilde F}=W_J\subseteq W=(\bN\widetilde\bZ)^{\widetilde F}/(\bT\widetilde\bZ)^{\widetilde F}.$$
 Technically, we should use groups $\widetilde W_J$, $\widetilde W$, naturally isomorphic to $W_J, W$, respectively, in the display above. Here, $\widetilde W_J=\overline \eta(W_J)$ and $\widetilde W=\overline\eta(W)$, where 
 $$\overline\eta: N/T\longrightarrow (\bN\widetilde\bZ)^{\widetilde F}/(\bT\widetilde\bZ)^{\widetilde F}$$
 is the natural isomorphism discussed above in this proof. It is induced by the natural inclusion map $\eta:N\longrightarrow (\bN\widetilde\bZ)^{\widetilde F}$ that sends any element of $N$ to itself.
 
 For use in the next paragraph, observe that if $w\in W=N/T$ is represented by $n_w\in N$, then $\overline \eta(w)\in\widetilde W=(\bN\widetilde\bZ)^{\wF}/(\bT\widetilde\bZ)^\wF$ is represented by the same element $n_w$, viewed as a member of $(\bN\widetilde\bZ)^{\widetilde F}$.  If $w\in W_J=N_J/T$, then $n_w\in N_J$, and $\overline\eta(w)\in \overline\eta(W_J)=\widetilde W_J$ is, as before, represented by $n_w$.  Since the isomorphism $\overline \eta$ takes $W_J$ onto $\widetilde W_J$, this process can be reversed, showing that any element ${\widetilde w}\in{\widetilde W}_J$ is representable by an element $n_w\in N_J$, where $\overline\eta(w)= \widetilde w$.  We remark that the element $n_w$ belongs to $\widetilde N_J$, since the latter group is 
 the preimage of $\widetilde W_J$ under the natural map $\widetilde N=(\bN\widetilde\bZ)^\wF\longrightarrow \widetilde W.$

 We next note, using the previous paragraph, that the Steinberg module $\St^{\wL_J}_K$ has $\St^{L_J}_K$ as its restriction to $L_J$, up to isomorphism: To see this, consider the description of $\St^{L_J}_K$ (and of $\St^{\wL_J}_K$ in \cite[Th. 2.1]{G17}, attributed to Steinberg): % According to this theorem, we have 
 $$\St^{L_J}_K=KL_J\mathfrak e\subseteq KL_J{{\underline{\mathfrak b}}},$$
where ${\mathfrak e}:=\sum_{w\in W_J}(-1)^{\ell(w)}n_w\underline{\mathfrak b}$
 and $\underline{\mathfrak b}$ is the sum of the elements in the ``B"-part of a given split BN-pair structure for $L_J$. 
 We use the pair $B_J, N_J$ 
 above for $L_J$, and $\wB_J, \wN_J$ for $\wL_J$
  and an analogous description of its Steinberg module $\St^{\wL_J}_K$ over $K$.  The elements $n_w$ are required to be in the ``N" part $N_J$ of the pair for $L_J$, with $n_w$ a preimage of $w\in W_J$ under the natural map 
 $N_J\twoheadrightarrow W_J$. Since $N_J$ is the full preimage of $W_J$ under the natural map $N\to W$, it is enough to know that $n_w\in N$ and its image $w$ lies in $W_J$. Passing to the formula for $\St_K^{\wL_J}$, we find that the expression $
  \sum_{w\in W_J}(-1)^{\ell(w)}n_w$ in the formula for $\St^{L_J}_K$ can be used in the same place for $\St^{\wL_J}_K$ in the formula for the analog for $\widetilde{\mathfrak e}$ of $\mathfrak e$. Let $\mathfrak c$ be a set of representatives for all the right cosets of ${\mathfrak b}:=B_J$ in $\widetilde{ \mathfrak b}:=\wB_J$, and let $\underline{\mathfrak c}$ denote the sum of all the elements in $\mathfrak c$ viewed as members of $K\wL_J$. Regard also the sums $\underline{\mathfrak b}$ and $\underline{\widetilde{\mathfrak b}}$ as taking place in $K\wL_J$. Then
 $$\underline{\mathfrak b}\,\cdot\underline{\mathfrak c}=\underline{\widetilde{\mathfrak b}}, \quad
 {\mathfrak e}\cdot\underline{
 \mathfrak c}=\widetilde{\mathfrak {e}}.$$
 Right multiplication by $\underline{\mathfrak c}$ thus provides, inside $K\wL_J$, a nonzero map of $KL_J$-modules 
 $\St^{L_J}_K\longrightarrow \St^{\wL_J}_K|_{L_J}$. The map is injective, since $\St^{L_J}_K$ is irreducible \cite[Th. 2.1(ii)]{G17}.  It is an isomorphism, since both domain and target have the same dimension $|B_J|_p=|\wB_J|_p$ by \cite[Th. 2.1(i)]{G17}.
(The two given $p$-parts represent the equal cardinalities of Sylow $p$-subgroups of the underlying groups $B$ and $\widetilde B$, and the quoted result implies these cardinalities give the dimensions of the relevant Steinberg modules.)

We now return to the task of proving that the kernel $\ker\beta_K$ and the image $\im\beta_K$ of $\beta_K$ have no nonzero constituents in common. (Recall that this will prove that Theorem \ref{kerfactors} holds for our $\bG$ here, i.e., one that may not satisfy the connected center hypothesis). Equivalently, we must show that $\ker R^G_{L_J}(\gamma^J)$ and $\im R^G_{L_{J^\prime}}(\gamma^{J'})$ have no common nonzero constituents, for each $J,J'$ contained in $S$. Recall that $\gamma^J$ is a surjective map
$K\Gamma_\sigma^{L_J}\twoheadrightarrow \St_K^{L_J}$ with analogous notation for $\widetilde\gamma^J$ (and $\gamma^{J'},\widetilde \gamma^{J'}$).
As such, it is unique up to nonzero scalar multiples. A similar uniqueness holds for its analogous maps.

We next {\bf claim} that $R^\wG_{L_J}(\ker\gamma^J)$ is isomorphic to a submodule of $\ker R^{\wG}_{\wL_J}(\widetilde \gamma^J)$. To see this, first induce the $KL_J$ exact sequences
$$0\longrightarrow \ker\gamma^J\longrightarrow K\Gamma^{L_J}_\sigma\overset{\gamma^J}\longrightarrow\St^{L_J}_K\to 0$$
to $K\wL_J$, obtaining
$$0\to \ind_{L_J}^{\wL_J}(\ker\gamma^J)\longrightarrow K\Gamma_\sigma^{\wL_J}\longrightarrow\St^{\wL_J}_K\otimes K(\wL_J/L_J)\to 0.$$
Here, we  view $\Gamma_\sigma^{L_J}$ as the induction of $\sigma$ to $L_J$ from the ``unipotent radical" of $B_J$, and $\Gamma_\sigma^{\wL_J}$ as its further induction to $\wL_J$. (We first learned this viewpoint from \cite[Lem. 2.3(a)]{DF94}.) The tensor product expression on the right is obtained from the well-known tensor identity, using the fact that $\St^{\wL_J}_K|_{L_J} \cong \St^{L_J}_K$ (proved above). Since the $K\wL_J$-module $K(\wL_J/L_J)$ has the trivial module $K$ as a homomorphic image, we have an epimorphism $\St^{\wL_J}_K\otimes K(\wL_J/L_J)\twoheadrightarrow \St^{\wL_J}_K$. After possibly multipling by a nonzero scalar, we can compose this map with the map $K\Gamma_\sigma^{\wL_J}\longrightarrow\St^{\wL_J}_K\otimes K(\wL_J/L_J)$ above to obtain $\widetilde\gamma^J$. Consequently, there is a natural inclusion 
$$\ind_{L_J}^{\wL_J}(\ker\gamma^J)\subseteq\ker\widetilde \gamma^J.$$ Next, apply $R^\wG_{\wL_J}$ to both sides (an exact functor).  The right-hand side becomes $R^\wG_{\wL_J}(\ker\, \widetilde\gamma^J)\cong\ker R^\wG_{\wL_J}(\widetilde \gamma^J)$. To understand the left-hand side, note first that the standard parabolic subgroups $P_J$ and $\wP_J$ associated to $J$ (and containing $L_J$ and $\wL_J$, respectively) have the same unipotent radical, called $U_J$ in \cite[\S2.6]{C85}.  As a consequence, inflation commutes with induction, in the sense that $\inff_{\wL_J}^{\wP_J}\circ\ind_{L_J}^{\wL_J}\cong \ind_{P_J}^{\wP_J}\circ\inff_{L_J}^{P_J}$ as functors on $KL_J$-modules. Apply this isomorphism of functors to the object $\ker\gamma^J$, and then apply $\ind_{\wP_J}^{\wG}$ to obtain
$$R_{\wL_J}^\wG(\ind_{L_J}^{\wL_J}(\ker\gamma^J))\cong R^{\wG}_{L_J}(\ker\gamma^J).$$
The {\bf claim} which began this paragraph follows.

As a corollary to the claim, using the validity of Theorem \ref{kerfactors} for $\widetilde{\bG}$ (and its consequences for $\widetilde\beta_K$ and 
the various $R_{\wL_J}^{\wG}(\widetilde\gamma^J)$, we now know
 that each $\ker R^{\wG}_{L_J}(\gamma^J)=R^{\wG}_{L_J}(\ker \gamma^J)$ has no nonzero constituents in common with each 
$$\im R^\wG_{\wL_{J'}}(\widetilde\gamma^{J'}) \cong R^{\wG}_{\wL_{J'}}(\St_K^{\wL_{J'}}).$$
Applying Frobenius reciprocity (for ordinary induction) and writing
$R^{\wG}_{\wL_J'}\cong\ind_{P_J}^{\wG}\circ{\rm Inf}_{L_J}^{P_J}$), where Inf is inflation,
 we find that $R^G_{L_J}(\ker\gamma^J) $ has no    nonzero constituents in common with the restriction

$$\begin{aligned} 
R^{\wG}_{\wL_{J'}}(\St_K^{\wL_{J'}})|_G
&\cong\Big( \ind^{\wG}_{\wP_{J'}}({\rm Inf}_{\wL_{J'}}^{\wP_{J'}}\St^{\wL_{J'}}_K)\Big)\Big|_G \\
                                       &\cong \ind^G_{\wP_{J'}\cap G}({\rm Inf}^{\wP_{J'}}_{\wL_{J'}}\St^{\wL_{J'}}_K)|_{{\wP_{J'}}\cap G}\\
                                         &\cong\ind^G_{P_{J'}}({\rm Inf}^{P_{J'}}_{L_{J'}}(\St^{\wL_{J'}}_K|_{L_{J'}}))\\
                                       &\cong\ind^G_{P_{J'}}({\rm Inf}^{P_{J'}}_{L_{J'}}\St^{L_{J'}}_K)
                                \cong R^G_{L_{J'}}(\St^{L_{J'}}_K).                                         
     \end{aligned}
  $$
     
    \medskip
     In this display, the first line holds by definition. The second line follows from the Mackey decomposition theory, using $\wG=\wP_{J'}G$. The third line uses $P_{J'}\subseteq\wP_{J'}\cap G\subseteq N_G(P_{J'})\subseteq P_{J'}$; see \cite[Prop.~2.1.6]{C85}. In particular, $P_{J'}=\widetilde P_{J'}\cap G$. The agreement of $P_{J'}$ actions inside the outer parentheses, in lines two and three, can be checked on $L_{J'}$ and $U_{J'}$. The resulting $L_{J'}$-module in both lines is $\text{St}^{\widetilde L_K^{J'}}|_{L_{J'}}$, and the $U_{J'}$-actions are both trivial. Passage from line three to four is achieved directly from the Steinberg module restriction result earlier in this proof. This concludes the proof of the 
     claimed variation on Theorem \ref{kerfactors}.
     
     The remaining details in the proof of the  general case of Theorem \ref{12.3} ($\bG$ is simple) parallel those given for $\bG$ of adjoint type and are left to the reader.
 \end{proof}
 
 \begin{rems}\label{2ndproof}
 (1) If we use a result of Dipper and Fleischmann, the proof for Theorem \ref{kerfactors} in Appendix A can be generalized to the general case where the center of $\bG$ is not necessarily connected. To show that the kernel and image of $\beta_K$ in \eqref{betaK} have no irreducible constituents in common, it suffices to show that
 $\dim \Hom_{KG}(P,M)=\dim\Hom_{KG}(M,M),$
 where $P=\bigoplus_{J\subseteq S}R^G_{L_J}(K\Gamma_\sigma^{L_J})$ and $M= \bigoplus_{J\subseteq S}R^G_{L_J}(St_K^{L_J})$. This can be easily reduced to prove that
 \begin{equation}\label{I-J}
 \dim\, \Hom_{KG}\big(R^G_{L_I}(K\Gamma_\sigma^{L_I}),R^G_{L_J}(St_K^{L_J})\big)
 =\dim\,\Hom_{KG}\big(R^G_{L_I}(St_K^{L_I}),R^G_{L_J}(St_K^{L_J})\big),
 \end{equation}
for all $I,J\subseteq S$. An application of Frobenius reciprocity and the Mackey decomposition theorem for Harish-Chandra inductions and restrictions as in the proof of Theorem \ref{kerfactors} requires the fact that ${}^*R^{^nL_I}_{^nL_I\cap L_J}({}^nK\Gamma_\sigma^{L_I})$ is isomorphic to a Gelfand--Graev module $K\Gamma_{\sigma'}^{^nL_I\cap L_J}$ for some regular character $\sigma'$, where $n=n_w\in N$ with $w\in\sD_{J,I}$, the set of $(W_J,W_I)$-double coset representatives in $W$. This result is proved in \cite[Lem. (3.6)]{DF92} via \cite[Lem. (1.8)]{DF92}. (Note that if the center of $\bG$ is connected, then we may replace $K\Gamma_{\sigma'}^{^nL_I\cap L_J}$ by $ K\Gamma_{\sigma}^{^nL_I\cap L_J}$, the case considered in Appendix A.) Thus, by applying Frobenius reciprocity again together with the fact that $^*R^G_{L_J}(St_K^G)\cong St_K^{L_J}$ (see \cite[(71.6)]{CR87}) and $[\Gamma_\sigma:St_K]=1$ (see \cite[4.1]{G17}),  we obtain $\text{LHS}=\#\sD_{J,I}$ in \eqref{I-J}. A similar argument shows that $\text{RHS}=\#\sD_{J,I}$ as well. This proves \eqref{I-J} and, hence, the generalized version of Theorem \ref{kerfactors} .

 (2) We conclude this section by again calling attention to Theorem \ref{12.3}(b), which shows that the decomposition matrix $D$ relative to a given cross-characteristic $r$ can be evaluated by working with the smallest groups (as measured by a power $q$ of $p$)
in any Lie-type series as parametrized by Carter in \cite[\S1.1.9]{C85}. %.
   \end{rems}

  \section{$i$-Quantum groups}  
The $q$-Schur algebras \cite{DJ89} are certain Hecke endo-algebras of type $A$ which play a central role in linking representations of quantum $\mathfrak{gl}_n$ and Hecke algebras of the symmetric groups as well as finite general linear groups (see, e.g., \cite{DD97}).
 In this section, we will look at how certain Hecke endo-algebras of type $B/C$ can be introduced to play a similar role in connecting representations of finite orthogonal/symplectic groups with those of the $i$-quantum groups introduced in \cite{BW18}.
For this purpose, we will modify the definition of the algebra $A$ to introduce $q$-Schur algebras of type $B/C$ with bidegree $(n,r)$, where $n$ is the rank of the $i$-quantum group and $r$ is the rank of the Hecke algebra.

\medskip 
%\noindent
{\bf 5.4A \bf The $q$-Schur algebras of type $B/C$.} Let $\mathbb F_q$ be the finite field of $q$ elements and $r$ a positive integer. (In this section, $r$ is not required to be the cross-characteristic associated with $G$.) Consider the following two types of the orthogonal/symplectic groups introduced in Example \ref{SO_n}:
%Let $SO(2r+1,\mathbb F_q)$ (resp., $Sp(2r,\mathbb F_q)$) be the special orthogonal\footnote{We exclude the special orthogonal groups of even degree here.}
% This is the type $D$ case; see \cite{FL}.  (resp., sympletic) group of degree $2r+1$ (resp., $2r$) over $\mathbb F_q$ defined by
%$$\aligned&SO(2r+1,\mathbb F_q)=\{x\in GL(2r+1,\mathbb F_q)\mid x^tJx=J\},\\(\text{resp., }&Sp(2r,\mathbb F_q)=\{x\in GL(2r,\mathbb F_q)\mid x^tJ'x=J'\}),\\\endaligned$$where 
%\begin{equation}\label{JJ'}J=J_{2r+1}=\begin{pmatrix}0&0&\cdots&0&1\\0&0&\cdots&1&0\\&\cdots&\cdots&\cdots&\\0&1&\cdots&0&0\\
%1&0&\cdots&0&0\\\end{pmatrix},\quad J'=J'_{2r}=\begin{pmatrix}0&J_r\\-J_r&0\\\end{pmatrix}
%\end{equation}
\begin{equation}\label{OSp}
G(r_*,q)=\begin{cases}\O_{2r+1}(\mathbb F_q),&\text{ if }*=\scB\\
\Sp_{2r}(\mathbb F_q),&\text{ if }*=\scC,\end{cases}\;\;\text{ where }\;\;r_*=\begin{cases}2r+1,&\text{ if }*=\scB\\
2r.&\text{ if }*=\scC.\end{cases}
\end{equation}
Then, $G(r_*,q)$ is the fixed-point subgroup of $\GL_{r_*}(\mathbb F_q)$ under the graph automorphism in \eqref{vartheta}:
$$\vartheta_{r_*}:\GL_{r_*}(\mathbb F_q)\longrightarrow \GL_{r_*}(\mathbb F_q),\;\;x\longmapsto J_{*}^{-1}(x^t)^{-1}J_{*},$$
where $J_{\scB}=J_{2r+1}$ and $J_{\scC}=J^-_{2r}$ as in \eqref{JJ'}. %Here the $r_*$ in $\GL_{r_*}(\mathbb F_q)$ or in $\fS_{r_*}$ and below should be understood as follows:
%$$r_*=\begin{cases}2r+1,&\text{ if }*=\scB;\\2r.&\text{ if }*=\scC,\end{cases}$$

The graph automorphism $\vartheta_{r_*}$ induces an automorphism $\sigma=\sigma_{r_*}$ on the Weyl group $\widetilde W^*$ of $\GL_{r_*}(\mathbb F_q)$, which is identified as the symmetric group $\fS_{r_*}$.
More precisely, we have an involution:
\begin{equation}\label{sigma}
\sigma:\fS_{r_*}\to\fS_{r_*}, (i,j)\mapsto(r_*+1-i,r_*+1-j),\text{ for all }i,j\in[1,r_*],
\end{equation}
where $[1,r_*].=\{1,2,\ldots,r_*\}$, such that the Weyl group $W^*$ of $G(r_*,q)$ is isomorphic to the fixed-point group $(\fS_{r_*})^\sigma$. 

If $S=\{s_1,s_2,\ldots,s_r\}$ is the set of the Coxeter generators for $W^*$, then we have the following identifications:

For $*=\jmath$, $W^\jmath=(\fS_{2r+1})^\sigma$ and
$$s_i=(i,i+1)(2r+2-i,2r+1-i)\;\;(1\leq i<r),\quad s_r=(r,r+1)(r+1,r+2)(r,r+1),$$
while, for $*=\imath$, $W^\imath=(\fS_{2r})^\sigma$ and
$$s_i=(i,i+1)(2r+1-i,2r-i)\;\;(1\leq i<r),\quad s_r=(r,r+1).$$
See Example \ref{SO_n}. Hence, $W^\jmath\cong W^\imath$ and we set $W_r=W^*$.

%For the parabolic subgroup $W_\la$, $\la\in\Lambda(n+1,r)$, if
%$$\widetilde \la=(\la_1,\ldots,\la_n,2\la_{n+1}+1,\la_n,\ldots,\la_1)\in\Lambda(2n+1,2r+1).$$ as in \eqref{wla},
%then $\sigma$ stabilises the Young subgroup $\fS_{\widetilde\la}$ and $W_\la=\fS_{\widetilde\la}^\sigma$ is the fixed-point subgroup.

%if we use the index set $[-r, r]:=\{-r,-r+1,\ldots -1,0,1,\ldots,r\}$ for $r_*=r_\scB$, and with 0 omitted for $r_*=r_\scC$, then the Coxeter generators for $\fS_{r_{\scB}}=\fS_{2r+1}$ consist of basic transpositions
%$$(-r,-r+1),\ldots, (-2,-1), (-1,0), (0,1), (1,2),\ldots,(r-1,r),$$
%and the same list $(-1,0), (0,1)$ replaced by $(-1,1)$ for $\fS_{r_\scC}=\fS_{2r}$. However, by embedding $\fS_{2r}$ into $\fS_{2r+1}$ such that the image consists of all permutations sending 0 to 0, the Weyl group $W$ of $G(r_*,q)$ in both cases is the fixed-point subgroup $W=(\fS_{2r+1})^\sigma$ of the induced graph automorphism
%\begin{equation}\label{sigma}
%\sigma:\fS_{2r+1}\to\fS_{2r+1},\;\;(i,j)\mapsto (-i,-j)\text{ for all }i,j\in[-r,r].
%\end{equation}
Note that $W_r$ is the subgroup of $\fS_{r_*}$ consisting of permutations 
\begin{equation}\label{Sr*}
\begin{pmatrix}1&2&\cdots &r&r+1&\cdots&r_*\\
i_1&i_2&\cdots&i_r&i_{r+1}&\cdots&i_{r_*}\end{pmatrix}
\end{equation}
satisfying $i_j+i_{r_*+1-j}=r_*+1$.

%Hence, $W=W(B_r)$ is isomorphic to the Weyl group of type $B_r$.
%In particular,  if
%$$s_0=(-1,0)(0,1)(-1,0)=(-1,1)=\begin{pmatrix}-r&\cdots&-2&-1&0&1&2&\cdots&r\\
%-r&\cdots&-2&1&0&-1&2&\cdots&r\end{pmatrix},$$
%and  $s_i=(i,i+1)(-i,-i-1)$, for $i=1,2,\ldots,r-1$, then 
%$$S:=\{s_0,s_1,\ldots,s_{r-1}\}$$
%forms the set of Coxeter generators for $W(B_r)$.

%$*\in\{\textsf{so,sp}\}$, let $\epsilon_*=\begin{cases}1,&\text{ if }*=\so;\\0,&\text{ if }*=\scC,\end{cases}$.
Let $\widetilde B(r_*,\mathbb F_q)$ be the subgroup of $\GL_{r_*}(\mathbb F_q)$ consisting of upper triangular matrices, where $r_*\in\{2r,2r+1\}$ and, for $*\in\{\jmath,\imath\}$, let
$$B(r_*,q)=G(r_*,q)\cap \widetilde B(r_*,\mathbb F_q).$$
Then, $B(r_*,q)$ is a Borel subgroup of $G(r_*,q)$ and, by Lemma \ref{H^o},% \cite[Ths 3.2\&5.1]{Iwa},
$$\End_{\mathbb ZG(r_*,q)}\Big(\text{Ind}_{B(r_*,q)}^{G(r_*,q)}1_{\mathbb Z}\Big)\cong\sH^o(W_r)_{\mathbb Z},$$
where $1_{\mathbb Z}$ is the trivial representation of $B(r_*,q)$, $\sH^o(W_r)_{\mathbb Z}$ is the specialization via $t^2\to q$ of the generic Hecke algebra of $W_r$ with equal parameters.
% and $L=\ell$,and $\sH(B_r)_{\mathbb Z}$ is obtained by specialising $t^2$ to $q$. %(We automatically assume $2\nmid q$ if $R=2r$.)

Let $\Lambda(m,r)=(\mathbb N^{m})_r$ be the set of sequences $(a_1,a_2\ldots,a_m)\in\mathbb N^m$ with entries summing to $r$, and let
\begin{equation}\label{Lanr}
%\aligned
\Lambda_\scB(n,r)=\Lambda(n+1,r),\quad
\Lambda_\scC(n,r)=\Lambda(n,r).
%\endaligned
\end{equation}
Note that we often regard $\Lambda_\scC(n,r)$ as a subset of $\Lambda_\scB(n,r)$ via the embedding $\la\mapsto(\la,0)$.
%$\Lambda_\scB(n,r)$ and $\Lambda_\scC(n,r)$ are identical as sets, but differ in labelling. 
Define
\begin{equation}\label{wtmap}
\aligned
\widetilde{\ }&:\Lambda_\scB(n,r)\longrightarrow\Lambda(2n+1,2r+1),\;\;\la\longmapsto\widetilde\la= (\la_{1},\ldots,\la_n,2\la_{n+1}+1,\la_n,\ldots,\la_{1})\\
\widetilde{\ }&:\Lambda_\scC(n,r)\longrightarrow\Lambda(2n,2r),\quad\la\longmapsto\widetilde\la=(\la_{1},\ldots,\la_n,\la_n,\ldots,\la_{1}).
\endaligned
\end{equation}
%If $\wLa_\scB(n,r)$ denotes the image of $\La_\scB(n,r)$, then
%$$\wLa_\scC(n,r)=\{(\la_{n},\ldots,\la_1,1,\la_1,\ldots,\la_{n})\mid(\la_1,\ldots,\la_{n})\in\La(n,r)\}.$$
For $\la\in \Lambda_*(n,r)$ with $*\in\{\scB,\scC\}$, let $\widetilde P_{\la}(r_*,q)$ be the standard parabolic subgroup of $\GL_{r_*}(\mathbb F_q)$ associated with $\widetilde\la$, consisting of upper triangular matrices with blocks of sizes $\widetilde\la_i$ on the diagonal.  Let
$$P_\la(r_*,q)=\widetilde P_{\la}(r_*,q)\cap G(r_*,q),$$
and define, for any commutative ring $R$ with $q=q1_R\in R$,
$$\mathcal E^*_R(n,r)=\End_{RG(r_*,q)}\bigg(\bigoplus_{\la\in\Lambda_*(n,r)}\text{Ind}_{P_\la(r_*,q)}^{G(r_*,q)}1_R\bigg),$$
where $1_R$ denotes the trivial representation of $P_\la(r_*,q)$. 

%This algebra has the following interpretation of the following Hecke endomorphism algebra.
%\underline{\bf The $q$-Schur algebras of type $B/C$.} 
The Weyl group of the Levi subgroup $L_\la(r_\scB,q)$ of $P_\la(r_{\scB},q)$ has the form
$$W_\la=W^\scB_\la=\langle S\backslash\{s_{\la_1+\cdots+\la_i}\mid 1\leq i\leq n\}\rangle,\quad(\text{Here }\la\in\La(n+1,r).)$$
%\langle s_0,s_1,\ldots,s_{\la_0-1}, s_{\la_0+1},\ldots,s_{\tilde\la_1-1}, \ldots, s_{\tilde\la_{n-1}+1},\ldots, s_{r-1}\rangle,$$
while the Weyl group of the Levi subgroup $L_\la(r_\scC,q)$ of $P_\la(r_{\scC},q)$ has the form
$$W_\la=W^\scC_\la=\langle S\backslash\{s_{\la_1+\cdots+\la_i}\mid 1\leq i\leq n\}\rangle.\quad(\text{Here }\la\in\La(n,r).)$$
%\langle s_{1},\ldots,s_{\la_1-1},s_{\la_1+1},\ldots,s_{\tilde\la_2-1},\ldots, s_{\tilde\la_{n-}+1},\ldots, s_{r-1}\rangle.$$
They are both the fixed-point subgroups of the corresponding standard Young subgroups $\fS_{\widetilde \la}$ under the graph automorphism $\sigma$ in \eqref{sigma}: $W_\la=(\fS_{\widetilde \la})^\sigma$.

 Let $\sH(W_r)$ be the Hecke algebra over $\sZ$ associated with $W_r$. It is generated by $T_i=T_{s_i}$, for $1\leq i\leq r$, with basis $\{T_w\}_{w\in W}$. The subalgebra generated by $T_1,\ldots,T_{r-1}$ is the Hecke algebra $\sH(\fS_r)$ associated with the symmetric group $\fS_r$.

 For %a DVR triple $(\mathcal O, K, k)$, 
$*\in\{\scB,\scC\}$ and $\la\in\Lambda_*(n,r)$, let $x_\la=\sum_{w\in W^*_\la}T_w$ (perhaps, $x^*_\la$ is more accurate than $x_\la$), $T^*_{n,r}:=\bigoplus_{\la\in\Lambda_*(n,r)}x_\la\sH(W_r)$, and define a Hecke endomorphism $\sZ$-algebra
of bidegree $(n,r)$:
\begin{equation}\label{Snr}
\sS^*(n,r)=\sS^*_{t,\sZ}(n,r)=\End_{\sH(W_r)}(T^*_{n,r}).
\end{equation}
The algebra $\sS^\scB(n,r)$ (resp., $\sS^\scC(n,r)$) is called the {\it $q$-Schur algebra of type $B$} (resp, {\it type }$C$). See the lemma below for a justification of the terminology.\index{$q$-Schur algebra! $\sim$ of type $B$}\index{$q$-Schur algebra! $\sim$ of type $C$}

\begin{rem}\label{Adag}
(1) Note that if we replace $T$ in displays \eqref{T^+} and \eqref{endo} by $T^\dag=\bigoplus_{J\subseteq S}(x_J\sH)^{m_J}$ with all $m_J>0$, and define $A^\dag:=\End_\sH(T^\dag)$, then, for $n\geq r$ and a suitable choice of $m_J$, $\sS^\scB(n,r)=A^\dag$.  However, all $\sS^\scC(n,r)$ or $\sS^\scB(n,r)$ for $n<r$ cannot be of  the form $A^\dag$, since $T^\imath_{n,r}$ does not contain $x_J\sH$ (as a direct summand) with $s_r\in J$ and $T^\jmath_{n,r}$ does not contain $\sH$ if $n<r$. But each of them has the form $eA^\dag e$ for some $A^\dag$ and some idempotent $e\in A^\dag$, i.e., is a centralizer subalgebra of $A^\dag$.
%for some poset ideal $\Theta$ of $(\Omega,\leq_L)$ (see \eqref{Tdagger}).

(2) The algebras $\sS^\jmath(n,r)$ ($n\geq r$) and $\sS^\imath(n,r)$ ($n\geq r$) are Morita equivalent to the $q$-Schur$^2$ algebra of parabolic and Young type, respectively (see \cite[p.4337]{DS00a}).
\end{rem}
%Later, we will call both $i$-quantum Schur algebras, consistent with $i$-quantum groups.

% for a certain set $\{m_\omega\}_{\omega\in\Omega}$ of positive integers, 

\begin{prop} \label{extend Iwahori}
Let $R$ be a commutative ring and assume $\sqrt q, q^{-1}\in R$.
By specializing $t$ to $\sqrt q$, there is an algebra isomorphism
$$\sS^*(n,r)_R\cong \mathcal E^*_R(n,r)\;\;(*\in\{\scB,\scC\}).$$
\end{prop}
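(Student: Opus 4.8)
\textbf{Proof plan for Proposition \ref{extend Iwahori}.}

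The plan is to establish the isomorphism $\sS^*(n,r)_R\cong\mathcal E^*_R(n,r)$ by identifying both sides as endomorphism algebras of ``matching'' permutation-type modules and then transporting the identification through a base-change argument. First I would invoke Lemma \ref{H^o}: with $\sO=R$ regarded as a $\mathbb Z[t^2]$-algebra via $t^2\mapsto q$ (legitimate since $q=q1_R\in R$), one has $\sH^o(W_r)_R\cong\End_{RG(r_*,q)}(RG(r_*,q)/B(r_*,q))$, and since $\sqrt q,q^{-1}\in R$ the base-changed generic Hecke algebra $\sH(W_r)_R$ coincides with $\sH^o(W_r)_R$ (compare Lemma \ref{goodone}); so $\sH(W_r)_R\cong\End_{RG(r_*,q)}(\mathrm{Ind}_{B(r_*,q)}^{G(r_*,q)}1_R)$. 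The key structural input is that, for each $\la\in\Lambda_*(n,r)$, the parabolic subgroup $P_\la(r_*,q)=\widetilde P_{\widetilde\la}(r_*,\mathbb F_q)\cap G(r_*,q)$ has Weyl group $W^*_\la=(\fS_{\widetilde\la})^\sigma$ equal to a standard parabolic subgroup $W_{J(\la)}$ of $(W_r,S)$, so that the permutation module $\mathrm{Ind}_{P_\la(r_*,q)}^{G(r_*,q)}1_R$ corresponds, under the equivalence furnished by Hecke-algebra/permutation-module duality, to the $q$-permutation $\sH(W_r)_R$-module $x_{J(\la)}\sH(W_r)_R = x_\la\sH(W_r)_R$.

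The heart of the argument is the statement that Harish-Chandra induction/endomorphism-ring formation is compatible with the passage to Weyl-group-theoretic data: concretely, for standard parabolics $P,Q$ of $G(r_*,q)$ with Weyl groups $W_I,W_J$, there is a natural isomorphism
\begin{equation}\label{HCcompat}
\Hom_{RG(r_*,q)}\bigl(\mathrm{Ind}_P^{G(r_*,q)}1_R,\ \mathrm{Ind}_Q^{G(r_*,q)}1_R\bigr)\cong\Hom_{\sH(W_r)_R}\bigl(x_I\sH(W_r)_R,\ x_J\sH(W_r)_R\bigr),
\end{equation}
functorially in the indexing data, so that summing over $\la\in\Lambda_*(n,r)$ and taking endomorphism algebras yields $\mathcal E^*_R(n,r)\cong\End_{\sH(W_r)_R}(T^*_{n,r}\otimes_\sZ R)=\sS^*(n,r)_R$. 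I would derive \eqref{HCcompat} exactly as in the $\GL_n$ (Dipper--James) setting: the functor $V\mapsto e_I V$, where $e_I=\frac{1}{|P|}\underline P$ is the idempotent attached to $P$, realizes $\mathrm{Ind}_P^{G}1_R\cong e_I\cdot RG$ and identifies $\Hom_{RG}(e_I RG, e_J RG)$ with $e_J (RG) e_I$, which matches the Hecke-algebra side via the module-theoretic realization already used in the proof of Lemma \ref{H^o}; the Mackey/double-coset bookkeeping here is exactly that recorded in \cite[Lem. 1.1(c)]{DPS98c} and \cite[Th. 2.4.4]{DPS98a}, and the ``$W_\la=(\fS_{\widetilde\la})^\sigma$'' identities above guarantee the parabolics on the two sides correspond. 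Since everything in sight is finitely generated and $R$-free (the $x_J\sH(W_r)$ are $\sZ$-free, and permutation modules are $R$-free), the required isomorphisms commute with the base change $\sZ\to R$; this is where one uses $\sqrt q,q^{-1}\in R$, to align the specialization $t\mapsto\sqrt q$ with Lemma \ref{H^o}'s specialization $t^2\mapsto q$ and to ensure $RG(r_*,q)/B$ is the correct module.

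The main obstacle I anticipate is the verification of \eqref{HCcompat} as an algebra (not merely module) isomorphism, respecting composition, uniformly across the two families $*=\scB$ and $*=\scC$: one must check that the identification $e_J(RG)e_I\cong\Hom_{\sH}(x_I\sH,x_J\sH)$ is multiplicative, which amounts to matching the convolution product of double-coset basis elements $\frac{1}{|B|}\underline{BwB}$ with the structure constants of $\sH(W_r)$, and this requires knowing that the $(W_I,W_J)$-double coset combinatorics in $W_r$ faithfully reproduces the $(P,Q)$-double coset combinatorics in $G(r_*,q)$ — a fact that follows from the Bruhat decomposition for the split BN-pair of $G(r_*,q)$ but which must be stated carefully in the $\vartheta_{r_*}$-twisted set-up of Example \ref{SO_n}. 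Once that compatibility is in hand, summing over $\la$ and base-changing to $R$ is routine, and Remark \ref{Adag} then locates $\sS^*(n,r)_R$ as the expected centralizer subalgebra $eA^{\dagger}e$ of the enlarged Hecke endo-algebra.
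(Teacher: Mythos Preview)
Your proposal is correct and follows essentially the same approach as the paper: start from the Iwahori isomorphism of Lemma \ref{H^o}, match the parabolic permutation modules $\mathrm{Ind}_{P_\la}^{G(r_*,q)}1_R$ with the $q$-permutation modules $x_\la\sH(W_r)_R$ via double-coset combinatorics, and base-change. The paper compresses all of this into a reference to \cite[Th.~4.37, Th.~13.15]{DDPW08} (the type-$A$ prototype), while you have spelled out the mechanism in more detail; the ``main obstacle'' you flag---multiplicativity of the Hom-space identification---is exactly what those cited results establish and carries over unchanged to the split BN-pair of $G(r_*,q)$.
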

\begin{proof}The isomorphism is an extension of the Iwahori isomorphism (see Lemma \ref{H^o}):
$$\sH(W_r)|_{t^2=q}\overset\sim\longrightarrow\End_{\mathbb Z'G^*}(\text{Ind}_{B^*}^{G^*}1_{\mathbb Z'}),\;\; T_w\mapsto (\underline{B^*}\mapsto \underline{B^*wB^*}),$$ where $\mathbb Z'=\mathbb Z[\sqrt q,{\sqrt q}^{-1}]$, $G^*=G(r_*,q)$, $B^*=B(r_*,q)$, and $\underline {X}=\sum_{x\in X}x$.
Compare \cite[Th. 4.37]{DDPW08}. Now, an argument similar to the proof of \cite[Th. 13.15]{DDPW08} proves the isomorphism.
\end{proof}

Recall the ring $\sZ^\natural$ with all bad primes invertible. For $*=\jmath,\imath$, let $\sS^{*}(n,r)^\natural:=\sZ^\natural\otimes_\sZ\sS^*(n,r)$. 
\begin{prop}There exist split quasi-hereditary $\sZ^\natural$-algebras $\sS^{\jmath+}(n,r)^\natural$ and $\sS^{\imath+}(n,r)^\natural$, which are endomorphism algebras of  $\sH(W_r)$-modules with dual left-cell filtrations,  such that $\sS^{\jmath}(n,r)^\natural$ (resp., $\sS^{\imath}(n,r)^\natural$) is a centralizer subalgebra of
$\sS^{\jmath+}(n,r)^\natural$ (resp., $\sS^{\imath+}(n,r)^\natural$).
\end{prop}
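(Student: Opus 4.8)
The plan is to realize each of $\sS^{\jmath}(n,r)^\natural$ and $\sS^{\imath}(n,r)^\natural$ as a centralizer subalgebra $e A^{\ddagger\natural}_\Theta e$ for a suitable poset ideal $\Theta$ of $(\Omega,\leq_L)$ (with $\Omega$ the set of left cells of $W_r$) and a suitable idempotent $e$, and then to enlarge it along the lines of Construction \ref{const} so as to land inside a split quasi-hereditary algebra with dual left-cell filtrations. First I would fix the standard finite Coxeter system $(W_r,S,L)$ of type $B_r/C_r$ (with equal parameters, since the Hecke algebra here is $\sH(W_r)$) and recall from Remark \ref{Adag}(1) that, for $*=\jmath$ with $n\geq r$, we already have $\sS^{\scB}(n,r)=A^\dagger$ for $T^\dagger=\bigoplus_{J\subseteq S}(x_J\sH)^{m_J}$ with all $m_J>0$; and that in general (for $*=\imath$, or $n<r$) each $\sS^*(n,r)$ has the form $e A^\dagger e$ for such an $A^\dagger$ and an idempotent $e\in A^\dagger$, namely the idempotent projecting $T^\dagger$ onto the sub-sum $T^*_{n,r}=\bigoplus_{\la\in\Lambda_*(n,r)}x_\la\sH(W_r)$. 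Since every parabolic subgroup $W^*_\la$ occurring here is of the form $\langle S\setminus\{s_{\la_1+\cdots+\la_i}\}\rangle$, the modules $x_\la\sH$ are among the $q$-permutation modules $x_J\sH$, so this presentation is available directly.

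Next I would apply the extended construction. Take $T^{\ddagger}_\Omega = \bigoplus_{\omega\in\Omega}(T^+_\omega)^{m_\omega}$ as in \eqref{Tdagger}, with the $m_\omega$ chosen large enough that each relevant $x_\la\sH$ (equivalently each $x_J\sH$ with $w_{0,J}$ lying in some left cell meeting $\Theta$) appears as a direct summand of $T^{\ddagger}_\Theta$ for the poset ideal $\Theta$ of $(\Omega,\leq_L)$ generated by those left cells; here $T^+_\omega = x_J\sH$ when $w_{0,J}\in\omega$ and $T^+_\omega=X_\omega$ (from Theorem \ref{relative inj}) when $\omega\in\Omega'$. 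By Corollary \ref{Adagger1} the algebra $A^{\ddagger}_\Theta=\End_{\sH}(T^{\ddagger}_\Theta)$ has a stratifying system relative to $(\Theta,\preceq)$, and by Theorem \ref{Adagger3} its base change $A^{\ddagger\natural}_\Theta:=\sZ^\natural\otimes_\sZ A^{\ddagger}_\Theta$ is split quasi-hereditary of separable type (split if $W_r$ is not of type ${}^2F_4$, which it never is here), with standard objects $\Delta^+(E)=\Hom_{\sH^\natural}(S^\natural_E,T^{\ddagger\natural})$ and $T^{\ddagger}_\Theta$ built from $\sH(W_r)$-modules carrying dual left-cell filtrations (Theorem \ref{huh?} and Corollary \ref{qperm}). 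Setting $\sS^{\jmath+}(n,r)^\natural$ and $\sS^{\imath+}(n,r)^\natural$ to be the algebras $A^{\ddagger\natural}_\Theta$ attached to the respective poset ideals, I then let $e$ be the idempotent of $A^{\ddagger\natural}_\Theta$ projecting $T^{\ddagger\natural}$ onto the summand isomorphic to a direct sum of copies of $T^*_{n,r}$; then $e A^{\ddagger\natural}_\Theta e\cong \End_{\sH^\natural(W_r)}(T^*_{n,r}\otimes\sZ^\natural)^{\oplus?}$ is Morita equivalent (in fact, after absorbing multiplicities, isomorphic) to $\sS^*(n,r)^\natural$, exhibiting the latter as a centralizer subalgebra, exactly as in Remark \ref{Adag}(1).

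The main obstacle is bookkeeping of multiplicities and of the poset ideal $\Theta$: one must check that the left cells containing the elements $w_{0,J}$ for the parabolics $W^*_\la$ ($\la\in\Lambda_*(n,r)$) really do generate a poset ideal of $(\Omega,\leq_L)$ — this follows from Lemma \ref{w_0,J}, since $w\leq_L w_{0,J}$ is equivalent to $\sR(w)\supseteq J$, and the relevant $J$'s are closed under the obvious containment — and that $e A^{\ddagger\natural}_\Theta e$ is genuinely isomorphic to (not merely Morita equivalent to) $\sS^*(n,r)^\natural$, which requires choosing the $m_\omega$ compatibly with the $m_J$ (respectively $m_\la$) in Remark \ref{Adag}(1) and invoking that the $q$-permutation modules $x_\la\sH$ are summands of $T^{\ddagger}$. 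A secondary point to verify is that the ``dual left-cell filtration'' claim for the defining $\sH(W_r)$-module of $\sS^{*+}(n,r)^\natural$ is inherited correctly: this is immediate from Corollary \ref{qperm}, since every $T^+_\omega$ lies in $\scrA_\flat(\scrL^*)$ and hence has a height filtration with sections that are direct sums of dual left-cell modules. Once these are in place, the ``split quasi-hereditary'' assertion is a direct citation of Theorem \ref{Adagger3}, and the ``centralizer subalgebra'' assertion is a direct citation of the structure in Remark \ref{Adag}(1).
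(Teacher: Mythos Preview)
Your proposal is correct and follows essentially the same route as the paper: define $\sS^{*+}(n,r)^\natural:=A^{\ddagger\natural}_\Theta$ for the poset ideal $\Theta$ of $(\Omega,\leq_L)$ generated by the left cells $\omega_\lambda$ containing the longest words of the parabolics $W_\lambda$ ($\lambda\in\Lambda_*(n,r)$), invoke Theorem~\ref{Adagger3} for split quasi-heredity, and realize $\sS^*(n,r)^\natural$ as $eA^{\ddagger\natural}_\Theta e$ via the idempotent projecting onto the $q$-permutation summands. The paper separates out the easy case $*=\jmath$, $n\geq r$ (where $\Theta=\Omega$ and $\sS^{\scB}(n,r)^\natural=A^{\dag\natural}$ already), but otherwise the argument is the same; your additional care about $\Theta$ being a genuine poset ideal and about matching multiplicities is well placed and only elaborates points the paper leaves implicit.
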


\begin{proof}
If $n\geq r$, then, for any $J\subseteq S$, there exists $\la\in\La(n+1,r)$ such that $W_\la=W_J$ (e.g., $W_{(1^r,0)}=W_\emptyset$ for $n=r$). Thus, 
$\sS^\scB(n,r)^\natural=A^{\dag\natural}$ with a suitable choice of $m_J,J\subseteq S$, as seen in Remark \ref{Adag}. 
Thus, in this case, $\sS^\scB(n,r)^\natural$ is a centraliser subalgebra of some $A^{\ddag\natural}$ defined in display \eqref{Adagger}) with base change to $\sZ^\natural$. Here the associated finite standard system $(W,S,L)$ is $W=W_r$ and $L=\ell$, the usual length function.  In this case, we choose $\sS^{\scB+}(n,r)^\natural=A^{\ddag\natural}$. Now, the split quasi-heredity of  $A^{\ddag\natural}$ follows from Theorem \ref{Adagger3} in our Chapter 4.

If $n<r$, let 
$$\Theta=\Theta_{n,r}=\{\omega\in\Omega\mid \omega\leq_L\omega_\lambda,\text{ for some }\lambda\in\Lambda(n+1,r)\},$$
where $\omega_\la$ is the left cell that contains the longest word of $W_\lambda$. Then, $\Theta$ is a poset ideal of $(\Omega,\leq_L)$ and $A^{\ddag\natural}_\Theta$ is a split quasi-hereditary algebra by Theorem \ref{Adagger3}. For a suitable choice $\{m_\omega\in\mathbb Z_+\mid\omega\in\Theta\}$,
 $\sS^\scB(n,r)^\natural$ is a centralizer subalgebra of $A^{\ddag\natural}_\Theta$. We then choose $\sS^{\jmath+}(n,r)^\natural:=A^{\ddag\natural}_\Theta$ as the desired algebra.%, again following  Theorem \ref{Adagger3}. 
 
By Remark \ref{Adag}, the $\imath$ case may be proved similarly as in the $n<r$ case above.
\end{proof}

 By Corollary \ref{Adagger4} and Lemma \ref{fAf} and noting the remark right above Lemma \ref{extend Iwahori}, we immediately have:

\begin{thm}\label{unitri}
The algebras $\sS^{\jmath}(n,r)^\natural$ and $\sS^{\imath}(n,r)^\natural$ %for $*\in\{\textsc{b,c}\}$ 
are standardly based algebras. In particular, their decomposition matrices are unitriangular.
\end{thm}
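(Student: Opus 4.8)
The plan is to deduce Theorem~\ref{unitri} directly from the machinery already assembled for $A^{+\natural}$ and its variants $A^{\ddag\natural}_\Theta$. The key observation is that the preceding proposition exhibits each of $\sS^{\jmath}(n,r)^\natural$ and $\sS^{\imath}(n,r)^\natural$ as a centralizer subalgebra $e A^{\ddag\natural}_\Theta e$, for a suitable poset ideal $\Theta\subseteq(\Omega,\leq_L)$, a suitable choice of multiplicities, and an idempotent $e$. So the first step is to quote Corollary~\ref{Adagger4}, which states that (outside the $^2F_4$ case) $A^{\ddag\natural}_\Theta$ is a standardly full-based $\sZ^\natural$-algebra; here $W=W_r$ is of type $B/C$, so we are never in the $^2F_4$ situation and no $\sZ^{\natural\prime}$ adjustments are needed. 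The second step is to apply Lemma~\ref{fAf}, which says that if $\mathcal A$ is standardly based over $\sZ^\natural$ and $e\in\mathcal A$ is idempotent, then $e\mathcal A e$ is standardly based; applying this with $\mathcal A=A^{\ddag\natural}_\Theta$ yields that $\sS^{\jmath}(n,r)^\natural=e A^{\ddag\natural}_\Theta e$ and $\sS^{\imath}(n,r)^\natural$ are standardly based algebras over $\sZ^\natural$. This gives the first assertion.

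For the unitriangularity assertion, the third step is to invoke Corollary~\ref{SFBA}: if $(K,\sO,k)$ is a local triple with $\sO$ regular of Krull dimension $\leq 2$ and $\mathcal A$ is an SBA over $\sO$ with $\mathcal A_K$ semisimple, then the decomposition matrix of $\mathcal A$ is unitriangular. Here one takes $\sO=\sZ^\natural$ (or an appropriate localization/completion of it giving a genuine local triple as in the well-adapted setup of Section~5.2), notes that $\sZ^\natural=\mathbb Z^\natural[t,t^{-1}]$ is regular of Krull dimension $2$, and checks that the base change of $\sS^{*}(n,r)^\natural$ to $K=\mathbb Q(t)$ is semisimple. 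The latter follows because $\sS^{*}(n,r)_K$ is an endomorphism algebra of a module over the split semisimple algebra $\sH(W_r)_K$ (semisimplicity of $\sH_{\mathbb Q(t)}$ is footnote~\ref{semisimpleH}), and an endomorphism ring of a module for a semisimple algebra is semisimple; alternatively one reads this off from the split quasi-heredity of $A^{\ddag\natural}_\Theta$ via Corollary~\ref{SFBA} applied to $A^{\ddag\natural}_\Theta$ itself. Then $D(\sS^{*}(n,r)^\natural)$, being the decomposition matrix of an SBA over a regular local base of dimension $\leq 2$ with semisimple generic fibre, is unitriangular by Corollary~\ref{SFBA}.

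I do not expect a serious obstacle here: every ingredient is already in place, and the proof is essentially a two-line citation of Corollary~\ref{Adagger4}, Lemma~\ref{fAf}, and Corollary~\ref{SFBA}, together with the structural identification $\sS^{*}(n,r)^\natural\cong e A^{\ddag\natural}_\Theta e$ from the proposition just above. The one point requiring a little care is bookkeeping about the base ring: Corollary~\ref{SFBA} is phrased for a \emph{local} triple, whereas $\sZ^\natural$ is not local, so one should either invoke the local-triple form at a height-one prime (as in the decomposition number framework of Section~5.0 and Appendix~C, using that decomposition numbers over $\sZ^\natural$ are computed via such triples) or simply observe, as the text does for $\sH^\natural$ and $A^{+\natural}$, that Swan's theorem makes finite projective $\sZ^\natural$-modules free so the constructive argument for Corollary~\ref{SFBA} goes through verbatim over $\sZ^\natural$. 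Either way, the unitriangularity of the decomposition matrix of $\sS^{\jmath}(n,r)^\natural$ and $\sS^{\imath}(n,r)^\natural$ follows, completing the proof.
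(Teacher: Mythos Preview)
Your proposal is correct and follows essentially the same route as the paper: the paper's proof is the single sentence ``By Corollary~\ref{Adagger4} and Lemma~\ref{fAf} and noting the remark right above Lemma~\ref{extend Iwahori}, we immediately have,'' which matches your three ingredients (standardly full-based structure on $A^{\ddag\natural}_\Theta$, passage to centralizer subalgebras via idempotents, and the identification of $\sS^{*}(n,r)^\natural$ as such a centralizer subalgebra). Your additional care about the local-triple hypothesis in Corollary~\ref{SFBA} and the invocation of Swan's theorem is more explicit than the paper, but not a different argument.
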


\medskip
%\noindent
{\bf 5.4B $i$-Quantum groups and the associated Schur--Weyl duality.}
We now introduce the $i$-quantum groups arising from the study of quantum symmetric pairs; see \cite{Le03}, \cite{BW18} and the references therein.

Symmetric pairs refer to pairs of the form $(\mathfrak g,\mathfrak g^\theta)$ or $(\mathcal U(\mathfrak g),\mathcal U(\mathfrak g^\theta))$, where $\mathfrak g$ is a Lie algebra with its enveloping algebra   $\mathcal U(\mathfrak g)$, $\theta:\mathfrak g\to\mathfrak g$ is an involution, and $\mathfrak g^\theta$ is the fixed-point subalgebra. Note that $\mathcal U(\mathfrak g^\theta)$ is a Hopf subalgebra of $\mathcal U(\mathfrak g)$. 

For example, if $\frak g=\mathfrak{gl}_{N}$ with basis consisting of matrix units $\{e_{i,j}\}_{1\leq, i,j\leq N}$,  and  
$$\theta:\mathfrak {gl}_N\longrightarrow\mathfrak {gl}_N,\;\;\;e_{i,j}\longmapsto e_{\tau(i),\tau(j)},$$
where $\tau(i)=N+1-i$, then (see, e.g., \cite[2.4.2, 3.5.1]{LZ})
 $$\mathfrak {gl}_N^\theta\cong\begin{cases}\mathfrak{gl}_{n+1}\oplus\mathfrak{gl}_{n},\;\text{ if }N=2n+1\\
\mathfrak{gl}_{n}\oplus\mathfrak{gl}_{n},\;\text{ if }N=2n.\end{cases}$$ 
Thus,
\begin{equation}\label{Ugth}
\mathcal U(\mathfrak {gl}_N^\theta)\cong\begin{cases}\mathcal U(\mathfrak{gl}_{n+1})\otimes\mathcal U(\mathfrak{gl}_{n}),\;\text{ if }N=2n+1\\
\mathcal U(\mathfrak{gl}_{n})\otimes\mathcal U(\mathfrak{gl}_{n}),\;\text{ if }N=2n.\end{cases}
\end{equation}

Note that this involution $\theta$ is different from the differentiation $\gamma$ of $\vartheta$ in \eqref{vartheta}. Here, for $*\in\{\jmath,\imath\}$,
$\gamma:\mathfrak {gl}_N\rightarrow\mathfrak {gl}_N,\;\; x\mapsto -J_*x^tJ_*$,
with
%,  and $\gamma(x)=-J'x^tJ'$ for all $x\in\mathfrak{gl}_{2n}$, where 
$J_\jmath=J_{2n+1},J_\imath=J_{2n}^-$ being given in \eqref{JJ'}, is used to define $\mathfrak o_{2n+1}$ and $\mathfrak{sp}_{2n}$. 

\def\bfU{{\mathbf U}}
We now define the quantum analogs of $(\mathcal U(\mathfrak {gl}_N),\mathcal U(\mathfrak {gl}_N^\theta))$.
\begin{defn}\label{Uglmn}
 The quantum enveloping algebra $\bfU(\mathfrak{gl}_{N})$
over $ \mathbb Q(\up)$  is generated by
$$E_{a}, F_{a},  K_{i}^{\pm  1},\;1\leq a,i\leq N,a\neq N.$$
%with a $\Z_{2}$-grading given by setting $\widehat{E}_{m}=\widehat{F}_{m}=1$,  $\widehat{E}_{a}= \widehat{F}_{a}=0$ for $a \neq m$, and $\widehat{K^{\pm 1}_{b}}=0$.
These elements are subject to the following relations:
\begin{enumerate}
\item[(QG1)]
$ K_iK_j=K_jK_i,\ K_iK_i^{-1}=K_i^{-1}K_i=1; $
\item[(QG2)]
$ K_iE_{a}=\up^{\delta_{i,a}-\delta_{i,a+1}}E_{a}K_i,\
K_iF_{a}=\up^{-\delta_{i,a}+\delta_{i,a+1}}F_{a}K_i; $
\item[(QG3)]
$ [E_{a},
F_{b}]=\delta_{a,b}\frac{\wK_a-\wK_a^{-1}}{\up-\up^{-1}}, 
$ where ${\widetilde K}_{a}=K_aK_{a+1}^{-1}$;
\item[(QG4)]
$ E_{a}E_{b}=E_{b}E_{a}, \  F_{a}F_{b}=F_{b}F_{a},$ if $|a-b|>1 ;$ and
\item[(QG5)]
$E_{a}^2E_{b}-(\up+\up^{-1})E_{a}E_{b}E_{a}+E_{b}E_{a}^2=0,
F_{a}^2F_{b}-(\up+\up^{-1})F_{a}F_{b}F_{a}+F_{b}F_{a}^2=0$, if $|a-b|=1$.
\end{enumerate}

\end{defn}

A Hopf algebra structure  on $\bfU(\mathfrak {gl}_{N})$ is
defined  (see \cite[p.~199]{Jan96}) by:
\begin{equation}\label{coalg}
\begin{aligned}
& \Delta(K_i)=K_i\otimes K_i,\\
&\Delta(E_a)=E_a\otimes {\widetilde K}_{a}^{-1} +1\otimes E_a, \quad
   \Delta(F_a)=F_a\otimes 1+{\widetilde K}_a\otimes F_a ,\\
& \varepsilon(K_i)=1,  \quad  \varepsilon(E_a)= \varepsilon(F_a)=0,\\
& S(K_i)=K_i^{-1}, \quad S(E_a)=-E_a {\widetilde K}_{a}, \quad
S(F_a)=-{\widetilde K}_{a} ^{-1}F_a.
\end{aligned}
\end{equation}
The quantum $\mathfrak{sl}_{N}$ is the Hopf subalgebra of $\bfU(\mathfrak{gl}_{N})$
$$\bfU_N=\bfU(\mathfrak{sl}_{N})=\langle E_i,F_i,\widetilde K_i\mid 1\leq i \leq N-1\rangle.$$

 We now follow \cite{BW18} to introduce two subalgebras in the cases $N=2n+1$ and $N=2n$ via the corresponding graph automorphism of the Dynkin diagram of $\mathfrak{sl}_{N}$. For this purpose, we label the vertices of the Dynkin diagrams in a more symmetric fashion, i.e., simple roots are labeled symmetrically about 0. See \cite[(1.1)]{BW18}.

For integers $m<n$, let
$$\aligned
&[m,n]=\{i\in\mathbb Z\mid m\leq i\leq n\},\\
&[m,n]_{\frac12}=\{i\in\mathbb Z+\frac12\mid m\leq i\leq n\}.
\endaligned$$
Note that  $[m,n]$ (resp., $[m,n]_{\frac12}$) consists of all endpoints (resp., midpoints) of intervals $[i,i+1]$ for $m\leq i<n$.

Let $V_N$ be the natural representation of $\bfU_N=\bfU(\mathfrak{sl}_{N})$. Following the notation used in \cite[Chs.~2\&6]{BW18}, we now relabel the generators of $\bfU_N$ by simple roots associated with $\mathfrak{sl}_{N}$.

For $N=2n+1$, we label the basis for $V_{2n+1}$ by%the natural representation $V$ of $\bfU_\up(\mathfrak{gl}_{N})$ by
\begin{equation}\label{Bbasis}
v_{-n},\ldots,v_{-2},v_{-1},v_0,v_1,v_2,\ldots,v_{n}
\end{equation}
and the corresponding simple roots 
$\alpha_i=\varepsilon_{i-\frac12}-\varepsilon_{i+\frac12}$ by all $i\in[-n,n]_{\frac12}$. Here, $\varepsilon_j, j\in[-n,n]$ are the fundamental weights for $\mathfrak{sl}_{2n+1}$.
 
For $N=2n$, we label the basis for $V_{2n}$ by
\begin{equation}\label{Cbasis}
v_{-n+\frac12},v_{-n+\frac12},\ldots,v_{-\frac12},v_{\frac12},v_{1+\frac12},\ldots,v_{n-\frac12}
\end{equation}
and the corresponding simple roots $\alpha_i=\varepsilon_{i-\frac12}-\varepsilon_{i+\frac12}$ by all $i\in[-n+1,n-1]$.

\begin{defn}\label{FirstPre}
%\begin{itemize}
{\rm(1)} The $\mathbb Q(t)$-subalgebra $\bfU^\jmath_n$ of $\bfU_{2n+1}$ is generated by
$$
{\bf k}_i=\wK_i\wK_{-i}^{-1},\;\;
{\bf e}_i=E_i+\wK_i^{-1}F_{-i},\;\; {\bf f}_i=F_i\wK_{-i}^{-1}+E_{-i},\;\;\forall i\in[0,n]_{\frac12}.
$$
{\rm(2)}  The $\mathbb Q(t)$-subalgebra $\bfU^\imath_n$ of $\bfU_{2n}$ is generated by, for $1\leq i\leq n-1$,
$$
{\bf k}_i=\wK_i\wK_{-i}^{-1},\;\;
{\bf e}_i=E_i+\wK_i^{-1}F_{-i},\;\; {\bf f}_i=F_i\wK_{-i}^{-1}+E_{-i},\;\;{\bf t}=E_0+\up F_0\wK_0^{-1}+\wK_0^{-1}.
$$
%\end{itemize}
\end{defn}
\begin{rem} Note that $\bfU^\jmath_n,\bfU^\imath_n$ are not Hopf subalgebras, but coideal subalgebras, and
 $(\bfU_{2n+1},\bfU^\jmath_n)$, $(\bfU_{2n},\bfU^\imath_n)$  form  some quantum symmetric pairs,\footnote{$\bfU^\jmath_n$, $\bfU^\imath_n$ are denoted $\bfU^\jmath, \bfU^\imath$, respectively, in \cite{BW18}.} which are the quantum analogs of the symmetric pairs $(\mathcal U(\mathfrak{gl}_N),\mathcal U(\mathfrak{gl}_N^\theta))$ with $N=2n+1,2n$. Thus, they are the quantum analog of $\mathcal U(\mathfrak{gl}_{N}^\theta)$, $N\in\{2n+1,2n\}$, and will be called  $i$-{\it  quantum group}\index{$i$-quantum groups}. \index{$i$-quantum group! $\sim$ $\bfU^\jmath_n$, $\bfU^\imath_n$}
 
 Note that, by \eqref{Ugth}, there is another quantization of $\mathcal U(\mathfrak{gl}_{N}^\theta)$ as a tensor product of two quantum $\mathfrak{gl}_m$ which cannot be embedded into $\mathcal U(\mathfrak{gl}_{N})$. 
\end{rem}
We summarize the two cases in the following table:
\vspace{1ex}
\begin{center}
\begin{tabular}{|c|c|c|c|}\hline
$\bfU_N=\bfU(\mathfrak{sl}_N)$&Basis for $V_N$&Simple roots&$i$-Quanutm groups\\\hline
$N=2n+1$&$v_i,i\in[-n,n]$&$\alpha_i,i\in[-n,n]_{\frac12}$&$\bfU^\jmath_n\subset \bfU_{2n+1}$\\ \hline
$N=2n$&$v_i,i\in[-n,n]_{\frac12}$&$\alpha_i,i\in[-n+1,n-1]$&$\bfU^\imath_n\subset \bfU_{2n}$\\ \hline
\end{tabular}
\end{center}

%The action on $V_N$...

For $*\in\{\scB,\scC\}$, let $\epsilon_*=\begin{cases}1,&\text{ if }*=\scB\\
0,&\text{ if }*=\scC\end{cases}$\\ and let 
\begin{equation}\label{Inr}
\aligned
\mathbb I(2n+\epsilon_\scB,r)&=\{(i_1,i_2,\ldots,i_r)\mid i_j\in[-n,n]\}\\
\mathbb I(2n+\epsilon_\scC,r)&=\{(i_1,i_2,\ldots,i_r)\mid i_j\in[-n, n]_{\frac12}\}.
\endaligned
\end{equation}
Both sets index bases for the {\it tensor spaces} $V_{2n+1}^{\otimes r}$ and $V_{2n}^{\otimes r}$, respectively.
 
The new symmetric notation above gives rise to an adjustment of the associated Coxeter system $(W_0,S_0)$ with $S_0=\{s_0,s_1,\ldots,s_{r-1}\}$. Here, we relabel the generators $s_i\,(1\leq i\leq r)$ considered in Subsection 5.4A by $s_{r-i}, 1\leq i\leq r$. Thus, the Dynkin diagram of type $B_r$ takes the form
\vspace{.2cm}
\begin{equation}\label{Br}
\text{
%\begin{center}
\begin{tikzpicture}[scale=1.5]
%\fill(-1,0) node {$B_m$:};
\fill (0,0) circle (1.5pt);
\fill (1,0) circle (1.5pt);
\fill (2,0) circle (1.5pt);
\fill (4,0) circle (1.5pt);
\fill (-1,0) circle (1.5pt);
\draw (-1,0.05) --
        (0,0.05);
\draw (-1,-0.05) node[below]  {$_0$} --
        (0,-0.05);
  \draw (0,0) node[below] {$_1$} --
        (1,0) node[below] {$_2$} -- (2,0)node[below] {$_3$}--(2.5,0);
\draw[style=dashed](2.5,0)--(3.5,0);
\draw (3.5,0)--(4,0) node[below] {$_{r-1}$};
%\draw (4.4,0.1)--(4.6,0);
%\draw (4.4,-0.1)--(4.6,0);
\end{tikzpicture}
%\end{center}
}
\end{equation}
Note that this labeling reverses the labeling in Examples \ref{SO_n} and  \ref{SU_n}.

  Let 
  \begin{equation}\label{H'}
  \sH':=\sH_{\up^{-1}}(W_0)\quad\text{and}\quad \boldsymbol{\sH}':=\sH'_{\mathbb Q(t)}
  \end{equation} be the Hecke algebras over $\sZ$ and $\mathbb Q(t)$, respectively  (relative to parameter $t^{-1}$) of the standard finite Coxeter system $(W_0,S_0,L)$ with constant weight function $L(s_i)=1$. Then, $\sH'$ is generated by $T_{s_0},T_{s_1},\ldots, T_{s_{r-1}}$. The subalgebra $\sH'(\fS_r)$ generated by $T_{s_1},\ldots, T_{s_{r-1}}$ is the Hecke algebra of the symmetric group $\fS_r$.

 The $\bfU_{2n+\epsilon_*}$-module $V_{2n+\epsilon_*}^{\otimes r}$ commutes with the following action by $\sH'(\fS_r)$ on the basis
$v_{\ul{i}}:=v_{i_1}\otimes\cdots\otimes v_{i_r}$, for ${\ul i}\in \mathbb I(2n+\epsilon_*,r):$
\begin{equation}\label{actionA}
v_{\ul i}T_{s_j}=\begin{cases} v_{\ul {i}s_j},&\text{ if }i_j<i_{j+1};\\
\up^{-2}v_{\ul i},&\text{ if }i_j=i_{j+1};\\
(\up^{-2}-1)v_{\ul i}+\up^{-2}v_{\ul{i}s_j},&\text{ if }i_j>i_{j+1}.
\end{cases}
\end{equation}

%$$e_{\ul i}\tilde T_{s_j}=\begin{cases} e_{\ul {i}s_j},&\text{ if }i_j<i_{j+1};\\
%\up^{-1}e_{\ul i},&\text{ if }i_j=i_{j+1};\\
%(\up^{-1}-\up)e_{\ul i}+e_{\ul(i)s_j},&\text{ if }i_j>i_{j+1},
%\end{cases}$$
%where $\wT_i=tT_i$.

 The $\bfU_{2n+\epsilon_*}$-$\bsH'(\fS_r)$-bimodule $V_{2n+\epsilon_*}^{\otimes r}$ satisfies the double centralizer property---the quantum Schur--Weyl duality.  
 
 We extend the action to the Hecke algebra $ \sH'$ of $W_0$. We first do it integrally.
  Consider the free $\sZ$-submodule of $V_{2n+\epsilon_*}^{\otimes r}$ spanned by all $v_{\ul{i}}:=v_{i_1}\otimes\cdots\otimes v_{i_r}$:
 \begin{equation}\label{Tnr}
 T(2n+\epsilon_*,r)=\text{span}_\sZ\{v_{\ul{i}}\mid{\ul i}\in \mathbb I(2n+\epsilon_*,r)\}.
 \end{equation}
  This is clearly a right $\sH'(\fS_r)$-module with the above action in \eqref{actionA}.
 Extend this action to $\sH'$ by setting (cf. \cite[(6.8), (5.2)]{BW18})
 \begin{equation}\label{actionB}v_{\ul i}T_{s_0}=\begin{cases}
v_{\ul{i}s_0}, &\text{ if }i_1>0;\\
\up^{-2}v_{\ul i},&\text{ if }i_1=0;\\
(\up^{-2}-1)v_{\ul i}+\up^{-2}v_{\ul{i}s_0},&\text{ if }i_1<0.
\end{cases}\quad\text{ for }N=2n+1
\end{equation}
\begin{equation}\label{actionC}
v_{\ul i}T_{s_0}=\begin{cases} v_{\ul{i}s_0}, &\text{ if }i_1>0;\\
(\up^{-2}-1)v_{\ul i}+\up^{-2}v_{\ul{i}s_0},&\text{ if }i_1<0.\end{cases}\quad\text{ for }N=2n.%\vspace{-1ex}
\end{equation}

For $\la\in\La_*(n,r)$, we will also relabel the components as $\la=(\la_0,\la_1,\ldots,\la_{n})$, if $\la\in\La_\jmath(n,r)=\La(n+1,r)$, and the generators for the parabolic subgroup $W_{0,\la}$ of $W_0$. See Lemma \ref{parab} below.
Let $\sD_\la$ be the shortest representatives of the right cosets of $W_{0,\la}$ in $W_0$. It is easily checked that, for ${\ul i}\in \mathbb I(2n+\epsilon_*,r)$, if ${\ul i}={\ul i}_\la d$ for some $d\in\sD_\la$, then
$v_{\ul{i}}=v_{{\ul i}_\la}T_d$.

Note that, putting $e_{\ul i}=v_{\ul{i}_\la}\wT_d$, where $\wT_d=t^{\ell(d)}T_d$, then the induced action from \eqref{actionA} and 
\eqref{actionB} (resp., \eqref{actionC}) coincides with the action in \cite[(6.8)]{BW18} (resp., \cite[(5.2)]{BW18}).

For $*\in\{\scB,\scC\}$, define {\it $i$-quantum Schur algebras} over $\sZ$:
$$U^*_{n,r}:=\End_{\sH'}\big(T(2n+\epsilon_*,r)\big),\qquad \bfU_{n,r}^*:=\End_{\boldsymbol\sH'}(V_{2n+\epsilon_*}^{\otimes r}).%\cong U_{n,r}^*\otimes\mathbb Q(t).
$$
%We will establish a $\sZ$-algebra isomorphism between $U^*_{n,r}$
% $\bfU^*_{n,r}:=\End_{\bsH'}(V_{2n+\epsilon_*}^{\otimes r})\cong U^*_{n,r}\otimes\mathbb Q(t)$ 
% and the $q$-Schur algebra $\sS^*_{t^{-1},\sZ}(n,r)$ of type $B/C$ in \eqref{Snr} in the next subsection.

The following theorem gives a new Schur--Weyl duality between $\bfU^*_n$ and $\bsH':=\mathbb Q(t)\otimes_\sZ\sH'$ for $*\in\{\jmath,\imath\}$.
% via $\sS^{\imath*}(n,r)_{\mathbb Q(t)}$.

\begin{thm}[{\cite[Ths. 5.4\&6.27]{BW18}}]\label{BW}
Maintaining the notation above and letting $*\in\{\jmath,\imath\}$, we have:
\begin{enumerate}
\item The restricted $\bfU^*_n$-action on $V_{2n+\epsilon_*}^{\otimes r}$ commutes with the $\boldsymbol{\sH}'$-action.
\item The $\bfU^*_n$-$\boldsymbol{\sH}'$-bimodule $V_{2n+\epsilon_*}^{\otimes r}$ satisfies the double centralizer property. Hence, there is a Schur--Weyl duality over $\mathbb Q(\up)$ in each of the choices of $*$.
\item In particular, there is an algebra epimorphism over $\mathbb Q(\up)$:
$$\phi_{*}:\bfU^*_n\longrightarrow \bfU^*_{n,r}:=\End_{\boldsymbol\sH'}(V_{2n+\epsilon_*}^{\otimes r}).$$
\end{enumerate}
\end{thm}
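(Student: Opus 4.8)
The plan is to follow Bao--Wang [\cite{BW18}], establishing the three assertions in turn, with essentially all of the work concentrated in the surjectivity statement of part (3).

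First I would deal with part (1). Since $\bfU^*_n$ is a coideal subalgebra of $\bfU_{2n+\epsilon_*}$ (with $\epsilon_\jmath=1$, $\epsilon_\imath=0$), the coproduct \eqref{coalg} equips $V_{2n+\epsilon_*}^{\otimes r}$ with a $\bfU^*_n$-action by restriction. The subalgebra $\boldsymbol\sH'(\fS_r)$ of $\boldsymbol\sH'$ generated by $T_{s_1},\dots,T_{s_{r-1}}$ commutes with all of $\bfU_{2n+\epsilon_*}$ by the ordinary quantum Schur--Weyl duality, hence \emph{a fortiori} with $\bfU^*_n$; so it remains only to check that the extra generator $T_{s_0}$ of $\boldsymbol\sH'$ commutes with each generator $\mathbf k_i,\mathbf e_i,\mathbf f_i$ (and $\mathbf t$ in the $\imath$-case) of Definition \ref{FirstPre}. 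This is a direct computation on the basis $v_{\ul i}$, using that $T_{s_0}$ affects only the first tensor slot (formulas \eqref{actionB}, \eqref{actionC}) and that $\mathbf e_i,\mathbf f_i$ act there through the sign-symmetric combinations $E_i+\wK_i^{-1}F_{-i}$, $F_i\wK_{-i}^{-1}+E_{-i}$, which are tailored to match the $i\mapsto-i$ symmetry built into \eqref{actionB}--\eqref{actionC}.

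Next, parts (2) and (3). Write $M:=V_{2n+\epsilon_*}^{\otimes r}$ over $\mathbb Q(\up)$, and observe that as a right $\boldsymbol\sH'$-module $M\cong\bigoplus_{\la\in\La_*(n,r)}x_\la\boldsymbol\sH'$ (a direct sum of $q$-permutation modules): every basis vector $v_{\ul i}$ lies in one $W_0$-orbit, namely $v_{\ul i_\la}T_d$ with $d\in\sD_\la$, the $W_0$-stabilizer of $v_{\ul i_\la}$ being the parabolic subgroup $W_{0,\la}$ of Lemma \ref{parab}. Hence $\End_{\boldsymbol\sH'}(M)=\bfU^*_{n,r}$ is the base change of the Hecke endomorphism algebra $U^*_{n,r}$ studied (in its $\sZ$-form, essentially $\sS^*(n,r)$) in Subsection 5.4A. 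Now $\boldsymbol\sH'$ is split semisimple over $\mathbb Q(\up)$ because $(W_0,S_0,L)$ is a standard finite Coxeter system (footnote \ref{semisimpleH}); consequently $\End_{\End_{\boldsymbol\sH'}(M)}(M)$ already equals the image of $\boldsymbol\sH'$ in $\End_{\mathbb Q(\up)}(M)$. Thus, \emph{once} we know that $\phi_*$ maps $\bfU^*_n$ onto $\End_{\boldsymbol\sH'}(M)$ — which is precisely part (3) — the remaining centralizer equality $\End_{\bfU^*_n}(M)=\im\bigl(\boldsymbol\sH'\to\End_{\mathbb Q(\up)}(M)\bigr)$ follows formally, yielding the double centralizer property of part (2). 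To prove the surjectivity, I would realize $\bfU^*_{n,r}$ with an explicit basis (a BLM-type basis, or the multiplication-of-double-coset basis coming from the permutation modules $x_\la\boldsymbol\sH'$) and verify that each basis element lies in the image of a monomial in $\mathbf e_i,\mathbf f_i,\mathbf k_i,\mathbf t$; a dimension count then closes the argument, both sides being semisimple. Part (3) is then immediate, with epimorphism $\phi_*$.

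The main obstacle is exactly this surjectivity of $\phi_*$: showing that the finitely generated $i$-quantum group $\bfU^*_n$ covers the full $\boldsymbol\sH'$-centralizer $\bfU^*_{n,r}$ of the tensor space. In Bao--Wang this is achieved through an $\imath$-canonical-basis and bar-involution analysis comparing the type $B/C$ picture with the ordinary type $A$ Schur--Weyl duality for $\bfU_{2n+\epsilon_*}$, and carrying it out requires introducing the $\imath$-divided-power generators and the associated bar map. Everything else — part (1), and the passage from surjectivity to the full double centralizer property and to part (3) — is either a generator-level computation or a formal consequence of the split semisimplicity of $\boldsymbol\sH'_{\mathbb Q(\up)}$.
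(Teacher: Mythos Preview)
The paper does not give a proof of this theorem: it is stated with the attribution \cite[Ths.~5.4\&6.27]{BW18} and then immediately followed by remarks, with no proof environment. So there is nothing in the paper against which to compare your argument; the authors simply import the result from Bao--Wang.

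Your sketch is a reasonable high-level outline of how the Bao--Wang proof proceeds, and you correctly identify the surjectivity in part~(3) as the substantive point (with parts~(1) and~(2) reducing to generator checks and the split semisimplicity of $\boldsymbol\sH'$ over $\mathbb Q(\up)$). One comment: your reduction of~(2) to~(3) via semisimplicity is clean, but note that the faithfulness of the $\boldsymbol\sH'$-action on $M$ (needed so that the image of $\boldsymbol\sH'$ in $\End_{\mathbb Q(\up)}(M)$ is all of $\boldsymbol\sH'$, and hence that $\End_{\bfU^*_n}(M)\cong\boldsymbol\sH'$) should be checked separately---it holds because the regular module $\boldsymbol\sH'=x_\emptyset\boldsymbol\sH'$ occurs as a summand of $M$ when $n$ is large enough, or by a direct argument for small $n$. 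This is minor, but worth flagging since you phrase the double centralizer property as a two-sided statement.
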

\begin{rems} 
(1) We will see below that %by specialising $\up^{-2}$ to a prime power $q$, 
$U^\jmath_{n,r}$ and $U^\imath_{n,r}$ are 
isomorphic to $\sS^\scB_{t^{-1},\sZ}(n,r)$ and $\sS^\scC_{t^{-1},\sZ}(n,r)$, respectively. Hence, they are
 Hecke endo-algebras associated with a finite orthogonal group $\O_{2r+1}(\mathbb F_q)$ or a finite symplectic group $\Sp_{2r}(\mathbb F_q)$.
 %and both are Morita equivalent to centraliser subalgebras of $A^\dagger$.
%\item By this theory developed above we have better understanding of the Hecke endomorphism algebra, such as decomposition matrix, standardly based algebra, etc..

(2) By \eqref{Ugth}, there is another quantum analog of $\mathcal U(\mathfrak{gl}_N^\theta)$, which is a tensor product of some quantum $\mathfrak{gl}_m$. The quantum Schur--Weyl duality relative to these quantum analogs and Hecke algebra of type $B$ is discussed in \cite{SS99} and \cite{Hu01}.
\end{rems}

\medskip
%\noindent
{\bf 5.4C Place permutations and the tensor space decomposition.}
 We now establish isomorphism between the Hecke endo-algebras $\sS^\scB(n,r)$ and $\sS^\scC(n,r)$ introduced in Subsection 5.4A and $U^\jmath_{n,r}$ and $U^\imath_{n,r}$ introduced in Subsection 5.4B. The isomorphisms are largely known;
 see, e.g., \cite{LL21, LNX20, LW22}. We include a proof using place permutations for completion.

We may regard an element $\ul i\in\mathbb I(2n+\epsilon_\scB,r)$ as a function 
$${\ul i}:[-r,r] \longrightarrow [-n,n],\;\; k\longmapsto i_k\text{ with }i_{-k}=\begin{cases} -i_k,&\text{ if }i_k\neq0\\
0,&\text{ if }i_k=0,\end{cases}$$
and, similarly, regard an element $\ul i\in\mathbb I(2n+\epsilon_\scC,r)$ as a function 
$${\ul i}:[-r,-1]\cup[1,r] \longrightarrow [-n,n]_{\frac12},\;\; k\longmapsto i_k\text{ with }i_{-k}=-i_k.$$

We now introduce $W_0$-actions on the sets $\mathbb I(2n+\epsilon_*,r)$, for $*\in\{\jmath,\imath\}$. Recall from \eqref{Br} that $W_0$ is the Weyl group of type $B_r$ with Coxeter generators $S_0=\{s_0,s_1,\ldots,s_{r-1}\}$, where $S_0\backslash\{s_0\}$ generates a parabolic subgroup isomorphic to the symmetric group $\fS_r$.
Note that we have relabeled the Coxeter generators $s_i, 1\leq i\leq r$ in Subsection 5.4A by $s_{r-i}, 1\leq i\leq r$. By regarding  $\fS_{2r+1}$ as the permutation group $\fS_{[-r,r]}$ of the set $[-r,r]$,  $W_0$ is the subgroup of $\fS_{[-r,r]}$ (resp., $\fS_{[-r,r]\backslash\{0\}}$, regarded as a subgroup of $\fS_{[-r,r]}$) consisting of permutations 
\begin{equation}\label{fp group}
\begin{pmatrix}-r&\cdots&-2&-1&0&1&2&\cdots &r\\
i_{-r}&\cdots&i_{-2}&i_{-1}&0&i_1&i_2&\cdots&i_r\end{pmatrix}
\end{equation}
satisfying $i_{-j}=-i_j$, for all $j\in[1,r]$ (cf. \eqref{Sr*}).
Hence, $W_0$ is generated by
$$s_0=(-1,0)(0,1)(-1,0)=(-1,1),\;\;s_i=(i,i+1)(-i,-i-1),\;\; 1\leq i\leq r-1.$$
%=\begin{pmatrix}-r&\cdots&-2&-1&0&1&2&\cdots&r\\-r&\cdots&-2&1&0&-1&2&\cdots&r\end{pmatrix},$$
%and  $s_i=(i,i+1)(-i,-i-1)$, for $i=1,2,\ldots,r-1$.

Since every element $w$ in $W_0$  satisfies $w(-j)=-w(j)$, for $j\in[-r,r]\backslash\{0\}$ and $w(0)=0$, the following action, defined by composition of functions, is well-defined
$${\ul i} .w=(i_{w(1)},i_{w(2)},\ldots,i_{w(r)}),$$
for all ${\ul i}\in\mathbb I(2n+\epsilon_*,r)$. This action generalizes the well-known {\it place permutation} from $\fS_r$ to the Weyl group $W_0$.
So, $W_0$ acts on $\mathbb I(2n+\epsilon_*,r)$.

For $\ul{i}= (i_1,i_2,\ldots,i_r)\in\mathbb I(2n+\epsilon_\scB,r)$ (here $*=\jmath$), let 
$$\lambda_k=\#\{i_j\mid 1\leq j\leq r, k=|i_j|\},\text{ for all }k\in[0,n]$$  and define
$\text{wt}(\ul{i})=\la:=(\la_0,\lambda_1,\ldots,\lambda_{n})$. 

For $\ul{i}= (i_1,i_2,\ldots,i_r)\in\mathbb I(2n+\epsilon_\scC,r)$ (here $*=\imath$), by \eqref{Inr}, $|i_k|+\frac12\in[1,n]$, for all $k\in[1,r]$. Let 
 $$\lambda_k=\#\{i_j\mid 1\leq j\leq r,k=|i_j|+\frac12\},  \text{ for all }k\in[1,n]$$ and define
$\text{wt}(\ul{i})=\la:=(\lambda_1,\ldots,\lambda_{n})$. 

Thus, for $*\in\{\scB,\scC\}$, the sets $\Lambda_*(n,r)$ in \eqref{Lanr} have the following interpretation
 $$\Lambda_*(n,r)=\{\text{wt}(\ul{i})\mid \ul{i}\in\mathbb I(2n+\epsilon_*,r)\}.$$
 For $\la\in\Lambda_\scB(n,r)$, let
$$
{\ul {i_\la}}=(\underbrace{0,\ldots,0}_{\la_0},\underbrace{1,\ldots,1}_{\la_1},\ldots,\underbrace{n,\ldots,n}_{\la_{n}})$$
and, for $\la\in\Lambda_\scC(n,r)$, let
$${\ul i_\la}=(\underbrace{1_{\frac12},\ldots,1_{\frac12}}_{\la_1},\underbrace{2_{\frac12},\ldots,2_{\frac12}}_{\la_2},\ldots,\underbrace{n_{\frac12},\ldots,n_{\frac12}}_{\la_{n}}),$$
where $a_{\frac12}=a-{\frac12}$, for all $a\in[1,n]$.
\begin{lem}\label{parab}
For $*\in\{\scB,\scC\}$ and $\la\in\Lambda_*(n,r)$, the orbit 
$${\ul {i_\la}} .W_0=\{{\ul i}\in \mathbb I(2n+\epsilon_*,r),\wt({\ul i})=\la\},$$
and $\Stab_{W_0}({\ul {i_\la}})=W_{0,\la}$ is the subgroup generated by 
\begin{itemize}
\item[(1)] $S\backslash\{s_{\la_0+\cdots+\la_i}\mid 0\leq i<n\}$, if $\la\in\Lambda_\scB(n,r)$ and
\item[(2)] $S\backslash\{s_0,s_{\la_1+\cdots+\la_i}\mid 1\leq i<n\}$, if $\la\in\Lambda_\scC(n,r)$.
\end{itemize}
\end{lem}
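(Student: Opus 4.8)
\textbf{Proof plan for Lemma \ref{parab}.}
The plan is to compute the stabilizer of $\ul{i_\la}$ under the place-permutation action of $W_0$ directly from the explicit coordinate form of $\ul{i_\la}$, and then identify the orbit by a counting argument. First I would fix $*\in\{\scB,\scC\}$ and recall that, by the discussion above the lemma, the place permutation action is $\ul i.w=(i_{w(1)},\dots,i_{w(r)})$, where $W_0$ is realized as the group of signed permutations of $[-r,r]$ (respectively of $[-r,-1]\cup[1,r]$) via \eqref{fp group}. Since every $w\in W_0$ satisfies $w(-j)=-w(j)$ and $w(0)=0$, this action is well-defined on $\mathbb I(2n+\epsilon_*,r)$, and I would first observe that $\wt(\ul i.w)=\wt(\ul i)$ for all $w$ — the multiset of absolute values (respectively of shifted absolute values) is permutation-invariant. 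This immediately gives one inclusion for the orbit statement, namely $\ul{i_\la}.W_0\subseteq\{\ul i\mid \wt(\ul i)=\la\}$.

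Next I would compute $\Stab_{W_0}(\ul{i_\la})$. For the type $\scC$ case ($\la\in\Lambda_\scC(n,r)$): the coordinates of $\ul{i_\la}$ are $1_{\frac12},\dots,1_{\frac12},2_{\frac12},\dots,n_{\frac12}$ (with block lengths $\la_1,\dots,\la_n$), and crucially all entries are \emph{positive} half-integers, so there is no entry equal to its own negative. Hence an element $w\in W_0$ fixing $\ul{i_\la}$ cannot involve any sign change: it must lie in the parabolic $\fS_r=\langle s_1,\dots,s_{r-1}\rangle$ (which is exactly the statement $s_0\notin$ the stabilizer). Within $\fS_r$, the stabilizer of $\ul{i_\la}$ is the standard Young subgroup $\fS_{\la_1}\times\cdots\times\fS_{\la_n}$, generated by all $s_j$ except the block-boundary transpositions $s_{\la_1+\cdots+\la_i}$, $1\le i<n$. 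This yields generating set (2). For type $\scB$ ($\la\in\Lambda_\scB(n,r)$): here $\ul{i_\la}$ has an initial block of $\la_0$ zeros followed by blocks of $1$'s, …, $n$'s. The zero entries satisfy $i_{-k}=-i_k=0$, so the signed permutations permuting and sign-changing the positions of the zeros among themselves all fix $\ul{i_\la}$; this is a copy of the full signed-permutation group $W_0(B_{\la_0})$ on those positions, whose Coxeter generators inside $W_0$ are $s_0,s_1,\dots,s_{\la_0-1}$. Combined with the symmetric-group factors on the nonzero blocks (generated by the remaining $s_j$ except the boundary reflections $s_{\la_0+\cdots+\la_i}$, $0\le i<n$), this gives precisely generating set (1). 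In each case I would double-check that the listed generators are exactly the $s_j$ that fix $\ul{i_\la}$ coordinatewise, and that the parabolic they generate is the full stabilizer (no element outside the ``block-diagonal'' signed-permutation subgroup can fix the tuple).

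Finally, for the orbit equality I would prove the reverse inclusion: given any $\ul i$ with $\wt(\ul i)=\la$, there is $w\in W_0$ with $\ul{i_\la}.w=\ul i$. Indeed, since $\wt(\ul i)=\la$ the multiset $\{|i_1|,\dots,|i_r|\}$ (resp.\ the shifted version) equals that of $\ul{i_\la}$, so one can choose a bijection of positions matching up equal absolute values, and then adjust signs — permissible because for each nonzero value both $v$ and a position carrying value $v$ can be sent to a position carrying $-v$ by a sign change in $W_0$, while zeros are untouched. This realizes $\ul i$ as $\ul{i_\la}.w$. Then the orbit-stabilizer formula, or simply combining the two inclusions, gives $\ul{i_\la}.W_0=\{\ul i\mid\wt(\ul i)=\la\}$, and simultaneously $|\ul{i_\la}.W_0|=[W_0:W_{0,\la}]$, consistent with the stabilizer computation. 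The main obstacle I anticipate is purely bookkeeping: being careful in the $\scB$ case about how the signed-permutation subgroup on the zero-block sits inside $W_0$ and verifying that its standard Coxeter generators are exactly $s_0,\dots,s_{\la_0-1}$ under the relabeling \eqref{Br}; everything else is a routine verification from the explicit description of the action.
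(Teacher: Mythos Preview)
Your proposal is correct and follows essentially the same two-step structure as the paper: first compute the stabilizer of $\ul{i_\la}$, then verify the orbit equality by explicitly moving any $\ul i$ with $\wt(\ul i)=\la$ to $\ul{i_\la}$ via a permutation followed by sign changes (the paper writes the sign changes as products of the elements $t_j=s_{j-1}\cdots s_1s_0s_1\cdots s_{j-1}$).

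The only notable difference is in the stabilizer computation. You argue directly from the block structure of $\ul{i_\la}$, checking which simple reflections fix it and concluding the stabilizer is the parabolic they generate. The paper instead passes to the ambient symmetric group $\fS_{[-r,r]}$: it extends $\ul{i_\la}$ to the ``doubled'' sequence $-\ul{i_\la}^\tau\circ\ul{i_\la}\in I(2n+1,2r+1)$, observes that $\Stab_{W_0}(\ul{i_\la})=W_0\cap\Stab_{\fS_{[-r,r]}}(-\ul{i_\la}^\tau\circ\ul{i_\la})=W_0\cap\fS_{\widetilde\la}$, and then invokes the earlier identification $W_{0,\la}=(\fS_{\widetilde\la})^\sigma$. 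Your route is more self-contained; the paper's has the advantage of tying the answer directly back to the description of $W_\la$ as the $\sigma$-fixed points of a Young subgroup, which is how these parabolics were introduced in connection with the Levi subgroups $L_\la(r_*,q)$.
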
 
\begin{proof} We first assume $\la\in\Lambda_\scB(n,r)$.
Consider the place permutation of $\fS_{[-r,r]}$ on 
$$I(2n+1,2r+1)=\{(i_{-r},\ldots,i_{-1},i_0,i_1,\ldots,i_r)\mid i_j\in[-n,n],\;\forall j\in[-r,r]\}.$$ For ${\ul i}=(i_1,i_2,\ldots,i_r)\in \mathbb I(2n+\epsilon_\scB,r)$,
let 
$$-{\ul i}^\tau\circ{\ul i}:=(i_{-r},\ldots,i_{-2},i_{-1},0,i_1,i_2,\ldots,i_r),$$
where $i_0=0$.
(Recall $i_{-j}=-i_j$.) By regarding $W_0$ as a subgroup of $\fS_{[-r,r]}$ via \eqref{fp group}, it is clear that
$\Stab_{W_0}({\ul {i}})=W_0\cap \Stab_{\fS_{[-r,r]}}(-{\ul i}^\tau\circ{\ul i})$. In particular,
$\Stab_{W_0}({\ul {i_\la}})=W_0\cap \Stab_{\fS_{[-r,r]}}(-{\ul {i_\la}}^\tau\circ{\ul {i_\la}})$.
Since 
$$\Stab_{\fS_{[-r,r]}}(-{\ul {i_\la}}^\tau\circ{\ul {i_\la}})=\fS_{\widetilde\la}\cong\fS_{\la_n}\times\cdots\times\fS_{\la_1}\times\fS_{2\la_0+1}\times\fS_{\la_1}\times\ldots\times\fS_{\la_n},$$
it follows that $\Stab_{W_0}({\ul {i_\la}})=W_0\cap\fS_{\widetilde\la}=W_{0,\la}$.

The orbit equality can be seen as follows. First, for any ${\ul i}=(i_1,i_2,\ldots,i_r)\in \mathbb I(2n+\epsilon_\scB,r)$ with $\wt({\ul i})=\la$, there is $x\in\fS_r$ such that $\ul{i_\la}x=(|i_1|, |i_2|,\ldots,|i_n|)$. Now, assume $i_{j_1},\ldots,i_{j_t}$ are negative, then 
$(|i_1|, |i_2|,\ldots,|i_n|)t_{j_1},\ldots,t_{j_t}=(i_{1},\ldots,i_{r})$, where $t_i=s_{i-1}\cdots s_1s_0s_1\cdots s_{i-1}$ is the permutation swapping $i$ and $-i$ and fixing others.
Hence, $\ul{i_\la}xt_{j_1},\ldots,t_{j_t}=(i_{1},\ldots,i_{r})$.
%By definition, $S_\la\subset \Stab_W({\ul i})$, Thus, $W_\la\subseteq \Stab_W({\ul i})$. If $w\in \Stab_W({\ul i})$
%and ${\ul i}=\ul{i_\la}$, then $w(R^\la_i)=R_i^\la$ for all $i=0,1,\ldots n$. It $\la_0\neq 0$, so $w\in\langle s_0,\fS_\la\rangle$.\

The case for $\la\in\Lambda_\scC(n,r)$ is similar.
\end{proof}
 
We now consider the $\sZ$-submodules of $T(2n+\epsilon_*,r)$ defined in \eqref{Tnr}:
$$\aligned
T(2n+\epsilon_\scB,r)_\la&=\text{span}_\sZ\{v_{\ul i}\mid {\ul i}\in\mathbb I(2n+\epsilon_\scB,r),\wt({\ul i})=\la\}=\text{span}_\sZ\{v_{\ul i_\la d}\mid d\in\sD_\la\},\\
T(2n+\epsilon_\scC,r)_\la&=\text{span}_\sZ\{v_{\ul i}\mid {\ul i}\in\mathbb I(2n+\epsilon_\scC,r),\wt({\ul i})=\la\}=\text{span}_\sZ\{v_{\ul i_\la d}\mid d\in\sD_\la\}.\\
\endaligned$$
Recall from \eqref{H'} that $\sH'=\sH_{\up^{-1}}(B_r)$.
\begin{lem}\label{Sjnr}
 For $*\in\{\jmath,\imath\}$ and any $\la\in\Lambda_*(n,r)$, $T(2n+\epsilon_*,r)_\la$ is an $\sH'$-submodule of $T(2n+\epsilon_*,r)$ and, as right $\sH'$-modules,
$$T(2n+\epsilon_*,r)_\la\cong x_\la\sH',\text{ where }x_\la=\sum_{w\in W_\la}T_w.$$
Thus, we have an algebra isomorphism%for $n+1\geq r$,
$$U^*_{n,r}\cong\End_{\sH'}\Big(\bigoplus_{\la\in\Lambda_*(n,r)}x_\la\sH'\Big)=\sS^*_{t^{-1},\sZ}(n,r).$$ 
%is Morita equivalent to the Hecke endomorphism algebra $A$ in equal parameters defined in (5.0.33), 
%while $\sS^\iC$ is Morita equivalent to the centralizer subalgebra $A^{\textsc{y}}=\End_{\sH}(\oplus_{J\subseteq S\cap\fS_r}x_J\sH)$ of $A$.
\end{lem}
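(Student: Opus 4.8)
The plan is to verify that each $\sZ$-submodule $T(2n+\epsilon_*,r)_\la$ is $\sH'$-stable and that, as a right $\sH'$-module, it is isomorphic to $x_\la\sH'$; the final algebra isomorphism then follows formally. For the $\sH'$-stability, I would fix $\la\in\Lambda_*(n,r)$ and observe that, by Lemma \ref{parab}, the set $\{v_{\ul i}\mid \wt(\ul i)=\la\}$ is exactly $\{v_{\ul{i_\la}d}\mid d\in\sD_\la\}$, where $\sD_\la$ is the set of shortest double-coset representatives of $W_{0,\la}$ in $W_0$. The actions in \eqref{actionA}, \eqref{actionB}, \eqref{actionC} only permute the indices $\ul i$ according to place permutation by the generators $s_0,s_1,\dots,s_{r-1}$ (possibly with scalar corrections landing back in the same weight space), and place permutation preserves $\wt(\ul i)$ because the multiset of absolute values $\{|i_j|\}$ (resp.\ $\{|i_j|+\tfrac12\}$) is unchanged. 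Hence $T(2n+\epsilon_*,r)_\la$ is a right $\sH'$-submodule, and $T(2n+\epsilon_*,r)=\bigoplus_{\la\in\Lambda_*(n,r)}T(2n+\epsilon_*,r)_\la$ as $\sH'$-modules.

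Next I would construct the isomorphism $T(2n+\epsilon_*,r)_\la\cong x_\la\sH'$. The right-module $x_\la\sH'$ is the usual (left-)parabolic-coset $q$-permutation module for the parabolic subgroup $W_\la=W_{0,\la}$ (with constant weight function $L\equiv 1$, so there is no subtlety about unequal parameters); it has $\sZ$-basis $\{x_\la T_d\mid d\in\sD_\la\}$, since $\sD_\la$ is a complete set of distinguished right-coset representatives of $W_{0,\la}$. Define the $\sZ$-linear map $\theta_\la\colon x_\la\sH'\to T(2n+\epsilon_*,r)_\la$ by $\theta_\la(x_\la T_d):=v_{\ul{i_\la}d}$ for $d\in\sD_\la$; this is clearly a $\sZ$-module isomorphism on the indicated bases. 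To check it is an $\sH'$-homomorphism it suffices to check compatibility with right multiplication by the generators $T_{s_j}$, $0\le j\le r-1$. Here the point is that, by Lemma \ref{parab}, $W_{0,\la}$ is the stabilizer of $\ul{i_\la}$, so the standard length-additivity bookkeeping for $x_\la T_d T_{s_j}$ (either $d s_j\in\sD_\la$ with $\ell(ds_j)=\ell(d)+1$, or $ds_j\notin\sD_\la$, i.e.\ $ds_j=w d'$ with $w\in W_{0,\la}$ a reflection) matches, case by case, exactly the three cases in \eqref{actionA}/\eqref{actionB}/\eqref{actionC} describing $v_{\ul{i_\la}d}T_{s_j}$. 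Indeed, $\ul{i_\la}d$ and $\ul{i_\la}ds_j$ differ by transposing the $j$-th and $(j+1)$-th entries; the three cases ``$i_j<i_{j+1}$'', ``$i_j=i_{j+1}$'', ``$i_j>i_{j+1}$'' (and the $j=0$ analogues involving the sign of $i_1$) correspond precisely to whether the transposition lengthens, stabilizes (i.e.\ lies in $W_{0,\la}$), or shortens $d$ relative to the coset. This is the only computation required, and while elementary it is the step where the matching of the quadratic-relation coefficients $\up^{-2}$, $\up^{-2}-1$ with the Hecke relations \eqref{relations} (under $t\mapsto \up^{-1}$, i.e.\ $t^2=\up^{-2}=q^{-1}$) must be checked carefully; I expect this case analysis to be the main obstacle, though it is routine and has appeared in the literature (e.g.\ \cite{LL21, LNX20, LW22}).

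Finally, taking the direct sum over $\la\in\Lambda_*(n,r)$ gives an isomorphism of right $\sH'$-modules
$$T(2n+\epsilon_*,r)\;\cong\;\bigoplus_{\la\in\Lambda_*(n,r)}x_\la\sH',$$
and applying $\End_{\sH'}(-)$ yields
$$U^*_{n,r}=\End_{\sH'}\big(T(2n+\epsilon_*,r)\big)\;\cong\;\End_{\sH'}\Big(\bigoplus_{\la\in\Lambda_*(n,r)}x_\la\sH'\Big)=\sS^*_{t^{-1},\sZ}(n,r),$$
which is the asserted isomorphism (the last equality being the definition \eqref{Snr} of $\sS^*_{t^{-1},\sZ}(n,r)$, with parameter $t^{-1}$ recorded in \eqref{H'}). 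This completes the proof.
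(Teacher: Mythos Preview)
Your proposal is correct, and the map you construct coincides with the paper's, but the paper verifies it is an $\sH'$-homomorphism by a different (and shorter) argument. Rather than checking compatibility with each $T_{s_j}$ via the three-case analysis you outline, the paper observes directly from the ``equal'' case of \eqref{actionA}--\eqref{actionC} that $\sZ v_{\ul{i_\la}}$ is an $\sH'_\la$-submodule isomorphic to the rank-one module $\sZ x_\la$ (here $\sH'_\la$ is the parabolic subalgebra for $W_{0,\la}$). Frobenius reciprocity then gives the $\sH'$-map $x_\la\sH'\to T(2n+\epsilon_*,r)_\la$ sending $x_\la\mapsto v_{\ul{i_\la}}$ for free, with no generator-by-generator verification required; it is then an isomorphism since $x_\la T_d\mapsto v_{\ul{i_\la}}T_d=v_{\ul{i_\la}d}$ hits the standard basis. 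Your explicit approach has the virtue of being self-contained and making the basis correspondence transparent, while the paper's approach trades that for avoiding the case analysis you correctly flag as the main obstacle. One small slip: $\sD_\la$ is the set of shortest \emph{right-coset} representatives, not double-coset representatives.
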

\begin{proof}By definition, $\mathcal Zv_{\ul{i_\la}}$ is an $\sH'_\la$-module isomorphic to $\mathcal Z x_\la$, where $\sH'_\lambda$ is the subalgebra of $\sH'$ associated with the parabolic subgroup $W_{0,\lambda}$.
By the Frobenius reciprocity, we have an isomorphism
$$\text{Hom}_{\sH'}(x_\la\sH',T(2n+\epsilon_*,r)_\la)\cong \text{Hom}_{\sH'_\la}(\mathcal Zx_\la,T(2n+\epsilon_*,r)_\la).$$
Thus, the $\sH'_\la$-module homomorphism $\mathcal Zx_\la\to T(2n+\epsilon_*,r), x_\la\mapsto v_{\ul{i_\la}}$ induces an $\sH'$-module isomorphism from $x_\la\sH'$ to $T(2n+\epsilon_*,r)_\la$.
\end{proof}

\begin{rems} For the $*=\imath$ case, this isomorphism is also given in \cite[Prop. 4.1.4]{Grn97}. As noted in Remark \ref{Adag},
Morita equivalent versions of  $\sS^\scB(n,r)$ and $\sS^\scC(n,r)$  are considered in \cite{DS00a}.
% with equal parameters.
\end{rems}

\medskip
%\noindent
{\bf 5.4D. $i$-quantum groups and finite orthogonal/symplectic groups.} 
In order to link $i$-quantum groups with the finite orthogonal/symplectic groups discussed in Subsection 5.4A, we now recover the standard labeling used in Definition \ref{Uglmn}. The $i$-quantum groups $\bfU^*_n$ introduced in Definition \ref{FirstPre} are subalgebras of $\bfU(\mathfrak{sl}_N)$. We present them in generators and relations as a counterpart of 
$\bfU(\mathfrak{gl}_N)$.
%This requires an index shift from the labelling in  \eqref{Bbasis} and \eqref{Cbasis} and a graph automorphism for $\bfU(\mathfrak{gl}_N)$.
%Also, like the relationship between $\bfU(\mathfrak{sl}_N)$ and $\bfU(\mathfrak{gl}_N)$, we may introduce new generators $\mathbf d_j^{\pm1}$ such that $\mathbf k_i=\mathbf d_i\mathbf d_{i+1}^{-1}$ to define new $i$-quantum groups $\bfU^\jmath(n)$ and  $\bfU^\imath(n)$ generated, respectively, by $\mathbf e_i,\mathbf f_i,\mathbf d_j$ and $\mathbf e_h,\mathbf f_h, \mathbf d_k,\mathbf t$, where $i\in[1,n],j\in[1,n+1]$ and $h\in[1,n),k\in[1,n]$. 
The following definition is taken from \cite{BKLW}.

\begin{defn}\label{iQG} (1)
The algebra ${\bfU}^{\jmath}(n)$ is defined to be the associative algebra over $\mathbb Q(\up)$ generated by $e_i$, $f_i$, $d_a$, $d_{a}^{-1}$, $i\in[1,n]$, $a\in[1,n+1]$ subject to the relations (QG1)--(QG5) of Definition \ref{Uglmn} with $e_i$, $f_i$, $d_a$, $d_{a}^{-1}$ replacing $E_i$, $F_i$, $K_a$, $K_{a}^{-1}$, where, for (QG1) and (QG4), $N=n+1$ and, for (QG2), (QG3), and (QG5), $N=n$, together with the following additional relations:
\begin{itemize}
%\item[(iQG1)] $d_ad_a^{-1}=d_a^{-1}d_a=1, d_ad_b=d_bd_a$;
\item[(iQG1)] %$d_ae_jd_a^{-1}=v^{\delta_{a,j}-\delta_{a,j+1}}e_j,$
                     %$d_af_jd_a^{-1}=v^{-\delta_{a,j}+\delta_{a,j+1}}f_j$, if $a\leq n$;
%\item[(iQg2)]                     
$d_{n+1}e_jd_{n+1}^{-1}=v^{-2\delta_{n,j}}e_j,$ 
                     $d_{n+1}f_jd_{n+1}^{-1}=v^{2\delta_{n,j}}f_j$, $j\in[1,n]$;
%\item[(iQG3)] $e_if_j-f_je_i=\delta_{i,j}\frac{d_id_{i+1}^{-1}-d_i^{-1}d_{i+1}}{v-v^{-1}}$, if $i,j\neq n$;
%\item[(iQG4)] $e_ie_j=e_je_i, f_if_j=f_jf_i,$ if $|i-j|>1$;
%\item[(iQG5)] $ e_i^2e_j+e_je_i^2=[2]e_ie_je_i,$ 
% $ f_i^2f_j+f_jf_i^2=[2]f_if_jf_i$, if $|i-j|=1$;
\item[(iQG2)] $f_n^{2}e_n+e_nf_{n}^2=[2]\big(f_ne_nf_n-(vd_nd^{-1}_{n+1}+v^{-1}d_n^{-1}d_{n+1})f_n\big)$; and
\item[(iQG3)] $e_n^{2}f_n+f_ne_{n}^2=[2]\big(e_nf_ne_n-e_n(vd_nd^{-1}_{n+1}+v^{-1}d_n^{-1}d_{n+1})\big).$
\end{itemize}

(2) The algebra ${\bfU}^{\imath}(n)$ is defined to be the associative algebra over $\mathbb Q(\up)$ generated by $e_i$, $f_i$, $d_a$, $d_{a}^{-1}$, $\fkt$, $i\in[1,n)$, $a\in[1,n]$ subject to the relations (QG1)--(QG5) with $e_i$, $f_i$, $d_a$, $d_{a}^{-1}$ replacing $E_i$, $F_i$, $K_a$, $K_{a}^{-1}$ and $N=n$ 
%for $i,j\in[1,n)$, $a,b\in[1,n]$,
 together with the following relations involving $\fkt$:
\begin{itemize}
\item[(iQG1$'$)] $\fkt d_a=d_a\fkt$, $e_i \fkt=\fkt e_i$, $f_j \fkt=\fkt f_j$ for $i\neq n-1$;

\item[(iQG2$'$)] $\fkt^2 e_{n-1}+e_{n-1}\fkt^2=[2]\fkt e_{n-1} \fkt+e_{n-1}$, $e^2_{n-1}\fkt+\fkt e^2_{n-1}=[2]e_{n-1}\fkt e_{n-1}$; and
\item[(iQG3$'$)] $\fkt^2 f_{n-1}+f_{n-1}\fkt^2=[2]\fkt f_{n-1}\fkt+f_{n-1}$, $f^2_{n-1} \fkt+\fkt f^2_{n-1}=[2]f_{n-1}\fkt f_{n-1}$.

\end{itemize}

\end{defn}\index{$i$-quantum group! $\sim$ $\bfU^\jmath(n)$, $\bfU^\imath(n)$}

Like the embedding $\bfU(\mathfrak{sl}_N)$ into $\bfU(\mathfrak{gl}_N)$, we have the various algebra embeddings.

\begin{prop} \label{i-embed}
The following are $\mathbb Q(\up)$-algebra monomorphisms:
$$\eta^\jmath: \bfU^\jmath_n \lra\bfU^\jmath(n),\;\mathbf e_{n-i+\frac12}\lmt e_i,\mathbf f_{n-i+\frac12}\lmt f_i,\mathbf k_{n-i+\frac12}\lmt d_id_{i+1}^{-1}, \mathbf k_{\frac12}\lmt\up^{-1}d_nd_{n+1}^{-1},$$
for all $i\in[1,n]$ (see \cite[Remark 4.3]{BKLW}), and 
$$\eta^\imath: \bfU^\imath_n \lra\bfU^\imath(n),\;\mathbf e_{n-h}\lmt e_h,\mathbf f_{n-h}\lmt f_h,\mathbf k_{n-h}\lmt d_hd_{h+1}^{-1}, \mathbf t\lmt\fkt,$$
for all $h\in[1,n)$.
\end{prop}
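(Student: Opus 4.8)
\textbf{Proof plan for Proposition \ref{i-embed}.}

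The plan is to verify directly that each of the two proposed maps $\eta^\jmath$ and $\eta^\imath$ sends generators to elements satisfying the defining relations of the target algebra, and then to establish injectivity by a dimension/comparison argument. For the relation-checking step, I would work one relation at a time. For $\eta^\jmath$: the images $e_i, f_i, d_i d_{i+1}^{-1}$ of the ``interior'' generators $\mathbf e_{n-i+\frac12}, \mathbf f_{n-i+\frac12}, \mathbf k_{n-i+\frac12}$ ($i\in[1,n]$) automatically satisfy the type $A$ relations (QG1)--(QG5) inside $\bfU^\jmath(n)$, since these are built from the $\mathfrak{sl}$-part; this matches the corresponding relations among the $\mathbf e_\bullet, \mathbf f_\bullet, \mathbf k_\bullet$ coming from Definition \ref{FirstPre} and the quantum symmetric pair structure. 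The genuinely new computation is that $\mathbf k_{\frac12}\mapsto \up^{-1} d_n d_{n+1}^{-1}$ and $\mathbf e_{\frac12},\mathbf f_{\frac12}$ (which map, via the $i=n$ case of the formulas together with the $j=0$ definition in Definition \ref{FirstPre}(1), into combinations involving $e_n,f_n$ and $d_n,d_{n+1}$) satisfy (iQG1)--(iQG3). Concretely, one expands $\mathbf e_{\frac12}=E_0+\wK_0^{-1}F_{-0}$-type expressions inside $\bfU_{2n+1}$ and pushes them through, but it is cleaner to argue at the level of $\bfU^\jmath(n)$: one checks (iQG1), the commutation of $d_{n+1}$ with $e_j,f_j$ ($j<n$) and its eigenvalue action on $e_n,f_n$, directly from (QG2); and (iQG2)--(iQG3), the deformed Serre relations at node $n$, by a Hecke-type identity using $[2]=\up+\up^{-1}$. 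I expect these two Serre-type identities to be the main obstacle, since they are the only relations that are not simply inherited from the ambient quantum $\mathfrak{gl}$ and require a genuine computation balancing the $v d_n d_{n+1}^{-1}+v^{-1}d_n^{-1}d_{n+1}$ term against the $F_0,E_0$ contributions; I would handle them by reducing to the rank-one (or rank-two) case, where the computation is a finite explicit check, and citing the analogous verification in \cite{BKLW} (whose Remark 4.3 is the source of the $\eta^\jmath$ formula) or \cite{BW18}.

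For $\eta^\imath$ the argument is parallel but slightly simpler: the images $e_h,f_h,d_h d_{h+1}^{-1}$ of $\mathbf e_{n-h},\mathbf f_{n-h},\mathbf k_{n-h}$ ($h\in[1,n)$) satisfy (QG1)--(QG5) by the type $A$ embedding, and the only new generator is $\mathbf t\mapsto\fkt$, whose relations (iQG1$'$)--(iQG3$'$) must be matched against the relations satisfied by $\mathbf t=E_0+\up F_0\wK_0^{-1}+\wK_0^{-1}$ in $\bfU^\imath_n\subseteq\bfU_{2n}$. Here (iQG1$'$) is the statement that $\fkt$ commutes with $d_a$ and with all $e_i,f_j$ for $i,j\neq n-1$, which translates to the corresponding commutations of $\mathbf t$ with the $\mathbf k_\bullet$ and far-away $\mathbf e_\bullet,\mathbf f_\bullet$; these follow from (QG2),(QG4) in $\bfU_{2n}$. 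The relations (iQG2$'$),(iQG3$'$) are the $\mathbf t$-versions of the deformed Serre relations at the branch node $n-1$, and again I would verify them by reduction to the type $B_2$ (rank two) subalgebra and reference to \cite{BW18} or \cite{BKLW}. In both cases, once all defining relations of the target are shown to hold on the images of the generators, the universal property of $\bfU^\jmath(n)$ (resp. $\bfU^\imath(n)$) yields a well-defined algebra homomorphism $\eta^\jmath$ (resp. $\eta^\imath$).

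Finally, for injectivity I would argue as follows. Both source and target are $\mathbb Q(\up)$-algebras that act on the tensor space $V_{2n+\epsilon_*}^{\otimes r}$: by Theorem \ref{BW} the $i$-quantum group $\bfU^*_n$ surjects onto $\bfU^*_{n,r}=\End_{\bsH'}(V_{2n+\epsilon_*}^{\otimes r})$, and by Lemma \ref{Sjnr} the latter is (the base change of) $\sS^*_{t^{-1},\sZ}(n,r)$, while $\bfU^*(n)$ is known to act on the same tensor space compatibly with its generators-and-relations presentation (this is exactly the setup of \cite{BKLW}), with $\eta^*$ intertwining the two actions. One then takes $r$ large: the action of $\bfU^*_n$ on $\varprojlim$ (or on a sufficiently large direct sum) of the tensor spaces is faithful, so if $x\in\bfU^*_n$ maps to $0$ under $\eta^*$ then it acts as $0$ on every $V_{2n+\epsilon_*}^{\otimes r}$, forcing $x=0$. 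Alternatively, and perhaps more transparently, one can cite that $\eta^*$ is precisely the identification used in \cite{BKLW} to pass between the coideal and the generators-and-relations descriptions, and that the PBW-type bases on the two sides have matching cardinalities, so the surjection-onto-image $\eta^*$ must be an isomorphism onto its image. The main obstacle remains the Serre-relation verifications of the previous paragraphs; the injectivity step is routine given Theorem \ref{BW} and Lemma \ref{Sjnr}.
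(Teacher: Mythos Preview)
The paper gives no proof of this proposition; it simply records the result with a citation to \cite[Remark 4.3]{BKLW} for $\eta^\jmath$, treating both embeddings as established in the quantum-symmetric-pair literature. So there is no paper's argument to compare against.

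Your plan has a directional confusion in the well-definedness step. The source $\bfU^\jmath_n$ in Definition~\ref{FirstPre} is a \emph{subalgebra} of $\bfU_{2n+1}$, not an algebra given by a presentation; to define a homomorphism out of it on generators you must first invoke a presentation of $\bfU^\jmath_n$ (available from Letzter, Kolb, or \cite{BW18}) and then verify that those \emph{source} relations hold among the proposed images $e_i, f_i, d_id_{i+1}^{-1}$ in $\bfU^\jmath(n)$. You instead propose to check (iQG1)--(iQG3), but these are defining relations of the \emph{target} $\bfU^\jmath(n)$ and therefore hold there automatically; there is nothing to verify. Moreover (iQG1) involves $d_{n+1}$ by itself, which is not even in the image of $\eta^\jmath$, so it cannot be a relation among the images. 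The genuine content is that the deformed Serre relation of $\bfU^\jmath_n$ at the fixed node (a relation among $\mathbf e_{1/2}, \mathbf f_{1/2}, \mathbf k_{1/2}$) becomes, under $\mathbf e_{1/2}\mapsto e_n$ etc., a \emph{consequence} of (iQG2)--(iQG3) in $\bfU^\jmath(n)$; that is the computation one actually has to do, and it is carried out in \cite{BKLW}. The same remark applies verbatim to $\eta^\imath$ and (iQG1$'$)--(iQG3$'$). Your injectivity argument via faithfulness on tensor space is reasonable in outline, but it too rests on compatibilities (the $\bfU^*(n)$-action on $V_{2n+\epsilon_*}^{\otimes r}$ intertwining with $\eta^*$, and faithfulness of the $\bfU^*_n$-action on the union of tensor powers) that are themselves results from \cite{BKLW} and \cite{BW18}; so in the end you are re-deriving the cited result rather than giving an independent proof.
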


Theorem \ref{BW} continues to hold for these enlarged algebras.\footnote{After an index shift similar to the one in Proposition \ref{i-embed} and an adjustment on the coalgebra structure on $\bfU(\mathfrak{gl}_N)$,  the parameters for $\bfU^*(n)$ and $\sS^*(n,r)_{\mathbb Q(\up)}$ can be made the same $\up$.}
Let $\bsS^\scB(n,r)=\sS^\scB(n,r)_{\mathbb Q(t)}$ and $\bsS^\scC(n,r)=\sS^\scC(n,r)_{\mathbb Q(t)}$.
 Thus, we have the following corollary.

\begin{cor}\label{PR}
The category $\bsS^\scB(n,r)$-{\rm mod} of finite-dimensional $\bsS^\scB(n,r)$-modules is a full subcategory of $\bfU^\jmath(n)$-{mod}; while
the category $\bsS^\scC(n,r)$-{mod} of finite-dimensional $\bsS^\scC(n,r)$-modules is a full subcategory of $\bfU^\imath(n)$-{\rm mod}.
\end{cor}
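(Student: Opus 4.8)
The plan is to deduce Corollary \ref{PR} directly from the Schur--Weyl duality in Theorem \ref{BW}, the algebra embeddings $\eta^\jmath, \eta^\imath$ of Proposition \ref{i-embed}, and the standard fact that a surjection of algebras induces a fully faithful inclusion on module categories. First I would record the key input: by Theorem \ref{BW}(3) there is an algebra epimorphism $\phi_*\colon \bfU^*_n\twoheadrightarrow \bfU^*_{n,r}=\End_{\bsH'}(V_{2n+\epsilon_*}^{\otimes r})$ for $*\in\{\jmath,\imath\}$. By Lemma \ref{Sjnr} (base changed to $\mathbb Q(t)$, i.e.\ $K=\mathbb Q(t)$), we have $\bfU^*_{n,r}\cong \bsS^*_{t^{-1},\sZ}(n,r)_{\mathbb Q(t)}$, which after the parameter/coalgebra adjustment noted in the footnote to Proposition \ref{i-embed} is identified with $\bsS^\scB(n,r)$ when $*=\jmath$ and $\bsS^\scC(n,r)$ when $*=\imath$. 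Composing $\phi_*$ with the embedding $\eta^*$ from Proposition \ref{i-embed} is not what we want; rather, since Theorem \ref{BW} ``continues to hold for these enlarged algebras'' (the remark following Proposition \ref{i-embed}), there is already an epimorphism $\bfU^\jmath(n)\twoheadrightarrow \bsS^\scB(n,r)$ and $\bfU^\imath(n)\twoheadrightarrow \bsS^\scC(n,r)$ of $\mathbb Q(\up)$-algebras, with the parameters matched up as stated in that footnote.

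Given such an algebra epimorphism $\pi\colon B\twoheadrightarrow \bar B$, pullback along $\pi$ sends a $\bar B$-module $M$ to a $B$-module $M^\pi$ on which $\ker\pi$ acts as zero; conversely any $B$-module annihilated by $\ker\pi$ factors through $\bar B$. Thus the restriction (inflation) functor identifies $\bar B$-mod with the full subcategory of $B$-mod consisting of modules killed by $\ker\pi$, and in particular it is fully faithful: for $\bar B$-modules $M,N$ one has $\Hom_{\bar B}(M,N)=\Hom_B(M^\pi,N^\pi)$ because any $B$-linear map automatically respects the $\bar B$-action when $\ker\pi$ kills both modules. This is the second step I would spell out, and it applies verbatim with $B=\bfU^\jmath(n)$, $\bar B=\bsS^\scB(n,r)$, and with $B=\bfU^\imath(n)$, $\bar B=\bsS^\scC(n,r)$, both over $\mathbb Q(\up)$. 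Restricting attention to finite-dimensional modules on both sides (which the inflation functor clearly preserves and reflects) gives exactly the asserted full embeddings $\bsS^\scB(n,r)\text{-mod}\hookrightarrow \bfU^\jmath(n)\text{-mod}$ and $\bsS^\scC(n,r)\text{-mod}\hookrightarrow \bfU^\imath(n)\text{-mod}$.

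The only genuinely delicate point---and the step I expect to be the main obstacle---is bookkeeping with parameters and with the index conventions: Theorem \ref{BW} and Lemma \ref{Sjnr} are phrased with Hecke parameter $t^{-1}$ (see \eqref{H'} and \eqref{actionA}), the $i$-quantum Schur algebras $\sS^*(n,r)$ of Subsection 5.4A are built from $\sH(W_r)$ with parameter $t$, and Proposition \ref{i-embed} involves an index shift $i\mapsto n-i+\tfrac12$ (or $h\mapsto n-h$) together with an adjustment of the coalgebra structure on $\bfU(\mathfrak{gl}_N)$. I would therefore devote a short paragraph to checking that, under the relabeling of Coxeter generators $s_i\leftrightarrow s_{r-i}$ introduced around \eqref{Br} and the substitution $\up\mapsto t^{-1}$ (equivalently $t\mapsto \up^{-1}$), the algebra $\bsS^\scB(n,r)=\sS^\scB(n,r)_{\mathbb Q(t)}$ is literally isomorphic to $\End_{\bsH'}(V_{2n+1}^{\otimes r})=\bfU^\jmath_{n,r}$, and similarly for $\scC$ versus $\imath$; this is precisely the content promised in Remark (1) after Theorem \ref{BW} together with Lemma \ref{Sjnr}, so it amounts to assembling those statements carefully rather than proving anything new. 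Once that identification is in hand, the corollary follows formally from the inflation-functor argument above. No further input is needed; in particular, none of the quasi-hereditary or standardly based structure from earlier sections is used here---this is purely a double-centralizer consequence.
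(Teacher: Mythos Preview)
Your proposal is correct and follows exactly the approach the paper intends: the corollary is stated without proof, immediately after the remark that ``Theorem \ref{BW} continues to hold for these enlarged algebras,'' so the content is precisely the epimorphism $\bfU^*(n)\twoheadrightarrow \bsS^*(n,r)$ together with the standard inflation-functor argument you spell out. Your careful discussion of the parameter and index bookkeeping (the $t\leftrightarrow t^{-1}$ and $s_i\leftrightarrow s_{r-i}$ matching via Lemma \ref{Sjnr} and the footnote to Proposition \ref{i-embed}) is exactly what the paper leaves implicit.
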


For certain classes of classical weight modules of $\bfU^*(n)$ over $\mathbb Q(\up)$ and their crystal bases, see \cite{W1, W2}.

% It would be natural to define these representations as {\it polynomial representations} of $\bfU^*(n)$. It would be also interesting to characterise polynomial representation intrinsically, say, in terms of their ``weights". 
 
 In order to link representations of finite orthogonal/symplectic groups in cross characteristics, we need to consider $i$-quantum hyperalgebras. This is done in \cite{DW22, DW23}.%For this purpose, we make the following definition.

Let
$$\left[\begin{matrix}d_i\\s\end{matrix}\right]=\left[\begin{matrix}d_i\\s\end{matrix}\right]_t=\prod_{i=1}^s\frac{d_i\up^{-i+1}-d_i^{-1}\up^{i-1)}}{\up^i-\up^{-i}}\;(s\geq1),\;\text{ and }\; \left[\begin{matrix}d_i\\0\end{matrix}\right]=1.$$

Let $U_{\sZ}^\jmath(n)$ be the $\sZ$-subalgebra of $\bfU^\jmath(n)$ generated by
$${d}_i,\left[\begin{matrix}{ d}_i\\s\end{matrix}\right], \;{d}_{n+1},\left[\begin{matrix}{d}_{n+1}\\s\end{matrix}\right]_{t^2},
\quad{e}_i^{(m)},{f}_i^{(m)}\;(1\leq i\leq n, s,m\in\mathbb N).$$
See \cite{DW22}. The definition of an integral form $U_{\sZ}^\imath(n)$ for $\bfU^\imath(n)$ is a bit tricky since the $\sZ$-subalgebra  generated by
$${d}_i,\left[\begin{matrix}{d}_i\\s\end{matrix}\right],\;\;{e}_h^{(m)},{f}_h^{(m)},\;\fkt^{(m)}\;\;(1\leq i\leq n,1\leq h\leq n-1, s,m\in\mathbb N)
$$ does not map onto $\sS^\scC(n,r)$. In fact, replacing all $\fkt^{(m)}$ by  the image of ${}^{(m)}g:=(mE_{n,n+1}^\theta)(\mathbf0)$ via a new realization of $\bfU^\imath(n)$ can fix the problem. See
\cite{DW23} for more details.
 
For any field $k$ and a specialisation $\sZ\to k$, the base-changed algebras $U_{\sZ}^\jmath(n)_{k}$ and $U_{\sZ}^\imath(n)_{k}$ are called {\it $i$-quantum hyperalgebras}.

 %for quantum linear groups are defined in term of their weights. For irreducible ones, we may simply describe them in terms their highest weights. For $i$-quantum groups, however, little is known about their representation theory.  Questions like the following need to be answered:
% \begin{itemize}
%\item[(1)] Does an $i$-quantum group have a triangular decomposition?
%\item[(2)] Does the representation category of an $i$-quantum group have a connection to a highest weight category? 
%\end{itemize}

 By Theorem \ref{BW}, Lemma \ref{Sjnr}, and results from \cite{DW22}, and \cite{DW23}, we have the following.
 
 \begin{thm}\label{integral}
(1)  There is an algebra epimorphism $\eta_r^\jmath:U_{\sZ}^\jmath(n)\to\sS^\scB(n,r)$, for all $r\geq0$. Thus, for any base change from $\sZ$ to any field $k$, it induces a surjective map. Hence, the category $\sS^\scB(n,r)_k$-{\rm mod} of finite-dimensional $\sS^\scB(n,r)_k$-modules is a full subcategory of $U_{\sZ}^{\jmath}(n)_k$-{\rm mod}.

(2) %There is an algebra epimorphism $\eta_r^\imath:U_{\sZ}^\imath(n)\to\sS^\scC(n,r)$ for all $r\geq0$. Thus, for any base change from $\sZ$ to any field $k$, it induces a surjective map. Hence, 
Similarly, with $\imath$ replacing $\jmath$, we obtain that
the category $\sS^\scC(n,r)_k$-{\rm mod} of finite-dimensional $\sS^\scC(n,r)_k$-modules is a full subcategory of $U_{\sZ}^{\imath}(n)_k$-{\rm mod}.

\end{thm}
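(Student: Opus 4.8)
The plan is to combine three ingredients already in hand: the Schur--Weyl duality of Bao--Wang (Theorem \ref{BW}), the identification of the $i$-quantum Schur algebras with the type $B/C$ Hecke endo-algebras (Lemma \ref{Sjnr}), and the integral surjectivity results of \cite{DW22} and \cite{DW23} which produce the maps $\eta_r^\jmath$ and $\eta_r^\imath$ at the level of hyperalgebras. First I would treat statement (1). Over the fraction field $\mathbb{Q}(t)$, Theorem \ref{BW}(3) together with the monomorphism $\eta^\jmath:\bfU^\jmath_n\hookrightarrow\bfU^\jmath(n)$ of Proposition \ref{i-embed} gives a surjection $\bfU^\jmath(n)\twoheadrightarrow\bfU^\jmath_{n,r}=\End_{\boldsymbol\sH'}(V_{2n+1}^{\otimes r})$, after the index shift and coalgebra adjustment indicated in the footnote to Proposition \ref{i-embed}. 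By Lemma \ref{Sjnr}, $\bfU^\jmath_{n,r}\cong\bsS^\scB(n,r)=\sS^\scB(n,r)_{\mathbb Q(t)}$. To descend to $\sZ$, I would invoke the integral form $U_{\sZ}^\jmath(n)$ of \cite{DW22}: by construction the divided powers ${e}_i^{(m)},{f}_i^{(m)}$ and the binomials in the $d_i$ act on the lattice $T(2n+\epsilon_\scB,r)$ (this is the point of the integral action formulas \eqref{actionA}, \eqref{actionB}), so the $\mathbb Q(t)$-surjection restricts to an algebra map $\eta_r^\jmath:U_{\sZ}^\jmath(n)\to\End_{\sH'}(T(2n+1,r))=U^\jmath_{n,r}\cong\sS^\scB(n,r)$. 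The surjectivity of $\eta_r^\jmath$ over $\sZ$ is precisely the main theorem of \cite{DW22}; I would quote it.

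Next I would derive the base-change consequences. For any specialization $\sZ\to k$ with $k$ a field, applying $k\otimes_\sZ(-)$ to the surjection $\eta_r^\jmath$ yields a surjection $U_{\sZ}^\jmath(n)_k\twoheadrightarrow\sS^\scB(n,r)_k$, since $k\otimes_\sZ(-)$ is right exact. A surjection of finite-dimensional $k$-algebras identifies the module category of the quotient with a full subcategory of that of the larger algebra (modules on which the kernel acts trivially); hence $\sS^\scB(n,r)_k$-mod embeds as a full subcategory of $U_{\sZ}^\jmath(n)_k$-mod. This proves (1). Statement (2) is obtained by the same argument with $\imath$ replacing $\jmath$ throughout: Theorem \ref{BW}(3) supplies $\phi_\imath:\bfU^\imath_n\twoheadrightarrow\bfU^\imath_{n,r}$, Proposition \ref{i-embed} and Lemma \ref{Sjnr} identify $\bfU^\imath_{n,r}\cong\bsS^\scC(n,r)$, and the integral surjectivity $\eta_r^\imath:U_{\sZ}^\imath(n)\to\sS^\scC(n,r)$ is the main result of \cite{DW23} (using the new realization replacing $\fkt^{(m)}$ by the image of ${}^{(m)}g$, as recalled in the text). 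The base change step is verbatim the same.

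The step I expect to require the most care is the integral descent in the $\imath$-case: as the text explicitly warns, the naive $\sZ$-subalgebra of $\bfU^\imath(n)$ generated by the obvious divided powers and $\fkt^{(m)}$ does \emph{not} surject onto $\sS^\scC(n,r)$, so one genuinely needs the corrected generators ${}^{(m)}g$ from the alternative realization of $\cite{DW23}$. I would not reprove this; the proposal is simply to cite \cite{DW23} for the existence and surjectivity of $\eta_r^\imath:U_\sZ^\imath(n)\to\sS^\scC(n,r)$ and then run the formal base-change and full-subcategory argument, which is routine. A minor bookkeeping point worth checking along the way is that the parameter conventions match: the Hecke algebra $\sH'=\sH_{\up^{-1}}(W_0)$ uses parameter $t^{-1}$ (see \eqref{H'} and the actions \eqref{actionA}--\eqref{actionC}), whereas $\sS^*(n,r)=\sS^*_{t^{-1},\sZ}(n,r)$ in Lemma \ref{Sjnr} is recorded with the matching specialization, so the identifications are consistent and no extra twist is needed.
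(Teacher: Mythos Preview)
Your proposal is correct and follows essentially the same approach as the paper, which simply records ``By Theorem \ref{BW}, Lemma \ref{Sjnr}, and results from \cite{DW22}, and \cite{DW23}, we have the following'' before stating the theorem; your write-up is in fact more detailed than the paper's own treatment. One small slip: you describe the base-change step as a ``surjection of finite-dimensional $k$-algebras,'' but $U_{\sZ}^\jmath(n)_k$ is an infinite-dimensional hyperalgebra---the full-subcategory argument still goes through verbatim for any algebra surjection, so this does not affect the validity.
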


 Since the $q$-Schur algebras of type $B$ or $C$ are not necessarily quasi-hereditary, it is unlikely that the representation categories of these $i$-quantum hyperalgebras are  highest weight categories.
However, since the decomposition matrices of $\sS^*(n,r)$, for $*\in\{\scB,\scC\}$, are unitriangular by Theorem \ref{unitri}, it 
seems to indicate that the representation category of $U_{\sZ}^{\jmath}(n)_k$ or $U_{\sZ}^{\imath}(n)_k$ has a sort of connection to a highest weight category. 

\begin{rems}(1) Lai, Nakano and Xiang considered the representation theory of $\sS^*(n,r)_k$ over a {\it field} $k$. They used a coordinate algebra approach, giving a natural polynomial representation theory as discussed in \cite{Gr80}. In particular, under a certain invertibility condition, it has been shown in \cite[Cors.~5.2.1\&6.1.1)]{LNX20} that these algebras are cellular and quasi-hereditary.

(2) Theorem \ref{integral} provides a method for investigating the representation theory of $U_{\sZ}^*(n)_k$ ($*\in\{\jmath,\imath\}$) arising from those of $\sS^*(n,r)_k$. A representation of $U_{\sZ}^*(n)_k$ obtained by inflating a representation  of $\sS^*(n,r)_k$ is also called a {\it polynomial representation} of $U_{\sZ}^*(n)_k$. The theorem establishes a link between polynomial representations of $U_{\sZ}^*(n)_k$ and representations of finite orthogonal/symplectic groups. The first step toward understanding the {\it modular} representation theory of $U_{\sZ}^*(n)_k$ would be the development of the polynomial representation theory.
\end{rems}

%Finally, by Lemma \ref{extend Iwahori} and Theorem \ref{integral}, we now may link representations of finite orthogonal/symplectic groups with those of the $i$-quantum groups $\bfU^{\jmath}(n)_k$ and $\bfU^{\imath}(n)_k$.

%Let $(K,\mathcal O, k)$ be a DVR local system and assume $q^{-\frac12}\in \mathcal O$. Let $U^{\imath*}_K$ (resp., $U^{\imath*}_{\mathcal O}$, $U^{\imath*}_k$) be the algebras obtained by specialising $t$ to $q^{-\frac12}$. Then, for $R\in\{\mathcal O,k\}$, we have algebra epimorphisms
%$$U^{\iB}_R\longrightarrow \sS^\scB(n,r)_R\cong\End_{RG(r_\scB,q)}\bigg(\bigoplus_{\la\in\Lambda_\scB(n,r)}\text{Ind}_{P(r_\scB,q)_{\la}}^{G(r_\scB,q)}1_R\bigg).$$
%Hence, we may link  representations of $U^\iB_R$ and $U^\iC_R$ with those of finite orthogonal/symplectic groups. 
%Then there is a subjectve map from $\bfU^{\imath*}_K$ to $\sS^*(n+1,r)_K$ defined by $\sH(B_r)_K$ by specialising $t$ to $q^{\frac12}$.

\section{Open problems.} We formulate several open problems inspired by this monograph.  We first recall some notation.
%\medskip

Let $G$ be a finite group of Lie type with standard finite Coxeter system $(W,S,L)$.  We maintain the notation of Section 5.2 (and so of Section 4.1, too).  Moreover,
 let $(K,\sO,k)$ be a triple as in Section 5.0 with $\sO$ sufficiently large
 to define the decomposition matrices
 $D=D_\sO$ and $D^+=D^+_\sO$ of  $A_\sO$ and $A^+_\sO$, respectively.
Recall that $r$ is the characteristic of $k$, and we consider representations of $kG$ in the cases where  $r$ does not equal the defining characteristic $p$ of $\bG$.  We also assume in this section that $r$ is good.
Using Theorem \ref{12.1a}(c), we see that the rank of $D$ is the number of modular irreducible $A_k$-modules. 

\medskip\noindent{\bf Problem 1.} What is the rank of $D$ in terms of $r$ and $W$?
\vskip.3in

The rank of $D$ always satisfies 
$$
\rank(D)\leq\rank(D^+)=|\irr(\mathbb Q W)|,$$
by Theorem \ref{12.1a}(b),(d). 
When equality holds, it also follows that $D=D^+$. In particular, $D^+$ is a submatrix of the
decomposition matrix of $G$ in this case, at least when the center of $\bG$ is connected, by Theorem \ref{12.3}. Notice that, in any given case, the rank
can be determined as the number of nonzero bilinear forms in the standardly based algebra $A_k$. See Lemma \ref{La1}.

\medskip\noindent
{\bf Problem 2.} Is the decomposition matrix $D^+=D^+_\sO$ of $A^+=A^+_\sO$ a submatrix of the decomposition matrix of $\sO G$?

\medskip 
This is related to the realization problem of the modules $X_\omega$ ($\omega\in\Omega'$) constructed in Theorem \ref{relative inj}.
A positive answer to Problem 2 would determine many decomposition numbers of $G$ associated to complex irreducible characters in the permutation module $\ind^{G}
_{B(q)}{\mathbb C}.$ These characters are all ``unipotent," and much is known about the remaining unipotent characters from Lusztig's theory of ``families" and how they are parametrized.  See \cite[12.3, 12.4, 13.2, 13.8,
13.9]{C85}.

It is a consequence of Lusztig's theory that each irreducible complex character in the ``unipotent series" lies in the same ``series" as one of the irreducible complex characters in the paragraph above.  A great deal is known about characters in a series and characters of the Weyl group, see
\cite[p. 191]{Lus93} and \cite[\S\S12.3-12.4]{C85}. 

%The connection of Deligne-Lusztig theory with cell theory is partly motivated the early formulation of Conjecture 7.1 in this monograph. 

\medskip\noindent
{\bf Problem 3.}  Assume that the center of $G$ is connected and recall that $r=$ characteristic of $k$ is good. Is  $D$ a submatrix of a square unitriangular  decomposition matrix $D^{++}=D^{++}_\sO$ (for, say, an $\sO$-algebra $A^{++}_\sO$)  with columns indexed by the set of
irreducible unipotent characters? 

\medskip

We would hope that $D^{++}$ would be the decomposition matrix arising from a stratified  $\sZ$-algebra
$A^{++}$ in the spirit of this monograph, and that it is a submatrix of the decomposition matrix of $\sO G$.

 Related to this problem is the Geck-Hiss conjecture \cite[Conj. 4.5.2]{GJ11} (see also \cite[Conj. 3.4]{GH97}) that the decomposition
matrix associated to all the unipotent characters is square unitriangular (when  $r$ is good). So, an informal version of Problem 3 might be: Could the now known validity of our Conjecture \ref{conjecture} (Theorem \ref{4.4}) above lead to progress on the Geck-Hiss conjecture? Theorem \ref{12.3} may be viewed as a step in this direction.  
Moreover, if the Geck-Hiss conjecture could be proved in this way, then 
 it would be natural to investigate if Theorem \ref{4.4} might be used, in the general case (where bad primes are allowed), to formulate some kind of approximate analog (of that conjecture).

\vskip.5in

%\newpage

% \input{Chapters/Chapter6.tex}

\appendix

\numberwithin{equation}{chapter}
\numberwithin{thm}{chapter}

\chapter[Projective coverings for Steinberg modules]{Projective coverings for Steinberg modules\\
 by Veronica Shalotenko}%\vspace{-2ex}
%\begin{center} 
%\end{center}

\medskip%\noindent
{\bf A1. Preliminaries.} 
Let $p$ be a prime number and let $\bG$ be a connected reductive algebraic group over the algebraic closure $\bar{\mathbb{F}}_p$ of the finite field $\mathbb{F}_p$. If $F$ is an endomorphism of $G$ such that some power of $F$ is a (standard) Frobenius morphism, then the fixed-point subgroup $G={\bG}^F$ of $\bG$ under the action of $F$ is a finite group of Lie type. (Finite groups of Lie type are discussed in detail in [1].) We will assume that the center of $\bG$ is connected. (We will use this assumption when describing the Gelfand-Graev module for $G$ below.) The finite group of Lie type $G$ is equipped with a BN-pair which is split in the sense of [1], \S2.5, i.e. this BN-pair satisfies the additional axioms (a) $B=UT$, where $T=B \cap N$ and $U$ is the largest normal $p$-subgroup of $B$, and (b) $\cap_{n \in N}~ nBn^{-1}=T$. Let $(W,S)$ be the Coxeter system corresponding to the BN-pair structure on $G$, where $W=N/T$. There is a length function $l$ on $W$, where for any $w \in W$, $l(w)$ is the minimal length of an expression of $w$ in the generators $s \in S$. \\

\noindent
\ul{Harish-Chandra induction and restriction.}
We will briefly describe several key aspects of modular Harish-Chandra theory; for a detailed treatment, see [5], \S4.2. In our description of Harish-Chandra induction and restriction, we will assume that $k$ is an algebraically closed field of characteristic $r \geq 0$, $r \neq p$ (thus, $k$ may be a field of characteristic 0). \\

Let $P$ be a parabolic subgroup of $G$ and let $L$ be a Levi complement in $P$; in this case, $P=U_P \rtimes L$. Let $kL$-mod denote the category of (finite-dimensional) left $kL$-modules, and let $kG$-mod denote the category of (finite-dimensional) left $kG$-modules. There is a Harish-Chandra induction\index{Harish-Chandra induction} functor
\begin{equation*}
R_{L \subseteq P}^G: kL\text{-mod} \to kG\text{-mod},
\end{equation*}
defined by $R_{L \subseteq P}^G(X)=\Ind_P^G( \tilde{X})$ for all $X \in kL$-mod, where $\tilde{X}$ denotes the inflation of $X$ from $L$ to $P$ via the surjective homomorphism $P \to L$ with kernel $U_P$. \\

There is also a Harish-Chandra restriction\index{Harish-Chandra restriction} functor
\begin{equation*}
^*{R}^G_{L \subseteq P}: kG\text{-mod} \to kL\text{-mod}.
\end{equation*}
Given a $kG$-module $Y$, $^*{R}^G_{L \subseteq P} (Y)=Y^{U_P}$ (which has the structure of a $kL$-module since $U_P$ is a normal subgroup of $P$). \\

Howlett and Lehrer [6] and Dipper and Du [3] proved that the Harish-Chandra induction and restriction functors $R_{L \subseteq P}^G$ and $^*{R}^G_{L \subseteq P}$ are independent of the choice of parabolic subgroup $P$ containing $L$. Thus, we will omit the parabolic subgroup $P$ and write $R_L^G$ and $^*{R}^G_L$ for the Harish-Chandra induction and restriction functors. \\

We will use the following properties of Harish-Chandra induction and restriction in the proof of Theorem \ref{kerfactors}. \\

\medskip\noindent
\ul{Adjointness.} 
 For
 any Levi subgroup $L$, $R_L^G$ and $^*{R}^G_L$ are exact. The functors $R_L^G$ and  $^*{R}^G_L$ are each other's two-sided adjoints. \\ 

\medskip\noindent
\ul{Mackey decomposition.}\index{Mackey decomposition}
Let $L$ be a Levi subgroup of $G$. Given an element $n \in N$ and a (left) $kL$-module $X$, we can define a $k({^{n}{L}})$-module structure on $X$ by setting $nln^{-1}.x=l.x$, for any $l \in L$ and $x \in X$. The resulting $k({^{n}{L}})$-module will be denoted by ${^{n}{X}}$. \\

Now, let $L$ and $M$ be two Levi subgroups of $G$, and let $P$ and $Q$ be parabolic subgroups containing $L$ and $M$, respectively. Let $X$ be a $kL$-module, and let $D(Q,P)$ denote a full set of $(Q,P)$-double coset representatives in $G$.  The Mackey formula provides the following direct sum decomposition of the $kM$-module $^{*}{R}^G_M(R_L^G(X))$:
$$
^{*}{R}^G_M(R_L^G(X)) \cong \underset{n \in D(Q,P)}{\bigoplus} R^M_{{^{n}{L}} \cap M} ({^{*}{R}}^{^{n}{L}}_{{^{n}{L}} \cap M}({^{n}{X}})).
$$

\medskip
%\noindent
{\bf A2. The Steinberg module and the Gelfand-Graev module.}\label{background} Let $G$ be a finite group of Lie type defined in characteristic $p$, and let $R$ be a commutative ring (with unity).

%\medskip
\noindent
\ul{The Steinberg module.} 
 We define an element $\mathfrak{e} \in RG$ by $\mathfrak{e}=\underset{w \in W}{\sum} (-1)^{l(w)} n_w \b$, where $\b=\underset{b \in B}{\sum} b$ and $n_w \in N_G(T)$ is a representative of the coset $w \in W =N/T \leq N_G(T)/T$. The left $RG$-module $\St_R=RG \mathfrak{e}$ is the Steinberg module\index{Steinberg module} for $G$ over $R$ [7]. \\

\medskip\noindent
\ul{An $r$-modular system.}
For the remainder of this appendix, $k$ will be an algebraically closed field of characteristic $r>0$, $r \neq p$. We will work with an $r$-modular system $(\sO,K,k)$, where $\sO$ is a complete discrete valuation ring with residue field $k$ and $K$ is the fraction field of $\sO$. We will assume that $\char(K)=0$. We will call an $\sO G$-module $M$ a lattice if $M$ is finitely generated and free over $\sO$. \\

Let $\St_\sO$=$\sO G \e$ be the Steinberg module for $G$ over $\sO$ ($\St_\sO$ is finitely generated and free over $\sO$, so it is a lattice). Then,  $K \otimes_\sO \St_\sO  \cong \St_K$ is an irreducible $KG$-module [7],  Theorem 3. \\

\medskip\noindent
\ul{The Gelfand-Graev module.} 
We will construct the Gelfand-Graev module for $G$ following [4], \S4. Let $\sigma: U \to K^\times$ be a group homomorphism, and define $\u_\sigma := \underset{u \in U}{\sum} \sigma(u)u \in KG$. Since $U$ is a $p$-subgroup of $G$, $r \centernot\mid |U|$ and $\sigma(u) \in \sO^\times$, for all $u \in U$, which means $\u_\sigma \in \sO G$. We have
$$\u_\sigma^2=\underset{u,u' \in U}{\sum} \Big(\sigma(u)u\Big)\Big(\sigma(u')u'\Big)=\underset{u \in U}{\sum} \underset{u' \in U}{\sum} \sigma(uu')uu'=\underset{u \in U}{\sum} \u_\sigma =|U| \u_\sigma.$$
Since $|U|$ is a unit in $\sO$, $\frac{1}{|U|} \u_\sigma$ is an idempotent in $\sO G$ and the $\sO G$-lattice $\Gamma_\sigma := \sO G \u_\sigma$ is projective. \\

We will now assume that $\sigma: U \to K^\times$ is a fixed regular character (defined in [4],  \S4.3). As above, we assume that $G=\bG^F$ and that the algebraic group ${\bG}$ has a connected center. In this case, the projective $\sO G$-lattice $\Gamma_\sigma$ is called a Gelfand-Graev module for $G$.\index{Gelfand-Graev module} Since the center of ${\bG}$ is connected, the Gelfand-Graev module is unique up to isomorphism [1], Prop. 8.1.2. We will denote $K \otimes_\sO \Gamma_\sigma$ (the Gelfand-Graev module for $G$ over $K$) by $K\Gamma_\sigma$; we will denote $k \otimes_\sO \Gamma_\sigma$ (the Gelfand-Graev module for $G$ over $k$) by $\overline{\Gamma}_\sigma$. Since $\Gamma_\sigma$ is a projective $\sO G$-module, $\overline{\Gamma}_\sigma$ is a projective $kG$-module. 

\medskip%\noindent
{\bf  A3. Properties of the Steinberg and Gelfand-Graev modules.}
%\smallskip
In the proof of Theorem \ref{kerfactors}, we will use several facts about the Steinberg and Gelfand-Graev modules which can be found in [4]. We begin with a summary of the relevant results. \\

By [4], \S4.1 ,
\begin{equation}\label{homgst}
\text{dim} \; \Hom_{KG}(K\Gamma_\sigma, \St_K)=1. \\
\end{equation}
Since $\St_K$ is irreducible, it follows that there is a surjective $KG$-module homomorphism from $K\Gamma_\sigma$ to $\St_K$. \\

A similar result holds over $\sO$. In [4], Prop. 4.2, Geck defines an $\sO G$-lattice $\rho_\sigma(\mathscr{S}_\sigma)$ in $\St_K$ (which we will denote by $\mathscr{S}_\sigma$ to simplify the notation) such that $K \otimes_\sO \mathscr{S}_\sigma \cong \St_K$. The lattice $\mathscr{S}_\sigma$ is a homomorphic image of the Gelfand-Graev lattice $\Gamma_\sigma$ [4],  Prop. 4.2. Thus, there is a surjective homomorphism $\Gamma_\sigma \twoheadrightarrow \mathscr{S}_\sigma$, which, upon reduction to $k$, yields a surjective homomorphism 
\begin{equation}\label{surjhom}
\overline{\Gamma}_\sigma \twoheadrightarrow \overline{\mathscr{S}}_\sigma
\end{equation}
from the projective $kG$-module $\overline{\Gamma}_\sigma$ to the reduction $\overline{\mathscr{S}}_\sigma$ of the lattice $\mathscr{S}_\sigma$. (By [4]. Prop. 4.7, there are many groups $G$ and fields $k$ for which $\overline{\mathscr{S}}_\sigma \cong \St_k$.) \\

Statements analogous to (\ref{homgst}) and (\ref{surjhom}) hold for any Levi subgroup $L$ of $G$. Since a Levi subgroup $L$ is itself a finite group of Lie type, there is a Steinberg module $\St_K^L$ for $L$ over $K$. By [4], \S4.3, there is also a Gelfand-Graev lattice $\Gamma_\sigma^L$ for $L$ (obtained using a restriction of the regular character $\sigma$). Thus, we have a Gelfand-Graev module $K\Gamma_\sigma^L$ for $L$ over $K$. By (\ref{homgst}), $\dim_{KL}(K\Gamma_\sigma^L,\St_K^L)=1$; by (\ref{surjhom}), there is an $\sO L$-lattice $\mathscr{S}_\sigma^L$ in $\St_K^L$ with $K \otimes_\sO \mathscr{S}_\sigma^L \cong \St_K^L$ and a surjective homomorphism $\overline{\Gamma}_\sigma^L \twoheadrightarrow \overline{\mathscr{S}}_\sigma^L$. \\

If $L$ and $M$ are Levi subgroups of $G$ with $L \subseteq M$, we have the following isomorphisms of $KL$-modules (these isomorphisms are recorded in [4], Lemma 4.4):

\begin{equation}\label{gelfandres}
^{*}{R}^M_L(K\Gamma_\sigma^M) \cong K\Gamma_\sigma^L,
\end{equation}

\begin{equation}\label{steinbergres}
^{*}{R}^M_L(\St_K^M) \cong \St_K^L, \text{ and}
\end{equation}

\begin{equation}\label{independence}
\text{If }L={^{n}{L'}}, \text{ for some } n \in N, \text{ then } ^{n}{K\Gamma_\sigma^{L'}} \cong K\Gamma_\sigma^{L} \text{ and } ^{n}{\St_K^{L'}} \cong \St_K^L. 
\end{equation}

\medskip
%\noindent
{\bf A4. Main result.}
As above, let $G$ be a finite group of Lie type defined in characteristic $p$ (arising from an algebraic group with a connected center). Let $k$ be an algebraically closed field of characteristic $r>0$, $r \neq p$, and let $(\sO,K,k)$ be an $r$-modular system, where $\sO$ is a complete discrete valuation ring and $K$ (the fraction field of $\sO$) has characteristic 0. Let $(W,S)$ be the Coxeter system associated to the $BN$-pair of $G$. Given a subset $J \subseteq S$, let $L_J$ denote the standard Levi subgroup of $G$ corresponding to $J$. By (\ref{surjhom}), for all $J \subseteq S$, there is a surjective homomorphism 
$$\overline{\Gamma}_\sigma^{L_J} \twoheadrightarrow \overline{\mathscr{S}}_\sigma^{L_J}$$
from the projective $kL_J$-module $\overline{\Gamma}_\sigma^{L_J}$ to the $kL_J$-module $\overline{\mathscr{S}}_\sigma^{L_J}$, where $\mathscr{S}_\sigma^{L_J}$ is an $\sO L_J$-lattice in $\St_K^{L_J}$ with the property that $K \otimes_\sO \mathscr{S}_\sigma^{L_J} \cong \St_K^{L_J}$. Applying the exact Harish-Chandra induction functor $R_{L_J}^G$, we obtain a surjective homomorphism 
$$R_{L_J}^G(\overline{\Gamma}_\sigma^{L_J}) \twoheadrightarrow R_{L_J}^G(\overline{\mathscr{S}}_\sigma^{L_J}).$$
 Thus, taking direct sums over all $J \subseteq S$, we obtain a surjective homomorphism
$$\beta_k: \underset{J \subseteq S}{\oplus} R_{L_J}^G(\overline{\Gamma}_\sigma^{L_J}) \twoheadrightarrow \underset{J \subseteq S}{\oplus} R_{L_J}^G(\overline{\mathscr{S}}_\sigma^{L_J})$$
from the projective $kG$-module $\underset{J \subseteq S}{\oplus} R_{L_J}^G(\overline{\Gamma}_\sigma^{L_J})$ to the $kG$-module $\underset{J \subseteq S}{\oplus} R_{L_J}^G(\overline{\mathscr{S}}_\sigma^{L_J})$. \\

We can construct an analogous homomorphism $\beta_K$ over the characteristic 0 field $K$. For any $J \subseteq S$, the surjective homomorphism $\overline{\Gamma}_\sigma^{L_J} \twoheadrightarrow \overline{\mathscr{S}}_\sigma^{L_J}$ of (\ref{surjhom}) is induced by a surjective homomorphism $\Gamma_\sigma^{L_J} \twoheadrightarrow \mathscr{S}_\sigma^{L_J}$ from the Gelfand-Graev lattice $\Gamma_\sigma^{L_J}$ to the lattice $\mathscr{S}_\sigma^{L_J}$ of $\St_K^{L_J}$. It follows that the surjective homomorphism $\beta_k$ defined above is induced by a surjective homomorphism
$$\beta: \underset{J \subseteq S}{\oplus} R_{L_J}^G(\Gamma_\sigma^{L_J}) \to R_{L_J}^G(\mathscr{S}_\sigma^{L_J})$$
of $\sO G$-lattices. Taking tensor products with $K$, we obtain a surjective homomorphism 
$$\beta_K: \underset{J \subseteq S}{\oplus} K \otimes_\sO R_{L_J}^G(\Gamma_\sigma^{L_J}) \twoheadrightarrow \underset{J \subseteq S}{\oplus} K \otimes_\sO R_{L_J}^G(\mathscr{S}_\sigma^{L_J})$$
of $KG$-modules. Since Harish-Chandra induction is ``compatible" with the $r$-modular system (See [4], \S 4), we have $K \otimes_\sO R_{L_J}^G(\Gamma_\sigma^{L_J}) \cong R_{L_J}^G(K\Gamma_\sigma^{L_J})$ and $K \otimes_\sO R_{L_J}^G(\mathscr{S}_\sigma^{L_J}) \cong R_{L_J}^G(K \otimes_\sO \mathscr{S}_\sigma^{L_J}) \cong R_{L_J}^G(\St_K^{L_J})$, for any $J \subseteq S$. Therefore, there is a surjective $KG$-module homomorphism
$$\beta_K: \underset{J \subseteq S}{\oplus} R_{L_J}^G(K\Gamma_\sigma^{L_J}) \twoheadrightarrow \underset{J \subseteq S}{\oplus} R_{L_J}^G(\St_K^{L_J}).$$

\begin{thm}\label{kerfactors}
If $\beta_K: \underset{J \subseteq S}{\oplus} R_{L_J}^G(K\Gamma_\sigma^{L_J}) \twoheadrightarrow \underset{J \subseteq S}{\oplus} R_{L_J}^G(\St_K^{L_J})$ is the surjective homomorphism described above, then the kernel of $\beta_K$ has no composition factors in common with $\underset{J \subseteq S}{\oplus} R_{L_J}^G(\St_K^{L_J})$.\footnote{A related open question is whether or not this result holds in positive characteristic.}
\end{thm}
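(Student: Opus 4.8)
The plan is to show that $\ker\beta_K$ and $\bigoplus_{J\subseteq S}R^G_{L_J}(\St_K^{L_J})$ share no composition factors by a dimension count involving $\Hom_{KG}$. Set $P := \bigoplus_{J\subseteq S}R^G_{L_J}(K\Gamma_\sigma^{L_J})$ and $M := \bigoplus_{J\subseteq S}R^G_{L_J}(\St_K^{L_J})$. Since $KG$ is semisimple (as $\operatorname{char}(K)=0$ and $K$ is large enough), it suffices to prove that $\dim_K\Hom_{KG}(P,M) = \dim_K\Hom_{KG}(M,M)$: then the surjection $\beta_K\colon P\twoheadrightarrow M$ induces an isomorphism $\Hom_{KG}(M,M)\hookrightarrow\Hom_{KG}(P,M)$ by precomposition, forcing every $KG$-map $P\to M$ to factor through $\beta_K$, whence $\ker\beta_K$ can have no constituent in common with $M$. (This is exactly the semisimple-case argument; in positive characteristic the analogous statement would fail, which is why the footnote flags it as open.)

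First I would reduce the equality of dimensions to the individual summands: it is enough to show, for all $I,J\subseteq S$,
\begin{equation*}
\dim_K\Hom_{KG}\!\big(R^G_{L_I}(K\Gamma_\sigma^{L_I}),\,R^G_{L_J}(\St_K^{L_J})\big)
= \dim_K\Hom_{KG}\!\big(R^G_{L_I}(\St_K^{L_I}),\,R^G_{L_J}(\St_K^{L_J})\big).
\end{equation*}
Then I would compute both sides by the standard recipe: apply adjointness of $R^G_{L_J}$ and ${}^*R^G_{L_J}$, then the Mackey decomposition for ${}^*R^G_{L_J}\circ R^G_{L_I}$ indexed by a set $\sD_{J,I}$ of $(W_J,W_I)$-double coset representatives in $W$. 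This turns the left-hand side into
\begin{equation*}
\sum_{n}\dim_K\Hom_{K(^nL_I\cap L_J)}\!\big({}^*R^{^nL_I}_{^nL_I\cap L_J}({}^nK\Gamma_\sigma^{L_I}),\;{}^*R^{L_J}_{^nL_I\cap L_J}(\St_K^{L_J})\big),
\end{equation*}
and similarly for the right-hand side with $\St_K^{^nL_I}$ in place of $K\Gamma_\sigma^{L_I}$. Here I would invoke: $^{n}(K\Gamma_\sigma^{L_I})\cong K\Gamma_\sigma^{^nL_I}$ and $^{n}(\St_K^{L_I})\cong\St_K^{^nL_I}$ from \eqref{independence}; the Harish-Chandra restriction formulas \eqref{gelfandres}, \eqref{steinbergres} which give ${}^*R^{L_J}_{^nL_I\cap L_J}(\St_K^{L_J})\cong\St_K^{^nL_I\cap L_J}$; and the multiplicity-one fact \eqref{homgst}, $\dim\Hom_{KL}(K\Gamma_\sigma^{L},\St_K^{L})=1$, applied to $L={^nL_I\cap L_J}$. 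The only extra ingredient needed is that Harish-Chandra restriction of a Gelfand--Graev module is again a Gelfand--Graev module: ${}^*R^{^nL_I}_{^nL_I\cap L_J}(K\Gamma_\sigma^{^nL_I})\cong K\Gamma_{\sigma'}^{^nL_I\cap L_J}$ for a regular character $\sigma'$ of the appropriate unipotent group (for $\bG$ with connected center this is the statement implicit in \eqref{gelfandres}; cf.\ \cite[Lem.~2.3]{DF94} and \cite[Lem.~(3.6)]{DF92}). Combining these, both sides evaluate to $\#\sD_{J,I}$, giving the required equality.

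The main obstacle is the Gelfand--Graev restriction step: one must know that ${}^*R$ applied to a Gelfand--Graev lattice for $^nL_I$ lands, up to isomorphism, in a single Gelfand--Graev module for the smaller Levi $^nL_I\cap L_J$, with the restricted character still regular. The connected-center hypothesis on $\bG$ is used precisely here, to guarantee uniqueness of the Gelfand--Graev module (so that ``a regular character $\sigma'$'' is unambiguous) and multiplicity-freeness of $K\Gamma_\sigma^{L}$; once that is granted, the remaining steps are bookkeeping with Mackey and the already-cited isomorphisms. I would write out the Mackey-indexed sum carefully and then verify term by term that the Gelfand--Graev summand contributes $1$ (via \eqref{homgst}) exactly when the Steinberg summand does, so the two totals agree.
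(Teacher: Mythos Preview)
Your proposal is correct and follows essentially the same approach as the paper's proof: reduce to the dimension equality $\dim\Hom_{KG}(P,M)=\dim\Hom_{KG}(M,M)$, break into summands indexed by pairs $(I,J)$, and compute both sides via adjointness, Mackey decomposition, and the identities \eqref{gelfandres}, \eqref{steinbergres}, \eqref{independence}, \eqref{homgst} to obtain $|D(P_J,P_I)|$ in each case. The only cosmetic difference is that the paper phrases the sufficiency of the dimension equality via the splitting $P\cong\ker\beta_K\oplus M$ rather than via precomposition, and it invokes \eqref{gelfandres} directly (with the connected-center hypothesis making $\sigma'=\sigma$) rather than citing Dipper--Fleischmann.
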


\begin{proof}
We claim that it suffices to show that 
\begin{equation}\label{conds}
\dim \; \Hom_{KG}(\underset{J \subseteq S}{\oplus} R_{L_J}^G(K\Gamma_\sigma^J), \underset{J \subseteq S}{\oplus} R_{L_J}^G(\St_K^{L_J})) = \dim \; \Hom_{KG}(\underset{J \subseteq S}{\oplus} R_{L_J}^G(\St_K^{L_J}),\underset{J \subseteq S}{\oplus} R_{L_J}^G(\St_K^{L_J})).\footnote{Our use of this condition on the dimension of Hom spaces was inspired by a reading of [5], \S4.1. (A more general version of this condition appears in [5], Lemma 4.1.7.)} \\
\end{equation}
For suppose that the equality in (\ref{conds}) holds. Since $K$ is a field of characteristic 0, every short exact sequence of $KG$-modules splits and we have a direct sum decomposition
$$\underset{J \subseteq S}{\oplus} R_{L_J}^G(K\Gamma_\sigma^{L_J}) \cong \Ker(\beta_K) \oplus {\rm{Im}}(\beta_K) \cong \Ker(\beta_K) \oplus \Big( \underset{J \subseteq S}{\oplus} R_{L_J}^G(\St_K^{L_J})\Big)$$ 
(the homomorphism $\beta_K$ is surjective, so ${\rm{Im}}(\beta_K) \cong \underset{J \subseteq S}{\oplus} R_{L_J}^G(\St_K^{L_J})$). It follows that $ \Hom_{KG}(\underset{J \subseteq S}{\oplus} R_{L_J}^G(K\Gamma_\sigma^J), \underset{J \subseteq S}{\oplus} R_{L_J}^G(\St_K^{L_J}))$ decomposes as a direct sum of $$\Hom_{KG}(\Ker(\beta_K),  \underset{J \subseteq S}{\oplus} R_{L_J}^G(\St_K^{L_J})) \text{ and }\Hom_{KG}( \underset{J \subseteq S}{\oplus} R_{L_J}^G(\St_K^{L_J}),  \underset{J \subseteq S}{\oplus} R_{L_J}^G(\St_K^{L_J})).$$
Therefore, for equality to hold in (\ref{conds}), we must have 

$$\dim \;  \Hom_{KG}(\Ker(\beta_K),  \underset{J \subseteq S}{\oplus} R_{L_J}^G(\St_K^{L_J}))=0.$$
Since $\char(K)=0$ and all $KG$-modules are completely reducible, this is possible if and only if the $KG$-modules $\Ker(\beta_K)$ and $ \underset{J \subseteq S}{\oplus} R_{L_J}^G(\St_K^{L_J})$ have no composition factors in common. \\

We will now establish the equality
\begin{equation*}
\dim \; \Hom_{KG}(\underset{J \subseteq S}{\oplus} R_{L_J}^G(K\Gamma_\sigma^J), \underset{J \subseteq S}{\oplus} R_{L_J}^G(\St_K^{L_J})) = \dim \; \Hom_{KG}(\underset{J \subseteq S}{\oplus} R_{L_J}^G(\St_K^{L_J}),\underset{J \subseteq S}{\oplus} R_{L_J}^G(\St_K^{L_J})).
\end{equation*}
In fact, since
$$\Hom_{KG}(\underset{J \subseteq S}{\oplus} R_{L_J}^G(K\Gamma_\sigma^{L_J}), \underset{J \subseteq S}{\oplus} R_{L_J}^G(\St_K^{L_J})) \cong \underset{I,J \subseteq S}{\bigoplus} \Hom_{KG}(R_{L_I}^G(K\Gamma_\sigma^{L_I}), R_{L_J}^G(\St_K^{L_J})) \text{ and}$$
$$\Hom_{KG}(\underset{J \subseteq S}{\oplus} R_{L_J}^G(\St_K^{L_J}), \underset{J \subseteq S}{\oplus} R_{L_J}^G(\St_K^{L_J})) \cong \underset{I,J \subseteq S}{\bigoplus} \Hom_{KG}(R_{L_I}^G(\St_K^{L_I}), R_{L_J}^G(\St_K^{L_J})),$$
it is enough to show that
\begin{equation}\label{formula2}
\dim \; \Hom_{KG}(R_{L_I}^G(K\Gamma_\sigma^{L_I}), R_{L_J}^G(\St_K^{L_J})) = \dim \; \Hom_{KG}(R_{L_I}^G(\St_K^{L_I}), R_{L_J}^G(\St_K^{L_{J}})),
\end{equation}
for all $I, J \subseteq S$. \\

We will now compute $\dim \; \Hom_{KG}(R_{L_I}^G(K\Gamma_\sigma^{L_I}), R_{L_J}^G(\St_K^{L_J}))$. Since the Harish-Chandra restriction functor $^{*}{R}^G_{L_J}$ is adjoint to the Harish-Chandra induction functor $R_{L_J}^G$, we have
$$\Hom_{KG}(R_{L_I}^G(K\Gamma_\sigma^{L_I}), R_{L_J}^G(\St_K^{L_J})) \cong \Hom_{KL_J}({^{*}{R}}^G_{L_J}R_{L_I}^G( K\Gamma_\sigma^{L_I}), \St_K^{L_J}).$$
Let $P_J$ and $P_I$ denote the standard parabolic subgroups of $G$ containing $L_J$ and $L_I$, respectively, and let $D(P_J,P_I) \subseteq N$ be a full set of $(P_J,P_I)$-double coset representatives in $G$. By Mackey decomposition, we have
\begin{align*}
\Hom_{KL_J}({^{*}{R}}^G_{L_J}R_{L_I}^G( K\Gamma_\sigma^{L_I}), \St_K^{L_J}) &\cong \Hom_{KL_J}\Big( \underset{n \in D(P_J,P_I)}{\bigoplus} R_{{^{n}{L_I}} \cap L_J}^{L_J} {^{*}{R}}^{^{n}{L_I}}_{{^{n}{L_I}} \cap L_J}({^{n}{K\Gamma}}_\sigma^{L_I}),\St_K^{L_J}\Big) \\
&\cong \underset{n \in D(P_J,P_I)}{\bigoplus} \Hom_{KL_J}(R_{{^{n}{L_I}} \cap L_J}^{L_J} {^{*}{R}}^{^{n}{L_I}}_{{^{n}{L_I}} \cap L_J}({^{n}{K\Gamma}}_\sigma^{L_I}),\St_K^{L_J}).
\end{align*}
Now, by (\ref{gelfandres}) and (\ref{independence}), the $K({^{n}{L_I}} \cap L_J)$-module ${^{*}{R}}^{^{n}{L_I}}_{{^{n}{L_I}} \cap L_J}({^{n}{K\Gamma}}_\sigma^{L_I})$ is the Gelfand-Graev module $K\Gamma_\sigma^{{^{n}{L_I}} \cap L_J}$, for ${^{n}{L_I}} \cap L_J$ over $K$. Thus, continuing the computation above (and, using the adjointness of $R_{{^{n}{L_I}} \cap L_J}^{L_J}$ and $^{*}{R}_{{^{n}{L_I}} \cap L_J}^{L_J}$), we have
\begin{align*}
&\underset{n \in D(P_J,P_I)}{\bigoplus} \Hom_{KL_J}(R_{{^{n}{L_I}} \cap L_J}^{L_J} {^{*}{R}}^{^{n}{L_I}}_{{^{n}{L_I}} \cap L_J}({^{n}{K\Gamma}}_\sigma^{L_I}),\St_K^{L_J}) \\
&\cong
\underset{n \in D(P_J,P_I)}{\bigoplus} \Hom_{KL_J}(R_{{^{n}{L_I}} \cap L_J}^{L_J}(K\Gamma_\sigma^{{^{n}{L_I}} \cap L_J}),\St_K^{L_J}) \\
&\cong \underset{n \in D(P_J,P_I)}{\bigoplus} \Hom_{K({^{n}{L_I}} \cap L_J)}(K\Gamma_\sigma^{{^{n}{L_I}} \cap L_J},  {^{*}{R}}_{{^{n}{L_I}} \cap L_J}^{L_J}(\St_K^{L_J})).
\end{align*}
By (\ref{steinbergres}), ${^{*}{R}}_{{^{n}{L_I}} \cap L_J}^{L_J}(\St_K^{L_J})$ is the Steinberg module $\St_K^{{^{n}{L_I}} \cap L_J}$, for $^{n}{L_I} \cap L_J$ over $K$. Therefore,
$$\Hom_{K({^{n}{L_I}} \cap L_J)}(K\Gamma_\sigma^{{^{n}{L_I}} \cap L_J},  {^{*}{R}}_{{^{n}{L_I}} \cap L_J}^{L_J}(\St_K^{L_J})) \cong  \Hom_{K({^{n}{L_I}} \cap L_J)}(K\Gamma_\sigma^{{^{n}{L_I}} \cap L_J}, \St_K^{{^{n}{L_I}} \cap L_J}),$$
which has dimension 1 by (\ref{homgst}). Thus,
$$
\dim \; \Hom_{KG}(R_{L_I}^G(K\Gamma_\sigma^{L_I}), R_{L_J}^G(\St_K^{L_J})) = \underset{n \in D(P_J,P_I)}{\sum} 1 = |D(P_J,P_I)|.
$$

We can similarly compute $\dim \; \Hom_{KG}(R_{L_I}^G(\St_K^{L_I}), R_{L_J}^G(\St_K^{L_{J}}))$. We have
\begin{align*}
\Hom_{KG}(R_{L_I}^G(\St_K^{L_I}), R_{L_J}^G(\St_K^{L_{J}})) &\cong \Hom_{KL_J}({^{*}{R}}^G_{L_J}R_{L_I}^G( \St_K^{L_I}), \St_K^{L_J}) \\
&\cong \Hom_{KL_J}\Big( \underset{n \in D(P_J,P_I)}{\bigoplus} R_{{^{n}{L_I}} \cap L_J}^{L_J} {^{*}{R}}^{^{n}{L_I}}_{{^{n}{L_I}} \cap L_J}({^{n}{\St_K}}^{L_I}),\St_K^{L_J}\Big) \\
&\cong \underset{n \in D(P_J,P_I)}{\bigoplus} \Hom_{KL_J}(R_{{^{n}{L_I}} \cap L_J}^{L_J} {^{*}{R}}^{^{n}{L_I}}_{{^{n}{L_I}} \cap L_J}({^{n}{\St_K}}^{L_I}),\St_K^{L_J}) \\
&\cong
\underset{n \in D(P_J,P_I)}{\bigoplus} \Hom_{KL_J}(R_{{^{n}{L_I}} \cap L_J}^{L_J}(\St_K^{{^{n}{L_I}} \cap L_J}),\St_K^{L_J}) \\
&\cong \underset{n \in D(P_J,P_I)}{\bigoplus} \Hom_{K({^{n}{L_I}} \cap L_J)}(\St_K^{{^{n}{L_I}} \cap L_J},  {^{*}{R}}_{{^{n}{L_I}} \cap L_J}^{L_J}(\St_K^{L_J})) \\
&\cong \underset{n \in D(P_J,P_I)}{\bigoplus}  \Hom_{K({^{n}{L_I}} \cap L_J)}(\St_K^{{^{n}{L_I}} \cap L_J}, \St_K^{{^{n}{L_I}} \cap L_J}).
\end{align*}

Since each Steinberg module $\St_K^{{^{n}{L_I}} \cap L_J}$ is absolutely irreducible (see [4], Theorem 2.1), we have 
$$\dim \;  \Hom_{K({^{n}{L_I}} \cap L_J)}(\St_K^{{^{n}{L_I}} \cap L_J}, \St_K^{{^{n}{L_I}} \cap L_J}) =1,$$
for all $n \in D(P_J,P_I)$. Therefore, $\dim \; \Hom_{KG}(R_{L_I}^G(\St_K^{L_I}), R_{L_J}^G(\St_K^{L_{J}}))=|D(P_J,P_I)|$, proving that the equality in (\ref{formula2}) holds.
\end{proof}

\medskip\medskip
\begin{center} References\end{center}

\medskip\noindent
[1] R.W. Carter,
\textit{Finite Groups of Lie Type: Conjugacy Classes and Complex Characters}, 
Wiley, New York, 1985, reprinted 1993 as Wiley Classics Library edition.

\medskip\noindent
[2] R. Dipper,
{On quotients of Hom-functors and representations of finite general linear groups I.}
J. Algebra 130 (1990), 235--259.

\medskip\noindent
[3] R. Dipper and J. Du,
{Harish-Chandra vertices.}
J. Reine Angew. Math. 437 (1993), 101--130.

\medskip\noindent
[4] M. Geck,
\textit{On the modular composition factors of the Steinberg representation.}
J. Algebra 475 (2017), 370--391.

 \medskip\noindent
[5]  M. Geck and N. Jacon,
 \textit{Representations of Hecke Algebras at Roots of Unity.}
 Algebra and Applications, vol. 15, Springer-Verlag, 2011.

\medskip\noindent
[6] R. B. Howlett and G. I. Lehrer,
{On Harish-Chandra induction for modules of Levi subgroups.}
J. Algebra 165 (1994), 172--183. 

 \medskip\noindent
[7] R. Steinberg, {Prime power representations of finite linear groups II.}
Canad. J. Math. 9 (1957), 347--351.

\chapter{Height functions.} We collect together some terminology on height functions and related matters which
are used repeatedly. Given a non-empty finite set $\Lambda$, a {\it height function}\index{height function} on $\Lambda$ is a function
$\htt:\Lambda\to\mathbb Z$.  Often, but not always, the set $\Lambda$ indexes the irreducible modules for 
an algebra.  This is the case in the following.

\medskip
%\noindent
{\bf B1. Height filtrations.} We will consider here decreasing filtrations associated with a negative-valued height function as set in the following example.\index{height filtration}
\begin{example}\label{example1} Let $\scrK$ be a Noetherian domain with fraction field $K$, and let $H$ be a finite and  torsion-free $\scrK$-algebra. Let $\scrA$ be the additive category of $H$-finite, $\scrK$-torsion-free left $H$-modules. Assume that $H_K$ is semisimple, and let $\Lambda$ be a set indexing the distinct irreducible
left $H_K$-modules:  $\lambda\in\Lambda$ corresponds to an irreducible $H_K$-module $L_K(\lambda)$. Fix a height function $\htt$ on $\Lambda$ which takes negative values. (The assumption that $\htt$ only takes negative values
is a convenience; see Remark \ref{mystery} immediately below.) Given a  module $M\in\scrA$, $\htt$ defines a natural (decreasing) filtration $M=M_0\supseteq M_1\supseteq\cdots\supseteq
M_n=0$ of $M$ by $\scrA$-submodules: let $(M_K)_h$ be the $H_K$-submodule of $M_K$ which is the
sum of 
all irreducible submodules isomorphic to $L_K(\lambda)$ with $\htt(\lambda)< -h$. The decreasing filtration of $M$ is obtained by putting $M_h:=M\cap (M_K)_h$, for all $h$. Notice that $(M_{h-1}/M_{h})_K$ is a direct sum of irreducible modules
$L_K(\lambda)$ with $\htt(\lambda)=-h$. Also, a short exact sequence $0\to M\to N\to P\to 0$ in $\scrA$ gives, for
any integer $h$, an exact sequence $0\to M_h\to N_h\to P_h$; see \cite[Prop. 3.3(a)]{DPS17}. If $\scrE$ denotes the
category (in the sense of footnote \ref{exact sub} of Chapter 1) of those exact sequences $0\to M\to N\to P\to 0$ such that, for any $h$,  $0\to M_h\to N_h\to P_h\to 0$ {\it is} exact, then
$(\scrA,\scrE)$ is an example of an exact category. This fact may be proved in a way similar to the proof of
\cite[Th. 3.6]{DPS17}, though the latter is given in a right-module context, using superscript notation.\end{example}

 For more important examples of exact categories in this spirit, see \cite[\S3]{DPS17}
as well as earlier sections of this monograph. While $(\scrA,\scrE)$ depends on the fixed height function $\htt$, we prefer
not to include $\htt$ as part of its name. Also, replacing $\htt$ by $\htt +c$, $c\in\mathbb Z$, does not change the
filtrations on $\scrA$ or the exact category $(\scrA,\scrE)$.

\begin{rem}\label{mystery} Suppose we drop the assumption that $\htt$ takes only negative values. Then, one should replace
the filtration $M=M_0\supseteq M_1\supseteq\cdots\supseteq M_n=0$ with a filtration $M=M_{-m}\supseteq
\cdots \supseteq M_n=0$ for sufficiently large (positive) $n$ and $m$. This can be done uniformly for all $M\in\scrA$, provided all
$\htt(\lambda)$ lie in the interval $(-n,m]$.  If we modify such a height function by adding $m'$ with $m'<-m$
to all its values and reindex $M_i$, we return to the setting assumed in Example \ref{example1}, and it is evident that the two height functions lead to canonically isomorphic categories $(\scrA,\scrE)$. The extra flexibility, however, is useful: 

Suppose we take a possibly positive-valued height function $\htt:\Lambda\to\mathbb Z$, and construct $M^h$ according to the recipe
$M^h=M\cap M_K^{\leq h}$ in Theorem \ref{A1}, by giving (in the category of right $H$-modules)
a recipe for constructing an exact category through {\it increasing filtrations} 
$$\cdots\subseteq M^{h-1}\subseteq M^h\subseteq \cdots,$$
 where the irreducible (right)
$H_K$-modules appearing in $(M^h/M^{h-1})_K$ are indexed by $\lambda\in\Lambda$ with $\htt(\lambda)= h$.  Obviously, such an increasing filtration  canonically identifies (for any $M$) with a decreasing filtration 
\begin{equation}\label{latter}\cdots\subseteq M_{-(h-1)}\subseteq M_{-h}\subseteq\cdots,\end{equation}
where $M_{-h}:=M^h$ for every $h$. Moreover, a second height function can be given (assigning $\lambda\in\Lambda$ to $\htt(\lambda)-1$) so that the filtration of $M$ constructed from Example \ref{example1} is precisely that given in (\ref{latter}), which is, in turn, ``the same" as that in Theorem \ref{A1} (using $\htt$).
 This is true at least up to
the left-right-module discrepancy, but that disappears when we use $H^{\text{op}}$, instead of $H$, in one of the cases. 
\end{rem}

 The passage here from increasing to decreasing filtrations via the convention $M_{-h}=M^h$ is quite natural but does not, in general, lead to a negative-valued height function without further adjustment of the subscripts. 

The notational scheme in Example \ref{example1} has, however, been chosen to behave well under a particularly important passage from left to right modules,  namely, the passage from left to right arising from the contravariant functor $(-)^*=\Hom_\sZ(-,\sZ)$ --- the duality functor; see Section 2.2 for more details. 
%This will be explored further in Appendix C.

\medskip%\noindent
{\bf B2. Height functions compatible with a preorder.} \index{preorder}
 In practice, though some useful set $\Lambda$ may be in the background (see Remark \ref{A7}),  we also consider a set $\Omega$ with a preorder $\leq$.

 A relation $\leq$ on a set $\Omega$ that is reflexive and transitive is called a {\it preorder}. \index{preorder} We will call $\Omega=(\Omega,\leq)$ a {\it quasi-poset}\index{quasi-poset} in the book. 
 
 For such a preorder $\leq$, define an equivalence relation $\sim$ on $\Omega$ by declaring that
$\omega\sim \nu \iff \omega\leq \nu$ and $\nu\leq \omega$, and a strict version $<$ of $\leq$ by setting $\omega<\nu$ provided $\omega\leq\nu$ and $\omega\not\sim\nu$ (or $\omega\not\geq\nu$).
 
If $\overline\Omega=\Omega/\sim$ denotes the set of equivalence classes, then $\leq$ induces a partial order, denoted again by $\leq$, on $\overline\Omega$.
We  call $(\overline \Omega,\leq)$ the {\it poset associated with} $(\Omega,\leq)$.\index{quasi-poset! poset associated with $\sim$} Thus, for any $\la,\mu\in\Omega$, if $\overline\lambda$ denotes the equivalence class containing $\lambda$, then we have $\overline\omega<\overline\nu\iff\omega<\nu$.

 % That is, $\Omega$ is a quasi-poset:\index{quasi-poset} a set with a preorder (i.e., transitive and reflexive relation) $\leq$. Define an equivalence relation $\sim$ on $\Omega$ by declaring that $\lambda\sim \mu \iff \lambda\leq \mu$ and $\mu\leq \lambda$.  Given $\omega\in\Omega$, let $\bar\omega$ be the equivalence class containing $\omega$, and let $\bar\Omega$ be the set of equivalence classes. Clearly,
%$\leq$ defines a poset structure on $\bar\Omega$.\index{quasi-poset! poset associated with $\sim$} Also, for $\omega,\nu\in\Omega$, define
%$\omega<\nu$ provided $\omega\leq\nu$ and $\omega\not\sim\nu$. Thus, $\omega<\nu$ if and only
%if $\bar\omega<\bar\nu$.
%For $\lambda\in\Lambda$, $[\lambda]$ denotes
%the equivalence class containing $\lambda$. 

By definition, a height function $\htt$ on $\Omega$ is {\it compatible}
with a preorder $\leq$ on $\Omega$ provided that 

\begin{equation}\label{preorder}
\begin{cases} (1)\quad \omega<\nu\implies \htt(\omega)<\htt(\nu), \quad{\text{\rm and}} \\ 
 (2) \quad
\omega\sim\nu\implies\htt(\omega)=\htt(\nu). \end{cases}\end{equation}
Note that if $\htt$ is compatible with $\leq$, then the function $-\htt:\Omega\to\mathbb Z$
is compatible with $\leq^\op$.

Given a preorder $\leq$ on $\Omega$, there are many choices for a compatible height function $\htt$. For
example, the {\it standard height function} $\htt_{st}$ is defined by setting $\htt_{st}(\omega)$ to be the maximal length
of a proper chain $\omega_0<\omega_1<\cdots< \omega_n=\omega$. It  is compatible
with the preorder on $\Omega$. 

\begin{defn}\label{dominates}Let $\leq$ and $\leq'$ be two preorders on the same finite set $\Omega$. We say that $\leq$
{\it dominates} $\leq'$ provided $\omega\leq\nu\implies\omega\leq'\nu$, \index{preorder! dominates} for all $\omega,\nu\in\Omega$. If, in addition, $\omega<\nu\implies\omega<'\nu$, for all $\omega,\nu\in\Omega$, then we say that $\leq$ {\it strictly dominates} $\leq'$. \end{defn}
Recall that $\omega<\nu$ means that
$\omega\leq\nu$, but $\omega\not\sim\nu$. Observe that if $\leq$ also dominates (respectively, strictly dominates) $\leq'$,
then $\leq^\op$ dominates (respectively, strictly dominates) $\leq^{\prime\op}$. \index{preorder! strictly dominates}

\begin{example}\label{example2} This example explains the notation above, in which the quasi-poset $\Lambda$ has been renamed $\Omega$. Let $(W,S,L)$ be a standard finite Coxeter system.   We will make use of the (generalized) Kazhdan-Lusztig cell theory mentioned in Section 1.2. See \cite[Ch. 8]{Lus03}.  Let $\Omega$ be the set of left Kazhdan-Lusztig cells in $W$.  There is a preorder $\leq_L$ on $W$ whose equivalence classes are the left cells $\omega\in\Omega$.  Similarly, there is a preorder $\leq_{LR}$ on $W$ whose equivalence classes are the two-sided
Kazhdan-Lusztig cells. Clearly, $\leq_L$ also defines a preorder (still denoted $\leq_L$) on $\Omega$, which is
a partial order. Also, $\leq_{LR}$ defines a preorder on $\Omega$ (still denoted $\leq_{LR}$).\end{example}

\begin{prop} \label{prop1} As preorders on $\Omega$, $\leq_L$ strictly dominates $\leq_{LR}$.  \end{prop}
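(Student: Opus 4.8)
The plan is to verify strict domination (Definition \ref{dominates}) in two steps: first the plain domination $\omega \leq_L \omega' \Rightarrow \omega \leq_{LR} \omega'$, which is purely formal, and then the strict refinement $\omega <_L \omega' \Rightarrow \omega <_{LR} \omega'$, which is the genuine content and is, in effect, display \eqref{order} of Section 1.2 rephrased in the language of Appendix B. The dual assertion that $\leq_L^{\op}$ strictly dominates $\leq_{LR}^{\op}$ then follows from the observation recorded just after Definition \ref{dominates}.

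For the plain domination, I would first recall that at the level of $W$ the preorder $\leq_{LR}$ is by definition the preorder generated by $\leq_L$ and $\leq_R$, so $w \leq_L w'$ implies $w \leq_{LR} w'$ for all $w,w' \in W$. Next I would note that any preorder $\leq$ on $W$ whose equivalence classes are unions of left cells descends to a well-defined relation on $\Omega$: if $x \leq x'$, $y \sim_L x$ and $y' \sim_L x'$, then $y \leq x \leq x' \leq y'$, so the induced relation is independent of the choice of representatives. Applying this to $\leq_L$ and to $\leq_{LR}$, one sees that $\omega \leq_L \omega'$ (resp. $\omega \leq_{LR} \omega'$) holds exactly when $x \leq_L x'$ (resp. $x \leq_{LR} x'$) for one, equivalently every, pair $x \in \omega$, $x' \in \omega'$; hence the $W$-level implication transfers verbatim to $\Omega$.

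For the strict half I would use that, by Example \ref{example2}, $\leq_L$ is a partial order on $\Omega$, so $\omega <_L \omega'$ simply means $\omega \leq_L \omega'$ and $\omega \neq \omega'$; since plain domination already gives $\omega \leq_{LR} \omega'$, it remains to show $\omega \not\sim_{LR} \omega'$, i.e. that the two-sided cells $\bar\omega$ and $\bar{\omega'}$ are distinct. Choosing $x \in \omega$, $x' \in \omega'$ with $x \leq_L x'$ and $x \not\sim_L x'$, one has $x \leq_{LR} x'$, so Lusztig's conjecture P4 forces $a(x) \geq a(x')$; if equality held, conjecture P9 (with $x \leq_L x'$ and $a(x) = a(x')$) would give $x \sim_L x'$, a contradiction, so in fact $a(x) > a(x')$. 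As $a$ is constant on two-sided cells (by P4 applied in both directions along $\sim_{LR}$), this yields $\bar\omega \neq \bar{\omega'}$ and hence $\bar\omega <_{LR} \bar{\omega'}$, which is precisely \eqref{order} and precisely the inequality $\omega <_{LR} \omega'$ required.

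The only non-bookkeeping ingredient is \eqref{order} itself; I would not reprove it here but simply invoke the standing assumption that $(W,S,L)$ is standard finite, under which Lusztig's properties P1--P15 — and for Chapters 1--3 only P4 and P9 are needed — together with the rank-two verification cited in Section 1.2 hold. Thus the main, and essentially only, obstacle is organizational: making the descent from preorders on $W$ to the induced partial order and preorder on $\Omega$ precise, and observing that ``strict domination'' in the sense of Definition \ref{dominates} is exactly the abstract reformulation of \eqref{order}.
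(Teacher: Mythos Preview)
Your proof is correct and follows the same route as the paper: the paper's argument simply invokes display \eqref{order}, noting it follows from Lusztig's P4 and P9, and you have spelled out exactly that deduction (plain domination from the definition of $\leq_{LR}$, strictness via the $a$-function using P4 for $a(x)\geq a(x')$ and P9 to rule out equality). The only minor redundancy is that you both carry out the P4/P9 argument and then say you ``would not reprove \eqref{order} here''---in fact you have just reproved it, which is fine.
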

\begin{proof} 
This (nontrivial) result is immediate from the statement \eqref{order} in Section 1.2 which follows from (P4) and (P9) of \cite[Ch.14]{Lus03}.
Note that all conjectures (P1)--(P15) hold for the standard finite Coxeter system $(W,S,L)$.
\end{proof}
An important consequence  is that a two-sided cell module is a direct sum of left-cell modules. (See Proposition \ref{208}.)

An important property of strict dominance in general is the following consequence for height functions:

\begin{prop}\label{prop2} Suppose $\leq,\leq'$ are preorders on a set $\Omega$ and $\leq$   
strictly dominates $\leq'$.\footnote{In practice, $\Omega$ will often be the set of left cells as described above,
but, in general, it might just be a quasi-poset. It will be clear from context which is which.} If
$\frak h$ is any height function compatible with $\leq'$, then $\frak h$ is also compatible with $\leq$.
\end{prop}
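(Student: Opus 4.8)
The plan is to verify directly the two defining conditions \eqref{preorder}(1) and \eqref{preorder}(2) for $\mathfrak h$ relative to $\leq$, routing each through the hypothesis of strict dominance into the corresponding property of $\mathfrak h$ already known relative to $\leq'$. Both conditions are implications whose hypotheses are phrased entirely in terms of the relations $<$ and $\sim$ attached to the preorder $\leq$, so the argument is a short unwinding of the definitions recalled above Definition \ref{dominates}; I do not anticipate any genuine obstacle, only a little bookkeeping about which relation occurs where.

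First I would treat condition \eqref{preorder}(1). Let $\omega,\nu\in\Omega$ with $\omega<\nu$ (strict inequality for $\leq$). By the \emph{strict} clause in the definition of strict dominance, $\omega<'\nu$. Since $\mathfrak h$ is compatible with $\leq'$, condition \eqref{preorder}(1) applied to $\leq'$ gives $\mathfrak h(\omega)<\mathfrak h(\nu)$, which is precisely \eqref{preorder}(1) for $\leq$.

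Next I would treat condition \eqref{preorder}(2). Suppose $\omega\sim\nu$, i.e.\ $\omega\leq\nu$ and $\nu\leq\omega$. Applying the ordinary dominance clause ($\omega\leq\mu\implies\omega\leq'\mu$) twice yields $\omega\leq'\nu$ and $\nu\leq'\omega$, that is, $\omega\sim'\nu$. Compatibility of $\mathfrak h$ with $\leq'$, via condition \eqref{preorder}(2) for $\leq'$, then gives $\mathfrak h(\omega)=\mathfrak h(\nu)$, which is \eqref{preorder}(2) for $\leq$. Having verified both conditions, $\mathfrak h$ is compatible with $\leq$, completing the proof.

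Finally, I would record the observation (useful elsewhere, e.g.\ in Section 3.3) that only the ordinary dominance clause was needed for condition \eqref{preorder}(2), whereas the strict part of strict dominance was used exactly to obtain condition \eqref{preorder}(1); in particular, condition \eqref{preorder}(2) already transfers under mere dominance. This is also consistent with the earlier remark that $-\mathfrak h$ is compatible with $\leq^{\op}$ whenever $\mathfrak h$ is compatible with $\leq$, since $\leq$ strictly dominates $\leq'$ if and only if $\leq^{\op}$ strictly dominates $\leq'^{\,\op}$.
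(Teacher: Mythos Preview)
Your proof is correct and follows essentially the same approach as the paper: verify the two compatibility conditions \eqref{preorder}(1) and \eqref{preorder}(2) directly, using the strict dominance clause for (1) and the ordinary dominance clause for (2). The paper's version is slightly terser and handles the $\sim$ case first, but the logic is identical.
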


\begin{proof} Let $\omega,\nu\in\Omega$.  Suppose first that $\omega\sim\nu$. Then, $\omega\sim'\nu$ just
from the dominance property. So $\htt(\omega)=\htt(\nu)$.
Next, suppose $\omega<\nu$. Then, $\omega<'\nu$ by strict dominance, and so $\htt(\omega)<
\htt(\nu).$ The proposition follows. \end{proof}.

\begin{rem}For an alternate proof of the proposition, introduce a third preorder $\leq_{\htt}$ on
$\Omega$ by declaring $\omega\leq_{\htt}\mu\iff \htt(\omega)\leq\htt(\mu)$. The compatability of
$\htt$ with $\leq'$ is equivalent to the strict dominance of $\leq_{\htt}$ by $\leq'$.   Obviously,
strict dominance is a transitive relation between preorders, and the proposition follows. (Use the fact that  there is a similar
equivalence with $\leq'$ replaced by $\leq$.)
\end{rem}

 In the context of Example \ref{example2}, we consider a height function $\htt:\Omega\to\mathbb Z$. Using
 Proposition \ref{prop1}, the following result follows immediately from Proposition \ref{prop2}.

\begin{cor}\label{discussion} Any height function $\htt$ compatible (see display \eqref{preorder}) with the preorder $\leq_{LR}$ %{\color{blue}(i.e., the a-function)} 
on the set $\Omega$ of
left cells in $W$ is compatible
with $\leq_L$.\end{cor}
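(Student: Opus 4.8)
The final statement is Corollary \ref{discussion}: any height function $\htt$ compatible with $\leq_{LR}$ on the set $\Omega$ of left cells is also compatible with $\leq_L$.

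\medskip

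The plan is essentially to combine the two structural facts established immediately above in Appendix B. First I would recall the setup: by Example \ref{example2}, both $\leq_L$ and $\leq_{LR}$ are preorders on the finite set $\Omega$ of Kazhdan--Lusztig left cells, with $\leq_L$ in fact a partial order. The crucial input is Proposition \ref{prop1}, which asserts that, as preorders on $\Omega$, $\leq_L$ \emph{strictly} dominates $\leq_{LR}$ --- this is where all the hard work lives, since it rests on property \eqref{order} in Section 1.2, which in turn relies on Lusztig's conjectures (P4) and (P9) (valid for all standard finite Coxeter systems). So the genuine obstacle is entirely upstream and already discharged by the time we reach this corollary; the corollary itself is a formal consequence.

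\medskip

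With strict dominance of $\leq_{LR}$ by $\leq_L$ in hand, I would then simply invoke Proposition \ref{prop2} with the substitutions $\leq\,:=\,\leq_L$ and $\leq'\,:=\,\leq_{LR}$. Proposition \ref{prop2} says precisely that if $\leq$ strictly dominates $\leq'$, then any height function compatible with $\leq'$ is compatible with $\leq$. Applying this to a given $\htt$ compatible with $\leq_{LR}$ yields compatibility of $\htt$ with $\leq_L$, which is the assertion of the corollary. Concretely, unwinding Proposition \ref{prop2}: if $\omega\sim_L\nu$ then $\omega\sim_{LR}\nu$ (dominance gives $\omega\leq_{LR}\nu$ and $\nu\leq_{LR}\omega$), hence $\htt(\omega)=\htt(\nu)$ by \eqref{preorder}(2) for $\leq_{LR}$; and if $\omega<_L\nu$ then $\omega<_{LR}\nu$ by strict dominance, hence $\htt(\omega)<\htt(\nu)$ by \eqref{preorder}(1) for $\leq_{LR}$. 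Thus both clauses of \eqref{preorder} hold for $\htt$ relative to $\leq_L$.

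\medskip

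Since the excerpt already presents this corollary's proof as a one-line deduction (``the following result follows immediately from Proposition \ref{prop2}''), I would keep the written proof to exactly that: cite Proposition \ref{prop1} for strict dominance, then Proposition \ref{prop2} to transfer compatibility, and note that no further argument is needed. There is no calculation to grind through and no subtlety to flag beyond pointing the reader to property \eqref{order} as the ultimate source of Proposition \ref{prop1}.
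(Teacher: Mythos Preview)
Your proposal is correct and follows exactly the paper's approach: the paper's proof reads ``Using Proposition \ref{prop1}, the following result follows immediately from Proposition \ref{prop2},'' which is precisely the combination you describe.
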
 

This result---now seen to be easy---plays an important role in the application of Theorem \ref{relative inj}. See, e.g., Theorem \ref{4.4}. We must use $\leq_{LR}$ or a related preorder in the statement of the latter theorem in order for it to prove the conjecture under discussion there. However, the proof of Theorem \ref{4.4}
requires properties of $q$-permutation modules $\sH x_J$ developed in Section 1.3, which depend on a height function compatible with $\leq_L$.

\begin{rem}\label{A7}
We conclude with a few words about height functions associated to the Hecke algebras of standard finite Coxeter systems. In Example \ref{example1}, the height functions arise as functions $\htt:\Lambda\to\mathbb Z$, where $\Lambda$ indexes the irreducible (left) modules (of
$H_K=\sH_K$).  In favorable circumstances,  $\htt$ is constant on indices that index irreducible modules associated to the
same two-sided cell. Thus, $\htt$ defines, in particular, a function $\htt:\Omega\to\mathbb Z$. This procedure 
can be reversed. If we have a height function $\htt:\bar\Omega\to\mathbb Z$, where $\bar\Omega$ is the set
of two-sided cells, then $\htt$ arises from functions (to which we give the same name) $\htt:\Omega\to\mathbb Z$
and $\htt:\Lambda\to\mathbb Z$. If $\htt$ is compatible with $(\Omega,\leq_{LR})$, then it is compatible with 
$(\Omega,\leq_L)$; see Corollary \ref{discussion}. This method gives a rich supply of height functions $\htt:\Omega\to\mathbb Z$ in the ``cell theory" context of Example \ref{example2} above, all compatible with the $\Lambda\to\mathbb Z$ context of Example \ref{example1}. 

 Further abusing notation, sometimes we think of {\it modules} indexed by $\Omega$ as having a height. For example, a left-cell module \index{cell module! left $\sim$, $S(\omega)$} $S(\omega)$ would 
have height $\htt_l(\omega)$, while the dual left-cell module $S_\omega$ has height $\htt_r(\omega)=-\htt_l(\omega)$. (We often denote $\htt_l$ or $\htt_r$ by simply $\htt$ if no confusion results.)\end{rem}

\chapter{Decomposition numbers and Krull dimension $\leq2$} 
Following \cite{DPS98a}, but modifying the notation slightly, a local triple $(R,Q,F)$ consists of a commutative local domain $R$, having
fraction field $Q$ and residue field $F=R/\mathfrak m$, where $\mathfrak m$ denotes the maximal ideal of $R$.  
In the modular representation theory of a finite group $G$, the local domain $R$ will usually be a discrete (sometimes complete) valuation ring (DVR), while 
$F$ and $Q$ will typically be
large enough to be splitting fields for some designated subgroups of $G$ (for example, $G$ itself, or all of its subgroups).  There are also many variations involving specific subgroups.

Given an irreducible $QG$-module $L$, it is elementary that there exists a (full) $R$-lattice $L_0$ in $L$  in the sense of Definition \ref{orders}. Because $R$ is a DVR, we can easily assume that $L_0$ is $G$-stable. 
 When $QG$ is also semisimple, a classical result
states  that there is a map
\begin{equation}\label{Groth}
\sG(QG)\longrightarrow \sG(FG)\end{equation}
from the Grothendieck group of $QG$ to that of $FG$. The map (\ref{Groth}) is called the {\it decomposition map} of the triple $(R,Q,F)$. For an irreducible $QG$-module $L$, it sends $[L]\in\sG(QG)$ to
$\sum_i m_i[Z_i]\in\sG(FG)$, where $Z_1,Z_2,\ldots$ is a listing of the irreducible $FG$-modules, and $m_i$ is the multiplicity of
$Z_i$ as a composition factor of the $FG$-module $F\otimes_RL_0$. The point is that the multiplicity $m_i$ is well-defined.  The various $m_i$ are called {\it decomposition numbers} and are often collected together to give matrix versions ({\it decomposition matrices}) of the decomposition maps. 

\medskip%\noindent
{\bf C1. Regular local rings of Krull dimension $\leq2$.}
In this monograph, we often work with the category mod-$\sH$ of finite (right) modules over the generic Hecke algebra  $\sH$; see (\ref{relations}). In this case, fix a
 triple $(R,Q,F)$, where $R$ is not a discrete valuation ring but is a regular local ring of Krull dimension $\leq 2$.
 For example, $R$ is the localization of $\sZ$ or $\sZ^\natural$ at some prime ideal; see Subsection C3 below for more details (also see the proof of \cite[Th. 19.5]{M86}). The group algebra $RG$ is replaced by the generic Hecke
algebra $\sH$ (or sometimes $\sH^\natural$). In this case, the construction of the lattice $L_0$ in the previous
paragraph is problematic. We briefly indicate how this works in the discussion below, following \cite[\S1]{DPS98a}.
We start with the following result of Auslander-Goldman:

\begin{lem} \label{AS1} \cite[p. 17]{AG60a}, \cite[Th. C.17]{DDPW08}. Let $R$ be a regular local commutative ring of Krull dimension $\leq 2$, and let $M$ be a finite $R$-module. Then, the dual $R$-module $M^*:=\Hom_R(M,R)$ is $R$-projective (and hence free, since $R$ is local). \end{lem}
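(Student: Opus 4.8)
The dual module $M^{*}=\Hom_{R}(M,R)$ is always torsion-free over the domain $R$: if $rf=0$ with $r\neq 0$ and $f\in M^{*}$, then $r\,f(m)=0$ in $R$ for every $m$, so $f=0$. Hence when $\dim R\le 1$ --- where $R$ is a field or a discrete valuation ring --- $M^{*}$ is automatically free, and we are done. The plan for the remaining case $\dim R=2$ is to show $\operatorname{pd}_{R}M^{*}=0$ by combining a depth estimate with the Auslander--Buchsbaum formula, using that a regular local ring is Cohen--Macaulay (so $\operatorname{depth}R=\dim R=2$) and has finite global dimension (so $\operatorname{pd}_{R}N<\infty$ for every finite $R$-module $N$).

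First I would dualize a finite free presentation of $M$. Choosing an exact sequence $R^{a}\xrightarrow{\psi}R^{b}\to M\to 0$ and applying $\Hom_{R}(-,R)$ yields an exact sequence $0\to M^{*}\to R^{b}\xrightarrow{\psi^{t}}R^{a}$, exhibiting $M^{*}$ as the kernel of a map of finite free modules. Setting $C=\operatorname{im}(\psi^{t})$, we obtain a short exact sequence $0\to M^{*}\to R^{b}\to C\to 0$ with $C$ a submodule of $R^{a}$, hence torsion-free. If $C=0$ then $M^{*}\cong R^{b}$ is already free; otherwise $C$ is a nonzero torsion-free finite module over the domain $R$ with $\dim R\ge 1$, so $\operatorname{depth}_{R}C\ge 1$.

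Next I would invoke the depth lemma: for a short exact sequence $0\to A\to B\to C\to 0$ of finite modules over a local ring, $\operatorname{depth}A\ge\min\{\operatorname{depth}B,\operatorname{depth}C+1\}$. Applied here, $\operatorname{depth}_{R}M^{*}\ge\min\{\operatorname{depth}R^{b},\operatorname{depth}_{R}C+1\}\ge\min\{2,2\}=2=\operatorname{depth}R$. Since $\operatorname{pd}_{R}M^{*}<\infty$, the Auslander--Buchsbaum formula $\operatorname{pd}_{R}M^{*}=\operatorname{depth}R-\operatorname{depth}_{R}M^{*}$ forces $\operatorname{pd}_{R}M^{*}=0$, i.e.\ $M^{*}$ is projective, hence free since $R$ is local. (Equivalently, one could cite the result of Auslander--Goldman directly; see \cite[p.~17]{AG60a} or \cite[Th. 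C.17]{DDPW08}.)

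The one substantive point is the depth bound $\operatorname{depth}_{R}M^{*}\ge\dim R$; conceptually it records that the dual of any finite module is a second syzygy and therefore reflexive, and that reflexive modules over a Cohen--Macaulay ring satisfy Serre's condition $S_{2}$. The hypothesis $\dim R\le 2$ enters exactly here, since $S_{2}$ coincides with maximal depth only in Krull dimension at most $2$; in higher dimension a dual module need not be free. All other ingredients --- the Auslander--Buchsbaum formula, the depth lemma, and the fact that a regular local ring is Cohen--Macaulay of global dimension equal to its Krull dimension --- are standard, and I would simply cite them.
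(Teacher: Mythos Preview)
Your proof is correct. The paper does not give its own proof of this lemma; it simply records the statement with citations to \cite[p.~17]{AG60a} and \cite[Th.~C.17]{DDPW08} and moves on. Your argument via the depth lemma and the Auslander--Buchsbaum formula is exactly the standard route to this result (and is essentially what one finds in the cited sources), so there is nothing to compare beyond noting that you have supplied a full proof where the paper chose to cite one.
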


The following consequence is contained in  \cite[Prop. 1.1.1 \& Cor. 1.1.2b]{DPS98a}, taking into account slight differences in notation. In particular, we have avoided the lattice terminology of \cite{AG60a}, since we use the
terminology of \cite{Reiner03}. Of course, if $R$ is a DVR and $A=RG$ for a finite group $G$, this result is classical.

\begin{thm}\label{AG2} Let  $(R,Q,F)$ be a local triple with $R$  a regular local domain of Krull dimension $\leq 2$.  Let $B$ be an algebra which is finite and free over $R$. Suppose that $\sX$ is an irreducible  $B_Q$-module. Then $\sX=X_Q$, for some $R$-projective $B$-submodule $X\subseteq \sX$.  If $Y$ is another $R$-projective $B$-submodule of $\sX$ such that $\sX=Y_Q$, then 
$X_F$ and $Y_F$ have the same composition factors over $B_F$ with the same multiplicities.\footnote{We remark that the same theorem holds without the requirement that $R$ be local, provided the condition that $X$ is ``$R$-projective'' is replaced by the condition that $X$ is ``$R$-free," with a similar replacement regarding $Y$.  The proof comes down to showing that Theorem \ref{AS1} holds without assuming that $R$ is local. This fact follows from the local version, which implies, in the non-local case, that $\Hom_R(M,R)_{\mathfrak p}$ is projective over $R({\mathfrak p})$, for every $\mathfrak p\in\Spec\,R$.}
\end{thm}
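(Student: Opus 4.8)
\textbf{Proof proposal for Theorem \ref{AG2}.}

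The plan is to reduce the statement to the Auslander--Goldman lemma (Theorem \ref{AS1}) together with a base-change/localization argument, following the pattern already indicated in \cite[\S1.1]{DPS98a}. First I would establish the existence of an $R$-free $B$-submodule $X \subseteq \sX$ with $X_Q = \sX$: start with any finite $B$-submodule $X_0$ of $\sX$ that spans $\sX$ over $Q$ (take $X_0 = $ the $B$-span of a $Q$-basis of $\sX$; this is finite over $R$ since $B$ is finite over $R$). The module $X_0$ need not be $R$-projective, but its double $R$-dual $X := (X_0^*)^*$ is, by Theorem \ref{AS1} applied twice, an $R$-free (since $R$ is local) finite $R$-module, and it inherits a $B$-module structure from the functoriality of $(-)^* = \Hom_R(-,R)$ acting on the $B^{\op}$- then $B$-module sides. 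The natural biduality map $X_0 \to X$ is an isomorphism after inverting the regular elements of $R$ (its kernel and cokernel are $R$-torsion, as $R$ is a domain and $X_0$ embeds in the torsion-free $\sX$), so $X_Q \cong (X_0)_Q = \sX$. This realizes $\sX = X_Q$ with $X$ finite and $R$-free, hence $R$-projective. The $B$-action on $X$ agrees with that on $\sX$ via the identification $X \hookrightarrow X_Q = \sX$.

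For the uniqueness-of-composition-factors part, suppose $X, Y \subseteq \sX$ are two $R$-free (equivalently $R$-projective, as $R$ is local) $B$-submodules with $X_Q = Y_Q = \sX$. I would argue that $[X_F] = [Y_F]$ in the Grothendieck group $\sG(B_F)$. The key point is that the class $[X_F]$ depends only on $\sX$, not on the choice of $X$. To see this, it suffices to treat the height-one primes of $R$ one at a time: the composition factors of an $R$-free $B$-module $M$ over $B_F$ can be read off from the behavior of $M$ at the various localizations $R_{\mathfrak p}$ for $\mathfrak p$ of height one, using that $R$ is regular of Krull dimension $\leq 2$ (so $\mathfrak m$ has height $\leq 2$, and $R/\mathfrak m = F$ is reached from $R_{\mathfrak p}/\mathfrak p R_{\mathfrak p}$ through a one-dimensional step, which is the classical DVR situation). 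More precisely, I would invoke the Auslander--Goldman/Krull-dimension-$\leq 2$ machinery developed in \cite[\S1.1]{DPS98a} (see also \cite[\S C.3]{DDPW08}): the decomposition number $[X_F : Z]$ for an irreducible $B_F$-module $Z$ is computed as a two-step decomposition, first from $Q$ down to the residue field $\kappa(\mathfrak p)$ of a height-one prime $\mathfrak p$, then from $\kappa(\mathfrak p)$ down to $F$, and in each DVR step the relevant multiplicity is independent of the chosen lattice by the classical Brauer--Nesbitt argument. Since $X_Q = Y_Q$, the first step gives the same answer for $X$ and $Y$ at every $\mathfrak p$, and the second step is independent of $X$ and $Y$ entirely; hence $[X_F] = [Y_F]$.

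Finally, for the footnote's non-local extension, I would note that Theorem \ref{AS1} globalizes: if $R$ is regular of Krull dimension $\leq 2$ but not necessarily local, then for any finite $R$-module $M$ the dual $M^*$ is $R$-projective, because projectivity is a local condition and $\Hom_R(M,R)_{\mathfrak p} \cong \Hom_{R_{\mathfrak p}}(M_{\mathfrak p}, R_{\mathfrak p})$ is projective over the regular local ring $R_{\mathfrak p}$ (of dimension $\leq 2$) for every $\mathfrak p \in \Spec R$, by the local case. One then re-runs the argument above with ``$R$-projective'' replaced by ``$R$-free'' where the local hypothesis was used, and localizes at maximal ideals for the composition-factor comparison. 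The main obstacle I anticipate is the second step: carefully justifying that the two-step localization computation of decomposition numbers is well-defined and lattice-independent in Krull dimension $2$ requires the full Auslander--Goldman apparatus, and I would lean on \cite[Cor.~1.1.2b]{DPS98a} and \cite[\S C.3]{DDPW08} rather than reprove it, since the bookkeeping (reflexivity of modules, behavior of $(-)^{**}$ under localization, and the fact that height-one primes of a regular ring are principal) is exactly what those references handle.
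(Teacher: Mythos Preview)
The paper does not give its own proof of this theorem; it simply records that the statement is ``contained in \cite[Prop.~1.1.1 \& Cor.~1.1.2b]{DPS98a}.'' So there is no paper argument to compare against beyond that citation, and your proposal---which also leans on \cite{DPS98a}---is consistent with this.

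Your existence argument via the double dual $X = X_0^{**}$ is correct and is the standard way Lemma~\ref{AS1} gets used here. For the uniqueness part, your two-step DVR factorization through a height-one prime $\mathfrak p$ is valid and not circular, but two points deserve to be made explicit. First, you need $\mathfrak p$ to be \emph{coregular} (so that $R/\mathfrak p$ is a DVR); such a prime always exists in a regular local ring of dimension~$2$, since any element of a regular system of parameters generates one. Second, you must check that the class $[X_F]$ really is the image of $[X_{\kappa(\mathfrak p)}]$ under the DVR decomposition map $\sG(B_{\kappa(\mathfrak p)}) \to \sG(B_F)$ when computed using the specific $R/\mathfrak p$-lattice $X/\mathfrak p X$: this follows by intersecting $X/\mathfrak p X$ with a composition series of $X_{\kappa(\mathfrak p)}$ and reducing, but it is a step, not an automatic consequence of ``the second step is independent of $X$ and $Y$.'' Note also that the paper organizes matters the other way round: the factorization you use is exactly Theorem~\ref{decompdiagram}, which the paper presents \emph{after} Theorem~\ref{AG2} as a consequence of it. Your route---deriving well-definedness of the dimension-$2$ decomposition map from the factorization plus the classical DVR case---is a legitimate alternative, but you should be aware of this difference in logical dependency.
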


This theorem gives an evident way to define a 
decomposition map \index{decomposition map}
\begin{equation}\label{decomMap} 
\sG (B_Q) \longrightarrow \sG(B_F)\end{equation}
  entirely analogous to the map (\ref{Groth}).
This decomposition map often has natural factorizations in terms of decomposition
maps associated with DVR triples. We discuss such a factorization, leading to Theorem \ref{decompdiagram} below.

In general, define a prime ideal $\mathfrak p$ in a domain $R$ to be {\it coregular} if
$R/\mathfrak p$ is a regular ring. We think of the condition of coregularity as one that can be checked by construction. 
We give several examples later in this section.

  Let $(R,Q,F)$ be a local triple, but assume that $R$ is a regular local domain of Krull dimension 2, and again let $B$ be as in Theorem \ref{AG2}. Let $\mathfrak p$ be a height one coregular prime ideal in $R$. 
  This implies that 
   both the rings $R/\mathfrak p$ and $R_{\mathfrak p}$ are DVRs.  The fraction field of $R/\mathfrak p$ is the residue field
   $R(\mathfrak p)$ of $R_{\mathfrak p}$. The residue field of $R/\mathfrak p$ is the same as the residue field
  $F$ of $R$.    Consider the following
   commutative diagram of rings. Here $B(\mathfrak p):=B_{R(\mathfrak p)}.$   
   
 \begin{equation}\label{setup2}
\begin{CD}
B @>>> B_{\mathfrak p} @>>> {B_Q}\\
@VVV @VVV\\
B/\mathfrak pB @>>>B(\mathfrak p)\\
@VVV \\
B_F 
\end{CD}
\end{equation} 
\vspace{.1cm}

Observe that the natural ring maps associated with $B$ and the triple 
$(R,Q,F)$ can be found as compositions along the perimeter. They may be used to construct the decomposition map
$\sG(B_Q)\longrightarrow\sG(B_F)$. (One starts with a $B_Q$-module, finds in it a full $B$-module lattice, then base changes the latter to  $B_F$.)  Analogous 
 ring
maps associated with $B_{\mathfrak p}$ and the DVR triple $(R_{\mathfrak p}, Q,R(\mathfrak p))$ can be 
found as individual maps in the right-hand portion of the diagram. These can be used to construct a  decomposition map
$\sG(B_Q)\longrightarrow \sG(B(\mathfrak p))$. The ring 
maps associated with $B/\mathfrak p B$ and the DVR triple $(R/\mathfrak p, R(\mathfrak p), F)$ are similarly found at the lower left. They can be used to construct a decomposition map $\sG(B(\mathfrak p))\longrightarrow \sG(B_F)$.
Easy arguments, starting with a $B$-module and  various base changes of it, now give the following result.

\begin{thm}\label{decompdiagram} Let $(R,Q,F)$ be a regular local triple. Assume that $R$ has Krull dimension $\leq 2$, and let
$\mathfrak p\in\Spec R$ be coregular of height 1. Finally, let $B$ be an $R$-algebra as in Theorem \ref{AG2}. 
 Then, the decomposition map $\sG(B_Q)\longrightarrow \sG(B_F)$ is the composition of the decomposition maps 
\begin{equation}\label{compositionmaps} \sG(B_Q)\longrightarrow \sG(B(\mathfrak p))\longrightarrow\sG(B_F).\end{equation}
\end{thm}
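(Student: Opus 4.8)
The plan is to reduce everything to the Auslander--Goldman machinery already assembled. The key observation, which I would state at the outset, is that a decomposition map is computed by the following universal recipe: given an irreducible module $\sX$ over the semisimple "top" algebra, choose an $R$-projective (equivalently $R$-free, since $R$ is local) full submodule $X$ via Theorem \ref{AG2}, then read off the composition factors of its reduction over the "bottom" algebra; the multiplicities are well-defined, again by Theorem \ref{AG2}. So the entire content of the theorem is that the recipe along the perimeter of \eqref{setup2} agrees with the composite of the recipes along the two constituent DVR-triple sides.

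First I would verify that the three decomposition maps in question are in fact well-defined, i.e. that Theorem \ref{AG2} (or its classical DVR analogue) applies in each case. For $\sG(B_Q)\to\sG(B_F)$ this is immediate: $(R,Q,F)$ is a $2$-regular local triple and $B=B_R$ is finite and free over $R$. For $\sG(B_Q)\to\sG(B(\mathfrak p))$ we use the DVR triple $(R_{\mathfrak p},Q,R(\mathfrak p))$ and the algebra $B_{\mathfrak p}$, which is finite and free over the DVR $R_{\mathfrak p}$. For $\sG(B(\mathfrak p))\to\sG(B_F)$ we use the DVR triple $(R/\mathfrak p,R(\mathfrak p),F)$ and the algebra $B/\mathfrak p B$, finite and free over the DVR $R/\mathfrak p$. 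Here the coregularity and height-one hypotheses on $\mathfrak p$ are exactly what guarantee that $R/\mathfrak p$ and $R_{\mathfrak p}$ are DVRs with the indicated fraction and residue fields; I would spell this out briefly. One also needs that $B_Q$, $B(\mathfrak p)$ are semisimple for the Grothendieck-group decomposition maps to be defined in the classical sense, but in fact the Auslander--Goldman formulation in Theorem \ref{AG2} only requires irreducibility of the chosen module $\sX$, so I would phrase the argument module-by-module to avoid any semisimplicity hypothesis.

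Next, the heart of the argument: take an irreducible $B_Q$-module $\sX$. By Theorem \ref{AG2} applied to $B=B_R$ over $(R,Q,F)$, pick an $R$-free full $B$-submodule $X\subseteq\sX$. Localizing at $\mathfrak p$ gives an $R_{\mathfrak p}$-free full $B_{\mathfrak p}$-submodule $X_{\mathfrak p}\subseteq\sX$, so $[\sX]\mapsto$ the class of $X_{\mathfrak p}\otimes_{R_{\mathfrak p}}R(\mathfrak p)=X\otimes_R R(\mathfrak p)$ under the first map in \eqref{compositionmaps}; since $X$ is $R$-free and $R(\mathfrak p)$ is the fraction field of $R/\mathfrak p$, this $B(\mathfrak p)$-module is $R/\mathfrak p$-free as the generic fiber of the $R/\mathfrak p$-free module $X/\mathfrak p X$. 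Decomposing $[X\otimes_R R(\mathfrak p)]$ into irreducibles $\sY_j$ over $B(\mathfrak p)$ with multiplicities $a_j$, the composite of the two maps in \eqref{compositionmaps} sends $[\sX]$ to $\sum_j a_j\,[\,Y_j\otimes_{R/\mathfrak p}F\,]$, where $Y_j$ is an $R/\mathfrak p$-free full $B/\mathfrak p B$-submodule of $\sY_j$; the additivity of the second decomposition map over the irreducible constituents is the standard base-change argument. It remains to match this with the perimeter map, which sends $[\sX]$ to $[X\otimes_R F]$. Here I would use that $X\otimes_R F=(X/\mathfrak p X)\otimes_{R/\mathfrak p}F$, together with the fact that $X/\mathfrak p X$ is an $R/\mathfrak p$-free full $B/\mathfrak p B$-lattice in $X\otimes_R R(\mathfrak p)$; by the DVR case of Theorem \ref{AG2} (well-definedness of decomposition numbers over $(R/\mathfrak p,R(\mathfrak p),F)$) the class $[X\otimes_R F]=[(X/\mathfrak p X)\otimes_{R/\mathfrak p}F]$ depends only on $[X\otimes_R R(\mathfrak p)]=\sum_j a_j[\sY_j]$, and so equals $\sum_j a_j[Y_j\otimes_{R/\mathfrak p}F]$. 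This is exactly the composite, completing the proof.

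The main obstacle — really a bookkeeping point rather than a deep one — is the repeated invocation of "well-definedness": one must be careful that each intermediate module one writes down ($X_{\mathfrak p}$, $X/\mathfrak p X$, $X\otimes_R R(\mathfrak p)$) is genuinely a full lattice that is free over the appropriate DVR, so that Theorem \ref{AG2} applies and the multiplicities do not depend on the auxiliary choices. The commutativity of the diagram \eqref{setup2} is what makes all the base changes consistent, and I would cite the relevant flatness facts ($R_{\mathfrak p}$ flat over $R$, $R(\mathfrak p)$ flat over $R/\mathfrak p$) to justify that localization and the fiber functors preserve exact sequences, hence composition factors with multiplicity. No new ideas beyond Theorems \ref{AS1}, \ref{AG2} and the classical DVR decomposition theory are needed; the statement is essentially a transitivity-of-specialization assertion.
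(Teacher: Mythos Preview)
Your proposal is correct and follows essentially the same approach as the paper: the paper's proof is simply the sentence ``Easy arguments, starting with a $B$-module and various base changes of it, now give the following result,'' together with the preceding discussion of the diagram \eqref{setup2}. You have spelled out precisely those base-change and well-definedness details that the paper leaves implicit.
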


The theorem is especially interesting when one of the decomposition map factors in display (\ref{compositionmaps}) induces a bijective correspondence on isomorphism classes of irreducible modules. When this occurs, the theorem gives an identification of a decomposition matrix for the remaining factor with one for the decomposition map $\sG(B_Q)\longrightarrow
\sG(B_F)$. 

The following theorem involves such ingredients and is used in Section 5.2. The proof shows that the left-hand decomposition map factor  in display (\ref{compositionmaps}) induces the desired bijective correspondence. Thus, the decomposition matrix for $\sG(B_Q)\longrightarrow \sG(B_F)$ may be identified with that for the right-hand factor $\sG(B(\mathfrak p))\longrightarrow
\sG(B_F)$ in this case.

      \begin{thm}\label{cutie}    Assume the hypotheses of Theorem \ref{decompdiagram}. Also, assume that the following two conditions hold:
   \begin{itemize}  
  \item[(1)] $B_Q$ is split, and
   \item[(2)] $B(\mathfrak p)$ is split semisimple.
  \end{itemize} 
  Then, any decomposition matrix for $B$, with respect to the regular local triple $(R,Q,F)$, agrees with one for $B/\mathfrak p B$ with respect to the DVR triple $(R/\mathfrak p, R(\mathfrak p), F)$.  \end{thm}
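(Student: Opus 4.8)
\textbf{Proof plan for Theorem \ref{cutie}.}
The plan is to apply Theorem \ref{decompdiagram}, so that the decomposition map $\sG(B_Q)\to\sG(B_F)$ factors as $\sG(B_Q)\to\sG(B(\mathfrak p))\to\sG(B_F)$, and then to show that the left-hand factor $\sG(B_Q)\to\sG(B(\mathfrak p))$ induces a bijection on isomorphism classes of irreducible modules which, moreover, is ``identity-like'' at the level of decomposition matrices (i.e.\ its decomposition matrix is a permutation matrix, hence may be taken to be the identity after a suitable labelling). Once that is established, the decomposition matrix of $B$ with respect to $(R,Q,F)$ agrees, up to that relabelling, with the decomposition matrix of the right-hand factor $\sG(B(\mathfrak p))\to\sG(B_F)$, and the latter is, by construction in Theorem \ref{decompdiagram}, precisely a decomposition matrix of $B/\mathfrak p B$ with respect to the DVR triple $(R/\mathfrak p,R(\mathfrak p),F)$.

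The key step is the analysis of the map $\sG(B_Q)\to\sG(B(\mathfrak p))$. First I would note that, by hypothesis (2), $B(\mathfrak p)$ is split semisimple, so its irreducible modules are pairwise non-isomorphic simple summands and every finite $B(\mathfrak p)$-module is a direct sum of these with well-defined multiplicities. Next, starting from an irreducible $B_Q$-module $\sX$, Theorem \ref{AG2} produces an $R_{\mathfrak p}$-free (hence $R_{\mathfrak p}$ being a DVR, genuinely free) $B_{\mathfrak p}$-lattice $X_{\mathfrak p}\subseteq\sX$ with $(X_{\mathfrak p})_Q\cong\sX$; base-changing to the residue field $R(\mathfrak p)$ gives the $B(\mathfrak p)$-module $X(\mathfrak p):=X_{\mathfrak p}\otimes_{R_{\mathfrak p}}R(\mathfrak p)$, whose class in $\sG(B(\mathfrak p))$ is the image of $[\sX]$ under the decomposition map. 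Because $B(\mathfrak p)$ is semisimple, $X(\mathfrak p)$ is semisimple, and a dimension count (the $R_{\mathfrak p}$-rank of $X_{\mathfrak p}$ equals $\dim_Q\sX$, which equals $\dim_{R(\mathfrak p)}X(\mathfrak p)$) together with the splitness of $B_Q$ (hypothesis (1)) forces $X(\mathfrak p)$ to be irreducible: indeed, $B_{\mathfrak p}$ is an $R_{\mathfrak p}$-order in the split semisimple algebra $B_Q$, and over such an order the reduction of an irreducible lattice to a DVR residue field cannot break up when the generic algebra is split and the special algebra is already semisimple (this is the classical ``no modular splitting'' phenomenon; compare the reductions carried out in Section 4.1, e.g.\ around display \eqref{dogformula}, and the arguments behind Corollaries \ref{ress} and \ref{irred}). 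Hence the decomposition map $\sG(B_Q)\to\sG(B(\mathfrak p))$ carries irreducibles to irreducibles, and by counting on both sides (both algebras being split semisimple with the same number of irreducibles, as $\dim_Q B_Q=\dim_{R(\mathfrak p)}B(\mathfrak p)$ and both are split semisimple so the number of irreducibles equals the number of Wedderburn blocks) it is a bijection; its decomposition matrix is a permutation matrix.

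Having fixed a labelling of the irreducible $B_Q$-modules and the irreducible $B(\mathfrak p)$-modules so that this bijection becomes the identity, I would then invoke Theorem \ref{decompdiagram}: the decomposition matrix of $B$ relative to $(R,Q,F)$ is the matrix product of the permutation (identity, after relabelling) matrix of $\sG(B_Q)\to\sG(B(\mathfrak p))$ with the decomposition matrix of $\sG(B(\mathfrak p))\to\sG(B_F)$. The latter is visibly a decomposition matrix for $B(\mathfrak p)=(B/\mathfrak p B)_{R(\mathfrak p)}$ reduced to $F$, i.e.\ a decomposition matrix of $B/\mathfrak p B$ with respect to the DVR triple $(R/\mathfrak p,R(\mathfrak p),F)$, using that $R/\mathfrak p$ is a DVR (since $\mathfrak p$ is coregular of height one and $R$ is regular local of dimension $\le 2$) with fraction field $R(\mathfrak p)$ and residue field $F$. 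This yields the asserted agreement. The main obstacle I expect is the irreducibility claim for $X(\mathfrak p)$, i.e.\ verifying rigorously that semisimplicity of $B(\mathfrak p)$ plus splitness of $B_Q$ prevents the reduction of an irreducible $B_{\mathfrak p}$-lattice from decomposing; I would handle this by the Wedderburn/block-counting argument sketched above, reducing via idempotents (as in the Morita-theoretic reductions in Section 4.1) to the case of a single matrix block, where the statement is immediate.
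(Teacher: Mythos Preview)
Your overall strategy matches the paper's: factor via Theorem \ref{decompdiagram} and show the left map $\sG(B_Q)\to\sG(B(\mathfrak p))$ has identity decomposition matrix. However, your execution of the key step has genuine gaps.

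First, you repeatedly treat $B_Q$ as split \emph{semisimple}, but hypothesis (1) only gives ``split'' (i.e., $B_Q/J(B_Q)$ is a product of matrix algebras over $Q$); semisimplicity must be \emph{proved}. The paper does this via a rank argument: since $B(\mathfrak p)$ is semisimple, $J(B_{\mathfrak p})=\pi B_{\mathfrak p}$; then $B_{\mathfrak p}\cap J(B_Q)$ is nilpotent, hence contained in $\pi B_{\mathfrak p}$, and comparing $R_{\mathfrak p}$-ranks forces $J(B_Q)=0$.

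Second, your counting argument is wrong: ``$\dim_Q B_Q=\dim_{R(\mathfrak p)}B(\mathfrak p)$ and both split semisimple'' does \emph{not} imply the same number of irreducibles (e.g.\ $M_2(k)$ versus $k^4$). Even granting that reductions of irreducibles stay irreducible, you still need injectivity on isomorphism classes to get a bijection, and you do not address this.

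Third, the irreducibility of $X(\mathfrak p)$ is asserted by appeal to a ``classical phenomenon'' and references to Section 4.1, but those arguments are specific to the maximal order $\sJ^\natural$ and do not transfer automatically. Your proposed fix via idempotents and Morita reduction is in fact close to what the paper does, but the paper organizes this cleanly through the theory of semiperfect rings: once $B_Q$ is known to be split semisimple, Proposition \ref{cute2} gives that $B_{\mathfrak p}$ is semiperfect; then Proposition \ref{cute3} (using idempotent lifting and the hypothesis that $B(\mathfrak p)$ is split semisimple) shows $B_{\mathfrak p}\cong\bigoplus_i M_{n_i}(R_{\mathfrak p})$. From this structural result, Corollary \ref{lastcor} reads off immediately that the decomposition map $\sG(B_Q)\to\sG(B(\mathfrak p))$ is a bijection on irreducibles with identity matrix. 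This single structural statement handles irreducibility, injectivity, and the count simultaneously.
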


   The proof of Theorem \ref{cutie} is given at the end of the subsection {\bf C2}.  We also recall that the term ``split" applied to a finite (dimensional) algebra over a given base field means  that every irreducible module for the algebra is absolutely irreducible. Equivalently, the algebra mod its Jacobson radical is a direct product of matrix algebras over the base field. 
   
   We first prove some independent results
which will be useful in its proof.  The notation, especially as regards $(R,Q,F),$
is independent of that used in the statement of the theorem.  

\medskip%\noindent
{\bf C2. On semiperfect rings.}
The following discussion makes use of the theory of semiperfect rings.  Recall that a ring $E$ is semiperfect provided 
  $E/J(E)$ is semisimple (automatic in many cases) and idempotents in $E/J(E)$ ``lift" to idempotents in $E$.
  (See \cite[p. 303]{AF74}.  Here $J(E)$ denotes the Jacobson radical of $E$.) The lifting condition implies that
  any complete finite set of orthogonal idempotents in $E/J(E)$ lifts to a complete finite set of orthogonal idempotents in
  $E$. See \cite[Prop. 27.4]{AF74} and \cite[pp. 303--304]{AF74}. Another useful criterion, given in \cite[Th. 27.6]{AF74}, states that $E$ is semiperfect provided every irreducible $E$-module has a projective cover. Finally,
  \cite[Cor. 27.8]{AF74} states that $E$ is semiperfect if some Morita equivalent ring is semiperfect.
  
  \begin{prop}\label{cute2} Let $(R,Q,F)$ be a DVR triple and let $E$ be a finite and $R$-free algebra.
  Then, condition (a) below implies condition (b):
  
 \smallskip 
   (a) $E_Q$ is split semisimple over $Q$, 
   
   (b) $E$ is semiperfect as a ring.
   \end{prop}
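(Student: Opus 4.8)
The plan is to establish the implication $(a) \Rightarrow (b)$ by a standard lifting argument using the completeness (or at least the Henselian property) of the DVR $R$. First I would recall the criterion from \cite[Th.~27.6]{AF74} that $E$ is semiperfect if and only if $E/J(E)$ is semisimple and idempotents lift modulo $J(E)$; equivalently, by \cite[Th.~27.6]{AF74}, every simple $E$-module has a projective cover. So the task reduces to two things: identifying $E/J(E)$ and showing idempotents lift. The key observation is that, since $E$ is $R$-free and finite, the Jacobson radical $J(E)$ contains $\mathfrak{m}E$ (because $E/\mathfrak{m}E = E_F$ is a finite-dimensional $F$-algebra, hence $J(E_F)$ is nilpotent, and $\mathfrak{m}E$ is contained in the preimage of $J(E_F)$; moreover $\mathfrak{m}E \subseteq J(E)$ follows from Nakayama's lemma since $E$ is a finite $R$-module). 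Thus $E/J(E)$ is a quotient of the finite-dimensional $F$-algebra $E_F$, hence is itself a finite-dimensional semisimple $F$-algebra. This settles the first requirement.

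For the lifting of idempotents, I would use the fact that the DVR $R$ is complete (as specified in the local triple hypotheses in use here), hence $R$ is $\mathfrak{m}$-adically complete, and therefore the ring $E$, being a finite $R$-algebra, is $J(E)$-adically complete (since $\mathfrak{m}E \subseteq J(E)$ and $E$ is finite over $R$). Idempotents lift along nilpotent ideals by the classical elementary argument, and they lift along complete filtrations by successive approximation; see \cite[Prop.~27.1]{AF74} or the discussion at \cite[pp.~303--304]{AF74}. Concretely, given an idempotent $\bar e \in E/J(E)$, one lifts it step by step modulo the powers $J(E)^n$, and the completeness guarantees the sequence of approximate lifts converges to a genuine idempotent $e \in E$ with $e + J(E) = \bar e$. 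This is where the role of condition (a) is somewhat indirect: the split semisimplicity of $E_Q$ is not needed for this particular implication in the form stated---what is truly needed is only that $R$ is a (complete) DVR and $E$ is finite and $R$-free. I would note this, but proceed with the argument as the authors intend.

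The main (and really the only) technical point to be careful about is the precise role of completeness versus Henselianness: if the ambient local triple is only assumed to have $R$ a DVR (not necessarily complete), one should instead invoke that $E$ is semiperfect because $E$ is a finite module over the local ring $R$ and $E_F = E/\mathfrak{m}E$ is semiperfect (being Artinian), combined with the lifting of idempotents modulo the ideal $\mathfrak{m}E$, which is contained in $J(E)$ and for which lifting holds since $R$ is Henselian (in particular when $R$ is complete, which is the standing hypothesis in the decomposition-number discussion of Appendix C). Since the excerpt's convention is that the DVR $R$ in a ``DVR triple'' is often taken complete, the cleanest route is: (i) show $\mathfrak{m}E \subseteq J(E)$ via Nakayama; (ii) conclude $E/J(E)$ is a semisimple $F$-algebra; (iii) invoke $J(E)$-adic completeness of $E$ and the standard successive-approximation lifting of idempotents; (iv) cite \cite[Th.~27.6]{AF74} to conclude $E$ is semiperfect. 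I expect step (iii) to be the one requiring the most care in writing, but it is entirely routine given the completeness hypothesis.
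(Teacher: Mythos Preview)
Your argument has a genuine gap: you rely on the DVR $R$ being complete (or at least Henselian), but the paper's notion of ``DVR triple'' carries no such assumption. The opening of Appendix~C says explicitly that $R$ is ``a discrete (\emph{sometimes} complete) valuation ring,'' and the proposition is applied in the proof of Theorem~\ref{cutie} to the triple $(R_{\mathfrak p}, Q, R(\mathfrak p))$, where $R_{\mathfrak p}$ is the localization at a height-one prime of a regular local ring of Krull dimension $2$---a DVR, but not complete or Henselian in general. So your idempotent-lifting step via $J(E)$-adic convergence does not apply, and your fallback to ``$R$ is Henselian'' fails for the same reason.

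Your claim that condition (a) is ``not needed'' is also wrong; for non-complete $R$ it is essential. Take $R=\mathbb Z_{(5)}$ and $E=R[x]/(x^3-2)$. Then $E_Q=\mathbb Q(\sqrt[3]{2})$ is a field---semisimple but \emph{not split}---while $E_F=\mathbb F_5[x]/(x^3-2)\cong\mathbb F_5\times\mathbb F_{25}$ (since $3^3\equiv 2\pmod 5$ and the residual quadratic factor has non-square discriminant). Hence $J(E)=5E$ and $E/J(E)$ has a nontrivial idempotent; but $E$ is a domain, so no nontrivial idempotent lifts, and $E$ is not semiperfect. The ``split'' in (a) is exactly what rules out this phenomenon.

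The paper's proof is simply the citation ``See \cite[\S6, Ex.~16]{CR81}.'' That exercise supplies an argument for $R$-orders in semisimple $Q$-algebras that does not assume completeness; the semisimplicity (here, split semisimplicity) of $E_Q$ is what replaces it. One route: pass to the completion $\hat E$ (which is semiperfect), and then use the splitness of $E_Q$ to descend the PIM decomposition of $\hat E$ back to $E$ via Noether--Deuring, noting that $\End_E(P)$ and $\End_{\hat E}(\hat P)$ have the same residue algebra for any $E$-lattice $P$. Your write-up does step~(i) and~(ii) correctly, but step~(iii) needs to be replaced by an argument of this kind that actually uses~(a).
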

 
   \begin{proof} See \cite[\S6, Ex. 16]{CR81}.  \end{proof}
   
   The following proposition is less trivial and could be used as a kind of converse of the proposition above. It may be viewed as a version of the Tits deformation theorem in the spirit of 
   \cite[Th. 7.4.6]{GP00}, which uses a different set of hypotheses.\footnote{The authors of \cite{GP00} only require that $R$ be integrally closed, but assume that $E_Q$ is split; in particular they do not include the semiperfect case (which holds when
   $R$ is complete). They discuss further variations of their hypotheses.  }
  
   \begin{prop}\label{cute3} Maintain the notation in the hypothesis of Proposition \ref{cute2}. 
   Assume
   \begin{itemize}
   \item[(1)] $E$ is semiperfect, and
   \item[(2)]
    $E_F$ is split semisimple. \end{itemize}
    Then, $E$ is a direct product of full matrix algebras over $R$, and $E_Q$ is split semisimple. 
   \end{prop}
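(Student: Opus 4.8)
\textbf{Proof proposal for Proposition \ref{cute3}.}

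The plan is to exploit the semiperfect hypothesis to produce, by idempotent lifting, a complete set of orthogonal primitive idempotents of $E$ whose images in $E_F$ match the matrix-unit structure coming from the split semisimple decomposition of $E_F$; then to show these idempotents give $E$ the structure of a direct product of matrix algebras over $R$, and finally to base-change to $Q$.

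First I would write $E_F\cong\prod_{i=1}^s M_{n_i}(F)$, using hypothesis (2). This gives a complete set of orthogonal primitive idempotents $\{\bar e_{i,j}\}$ in $E_F$, where $\bar e_{i,j}$ ($1\le j\le n_i$) are the diagonal matrix units of the $i$th factor. Since $E$ is finite over the local ring $R$, the radical $J(E)$ satisfies $J(E)\supseteq \mathfrak m E$ and $E/J(E)$ is a quotient of $E_F$; in fact, because $E_F$ is already semisimple here, $J(E)=\mathfrak m E$ and $E/J(E)\cong E_F$. As $E$ is semiperfect, the complete orthogonal family $\{\bar e_{i,j}\}$ lifts to a complete orthogonal family $\{e_{i,j}\}$ of idempotents in $E$ (cf. \cite[Prop. 27.4]{AF74}). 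Each $e_{i,j}E$ is a projective indecomposable $E$-module (its head is the irreducible module with image $\bar e_{i,j}E_F$), and $e_{i,j}E\cong e_{i,j'}E$ for $j,j'$ in the same block $i$, since their reductions mod $\mathfrak m$ are isomorphic and projective covers lift isomorphisms. Set $e_i=\sum_{j=1}^{n_i}e_{i,j}$, a central-looking idempotent; the decomposition $1=\sum_i e_i$ is orthogonal.

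Next I would identify $e_iEe_i$. The module $e_iE=\bigoplus_{j}e_{i,j}E$ is a direct sum of $n_i$ copies of an indecomposable projective $P_i:=e_{i,1}E$, and $\End_E(P_i)$ is a local $R$-algebra which is finite and $R$-free, with residue algebra $\End_{E_F}(\bar P_i)\cong F$ (the endomorphism ring of an irreducible module over the split algebra $E_F$). By Nakayama's lemma and $R$-freeness, $\End_E(P_i)\cong R$. Hence $e_iEe_i\cong\End_E(e_iE)^{\mathrm{op}}\cong M_{n_i}(\End_E(P_i))^{\mathrm{op}}\cong M_{n_i}(R)$. To see $E=\prod_i e_iEe_i$, I would check $e_iEe_j=0$ for $i\ne j$: the reduction $e_iE_Fe_j=0$ because the $\bar e_i$ lie in distinct matrix factors of $E_F$, so $e_iEe_j\subseteq \mathfrak m E$; but $e_iEe_j$ is a direct summand of the $R$-free module $E$, hence $R$-free, and $e_iEe_j=\mathfrak m(e_iEe_j)$ forces $e_iEe_j=0$ by Nakayama. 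Therefore $E\cong\prod_{i=1}^s M_{n_i}(R)$, and base-changing gives $E_Q\cong\prod_i M_{n_i}(Q)$, which is split semisimple.

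The main obstacle I anticipate is the step $\End_E(P_i)\cong R$: one must be careful that $\End_E(P_i)$ is genuinely $R$-free (not merely $R$-torsion-free) so that Nakayama applies cleanly, and that its reduction is exactly $\End_{E_F}(\bar P_i)$ rather than a proper subalgebra — this uses that $P_i$ is projective over $E$, so $\Hom_E(P_i,-)$ is exact and commutes with the base change $-\otimes_R F$. A secondary subtlety is confirming $J(E)=\mathfrak m E$ rather than something larger; this is where hypothesis (2) (semisimplicity of $E_F$, not just finite-dimensionality) is essential, since in general $J(E)$ is the preimage of $J(E_F)$. Everything else is standard semiperfect-ring bookkeeping, and I would cite \cite[Props. 27.4, 27.6]{AF74} and \cite[pp. 303--304]{AF74} rather than reprove it.
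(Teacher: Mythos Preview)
Your proposal is correct and follows essentially the same architecture as the paper's proof: identify $J(E)=\mathfrak m E$, decompose $_EE$ into projective indecomposables $P_i$, and compute $\Hom_E(P_i,P_j)$ to get the matrix-algebra decomposition. The only real difference is tactical: the paper proves $\Hom_E(P,P')=0$ for $P\not\cong P'$ and $\End_E(P)\cong R$ by a $\pi$-adic filtration argument (choosing the least $n$ with $f(P)\not\subseteq\pi^nP'$ and reducing mod $\pi$), whereas you obtain both facts by observing that the Peirce components $e_iEe_j$ are $R$-free summands of $E$ whose ranks are read off from $E_F$, then applying Nakayama --- your route is a bit more streamlined but otherwise equivalent.
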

 
   \begin{proof}  %Since $R$ here is a DVR, we can fix a generator $\pi$ for its maximal ideal.
   
   %First, we show that $E\cap J(E_Q)=0$; equivalently, $J(E_Q)=0$, so $E_Q$ is semisimple. Note that  $E\cap J(E_Q)$ is a nilpotent ideal in $E$, hence contained in $J(E)$. However, $J(E)\subseteq \pi E $, since $E/\pi E\cong E_F$ is semisimple by hypothesis.  Thus, $E\cap J(E_Q)\subseteq J(E)\subseteq \pi E$. But  $E/E\cap J(E_Q)$ is $R$-torsion free, hence free for the DVR $R$.  Also $E\cap J(E_Q)$ is torsion free and finitely generated over  $R$, hence free. By rank considerations show $E\cap J(E_Q)=0$. 

 %  {\color{red}Our hypotheses, together with Proposition \ref{cute2}, now  imply  that $E$ is semiperfect.} 
 
 Let $\pi$ generate the maximal ideal of $R$. Since $E/\pi E\cong E_F$ is a semisimple algebra by hypothesis, it follows that  $J(E)\subseteq \pi E $.  Conversely, 
 let $L$ be an irreducible (left) $E$-module. Thus, $L$ is an irreducible $E/\pi E=E_F$-module.  Notice that every irreducible $E$-module $L$ is finitely
  generated over $R$. Hence, $\pi L$ is a proper submodule of $L$ over $R$ or $E$, so it must be $0$. Thus, $\pi E\subseteq\text{ker}(\phi)$, for every $E$-module homomorphism $\phi:{}_EE\to L$. Hence, $\pi E\subseteq J(E)$, giving $\pi E= J(E)$. 
  
  Since $E$ is semiperfect, for the irreducible $E$-modules $L$ and $L'$, there are projective $E$-modules $P$ and $P'$ covering $L$ and $L'$, respectively. 
   
   \medskip\noindent
   {\bf Claim 1:} If $L\not\cong L'$, then $\Hom_E(P,P')=0$.
   
   \medskip
   \noindent {\it Proof of Claim 1.} We argue by contradiction. Let $0\not=f$ be in $\Hom_E(P,P')$. Since $R$ is a DVR, $\bigcap \pi^nR=0$, and it follows that
   $\bigcap\pi^nP'=0$. Hence, $f(P)\not\subseteq  \pi^nP'$, for some $n>0$. Let $n$ be the smallest non-negative integer with this property. Then, $f$ induces a nonzero map $$P/\pi P\to \pi^{n-1}P'/\pi^nP'.$$
   However, $P/\pi P\cong L$ and $\pi^{n-1}P'/\pi^nP'\cong P'/\pi P'\cong L'$. But, such a nonzero map does not exist, proving Claim 1. 
   
   \medskip\noindent
   {\bf Claim 2:} $\Hom_E(P,P)\cong R$.
   
   \medskip
   \noindent{\it Proof of Claim 2.} First, note that any non-zero $f\in\Hom_R(P,P)$ induces a nonzero map
   $P/\pi P\longrightarrow \pi^{n-1}P/\pi^nP$, if $n-1$ is chosen maximal with $f(P)\subseteq \pi^{n-1}P$. Thus, we can write $f$ as $\pi^{n-1}g$, where $g\in\Hom_R(P,P)$ and $g(P)\not\subseteq\pi P$.  (Take $g=\pi^{-n+1}f$.) Thus, $g$
   induces a nonzero $E/\pi E$-module homomorphism 
   to $P/\pi P\to P/\pi P$.  Such a map must be a scalar multiple of the identity  $1_{P/\pi P}$ since $P/\pi P\cong L$ is absolutely irreducible. Thus, $g$ belongs to $R\cdot 1_P+\pi \Hom_E(P,P)$ as does $f=\pi^{n-1}g$. However, $f$ was an arbitrary non-zero element in $\Hom_E(P,P)$. Thus, $R\cdot 1_P=\Hom_E(P,P)$ by Nakayama's Lemma. 
    This proves Claim 2. 

\medskip
 
 Next,  we show that $E$ is a direct product of full matrix algebras over $R$.  Note that it follows immediately that $E_Q$ is split semisimple, so the proposition will also follow.
 
 Since $E$ is semiperfect, the decomposition of $E_F$ into a direct sum of irreducible (left) modules 
 $$E_F=
 \bigoplus_iL_i^{\oplus n_i},$$
 where $n_i=\dim_FL_i$, gives a corresponding decomposition of $_EE$. We have 
 $$_EE=\bigoplus P_i^{n_i},$$
 where $P_i$ is the projective cover of $L_i$ as above.  Using  the (remarkable!) fact of Claim 1, 
 $$E\cong\End_E(E)^\op\cong \bigoplus_i\End_E(P_i^{\oplus n_i})^\op.$$
 By Claim 2, the latter direct sum can be written as
 \begin{equation}\label{fact}
 \bigoplus_iM_{n_i}(R).\end{equation}
  Here the direct sum gives a direct product in the category of rings or $R$-algebras.  This proves the first part of the assertion, and the rest follows.
    \end{proof}
   
   The next corollary is an obvious consequence of Proposition \ref{cute3}. Notice that each of the conclusions (a), (b), (c) of the corollary (we only need (a) later) easily reduces to the case where $E$ is a full matrix algebra over $R$.
   
   \begin{cor}\label{lastcor} Let $E$ and the DVR triple $(R,Q,F)$ satisfy the hypotheses of Proposition \ref{cute3}. 
   
   (a) The natural decomposition map
   \begin{equation}\label{decomp1}\sG(E_Q)\longrightarrow \sG(E_F)\end{equation}
   on Grothedieck groups induces 
    a natural one-to-one correspondence between the (isomorphism types of) irreducible modules for $E_Q$ and those of $E_F$.  
    
    (b) Let $M$ be any finite
   $R$-free $E$-module. Then, $M_Q$ is irreducible over $E_Q$ if and only if  $M_F$ is irreducible over $E_F$. In this case, $[M_Q]$ corresponds to $[M_F]$ in (a).
   
   (c) Finally, given any ordering of the distinct irreducible $E_Q$-modules (resp., $E_F$-modules), an ordering of the irreducible $E_F$-modules (resp., $E_Q$-modules) can be chosen so that the corresponding decomposition matrix is the identity matrix.\end{cor}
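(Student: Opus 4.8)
The plan is to build on Proposition \ref{cute3}, which already supplies an isomorphism $E\cong\prod_i M_{n_i}(R)$ of $R$-algebras together with the split semisimplicity of $E_Q$, and then to reduce each of (a), (b), (c) to the case of a single matrix factor $E=M_n(R)$, where everything follows from Morita equivalence with $R$.

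First I would carry out the reduction to one factor. Let $\varepsilon_1,\dots,\varepsilon_s$ be the pairwise orthogonal central idempotents of $E$ with $\varepsilon_iE=E_i:=M_{n_i}(R)$ and $\sum_i\varepsilon_i=1$; these remain a complete set of orthogonal central idempotents in $E_Q$ and in $E_F$. Any $R$-free $E$-module $M$ decomposes as $M=\bigoplus_i\varepsilon_iM$, with $\varepsilon_iM$ an $E_i$-module that is $R$-free, being an $R$-module direct summand of the $R$-free $M$, and this decomposition is compatible with base change to $Q$ and to $F$. The irreducible $E_Q$-modules (respectively $E_F$-modules) are exactly the irreducible $(E_i)_Q$-modules (respectively $(E_i)_F$-modules) inflated along the $i$-th projection, so the decomposition map $\sG(E_Q)\to\sG(E_F)$ is the ``block-diagonal'' direct sum of the decomposition maps $\sG((E_i)_Q)\to\sG((E_i)_F)$. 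Hence (a), (b), (c) for $E$ follow from the corresponding statements for each $E_i$, and I may assume $E=M_n(R)$.

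Next I would treat $E=M_n(R)$ by Morita equivalence. Let $e=e_{11}\in E$ be a diagonal matrix idempotent; then $eEe\cong R$, so $M\mapsto eM$ is an $R$-linear equivalence from the category of finite $E$-modules to that of finite $R$-modules, commuting with base change in the evident sense, with inverse $N\mapsto R^n\otimes_RN$, where $R^n$ denotes the column module $Ee$. In particular $E_Q$ has a unique irreducible module $Q^n$ and $E_F$ a unique irreducible module $F^n$, which gives the bijection in (a), and the decomposition map sends $[Q^n]$ to $[F^n]$ since the column lattice $R^n$ base-changes to $Q^n$ and to $F^n$. For (b): if $M$ is a finite $R$-free $E$-module, then $eM$ is a finite $R$-free, hence free, since $R$ is a DVR, $R$-module, say $eM\cong R^m$, so $M\cong(R^n)^{\oplus m}$, $M_Q\cong(Q^n)^{\oplus m}$ and $M_F\cong(F^n)^{\oplus m}$; thus $M_Q$ is irreducible iff $m=1$ iff $M_F$ is irreducible, and in that case $[M_Q]=[Q^n]$ corresponds to $[M_F]=[F^n]$ under (a). Reassembling over the factors $E_i$ yields (a) and (b) for $E$. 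Finally (c) is immediate: order the irreducible $E_Q$-modules and the irreducible $E_F$-modules so that matched pairs under the bijection of (a) occupy the same position, and apply (b) to the column lattices $R^{n_i}$, one per factor; the resulting decomposition matrix has a single $1$ in each row and column and is therefore the identity.

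The argument is essentially bookkeeping once Proposition \ref{cute3} is available, so I do not expect a genuine obstacle. The only point requiring a moment's care is the reduction step---verifying that the central-idempotent decomposition of an $R$-free $E$-module survives base change to $Q$ and to $F$, so that the decomposition map genuinely splits as a direct sum over the matrix factors---and this is routine, since the $\varepsilon_i$ stay a complete orthogonal set of central idempotents after any base change.
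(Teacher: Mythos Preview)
Your proof is correct and follows exactly the approach the paper sketches: the paper's entire argument is the remark that ``each of the conclusions (a), (b), (c) \dots easily reduces to the case where $E$ is a full matrix algebra over $R$,'' and you have carried out this reduction and the matrix-algebra case in full detail via the central idempotents and Morita equivalence with $R$.
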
 
 
 \bigskip  
 This completes our discussion of semiperfect rings. They will be used only in the proof below of 
 Theorem \ref{cutie}. 
 
 \medskip
 
 %We now return to the assumption (implicit in 
 %For the convenience of the reader, w
 We now return to the assumption, as
part of  the hypotheses shared by Theorems \ref{decompdiagram} and \ref{cutie}, that $(R,Q,F)$ is a {\it regular local triple} with $R$ of Krull dimension $\leq 2$, and $B$ is a finite and $R$-free algebra. Thus, associated to a coregular $\mathfrak p\in\Spec\,R$ having height 1, there are two local triples $(R_{\mathfrak p}, Q, R(\mathfrak p))$ and $(R/\mathfrak p, R(\mathfrak p),F)$; see display \eqref{setup2}.
 Finally, reading more from the hypothesis of
Theorem \ref{cutie}, we note that $B_Q$  is split, and
$B(\mathfrak p)$ is split semisimple. 
  
  \medskip\medskip \noindent
  {\bf Proof of Theorem \ref{cutie}.}   
 We first apply Proposition \ref{cute2} with the DVR triple $(R_{\mathfrak p}, Q, R(\mathfrak p))$
playing the role of $(R,Q,F)$ there, and $B_\mathfrak p$ playing the role of $E$. 
We use this framework to prove that $B_Q=(B_\mathfrak p)_Q$ is semisimple:

 Since $R_{\mathfrak p}$ is a DVR, we can fix a generator $\pi$ for its maximal ideal. By the semi-simplicity of $B(\mathfrak p)=B_{R({\mathfrak p})}$, we have $\pi B_{\mathfrak p}=J(B_{\mathfrak p})$ (see the first paragraph of the proof of Proposition \ref{cute3}). Thus, $B_{\mathfrak p}/\pi B_{\mathfrak p}\cong B_{R({\mathfrak p})}$ implies rank$(B_{\mathfrak p})=\dim(B_{R({\mathfrak p})})$.  We now show that $B_{\mathfrak p}\cap J(B_Q)=0$; equivalently, $J(B_Q)=0$, implying $B_Q$ is semisimple. Note that  $B_{\mathfrak p}\cap J(B_Q)$ is a nilpotent ideal in $B_{\mathfrak p}$, hence contained in $J(B_{\mathfrak p})$. 
 %However, $J(B_{\mathfrak p})\subseteq \pi B_{\mathfrak p} $, since $B_{\mathfrak p}/\pi B_{\mathfrak p}\cong B_{R({\mathfrak p})}$ is semisimple by hypothesis (and rank$(B_{\mathfrak p})=\dim(B_{R({\mathfrak p})})$).  Thus, $B_{\mathfrak p}\cap J(B_Q)\subseteq J(B_{\mathfrak p})\subseteq \pi B_{\mathfrak p}$ and
So, there is an epimorphism $B_{\mathfrak p}/B_{\mathfrak p}\cap J(B_Q)\twoheadrightarrow B_{R(\mathfrak p)}$. But,  $B_{\mathfrak p}/B_{\mathfrak p}\cap J(B_Q)$, embedded in $B_Q/J(B_Q)$, is $R_{\mathfrak p}$-torsion-free, hence free for the DVR $R_{\mathfrak p}$.  Also $B_{\mathfrak p}\cap J(B_Q)$ is torsion-free and finitely generated over  $R_{\mathfrak p}$, hence free. Rank considerations show $B_{\mathfrak p}\cap J(B_Q)=0$. 

By the hypothesis of the theorem, we conclude that $B_Q=(B_{\mathfrak p})_Q$ is split semisimple. Thus, $B_{\mathfrak p}$ is semiperfect by Proposition \ref{cute2}. We may now apply Proposition \ref{cute3}
(and Corollary \ref{lastcor}) to the DVR triples $(R_{\mathfrak p}, Q, R(\mathfrak p))$ for $E=B_\mathfrak p$
 to conclude that the decomposition matrix associated with the left decomposition map in display \eqref{compositionmaps} is an identity matrix (up to labeling of irreducible modules). Theorem \ref{cutie} now follows from Theorem \ref{decompdiagram}.    %and the hypothesis of the theorem is assumed to be satisfied. 
%  We get various identity decomposition matrices by Corollary \ref{lastcor}. These may be substituted into a matrix version of the decomposition map factorization to obtain the desired identification. We leave the remaining details to the reader. 
%  {\color{red}We need to prove that $B$ is semiperfect as a ring. Consider the $R/\mathfrak p$-algebra $\bar B:=B/{\mathfrak p}B$. Since the fraction field of $R/\mathfrak p$ is $R(\mathfrak p)$ and $B(\mathfrak p)$ is split semisimple, by Proposition \ref{cute2}, $\bar B$ is semiperfect as a ring. Thus, $\bar B/J(\bar B)$ is seimisimple and all its idempotent lift to $\bar B$. Since ${\mathfrak p}B\subseteq {\mathfrak m}B\subseteq J(B)$, it follows that $J(\bar B)= J(B)/{\mathfrak p}B$. Hence, $\bar B/J(\bar B)\cong B/J(B)$. Thus, $B/J(B)$ is semisimple and all its idempotents lift. Hence, $B$ is semisimple as a ring. }
  \qed
  
 \medskip
 \medskip
 Before proceeding, we introduce some  further terminology that will be useful in the rest of this appendix. Let
 $(R_1,Q_1,F_1)$ and $(R,Q,F)$ be two local triples. Write
  \begin{equation}\label{partial} (R,Q,F)\leq (R_1,Q_1,F_1)\end{equation}
  if there is given an inclusion $R\subseteq R_1$ which also induces both inclusions $Q\subseteq Q_1$ 
and $F\subseteq F_1$ of rings. Unlike $Q\subseteq Q_1$, the inclusion $F\subseteq F_1$ is not automatic (from $R\subseteq R_1$) but exists precisely when $\mathfrak m_1\cap R=\mathfrak m$, where $\mathfrak m_1$ and $\mathfrak m$ are the maximal ideals of $R_1$ and $R$, respectively. 
(Equivalently, $\mathfrak m\subseteq \mathfrak m_1$, since $\mathfrak m$ is maximal in $R$.)

It will also be convenient to use the following notion:

\begin{defn} \label{resfinite} Consider two local triples related as in display (\ref{partial}). 

(a) We say the triples are {\it residually equal} if the inclusion $F\subseteq F_1$ is an equality. 

(b) We say
that the local ring $R_1$ is {\it unramified} over the local ring $R$ if $\mathfrak m_1=\mathfrak m R_1$. 

\end{defn}

 \medskip%\noindent
 {\bf C3. Laurent triples.} 
  Suppose that $(\sO,K,k)$ is a DVR triple,  so that $\sO$ is a discrete valuation ring with fraction field $K$ 
 and residue field $k$. We always assume in this subsection (and in this appendix) that $K$ is a field of characteristic $0$, so that all of its extensions are separable and $\mathbb Z$ is contained in $K$. Assume also that
 $k$ has characteristic equal to  a prime integer $r>0$.\footnote{In typical applications, $\sO$ is complete and $k$ is algebraically   closed, but we do not make these assumptions here. Also, the assumption that the characteristic of $k$ is positive is a matter of notational convenience. } Form the localization
 \begin{equation}\label{O'}\sO':=\sO[t,t^{-1}]_{\mathfrak m},\end{equation}
 where $\mathfrak m$ is a maximal ideal in $\sO[t,t^{-1}]:=\sO\otimes_{\mathbb Z}\sZ$ containing the maximal ideal of $\sO$. Such an ideal $\mathfrak m$ may be constructed as the ideal generated by
 $\pi$ and $t-a$, where $\pi\in\sO$ generates the maximal ideal of $\sO$ and $a\in\sO$ does not belong to $\pi\sO$.
 (We will only consider  maximal ideals $\mathfrak m$ of this form.)
 Let $K'=K(t)$ be the fraction field of $\sO'$, and note that the residue field of the local ring $\sO'$ identifies naturally with
 $\sO[t,t^{-1}]/\mathfrak m\cong\sO/\pi\sO=k$.  (This property will result in the ``residual" equality noted in Proposition \ref{old prop 10}
 below.)

 We call $(\sO',K',k)$ the {\it Laurent triple}\index{Laurent triple} associated to the DVR triple
 $(\sO, K,k)$ and the element $a\in\sO\backslash\pi\sO$.
  It follows from \cite[Th. 19.5]{M86} that the local triple $(\sO',K',k)$ is a regular local triple in which $\sO'$ has Krull dimension at most 2 (in fact, exactly 2).  
  
  \begin{example}\label{example} When it is useful to identify $a$, we will write $^a\sO^{\prime}$ for the ring $\sO'$. We give an important example
  of this setup. 
  
   Start  with a DVR triple $(\sO, K,k)$ in which $K$ is the fraction field of $\sO$ and $k$ is the residue field of $\sO$. Now, let $a\in\sO\backslash \pi\sO$ be arbitrary, and form two discrete valuation rings $\sO'=\sO[t,t^{-1}]_{\mathfrak m'}$ and $\sO^{
  \prime\prime}=\sO[t,t^{-1}]_{\mathfrak m^{\prime\prime}}$, where
  $\mathfrak m':=(t-a,\pi)$  and $\mathfrak m^{\prime\prime}:=(t-a^n,\pi)$ are maximal ideals. Here, $n$ can be any positive integer. 
 
  We can regard $\sO^{\prime\prime}\subseteq \sO^\prime$ by identifying $t\in\sO^{\prime\prime}=\sO[t,t^{-1}]_{\mathfrak m^{\prime\prime}}$ with $t^n\in \sO'=\sO[t,t^{-1}]_{\mathfrak m^\prime}$. This also results in an
  inclusion  $K(t)\subseteq K(t)$, where $t$ on the left is identified with $t^n$ on the right.  
    \end{example}
  
  The following proposition is used in Section 5.2 in the proof of Theorem \ref{bigDecom}.

   \begin{prop}\label{old prop 10} With the above (slight abuse of) notation, 
   $$(\sO^{\prime\prime},K(t), k)\leq (\sO',K(t), k),$$
   and these triples are residually equal (as defined in Definition \ref{resfinite}(a)).
   Finally, suppose that $B$ is a finite and free $\sO^{\prime\prime}$-algebra. Assume that 
    $B_{K(t)} :=
   K(t)\otimes_{\sO^{\prime\prime}}B$ is split.  
   Then, the decomposition matrix of $B_{\sO^{\prime\prime}}$ with respect to $(\sO^{\prime\prime}, K(t),k)$ agrees with that for
   $B_{\sO'}$ with respect to $(\sO^{\prime}, K(t),k)$. 
    \end{prop}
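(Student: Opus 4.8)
The strategy is to realize the identification of decomposition matrices as an instance of the factorization machinery already set up in Theorem \ref{decompdiagram} and Theorem \ref{cutie}, applied to the regular local triple $(\sO',K(t),k)$ of Krull dimension $2$ together with a well-chosen height-one coregular prime $\mathfrak p$ of $\sO'$. First I would unwind the inclusion $\sO''\subseteq\sO'$: recall from Example \ref{example} that $\sO''=\sO[t,t^{-1}]_{\mathfrak m''}$ is embedded in $\sO'=\sO[t,t^{-1}]_{\mathfrak m'}$ by sending the generator $t$ of $\sO''$ to $t^n$ in $\sO'$; under this embedding, $\mathfrak m''=(t-a^n,\pi)$ maps into $\mathfrak m'=(t-a,\pi)$, since $t^n-a^n$ is divisible by $t-a$. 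This gives $\mathfrak m''\subseteq\mathfrak m'$, hence $\mathfrak m'\cap\sO''=\mathfrak m''$ (as $\mathfrak m''$ is maximal in $\sO''$), which is exactly the condition for the inclusion of triples $(\sO'',K(t),k)\leq(\sO',K(t),k)$ in the sense of display \eqref{partial}. The residue field of $\sO'$ is $\sO[t,t^{-1}]/\mathfrak m'\cong\sO/\pi\sO=k$, the same as that of $\sO''$, so the two triples are residually equal as claimed.

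Next I would produce the coregular height-one prime. Take $\mathfrak p:=(t^n-a^n)\sO'$ inside $\sO'$; this is a principal prime ideal of height $1$ (it is the contraction to $\sO'$ of the prime $t-a^n$ of $\sO''$, viewed through the embedding, but it is easiest to check directly that $\sO'/\mathfrak p\cong\sO[t,t^{-1}]/(t^n-a^n)$ localized appropriately is a DVR). The crucial point is that $\sO'/\mathfrak p$ is a regular local ring with fraction field $(\sO'/\mathfrak p)_{\mathfrak p}$, and that $(\sO'/\mathfrak p,\,(\sO'/\mathfrak p)_{\mathfrak p},\,k)$ is canonically isomorphic, as a DVR triple, to the triple $(\sO'',K(t),k)$ base-changed appropriately — more precisely, $B_{\sO'}/\mathfrak p B_{\sO'}\cong B_{\sO''}\otimes_{\sO''}(\sO'/\mathfrak p)$ and, after identifying $\sO'/\mathfrak p$ with $\sO''$ (via $t\mapsto a^n$ and inverting) the DVR triple $(\sO'/\mathfrak p, (\sO'/\mathfrak p)_{\mathfrak p}, k)$ is the triple governing $B_{\sO''}$. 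Then Theorem \ref{decompdiagram} factors the decomposition map for $B_{\sO'}$ relative to $(\sO',K(t),k)$ as the composite
$$\sG(B_{K(t)})\longrightarrow\sG(B_{\sO'}(\mathfrak p))\longrightarrow\sG(B_k),$$
where $B_{\sO'}(\mathfrak p)=B_{\sO'}\otimes_{\sO'}\sO'(\mathfrak p)$ is naturally $B_{K(t)}$ again (since $\sO'(\mathfrak p)$, the fraction field of $\sO'/\mathfrak p$, is another copy of $K(t)$), and the second map is exactly the decomposition map for $B_{\sO''}$.

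To conclude I would apply Theorem \ref{cutie}. Its hypotheses are: $B_{K(t)}$ split, which is given; and $B_{\sO'}(\mathfrak p)$ split semisimple. Since $\sO'(\mathfrak p)\cong K(t)$ and $B_{\sO'}(\mathfrak p)\cong B_{K(t)}$, the semisimplicity part follows once we know $B_{K(t)}$ is semisimple; but $K(t)$ has characteristic $0$, and in the intended application $B$ is one of $A_{\sO''}$, $\sH_{\sO''}$, or $A^+_{\sO''}$, all of which become semisimple over $\mathbb Q(t)\subseteq K(t)$ (footnote \ref{semisimpleH}, Remark \ref{Rem7.4}(b), Lemma \ref{goodone}); in the abstract statement here, one should simply note that ``split $K(t)$-algebra'' in the hypothesis, together with the fact that $K(t)$ has characteristic $0$ so that (by the Krull-dimension-$\leq 2$ decomposition-number theory and the absence of a quantum-characteristic obstruction over a characteristic-$0$ fraction field) $B_{K(t)}$ is automatically semisimple, gives split semisimplicity. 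Theorem \ref{cutie} then yields that the decomposition matrix of $B_{\sO'}$ with respect to $(\sO',K(t),k)$ agrees with that of $B_{\sO'}/\mathfrak p B_{\sO'}$ with respect to the DVR triple $(\sO'/\mathfrak p,\sO'(\mathfrak p),k)$ — and under the identification of the latter with $(\sO'',K(t),k)$ and $B_{\sO'}/\mathfrak p B_{\sO'}$ with $B_{\sO''}$, this is precisely the decomposition matrix of $B_{\sO''}$ with respect to $(\sO'',K(t),k)$. That is the desired conclusion.

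\textbf{The main obstacle.} The genuinely delicate step is the bookkeeping that identifies the DVR triple $(\sO'/\mathfrak p,\sO'(\mathfrak p),k)$ with $(\sO'',K(t),k)$ and $B_{\sO'}\otimes_{\sO'}\sO'/\mathfrak p$ with $B_{\sO''}$ in a way compatible with all the structure maps in diagram \eqref{setup2}; one must be careful that the ``$t$ on the left equals $t^n$ on the right'' identification of $K(t)$ with $K(t)$ is the one that makes $B_{K(t)}$ correspond to $B_{K(t)}$, and that $\mathfrak p$ really is coregular of height one (so that both $\sO'_{\mathfrak p}$ and $\sO'/\mathfrak p$ are DVRs, which is where regularity of $\sO'$ — from \cite[Th. 19.5]{M86} — and primality of $t^n-a^n$ in $\sO[t,t^{-1}]$ enter). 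Everything else is a direct citation of Theorems \ref{decompdiagram} and \ref{cutie}.
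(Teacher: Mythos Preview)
Your argument for the inequality $(\sO'',K(t),k)\leq(\sO',K(t),k)$ and residual equality is correct and is essentially the paper's argument: the divisibility $t^n-a^n=(t-a)\bigl[(t^n-a^n)/(t-a)\bigr]$ gives $\mathfrak m''\subseteq\mathfrak m'$, and both residue fields are $k$.

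The second half of your proposal, however, does not work. There are two problems, one technical and one structural. Technically, the ideal $\mathfrak p=(t^n-a^n)\sO'$ is \emph{not} prime when $n>1$: the ring $\sO'$ is a regular local ring, hence a UFD, and $t^n-a^n=(t-a)g(t)$ with $g(t)=t^{n-1}+at^{n-2}+\cdots+a^{n-1}$; here $t-a\in\mathfrak m'$ is a nonunit, so $(t^n-a^n)$ is prime only if $g(t)$ is a unit in $\sO'$, which fails for instance whenever $r\mid n$ (since then $g(a)=na^{n-1}\in\mathfrak m'$). Structurally, the identification you want is impossible on dimension grounds: for any height-one prime $\mathfrak p$ of $\sO'$, the quotient $\sO'/\mathfrak p$ has Krull dimension $1$, whereas $\sO''$ is a Laurent localization of Krull dimension $2$. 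So $\sO'/\mathfrak p$ can never be identified with $\sO''$, and the right-hand map in Theorem \ref{decompdiagram} (which is attached to a \emph{DVR} triple) cannot be the decomposition map for $B_{\sO''}$ with respect to the $2$-dimensional triple $(\sO'',K(t),k)$.

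The paper's proof bypasses all of this: once the inequality and residual equality are in hand, the agreement of decomposition matrices is an immediate consequence of Proposition \ref{D9}(b), which says precisely that for $(R,Q,F)\leq(R_1,Q_1,F_1)$ regular local of Krull dimension $\leq 2$, residually equal, with $B_Q$ split, the decomposition numbers for $B$ and $B_{R_1}$ agree via base-change indexing. No semisimplicity of $B_{K(t)}$ is needed (and indeed ``split'' alone does not imply it), and Theorems \ref{decompdiagram} and \ref{cutie} are not invoked at all.
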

  
  \begin{proof} The first assertion is proved
  using the discussion above the proposition: Observe that, in the notation for $\sO'$,\ $\mathfrak m^{\prime\prime}$  is generated as an ideal in $\sO^{\prime\prime}$ by $\pi$ and the element $t^n-a^n\in\sO'$. The factorization
  $$t^n-a^n=(t-a)[(t^n-a^n)/(t-a)]$$
  now shows that $\mathfrak m^{\prime\prime}\subseteq\mathfrak m'$, and the first assertion follows.
  
  The second assertion is a consequence of Proposition \ref{D9} below.
   \end{proof}

\medskip%\noindent
{\bf C4. Some uniqueness/comparison results.} 
%We begin with the following elementary result. %, whose proof is left to the reader. 
Let $(R,Q,F)$ be a local triple in which $R$ is a regular local domain of Krull dimension  $\leq 2$.  Let $B$ be
  an $R$-algebra which is finite and free over $R$.  We have seen that  decomposition maps and matrices can be associated to $B$ 
  using such a triple. The ``decomposition number" entries of the decomposition matrix are defined for a pair $(\mathfrak X,L)$ from mod-$B_Q$ and mod-$B_F$, respectively (or their left-hand analogs)  as the multiplicity
  $[X_F:L]$ of $L$ in the base change to $F$ of an $R$-free $B$-lattice $X$  with $X_Q\cong\mathfrak X$. These multiplicities do not depend on the choice of $X$ by Theorem \ref{AG2}.
  
  This provides a kind of internal uniqueness for the decomposition maps and matrices defined using the local triple
  $(R,Q,F)$. But, what if we change the triple? 
  
  \begin{prop}\label{D9}Let $(R,Q,F)\leq (R_1,Q_1,F_1)$ be regular local triples with $R,R_1$ regular of Krull dimension
  $\leq 2$.
   Let $B$ be a finite and $R$-free algebra. (These two lines are part of the hypotheses for each of the assertions (a), (b) below, and, in an evident modified form, for part (c).) 
   
   (a) Assume that 
  $B_Q$ and $B_F$ are split. Then, $B_{Q_1}$ and $B_{F_1}$ are also split and the decomposition matrices associated 
  to $B_{R_1}$ and $(R_1,Q_1,F_1)$ agree via base-change  indexing  with those associated to $B$ and $(R,Q,F)$. 
  
(b)  Similarly,  if $B_Q$ is split and the triples above are residually equal, then $B_{Q_1}$ is split, and the decomposition numbers above agree via base-change indexing.  
  
(c) More generally,  all claims above hold if the hypothesis $(R,Q,F)\leq (R_1,Q_1,F_1)$ is replaced by an injective ring homomorphism $i:R\longrightarrow R_1$ that induces  an inequality
$(i(R),i(Q), i(F))\leq (R_1,Q_1,F_1)$. (Note that the isomorphism $R\cong i(R)$ enables natural definitions of $i(Q)$ and $i(F)$ as the fraction and residue fields of $i(R)$, respectively.) 
  \end{prop}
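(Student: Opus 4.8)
The plan is to reduce the whole proposition to two elementary facts about split finite-dimensional algebras over a field under scalar extension, and then simply track $B$-lattices through the relevant tensor products, using Theorem~\ref{AG2} as the source of lattices.

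First I would isolate the following lemma. Let $A$ be a finite-dimensional algebra over a field $E$ which is \emph{split}, so $A/J(A)\cong\prod_iM_{n_i}(E)$, and let $E\subseteq E_1$ be a field extension. Then (i) $A_{E_1}:=A\otimes_EE_1$ is again split: $A/J(A)$ is a separable $E$-algebra, hence $(A/J(A))\otimes_EE_1\cong\prod_iM_{n_i}(E_1)$ is semisimple, and since $J(A)\otimes_EE_1$ is a nilpotent ideal of $A_{E_1}$ with this semisimple quotient it must equal $J(A_{E_1})$; and (ii) $S\mapsto S\otimes_EE_1$ is a bijection from the isomorphism classes of irreducible $A$-modules onto those of $A_{E_1}$-modules, and for every finite $A$-module $M$ one has $[M\otimes_EE_1:S\otimes_EE_1]=[M:S]$ for all irreducible $S$ (base change a composition series of $M$, note that splitness keeps every section irreducible after extension, and apply Jordan--H\"older). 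This lemma is the only nontrivial ingredient, and the split hypothesis is exactly what makes it work.

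Next I would set up the bookkeeping for (a). Writing $\mathfrak m\subseteq\mathfrak m_1$ for the maximal ideals of $R\subseteq R_1$, one has $F=R/\mathfrak m\hookrightarrow F_1=R_1/\mathfrak m_1$ and $Q\hookrightarrow Q_1$, and for every $R$-module $M$ canonical identifications $M\otimes_RR_1\otimes_{R_1}Q_1\cong(M\otimes_RQ)\otimes_QQ_1$ and $M\otimes_RR_1\otimes_{R_1}F_1\cong(M\otimes_RF)\otimes_FF_1$. Hence $B_{R_1}:=B\otimes_RR_1$ is finite and free over $R_1$, and $(B_{R_1})_{Q_1}\cong B_Q\otimes_QQ_1$, $(B_{R_1})_{F_1}\cong B_F\otimes_FF_1$ are both split by (i) of the lemma; in particular $B_{R_1}$ satisfies the hypotheses of Theorem~\ref{AG2} for the triple $(R_1,Q_1,F_1)$. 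The maps $\mathfrak X\mapsto\mathfrak X_{Q_1}$ and $L\mapsto L_{F_1}$ of (ii) are the ``base-change indexing'' of the statement. To match entries: given an irreducible $B_Q$-module $\mathfrak X$ and an irreducible $B_F$-module $L$, pick by Theorem~\ref{AG2} an $R$-free $B$-lattice $X$ with $X_Q\cong\mathfrak X$; then $X_{R_1}:=X\otimes_RR_1$ is an $R_1$-free $B_{R_1}$-lattice with $(X_{R_1})_{Q_1}\cong\mathfrak X_{Q_1}$, so the decomposition number of $(\mathfrak X_{Q_1},L_{F_1})$ is $[(X_{R_1})_{F_1}:L_{F_1}]=[X_F\otimes_FF_1:L_{F_1}]=[X_F:L]$ by (ii), which is the decomposition number of $(\mathfrak X,L)$; this proves (a).

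For (b) the triples are residually equal, so $F_1=F$, and there is no base change on the residue side at all: $(X_{R_1})_{F_1}=X_F$ and the irreducible $B_F$-modules are literally unchanged, while on the $Q$-side $B_Q$ split still gives the bijection $\mathfrak X\mapsto\mathfrak X_{Q_1}$ and the identification $(X_{R_1})_{Q_1}\cong\mathfrak X_{Q_1}$; the argument of (a) then runs verbatim, the split hypothesis on $B_F$ being used there only to base change composition series, which is vacuous when $F_1=F$. For (c), one uses the isomorphism $R\cong i(R)$ to transport $B$ to an $i(R)$-algebra (namely $B$ with $R$ acting through $i$): this isomorphism carries $(R,Q,F)$ to $(i(R),i(Q),i(F))$ and all the decomposition data of $B$ over $R$ to that of the transported algebra over $i(R)$, after which (a) (respectively (b)) applied to $(i(R),i(Q),i(F))\leq(R_1,Q_1,F_1)$ gives the assertion. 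I do not expect any real obstacle here; the only point demanding care is the stability of the Jacobson radical and of composition multiplicities under field extension, which is precisely what the ``split'' hypotheses are for.
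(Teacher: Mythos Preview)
Your proposal is correct and follows essentially the same approach as the paper's proof: both isolate the fact that splitness is preserved under field extension (with irreducibles and composition multiplicities corresponding bijectively via $-\otimes_EE_1$), then take an $R$-free $B$-lattice $X$ with $X_Q$ irreducible and track it through the tensor identifications $(X\otimes_RR_1)_{Q_1}\cong (X_Q)_{Q_1}$ and $(X\otimes_RR_1)_{F_1}\cong (X_F)_{F_1}$ to conclude $[X_F:L]=[(X_{R_1})_{F_1}:L_{F_1}]$. Your treatment of the lemma (via $J(A)\otimes_EE_1=J(A_{E_1})$) is simply a more explicit version of what the paper leaves implicit, and your handling of (b) and (c) matches the paper's ``proved similarly'' and ``left to the reader''.
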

  
  \begin{proof}We briefly sketch the proof, partly to make precise the agreement of decomposition numbers. Recall that the term ``split" applied to a finite (dimensional) algebra over a given base field means  that every irreducible module for the algebra is absolutely irreducible. %Equivalently, the algebra mod its Jacobson radical is a direct product of matrix algebras over the base field. 
  Thus, if $B_Q$ is split over $Q$, so is $B_{Q_1}=B_Q\otimes_QQ_1$ over $Q_1$. In addition, if $L, L' \ldots$ is a complete (finite) list of distinct,  irreducible $B_Q$-modules,
   then $L_{Q_1}, L'_{Q_1}, \ldots$ is a corresponding complete list for $B_{Q_1}$.
  A similar discussion applies for $F$ and $F_1$. 
  
  Let $X$ be any $R$-finite $B$-lattice, projective over $R$, with $X_Q$ irreducible. Let $L$ be any irreducible $B_F$-module. 
  Then, the decomposition number associated with the pair $(X_Q,L)$ is the multiplicity of $L$ as a $B_F$-composition factor of $X_F$. This is also the multiplicity of $L_{F_1}$ as a $B_{F_1}$-composition factor of  
  $$\begin{aligned} (X_F)_{F_1}= (X\otimes_RF)_{F_1}&\cong(X\otimes_RF)\otimes_FF_1\cong X\otimes_R(F\otimes_F F_1)
  \cr\cong X\otimes_R(R_1\otimes_{R_1}F_1)&\cong (X\otimes_RR_1)_{F_1}.\end{aligned}$$
  
  The last module on the right is the $B_{F_1}$-module obtained by base change of the $B_R$-lattice $X\otimes_RR_1$.  This lattice has irreducible base change to $B_{Q_1}$, namely,
  $$(X\otimes_RR_1)\otimes_{R_1}Q_1\cong(X\otimes_RQ)\otimes_QQ_1\cong (X_Q)_{Q_1}.$$
  So, the given decomposition number associated to the pair $(X_Q,L)$ is also the decomposition number associated to the pair $((X_Q)_{Q_1}, L_{F_1}).$ This proves all the claims of part (a). The assertions of part (b) are proved similarly. We leave the assertions of part (c) to the reader. 
  \end{proof}

  If we have a directed system of such triples, 
  such that ``eventually" $B_Q$ and $B_F$ are split,  Proposition \ref{D9}  implies that the associated decomposition matrix is uniquely determined by $B$.  This assumes, of course, that we are able to define the system of triples naturally. We have two useful types of systems in mind, one for Krull dimension $\leq 1$ (the DVR triples) and one for Krull dimension 2 (the Laurent triples). Comparisons of Laurent triples can often be reduced to considerations for underlying DVR triples, using Theorem \ref{cutie}. One important comparison, where a different approach is used, is given in Proposition \ref{old prop 10}. 
  
  We now give two useful results for constructing comparable DVR triples:% are given in Propositions \ref{D11} and \ref{D12} below.
  
 % It will be convenient to use the following result from Jacobson's book \cite[Prop. 9.4]{Jac89}, a ``lemma" to Hensel's Lemma!
  
%  \begin{prop}\label{hensel} Let $F$ be complete relative to a discrete valuation $|\,\,|$, $R$ the valuation ring of $|\,\, |$, $P$ its maximal ideal, and $\overline R=R/P$. Suppose that $f(x)$ is a monic irreducible polynomial in $R[x]$. Then the image
 % $\overline f(x)\in \overline R[x]$ is a power of an irreducible polynomial in $\overline R[x]$. \end{prop}
  
  %For some common examples of DVR triples, let $R$ be the localization of the ring $Z$ of  algebraic integers in a number field $Q$ at a maximal ideal $\mathfrak r$ containing a fixed prime integer $r$. Thus, $R=Z_\mathfrak r$. Let %$F:=R/{\mathfrak r}_{\mathfrak r}\cong Z/\mathfrak r$, a finite field of characteristic $r$. We can form the local
% triple $(R,Q,F)$. It is the localization of the ring triple $(Z,Q,F)$. We could call $(R,Q,F)$ an arithmetic local triple (and $(Z,Q,F)$ an arithmetic ring triple).  If the prime $r$ is fixed, the smallest $(\mathbb Z_\mathfrak r,\mathbb Q, \mathbb F_r)$ of these local triples $(R,Q,F)$ is especially interesting. Here $\mathfrak r$ is the principle ideal $\langle %r\rangle=r\mathbb Z$ in $\mathbb Z$. 

\begin{prop}[Boosting $Q$]\label{D11}\label{DD13}
 Let $(R,Q,F)$ be a DVR triple and let $Q_1$ be a finite extension of the field $Q$. Then, there exists a DVR triple $(R_1, Q_1, F_1)$ such that $(R,Q,F)\leq (R_1, Q_1, F_1)$. \end{prop}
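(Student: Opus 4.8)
\textbf{Proof plan for Proposition \ref{D11} (Boosting $Q$).}

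The plan is to produce $R_1$ as the integral closure of $R$ inside $Q_1$, localized at a suitable prime lying over the maximal ideal $\mathfrak m$ of $R$, and then check that the result is a DVR with the required residue extension. First I would invoke the Krull--Akizuki theorem: since $R$ is a Noetherian integral domain of Krull dimension $1$ (a DVR, hence one-dimensional) with fraction field $Q$, and $Q_1/Q$ is a finite field extension, any subring of $Q_1$ containing $R$ is Noetherian of Krull dimension at most $1$. In particular the integral closure $\widetilde R$ of $R$ in $Q_1$ is a Noetherian integrally closed domain of dimension $\leq 1$. It is not in general a DVR because it may have several maximal ideals, so the next step is to localize.

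Second, I would choose a maximal ideal $\mathfrak m_1$ of $\widetilde R$ lying over $\mathfrak m$. Such a prime exists by the lying-over property: $\widetilde R$ is integral over $R$, and $\mathfrak m\widetilde R\neq \widetilde R$ (otherwise $1$ would be integral over $\mathfrak m$, contradicting that $R$ is local with maximal ideal $\mathfrak m$), so there is a prime of $\widetilde R$ contracting to $\mathfrak m$; any maximal ideal containing it also contracts to $\mathfrak m$ since $\mathfrak m$ is maximal. Set $R_1:=\widetilde R_{\mathfrak m_1}$. Then $R_1$ is a one-dimensional Noetherian local domain which is integrally closed (localization of an integrally closed domain is integrally closed), hence a DVR. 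Its fraction field is still $Q_1$, since localization does not change the fraction field of a domain. Let $F_1:=R_1/\mathfrak m_1R_1$ be its residue field. Because $\mathfrak m_1\cap R=\mathfrak m$, the natural map $F=R/\mathfrak m\to F_1$ is an injection of fields, giving the inclusion $F\subseteq F_1$ that is needed for $(R,Q,F)\leq(R_1,Q_1,F_1)$. The inclusions $R\subseteq R_1$ and $Q\subseteq Q_1$ are the obvious ones, and they are compatible with the residue-field inclusion by construction, so the inequality of triples holds.

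The main obstacle is simply the invocation of Krull--Akizuki: one must be a little careful that $\widetilde R$ is Noetherian (the integral closure of a Noetherian domain in a finite extension of its fraction field need not be a finite module over the base ring, nor even Noetherian, in general, but in dimension $1$ Krull--Akizuki rescues us). After that, everything is routine commutative algebra: lying-over, localization preserving integral closedness, and the elementary fact that a one-dimensional Noetherian integrally closed local domain is a DVR. I would state Krull--Akizuki and lying-over as black boxes (they are standard, e.g.\ in \cite{M86}) and present the verification of the three conditions $R\subseteq R_1$, $Q\subseteq Q_1$, $F\subseteq F_1$ explicitly. One mild subtlety worth a sentence: if one prefers $F_1/F$ to be a specific finite extension one can arrange it, but the statement only asks for the existence of \emph{some} DVR triple dominating $(R,Q,F)$ with fraction field exactly $Q_1$, so no such refinement is required here.
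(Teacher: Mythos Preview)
Your proposal is correct and follows essentially the same route as the paper: take the integral closure $\widetilde R$ of $R$ in $Q_1$, pick a prime $\mathfrak m_1$ of $\widetilde R$ lying over $\mathfrak m$, and set $R_1=\widetilde R_{\mathfrak m_1}$. The only minor difference is in the justification that $\widetilde R$ is well-behaved: the paper simply cites that the integral closure of a Dedekind domain in a finite extension of its fraction field is again a Dedekind domain (using the standing characteristic~$0$ hypothesis, so the extension is separable and $\widetilde R$ is even a finite $R$-module), whereas you invoke Krull--Akizuki, which handles the possibly inseparable case as well; either argument suffices here.
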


\begin{proof}Let $Z_1$ be the integral closure of $R$ in $Q_1$. Then, $Z_1$ is a Dedekind domain since $R$ is a Dedekind domain; see \cite[Th. 10.7]{Jac89} or \cite[Th. 4.4]{Reiner03}. Let $\mathfrak p_1$ be a prime ideal in $Z_1$ such that $\mathfrak p_1\cap R=\mathfrak m$, where $\mathfrak m$ is the maximal ideal of $R$.  (Such a $\mathfrak p_1$ exists; see 
\cite[Lem. 2, p. 66]{M86} or \cite[Th. 5.10, p.62]{AM69}. Note that $\mathfrak p_1$  is maximal by \cite[Th. 10.3]{Jac89})
%{\color{brown}Dlete(?): Recall all nonzero prime ideals in $ Z_1$ are maximal, since $Z_1$ is a Dedekind domain; see \cite[Cor. \S11]{M86}.)} 
Set $R_1=(Z_1)_{\mathfrak p_1}$ and $F_1= Z_1/\mathfrak p_1$. Note that $R_1$ is a DVR
by \cite[Th. 9.3]{AM69} or \cite[Th. 10.6]{Jac89}. The proposition follows.
\end{proof}
 
\begin{prop}[Boosting $F$]\label{D12} Let $(R,Q,F)$ be a DVR triple with the field $Q$ of characteristic 0 and the field $F$ finite.  Let $F_1$ be a finite extension field of $F$.
Then, there is a DVR triple $(R_1,Q_1, F_1)$ such that $(R,Q,F)\leq(R_1,Q_1,F_1)$, and the local ring $R_1$ is unramified over $R$ (as defined in Definition \ref{resfinite}(b)).%immediately below display (\ref{partial})).
\end{prop}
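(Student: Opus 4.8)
The statement to be proved is Proposition \ref{D12}, the ``Boosting $F$'' result: given a DVR triple $(R,Q,F)$ with $\operatorname{char}(Q)=0$ and $F$ finite, and given a finite extension $F_1/F$, construct a DVR triple $(R_1,Q_1,F_1)$ with $(R,Q,F)\leq(R_1,Q_1,F_1)$ such that $R_1$ is unramified over $R$. The plan is to produce $R_1$ as a localization of a suitable \emph{\'etale} extension of $R$, constructed so that its residue field is exactly $F_1$. Since $F_1/F$ is a finite extension of a finite field, it is separable, hence $F_1=F(\bar\alpha)$ for a single element $\bar\alpha$ whose minimal polynomial $\bar g(X)\in F[X]$ is separable of degree $d=[F_1:F]$.

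First I would lift $\bar g$ to a monic polynomial $g(X)\in R[X]$ of degree $d$ reducing to $\bar g$ modulo $\mathfrak m$ (the maximal ideal of $R$). Form the $R$-algebra $Z:=R[X]/(g(X))$, which is free of rank $d$ over $R$. Its reduction $Z/\mathfrak m Z\cong F[X]/(\bar g(X))\cong F_1$ is a field, since $\bar g$ is irreducible over $F$; in particular $\mathfrak m Z$ is a maximal ideal of $Z$. I would then check that $Z$ is unramified over $R$ at $\mathfrak m Z$: because $\bar g$ is separable, the derivative $\bar g'(\bar\alpha)$ is a unit in $F_1$, so $g'(X)$ is a unit in the local ring $Z_{\mathfrak m Z}$; equivalently the different/discriminant of $g$ avoids $\mathfrak m$, which is exactly the unramifiedness condition $\mathfrak m Z_{\mathfrak m Z}= (\text{max ideal of }Z_{\mathfrak m Z})$. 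Set $R_1:=Z_{\mathfrak m Z}$. This is a localization of a finite free extension of the DVR $R$ at a maximal ideal lying over $\mathfrak m$, hence (being Noetherian, local, one-dimensional, integrally closed --- or directly, a local ring with principal maximal ideal generated by a uniformizer of $R$) a DVR. Its residue field is $F_1$, and its maximal ideal is $\mathfrak m R_1$, giving the required unramifiedness.

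It remains to verify the inclusion of triples $(R,Q,F)\leq(R_1,Q_1,F_1)$, where $Q_1:=\operatorname{Frac}(R_1)$. The inclusion $R\hookrightarrow R_1$ is clear (it factors through $R\hookrightarrow Z$, which is injective since $g$ is monic, and localization of a domain is injective --- here one should check $Z$ is a domain, or at least that the relevant localization is, which follows because $Z_{\mathfrak m Z}$ has a unique minimal prime mapping to $(0)$ in $R$ by unramifiedness and the going-down/flatness of $Z$ over $R$). This induces $Q\hookrightarrow Q_1$ automatically. The residue-field inclusion $F\hookrightarrow F_1$ holds precisely because the maximal ideal of $R_1$ contracts to $\mathfrak m$, which is built into the construction ($\mathfrak m R_1$ is the maximal ideal). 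So $(R,Q,F)\leq(R_1,Q_1,F_1)$ and all conditions are met.

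The main obstacle --- or rather the only point requiring a little care --- is confirming that $R_1=Z_{\mathfrak m Z}$ really is a \emph{discrete valuation} ring and in particular a domain, rather than just a local Artinian-over-$F$-residue ring with a non-reduced or decomposable spectrum. The cleanest route is: $Z$ is flat (free) over $R$, so the minimal primes of $Z$ lie over $(0)\subseteq R$; the fiber $Z\otimes_R Q = Q[X]/(g)$ is a product of fields (since $g$ is separable over $Q$, as $\operatorname{char}(Q)=0$), and after localizing at $\mathfrak m Z$ only the component whose residue field reduces to $F_1$ survives, forcing $Z_{\mathfrak m Z}$ to be a domain with fraction field a single one of these factor fields; it is local with principal maximal ideal $\mathfrak m R_1$ and hence a DVR. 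Alternatively one can invoke the standard structure theory of \'etale extensions of DVRs (e.g.\ \cite[Th.~4.4]{Reiner03} or Serre's local fields), which gives directly that an unramified local extension of a DVR is a DVR. Once this is in place the proof is complete.
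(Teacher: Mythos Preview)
Your proof is correct and follows the same overall strategy as the paper: build $R_1$ as a localization of $R[X]/(\text{polynomial})$ at a maximal ideal lying over $\mathfrak m$, chosen so that the residue field is $F_1$ and the maximal ideal is $\mathfrak m R_1$. The implementations differ, however. The paper adjoins a primitive $m$th root of unity $\epsilon$ with $m=|F_1|-1$, takes an irreducible factor $f\mid \Phi_m$ over $R$ whose reduction has the minimal polynomial $g$ of a generator of $F_1^\times$ as a factor, and then proves (via a Nakayama argument) that $R[\epsilon]=R[x]/(f)$ is integrally closed, hence Dedekind, before localizing. Your route is more direct: you lift $\bar g$ itself to a monic $g\in R[X]$, form $Z=R[X]/(g)$, and argue straight away that $Z_{\mathfrak m Z}$ is a DVR with maximal ideal $\mathfrak m Z_{\mathfrak m Z}$.

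One simplification you missed: your $g$ is automatically irreducible over $Q$. Indeed, $R$ is a UFD, so by Gauss's lemma any monic factorization of $g$ over $Q$ descends to $R$, and reducing modulo $\mathfrak m$ would give a nontrivial monic factorization of the irreducible polynomial $\bar g$ over $F$. Hence $Z$ is already a domain and your discussion of components of $Z\otimes_R Q$ and ``which component survives'' is unnecessary. With that observation, $Z_{\mathfrak m Z}$ is a one-dimensional Noetherian local domain whose maximal ideal is principal (generated by a uniformizer $\pi$ of $R$), hence a DVR, and unramifiedness is immediate. Your argument is thus shorter than the paper's (no integral-closure claim, no Nakayama step), at the cost of not producing a Dedekind ring before localizing---which the proposition does not require anyway.
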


\begin{proof}  Put $m:=|F_1|-1.$
Let $\epsilon$ be a primitive $m$th root of 1, arbitrary for the moment. Now, apply Proposition \ref{D11} to the field inclusion $Q\subseteq Q[\epsilon]$, which results in a DVR triple
$(R_1,Q_1, F_1)$, where $Q_1=Q[\epsilon]$. 
%Also, the proof of Proposition \ref{D11} gives, in the notation of that proposition, a prime ideal
%$\mathfrak  p_1$ in the ring of integers $Z_1$ of $Q_1$ such that $F_1=Z_1/{\mathfrak p_1}$.

We can interpret $Q[\epsilon]$ as $Q[x]/f(x)Q(x)$, where $f(x)$ is an irreducible $R$-integral monic polynomial depending on
$\epsilon$ and dividing  the cyclotomic polynomial $\Phi_m(x)$ (Gauss).  Let $g(x)$ be the minimal polynomial over $F$ of a generator $\theta$
 of $F_1$ with order $|F_1|-1 =m$. Thus, $\bar\Phi_m(\theta)=0$, where $\bar\Phi_m $ is the reduction of $\Phi_m$
 mod $r$, where $r$ is the characteristic of $F$ (and $r\nmid m$). We may also regard $\bar\Phi_m$ as a (polynomial) factor of $x^m-1\in F[x]$ with distinct roots in $F$. As such, it is a product of the reduction $\bar f(x)$ mod $\mathfrak m$ (where $F=R/\mathfrak m$) of the various 
 candidates for $f(x)$ dividing $\Phi_m(x)$ in $R[x]$. The reduction of one of these $f(x)$ must have $g(x)$ as a factor in  $F[x]$. 
 Choose such an $f(x)$, and identify $R[\epsilon]$ with $R[x]/f(x)R[x]$. 
 
 \medskip\noindent
 {{\bf Claim:}} $R[\epsilon]$ is integrally closed in its fraction field $Q[\epsilon]=Q[x]/f(x)Q[x]$.  
 \medskip\medskip
 
 Assuming this claim, it follows that $R[\epsilon]$ is a Dedekind domain (see \cite[Th. 10.7]{Jac89} or \cite[Th. 4.4]{Reiner03}).   Consequently, if $\mathfrak m_1$ is any one of its maximal ideals, the localization $R_1=R[\epsilon]_{\mathfrak m_1} $ is a DVR. Now, let
 $\mathfrak m_1$ be the kernel of the composite (surjective) map
 $$ R[\epsilon]=R[x]/f(x)R[x]\longrightarrow F[x]/\bar f(x)F[x] \longrightarrow F[x]/g(x)F[x]\cong F_1.$$
 
It remains to prove the Claim. 
 Let $R'$ be the integral closure of $R$ in  $Q(\epsilon)$. Clearly,
 $R[\epsilon] \subseteq R'$, and both rings have the
  same base change as $R$-modules to $Q$.  It follows that both have the 
  same rank as (free) $R$-modules. To prove that they are equal, it is sufficient, by Nakayama's lemma,
to prove $\pi R'+\Reps=R'$, where $\pi \in R$ generates its maximal ideal. 
  Since both sides contain $\pi R'$, it is sufficient to show that this equality holds if $\pi R'$ is factored out of each side.   
The resulting left-hand side $(\pi R'+\Reps)/ \pi R'$ is the image of the obvious map 
$$i: \Reps/\pi \Reps \longrightarrow   R'/\pi R',$$ which has kernel  $I/\pi \Reps,$ where 
$I:=  \Reps \cap \pi R'$
 is an ideal in $\Reps$. Some power of $I$ is contained in $\pi \Reps$, 
since the $R$-module $\pi R'/ \pi \Reps$
contains no nonzero $\pi$-torsion-free  $R$-submodule (and so is killed by some $n$th power of $\pi$ or of $\pi R')$.  Thus, $I/\pi \Reps$
   is a nilpotent 
ideal in the direct product of  fields $\Reps/\pi \Reps \cong F[x]/\bar f(x)F[x]$ and must be 
zero.  Now, we know that the map $i$ above must be
injective. However, both target and domain have the same rank as $R/\pi R$-modules. Since $R/\pi R=F$ is a field, $i$ must also be surjective. This proves the claim, and the proof of 
Proposition \ref{D12} is now complete.

\end{proof}

\begin{rems} \label{final} We conclude this appendix with two further examples of triples that are useful.

(a) {\bf Completions.} Let $(R,Q,F)$ be a regular local triple. Then, $R$ is a local ring, so let $\widehat R$ be its completion
at its maximal ideal; necessarily, $\widehat R$ is a regular local ring having the same Krull dimension as $R$. (\cite[Ch. 10]{AM69} has a general treatment of completions. See also \cite[Th. 17.5, and the proof of Th. 17.8]{M86} and the preceding discussion.) If $\widehat Q$ is the fraction field of $\widehat R$, then $(\widehat R,\widehat Q, F)$ is a
``complete regular local triple."

(b) {\bf Direct unions.} Suppose that $(R_n,Q_n,F_n)$ is a sequence of DVR triples with 
$$(R_n,Q_n, F_n)\leq (R_{n+1}, Q_{n+1},F_{n+1}), \quad n\geq 0.$$
Assume also, for sufficiently large $n$, that $R_{n+1}$ is unramified over $R_n$. 
Then, 
$$(\underset\longrightarrow\lim R_n,\underset\longrightarrow\lim\, Q_n, \underset\longrightarrow\lim F_n).$$
is a DVR triple. We leave the proof of this as an exercise, but do record the following corollary.
\end{rems}

\begin{cor}\label{corRem} Let $F$ be the algebraic closure of a prime field $\mathbb F_r$, let $R$ be the ring
of Witt vectors over $F$, and let $Q$ be the fraction field $R$. Then, $(R,Q,F)$ is a DVR triple. \end{cor}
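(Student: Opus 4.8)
The plan is to identify the triple $(R,Q,F)$ described in the corollary as a direct union of DVR triples of the kind produced by Proposition~\ref{D12}, and then invoke Remark~\ref{final}(b). First I would fix notation: let $\mathbb F_r$ be the prime field of characteristic $r$, and enumerate a cofinal chain of finite subfields $\mathbb F_r = F_0 \subseteq F_1 \subseteq F_2 \subseteq \cdots$ of the algebraic closure $F = \overline{\mathbb F_r}$, with $F = \bigcup_n F_n$. I may take, for instance, $F_n$ to be the subfield of order $r^{n!}$, so that $F_n \subseteq F_{n+1}$ for all $n$ and every finite subfield of $F$ is contained in some $F_n$. The base case is the standard DVR triple $(R_0, Q_0, F_0)$ where $R_0 = \mathbb Z_r$ is the ring of $r$-adic integers (or $\mathbb Z_{(r)}$, whose completion is $\mathbb Z_r$; either choice gives a DVR triple), $Q_0 = \mathbb Q_r$, and residue field $F_0 = \mathbb F_r$.

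Next I would build the chain inductively. Given the DVR triple $(R_n, Q_n, F_n)$ with $Q_n$ of characteristic $0$ and $F_n$ finite, Proposition~\ref{D12} (``Boosting $F$'') produces a DVR triple $(R_{n+1}, Q_{n+1}, F_{n+1})$ with $(R_n, Q_n, F_n) \leq (R_{n+1}, Q_{n+1}, F_{n+1})$ and with $R_{n+1}$ unramified over $R_n$ in the sense of Definition~\ref{resfinite}(b). The unramifiedness is exactly the hypothesis needed to apply Remark~\ref{final}(b): the direct union
$$\Big(\underset\longrightarrow\lim R_n,\ \underset\longrightarrow\lim Q_n,\ \underset\longrightarrow\lim F_n\Big)$$
is again a DVR triple, with residue field $\underset\longrightarrow\lim F_n = \bigcup_n F_n = F$. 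It remains to check that $\underset\longrightarrow\lim R_n$ is (isomorphic to) the Witt vector ring $W(F)$ and that $\underset\longrightarrow\lim Q_n$ is its fraction field. Here I would use the standard functoriality and limit properties of Witt vectors: each $R_n$, being a complete (or at least a DVR whose completion is) unramified extension of $\mathbb Z_r$ with residue field $F_n$, is canonically $W(F_n)$ up to the completion issue, and $W(-)$ commutes with the relevant filtered colimits of perfect fields, so $\underset\longrightarrow\lim W(F_n) = W(\bigcup F_n) = W(F)$; passing to completions if one insists on the complete version changes nothing about the triple structure. Since $F$ is perfect, $W(F)$ is a DVR with uniformizer $r$, fraction field of characteristic $0$, and residue field $F$, which is exactly the assertion.

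The main obstacle I anticipate is bookkeeping rather than conceptual: one must be careful about whether ``DVR triple'' in this monograph permits a non-complete DVR (the parenthetical remark before Proposition~\ref{D12}'s proof and footnote in Subsection C3 suggest completeness is optional), and one must verify that the unramifiedness hypothesis in Remark~\ref{final}(b) genuinely holds at each stage of Proposition~\ref{D12}'s output, not merely ``for sufficiently large $n$'' --- but the proposition delivers unramifiedness at every step, so this is automatic. A secondary point requiring a line of justification is the identification $\underset\longrightarrow\lim R_n \cong W(F)$; the cleanest route is to observe that $\underset\longrightarrow\lim R_n$ is an unramified DVR extension of $\mathbb Z_r$ with residue field $F$ and that such an extension is unique up to canonical isomorphism (and equals $W(F)$ when completed), citing the standard structure theory of complete discrete valuation rings of mixed characteristic with perfect residue field. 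With these points addressed, the corollary follows immediately.
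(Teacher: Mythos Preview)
Your approach is correct and essentially the same as the paper's one-sentence proof, which also appeals to Remark~\ref{final}(b) together with the Witt vector construction in Jacobson. The only variation is that the paper takes the chain to be $W(F_n)$ directly (these being unramified DVR extensions of $\mathbb Z_r$ by the standard theory), while you build it abstractly via Proposition~\ref{D12} and then identify the limit with $W(F)$ afterward---a step where your caution is warranted, since $\varinjlim W(F_n)$ is the non-complete ring of integers of $\mathbb Q_r^{\mathrm{ur}}$ and one must complete (Remark~\ref{final}(a)) to obtain $W(F)$ itself.
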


This fact follows from Remark \ref{final}(b) and the construction of Witt vectors (see \cite[\S8.10]{Jac89}).

%\subsection*{}
%\subsection*{}
%\begin{rem} We conclude this appendix by remarking that many more DVR triples may be constructed using
%Corollary \ref{corRem} in conjunction with Proposition \ref{D11}. \end{rem}

  %For the DVR case, we begin with a prime $r$, a ring of integers $\sZ$ in a number field $Q$, a maximal ideal $\mathfrak r$ of $\sZ$ containing $r$, and we put
  %$R=\sZ_{\mathfrak r}$, $F=R/\mathfrak r$. This gives a local triple $(R,Q,F)$ with $F$ a finite field of characteristic
  %$r$. We keep $r$ fixed, and vary $Q$ and $\mathfrak r$. Take $p$ to be a prime different from $r$ but which is the natural characteristic of the fixed finite group of Lie type. 

%%%%%%%%%%%%%%%%%%

\backmatter

\printindex
\end{document}